\theoremstyle{plain}
\numberwithin{equation}{section}
\newcommand{\tb}{\mathtt{b}}
\newcommand{\calO}{\mathcal{O}}
\newcommand{\td}{{\mathtt D}}
\newcommand{\gotp}{{\mathfrak p}}
\def\td{{\mathtt d}}
\newtheorem{Teo}{Theorem}
\newtheorem{Def}{Definition}
\newtheorem{theorem}{Theorem}[section]
\newtheorem{proposition}[theorem]{Proposition}
\newtheorem{lemma}[theorem]{Lemma}
\newtheorem{remark}[theorem]{Remark}
\newtheorem{remarks}[theorem]{Remark}
\newtheorem{definition}[theorem]{Definition}
\newcommand{\be}{\begin{equation}}
\newcommand{\ee}{\end{equation}}
\newcommand{\e}{\varepsilon}
\newcommand{\al}{\alpha}
\newcommand{\ov}{\overline}
\newcommand{\R}{\mathbb R}
\newcommand{\C}{\mathbb C}
\newcommand{\Z}{\mathbb Z}
\newcommand{\N}{\mathbb N}
\newcommand{\T}{\mathbb T}
\newcommand{\sign}{{\rm sign}}
\newcommand{\s }{\sigma }
\newcommand{\ii }{{\rm i} }
\renewcommand{\d }{\delta }
\newcommand{\g }{\gamma}
\newcommand{\m }{\mu }
\newcommand{\vphi}{\varphi }
\newcommand{\bral}{[ \! [} 
\newcommand{\brar}{] \! ]}
\newcommand{\z }{\zeta }
\newcommand{\dps}{\displaystyle}
\newcommand{\gotL}{\mathfrak{L}}
\newcommand{\x }{\xi }
\newcommand{\gotM}{\mathfrak{M}}
\newcommand{\pa}{\partial}
\newcommand{\op}{{Op}}
\newcommand{\opw}{{Op^{\mathrm{W}}}}
\def\hat{\widehat}
\def\bar{\overline}
\def\cal{\mathcal}
\newcommand{\f}{\vphi}
\newcommand{\su}{\s_1}
\def\ba{\begin{aligned}}
\def\ea{\end{aligned}}
\def\beginm{\begin{multline}}
\def\endm{\end{multline}}
\providecommand{\vect}[2]{{\bigl[\begin{smallmatrix}#1\\#2\end{smallmatrix}\bigr]}} \providecommand{\sign}{\mathrm{sgn}\,}  
\providecommand{\sm}[4]{{\bigl[\begin{smallmatrix}#1&#2\\#3&#4\end{smallmatrix}\bigr]}}
\renewcommand{\tocsection}[3]{%
  \indentlabel{\@ifnotempty{#2}{\bfseries\ignorespaces#1 #2\quad}}\bfseries#3}
\def\l@subsection{\@tocline{2}{0pt}{2.5pc}{5pc}{}}
\def\l@subsubsection{\@tocline{3}{0pt}{4.5pc}{5pc}{}}
\renewcommand\tocchapter[3]{%
  \indentlabel{\@ifnotempty{#2}{\ignorespaces#2.\quad}}#3%
}
\newcommand\@dotsep{4.5}
\def\@tocline#1#2#3#4#5#6#7{\relax
  \ifnum #1>\c@tocdepth 
  \else
    \par \addpenalty\@secpenalty\addvspace{#2}%
    \begingroup \hyphenpenalty\@M
    \@ifempty{#4}{%
      \@tempdima\csname r@tocindent\number#1\endcsname\relax
    }{%
      \@tempdima#4\relax
    }%
    \parindent\z@ \leftskip#3\relax \advance\leftskip\@tempdima\relax
    \rightskip\@pnumwidth plus1em \parfillskip-\@pnumwidth
    #5\leavevmode\hskip-\@tempdima{#6}\nobreak
    \leaders\hbox{$\m@th\mkern \@dotsep mu\hbox{.}\mkern \@dotsep mu$}\hfill
    \nobreak
    \hbox to\@pnumwidth{\@tocpagenum{#7}}\par
    \nobreak
    \endgroup
  \fi}
\def\l@subsection{\@tocline{2}{0pt}{2.5pc}{5pc}{}}
\begin{document}

\title{Quasi-periodic Traveling Waves on an Infinitely Deep Perfect Fluid Under Gravity
} 

\date{}

\author{Roberto Feola}
\address{University of Nantes}
\email{roberto.feola@univ-nantes.fr}

\author{Filippo Giuliani}
\address{UPC, Barcelona}
\email{filippo.giuliani@upc.edu}

\thanks{
Roberto Feola has been supported of the Centre Henri Lebesgue 
ANR-11-LABX-0020-01 and by ANR-15-CE40-0001-02 
``BEKAM'' of the Agence Nationale de la Recherche.
Filippo Giuliani has received funding from 
the European Research Council (ERC) 
under the European Union's Horizon 2020
research and innovation programme under grant agreement 
No 757802.}


 \begin{abstract} 
We consider the gravity water  
waves system with a periodic one-dimensional interface 
in infinite depth and we establish the existence and the linear stability of small amplitude, 
quasi-periodic in time, traveling waves.
This provides the first existence result of quasi-periodic water waves 
solutions bifurcating from a \emph{completely resonant} elliptic fixed point. 
The proof is based on a Nash-Moser scheme, Birkhoff normal form methods 
and pseudo differential calculus techniques. We deal with the combined problems 
of \emph{small divisors}
and the \emph{fully-nonlinear} nature of the equations.\\
The lack of parameters, like the capillarity or the depth of the ocean, 
demands a refined \emph{nonlinear} bifurcation analysis involving 
several non-trivial resonant wave interactions, as the well-known ``Benjamin-Feir resonances''. 
We develop a novel normal form approach to deal with that.
Moreover, by making full use of the Hamiltonian structure, 
we are able to provide the existence of a wide class of solutions 
which are free from restrictions of parity in the time and space variables.
\end{abstract}

\maketitle
\tableofcontents

\section{Introduction}\label{sec:1}

We consider a two dimensional (1d interface)
incompressible and irrotational perfect fluid
with periodic boundary conditions under the action of gravity.
The fluid layer is assumed to be infinitely deep
and the motion is governed by the free surface Euler equations.  
This paper is concerned with the existence and the stability of small amplitude, quasi-periodic in time, traveling waves
on the surface of the fluid. By a quasi-periodic traveling wave we mean a motion that, at the first orders of amplitude, is a superposition of an arbitrarily large number of periodic traveling waves with rationally independent frequencies.
The irrationality of the frequencies of oscillations 
excludes the existence of a moving frame for 
which such motions are stationary. 
As a consequence the traveling quasi-periodic
 waves are ``truly'' time dependent solutions and the search for them is a \emph{small divisors} problem. 
A natural approach to deal with it 
is to implement a Nash-Moser implicit function scheme.

\noindent
Besides the small divisors difficulty, the pure gravity water waves system with infinite depth presents several issues regarding the bifurcation of small amplitude solutions.
Indeed the absence of parameters, like the capillarity of the fluid or the depth of the ocean, makes the linearized problem at the rest surface \emph{completely resonant}, in the sense that the kernel of the linearized operator has infinite dimension.

\noindent
In the search for periodic waves, the same issues have been tackled by Iooss-Plotnikov-Toland for proving the existence of standing periodic unimodal waves \cite{IoossPloTol1} and, later, by Iooss-Plotnikov \cite{IossPlo111} for the multimodal generalization. \\
In the quasi-periodic case, the complexity of the resonant waves interactions and the analysis of the linearized operator in a neighborhood of the equilibrium demand more refined techniques.\\  
The first results on the existence of quasi-periodic waves have been provided only recently  
and are due to Berti-Montalto \cite{BM1} for the gravity-capillary 
case with infinite depth and by Baldi-Berti-Haus-Montalto \cite{BBHM} 
for the pure gravity case with finite depth for $2$d oceans. 
In both cases the existence of quasi-periodic solutions
is provided for some asymptotically full-measure set of the parameters of the problem, respectively capillarity and depth (or equivalently wavelength). We remark that such solutions are standing, i.e. even in space, and reversible (they enjoy an additional symmetry in the space-time variable).\\
As far as we know, all the previous results on periodic and quasi-periodic in time water waves take advantage from the presence of physical parameters and / or assumptions of parity conditions.\\
The purpose of the present paper is to address two natural questions: (i) we work on a fixed equation for which the only possible parameters to modulate are the initial data of the solutions.\\
(ii) we look for a general class of quasi-periodic traveling waves which are free from restrictions of parity in the spatial and time variables.
%

 We deal with these issues by using Birkhoff normal form methods and fully exploiting the (approximate) constants of motion of the pure gravity water waves system.\\ 
The theorem that we prove is given below and it has been announced in \cite{FGnote}.
\begin{Teo}
\label{thm:mainmain}
There exist non-trivial small amplitude, linearly stable quasi-periodic traveling waves solutions 
of the two dimensional pure gravity water waves problem in infinite depth.
\end{Teo}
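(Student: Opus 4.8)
The plan is to combine a \emph{weak Birkhoff normal form} analysis --- designed to extract the nonlinear frequency modulation in the absence of external parameters --- with a Nash--Moser iteration carried out entirely inside the subspace of functions enjoying the traveling-wave symmetry.

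\noindent\emph{Hamiltonian set-up and symmetries.} First I would write the system in the Zakharov--Craig--Sulem variables $(\eta,\psi)$ (the free-surface elevation and the trace of the velocity potential), with Hamiltonian $H=\tfrac12\int\psi\,G(\eta)\psi\,dx+\tfrac{g}{2}\int\eta^2\,dx$ and $G(\eta)$ the Dirichlet--Neumann operator, and then pass to the complex coordinate $u\sim|D|^{1/4}\eta+\ii\,|D|^{-1/4}\psi$ diagonalizing the linear flow, $\pa_t u=-\ii\sqrt{g|D|}\,u$, with linear frequencies $\Omega_j=\sqrt{g|j|}$, $j\in\Z\setminus\{0\}$. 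Three structures are used throughout: the Hamiltonian nature of the system, its invariance under space translations (momentum conservation) and under time translations. A quasi-periodic traveling wave is sought as $u(t,x)=\breve u(\omega t,x)$, where the profile $\breve u(\varphi,x)$, $\varphi\in\T^\nu$, $x\in\T$, satisfies $\breve u(\varphi+\vec\jmath\,\varsigma,\,x-\varsigma)=\breve u(\varphi,x)$ for all $\varsigma\in\R$ and a suitable $\vec\jmath\in\Z^\nu$; equivalently, $\breve u$ has its Fourier support contained in $\{(\ell,j):\,j=\ell\cdot\vec\jmath\,\}$. Since the linear waves $e^{\ii(\sqrt{g|j|}\,t+jx)}$ already have this form the ansatz is natural, and momentum conservation guarantees that this subspace is invariant under the nonlinear flow, so normal forms, reducibility and the Nash--Moser scheme can all be performed inside it. This replaces --- and substantially generalizes --- the parity/reversibility restrictions of the earlier works, allowing solutions with no symmetry in $(t,x)$.

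\noindent\emph{Weak Birkhoff normal form and non-degeneracy.} Since $u=0$ is a completely resonant elliptic fixed point (the $\Omega_j=\sqrt{g|j|}$ admit infinitely many near-resonances), the nonlinear frequency shifts must be produced by the equation itself. Using paradifferential (``normal form for quasilinear PDEs'') techniques, so as not to lose derivatives, I would conjugate $H$ to the form $\sum_j\Omega_j|u_j|^2+Z_4+\text{(higher order)}$: the cubic part is non-resonant and can be removed (there are no $3$-wave resonances compatible with momentum conservation), while $Z_4$ retains the resonant quartic monomials --- the \emph{integrable} ones $|u_{j_1}|^2|u_{j_2}|^2$, whose coefficients give the amplitude-to-frequency map $\Omega_j\mapsto\Omega_j+(A\xi)_j+O(|\xi|^2)$, and the genuinely resonant \emph{Benjamin--Feir} quartets $u_{j_1}u_{j_2}\bar u_{j_3}\bar u_{j_4}$ with $j_1+j_2=j_3+j_4$, $\sqrt{|j_1|}+\sqrt{|j_2|}=\sqrt{|j_3|}+\sqrt{|j_4|}$ and $\{j_1,j_2\}\ne\{j_3,j_4\}$, which cannot be eliminated. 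One then chooses the $\nu$ tangential sites $S^+=\{j_1,\dots,j_\nu\}\subset\N$ so that no non-trivial resonant quartet is supported entirely on $S=S^+\cup(-S^+)$ --- and so that the twist condition below holds --- computes the relevant Birkhoff coefficients, and verifies a R\"ussmann-type non-degeneracy (``twist'') condition for the amplitude-dependent frequency map $\xi\mapsto(\omega(\xi),\Omega^\infty(\xi))$, now also accounting for the corrections produced by the Benjamin--Feir terms that survive in the normal directions. I expect this nonlinear bifurcation analysis --- controlling the several non-trivial resonant wave interactions with the amplitude as the only available parameter --- to be the main obstacle; it is precisely the novel normal-form ingredient announced in the abstract.

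\noindent\emph{Nash--Moser scheme, reducibility and linear stability.} Next I would pass to action-angle variables $u_j=\sqrt{\xi_j+y_j}\,e^{\ii\theta_j}$ on the tangential sites, keep $z=\Pi_S^\perp u$ as normal coordinate, and rescale $\xi=\e^2\zeta$ so that the amplitudes $\zeta$ (in a fixed compact set) play the role of parameters. One then seeks a zero of $\mathcal F(\iota,\omega)=\omega\cdot\pa_\varphi\iota-X_H(\iota)$, $\iota=(\theta,y,z)$, inside the traveling subspace, by a Nash--Moser iteration, using the Hamiltonian structure to construct an approximate right inverse of $d\mathcal F$. Its analytic heart is the analysis of the linearized operator in the normal directions: a quasi-periodically forced, quasilinear operator whose top-order part is a variable-coefficient transport term together with the dispersion $|D|^{1/2}$; a finite number of paradifferential changes of variables (all preserving reality and the traveling symmetry) reduce it to constant coefficients up to regularizing remainders, after which a KAM reducibility scheme diagonalizes it, producing final normal frequencies $\mu_j^\infty(\zeta)$. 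Imposing the first- and second-order Melnikov conditions on $\omega\cdot\ell$, on $\omega\cdot\ell+\mu_j^\infty$ and on $\omega\cdot\ell+\mu_j^\infty\pm\mu_k^\infty$, and using the twist condition established above, one proves that the set of amplitudes $\zeta$ for which the iteration converges has asymptotically full measure as $\e\to0$. Finally, reducibility with purely imaginary $\ii\mu_j^\infty$, $\mu_j^\infty\in\R$, conjugates the linearized flow at the embedded invariant torus to a diagonal, time-independent system, which yields the asserted linear stability and completes the construction.
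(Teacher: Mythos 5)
Your overall architecture (weak Birkhoff normal form to extract amplitude parameters, action--angle variables on the tangential sites, Nash--Moser with an approximate inverse built from a pseudodifferential regularization plus KAM reducibility of the normal directions, everything performed inside the momentum/traveling subspace) is the same as the paper's. But there is a genuine gap in the step you describe as ``computes the relevant Birkhoff coefficients \dots now also accounting for the corrections produced by the Benjamin--Feir terms that survive in the normal directions.'' Choosing the tangential sites so that no non-trivial resonant quartet lies entirely in $S$ only controls the monomials with at most one index outside $S$ (finitely many resonances, by momentum conservation), and this is exactly what the paper's genericity argument does for the weak BNF and the tangential twist. The dangerous terms are the quartic resonant monomials with \emph{two} normal indices: these give $\varepsilon^2$-corrections of the form $z_j\bar z_k$ (with $j\neq k$ coupled through Benjamin--Feir quartets and tangential amplitudes), they involve infinitely many integer conditions, so genericity of $S$ cannot exclude them, and if present they make the first-order normal form non-diagonal, which blocks both the explicit computation of the corrected normal frequencies (hence the verification of the non-resonance/twist conditions you invoke) and the second Melnikov conditions for reducibility. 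You cannot appeal to the classical uniqueness of non-resonant Birkhoff normal forms either, because here the normal form is resonant and, due to the quasilinear character of the equations, no genuine full or partial Birkhoff transformation of the phase space is available -- only the weak BNF (finite rank) plus a ``linear'' BNF on the linearized operator.

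The missing ingredient, which is the paper's main new idea, is an identification-of-normal-forms argument: one builds an \emph{approximate constant of motion} $K=\sum_j j^2|u_j|^2+\cdots$ (exploiting the formal integrability at order four of the full normal form proved by Zakharov--Dyachenko and Craig--Worfolk, i.e.\ the vanishing of the Benjamin--Feir coefficients), shows that $H$ and $K$ can be simultaneously normalized, and concludes that the resonant quartic terms produced by the rigorous weak$+$linear BNF must lie in $\mathrm{Ker}(\mathrm{ad}_{H^{(2)}})\cap\mathrm{Ker}(\mathrm{ad}_{K^{(2)}})$, hence are supported only on trivial (action-preserving) resonances. This is what makes the $\varepsilon^2$-corrections of the normal eigenvalues explicit and diagonal (up to finitely many terms removed by rotating coordinates), and it is also what feeds the measure estimates, where -- because of the sublinear dispersion $\sqrt{|j|}$ -- the second Melnikov conditions are only tractable after restricting to the momentum hyperplane $\mathtt{v}\cdot\ell+j-k=0$. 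Without an argument of this type your plan to ``account for'' the surviving Benjamin--Feir terms does not go through.
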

To the best of our knowledge the above theorem is the first existence result concerning
quasi-periodic solutions of the water waves equations bifurcating from a completely resonant elliptic fixed point. 

As it is well known one of the main difficulties in the search for quasi-periodic solutions is to deal with the resonances between the frequencies of the expected solutions and the eigenvalues of the linearized operator.\\
This requires a complete control on the frequencies and a good knowledge of the spectrum in a whole neighborhood of the equilibrium. Especially in the infinite dimensional context, this demands the use of some parameters that modulate the frequencies.\\
In a completely resonant case such parameters are to be found through a \emph{nonlinear} bifurcation analysis. In the small amplitude regime this can be successfully done by performing Birkhoff normal form methods. The bifurcation parameters 
appear naturally as the amplitudes of 
an appropriate approximate solution, which is 
 obtained by 
exciting  a finite number  of modes that we call  \emph{tangential sites}.  It turns out that the tangential sites have to be chosen in a suitable way and such choice is a fundamental ingredient of the proof of Theorem \ref{thm:mainmain}.
\\
A serious  difficulty is to prove that the amplitudes
provide a sufficiently good  modulation 
 to impose \emph{infinitely many} non-resonance conditions.
This is usually achieved by exploiting some 
non-degeneracy
 of the Birkhoff normal form 
at order four, which in turn depends on the choice of the tangential sites. 
Proving the non-degeneracy of the normal form is tantamount to prove a "twist" 
condition between the bifurcation parameters and the first 
order corrections of the eigenvalues 
of the linearized operator at the equilibrium.
\\
Due to the quasi-linear nature of the equations and the presence of non-trivial resonances at order four (called  \emph{Benjamin-Feir resonances}) the proof of the aforementioned "twist" 
condition requires the implementation of a new set of ideas.\\ 
The key ingredient to overcome these problems is a novel identification argument of normal forms. We refer to section \ref{sec:integrability} for a detailed discussion. We remark that, by the lack of parameters, classical uniqueness arguments concerning \emph{non-resonant} Birkhoff normal forms (see for instance \cite{KdVeKAM}) cannot be applied. 
Actually our approach relies on the presence 
of approximate conserved quantities and the formal integrability of the Hamiltonian of order four proved in \cite{Zak2}, \cite{CW}, \cite{CS}.

\subsection{Formulation of the problem}\label{sec:1.1}
We consider an incompressible and irrotational perfect fluid, under the action of gravity
occupying, at time
$t$, a two dimensional domain with infinite depth, periodic in the horizontal variable, given by 
\be\label{Deta}
    {\mathcal D}_{\eta} := \big\{ (x,y)\in \T \times\R \, ;  \ - \infty <y<\eta(t,x) \big\},  \quad 
    \T := \R \slash (2 \pi \Z) \, ,
\ee 
where $\eta$ 
is a smooth function. 
The velocity field in the time dependent domain 
$ {\mathcal D}_{\eta} $ is the gradient of a harmonic function $\Phi$, called the velocity potential. 
The time-evolution of the fluid is determined by a system of equations for 
the two functions $(t,x)\to \eta(t,x) $, $ (t,x,y)\to \Phi(t,x,y)$.
Following Zakharov \cite{Zak1} and Craig-Sulem \cite{CrSu} we denote by
$\psi(t,x) = \Phi(t,x,\eta(t,x))$ the restriction 
of the velocity potential to the free interface.
Given the shape $\eta(t,x)$ of the domain $ {\cal D}_\eta $ 
and the Dirichlet value $\psi(t,x)$ of the velocity potential at the top boundary, one can recover 
$\Phi(t,x,y)$ as the unique solution of the elliptic problem 
\begin{equation} \label{BoundaryPr}
\Delta \Phi = 0  \ \text{in } 
{\cal D}_\eta  \, , \quad 
\partial_y \Phi \to 0  \ \text{as } y \to - \infty \, , \quad 
\Phi = \psi \ \;\; \text{on }\;\; \{y = \eta(t,x)\}. 
\end{equation}
The $(\eta,\psi)$ variables then satisfy the gravity water waves system
\begin{equation} \label{eq:113}
\begin{cases}
    \partial_t \eta = G(\eta)\psi \cr
\partial_t\psi = \dps -g\eta  -\frac{1}{2} \psi_x^2 +  \frac{1}{2}\frac{(\eta_x  \psi_x + G(\eta)\psi)^2}{1+\eta_x^2}
\end{cases}
\end{equation}
where $ G(\eta)\psi $ is the Dirichlet-Neumann operator  
\begin{equation}
  \label{eq:112a}
  G(\eta)\psi := \sqrt{1+\eta_x^2}(\partial_n\Phi)\vert_{y=\eta(t,x)} = (\partial_y\Phi -\eta_x \partial_x\Phi)(t,x,\eta(t,x))
\end{equation}
and $n$ is the outward unit normal at the free interface $y= \eta(t,x)$.
$ G(\eta)$ is a pseudo differential operator with principal symbol $|D|:=-\mathrm{i} \partial_x$
,
self-adjoint with respect to the $L^2$ scalar product, positive-semidefinite, 
and its kernel contains only the constant functions. 
Without loss of generality, we set the gravity constant to $g = 1$.

It was first observed by Zakharov \cite{Zak1} that \eqref{eq:113} is a Hamiltonian system with respect to the symplectic form $d\psi\wedge d\eta$ and it can be written as
\begin{equation}\label{HS}
\begin{aligned}
& \qquad \pa_t \eta = \nabla_\psi H (\eta, \psi) \, , \quad  \pa_t \psi = - \nabla_\eta H (\eta, \psi)  \, , 
\end{aligned}
\end{equation}
where $ \nabla $ denotes the $ L^2 $-gradient, with Hamiltonian
\begin{equation}\label{Hamiltonian}
H(\eta, \psi) := \frac12 \int_\T \psi \, G(\eta ) \psi \, dx + \frac{1}{2} \int_{\T} \eta^2  \, dx
\end{equation}
given by the sum of the kinetic 
and potential energy of the fluid.
The invariance of the system \eqref{eq:113} in the $y$ and $x$ variable implies the existence of two prime integrals, respectively the ``mass'' $\int_\T \eta \, dx$ and the momentum 
\begin{equation}\label{Hammomento}
\mathtt{M}:= \int_{\T} \eta_x (x) \psi (x)  \, dx. 
\end{equation}

The Hamiltonian \eqref{Hamiltonian}  is defined on the spaces
\be\label{phase-space}
(\eta, \psi) \in H^s_0 (\T; \mathbb{R}) \times {\dot H}^s (\T; \mathbb{R})  
\ee
where $ H^s (\T;\mathbb{R})$, $ s\in \R $, denotes the Sobolev space of $ 2 \pi $-periodic functions of $ x $, ${\dot H}^s (\T;\mathbb{R}):= H^s (\T;\mathbb{R}) \slash{ \sim}$ 
is the homogeneous Sobolev space
obtained by the equivalence relation
$\psi_1 (x) \sim \psi_2 (x)$ if and only if $ \psi_1 (x) - \psi_2 (x) $ is a constant\footnote{The 
fact that $ \psi \in \dot H^s $ is coherent with the fact 
that only the velocity field $ \nabla_{x,y} \Phi $ has physical meaning, 
and the velocity potential $ \Phi $ is defined up to a constant.
For simplicity of notation 
we denote the equivalence class $ [\psi] $ by $ \psi $ and,  
since the quotient map induces an isometry of $ {\dot H}^s (\T;\mathbb{R}) $ onto 
$ H^s_0 (\T;\mathbb{R}) $, 
we will conveniently identify $ \psi $ with a function with zero average.},
and $H^s_0(\T;\mathbb{R})$ is the subspace of 
$H^s(\T;\mathbb{R})$ of zero average functions. \\
Since 
$ \widehat \eta_0 (t)  := \frac{1}{2\pi} \int_{\T} \eta (t, x) \, dx $, 
$\widehat \psi_0 (t) := \frac{1}{2\pi} \int_{\T} \psi (t, x) \, dx $
evolve according to the decoupled equations\footnote{Since  the ocean has infinite depth,   if $\Phi$ solves \eqref{BoundaryPr},
then $\Phi_{c}(x,y):=\Phi(x,y-c)$ solves the same problem in $\mathcal{D}_{\eta+c}$ 
assuming the Dirichlet datum $\psi$ at the free boundary
 $\eta+c $. Therefore $G(\eta+c)=G(\eta) $, $ \forall c \in \R $, 
and $ \int_\T \nabla_\eta K \, dx = 0$ where $ K := \frac12 \int_{\T} \psi G(\eta) \psi \, dx $ denotes the kinetic energy.}
\be\label{medie00}
\pa_t {\widehat \eta}_0 (t) = 0 \, , \quad \pa_t  \widehat \psi_0 (t) = - g \widehat \eta_0 (t) \, ,  
\ee
we may restrict the study of the dynamics to the invariant subspace of $(\eta, \psi)$ such that
\begin{equation}\label{zeromode}
\int_{\T} \eta \, dx = \int_{\T} \psi \, dx = 0  \, . 
\end{equation}

\noindent
{\bf Linear water waves system.} Small amplitude solutions are close to the solutions of the linearized system of \eqref{eq:113}
at the equilibrium $(\eta,\psi)=(0,0)$, namely
\begin{equation} \label{eq:113linear}
\begin{cases}
    \partial_t \eta =G(0)\psi \cr
\partial_t\psi = \dps -\eta  
\end{cases}
\end{equation}
where 
 the Dirichlet-Neumann operator 
at the flat surface $\eta=0$ is the Fourier multiplier
$G(0)=|D|$.
The solution of the linear system \eqref{eq:113linear} restricted to the subspace \eqref{zeromode}
are 
\begin{equation}\label{solLineari}
\begin{aligned}
\eta(t,x)&=\frac{1}{\sqrt{2\pi}}\sum_{j\in \mathbb{Z}\setminus\{0\}}\Big(
\eta_{j}(0)\cos(\sqrt{|j|} t)+
|j|^{\frac{1}{2}}\psi_{j}(0)\sin(\sqrt{|j|}t)\Big)e^{\ii jx}\,,\\
\psi(t,x)&=\frac{1}{\sqrt{2\pi}}\sum_{j\in \mathbb{Z}\setminus\{0\}} \Big( 
\psi_{j}(0)\cos(\sqrt{|j|}t)- 
|j|^{-\frac{1}{2}}\eta_{j}(0)\sin(\sqrt{|j|}t)\Big)e^{\ii jx}
\end{aligned}
\end{equation}
with
coefficients satisfying $\ov{\eta_{-j}(0)}=\eta_{j}(0)$
and $\ov{\psi_{-j}(0)}=\psi_{j}(0)$.
The frequency of oscillation of the $j$-th mode is $\sqrt{|j|}$ and
we refer to the map 
\begin{equation}\label{dispersionLaw}
j\to \sqrt{|j|}\,,\quad j\in \mathbb{Z}\setminus\{0\}\,,
\end{equation}
as the \emph{dispersion law} of \eqref{eq:113linear}.
We note that the dispersion law is even in $j$, then there are infinitely many multiple eigenvalues. 

\noindent
Passing to the complex coordinates, the system \eqref{eq:113linear}
is equivalent to the \emph{completely resonant} equation
\begin{equation} \label{eq:113CWW}
\pa_{t}u=-\ii |D|^{\frac{1}{2}} u\,,\qquad 
u=\frac{1}{\sqrt{2}}\big( |D|^{-\frac{1}{4}}\eta+\ii|D|^{\frac{1}{4}}\psi \big)\,.
\end{equation}
The Fourier multipliers in \eqref{eq:113CWW} are well-defined thanks to
the choice \eqref{zeromode}. The solutions \eqref{solLineari} assume the form
\begin{equation}\label{solLinearicomplex}
\begin{aligned}
u(t,x)&=\sum_{j\in \mathbb{Z}\setminus\{0\}}
u_{j}(0)e^{-\ii\sqrt{|j|}t+\ii jx}\,,
\qquad u_j(0):=\frac{1}{\sqrt{2}}\big( |j|^{-\frac{1}{4}}\eta_j(0)+\ii|j|^{\frac{1}{4}}\psi_{j}(0) \big)\,
\end{aligned}
\end{equation}
and it is clear that they can be either periodic or quasi-periodic depending on the Fourier support.

\vspace{0.3em}
\noindent
{\bf Traveling quasi-periodic solutions.}
A \emph{quasi}-periodic solution for the system \eqref{eq:113}
 with an irrational frequency vector $\omega\in\mathbb{R}^{\nu}$, $\nu\geq 1$, i.e. $\omega \cdot \ell\neq 0$ for all $\ell\in\mathbb{Z}^{\nu}\setminus\{0\}$, is defined by a smooth embedding  
\begin{equation}\label{6.6}
\begin{aligned}
\T^{\nu} & \to  
H_0^1(\mathbb{T};\mathbb{R})\times
 {H}_0^1(\mathbb{T};\mathbb{R})\\
 \varphi & \mapsto U(\varphi,x):=(\tilde{\eta}(\varphi, x), \tilde{\psi}(\varphi, x))
 \end{aligned}
\end{equation}
such that 
\begin{equation}\label{invarianceequationH}
U \circ \Psi^t_{\omega}=\Phi^t_H\circ U\,, 
\qquad \Psi^t_{\omega}(\varphi_0):=\varphi_0+\omega t\,,
\end{equation}
where $\Phi^t_H$ is the flow of \eqref{Hamiltonian}. 
By differentiating \eqref{invarianceequationH} at $t=0$ 
we get that $U$ has to be solution of the following partial differential equation
\begin{equation}\label{HamiltonianequationH}
\omega\cdot \partial_{\varphi} U-X_H(U)=0\,,
\end{equation}
where $X_H$ corresponds to the r.h.s of the system \eqref{eq:113}.\\
Since the Hamiltonians $H$ and $\mathtt{M}$ commute,
then $s\mapsto \Phi_{\mathtt{M}}^{s}\circ\Phi_{H}^{t}\circ U$, 
where $\Phi^t_{\mathtt{M}}$ is the flow of \eqref{Hammomento},
 is a one-parameter group of solutions of \eqref{HamiltonianequationH}. 
 An embedding $U$ as in \eqref{6.6}, with frequency
 $\mathtt{v}\in \mathbb{R}^{\nu}$, which is a solution of the Hamiltonian $\mathtt{M}$
 in \eqref{Hammomento}   is such that
\begin{equation}\label{invarianceequationM}
U \circ \Psi^{s}_{-\mathtt{v}}=\Phi^{s}_{\mathtt{M}}\circ U\,,\qquad
\leftrightarrow 
\qquad U(\vphi_0-\mathtt{v}s,x)=U(\vphi_0,x+s)\,.
\end{equation}
We refer to $\mathtt{v}$ as its  \emph{velocity vector}.
 By differentiating \eqref{invarianceequationM} at $s=0$ we find the following quasi-periodic transport equation with constant coefficients
\begin{equation}\label{HamiltonianequationM}
\mathtt{v}\cdot \partial_{\varphi} U+X_{\mathtt{M}}(U)=0, \qquad X_{\mathtt{M}}(U)=(\tilde{\eta}_x, \tilde{\psi}_x).
\end{equation}
If $U$ satisfies \eqref{invarianceequationH}, \eqref{invarianceequationM} then
\begin{equation}\label{parametergroup}
U(\vphi_0+\omega t, x+s)=
\Phi^t_H\circ \Phi_{\mathtt{M}}^{s} \circ U
=U \circ \Psi^s_{-\mathtt{v}}\circ\Psi_{\omega}^t(\varphi_0)
=U(\varphi_0+\omega t-\mathtt{v} s,x)\,.\footnote{Note that the freedom in the choice of the phase $\varphi_0$ comes from the autonomous nature of the systems involved. } 
\end{equation}

The \emph{quasi-periodic traveling waves} we look for are solutions of the form \eqref{parametergroup}. We point out that the linear solutions \eqref{solLineari} are of this type with velocity vector depending on the Fourier support. 

\begin{Def}{\bf (Quasi-periodic traveling waves).}
$(i)$ We say that a function $(\eta(t,x),\psi(t,x)) : \mathbb{R}\times \mathbb{T}\to\mathbb{R}^{2}$
is a \emph{quasi}-periodic solution of \eqref{eq:113} with  irrational 
frequency vector $\omega\in \mathbb{R}^{\nu}$, if there is an embedding 
$U\colon \mathbb{T}^{\nu}\to \mathbb{R}^2$ as in \eqref{6.6}
such that $(\eta(t,x),\psi(t,x))=U(\omega t,x)$ solves \eqref{eq:113}.

\noindent
$(ii)$ A quasi-periodic solution is \emph{traveling} with \emph{velocity vector}
$\mathtt{v}\in \mathbb{Z}^{\nu}$ if there is a function 
$\widetilde{U} : \mathbb{T}^{\nu}\to \mathbb{R}^{2}$ such that
\begin{equation}\label{travellingWaves}
(\eta(t,x),\psi(t,x))=U(\omega t, x)=\widetilde{U}(\omega t-\mathtt{v}x)\,.
\end{equation}
\end{Def}
We remark that an 
embedding $U$ satisfies  \eqref{travellingWaves}, i.e.
$U(\vphi,x)=\widetilde{U}(\vphi-\mathtt{v}x)$ for all $\varphi\in\mathbb{T}^{\nu}$ (because of the irrationality of $\omega$), if and only if $U(\vphi,x)$
solves the transport equation \eqref{HamiltonianequationM}.\\
We shall construct 
such solutions localized  
 in Fourier space at 
$\nu$ distinct  \textit{tangential sites} 
\begin{equation}\label{TangentialSitesDP}
S:=S^{+}\cup S^{-}\,, \quad
S^+:=\{\overline{\jmath}_1, \dots, \overline{\jmath}_m \}\subset \mathbb{N}\setminus\{0\}\,,\quad
S^-:=\{ \overline{\jmath}_{m+1}, \dots, \overline{\jmath}_{\nu}\}\subset -\mathbb{N}\setminus\{0\}\,,
\end{equation}
for some  $1\leq m\leq \nu$ and where
\begin{equation}\label{TangentialSitesDPbis}
k \neq -j\,,\quad \forall j\in S^+,\,\,\forall k\in S^-.
\end{equation}
Let us denote
\begin{equation}\label{maxS}
\max(S):=\max\{ \lvert j \rvert \,:\,  j\in S\}.
\end{equation}
The solutions of \eqref{eq:113linear} that originate by exciting the tangential modes are superpositions of periodic traveling linear waves with velocity $\overline{\jmath}_i$ and frequency $\sqrt{|\overline{\jmath}_i|}$. Such motions are quasi-periodic (or periodic) traveling waves of the form \eqref{travellingWaves} with frequency vector
\begin{equation}\label{LinearFreqDP}
\overline{\omega}:=\left(
\sqrt{|\overline{\jmath}_1|},\ldots, \sqrt{|\overline{\jmath}_{\nu}|}
\right)\in\mathbb{R}^{\nu}
\end{equation}
and velocity vector
\begin{equation}\label{velocityvec}
\mathtt{v}:=\left( \overline{\jmath}_1, \dots, \overline{\jmath}_{\nu} \right)\in\mathbb{Z}^{\nu}.
\end{equation}
We construct quasi-periodic traveling waves solutions of \eqref{eq:113} 
which are ``close'' to the linear ones, namely
they will be of the form
\begin{equation}\label{SoluzioneEsplicitaWW}
\begin{aligned}
\eta(t,x)&=\sum_{j\in S} \sqrt{2 \zeta_j} |j|^{1/4} \cos(\omega(j) t-j x)+o(\sqrt{|\zeta|})\,,\\
\psi(t,x)&=-\sum_{j\in S}
\sqrt{2 \zeta_j} |j|^{1/4} \sin(\omega(j) t-j x)+o(\sqrt{|\zeta|})\,, \\
&\omega=\bar\omega +O(\lvert \zeta \rvert) \,, 
\end{aligned}
\end{equation}
where $o(\sqrt{\lvert \zeta \rvert})$
is meant  in the 
 $H^{s}$-topology with $s$ large. The vectors $(\sqrt{\zeta_j})_{j\in S}\in \mathbb{R}^{\nu}$ represent the \emph{amplitudes} of the approximate solutions from which we have the bifurcation.\\
 Due to the absence of physical (or external) parameters in the system \eqref{eq:113}, we shall use these unperturbed amplitudes to modulate the frequencies of the expected solutions. It turns out that such strategy is not doable for any choice of the tangental sites $\overline{\jmath}_i$, but we will prove that \emph{generically} it is. When we refer to a generic choice of the tangential sites we mean that the $\overline{\jmath}_i$'s are chosen such that the vector $\left( \overline{\jmath}_1, \dots, \overline{\jmath}_{\nu} \right)$ is not a zero of a certain non-trivial poynomial $\mathbb{C}^{\nu}\to\mathbb{C}$. We remark that such choice is equivalent to the choice of the velocity vector $\mathtt{v}$ in \eqref{velocityvec}.
 
 \smallskip
 
\noindent \emph{Comment on assumption \eqref{TangentialSitesDPbis}.}
{ The genericity arguments are used to impose non-resonance conditions between the tangential linear frequencies of oscillations \eqref{LinearFreqDP}.
 The assumption \eqref{TangentialSitesDPbis} guarantees the simplicity of the tangential linear eigenvalues, excluding some delicate non-trivial resonant interactions. 
In section \ref{sec:generalStrategy}-$(iv)$ we explain where this assumption comes in handy in the analysis of the linearized operator.\\
The \eqref{TangentialSitesDPbis} simplifies also the computation of the first order corrections of the frequency of the expected quasi-periodic solutions, which are important to impose non-degeneracy conditions. See also section \ref{sec:generalStrategy}-$(i)$.
 }

\vspace{0.5em}

Denoting by $B(0,\varrho)$ 
the ball centered at the origin of $\mathbb{R}^{\nu}$ 
of radius $\varrho>0$, 
our result can be stated as follows.
\begin{Teo}{\bf (Quasi-periodic traveling  gravity waves).}\label{thm:main}
Let $\nu\geq 1$.
For a generic
choice of the velocity vector $\mathtt{v}$ as in \eqref{velocityvec} there exist $s\gg1$,  $0<\varrho\ll1$     
and a positive measure  
 Cantor-like set $\mathfrak{A}\subseteq B(0,\varrho)$
 such that
the following holds. 
For any $\zeta\in \mathfrak{A}$,
the equation \eqref{eq:113} possesses a small amplitude 
quasi-periodic solution $(\eta, \psi)(t,x; \zeta)=U(\omega t,x; \zeta)$ of the form \eqref{SoluzioneEsplicitaWW} which is a traveling wave with velocity vector $\mathtt{v}$, 
$U(\vphi,x)\in H^{s}(\mathbb{T}^{\nu+1},\mathbb{R}^{2})$ and 
$\omega:=\omega(\zeta)\in\mathbb{R}^{\nu}$ is a diophantine frequency vector. For  $0<\e\le \sqrt{\varrho}$, the set  $\mathfrak A$ has asymptotically full relative measure in $[\e^2,2\e^2]^\nu$.
Moreover these solutions are \textbf{linearly stable}.
\end{Teo}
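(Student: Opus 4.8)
The plan is to construct the quasi-periodic traveling waves via a Nash--Moser iteration in a scale of Sobolev spaces, with the unperturbed amplitudes $\zeta = (\zeta_j)_{j\in S}$ playing the role of the parameters that modulate the frequency vector $\omega = \omega(\zeta)$. First I would pass to the complex Hamiltonian formulation \eqref{eq:113CWW}, and split phase space into the tangential part supported on $S$ and the normal part; exciting the tangential sites produces the approximate solution in \eqref{SoluzioneEsplicitaWW}, while the true solution is obtained by solving the invariance equation \eqref{HamiltonianequationH} together with the transport constraint \eqref{HamiltonianequationM}, which, as remarked after \eqref{travellingWaves}, automatically enforces the traveling-wave structure. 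Working in the class of functions satisfying \eqref{HamiltonianequationM} with the fixed integer velocity vector $\mathtt v$ in \eqref{velocityvec} makes this a momentum-preserving search and is what pins $\mathtt v$ throughout.

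The core analytic steps are, in order: (1) a \emph{weak Birkhoff normal form} reduction near the completely resonant equilibrium, removing the non-resonant quartic terms and isolating the resonant fourth-order Hamiltonian; here is where the formal integrability of the order-four Hamiltonian from \cite{Zak2}, \cite{CW}, \cite{CS}, together with the novel normal-form identification argument announced around section \ref{sec:integrability}, is used to compute the first-order corrections of the linear frequencies and to verify the nondegeneracy (``twist'') of the map $\zeta \mapsto \omega(\zeta)$ for a generic choice of $\mathtt v$ — i.e.\ outside the zero set of an explicit nontrivial polynomial $\C^\nu \to \C$; (2) the reduction of the linearized operator at an approximate solution to constant coefficients up to a smoothing remainder, using the pseudodifferential / para-differential calculus and the quasi-linear structure, exploiting assumption \eqref{TangentialSitesDPbis} to keep the tangential eigenvalues simple; (3) the measure-theoretic verification that, for $\zeta$ in a Cantor-like set $\mathfrak A \subseteq B(0,\varrho)$ of asymptotically full relative density in $[\e^2, 2\e^2]^\nu$, the first- and second-order Melnikov non-resonance conditions hold, which is where the twist from step (1) is essential for inverting the linearized operator on the normal subspace; (4) the Nash--Moser iteration itself, solving \eqref{HamiltonianequationH} on the restricted class, with the usual interpolation/smoothing estimates, producing $U(\cdot;\zeta) \in H^s(\T^{\nu+1};\R^2)$ and a diophantine $\omega(\zeta)$. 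Linear stability then follows from the reducibility obtained in step (2): on the invariant normal subspace the linearized flow conjugates to a diagonal operator with purely imaginary spectrum, so the Sobolev norms of solutions of the linearized equation stay bounded.

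The main obstacle I expect is step (1) combined with step (3): proving the twist condition in a \emph{completely resonant, fully nonlinear} setting with no external parameter. In the standard quasi-linear KAM literature one perturbs off a dispersion relation carrying a genuine parameter (capillarity, depth), and a classical uniqueness theorem for the non-resonant Birkhoff normal form then gives the frequency-amplitude map. Here that route is blocked, as the authors emphasize: one must instead extract the quartic normal form intrinsically, reconcile it with the known integrable model of the water-wave Hamiltonian at order four, and control the Benjamin--Feir resonances, which are genuine resonant quartets at fourth order and cannot simply be averaged away. Handling those resonances while still extracting enough nondegeneracy to impose infinitely many Melnikov conditions — and doing so uniformly in the small parameter $\e$ so that $\mathfrak A$ has asymptotically full measure — is the delicate heart of the argument; everything else (the Nash--Moser scheme, the para-differential reduction of the linearized operator, the measure estimates given the twist) is, by now, relatively standard technology adapted to the traveling-wave constraint.
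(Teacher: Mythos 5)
Your proposal follows essentially the same route as the paper: a Nash--Moser scheme initialized by a weak Birkhoff normal form, with the twist condition extracted from the formally integrable order-four Hamiltonian via the normal-form identification argument, reducibility of the linearized operator in the normal directions through pseudo differential conjugations and linear BNF steps, Melnikov/measure estimates for the parameter set, and linear stability deduced from the final diagonalization. The points you single out as delicate (Benjamin--Feir resonances, nondegeneracy without external parameters, momentum-constrained traveling structure) are indeed the ones the paper resolves, so no change of strategy is needed.
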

%

Now we discuss the main issues and the novelties of the paper.

\smallskip
\noindent
$\bullet$  The general form of the linear frequencies of oscillations for the water waves equations is the following 
\[
\sqrt{| j | \tanh(\mathtt{h} |j|) (g +\kappa j^{2})},
\]
where $\mathtt{h}$ and $\kappa$ are respectively the depth and the capillarity of the fluid. If $\mathtt{h}<\infty$ or $\kappa\neq 0$ such parameters may be used to impose non-resonance conditions (see for instance \cite{BBHM}, \cite{BM1}, \cite{PloTol1}). In our case $\mathtt{h}=\infty$ and $\kappa= 0$, thus the linear frequencies of oscillations are
$
\sqrt{g \,|j|}
$
and the elements of the infinite dimensional space $\mbox{span}\{ e^{\mathrm{i} |n| t}\,e^{\mathrm{i} n^2 x} : n\in\mathbb{Z} \}$ are \emph{periodic} solutions of the linearized problem at the origin (completely resonant case).\\
The physical parameter $g$ clearly does not modulate the frequencies, hence if we look for quasi-periodic solutions
 we need to extract parameters directly from the nonlinearities of the equation. We do that by means of \emph{Birkhoff normal form} (BNF) techniques (see section \ref{sec:integrability} for a detailed discussion). In this way the bifurcation parameters are essentially the ``initial data'' or the amplitudes
of an appropriate approximate solution (see \eqref{SoluzioneEsplicitaWW}) from which the bifurcation occurs. 
The choice of the Fourier support of such approximate solution 
plays a fundamental role in proving some non-degeneracy conditions.
Roughly speaking, both the amplitudes $\sqrt{\zeta_{\overline{\jmath}_i}}$ and the tangential sites $\overline{\jmath}_i$
will be ``parameters'' of our problem.

\smallskip
\noindent
$\bullet$ The frequency of the expected quasi-periodic solutions are close to resonant vectors, then the diophantine constant $\gamma$ appearing in non-resonance conditions, see for instance \eqref{ZEROMEL}, has small size as the amplitudes. This clearly produces difficulties in the application of perturbative methods. 

\smallskip
\noindent
$\bullet$
In performing BNF procedures we shall deal with resonances among linear frequencies.
It is known that the pure gravity case in infinite depth 
has no $3$-waves resonant interactions. 
On the other hand, there are many \emph{non-trivial} $4$-wave
interactions, called 
\emph{Benjamin-Feir} resonances (see \eqref{straBFR}).
 We then exploit
a fundamental property of the pure gravity waver waves Hamiltonian 
\eqref{Hamiltonian} in infinite depth: 
the formal integrability, up to order four, of the 
Birkhoff normal form.
This has been proved in \cite{Zak2}, \cite{CW}, \cite{CS}
by showing explicit key algebraic cancellations 
occurring for the coefficients of the Hamiltonian.

\smallskip
\noindent
$\bullet$ In order to show that the ``initial data'' of the expected 
solutions tune in an efficient way the frequencies we 
shall find the explicit expression of the first order 
corrections of the tangential frequencies and of the spectrum 
of the linearized operator in the normal directions. 
 To do that we use an identification argument of normal 
 forms based on the presence of approximate constants 
 of motion (see section \ref{sec:3.33} 
 and Proposition \ref{identificoBNF}).  
Actually, even after this procedure, the Hamiltonian is still partially degenerate. Indeed it turns out that there is 
 a finite number of eigenvalues which are still in resonance.
We overcome this difficulty 
by passing to suitable rotating coordinates, see 
Lemma \ref{phaseshift}.

\smallskip
\noindent
$\bullet$ 
We exhibit the existence of a wide class of traveling quasi-periodic solutions with no parity restrictions in time and space by using the Hamiltonian structure and the $x$-translation invariance of \eqref{eq:113}. It is well known that 
the water waves system \eqref{eq:113} 
exhibits additional  symmetries.
For instance the vector field $X_{H}$
in \eqref{HS} is \begin{itemize}
\item[(i)]  \emph{reversible} with respect to  the involution
 \be\label{involutionReal}
S : \left(\begin{matrix} 
\eta(x) \\ \psi(x)
\end{matrix}\right)\;  \mapsto \;
 \left(\begin{matrix} 
\eta(-x) \\ -\psi(-x)
\end{matrix}\right)
\ee
i.e. it satisfies $X_{H}\circ S=-S\circ X_{H}$;
\item[(ii)]
 \emph{even-to-even}, 
i.e. maps $x$-even functions into $x$-even functions. 
\end{itemize}

{
In several papers (for instance \cite{BBHM}, \cite{BM1}, \cite{IoossPloTol1}, \cite{PloTol1}) such symmetries are adopted to remove degeneracies due to translation invariance in $x$ and $t$. In the aforementioned works the authors look for solutions which are \\
- standing, namely even in $x$
\[
(\eta, \psi) (t, x)=(\eta, \psi) (t, -x),
\]
- reversible, namely
\begin{equation}\label{sottocaso2}
(\eta, \psi)(t, x)=(\eta, -\psi)(-t, -x)=S(\eta, \psi)(-t, x).
\end{equation}
We observe that combining these properties, the solutions have to satisfy the following parity conditions
\[
\eta\,\, \mbox{is even in $x$ and even in $t$}, \qquad \psi \,\,\mbox{is odd in $t$ and even in $x$}.
\]
}
Since we do not look for solutions in the subspace of reversible functions,
the existence of a solution $U(t, x)$ of \eqref{eq:113} implies the existence of a possibly different solution $S U(-t, x)$.

\smallskip
\noindent
$\bullet$ The vector field in \eqref{eq:113} is a \emph{singular perturbation}
of the linearized  system at the origin \eqref{eq:113linear}
since
the nonlinearity
contains derivatives of the first order, while \eqref{eq:113CWW}
contains only derivatives of order $1/2$.
We remark that this is not that case 
when the capillarity $\kappa\neq 0$.

\smallskip
\noindent
$\bullet$ The dispersion law \eqref{dispersionLaw} is \emph{sub-linear}.
This is a major difference in developing KAM theory 
for equations with \emph{super-linear} dispersion law, such as in the gravity-capillary case. 
Indeed
 a weaker dispersion law
implies \emph{bad} separation properties of the eigenvalues. The main issue concerns the verification of non-resonance conditions between the tangential frequencies and the differences of the normal ones, called ``second order Melnikov conditions''. The bad separation properties force to impose very weak conditions. In this paper we adopt a different strategy with respect to \cite{BBHM} where the same problem occurs.\\
Our approach is well adapted to the completely resonant case since it allows to reduce the number of steps of nonlinear bifurcation.
It is based on the conservation of momentum, the algebraic structure of the equations and a careful analysis of the resonant regimes. We refer to section \ref{sec:generalStrategy} for a detailed discussion.

\smallskip
\noindent
$\bullet$ The stability result is the same given in Theorem $1.2$ in \cite{BBHM}.
This is of course an interesting dynamical information itself. We also remark that
it is a consequence of an important ingredient of the proof, a reducibility argument 
of the linearized equation at the quasi-periodic solution.\\
By a linearly stable $U(\omega t, x)$ we mean that the linearized operator at the embedded torus $U(\vphi,x)$ has purely imaginary spectrum. In particular
we are able to provide a set of coordinates in which the linear problem is diagonal in the directions normal to the torus. 
 As a consequence the Cauchy problem of the linearized equation is stable, 
 i.e. the Sobolev norms are uniformly bounded in $t$.

\vspace{0.5em}

\noindent\textbf{Literature.}
We  present some results on the water waves systems.

\vspace{0.3em}
\noindent
{\emph{Euclidean case.}}
In the Euclidean case, i.e. when $x\in \mathbb{R}^{d}$, 
the problem of local/global well-posedness has been 
addressed by several authors. Without trying to be exhaustive 
we mention  Coutand-Shkoller \cite{Cou},  Lindblad \cite{Lind1}, Lannes \cite{Lan1} 
Alazard-Burq-Zuily \cite{ABZduke, ABZ1} for local well-posedness results.
We 
 refer to  \cite{IP2}
for a survey on the subject (and reference therein).
In the Euclidean case it is also possible to construct global in time solution 
by exploiting the dispersive effect of the linearized problem.
We quote Germain-Masmoudi-Shatah \cite{GMS}, 
Wu \cite{Wu2}, Ionescu-Pusateri \cite{IP}, Alazard-Delort \cite{ADel1, ADel2},
 Ifrim-Tataru \cite{IFRT}.
 See also \cite{Wu}, \cite{IP3}, \cite{ifrTat}.

We now briefly discuss some known results about the 
the existence of \emph{special} solutions for the 
water waves equations.

\vspace{0.3em}
\noindent
{\emph{Traveling waves.}}
Early results about \emph{traveling} waves date back to $19$-th century. 
In \cite{stokes} Stokes provided  the first nonlinear analysis of the two 
dimensional {traveling} gravity waves, 
by computing a nonlinear approximation of the flow. The rigorous
bifurcation of small amplitude bi-dimensional traveling gravity water waves 
solutions 
has been obtained by Levi-Civita \cite{levi}, and 
Struik \cite{struik}.
The three-dimensional case has been successfully approached more recently.
We quote the paper  \cite{CN1} (and reference therein)
by  Craig-Nicholls where the
 existence of
traveling wave solutions
 is proved
in the gravity-capillary case with space periodic boundary conditions.
In absence of capillarity the existence of periodic traveling waves is 
a small divisor problem. 
This case has been treated by 
 Iooss and Plotnikov in \cite{IoossPlo1, IoossPlo2}.

\vspace{0.3em}

\noindent
{\emph{Standing  waves.}} The time periodic  \emph{standing waves}
are not stationary 
with respect to a moving reference frame.
We quote Plotinkov and Toland \cite{PloTol1} for the gravity case in finite depth,  
Iooss-Plotnikov-Toland  \cite{IoossPloTol1} 
and Ioss-Plotnikov \cite{IossPlo111} in infinite depth.
In \cite{AB1} Alazard-Baldi show the existence of periodic 
gravity-capillary standing waves.\\
We remark that the above quoted  results concern \emph{periodic} in time solutions.


\vspace{0.3em}
\noindent
{\emph{Time quasi-periodic waves.}}
The existence of time quasi-periodic solutions for PDEs 
has been widely studied from the $'80$s. We mention the pioneering works
by Kuksin \cite{ImaginarySpectrum}, Craig-Wayne \cite{CWay},
Bourgain \cite{JeanB},
Kuksin-P{\"o}schel \cite{KP}.
The KAM theory for PDEs with \emph{unbounded} perturbation is quite more
recent, see for instance \cite{Ku2,LY,BBiP2}. 
The problem of dealing with \emph{quasi-linear} equations
has been tackled first by
  Baldi-Berti-Montalto in 
\cite{Airy, KdVAut} for the KdV equation. Their method has been
extended to deal with 
 many models of interest in hydrodynamics,
 such as the Schr\"odinger equation \cite{FP1},
 generalized KdV \cite{Giuliani},
 Kirchoff \cite{Mon1}, Degasperis-Procesi equation \cite{FGP1}.
 Regarding the water waves problem we quote
 the novel results of existence of 
{standing} waves 
 by Berti-Montalto  \cite{BM1} for the gravity-capillary case 
 and by Baldi-Berti-Haus-Montalto
\cite{BBHM} for the gravity case in finite depth.
Eventually we mention the recent work  \cite{BFMasp} and the preprint { \cite{BFMasp2}
by Berti-Franzoi-Maspero} on the  existence  of quasi-periodic
traveling waves for fluids with constant vorticity, and
\cite{BaMon} by Baldi-Montalto regarding time quasi-periodic 
solutions for the incompressible Euler equation on $\mathbb{T}^{3}$.

\vspace{0.3em}
\noindent
{\emph{Birkhoff normal form and long time stability on tori.}}
The normal form theory plays a fundamental role in our approach. 
In particular we refer to the pioneering works
\cite{Zak2, DLzak, CW} (see also \cite{CS}) where
is proved the integrability (at the formal level) 
of the Birkhoff normal form
at order four of the pure gravity water waves in infinite depth. 
The paper quoted above provide just a ``formal'' result in the sense 
that
no actual relation can be established between the flow generated by $H$ in 
\eqref{Hamiltonian} and the one generated by the Hamiltonian in Birkhoff normal form.

Rigorous long time existence results for water waves, 
based on a normal form approach, are quite more recent. 
We mention 
\cite{BD} by Berti-Delort, where
the authors  
provide 
the existence of solutions
 evolving from $\e$-small data
up to time of order $\e^{-M}$ for the 
 $2$-d gravity-capillary water waves equations.
Ionescu-Pusateri in \cite{IP6} prove a life span of order $\e^{-5/3+}$
for the gravity-capillary waves in $3$-d.
These results hold 
for almost all values of the gravity  and surface tension  parameters.

Without the use of any external parameter, we mention 
the paper by Berti-Feola-Pusateri \cite{BFP} 
on the pure gravity case \eqref{eq:113} (see also \cite{BFPnote}).
Using 
a novel identification of normal form argument introduced in \cite{FGP1}, 
the authors proved  
an $\e^{-3}$ existence result providing 
a rigorous proof of the Zakharov-Dyachenko
conjecture (see \cite{Zak2}, \cite{DLzak}).
With a similar idea Berti-Feola-Franzoi proved in \cite{BFF} 
a quadratic life span
for gravity-capillary water waves for any values of   
the gravity  and surface tension.

\subsection{Scheme of the proof}\label{sec:generalStrategy}
  Here we 
 summarize the steps of the proof of Theorem \ref{thm:main} highlighting the key ingredients.
 For further details see \cite{FGnote}.

\smallskip
\noindent
$(i)$ \emph{Nash-Moser theorem of hypothetical conjugation.}
The quasi-periodic solutions are found as zeros of the 
nonlinear functional equation \eqref{EquazioneFunzionaleDP}, \eqref{NonlinearFunctionalDP}. 
We apply a Nash-Moser scheme 
(see Theorem \ref{NashMoserDP}) 
that provides the zeros of such functional 
as limit of a sequence of approximate solutions 
convergent in some Sobolev space.\\
The main issues concern the invertibility of the 
linearized operator in a neighborhood of the 
equilibrium and the search for a good 
approximate solution that initializes the scheme. \\
In sections \ref{sec:weak} and \ref{sec:5} we construct 
the first nonlinear approximate solution from 
which the expected solutions bifurcate. From a geometrical point of view, we determine the "unperturbed" embedded torus that we want to continue to a torus which is invariant for the full Hamiltonian system \eqref{Hamiltonian}.\\
In a suitable set of coordinates the linear dynamics 
of the tangential (to the torus) variables is decoupled by 
the dynamics of the normal ones (we follow the Berti-Bolle method \cite{BertiBolle}). 
In section \ref{sezione6DP} we solve the 
equations corresponding to the tangential 
part and in sections 
\ref{sec:Linopnormal}-\ref{sez:KAMredu} 
we deal with the linearized operator in the normal directions.

\smallskip
\noindent
$(ii)$ \emph{Weak Birkhoff normal form.}
In order to find the first nonlinear approximation 
we implement a Birkhoff normal form method, 
more precisely we construct
a map, which is close to the identity up to a 
finite rank operator, such that 
the Hamiltonian $H$ in \eqref{Hamiltonian}
assumes the form (see \eqref{piotta}) $H_{\rm Birk}+R$ 
with $R$ a small remainder
and 
\begin{itemize}
\item[1.]  there exists a finite dimensional subspace
$U_S$  invariant for $H_{\rm Birk}$; 
\item[2.] the Hamiltonian restricted to $U_S$ is integrable and non-degenerate
 in the sense that
the ``frequency-to-amplitude'' map (see \eqref{FreqAmplMapDP}) 
is invertible.
\end{itemize}
We refer to this procedure as a \emph{weak} Birkhoff normal form, see the discussion in section \ref{sec:integrability}.\\
At the $n$-th step of the BNF procedure one has to deal
with $n$-waves resonant interactions, 
which are zeros of the algebraic equations
\begin{equation}\label{rambo3}
\sum_{i=1}^{n}\s_{i} j_i=0\,,\qquad 
\sum_{i=1}^{n}\s_{i}\sqrt{|j_{i}|}=0\,,\qquad \s_{i}=\pm\,,\;\;\;
i=1,\ldots,n\,.
\end{equation}
We say that a $n$-tuple $(j_1,\ldots,j_{n})$  is a \emph{trivial} 
resonance
if $n$ is even, 
$\s_{i}=-\s_{i+1}$, $i=1,\ldots,n-1$ (up to permutations), and the $n$-tuple has the form
$(j,j,k,k,\ldots)$. 
It is easy to note that monomials $u^{\sigma_1}_{j_1}\dots u^{\sigma_n}_{j_n}$, with $u_j^{+}:=u_j, u_j^-:=\overline{u_j}$, supported on 
trivial resonances are \emph{integrable} or \emph{action preserving}, meaning that depend only on the actions $|u_j|^2$.
Unfortunately, even for $n=4$,  there are infinitely 
many non-trivial solutions of \eqref{rambo3}.
For $n=4$ they are called Benjamin-Feir resonances 
(see \eqref{straBFR}). To deal with these resonances we reason as follows.
In  \cite{Zak2}, \cite{CW}, \cite{CS} it has been proved that 
 the coefficients of the normalized Hamiltonian 
 (obtained by a \emph{full} Birkhoff normal form procedure, 
 see section \ref{sec:integrability}) at order four
of the monomials corresponding to the Benjamin-Feir resonances vanish.
In subsections \ref{ApproxCons}, \ref{sec:3.33},
by using suitable algebraic arguments, we actually prove that 
such cancellations of \cite{Zak2}, \cite{CW}, \cite{CS}
occur also performing a \emph{weak} version of BNF which involves only \emph{finitely}
many \emph{tangential sites} (see \eqref{TangentialSitesDP}). 
This is the content of Proposition \ref{LemmaBelloWeak}.
In this way we conclude the \emph{integrability}
of the weak BNF Hamiltonian at degree 4 and we obtain its explicit formula 
\eqref{theoBirH}.
We remark that formula \eqref{theoBirH} is fundamental to prove the non-degeneracy of the frequency shift.  {Without assumption \eqref{TangentialSitesDPbis} this formula would be more complicated and in principle it would be not clear whether the twist condition (see Lemma \ref{Twist1}) is satisfied.}\\
In order to deal with higher order resonances we use the  genericity
argument of section \ref{algresoreso}. {We remark that this argument exploit the conservation of momentum, as it is evident from the proof of Proposition \ref{resonances}.}

\smallskip
\noindent
$(iii)$ \emph{Invertibility 
of the linearized operator.} 
In section \ref{sec:Linopnormal} we compute the linearized operator in the normal directions $\mathcal{L}_{\omega}$ (see \eqref{BoraMaledetta}).\\
The invertibility is obtained by a \emph{reducibility argument} that consists into two main steps:
\begin{itemize}
\item[(a)]\label{stepREGU} A pseudo differential reduction 
in decreasing order of $\mathcal{L}_{\omega}$
which conjugates the linearized operator to 
a pseudo differential  one
with constant coefficients
up to a bounded remainder;

\item[(b)] A reduction of bounded operators and a KAM
 scheme which completes the diagonalization;
 \end{itemize}

 The main new issues are the following:
 \begin{itemize}
 \item[(I)] By the complete resonance of the linear equation \eqref{eq:113CWW} the frequency $\omega$ is close to a resonant vector, then the diophantine constant $\gamma$ in  \eqref{gammaDP} (see also \eqref{ZEROMEL}) is small with the size $\varepsilon$ of the amplitudes. This implies that many terms in $\mathcal{L}_{\omega}$ are not perturbative, in the sense that, roughly speaking, the size of their coefficients divided by $\gamma$ is big ($\varepsilon \gamma^{-1}\gg 1$).
 \item[(II)] We consider the linearization on a quasi-periodic traveling function $U(\varphi, x)$ without any assumption on the parity of $\varphi$ and $x$. Usually such conditions provide some algebraic cancellations in performing steps $(a)$-$(b)$ and reduce the multiplicity of the eigenvalues simplifying the proof.
 \end{itemize}
 
 {The regularization procedure (a) consists in applying several changes of coordinates. Each change of variables makes constant the coefficient of an unbounded pseudo differential operator and it is constructed in two steps. This is because the coefficients of the linearized operator are sum of two parts, one is non-perturbative in size, in the sense of (I), and the other one is just small enough. Then in the first step, that we call \emph{preliminary step}, we treat the non-perturbative part. In the second one we make constant the whole coefficient. In the regularization procedure (a) the pseudo differential structure is fundamental, because it guarantees that the linearized operator is diagonal up to a remainder whose order is getting smaller and smaller at any step. This fact relies on the property that the commutator between two pseudo differential operators gains one derivative (see Lemma \ref{James2}).\\
The procedure (b) completes the reduction of the linearized operator. It is again a transformation method but it deals only with bounded terms. Also in this case the changes of coordinates are constructed in two steps, for the non-perturbative and perturbative terms. The steps that deal with the non-perturbative terms are called  \emph{linear Birkhoff normal form steps}. The combination of the preliminary steps, the Linear BNF and the identification argument of the Proposition \ref{identificoBNF} allows to construct a normal form around the approximately invariant embedded tori. Once we obtained this normal form we can apply a KAM scheme as in a semilinear case.\\
In both procedures (a) and (b) the fact that the coefficients can be made constant is guaranteed by the assumption  \eqref{TangentialSitesDPbis} and certain symmetries. Since we do not assume parity conditions on the approximate solutions on which we linearize, as explained in (II), we need to exploit the Hamiltonian structure and the conservation of momentum. At the linear level the presence of such symmetries comes from the fact that we linearize a Hamiltonian operator on a quasi-periodic traveling function (see section \ref{sec:linstruct} for more details). Along the reducibility procedure we need to ensure that this structure is left invariant by all the changes of coordinates we perform.  }
 
 
 
 \smallskip
 
One of the main issues that we deal with by using the conservation of momentum is when we need to reduce to constant coefficient the  transport 
 term 
 \begin{equation}\label{figaro10}
\omega\cdot\pa_{\vphi}+V(\vphi,x)\pa_{x}\,
\end{equation}
 appearing in $\mathcal{L}_{\omega}$. This is the most delicate point of procedure (a) and we describe it below.
 Actually this problem is equivalent to straighten the degenerate 
 vector field
 $
 \omega\cdot\frac{\pa}{\pa_{\vphi}}+V(\vphi,x)\frac{\pa}{\pa{x}}\,
 $
 on the $\nu+1$-dimensional torus.
 By using the fact that we linearized on a quasi-periodic traveling wave (or using momentum conservation)
  we can reduce it to the study of the following \emph{non-degenerate} vector field (on a lower dimensional torus)
  \[
  (\omega-{\bf V}(\Theta)\mathtt{v})\cdot\frac{\pa}{\pa{\Theta}}\,,
  \qquad \Theta\in \mathbb{T}^{\nu}\,,
  \]
  where ${\bf V}(\vphi-\mathtt{v}x)=V(\vphi,x)$. Then we can apply a result of straightening of weakly perturbed constant vector fields on tori given in \cite{FGMP}.  
  
  \smallskip
\noindent
$(iv)$ 
  \emph{Non-resonance conditions.}
  Since the dispersion law is sub-linear, 
 at each step of 
  items (a) and (b) small divisors problems arise. We discuss 
  the non-resonance conditions that we shall require on the frequencies.\\
  {In the preliminary steps described above we encounter the same small divisors appearing in the weak BNF procedure, namely the combinations of the tangential linear frequencies of oscillations
  \begin{equation}\label{smalldivpre}
\sum_{i=1}^n \sigma_i \sqrt{|j_i|}, \qquad j_i\in S, \quad \sigma_i=\pm.
  \end{equation}
  In the weak BNF we use momentum conservation  
  \begin{equation}\label{lucca1}
  \sum_{i=1}^n \sigma_i j_i=0 \qquad j_i\in S, \quad \sigma_i=\pm
  \end{equation}
   to apply genericity arguments ensuring that the above relations vanish just in the trivial case. Indeed if a non-trivial $(\sigma_i, j_i)_{i=1}^n$ makes \eqref{smalldivpre} vanish , then one can choose the tangential sites such that \eqref{lucca1} is not satisfied (see the proof of Proposition \ref{resonances}). \\
    This corresponds to a generic choice because the conditions \eqref{lucca1} provide a system of \textbf{finitely many} polynomial equations. 
  However the same reasoning cannot be applied in the preliminary steps, where the conservation of momentum reads as
  \[
  \sum_{i=1}^n \sigma_i j_i+j-k=0 \qquad \forall j_i\in S, \qquad j, k\in S^c,
  \]
and so it involves infinitely many polynomial equations. In this case we are forced to choose the tangential sites out of the zero set of the following finitely many algebraic functions
\[
\sum_{i=1}^n \sigma_i \sqrt{|j_i|}=0, \qquad j_i\in S, \quad \sigma_i=\pm.
\]
This is certainly a generic choice, but the problem is that the above functions are identically zero if $(\sigma_i, |j_i|)_{i=1}^n$ is a trivial resonance, and not just $(\sigma_i, j_i)_{i=1}^n$. This is the typical issue that arises in a case with multiple eigenvalues. Then by assuming the condition \eqref{TangentialSitesDPbis} on the tangential set we get rid of this problem. 
}

In order to deal with the operator in \eqref{figaro10}
we shall impose the following
``zero order Melnikov conditions'':
\begin{equation}\label{zerointro}
|\big(\omega-\mathfrak{m}_1\mathtt{v}\big)\cdot\ell|
\geq \gamma \langle \ell\rangle^{-\tau}\,, \qquad \forall\, \ell\in \mathbb{Z}^{\nu}\setminus\{0\}
\end{equation}
where 
$\mathfrak{m}_1\approx (2\pi)^{-\nu}\int_{\mathbb{T}^{\nu}} {\bf V}(\Theta)d\Theta $,
$\tau>\nu+1$.

In section \ref{sec:linBNF} we compute the first order corrections
to the eigenvalues of the linearized operator  
in the normal directions. This step is fundamental in order to impose
the non-resonance conditions required for the KAM reducibility scheme of the procedure (b).
Thanks to the identification argument of section \ref{stepsLower}
it turns out that we need to prove
the non-degeneracy of the following map: 
\[
\omega\mapsto  \mathbb{A}\,\zeta(\omega)\cdot \ell + m_1(\omega)(j-k)+c_j(\omega)j
-c_{k}(\omega)k\,,\qquad
\mathtt{v}\cdot\ell+j-k=0\,,\;\;\; \forall\, \ell\in \mathbb{Z}^{\nu}\,, \; j,k\in S^{c}\,,
\]
where $\mathbb{A}$ is the matrix in \eqref{parco1} 
(which only depends on the choice of the tangential sites), 
$m_1$ is in \eqref{constM1}
and $c_j$ is in \eqref{cj}.
In principle the presence of the  $c_{j}$'s produces serious difficulties
in imposing the non-resonance conditions. 
However only a finite number of  such corrections are different from  zero.
We show that in an appropriate set of rotating coordinates
those corrections disappear. See Lemma \ref{phaseshift}.

In the KAM reducibility scheme in the procedure (b)
we shall  impose suitable lower bounds on the functions
\begin{equation}\label{rambo13}
\psi_{\ell, j,k}=\omega\cdot\ell +\mathfrak{m}_1(j-k)
+(1+\mathfrak{m}_{\frac{1}{2}})(\sqrt{|j|}-\sqrt{|k|})+
\mathfrak{m}_0(\sign(j)-\sign(k))+r_{j}-r_{k}
\end{equation}
$\ell\in \mathbb{Z}^{\nu}$, $j,k\in \mathbb{Z}\setminus\{0\}$, with
$\mathfrak{m}_{1/2}$, $\mathfrak{m}_{0}$, $r_{j}$ depending on $\omega$ and satisfying
\[
(\sup_{j}|j|^{-1/2}|r_{j}|+|\mathfrak{m}_{1/2}|+|\mathfrak{m}_0|)\gamma^{-1}\ll 1\,.
\]
These conditions are called ``second order Melnikov conditions'' and are of the form
\begin{equation}\label{secondintro}
|\psi_{\ell,j,k}|\geq \frac{\eta}{\langle \ell\rangle^{\tau}}\,,\quad
\ell\in\mathbb{Z}^{\nu}\,,\;\; j,k\in\mathbb{Z}\setminus\{0\}\,,\quad (\ell,j,k)\neq (0,j,j)\,,
\end{equation}
where $\eta, \tau>0$ are some constants to be fixed.

We prove that the
\eqref{secondintro} hold with
\begin{equation}\label{musmall}
\eta:=\gamma^{3}\ll \gamma\,,
\end{equation}
where $\gamma$ is the constant appearing in \eqref{zerointro}.
Actually we show that the measure of the complementary of the set of  frequencies $\omega$
such that \eqref{secondintro} holds goes to zero as $\e\to 0$.
The crucial problem is the summability in the indexes $\ell,j,k$.
Since in this case the linear 
frequencies grow at infinity only sublinearly,  the differences
 $\sqrt{|j|}-\sqrt{|k|}$ accumulate everywhere in $\R$. This means that for any fixed $\ell$ 
there are \emph{infinitely many} indexes $j,k$ to be taken into account. 
The key idea that we use is the following. By the conservation of momentum
we 
 need to impose the conditions
\eqref{secondintro} only for $(\ell,j,k)$ satisfying
\begin{equation}\label{momentointro}
\mathtt{v}\cdot\ell+j-k=0\,,
\end{equation}
where $\mathtt{v}$ is given in \eqref{velocityvec}.
This allows us to show (see Lemma \ref{delpiero}) that, if $|j|,|k|$
are much larger than $|\ell|$, then
the conditions \eqref{secondintro} are implied by the
\eqref{zerointro}
and then we are left 
 to control the small divisors only for \emph{finitely many} 
indexes $j,k$.\\
{We point out that, despite the multiplicity of the linear eigenvalues $\sqrt{|j|}$, the linearized operator is diagonal in the Fourier basis, and not just block-diagonal. This is usually the case because one cannot impose the \eqref{secondintro} for $\ell=0, j=\pm k$. Thanks to the conservation of momentum, when $k=-j$ in \eqref{rambo13} the relation \eqref{momentointro} implies that
\[
v\cdot \ell=-2 j
\]
and
\[
\psi_{\ell, j,-j}=\omega\cdot\ell +2\,\mathfrak{m}_1\,j
+
2\mathfrak{m}_0\,\sign(j)+r_{j}-r_{-j}=\overline{\omega}\cdot \ell+\e^2 (\mathbb{A}\zeta(\omega)-m_1(\omega) \mathtt{v})\,\cdot \ell+o(\e^2).
\]
By choosing $\omega$ in the set $\mathcal{G}^{(1)}_0$ in \eqref{ZEROMEL}, the first order expansion (the $\e^2$-terms) of $\psi_{\ell, j, -j}$ does not vanish and we are able to impose a lower bound as \eqref{secondintro}.} 

\vspace{0.5em}

We mention that an alternative approach to impose the non-resonance conditions required by the KAM scheme is to prove 
Melnikov conditions which \emph{lose} derivatives 
by setting
\begin{equation}\label{muloss}
\eta:=\frac{\gamma}{\langle j\rangle^{\mathtt{d}}\langle k\rangle^{\mathtt{d}}}\,,
\quad \mathtt{d}> 1\,.
\end{equation}
With this choice  ``many'' frequencies $\omega$ satisfy
the \eqref{secondintro}, but the small divisors creates a true loss 
of space derivatives.

The class of non perturbative terms is enlarged by the choice \eqref{musmall} but such terms are compactly time-Fourier supported ($|\ell|\leq C$ for some constant $C>0$).
It turns out that if at least one between $|j|, |k|$
is large enough
then there are no small divisors (see Lemma \ref{lowerboundDel1}).
Otherwise we choose to impose the second Melnikov conditions 
with $\eta$ as in \eqref{muloss}.
Actually this does not create any loss of 
derivatives since $|j|,|k|$ are taken into a ball with finite radius.

  Another possibility would be to use $\eta$ as in \eqref{muloss}
  in any regimes $\ell,j,k$ as done in \cite{BBHM}.
  As a drawback this choice would require
  several steps of reduction to constant coefficients up to smoothing remainders 
  before applying the KAM scheme of the procedure (b) above. 
  Since   we are dealing with a  resonant case,
  it turns out that
  the accuracy necessary for the bifurcation point 
  depends on the number of reduction steps.
In other words the number of weak Birkhoff normal form steps
in section \ref{sec:weak} increases rapidly as
the number of reduction steps increases.
Choosing to reduce to constant coefficients 
only the pseudo differential operators 
of non-negative orders (up to remainders which are
 only $1/2$-smoothing)  provides the optimal balancing of the two procedures.

\subsection{Comparison with previous KAM results} 
In this section we discuss the main differences between the present work and related KAM results for PDEs.

\smallskip

\noindent{\emph{ KAM results without external or physical parameters.} As it is well-known the implementation of KAM schemes in the PDE context requires the use of parameters to impose (infinitely many) non-resonance conditions.\\
Then it is common to use physical or external parameters to modulate the linear frequencies of oscillation. Excluding the cases of the water waves systems that we discussed before, this can be done artificially by adding convolution or multiplicative potentials, as for instance in nonlinear Schr\"odinger equations equations, or more naturally by considering mass terms, as in the case of Wave/Klein-Gordon equation.\\
However many important physical phenomena are modeled by equations without parameters. In these cases the parameters are extracted directly from the equation by using Birkhoff normal form methods. This approach has been implemented successfully in 
\cite{KP, GeYou1, GeYou2, GeXuYou, PP, FGP1}. 
In all these works the authors exploit also the conservation of momentum. Indeed usually the modulation of the inner parameters is not strong enough to impose non-resonance conditions.\\
As we discussed in section \ref{sec:generalStrategy}, in our case the conservation of momentum is fundamental to deal with 
the multiplicity of the linear eigenvalues and the sublinear dispersion law. 
We remark that in some papers, for instance \cite{KdVAut, FGP1}, 
the preservation of momentum is used just up to some order. In fact in \cite{KdVAut} the authors may deal with non-$x$-translation invariant nonlinearities. However in that case the dispersion law is super-linear (stronger separation property of the eigenvalues) and the linear spectrum is simple. In \cite{FGP1} the dispersion law is linear and this fact is used to prove that many second order Melnikov conditions are implied by first order Melnikov conditions. Thanks to this argument one does not need to fully exploit the conservation of momentum at any order.
}

\vspace{0.5em}
  
\noindent{\emph{Comparison with previous KAM results for fluid dynamics models }.
Here we make a brief comparison between the present paper and the works \cite{KdVAut, BM1, BBHM, FGP1} (mentioning also \cite{Giuliani}, \cite{BFMasp})
concerning results of existence and stability of small amplitude 
quasi-periodic solutions for nonlinear shallow waters 
and water waves systems in spatial periodic domains of dimension one. 
The overall strategy for these KAM results for quasi-linear PDEs, based on Nash-Moser theory, 
has been developed in \cite{BertiBolle} and nowadays it is well-established. However, as usual in the PDEs context, every equation presents different issues.  
We believe that the main ingredients to take into account are
\begin{itemize}
\item The (non-) resonant nature of the equilibrium and the presence of parameters;
\item The presence of non-trivial resonances of order $4$;
\item The use of the (approximate) integrability of the \emph{unperturbed} system;
\item The dispersion law: super-linear, linear and sub-linear cases;
\item The analysis of the linearized operator: invertibility in a whole neighborhood of the equilibrium.
\end{itemize}
The main distinction among the aforementioned papers relies is the fact that the works \cite{KdVAut, FGP1} 
are parameter-free and completely resonant problems, 
while for \cite{BM1, BBHM} the elliptic equilibrium is non-resonant for the set of the chosen parameters. 
We start by commenting the former group of results.\\
 In \cite{KdVAut} the authors consider the KdV equation 
 with a high order perturbation (a smooth nonlinearity of order $4$ at the origin), hence very small close to the origin. 
 The linear frequencies of oscillations are $j^3$, $j\in \Z$ and there are no 
 non-trivial resonances at order four. 
 This fact depends on the particular form of the linear eigenvalues and the conservation of momentum at order four, and it does not rely on the integrability of the KdV equation, 
 which is not exploited to prove this result. 
 The first order corrections of the eigenvalues, which are fundamental to prove a non-degeneracy condition, it is not affected by the presence of the perturbation. 
 This makes easier the nonlinear bifurcation analysis. Following the strategy of \cite{KdVAut}, in \cite{Giuliani} the second author of the present paper deals with quadratic quasi-linear 
 perturbations of KdV \footnote{The equations considered in \cite{Giuliani}  cannot be considered as small perturbations of the integrable KdV equation. This can be seen as an evidence that the integrability does not play a role in \cite{KdVAut}. }. The lower order of the nonlinearity implies a stronger perturbative effect on the linear dynamics and one of the main problems is to keep track of the corrections 
 to the eigenvalues also in the regularization procedure 
 described in section \ref{sec:generalStrategy}-$(iii)$ (\emph{preliminary steps}). All these computations are made without the use of identification arguments. This is doable thanks to  the classical differential structure of the equation.\\
 Similar difficulties are encountered in \cite{FGP1} to prove existence of 
 reducible KAM tori for the Degasperis-Procesi equation and its (high order) 
 Hamiltonian perturbations. 
 In this case the bifurcation analysis is challenging due to the complicated structure of the resonances. Indeed non-trivial resonances are present also at order four, as the Benjamin-Feir resonances for the pure gravity system with infinite depth. 
 The authors fully exploit the integrability of the equation and develop 
 a novel argument of identification of normal forms to provide non-degeneracy conditions. 
 Another fundamental difference with the works on KdV 
 equations is the fact that the dispersion law is asymptotically linear $j \to j+O(1/j)$, 
 instead of super-linear $j\to j^3$. 
 This complicates both the imposition of non-resonance conditions, since there is a weaker separation of the eigenvalues, 
 and the analysis of the linearized operator. Concerning the latter, in the KdV case the linearized operator is purely differential, while in the DP case this has a pseudo differential structure.
A KAM result for the pure gravity water waves system in infinite depth collects all the difficulties previously mentioned: a weak dispersion law, absence of parameters, 
presence of many non-trivial resonances of lower order, pseudo differential  structure of the linearized operator. 
The main differences with the DP case rely on the sub-linear dispersion law $j\to \sqrt{|j|}$, 
the lack of constants of motion (except momentum) 
and the fact that the linear analysis is a \emph{singular} perturbation problem. In section \ref{sec:integrability} we explain in details how to use the \emph{formal} approximate integrability of the pure gravity system in finite depth and the characterization of the Benjamin-Feir resonances to provide the identification argument of normal forms.\\
Concerning the works \cite{BM1, BBHM}, the dispersion law of the gravity-capillary case in infinite depth and pure gravity with finite depth is respectively super-linear and sub-linear. In both cases the measure estimates for the set of the good parameters (for which the existence of quasi-periodic water waves is ensured) are provided by using degenerate KAM theory. The present work and \cite{BBHM} share the difficulty of the weak separation of the linear eigenvalues, but we deal with this issue in a different way, as it has been explained in section \ref{sec:generalStrategy}-$(iv)$. In \cite{BBHM} the authors impose Melnikov conditions that imply a possible loss of (spatial) derivatives of the KAM transformations. This is compensated by a preliminary long regularization procedure that makes constant coefficients the linearized operator up to very smooth terms. In a resonant case such procedure is not convenient, because the number of steps of the normal form methods that we need to perform increases with the number of steps of the regularization procedure. Hence we do not impose Melnikov conditions with loss of derivatives thanks to a careful analysis of the small divisors (we refer to section \ref{sec:linBNF} for more details). \\ Concerning the role of integrability, we remark that in non-resonant cases the integrability is usually used just at the linear level. Roughly speaking the equation is seen as a small perturbation of the linear part, that is integrable, and the \emph{unperturbed tori} are linear solutions obtained by exciting a finite number of modes.\\
Regarding the analysis of the linearized operator, except for the number of steps of the regularization procedure, an important difference with respect to \cite{BM1, BBHM} is the fact that we look for traveling quasi-periodic waves, instead of standing and reversible. The main novelty is providing that the terms that cannot be eliminated or normalized in the reducibility procedure of the linearized operator are constant coefficients operators. In the standing/reversible case this is guaranteed by parity conditions on the solutions that one look for, which reflect on parity properties of the non-constant coefficients of the linearized operator. In the present work we use the Hamiltonian structure and the momentum conservation to verify that such coefficients have nice properties, as for instance that the averages in time or space of some of them are constant. This clearly requires that all the changes of coordinates that we perform leave invariant those symmetries along the reducibility procedure. In other words, the reducibility of the linearized operator has to be performed within a class of Hamiltonian operators, $x$-translation invariant (to which it belongs the linearized operator at a quasi-periodic traveling function), see section \ref{sec:group}.\\
To conclude, we mention the work \cite{BFMasp}. The authors provide the existence of traveling quasi-periodic water waves for the gravity-capillary case with constant vorticity. Also in this case the solutions are reversible (parity conditions in the time-space variable) and the vorticity is used as a parameter to modulate the linear frequencies of oscillations.
}

\vspace{0.5em}

\noindent
{\emph{Possible extensions to other water waves problems.}
It would be interesting to investigate the existence of quasi-periodic solutions 
for the models considered in \cite{BM1, BBHM, BFMasp} 
keeping \emph{fixed}
the parameters of the problems, respectively the capillarity, the depth of the ocean and the vorticity, and looking for solutions free from parity restrictions. 
Depending on the value of the fixed parameter, these equations present the same issues of a  
resonant case, like the pure gravity system in infinite depth.
Our method could be applied to these cases provided that: 
(i) the normal form is formally integrable at order four, namely 
the coefficients of the Hamiltonian vanishes on non-trivial resonances (if they exist);
(ii) we have some information on non-trivial resonances at order four; 
(iii) the twist condition holds, namely the frequency-amplitude map
is a local diffeomorphism. All these ones are algebraic conditions that have to be checked for each choice of the fixed parameter. At the moment
it is not clear to us whether these conditions are satisfied 
for some (large) set of the physical parameters.\\ Concerning the analysis of the linearized operator: the specific changes of coordinates that reduce the linear operator depend strongly on the considered model. However we believe that the reducibility strategy that we implement in the present work, made of an analysis of perturbative and non-perturbative terms, is quite general. Regarding the parity conditions, one should apply the reducibility argument in a class of linear operators with some algebraic structure dictated by the equation considered. We think that in water waves models is natural to exploit the Hamiltonian structure and the conservation of momentum. In this paper we provide a guideline to take advantage of these symmetries which are present also in the finite depth case and the gravity-capillary system.}

\vspace{0.5em}

\noindent\textbf{Plan of the paper.} 
In section \ref{sec:2} we introduce the functional setting 
and the classes of linear operators that 
we shall consider in the
reducibility argument.

In section \ref{sec:integrability}
we discuss the formal integrability of the pure gravity water waves at order four
and we show how to exploit it 
to prove the normal form identification 
argument  of section \ref{sec:3.33}.

In section \ref{sec:weak}  we apply a 
``weak'' version of a Birkhoff normal form 
procedure 
to the Hamiltonian \eqref{Hamiltonian} (see Proposition \ref{WBNFWW})
and we show that there exists a finite dimensional approximately 
invariant subspace foliated by embedded tori. 
Then, in section \ref{sec:5},
we prove that the dynamics on such subspace is 
 \emph{non-isochronous} 
(see the Hamiltonian system \eqref{HamiltonianSisteminoDP} and 
Lemma \ref{Twist1}).
In this way we find a first good nonlinear approximate solution.
In section \ref{sezioneNonlinearFunct} we state the 
Theorem \ref{IlTeoremaDP}
from which we deduce the main result.
This theorem is proved through a Nash-Moser nonlinear iteration, 
see Theorem \ref{NashMoserDP}. 

In section \ref{sezione6DP} we prove Theorem \ref{TeoApproxInvDP}
that concerns the inversion of the linearized 
operator at an approximate solution.
Sections \ref{sec:Good}-\ref{sez:KAMredu}
are devoted to the verification of the assumptions of Theorem \ref{TeoApproxInvDP},
in particular we prove the invertibility of the linearized operator in the normal
directions, whose properties
are discussed in section \ref{sec:Linopnormal}.
 The key results of  sections \ref{sec:Good}-\ref{sec:block} 
 are  Propositions \ref{lem:compVar}, 
  \ref{lem:mappa2}, \ref{blockTotale}.
  In section \ref{ordiniPositivi}
  we show how to reduce the linearized operator
  to a diagonal one up to a smoothing remainder, see Proposition \ref{riduzSum}.
In section \ref{sez:KAMredu}  we prove the reducibility Theorem 
 \ref{ReducibilityWW} which implies 
the invertibility  result in Proposition \ref{InversionLomegaDP}.

In section \ref{sec:linBNF} we deal with the terms 
which still are not \emph{perturbative} for the KAM scheme of Theorem
\ref{ReducibilityWW}.
By Lemma \ref{stepLBNF2} and Proposition \ref{identificoBNF}
we find the first order corrections of the normal eigenvalues.
Then we split the reduction of the remaining non perturbative terms 
by performing a \emph{low} / \emph{high} modes analysis.

We show that the analysis of high modes 
in section \ref{stepsHigher} 
does not involve small divisors. 
In section \ref{prepreKAMKAM} we perform 
the analysis for low modes.
In section \ref{sezione9WW}
we conclude the proof of the main theorem
by providing the measure estimates of the set 
of ``good'' frequencies.

\vspace{0.9em}
\noindent
{\bf Acknowledgements.}
The authors wish to thank Michela Procesi, Marcel Guardia and Raffaele Scandone for many 
useful discussions and comments. This paper has received funding from 
the European Research Council (ERC) 
under the European Union's Horizon 2020
research and innovation programme under grant agreement 
No 757802.

\section{Functional setting}\label{sec:2}
 
\subsection{Function spaces}
We consider functions $u(\varphi, x)$ defined on $\T^{\nu}\times \T$
with zero average in $x\in \mathbb{T}$. 
Passing to the Fourier representation we write
\begin{equation}\label{realfunctions}
u(\varphi, x)=\frac{1}{\sqrt{2\pi}}\sum_{j\in\mathbb{Z}\setminus\{0\}} 
u_{j}(\varphi)\,e^{\mathrm{i} j x}
=\frac{1}{\sqrt{2\pi}}\sum_{\ell\in\mathbb{Z}^{\nu}, j \in \mathbb{Z}\setminus\{0\}} 
u_{\ell j} \,e^{\mathrm{i}(\ell \cdot \varphi+j x)}\,.
\end{equation}
For $\vphi$-independent Fourier coefficients we also  
use the notation
\begin{equation*}
u_n^+ := u_n:=\hat{u}(n) \qquad  {\rm and} \qquad  
u_n^- := \ov{u_n}:=\ov{\hat{u}(n)}  \, . 
\end{equation*}
Let $s\in\mathbb{R}$. We define the scale of Sobolev spaces 
\begin{equation}\label{space} 
H^{s}:=\Big\{ u(\varphi, x)\in L^{2}(\T^{\nu+1}; \mathbb{C}) : 
\lVert u \rVert_s^2:=\sum_{\ell\in\mathbb{Z}^{\nu}, j\in\mathbb{Z}\setminus\{0\}} \lvert u_{\ell j} 
\rvert^2 \langle\ell, j \rangle^{2 s}<\infty \Big\}\,,
\end{equation}
where $\langle \ell, j \rangle:=\max\{ 1, \lvert \ell \rvert, \lvert j \rvert\}$, 
$\lvert \ell \rvert:=\sum_{i=1}^{\nu} \lvert \ell_i \rvert$. \\
In the following we shall consider the scale of Sobolev 
spaces $(H^s)_{s=s_0}^{\mathcal{S}}$ where
\begin{equation}\label{scala}
\mathcal{S}\gg s_0 :=[\nu/2]+2.
\end{equation}
We remark that $H^{s_0}$ is continuously embedded in 
$L^{\infty}(\mathbb{T}^{\nu+1}; \mathbb{C})$.

\smallskip
\noindent
{\bf Lipschitz norm.} Fix $\nu\in \mathbb{N}\setminus\{0\}$ 
and let $\calO$ be a compact subset of $\mathbb{R}^{\nu}$. 
For a function $u\colon \calO\to E$, where $(E, \lVert \cdot \rVert_E)$ is a Banach space, we define the sup-norm and the lip-seminorm of $u$ as
\begin{equation*}
\begin{aligned}
&\lVert u \rVert_E^{sup}:=\lVert u \rVert_{E}^{sup, \calO}
:=\sup_{\omega\in\calO} \lVert u(\omega) \rVert_E,\qquad \lVert u \rVert_{E}^{lip}
:=\lVert u \rVert_{E}^{lip, \calO}
:=\sup_{\substack{\omega_1, \omega_2\in \calO,\\ \omega_1\neq \omega_2}} 
\frac{\lVert u(\omega_1)-u(\omega_2)\rVert_E}{\lvert \omega_1-\omega_2\rvert}\,.
\end{aligned}
\end{equation*}
If $E$ is finite dimensional,  for any $\gamma>0$ we introduce the 
weighted  Lipschitz norm
\begin{equation*}
\lVert u \rVert_E^{\g, \calO}
:=\lVert u \rVert_E^{sup, \calO}+\gamma \lVert u \rVert_{E}^{lip, \calO}\,.
\end{equation*}
If $E$ is a scale of Banach spaces, say $E=H^{s}$, for $\gamma>0$ we introduce the 
weighted  Lipschitz norm
\begin{equation}\label{tazza10}
\lVert u \rVert_s^{\g, \calO}:=\lVert u \rVert_s^{sup, \calO}
+\gamma \lVert u \rVert_{s-1}^{lip, \calO}\,, \quad \forall s\geq s_0
\end{equation}
where we denoted by $[ r ]$ the integer part of $r\in\R$.

\subsection{Linear operators}
We introduce general classes of linear operators 
which will be used in the paper
following \cite{FGP1}.
We consider $\vphi$-dependent families of 
operators $A : \mathbb{T}^{\nu}\mapsto 
\mathcal{L}(L^{2}(\mathbb{T};\mathbb{R}))$ (or 
$\mathcal{L}(L^{2}(\mathbb{T};\mathbb{C}))$
$\vphi\mapsto A(\vphi)$ acting on functions 
$u(x)\in L^{2}(\mathbb{T};\mathbb{R})$
(or $L^{2}(\mathbb{T};\mathbb{C})$). 
We also regard $A$
as an operator acting on functions $u(\vphi,x)\in 
L^{2}(\mathbb{T}^{\nu+1};\mathbb{R}) $ (or 
$L^{2}(\mathbb{T}^{\nu+1};\mathbb{C})$) of the space-time.
In other words the action of 
$A\in \mathcal{L}(L^{2}(\mathbb{T}^{\nu+1}; \mathbb{R}))$ 
is defined by
\[
(Au)(\vphi,x)=(A(\vphi)u(\vphi,\cdot))(x)\,.
\]
We represent a linear operator $\mathcal{Q}$
acting on $L^{2}(\mathbb{T}^{\nu+1}; \mathbb{R}^{2})$
by a matrix
\begin{equation}\label{matrix1}
\mathcal{Q}:=\left(
\begin{matrix} A & B \\ C & D
\end{matrix}
\right)
\end{equation}
where $A$, $B$, $C$, $D$ 
are  operators acting on the scalar valued components
$\eta,\psi\in L^{2}(\mathbb{T}^{\nu+1}; \mathbb{R})$.
The action of $A$ on periodic functions 
as in \eqref{realfunctions}
is given by
\begin{equation}\label{actio}
(Au)(\vphi,x)=\frac{1}{\sqrt{2\pi}}
\sum_{l\in \mathbb{Z}^{\nu}, j\in \mathbb{Z}}e^{\ii l\cdot\vphi}e^{\ii jx}
\Big(\sum_{p\in \mathbb{Z}^{\nu}, k\in \mathbb{Z}}
A_{j}^{k}(l-p)\hat{u}(p,k)
\Big)\,.
\end{equation}
We shall identify the operator $A$ with the 
matrix $(A_{j}^{k}(\ell))_{\ell\in \mathbb{Z}^{\nu},j,k\in \mathbb{Z}}$.
We say that $A$ is a \textit{real} operator if it maps real valued functions in real valued functions. For the matrix coefficients this means that
\[
\overline{A_j^{k}(\ell)}=A_{-j}^{-k}(-\ell)\,.
\]
It will be convenient to work with the the complex
variables $(u,\bar{u})=\mathcal{C}(\eta, \psi)$ introduced in  \eqref{matriciCompVar}.
Thus we study how the linear operators $\mathcal{Q}$ as in \eqref{matrix1}
 transform under the map $\mathcal{C}$.
We have
\begin{equation}\label{forma-complessa}
\begin{aligned}
\mathcal{T}&:=\mathcal{C}\mathcal{Q}\mathcal{C}^{-1}:=(\mathcal{T}_{\s}^{\s'})_{\s,\s'=\pm}:=
\left(
\begin{matrix} 
\mathcal{T}_{+}^{+} & \mathcal{T}_{+}^{-} 
\vspace{0.2em}\\ 
{\mathcal{T}_{-}^{+}} & 
{\mathcal{T}_{-}^{-}}
\end{matrix}
\right)\,,
\quad 
\mathcal{T}_{\s}^{\s'}=\ov{\mathcal{T}_{-\s}^{-\s'}}\,,\;\;\s,\s'=\pm\\
 \mathcal{T}_{+}^{+}&:=\frac{1}{2}\big\{(A+D)-\ii (B-C)\big\}\,,
 \qquad
 \mathcal{T}_{+}^{-}:=\frac{1}{2}\big\{(A-D)+\ii (B+C)\big\}\,,
\end{aligned}
\end{equation}
where the \emph{conjugate} operator $\ov{\mathcal{T}_{\s}^{\s'}}$ is defined as
\begin{equation}\label{conjugate-Op}
\ov{\mathcal{T}_{\s}^{\s'}}[h]:= \ov{{\mathcal{T}_{\s}^{\s'}}[\bar{h}]}\,.
\end{equation}
By setting 
$u^{+}=u$, $u^{-}=\bar{u}$ for any $u\in L^{2}(\mathbb{T}^{\nu+1};\mathbb{C})$,
 the action of an operator 
$\mathcal{T}$ of the form \eqref{forma-complessa} is given by
\begin{equation}\label{azioneSobsigma}
(\mathcal{T}\vect{u}{\bar{u}})(\vphi,x)=
\left(
\begin{matrix}
\mathcal{T}_{+}^{+}u^{+}+\mathcal{T}_{+}^{-}u^{-}
\vspace{0.2em}\\
\mathcal{T}_{-}^{+}u^{+}+\mathcal{T}_{-}^{-}u^{-}
\end{matrix}
\right)\stackrel{\eqref{forma-complessa}}{=}
\left(
\begin{matrix}
\mathcal{T}_{+}^{+}u^{+}+\mathcal{T}_{+}^{-}u^{-}
\vspace{0.2em}\\
\ov{\mathcal{T}_{+}^{-}}u^{+}+\ov{\mathcal{T}_{+}^{+}}u^{-}
\end{matrix}
\right)\,.
\end{equation}
By formul\ae\,  \eqref{actio}, \eqref{azioneSobsigma}
we shall identify the operator $\mathcal{T}$ with the 
matrix 
$(\mathcal{T}_{\s,j}^{\s',k}(\ell))_{\ell\in \mathbb{Z}^{\nu},j,k\in \mathbb{Z},\s,\s'=\pm}$, where
\[
\mathcal{T}_{\s,j}^{\s',k}(\ell):=\int_{\mathbb{T}^{\nu+1}} \mathcal{T}_{\s}^{\s'} [e^{\mathrm{i} (\s' k-\s j) x} \,e^{-\mathrm{i} \ell \cdot \varphi}]\,dx\,d\varphi.
\]

\begin{definition}{\bf (Real-to-real).}\label{realtoreal}
We say that an operator  $\mathcal{T}$ acting on $L^2(\T^{\nu+1}; \mathbb{C}^2)$ of the form \eqref{forma-complessa} is 
\emph{real-to-real}. Equivalently an operator is real-to real
if it preserves the following subspace
 \begin{equation*}
 \mathcal{U}:=\big\{
(u,v)\in L^{2}(\mathbb{T}^{\nu+1};\mathbb{C}^{2})\, : \, v=\ov{u}\,\big\}\,.
 \end{equation*}
\end{definition}

\begin{remark}
Notice that the conjugate of a real  operator as in \eqref{matrix1}
under the map $\Lambda$ in \eqref{CVWW}
has the form \eqref{forma-complessa} and hence it is real-to-real.
\end{remark}

\subsubsection{Tame operators}
We use the notation $A\lesssim B$ to denote $A\le C B$ where 
$C$ is a positive constant possibly depending 
on fixed parameters given by the problem. 
We use the notation $A\lesssim_y B$ to denote 
$A\le C(y) B$ if we wish to highlight 
the dependence on the variable $y$ of the constant $C(y)>0$.\\
We use the notation $\calO\Subset \mathbb{R}^{\nu}$ to denote a compact subset of $\mathbb{R}^{\nu}$.

\medskip
\noindent
{\bf Linear Tame operators.} Here we introduce rigorously the spaces and the classes of operators on which we work.

\begin{definition}{\bf ($\s$-Tame operators).}\label{TameConstants}
Let $0<s_0< \mathcal{S}\le \infty$ be two integer numbers and $\s\geq 0$.
We say that 
a linear operator $A$ is $\sigma$-\textit{tame} w.r.t. a 
non-decreasing sequence of positive real numbers
$\{\mathfrak M_A(\s,s)\}_{s=s_0}^\mathcal{S}$ if (recall \eqref{space})
\begin{equation*}
\lVert A u \rVert_{s}\le \mathfrak{M}_A(\s,s) 
\lVert u \rVert_{s_0+\sigma}+\mathfrak{M}_A(\s,s_0) \lVert u \rVert_{s+\sigma} 
\qquad u\in H^s\,,
\end{equation*} 
for any $s_0\le s\le \mathcal{S}$. 
We call $\mathfrak{M}_A(\s,s)$  a {\sc tame constant} 
for the operator $A$. When the index $\s$ is not relevant 
we write $\gotM_{A}(\s,s)=\gotM_{A}(s)$. 
For an operator $\mathcal{T}$ acting on 
$L^2(\T^{\nu+1}; \mathbb{C}^2)$ of the form \eqref{forma-complessa}
we set 
\[
\gotM_{\mathcal{T}}(\s,s):=\max\{\gotM_{\mathcal{T}_{+}^{+}}(\s,s),
\gotM_{\mathcal{T}_{+}^{-}}(\s,s)\}.
\]
\end{definition}

\begin{definition}{\bf (Lip-$\s$-Tame operators).}\label{LipTameConstants}
Let $\s\geq 0$ and $A=A(\omega)$ be a linear operator defined for 
$\omega\in \calO\Subset \mathbb{R}^{\nu}$.
Let us define
\begin{equation}\label{defDELTAomega}
\Delta_{\omega,\omega'}A:=\frac{A(\omega)-A(\omega')}{|\omega-\omega'|}\,, \quad \omega,\omega'\in \calO\,,\,\,\omega\neq \omega'.
\end{equation}
Then $A$ is \emph{Lip-$\s$-tame} w.r.t. a 
non-decreasing sequence $\{\mathfrak{M}^{\gamma}_A(\s,s)\}_{s=s_0}^\mathcal{S}$ if
 the following estimate holds
\begin{equation*}
\sup_{\omega\in \calO}\|Au\|_{s}\,,\,\,
\g\sup_{\omega\neq\omega'}\|(\Delta_{\omega,\omega'}A)\|_{s-1}
\leq _{s}\gotM^{\g}_{A}(\s,s)\|u\|_{s_0+\s}+
\gotM^{\g}_{A}(\s,s)\|u\|_{s+\s}, \quad u\in H^{s}\,.
\end{equation*}
We call $\mathfrak{M}^{\gamma}_A(\s,s)$ a {\sc Lip-tame constant} of the operator $A$. 
When the index $\s$ is not relevant 
we write $\gotM^{\gamma}_{A}(\s,s)=\gotM^{\gamma}_{A}(s)$. 
\end{definition}

\noindent 
{\bf Modulo-tame operators and majorant norms.}
We now introduce a class of operators which are tame respect to a norm stronger than the operator norm.

\begin{definition}{\bf (Majorant operator).}\label{opeMagg}
Let $s\in\mathbb{R}$ and $u\in H^s$, we define the majorant function
$$\underline{u} (\varphi, x):=(2\pi)^{-1/2}\sum_{\ell\in\mathbb{Z}^{\nu}, j\in\mathbb{Z}\setminus\{0\}} \lvert u_{\ell j} \rvert e^{\mathrm{i}(\ell\cdot \varphi+j x)}.$$
Note that $\lVert u \rVert_s=\lVert \underline{u} \rVert_s$.
Let $A\in\mathcal{L}(H^s)$ and recall its matrix representation \eqref{actio}.
We define the majorant matrix $\underline{A}$ as the matrix with entries
\[
\big(\underline{A}\big)_j^{k}(\ell):=\lvert (A)_{j}^{k}(\ell) \rvert 
\qquad j, k\in\mathbb{Z}\setminus\{0\},\,\,\ell\in\mathbb{Z}^{\nu}\,.
\]
\end{definition}

\noindent
We consider the majorant operator norms 
\begin{equation*}
\|\underline{A}\|_{\mathcal L(H^s)}:= 
\sup_{\lVert u \rVert_s\le 1} \lVert\underline{A}u \rVert_{s}\,.
\end{equation*}
We have a partial ordering relation in the set of the infinite dimensional matrices, i.e.
if
\begin{equation}\label{partialorder}
A \preceq B \Leftrightarrow |A_{j}^{k}(\ell)|\leq |B_{j}^{k}(\ell)|\;\;\forall j, k,\ell\; 
\Rightarrow \lVert \underline{A} \rVert_{\mathcal L(H^s)}\le 
\lVert \underline{B} \rVert_{\mathcal L(H^s)}\,, 
\quad \lVert  {A}u \rVert_s\le 
\lVert  \underline{A}\,\underline u \rVert_s \le \lVert\underline{B}\, 
\underline u \rVert_s\,.
\end{equation}
Since we are working on a majorant norm we have 
the continuity of the projections 
on monomial subspace, in particular 
we define the following functor acting on the matrices
\begin{equation}\label{operegula}
\Pi_K A:=
\begin{cases}
 A_{j}^{k}(\ell) \qquad \qquad \text{if} \; |\ell|\le K\,, \\
0 \qquad \qquad \qquad \mbox{otherwise}
\end{cases}
\qquad \qquad  \Pi_K^\perp:= \mathrm{I}-\Pi_K\,.
\end{equation}
Finally we define for $\mathtt b\in \N$ the operator $\langle \pa_{\vphi}\rangle^{\mathtt{b}}A$
whose matrix element are
\begin{equation}\label{funtore}
(\langle \pa_\vphi \rangle^{\mathtt b} A )_{j}^{k}(\ell) :=  
\langle \ell  \rangle^{\mathtt b} A_j^{k}(\ell)\,.
\end{equation}

\begin{definition}{\bf ($\s$-modulo tame operators).}\label{TameConstantsSharpOP}
Let $0<s_0< \mathcal{S}\le \infty$ be two integer numbers and $\s\geq 0$.
We say that 
a linear operator $A$ is $\sigma$-\textit{modulo tame} w.r.t. a 
non-decreasing sequence of positive real numbers
$\{\mathfrak{M}^{\sharp}_A(\s,s)\}_{s=s_0}^\mathcal{S}$ if (recall \eqref{space})
\begin{equation*}
\lVert \underline{A}\, \underline{u} \rVert_{s}\le \mathfrak{M}^{\sharp}_A(\s,s) 
\lVert u \rVert_{s_0+\sigma}+\mathfrak{M}^{\sharp}_A(\s,s_0) \lVert u \rVert_{s+\sigma} 
\qquad u\in H^s\,,
\end{equation*} 
for any $s_0\le s\le \mathcal{S}$. 
We call $\mathfrak{M}^{\sharp}_A(\s,s)$  a {\sc modulo tame constant} 
for the operator $A$. When the index $\s$ is not relevant 
we write $\gotM_{A}^{\sharp}(\s,s)=\gotM^{\sharp}_{A}(s)$. 
For an operator $\mathcal{T}$ acting on 
$L^2(\T^{\nu+1}; \mathbb{C}^2)$ of the form \eqref{forma-complessa}
we set 
\[
\gotM^{\sharp}_{\mathcal{T}}(\s,s):=\max\{\gotM^{\sharp}_{\mathcal{T}_{+}^{+}}(\s,s),
\gotM^{\sharp}_{\mathcal{T}_{+}^{-}}(\s,s)\}.
\]
\end{definition}

\begin{definition}\label{menounomodulotamesenzaLip}
We say that $A$ is $(-1/2)$-modulo tame if 	
$\langle D \rangle^{1/4}{A}  \langle D \rangle^{1/4}$ 
is $0$-modulo tame, where $\langle D \rangle$ is the Fourier multiplier with symbol $\langle j \rangle$. We denote
\begin{equation*}
\begin{aligned}
&\mathfrak M^{{\sharp}}_{A}(-1/2,s):=  
\mathfrak M^{\sharp}_{ \langle D \rangle^{1/4}A 
\langle D \rangle^{1/4}}(0,s), \\
& \mathfrak M^{\sharp}_{A}(-1/2,s,a):= 
\mathfrak M^{\sharp}_{\langle \pa_\vphi \rangle^{a}  
\langle D \rangle^{1/4}A \langle D \rangle^{1/4}}(0, s)\,, 
\quad a\ge 0\,.
\end{aligned}
\end{equation*}
\end{definition}

\begin{definition}{\bf (Lip-$\sigma$-modulo tame).}\label{def:op-tame} 
Let $\s\geq 0$. A  linear operator $ A := A(\omega) $, $\omega\in \calO\Subset\mathbb{R}^{\nu}$,  is  
Lip-$\s$-modulo-tame  w.r.t. a non-decreasing sequence 
$\{{\mathfrak M}_{A}^{\sharp, \g} 
(\s, s) \}_{s=s_0}^{\mathcal{S}}$ if 
the majorant operators 
$  \underline{ A }$, 
$\underline{\Delta_{\omega,\omega'} A}$ 
(see \eqref{defDELTAomega}) 
are Lip-$\s$-tame w.r.t. these constants, i.e. they 
satisfy the following weighted tame estimates:  
  for all 
 $ s \geq s_0 $ and  for any $u \in H^{s} $,  
\begin{equation*}
\sup_{\omega\in\calO}\| \underline{A} u\|_s\,,
 \sup_{\omega\neq \omega'\in \mathcal{O}}{\g}  
 \|\underline{\Delta_{\omega,\omega'} A}  u\|_s 
 \leq  
{\mathfrak M}_{A}^{\sharp, \g} (\s,s_0) \| u \|_{s+\s} +
{\mathfrak M}_{A}^{\sharp, \g} (\s,s) \| u \|_{s_0+\s} \,.
\end{equation*}
The constant $ {\mathfrak M}_A^{{\sharp, \g}} (\s,s) $ 
is called the {\sc modulo-tame constant} of the operator $ A $. 
When the index $\s$ is not relevant 
we write $ {\mathfrak M}_{A}^{{\sharp, \g}} (\s, s) 
= {\mathfrak M}_{A}^{{\sharp, \g}} (s) $. 
\end{definition}

\begin{definition}\label{menounomodulotame}

We say that $A$ is Lip-$(-1/2)$-modulo tame if 	
$\langle D \rangle^{1/4}{A}  \langle D \rangle^{1/4}$ 
is Lip-$0$-modulo tame, where $\langle D \rangle$ is the Fourier multiplier with symbol $\langle j \rangle$. We denote
\begin{equation*}
\begin{aligned}
&\mathfrak M^{{\sharp, \g}}_{A}(-1/2,s):=  
\mathfrak M^{\sharp, \g}_{ \langle D \rangle^{1/4}A 
\langle D \rangle^{1/4}}(0,s), \\
& \mathfrak M^{\sharp, \g}_{A}(-1/2,s,a):= 
\mathfrak M^{\sharp, \g}_{\langle \pa_\vphi \rangle^{a}  
\langle D \rangle^{1/4}A \langle D \rangle^{1/4}}(0, s)\,, 
\quad a\ge 0\,.
\end{aligned}
\end{equation*}
\end{definition}
 
We refer the reader to the Appendix of \cite{FGP}, \cite{FGP1} 
for the properties of tame and modulo-tame operators. 
In particular we shall apply several times the results of 
Lemma $A.5$ in \cite{FGP}. 
We remark that in order to estimate the 
tame constants of operators $A$ acting on 
$L^2(\mathbb{T}^{\nu+1}; \mathbb{C}^2)$ 
we shall use these results for each entry of the matrix $A$.

\medskip
\noindent
{\bf Classes of  ``smoothing'' operators.}
The following class of smoothing (in space) 
operators has been introduced in \cite{FGP}, \cite{FGP1}.

\begin{definition}\label{ellerho}
Let $p\in \mathbb{N}$ with $s_0\le p< \mathcal{S}\le \infty$ (see \eqref{scala}).
Fix $\rho\in\mathbb{N}$,  with $\rho\geq 3$ 
and consider $\calO\Subset\R^\nu$. 
We denote by $\gotL_{\rho, p}=\gotL_{\rho, p}(\calO)$ 
the set of the linear operators 
$A=A(\omega)\colon H^s\to H^{s}$, $\omega\in \calO$ with the following properties:

\vspace{0.5em}
\noindent
$\bullet$ the operator $A$ is Lipschitz in $\omega$,

\vspace{0.5em}
\noindent		
$\bullet$  the operators $\partial_{\varphi}^{\vec{\tb}} A$,  $[\partial_{\varphi}^{\vec{\tb}} A, \partial_x]$, for all 
$\vec{\tb}=(\tb_1,\ldots,\tb_\nu)\in \mathbb{N}^{\nu}$ with 
$0\le |\vec{\tb}| \leq \rho-2$  have the following properties, 
for any $s_0\le s< \mathcal{S}$:
\begin{itemize}
		
\item[(i)] 
for any $m_{1},m_{2}\in \mathbb{R}$, $m_{1},m_{2}\geq0$
and $m_{1}+m_{2}=\rho-|\vec{\tb}|$ 
one has that  
$\langle D\rangle^{m_{1}} \pa_{\vphi}^{\vec{\mathtt{b}}}A \langle D\rangle^{m_{2}}$
is Lip-$0$-tame according to Definition \ref{LipTameConstants} 
and we set
\begin{equation*}
\gotM^{\gamma}_{\pa_{\vphi}^{\vec{\mathtt{b}}}A}(-\rho+|\vec{\tb}|,s):=
\sup_{\substack{m_{1}+m_{2}=\rho-|\vec{\tb}|\\
m_{1},m_{2}\geq0}}\gotM^{\gamma}_{\langle D\rangle^{m_{1}} \pa_{\vphi}^{\vec{\mathtt{b}}}A 
\langle D\rangle^{m_{2}}}(0,s)\,;
\end{equation*}
		
\item[(ii)]
for any $m_{1},m_{2}\in \mathbb{R}$, $m_{1},m_{2}\geq0$
and $m_{1}+m_{2}=\rho- |\vec{\tb}|-1$ 
one has that  
$\langle D\rangle^{m_{1}} [\pa_{\vphi}^{\vec{\mathtt{b}}}A,\pa_{x}] \langle D\rangle^{m_{2}}$
is Lip-$0$-tame 
and we set
\begin{equation*}
\gotM^{\gamma}_{[\pa_{\vphi}^{\vec{\mathtt{b}}}A,\pa_{x}]}(-\rho+|\vec{\tb}|+1,s):=
\sup_{\substack{m_{1}+m_{2}=\rho-|\vec{\tb}|-1\\
m_{1},m_{2}\geq0}}\gotM^{\gamma}_{\langle D\rangle^{m_{1}} 
[\pa_{\vphi}^{\vec{\mathtt{b}}}A,\pa_{x}] \langle D\rangle^{m_{2}}}(0,s)\,.
\end{equation*}
\end{itemize}
We define for $0\le \tb\le \rho-2$
\begin{equation*}
\begin{aligned}\mathbb{M}^{\gamma}_{A}(s, \mathtt{b}):=&\max_{0\leq |\vec{\tb}|\leq \tb}\max\left(
\gotM_{\pa_{\vphi}^{\vec{\mathtt{b}}}A}^{\gamma}(-\rho+|\vec{\tb}|,s),
\gotM_{\pa_{\vphi}^{\vec{\mathtt{b}}}[A,\pa_{x}]}^{\gamma}(-\rho+|\vec{\tb}|+1,s)\right).
\end{aligned}
\end{equation*}
Let $s\geq p$ and consider $i_1=i_1(\omega), i_2=i_2(\omega)$, defined for $\omega\in \mathcal{O}_1$ and $\mathcal{O}_2$ respectively, such that $\lVert i_k \rVert^{\gamma, \calO_k}_{s}\lesssim 1$, $k=1, 2$.
Consider an operator $A=A(\omega, i_k(\omega))$ defined for $\omega\in \mathcal{O}(i_k)\subseteq \mathcal{O}_k$, $k=1, 2$, and
we define 
\begin{equation*}
\Delta_{12}A=\Delta_{12} A(\omega, i_1(\omega), i_2(\omega)):=A(i_1)-A(i_2)\,, \quad \mbox{for} \quad \omega\in \mathcal{O}(i_1)\cap\mathcal{O}(i_2).
\end{equation*}
We require the following:

\vspace{0.5em}
\noindent		
$\bullet$ The operators
$\pa_{\vphi}^{\vec{\mathtt{b}}}\Delta_{12}A $, 
$[\pa_{\vphi}^{\vec{\mathtt{b}}}\Delta_{12}A ,\pa_{x}]$, for  $0\le |\vec{\tb}|\leq \rho-3$, 
have the following properties:

\begin{itemize}
\item[(iii)] 
for any $m_{1},m_{2}\in \mathbb{R}$, $m_{1},m_{2}\geq0$
and $m_{1}+m_{2}=\rho-|\vec{\tb}|-1$ 
one has that  
$\langle D\rangle^{m_{1}} \pa_{\vphi}^{\vec{\mathtt{b}}} \Delta_{12}A  \langle D\rangle^{m_{2}}$
is  bounded on $H^{p}$. More precisely there is a positive constant 
$\mathfrak{N}_{\pa_{\vphi}^{\vec{\mathtt{b}}}\Delta_{12}A }(-\rho+|\vec{\tb}|+1,p)$ such that, for any $h\in H^{p}$,
we have 
\begin{equation*}
\sup_{\substack{m_{1}+m_{2}=\rho-|\vec{\tb}|-1\\
m_{1},m_{2}\geq0}}
\|\langle D \rangle^{m_{1}}
\pa_{\vphi}^{\vec{\mathtt{b}}}\Delta_{12}A\langle D \rangle^{m_{2}} h\|_{p}\leq 
\mathfrak{N}_{\pa_{\vphi}^{\vec{\mathtt{b}}}\Delta_{12}A }(-\rho+|\vec{\tb}|+1,p)\|h\|_{p}\,
\end{equation*}
uniformly for $\omega\in \mathcal{O}(i_1)\cap\mathcal{O}(i_2)$	.	
\item[(iv)] 
for any $m_{1},m_{2}\in \mathbb{R}$, $m_{1},m_{2}\geq0$
and $m_{1}+m_{2}=\rho-|\vec{\tb}|-2$ 
one has that  
$\langle D\rangle^{m_{1}} [\pa_{\vphi}^{\vec{\mathtt{b}}} \Delta_{12}A ,\pa_{x}] 
\langle D\rangle^{m_{2}}$
is  bounded on $H^{p}$. More precisely there is a positive constant 
$\mathfrak{N}_{[\pa_{\vphi}^{\vec{\mathtt{b}}}\Delta_{12}A,\pa_{x}] }(-\rho+|\vec{\tb}|+2,p)$ such that for any 
$h\in H^{p}$ one has
\begin{equation*}
\sup_{\substack{m_{1}+m_{2}=\rho-|\vec{\tb}|-2\\
m_{1},m_{2}\geq0}}
\|\langle D\rangle^{m_{1}}[
\pa_{\vphi}^{\vec{\mathtt{b}}}\Delta_{12}A,\pa_{x}]
\langle D\rangle^{m_{2}} h\|_{p}\leq 
\mathfrak{N}_{[\pa_{\vphi}^{\vec{\mathtt{b}}}
\Delta_{12}A, \pa_{x}] }(-\rho+|\vec{\tb}|+2,p)\|h\|_{p}\,
\end{equation*}
uniformly for $\omega\in \mathcal{O}(i_1)\cap\mathcal{O}(i_2)$.	
\end{itemize}
		
We define for $0\le \tb\le \rho-3$
\begin{equation*}
\begin{aligned}\mathbb{M}_{\Delta_{12}A }(p, \mathtt{b})
:=&\max_{0\le |\vec{\tb}|\leq \tb}\max \left(
\mathfrak{N}_{\pa_{\vphi}^{\vec{\mathtt{b}}}\Delta_{12}A }(-\rho+|\vec{\tb}|+1,p)\,,
\mathfrak{N}_{\pa_{\vphi}^{\vec{\mathtt{b}}}
[\Delta_{12}A ,\pa_{x}]}(-\rho+|\vec{\tb}|+2,p)\right)\,.
\end{aligned}
\end{equation*}

\noindent
By construction one has that 
$\mathbb{M}^{\gamma}_{A}(s, \mathtt{b}_1)\leq 
\mathbb{M}^{\gamma}_{A}(s, \mathtt{b}_2)$
 if \,$0\le \tb_1\le \tb_2\le \rho-2$ and 
$\mathbb{M}_{\Delta_{12}A }(p, \mathtt{b}_1)\leq 
\mathbb{M}_{\Delta_{12}A }(p, \mathtt{b}_2)$
if \,$0\le \mathtt{b}_1\leq \mathtt{b}_2\leq \rho-3$.
\end{definition}
\noindent
For the properties of operators in the class $\gotL_{\rho,p}$
we refer the reader to Appendix $B$
in \cite{FGP}.

\subsubsection{pseudo differential operators and 
symbolic calculus}\label{sec:pseudopseudo}

Following \cite{BM1} 
we give the following definitions.

\begin{definition}{\bf (Symbols).}\label{pseudoSimbo}
Let $m\in \R$.  We define the class $S^m$ as the set of symbols
$a(x,j)$ which are
the restriction to 
$\mathbb{T}\times \mathbb{Z}$ of a complex valued function 
$a(x, \x)$ which is $C^{\infty}$ smooth on $\mathbb{T}\times\mathbb{R}$, 
$2\pi$-periodic in $x$ and satisfies
\begin{equation}\label{space3}
|\pa_{x}^{\al}\pa_{\xi}^{\beta}a(x,\x)|\leq C_{\al,\beta}\langle \x\rangle^{m-\beta}\,,
\;\;\forall \; \al,\beta\in \mathbb{N}\,.
\end{equation} 
\end{definition}

\begin{definition}{\bf (Pseudo differential operators).}\label{pseudoR}
Let $m\in \R$.
A linear operator $A$ is called pseudo differential 
of order $\le m$ if its action on any $H^s(\T; \mathbb{C})$ with 
$s\ge m$ is given by
\begin{equation}\label{weylquanti}
A u:=\opw(a(x,\x))[u]:=
\frac{1}{\sqrt{2\pi}}\sum_{k\in \mathbb{Z}}
\Big(\sum_{\substack{j\in \Z\setminus\{0\}}}\hat{a}\Big(k-j,\frac{k+j}{2}\Big)u_{j}\Big)
\frac{e^{\ii kx}}{\sqrt{2\pi}}\,,\qquad u=\frac{1}{\sqrt{2\pi}}
\sum_{j\in\Z\setminus\{0\}}u_{j}e^{\ii jx}\,,
\end{equation}
where   $a(x, \x)$ is a symbol in the class 
$S^{m}$ (see Def. \ref{pseudoSimbo})
and $\hat{a}(\eta,\x)$ 
denotes the Fourier transform of $a(x,\x)$ in the variable $x\in\mathbb{T}$.
We shall also write 
$A[\cdot]=\opw(a)[\cdot]=\opw(a(x,\x))[\cdot]$.
We call $OPS^m$ the class of the pseudo 
differential operator of order less or equal to $m$ and
$OPS^{-\infty}:=\bigcap_m OPS^m$.
\end{definition}

We will consider mainly operators acting on $H^s(\T;\mathbb{C})$ with a quasi-periodic time dependence.  
In the case of pseudo differential operators this corresponds\footnote{since $\omega$ 
is diophantine we can replace the time variable with angles $\varphi\in\T^{\nu}$. 
The time dependence is recovered by setting $\varphi=\omega t$.} 
to considering symbols $a(\varphi, x, \x)$ with $\varphi\in\T^{\nu}$. 
Clearly these operators can be thought as acting on functions 
$u(\vphi,x)=\sum_{j\in\Z}u_{j}(\vphi)e^{\ii jx}$ in 
$H^s(\T^{\nu+1}; \mathbb{C})$ in the following sense:
\[
(Au)(\vphi,x)=\frac{1}{\sqrt{2\pi}}
\sum_{k\in \mathbb{Z}}
\Big(\sum_{\substack{j\in \Z}}
\hat{a}\Big(\vphi,k-j,\frac{k+j}{2}\Big)u_{j}(\vphi)\Big)\frac{e^{\ii kx}}{\sqrt{2\pi}}\,,
\quad a(\vphi,x,j)\in S^{m}\,.
\]
The symbol $a(\varphi, x, \xi)$ is $C^{\infty}$ smooth also in the variable $\varphi$. 
We still denote 
$A:=A(\varphi)=\opw(a(\varphi, \cdot))=\opw(a)$.

\begin{definition}
Let $a:=a(\vphi,x, \x)\in S^{m}$ and set $A:=\opw(a)\in OPS^{m}$,
\begin{equation}\label{norma}
|A|_{m,s,\alpha}:=\max_{0\leq \beta\leq \alpha} 
\sup_{\x\in\mathbb{R}}\|\pa_{\xi}^{\beta}a(\cdot,\cdot, \x)\|_{s}
\langle \x\rangle^{-m+\beta}\,.
\end{equation}
We will use also the notation 
$\lvert a \rvert_{m, s, \alpha}:=|A|_{m,s,\alpha}$.
\end{definition}
Note that the norm $|\cdot|_{m,s,\alpha}$ is non-decreasing in $s$ and $\alpha$.
Moreover given a symbol $a(\vphi,x)$ independent of $\x\in\mathbb{R}$, the norm 
of the associated multiplication operator $\opw(a)$ 
is just the $H^{s}$ norm of the function $a$.
If on the contrary the symbol $a(\x)$ depends only on $\x$, 
then the norm of the corresponding 
Fourier multipliers $\opw(a(\x))$ 
is just controlled by a constant.
We shall use the following notation, used also in \cite{AB1}.
For any $m\in \mathbb{R}\setminus\{0\}$ we set
\begin{equation}\label{Demme}
|D|^{m}:=\opw(\chi(\x)|\x|^{m})\,,
\end{equation}
where $\chi\in C^{\infty}(\mathbb{R}, \mathbb{R})$ is 
an even and positive cut-off function such that
\begin{equation}\label{cutofffunct}
\chi(\x)=\left\{
\begin{aligned}
&0 \quad {\rm if}\,\, |\x|\leq 1/3 
\\
&1 \quad {\rm if}\,\, |\x|\geq 2/3 
\end{aligned}\right.\qquad \pa_{\x}\chi(\x)>0\quad \forall\, 
\x\in \left(\frac{1}{3}, \frac{2}{3}\right)\,.
\end{equation}
To simplify the notation we shall also write
\begin{equation}\label{notasignXi}
\opw(\sign(\x)) \qquad {\rm to \; denote}\qquad \opw(\chi(\x)\sign(\x))\,,
\qquad {\rm where}\qquad \sign(\x):=\x/|\x|\,.
\end{equation}

As in formula \eqref{tazza10}, if $A=\opw(a(\omega,\vphi,x, \x))\in OPS^{m}$ 
is a family of 
pseudo differential operators with symbols 
$a(\omega,\vphi,x, \x)$ belonging to $S^{m}$ and 
depending in a Lipschitz way on some parameter 
$\omega\in \calO\subset \mathbb{R}^{\nu}$, 
we set
\begin{equation*}
|A|_{m,s,\alpha}^{\g,\calO}:=\sup_{\omega\in \calO}|A|_{m,s,\alpha}+
\g \sup_{\omega_1,\omega_2\in \calO}\frac{|\opw\big(a(\omega_1,\vphi,x, \x)-a(\omega_2,\vphi,x, \x)\big)|_{m,s-1,\alpha}}{|\omega_1-\omega_2|}\,.
\end{equation*}
For the properties  of compositions, adjointness and quantitative estimates of the actions
on the Sobolev spaces $H^{s}$  of pseudo differential operators we 
refer to Appendix B of \cite{FGP1}.

\medskip

From now on we consider symbols
$a(\varphi, x, \xi)=a(\omega, i(\omega); \vphi,x,\x)$ 
where $\omega\in\mathcal{O} \Subset\mathbb{R}^{\nu}$ 
and $i=i(\omega)$ is a lipschitz function such that $\lVert i \rVert^{\gamma, \calO}_{\mathfrak{s}_0}\le 1$ for some $\mathfrak{s}_0\geq [\nu/2]+2$. \\
Let us consider two subsets $\mathcal{O}_1, \mathcal{O}_2\Subset \mathbb{R}^{\nu}$ and two lipschitz functions $i_1(\omega), i_2(\omega)$ defined respectively on $\mathcal{O}_1$, $\mathcal{O}_2$.
We denote by $\Delta_{12} a=\Delta_{12} a(\vphi,x,\x)$ the symbol
\[
\Delta_{12} a=\Delta_{12} a(\omega, i_1(\omega), i_2(\omega))
:=a(\omega, i_1(\omega))-a(\omega, i_2(\omega))
\] 
defined for $\omega\in \mathcal{O}_1\cap\mathcal{O}_2$.
To simplify the notation we shall omit the dependence on $(\omega,i(\omega))$
in symbols and operators.

\vspace{0.9em}
\noindent
{\bf Weyl/Standard quantizations.} 
The quantization of a symbol in $S^{m}$ given in \eqref{weylquanti}
is called \emph{Weyl} quantization.  
We shall compare this quantization with the \emph{standard one}, that we now recall:
 given a symbol $b(x,j)\in S^{m}$, the associated pseudo differential operator
is defined as $B:=\op(b)$ with
\begin{equation}\label{standardquanti}
\op(b)\sum_{j\in\Z\setminus\{0\}} u_j \frac{e^{\mathrm{i} j x}}{\sqrt{2\pi}} =
\sum_{j\in\mathbb{Z}\setminus\{0\}}b(x,j) u_{j}\frac{e^{\mathrm{i} jx}}{\sqrt{2\pi}}
=\frac{1}{\sqrt{2\pi}}\sum_{k\in \mathbb{Z}}
\Big(\sum_{\substack{j\in \Z\setminus\{0\}}}\hat{b}(k-j,j)u_{j}\Big)\frac{e^{\ii kx}}{\sqrt{2\pi}}\,.
\end{equation}
It is known (see paragraph $18.5$ in \cite{Hormo}) that the two quantizations
are equivalent. 
Moreover one can transform the symbols between 
different quantizations by using the formul\ae
\begin{equation}\label{bambola}
\opw(a)=\op(b)\qquad \hat{a}(k,j)=\hat{b}\big(k, j-\frac{k}{2}\big)\,.
\end{equation}
Along the paper {we shall use only the Weyl quantization, which is convenient in a Hamiltonian setting. We use the equivalence with the standard one to recover some known results of pseudo differential calculus on tori that have been proved in the standard case. We mostly refer to the results in \cite{FGP, FGP1} where symbolic calculus is provided with sharp estimates on the seminorms of the symbols.}

\smallskip
 
We shall need an asymptotic expansion of the symbol
$a$ in terms of the symbol $b$.
The distributional kernel of the operator $\opw(a)$ is given by
the oscillatory integral
\begin{equation}\label{quantiguai}
K(x,y)=\frac{1}{2\pi}\int_{\mathbb{R}}e^{\ii(x-y)\x}a\big(\frac{x+y}{2},\x\big)d\x\,.
\end{equation}
Hence we may recover the Weyl symbol $a$ from the kernel by 
the inverse formula
\begin{equation}\label{quantiguai2}
a(x,\x)=\int_{\mathbb{R}}K\big(x+\frac{t}{2},x-\frac{t}{2}\big)e^{-\ii t\x}dt\,.
\end{equation}
If we consider the standard quantization $\op(b)=\op^W(a)$ 
we can express the distributional kernel in terms of $b$
\begin{equation*}
K(x,y)=\frac{1}{2\pi}\int_{\mathbb{R}}e^{\ii (x-y)\x}b(x,\x)d\xi.
\end{equation*}
Then by \eqref{quantiguai} and \eqref{quantiguai2} 
we obtain an expression of the symbol $a$ in terms of $b$
\begin{equation}\label{quantiguai4}
a(x,\x)=\frac{1}{2\pi}\int_{\mathbb{R}^2}e^{-\ii z \zeta}
b\big(x+\frac{z}{2},\x-\zeta\big)dz d\zeta\,
\end{equation}
and viceversa
\begin{equation}\label{escobar}
b(x,\x)=\frac{1}{2\pi}\int_{\mathbb{R}^2}e^{-\ii z \zeta}
a\big(x-\frac{z}{2},\x-\zeta\big)dz d\zeta\,.
\end{equation}

\noindent
We have the following important Lemma (see e.g. Lemma $3.5$ in \cite{BD}).
\begin{lemma}\label{weyl/standard}
Let $b(\vphi,x,\x)\in S^{m}$ and consider the function $a(\vphi,x,\x)$ 
in \eqref{quantiguai4}.
Then for any $N\geq 1$ we have the expression
\begin{equation*}
a(\vphi,x,\x)=\sum_{p=0}^{N-1}\frac{(-1)^{p}}{2^{p}p!}
\big(\pa_{x}^{p}D_{\x}^{p}b\big)(\vphi,x,\x)+\widetilde{a}(\vphi,x,\x)
\end{equation*}
where $D_{\x}=-\ii \pa_{\x}$ and $\widetilde{a}$ satisfies
the following bounds 
\begin{equation}\label{quantiguai6}
\begin{aligned}
|\widetilde{a}|^{\gamma, \calO}_{m-N, s, \alpha}
&\lesssim
|b|^{\gamma,\mathcal{O}}_{m,s+s_0+N,\alpha+2+2N}\,,\\
| \Delta_{12}\widetilde{a}|_{m-N, p, \alpha}&\lesssim
|\Delta_{12} b|_{m,p+s_0+N,\alpha+2+2N}\,.
\end{aligned}
\end{equation}
\end{lemma}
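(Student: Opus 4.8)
The plan is to start from the exact integral relation \eqref{quantiguai4} expressing the Weyl symbol $a$ in terms of the standard symbol $b$, and to perform a Taylor expansion in the $\zeta$-variable at $\zeta = 0$. More precisely, write $b(x + \tfrac{z}{2}, \xi - \zeta)$ and expand in powers of $\zeta$ via Taylor's formula with integral remainder of order $N$; this produces the terms $\partial_\xi^p b(x + \tfrac z2, \xi)$ times monomials $\zeta^p$, plus a remainder integral. Then integrate in $z$ and $\zeta$: the key identity is that, in the sense of oscillatory integrals, $\frac{1}{2\pi}\int_{\mathbb R^2} e^{-\ii z\zeta}\, z^p \,(\partial_\xi^p b)(x+\tfrac z2,\xi)\,dz\,d\zeta$ collapses — the $\zeta$-integral against $z^p$ produces (a constant multiple of) $\partial_z^p \delta(z)$, and pairing this with the smooth function $z \mapsto (\partial_\xi^p b)(x+\tfrac z2, \xi)$ and evaluating at $z = 0$ gives $\tfrac{1}{2^p}(\partial_x^p \partial_\xi^p b)(x,\xi)$ up to the combinatorial factor $(-\ii)^p/p!$ coming from the Taylor coefficient. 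Collecting these reproduces exactly $\sum_{p=0}^{N-1}\frac{(-1)^p}{2^p p!}(\partial_x^p D_\xi^p b)(\varphi,x,\xi)$, since $D_\xi = -\ii\partial_\xi$ and the signs match. This is the standard Weyl/standard conversion computation; I would cite Lemma $3.5$ in \cite{BD} for the algebraic form of the expansion and concentrate the work on the quantitative tail estimate \eqref{quantiguai6}.

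For the remainder $\widetilde a$, the integral-form Taylor remainder gives an explicit oscillatory-integral representation of the type $\widetilde a(\varphi,x,\xi) = c_N\int_0^1 (1-t)^{N-1}\Big[\frac{1}{2\pi}\int_{\mathbb R^2} e^{-\ii z\zeta}\,(\partial_\xi^N b)(\varphi, x+\tfrac z2,\xi - t\zeta)\,dz\,d\zeta\Big]dt$. The next step is to estimate the $|\cdot|^{\gamma,\mathcal O}_{m-N,s,\alpha}$-seminorm of this object. The mechanism is the usual one for controlling non-stationary oscillatory integrals: integrate by parts in $z$ (gaining decay in $\zeta$ via the factor $e^{-\ii z\zeta}$ and $\langle \zeta\rangle^{-2L}$ after $2L$ integrations by parts) and in $\zeta$ (gaining decay in $z$), turning the double integral into an absolutely convergent one at the cost of finitely many extra $x$- and $\xi$-derivatives landing on $b$. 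Each $x$-derivative costs one unit of Sobolev regularity in $\varphi$ and one unit of regularity in $x$, which is bookkept in the gain $s \rightsquigarrow s + s_0 + N$; each $\xi$-derivative beyond $N$ together with the integration-by-parts count accounts for the $\alpha \rightsquigarrow \alpha + 2 + 2N$ loss in the number of $\xi$-derivatives; and the extra negative order $-N$ in $\langle\xi\rangle$ comes precisely from the $N$ $\xi$-derivatives applied to $b \in S^m$ in the Taylor remainder. One must also verify the shift $x \mapsto x + \tfrac z2$ is harmless after the integrations by parts force $|z| \lesssim 1$ effectively (or is handled by the $H^s$ translation-invariance of the norm). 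The Lipschitz-in-$\omega$ part of the estimate follows by applying the same computation to $\Delta_{12}b$ and $\omega_1$-vs-$\omega_2$ differences, which is why one loses one derivative $s \rightsquigarrow s-1$ in the Lipschitz seminorm as usual; the $\Delta_{12}$ estimate at regularity $p$ is identical with $s$ replaced by $p$ throughout.

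The main obstacle I anticipate is the careful tracking of the exact number of derivatives and the exact Sobolev indices in the tail bound \eqref{quantiguai6} — i.e.\ making the integration-by-parts count tight enough to land on the stated gains $(s+s_0+N, \alpha+2+2N)$ and not something weaker. The algebraic expansion itself is routine (and quotable from \cite{BD}), and the existence of the bound is clear from general oscillatory-integral estimates; the delicate point is the sharp constants in the indices, since these feed into the later symbolic calculus with sharp seminorm estimates referenced from \cite{FGP, FGP1}. Concretely, one needs: (i) enough integrations by parts in $\zeta$ to make $\int \langle\zeta\rangle^{-2L}d\zeta < \infty$ — this forces $2L > 1$, contributing to the ``$+2$''; (ii) $N$ integrations by parts in $z$ paired with the $N$-th order remainder, contributing the ``$+2N$'' once one also controls $\partial_z$ hitting the $(x + z/2)$-shift; and (iii) absorbing the resulting $\langle \xi - t\zeta\rangle^{m-N-\beta}$ factors uniformly in $t \in [0,1]$ using $|\zeta| \lesssim \langle\xi\rangle$ on the effective support after the $z$-integration by parts, which is what yields $\langle\xi\rangle^{-m+N+\beta}|\widetilde a|_{\ldots} \lesssim |b|_{m,\ldots}$. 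Once these three bookkeeping points are done cleanly, the estimate \eqref{quantiguai6} follows, and the Lipschitz/$\Delta_{12}$ versions are immediate repetitions.
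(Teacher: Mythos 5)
Your proposal is correct and follows essentially the same route as the paper: the paper's proof simply invokes Lemma $3.5$ of \cite{BD} (whose argument is exactly the Taylor-expansion-plus-oscillatory-integral computation you outline), adapted to the seminorm \eqref{norma}, and obtains the bounds for $\Delta_{\omega,\omega'}\widetilde a$ and $\Delta_{12}\widetilde a$ by repeating the proof with $b\rightsquigarrow\Delta_{\omega,\omega'}b,\ \Delta_{12}b$, just as you do.
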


\begin{proof}
The proof follows word by word the proof of Lemma $3.5$ in \cite{BD} taking into account the norm \eqref{norma} to estimate the regularity of the symbol. The bound for the variations $\Delta_{\omega, \omega'} \widetilde{a},\,\Delta_{12} \widetilde{a}$ follows by repeating the proof with $a\rightsquigarrow \Delta_{\omega, \omega'}a,\, \Delta_{12} a$, $b\rightsquigarrow \Delta_{\omega, \omega'} b,\,\Delta_{12} b$.
\end{proof}

As a consequence of Lemma \ref{weyl/standard} we have the following. 
\begin{lemma}\label{lem:escobar}
Fix $\rho,p$ as in Definition \ref{ellerho}. Let $m\in\mathbb{R}$, $N:=m+\rho$ and consider a symbol $b(\vphi,x,\x)$ in $S^{m}$.
 Then there exists a remainder $R_{\rho}\in \gotL_{\rho,p}$ such that 
\begin{equation}\label{passo}
\op(b)=\opw(c)+R_{\rho}, \quad c(\vphi,x,\x):=\sum_{p=0}^{N-1}\frac{(-1)^{p}}{2^{p}p!}
\big(\pa_{x}^{p}D_{\x}^{p}b\big)(\vphi,x,\x)
\end{equation} 
with
\begin{equation}\label{passo2}
|c|_{m,s,\alpha}^{\gamma,\calO}
\leq_{s,\rho}|b|^{\gamma,\calO}_{m,s+N,\alpha+N}\,,
\qquad
\mathbb{M}^{\gamma}_{R_{\rho}}(s,\tb)
\leq_{s,\rho}|b|^{\gamma,\calO}_{m,s+N,N}\,,
\end{equation}
for all $0\le \tb\le \rho-2$ and $s_0\le s\le \mathcal{S}$.
Moreover one has
\begin{equation}\label{passo3}
\lvert \Delta_{12} c   \rvert_{m+m', p, \alpha}
\leq_{p,\rho}
|\Delta_{12}b|_{m,s+N,\alpha+N}\,,
\qquad \mathbb{M}_{\Delta_{12} R_{\rho} } (p, \tb)\leq_{p,\rho}
|\Delta_{12}b|_{m,s+N,N}\,,
\end{equation}
for all $0\le \tb \le \rho-3$ and where $p$ is the constant 
given in Definition \ref{ellerho}.
\end{lemma}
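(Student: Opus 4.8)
The statement of Lemma \ref{lem:escobar} is essentially a book-keeping corollary of Lemma \ref{weyl/standard}: the asymptotic expansion of the Weyl symbol $c$ in terms of the standard symbol $b$ is already given there, and it only remains to recognize the tail $\widetilde a$ (after the quantization) as an operator lying in the smoothing class $\gotL_{\rho,p}$ of Definition \ref{ellerho}, with the right seminorm bounds. So the plan is: first set $N:=m+\rho$, define $c$ as the finite sum in \eqref{passo} (which is exactly the polynomial part produced by Lemma \ref{weyl/standard}), and set $R_\rho := \op(b) - \opw(c) = \opw(\widetilde a)$, where $\widetilde a = \widetilde a(\vphi,x,\x)$ is the remainder symbol of Lemma \ref{weyl/standard}. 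By that lemma, $\widetilde a \in S^{m-N} = S^{-\rho}$, with the quantitative bound $|\widetilde a|^{\gamma,\calO}_{-\rho,s,\alpha} \lesssim |b|^{\gamma,\calO}_{m,s+s_0+N,\alpha+2+2N}$.

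\textbf{Step 1: the symbol estimate for $c$.} Each term $\pa_x^p D_\x^p b$ with $0 \le p \le N-1$ is a symbol of order $m-p \le m$, and differentiating in $\x$ lowers the order while differentiating in $x$ costs $H^s$-regularity. A direct application of the definition \eqref{norma} gives $|\pa_x^p D_\x^p b|_{m,s,\alpha} \lesssim |b|_{m,s+p,\alpha+p} \le |b|_{m,s+N,\alpha+N}$, and the Lipschitz weight propagates in the obvious way; summing over $p$ and absorbing the combinatorial constants $2^{-p}/p!$ into the implicit constant yields the first inequality in \eqref{passo2}. The variation estimate \eqref{passo3} for $c$ is obtained by applying the same computation to $\Delta_{12} b$ in place of $b$, since $\Delta_{12}$ commutes with $\pa_x^p D_\x^p$.

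\textbf{Step 2: placing $R_\rho = \opw(\widetilde a)$ in $\gotL_{\rho,p}$.} This is the main point. One has to verify properties (i)--(iv) of Definition \ref{ellerho} for $A = \opw(\widetilde a)$. For (i): since $\widetilde a \in S^{-\rho}$, for any $\vec{\tb}$ with $|\vec{\tb}| \le \rho - 2$ the symbol $\pa_\vphi^{\vec{\tb}} \widetilde a$ is still in $S^{-\rho}$ (differentiation in $\vphi$ does not change the order), and for $m_1 + m_2 = \rho - |\vec{\tb}|$ the operator $\langle D\rangle^{m_1} \opw(\pa_\vphi^{\vec{\tb}}\widetilde a)\langle D\rangle^{m_2}$ is, by the composition rules of pseudo differential calculus (Appendix B of \cite{FGP1}), a pseudo differential operator of order $m_1 + m_2 - \rho = -|\vec{\tb}| \le 0$, hence $0$-tame, with tame constant controlled by $|\widetilde a|_{-\rho, s+s_0 + \text{const}, \alpha}$ for a suitable number of derivatives; tracing the seminorm loss through the composition gives the bound $\mathbb{M}^\gamma_{R_\rho}(s,\tb) \lesssim_{s,\rho} |b|^{\gamma,\calO}_{m,s+N,N}$ after using the bound on $|\widetilde a|$ from Lemma \ref{weyl/standard}. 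Property (ii), concerning $[\pa_\vphi^{\vec{\tb}}A, \pa_x]$, is analogous: the commutator with $\pa_x$ is again a pseudo differential operator whose order does not increase (in fact $[\opw(\widetilde a),\pa_x] = \opw(\pa_x \widetilde a)$, still in $S^{-\rho}$), so the same scheme applies with $m_1 + m_2 = \rho - |\vec{\tb}| - 1$. Properties (iii)--(iv) for $\Delta_{12} R_\rho = \opw(\Delta_{12}\widetilde a)$ follow by repeating the argument on the variation, using the second bound in \eqref{quantiguai6}; here one only needs boundedness on $H^p$ rather than tameness, which is weaker, and the loss of one unit in the allowed range of $|\vec{\tb}|$ (now $\le \rho - 3$) matches the loss already present in Lemma \ref{weyl/standard} for $\Delta_{12}\widetilde a$.

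\textbf{Main obstacle.} The routine part is the symbol manipulation; the one place that needs care is the precise accounting of how many $x$- and $\x$-derivatives (equivalently, how much $H^s$-regularity and how large an $\alpha$) are consumed when one (a) forms the remainder $\widetilde a$ via Lemma \ref{weyl/standard}, which already costs $s_0 + N$ regularity and $2 + 2N$ order-derivatives, and then (b) composes with $\langle D\rangle^{m_1}, \langle D\rangle^{m_2}$ and takes $\rho - 2$ further $\vphi$-derivatives to check membership in $\gotL_{\rho,p}$. One must confirm that the total loss is still of the form $s + N$ in regularity and $N$ in the order-index, as claimed in \eqref{passo2}, rather than something larger; this is just a matter of choosing $N = m + \rho$ so that $S^{m-N} = S^{-\rho}$ exactly cancels the $\rho$ derivatives needed in Definition \ref{ellerho}, and then bounding constants liberally. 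No genuinely new idea is required beyond what is in \cite{BD}, \cite{FGP}, \cite{FGP1}.
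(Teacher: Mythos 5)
Your proposal is correct and follows essentially the same route as the paper: apply Lemma \ref{weyl/standard} to the Weyl symbol of $\op(b)$, take $c$ to be the finite sum and $R_\rho:=\opw(\widetilde a)$, bound $c$ by direct computation, and control $R_\rho$ via the bound \eqref{quantiguai6}. The only difference is that your Step 2 re-derives by hand the embedding of pseudo differential operators of order $-\rho$ into $\gotL_{\rho,p}$, which the paper simply quotes as Lemma B.2 of \cite{FGP}.
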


\begin{proof}
Let $a(\vphi,x,\x)$ be a symbol such that $\op^W(a)=\op(b)$.
We  apply Lemma \ref{weyl/standard}. 
and we set $R_{\rho}:=\opw(\widetilde{a})$.
The bounds \eqref{passo2} and \eqref{passo3} 
on the symbol $c(\vphi,x,\x)$
follow by an explicit computation.
The estimates on the remainder $R_{\rho}$ follows 
by \eqref{quantiguai6}
and {Lemma B.2
in \cite{FGP}.}
\end{proof}

\begin{remark}\label{standard/Weyl}
The formula \eqref{escobar} gives a explicit expression of a standard symbol $b(x, \xi)$ as a function of a Weyl symbol $a(x, \xi)$. Then one can prove Lemmata \ref{weyl/standard}, \ref{lem:escobar} inverting the role of the two symbols. More precisely one can obtain 
\begin{equation}\label{escobar2}
b(\vphi,x,\x)=f(\varphi, x, \xi)  +\widetilde{b}(\vphi,x,\x), \qquad f(\varphi, x, \xi):=\sum_{p=0}^{N-1}\frac{1}{2^{p}p!}
\big(\pa_{x}^{p}D_{\x}^{p}a\big)(\vphi,x,\x)
\end{equation}
with $\widetilde{b}$ satisfying estimates like \eqref{quantiguai6}. Hence $\op^W(a)=\op(f)+R_{\rho}$ for some remainder $R_{\rho}\in\mathfrak{L}_{\rho, p}$ satisfying bounds like \eqref{passo2}, \eqref{passo3}.
\end{remark}

It is known that the $L^2$-adjoint
of a pseudo differential operator is pseudo differential.
In the Weyl quantization it is easier, w.r.t. the standard quantization,
to determine the symbol of the adjoint operator.
In particular we have the following:

\vspace{0.5em}
\noindent
$\bullet$ if $A:=\opw(a(x,\x))$, with $a(x,\x)\in S^{m}$
then the operator $\ov{A}$ defined in \eqref{conjugate-Op}
 and the $L^{2}$-adjoint
 have the form
\begin{equation}\label{pseudobarra}
\ov{A}:=\opw\left(\ov{a(x,-\x)}\right)\,,\qquad
A^{*}:=\opw\left(\ov{a(x,\x)}\right)\,.
\end{equation}

\noindent
{\bf Composition of pseudo differential operators.}
Let 
$$ 
\s(D_{x},D_{\x},D_{y},D_{\eta}) := D_{\x}D_{y}-D_{x}D_{\eta} 
$$
where $D_{x}:=- \ii \pa_{x}$ and $D_{\x},D_{y},D_{\eta}$ are similarly defined. 

\begin{definition}{\bf (Asymptotic expansion of composition symbol).}\label{cancelletti}
Let $\rho\geq 0$.
Consider symbols 
$a(x,\x)\in S^{m}$ and $b(x,\x)\in S^{m'}$. 
We define
\begin{equation}\label{espansione2}
(a\#_{\rho}^{W} b)(x,\x):=\sum_{k=0}^{\rho}\frac{1}{k!}
\left(
\frac{\ii}{2}\s(D_{x},D_{\x},D_{y},D_{\eta})\right)^{k}
\Big[a(x,\x)b(y,\eta)\Big]_{|_{\substack{x=y, \x=\eta}}}
\end{equation}
modulo symbols in $S^{m+m'-\rho}$.
\end{definition}

\noindent
$ \bullet $
We have  the expansion  
\[
 a\#^{W}_{\rho}b =ab+\frac{1}{2 \ii }\{a,b\} -\frac{1}{8}\Big(\pa_{\x\x}a\pa_{xx}b
 -2\pa_{\x x}a\pa_{x\x}b+\pa_{xx}a\pa_{\x\x}b\Big)
 \]
up to a symbol in 
$S^{m+m'-3}$, 
where 
\begin{equation*}
\{a,b\}:=\pa_{\x}a\pa_{x}b-\pa_{x}a\pa_{\x}b
\end{equation*}
denotes the Poisson bracket. 
In the following Lemma 
we prove that the composition of pseudo differential operators
is  pseudo differential up to 
up to a remainder which is smoothing in the $x$-variable.
We provide
precise estimates
on the $|\cdot|_{m,s,\alpha}^{\g,\calO}$-norm
of the symbol of the composition operator.
\begin{lemma}{\bf (Composition).}\label{James}
Let $m,m'\in \mathbb{R}$.
Fix  $\rho,p$ as in Definition \ref{ellerho},
such that $\rho \geq \!\max\{-(m+m'+1), 3\}$
and  define $N:=m+m'+\rho\geq1$.
Consider two symbols
$a(\vphi,x,\x)\in S^m$, $b(\vphi,x,\x)\in S^{m'}$.
There exist an operator $R_{\rho}\in \gotL_{\rho, p}$ 
and a constant $\s=\s(N)\sim N$
such that (recall Def. \ref{cancelletti}) 
\begin{equation}\label{escobar12}
\opw(a)\circ\opw(b)
=\opw(c)+R_{\rho}, \qquad c:=a\#^{W}_{N} b\in S^{m+m'}
\end{equation}
where
\begin{align}
\lvert c \rvert^{\g, \calO}_{m+m', s, \alpha}&\le_{s, \rho, \alpha, m, m'} 
\lvert a \rvert^{\g, \calO}_{m, s+\s,\s+\alpha}
\lvert b \rvert^{\g, \calO}_{m', s_0+\s, \alpha+\s}
\lvert a \rvert^{\g, \calO}_{m, s_0+\s,\s+\alpha}+
\lvert b \rvert^{\g, \calO}_{m', s+\s, \alpha+\s}\,,\label{crawford}\\
\mathbb{M}^{\g}_{R_{\rho}}(s, \tb)
&\le_{s, \rho,  m , m'} 
\lvert a \rvert^{\g, \calO}_{m,s+\s, \s}
\lvert b \rvert^{\g, \calO}_{m', s_0+\s, \s}
+\lvert a \rvert^{\g, \calO}_{m, s_0+\s, \s}
\lvert b \rvert^{\g, \calO}_{m', s+\rho+\s, \s}\,,\label{jamal}
\end{align}
for all $0\le \tb\le \rho-2$ and $s_0\le s\le \mathcal{S}$.
Moreover one has
\begin{align}
\!\!\!\lvert \Delta_{12} c   \rvert_{m+m', p+\s, \alpha+\s} 
&\le_{p, \alpha, \rho, m, m'} 
\lvert \Delta_{12} a \rvert_{m, p+\s, \s+\alpha}\lvert b \rvert_{m', p+\s, \alpha+\s}
+\lvert a \rvert_{m, p+\s, \s+\alpha}\lvert \Delta_{12} b   \rvert_{m', p+\s, \alpha+\s}
\,,\label{crawford2}\\
\mathbb{M}_{\Delta_{12} R_{\rho} } (p, \tb) 
&\le_{p, \rho, m, m'} 
\lvert \Delta_{12} a   \rvert_{m+1, p+\s, \s}
\lvert b \rvert_{m', p+\s, \s}+\lvert a \rvert_{m, p+\s, \s}
\lvert \Delta_{12} b   \rvert_{m'+1, p+\s, \s}\,,\label{jamal2}
\end{align}
for all $0\le \tb \le \rho-3$ and where $p$ 
is the constant given in Definition \ref{ellerho}.
\end{lemma}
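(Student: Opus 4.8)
\emph{Proof strategy.} The plan is to reduce the statement to the symbolic calculus already available in the \emph{standard} quantization (Appendix B of \cite{FGP1}) together with the composition properties of the class $\gotL_{\rho,p}$ (Appendix B of \cite{FGP}), using the quantization conversion formulae of Lemma \ref{weyl/standard}, Lemma \ref{lem:escobar} and Remark \ref{standard/Weyl} to pass back and forth between the Weyl and the standard quantizations. The hypothesis $\rho\ge\max\{-(m+m'+1),3\}$, which guarantees $N=m+m'+\rho\ge1$ and $\rho\ge3$, is what makes all the order bookkeeping close.

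\emph{Steps 1--2: conversion and composition in the standard quantization.} First I would apply Remark \ref{standard/Weyl} to each factor, truncating the standard-to-Weyl expansion to a length $\s\sim N$ large enough (proportional to $N$) that the conversion remainders gain enough derivatives to absorb, later, the order of the \emph{opposite} factor. This produces standard symbols $\alpha\in S^m$, $\beta\in S^{m'}$ (the truncated expansions $\sum_{p}\tfrac{1}{2^pp!}\pa_x^pD_\xi^p a$, $\sum_{p}\tfrac{1}{2^pp!}\pa_x^pD_\xi^p b$) and remainders $Q_a,Q_b\in\gotL_{\rho,p}$ such that $\opw(a)=\op(\alpha)+Q_a$, $\opw(b)=\op(\beta)+Q_b$, with $|\alpha|^{\g,\calO}_{m,s,\al}\lesssim|a|^{\g,\calO}_{m,s+\s,\al+\s}$, $\mathbb{M}^\g_{Q_a}(s,\tb)\lesssim|a|^{\g,\calO}_{m,s+\s,\s}$, and analogously for $\beta,Q_b$ and for the $\Delta_{12}$-variations (using the $\Delta_{12}$ parts of Lemmata \ref{weyl/standard}, \ref{lem:escobar}). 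Then
\[
\opw(a)\circ\opw(b)=\op(\alpha)\op(\beta)+\op(\alpha)Q_b+Q_a\op(\beta)+Q_aQ_b.
\]
For the first term I invoke the composition lemma in the standard quantization: $\op(\alpha)\op(\beta)=\op(\gamma)+\widetilde R$, with $\gamma\in S^{m+m'}$ the truncated standard composition symbol, $\widetilde R\in\gotL_{\rho,p}$, and the usual tame estimates $|\gamma|^{\g,\calO}_{m+m',s,\al}\lesssim|\alpha|^\g_{m,s+\s,\al+\s}|\beta|^\g_{m',s_0+\s,\al+\s}+|\alpha|^\g_{m,s_0+\s,\al+\s}|\beta|^\g_{m',s+\s,\al+\s}$ and $\mathbb{M}^\g_{\widetilde R}(s,\tb)\lesssim\dots$. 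The three cross terms are compositions of a pseudo differential operator of order $m$ (resp.\ $m'$) with a $\gotL_{\rho,p}$-operator which, by the choice of truncation length, is smoothing of order at least $\rho+|m|+|m'|$; hence by the composition rules for $\gotL_{\rho,p}$ each of $\op(\alpha)Q_b$, $Q_a\op(\beta)$, $Q_aQ_b$ again lies in $\gotL_{\rho,p}$, with modulo/tame norm bounded by the product of the relevant symbol seminorm and smoothing norm, i.e.\ by the right-hand side of \eqref{jamal}.

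\emph{Step 3: conversion back and identification of the symbol.} Applying Lemma \ref{lem:escobar} to $\op(\gamma)$ gives $\op(\gamma)=\opw(c)+R'$ with $R'\in\gotL_{\rho,p}$ and $c$ the finite standard-to-Weyl expansion of $\gamma$; setting $R_\rho:=R'+\widetilde R+\op(\alpha)Q_b+Q_a\op(\beta)+Q_aQ_b\in\gotL_{\rho,p}$ yields \eqref{escobar12}. It then remains to check that $c=a\#^W_N b$ modulo $S^{m+m'-\rho}$. This is the purely algebraic fact that composing the three truncated asymptotic expansions (Weyl$\to$standard on each factor, standard composition, standard$\to$Weyl) reproduces, up to order $m+m'-\rho$, the Weyl symplectic series $\sum_k\tfrac1{k!}\bigl(\tfrac{\ii}{2}\sigma(D_x,D_\xi,D_y,D_\eta)\bigr)^k[a(x,\xi)b(y,\eta)]\big|_{x=y,\xi=\eta}$ of Definition \ref{cancelletti} --- the classical computation of \S18.5 of \cite{Hormo} --- and the dropped terms of order $<m+m'-\rho$ are absorbed into $R_\rho$ using the smoothing-norm bound in \eqref{passo2}. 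The estimates \eqref{crawford}, \eqref{jamal} now follow by chaining the tame product estimates through $|a|\rightsquigarrow|\alpha|$, the standard composition estimate, and $|\gamma|\rightsquigarrow|c|$; and \eqref{crawford2}, \eqref{jamal2} follow by running the identical argument with $a\rightsquigarrow\Delta_{12}a$, $b\rightsquigarrow\Delta_{12}b$, the loss of one extra space derivative (indices $m+1$, $m'+1$ and $0\le\tb\le\rho-3$) being inherited from the $\Delta_{12}$ estimates of Step 1 and from the definition of the class $\gotL_{\rho,p}$.

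\emph{Main obstacle.} The substantive point is the order bookkeeping: the conversions in Step 1 must be carried far enough (length $\sim N=m+m'+\rho$, not merely $\rho$) so that $Q_a,Q_b$ are smoothing enough to absorb the opposite factor's order when the cross terms are formed, so that everything stays inside the \emph{single} class $\gotL_{\rho,p}$ while respecting the asymmetric (tame) dependence on $a$ and $b$ recorded in \eqref{crawford}--\eqref{jamal2}; and one must verify carefully that the composition of the three finite expansions leaves exactly $a\#^W_N b$ as principal part. A minor technical point is the zero-$x$-average constraint in the quantization \eqref{weylquanti}: the projection onto zero-average functions differs from the identity by a smoothing operator, which is absorbed into $R_\rho$ exactly as in \cite{FGP1}.
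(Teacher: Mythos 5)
Your proposal is correct and follows essentially the same route as the paper: convert each Weyl-quantized factor to the standard quantization via the truncated expansions of Remark \ref{standard/Weyl}, compose in the standard quantization (Lemma B.4 of \cite{FGP}), convert back with Lemma \ref{lem:escobar}, identify the resulting symbol with $a\#^{W}_{N}b$ by an explicit computation, and collect all conversion, composition and cross-term remainders into $R_{\rho}\in\gotL_{\rho,p}$. The only difference is presentational: you spell out the cross terms $\op(\alpha)Q_b$, $Q_a\op(\beta)$, $Q_aQ_b$ and the truncation-length bookkeeping, which the paper absorbs silently into its remainder $Q_{\rho}$ through the choice $N=m+m'+\rho$.
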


\begin{proof}
By Lemma \ref{lem:escobar} and   Remark \ref{standard/Weyl} 
we can write
\begin{equation}\label{escobar10}
\opw(a)\circ\opw(b)=\op({a}_{N})\circ\op({b}_{N})+Q_{\rho}
\end{equation}
where the symbols ${a}_{N}$ and ${b}_{N}$
are defined as (recall \eqref{passo}, \eqref{escobar2})
\begin{equation}\label{escobar16}
{a}_{N}=\sum_{p=0}^{N-1}\frac{1}{2^{p}p!}
\big(\pa_{x}^{p}D_{\x}^{p}a\big)(\vphi,x,\x)\,,\qquad
{b}_{N}=\sum_{p=0}^{N-1}\frac{1}{2^{p}p!}
\big(\pa_{x}^{p}D_{\x}^{p}b\big)(\vphi,x,\x)\,.
\end{equation}
Moreover the remainder $Q_{\rho}$ satisfies bounds like \eqref{passo2},\eqref{passo3}.
 Lemma B.4 in \cite{FGP}  implies that
 \begin{equation}\label{escobar11}
 \op({a}_N)\circ\op({b}_N)=\op({c}_N)+L_{\rho}
 \end{equation}
 for some $L_{\rho}\in \gotL_{\rho, p}$ and where the symbol 
 $c_N\in S^{m}$
 has the form 
 \begin{equation}\label{escobar15}
{c}_N(x,\x):=
 \sum_{n=0}^{N-1}\frac{1}{n! \mathrm{i}^{n}}(\pa_{\x}^{n}{a}_{N})(x,\x)\,\cdot
 (\pa_{x}^{n}{b}_{N})(x,\x)
 \end{equation}
 By Lemma  \ref{lem:escobar} applied to the operator $\op(c_N)$ in \eqref{escobar11} 
 we get
 $  \op({c}_{N})=\opw(c)+\widetilde{R}_{\rho}$
 where
 $\widetilde{R}_{\rho}\in \gotL_{\rho,p}$
 and
  \begin{equation}\label{escobar19}
 c=\sum_{p=0}^{N-1}\frac{(-1)^{p}}{2^{p}p!}
\big(\pa_{x}^{p}D_{\x}^{p}{c}_{N}\big)(\vphi,x,\x)\,.
 \end{equation}
 By an explicit computation using \eqref{escobar15}, \eqref{escobar16}
 we deduce that the symbol $c$ in \eqref{escobar19}
 has the form $c=a\#^{W}_{N} b$ (see \eqref{espansione2}).
Using \eqref{escobar10}, \eqref{escobar11} we get the \eqref{escobar12}
  with $R_{\rho}:=\widetilde{R}_{\rho}+L_{\rho}+Q_{\rho}$.
  The estimates \eqref{crawford}-\eqref{jamal2}
  follow by combining the estimates in 
  Lemmata B.2 and B.4 in \cite{FGP} and Lemmata
  \ref{weyl/standard}, \ref{lem:escobar} and
  Remark \ref{standard/Weyl}.
\end{proof}

\begin{lemma}{\bf (Commutator).}\label{James2}
Let $m,m'\in \mathbb{R}$.
Fix  $\rho,p$ as in Definition \ref{ellerho},
such that $\rho \geq \max\{-(m+m'+1), 3\}$
and  define $N:=m+m'+\rho\geq1$.
Consider two symbols
$a(\vphi,x,\x)\in S^m$, $b(\vphi,x,\x)\in S^{m'}$.
There exist an operator $R_{\rho}\in \gotL_{\rho, p}$ 
and $\s=\s(N)\sim N$
such that the commutator between $\opw(a)$ and $\opw(b)$ has the form
\begin{equation}\label{escobar12commu}
[\opw(a), \opw(b)]
=\opw(c)+R_{\rho}, \qquad c:=a\star_{N} b\in S^{m+m'-1}
\end{equation}
where (recall \eqref{espansione2})
\begin{equation}\label{moyal}
a\star_{N} b:=a\#^{W}_{N}b-b\#^{W}_{N}a=\frac{1}{\ii}\{a,b\}+r\,,\qquad r\in S^{m+m'-3}\,.
\end{equation}
Moreover the symbols $\{a,b\}$, $r$ satisfy bounds as 
\eqref{crawford}, \eqref{crawford2} with $m+m'$ replaced by $m+m'-1$, 
$m+m'-3$ respectively. Finally the smoothing operator $R_{\rho}$
satisfies bounds as \eqref{jamal} and \eqref{jamal2}.
\end{lemma}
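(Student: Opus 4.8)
The plan is to obtain the commutator expansion by applying the Composition Lemma \ref{James} to the two orderings $\opw(a)\circ\opw(b)$ and $\opw(b)\circ\opw(a)$ and subtracting. Since the hypothesis $\rho\geq\max\{-(m+m'+1),3\}$ is symmetric in $m$ and $m'$, Lemma \ref{James} applies to both products with the \emph{same} $N=m+m'+\rho$ and the \emph{same} $\s=\s(N)\sim N$, yielding
\[
\opw(a)\circ\opw(b)=\opw(a\#^{W}_{N}b)+R^{(1)}_{\rho}\,,\qquad
\opw(b)\circ\opw(a)=\opw(b\#^{W}_{N}a)+R^{(2)}_{\rho}\,,
\]
with $R^{(1)}_{\rho},R^{(2)}_{\rho}\in\gotL_{\rho,p}$ satisfying \eqref{jamal}, \eqref{jamal2} (with $a,b$ exchanged in the second identity). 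First I would record that $\gotL_{\rho,p}$ is a linear subspace on which the quantities $\mathbb{M}^{\gamma}_{A}(s,\tb)$ and $\mathbb{M}_{\Delta_{12}A}(p,\tb)$ of Definition \ref{ellerho} are subadditive in $A$; hence $R_{\rho}:=R^{(1)}_{\rho}-R^{(2)}_{\rho}\in\gotL_{\rho,p}$, and the four products of seminorms appearing in its estimate are each dominated by the right-hand side of \eqref{jamal} (resp. \eqref{jamal2}) after enlarging $\s$ by $\rho$ (which keeps $\s\sim N$ up to a constant depending on $m,m'$) and using the monotonicity of $|\cdot|_{m,s,\alpha}$ in $s$. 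This already produces \eqref{escobar12commu} with $c:=a\#^{W}_{N}b-b\#^{W}_{N}a=:a\star_{N}b$.

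Next I would identify the symbol class of $c$ and extract its leading term. The structural observation is that, under the relabelling of variables that swaps the roles of $a$ and $b$, the operator $\s(D_{x},D_{\x},D_{y},D_{\eta})$ in \eqref{espansione2} turns into $\s(D_{y},D_{\eta},D_{x},D_{\x})=-\s(D_{x},D_{\x},D_{y},D_{\eta})$, so the $k$-th term of $a\#^{W}_{N}b$ acquires a factor $(-1)^{k}$. Therefore in $c=a\#^{W}_{N}b-b\#^{W}_{N}a$ the terms with $k$ even cancel — in particular the leading term $ab\in S^{m+m'}$ and the term of order $m+m'-2$, which by the expansion recalled after Definition \ref{cancelletti} is the $a\leftrightarrow b$-symmetric expression $-\tfrac{1}{8}\big(\pa_{\x\x}a\,\pa_{xx}b-2\pa_{\x x}a\,\pa_{x\x}b+\pa_{xx}a\,\pa_{\x\x}b\big)$ — while the terms with $k$ odd are doubled. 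Since the $k=1$ term of $a\#^{W}_{N}b$ equals $\tfrac{1}{2\ii}\{a,b\}$, doubling gives $\tfrac{1}{\ii}\{a,b\}\in S^{m+m'-1}$, and the surviving odd terms ($3\le k\le N-1$) assemble into a symbol $r\in S^{m+m'-3}$. This is precisely \eqref{moyal}.

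Finally I would check the quantitative bounds for $\{a,b\}$ and $r$. Writing $\{a,b\}=\pa_{\x}a\,\pa_{x}b-\pa_{x}a\,\pa_{\x}b$, applying the Leibniz rule in $\x$, and using the tame product inequality $\|fg\|_{s}\lesssim\|f\|_{s}\|g\|_{s_{0}}+\|f\|_{s_{0}}\|g\|_{s}$ in $H^{s}(\T^{\nu+1})$ (the same inequality underlying \eqref{crawford}), each resulting term $\pa_{\x}^{\beta_{1}+1}a\,\pa_{\x}^{\beta_{2}}\pa_{x}b$ is bounded in $H^{s}$ by $\langle\x\rangle^{m+m'-1-\beta}$ times a product of the type appearing on the right of \eqref{crawford}; summing over $\beta\le\alpha$ gives the estimate with $m+m'$ replaced by $m+m'-1$. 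Since $r$ is a finite sum of constant multiples of products of $k$-th order (with $k\ge 3$) derivatives of $a$ and of $b$, the identical computation gives its estimate with $m+m'$ replaced by $m+m'-3$; the variations $\Delta_{12}\{a,b\}$ and $\Delta_{12}r$ are treated the same way through $\Delta_{12}(fg)=(\Delta_{12}f)g+f(\Delta_{12}g)$, producing \eqref{crawford2} with the corresponding shift of orders. The only genuinely structural ingredient here is the parity cancellation in the Moyal expansion, which is immediate; the main (but routine) obstacle is the bookkeeping of the loss of $x$-derivatives (Sobolev index, absorbed into $\s$) and of $\x$-derivatives ($\alpha$-index) when propagating the composition-lemma estimates through the subtraction.
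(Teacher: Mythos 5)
Your argument is correct and follows essentially the same route as the paper: the paper's proof simply states that the lemma is a direct application of the Composition Lemma \ref{James} together with the expansion formula \eqref{espansione2}, which is precisely your subtraction of $\opw(a)\circ\opw(b)$ and $\opw(b)\circ\opw(a)$ with the same $N$ and $\s$. The parity cancellation of the even-order terms (and doubling of the odd ones) in the Weyl expansion, which you spell out via the antisymmetry of $\s(D_{x},D_{\x},D_{y},D_{\eta})$ under swapping the two sets of variables, is exactly the detail the paper leaves implicit, and your handling of the remainder and symbol estimates by subadditivity and enlargement of $\s$ is consistent with the stated bounds.
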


\begin{proof}
It is a direct application of Lemma \ref{James}. The expansion 
\eqref{escobar12commu}, \eqref{moyal}
follows  using formula  \eqref{espansione2}.
\end{proof}

\noindent
{\bf Notation.}
\begin{itemize}

\item 
Let  $a,b,c,d$ be symbols in the class
 $S^{m}$, $m\in \mathbb{R}$ (see Def. \ref{pseudoR})
depending in a Lipschitz way on $\omega\in \mathcal{O}$.
We shall write
\begin{equation}\label{matrixSym}
A:=A(\vphi,x,\x):=\sm{a}{b}{c}{d}
\in S^{m}\otimes\mathcal{M}_{2}(\mathbb{C})
\end{equation}
to denote $2\times2$ matrices of symbols. 
With abuse of notation
we write $|A|_{m,s,\alpha}^{\gamma,\calO}$   to denote
$\max\{|f|_{m,s,\alpha}^{\gamma,\calO}\,, f=a,b,c,d\}$.

\item Similarly we shall write 
$\mathcal{Q}\in\gotL_{\rho,p}\otimes\mathcal{M}_{2}(\mathbb{C}) $ 
to denote a $2\times2$-matrix whose entries are 
smoothing operators in $\gotL_{\rho,p}$.
\end{itemize}

\subsection{Hamiltonian and translation invariant operators}\label{sec:group}
Let $\mathtt{v}\in \mathbb{Z}^{\nu}$ as in \eqref{velocityvec}.
We define  the
 subspace
\begin{align}
S_{\mathtt{v}}&:=\{u(\vphi,x)\in L^{2}(\mathbb{T}^{\nu+1};\mathbb{C}^{2})\,:\,
u(\vphi,x)=U(\vphi-\mathtt{v}x)\,, U(\Theta)\in L^{2}(\mathbb{T}^{\nu};\mathbb{C}^2)
\}\,.\label{funzmom}
\end{align}
With abuse of notation we denote by $S_{\mathtt{v}}$ also the subspace of
$L^{2}(\mathbb{T}^{\nu+1};\mathbb{C})$ of scalar functions $u(\vphi,x)$
of the form 
$u(\vphi,x)=U(\vphi-\mathtt{v}x)$, 
$U(\Theta)\in L^{2}(\mathbb{T}^{\nu};\mathbb{C})$.
We consider the symplectic form in the extended phase space
$(\vphi,Q,h,\bar{h})\in \mathbb{T}^{\nu}\times\mathbb{R}^{\nu}
\times H^{s}(\mathbb{T};\mathbb{C})\times H^{s}(\mathbb{T};\mathbb{C})$
\begin{equation}\label{formaestesa}
\Omega_{e}(\vphi,Q,h,\bar{h})=dQ\wedge d\vphi-\ii dh\wedge d\bar{h}\,
\;\;\footnote{namely, for any  $U_1:=(\vphi_1,Q_1, h_1,\ov{h_1})$,
$U_2:=(\vphi_2,Q_2, h_2,\ov{h_2})$, one has $
\Omega_e(U_1,U_2)= J \vect{\vphi_1}{Q_1}\cdot \vect{\vphi_2}{Q_2}
-\int_{\mathbb{T}} \vect{h_1}{\ov{h_1}}\cdot \ii J\vect{h_2}{\ov{h_2}}dx$,\;
$J:=\sm{0}{1}{-1}{0}$.
}
\end{equation}
where $\cdot$ denotes the standard scalar product on $\mathbb{R}^{2}$.
Given a function $F : \mathbb{T}^{\nu}\times\mathbb{R}^{\nu}
\times H^{s}(\mathbb{T};\mathbb{C})\times H^{s}(\mathbb{T};\mathbb{C})\to\mathbb{R}$
 its Hamiltonian vector field is given by
\[
X_{F}=\Big(\pa_{Q}F, -\pa_{\vphi}F, -\ii \pa_{\bar{h}}F, \ii \pa_{h}F    \Big)\,,
\]
since
\[
dH(U)[\hat{\eta}]=\Omega_e(\hat{\eta},X_{F}(U))\,,\quad \forall \;U,\hat{\eta}\in 
 \mathbb{T}^{\nu}\times\mathbb{R}^{\nu}
\times H^{s}(\mathbb{T};\mathbb{C})\times H^{s}(\mathbb{T};\mathbb{C})\,.
\]
Here $\pa_{h}=(\pa_{{\rm Re}(h)}-\ii \pa_{{\rm Im}(h)})/\sqrt{2}$, 
$\pa_{h}=(\pa_{{\rm Re}(h)}+\ii \pa_{{\rm Im}(h)})/\sqrt{2}$
denote the $L^{2}$-gradient.
The Poisson brackets between two Hamiltonians $F, G$
are defined as
\begin{equation}\label{poissonEstese}
\{G,F\}_{e}:=\Omega_e(X_{G},X_{F})=\pa_{Q}G\pa_{\vphi}F-\pa_{\vphi}G\pa_{Q}F
+\frac{1}{\ii }\int_{\mathbb{T}}
(\pa_{h}F\pa_{\bar{h}}G-\pa_{\bar{h}}F\pa_{{h}}G)dx.
\end{equation}
We give the following Definition.

\begin{definition}\label{admiOpComp}
Consider an operator $\mathcal{T}=\mathcal{T}(\vphi)$ 
acting on $L^2(\T^{\nu+1}; \mathbb{C}^2)$
of the form \eqref{forma-complessa}. We say that $\mathcal{T}(\vphi)$ is

\vspace{0.5em}
\noindent
$(i)$ {\bf Hamiltonian}
if it has the form
\begin{equation}\label{Ham1comp}
\mathcal{T}=\ii E \widetilde{\mathcal{T}}
\,, \qquad E=\sm{1}{0}{0}{-1}
\end{equation}
where 
$\widetilde{\mathcal{T}}$ has the form \eqref{forma-complessa}
and
\begin{equation}\label{self-adjT}
(\widetilde{\mathcal{T}}_{+}^{+})^{*}=\widetilde{\mathcal{T}}_{+}^{+}\,,
\qquad (\widetilde{\mathcal{T}}_{+}^{-})^{*}=\ov{\widetilde{\mathcal{T}}_{+}^{-}}
\end{equation}
where $(\mathcal{T}_{\s}^{\s'})^{*}_i$, $\s,\s'=\pm$ 
denotes the adjoint operator with respect 
to the complex scalar product of $L^{2}(\mathbb{T};\mathbb{C})$;

\vspace{0.5em}
\noindent
$(ii)$ {\bf $x$-translation invariant} if  (recall \eqref{funzmom})
\begin{equation}\label{traInvcomp}
\mathcal{T}\; :\; S_{\mathtt{v}}\to S_{\mathtt{v}}\,. 
\end{equation}

Consider a real operator $\mathcal{Q}=\mathcal{Q}(\vphi)$ 
acting on $L^{2}(\mathbb{T}^{\nu+1}; \mathbb{R}^{2})$
of the form \eqref{matrix1}. We say that $\mathcal{Q}$ is,
respectively, \emph{Hamiltonian}, 
\emph{$x$-translation invariant},
if the operator $\mathcal{T}=\mathcal{C}\mathcal{Q}\mathcal{C}^{-1}$ 
defined in \eqref{forma-complessa}
is, respectively,
Hamiltonian, 
$x$-translation invariant.
We say that an operator $\mathcal{L}=\mathcal{L}(\vphi)$
of the form
\begin{equation}\label{ELLEOMEGA}
\mathcal{L}:=\omega\cdot\pa_{\vphi}+\mathcal{T}(\vphi)
\end{equation}
is respectively Hamiltonian, $x$-translation invariant
if $\mathcal{T}(\vphi)$ 
is respectively Hamiltonian, $x$-translation invariant.
Similar for $\mathcal{L}:=\omega\cdot\pa_{\vphi}+\mathcal{Q}(\vphi)$
where $\mathcal{Q}(\vphi)$ has the form \eqref{matrix1}.
\end{definition}

Let $\mathcal{T}=\mathcal{T}(\vphi)$ 
acting on $L^2(\T^{\nu+1}; \mathbb{C}^2)$
of the form \eqref{forma-complessa} be an Hamiltonian operator according to 
Definition \ref{admiOpComp}.
We associate to $\mathcal{T}$  a real valued  Hamiltonian 
function 
\begin{equation}\label{HamilDiTT}
\mathcal{H}(U)=\omega\cdot Q+\frac{1}{2}\int_{\mathbb{T}} 
\widetilde{\mathcal{T}}(\vphi)\vect{h}{\bar{h}} \cdot\vect{\bar{h}}{h}dx\,,
\quad\omega\in \mathbb{R}^{\nu}\,,
\quad U=(\vphi,Q,h,\bar{h})\in 
 \mathbb{T}^{\nu}\times\mathbb{R}^{\nu}
\times H^{s}(\mathbb{T};\mathbb{C})\times H^{s}(\mathbb{T};\mathbb{C})
\end{equation}
with $\widetilde{\mathcal{T}}$ as in \eqref{Ham1comp}, \eqref{self-adjT}.
Its Hamiltonian vector field $X_{\mathcal{H}}$, w.r.t. the symplectic 
form \eqref{formaestesa}, 
as the form
\[
\pa_{t}{\vphi}=\omega\,,\quad \pa_{t}{Q}=-\frac{1}{2}\pa_{\vphi}\mathcal{H}(U)\,,
\qquad
\pa_{t}\vect{h}{\bar{h}}=-\ii E\widetilde{\mathcal{T}}(\vphi)\vect{h}{\bar{h}}
=-\mathcal{T}(\vphi)\vect{h}{\bar{h}}\,.
\]
The second equation is not relevant for the dynamics since it is 
completely determined from the equation on the variables $(\vphi, h,\bar{h})$.
Moreover, setting $\vphi=\omega t$, $z(\omega t) := h(t)$, 
$z(\vphi)\in H^{s}(\mathbb{T}^{\nu+1};\mathbb{C})$,
we can write the third equation as
\[
0=\omega\cdot\pa_{\vphi}\vect{z}{\bar{z}}(\vphi)+\mathcal{T}(\vphi)\vect{z}{\bar{z}}(\vphi)
{=}\mathcal{L}(\vphi)\vect{z}{\bar{z}}(\vphi)\,,
\]
where $\mathcal{L}(\vphi)$ is the operator defined 
in \eqref{ELLEOMEGA} acting on functions in $H^{s}(\mathbb{T}^{\nu+1};\mathbb{C})$.
This justifies Definition \ref{admiOpComp}.
In the following we shall characterize Hamiltonian operators which are 
$x$-\emph{translation} invariant, namely \eqref{traInvcomp} holds,
by giving a condition on the associated Hamiltonian function $\mathcal{H}$
in \eqref{HamilDiTT}.
\begin{lemma}\label{funzioniSVVV}
The following conditions are equivalent:
\begin{itemize}
\item[(a)] the function $h=h(\vphi,x)$ belong to $S_{\mathtt{v}}$ in \eqref{funzmom};

\item[(b)] one has that
\begin{equation}\label{SVVV2}
(\mathtt{v}\cdot\pa_{\vphi}+\pa_{x})h=0\;;
\end{equation}

\item[(c)] $h(\vphi,x)=(2\pi)^{-1/2}\sum_{\ell,j}h_{\ell j}e^{\ii \ell\cdot\vphi+\ii jx}$ is such that
\begin{equation}\label{vascoblasco31}
{\rm if} \quad h_{\ell j}\neq0 \quad \Rightarrow \quad \mathtt{v}\cdot\ell+j=0\,,\;\;\forall \, 
\ell\in \mathbb{Z}^{\nu}\,, \;j\in \mathbb{Z}\,.
\end{equation}
\end{itemize}
\end{lemma}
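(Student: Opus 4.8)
The plan is to reduce all three conditions to statements about the Fourier coefficients $h_{\ell j}$ of $h$, so that no regularity beyond $h\in L^{2}(\mathbb{T}^{\nu+1};\mathbb{C})$ is needed and the differential relation in $(b)$ is read in the distributional sense. Writing $h(\vphi,x)=(2\pi)^{-1/2}\sum_{\ell,j}h_{\ell j}e^{\ii(\ell\cdot\vphi+jx)}$, I would first prove $(b)\Leftrightarrow(c)$: applying $\mathtt{v}\cdot\pa_{\vphi}+\pa_{x}$ term by term gives the series with coefficients $\ii(\mathtt{v}\cdot\ell+j)h_{\ell j}$, which vanishes if and only if $(\mathtt{v}\cdot\ell+j)h_{\ell j}=0$ for all $(\ell,j)$, i.e. precisely \eqref{vascoblasco31}.

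Next I would prove $(c)\Rightarrow(a)$: under \eqref{vascoblasco31} only the coefficients with $j=-\mathtt{v}\cdot\ell$ are nonzero, so $h(\vphi,x)=(2\pi)^{-1/2}\sum_{\ell}h_{\ell,-\mathtt{v}\cdot\ell}\,e^{\ii\ell\cdot(\vphi-\mathtt{v}x)}$; setting $U(\Theta):=(2\pi)^{-1/2}\sum_{\ell}h_{\ell,-\mathtt{v}\cdot\ell}\,e^{\ii\ell\cdot\Theta}$ one obtains $h(\vphi,x)=U(\vphi-\mathtt{v}x)$, and $U\in L^{2}(\mathbb{T}^{\nu};\mathbb{C})$ because $\sum_{\ell}|h_{\ell,-\mathtt{v}\cdot\ell}|^{2}\le\sum_{\ell,j}|h_{\ell j}|^{2}<\infty$; this gives $(a)$ by \eqref{funzmom}. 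Finally, $(a)\Rightarrow(c)$ is immediate: expanding $U(\Theta)=(2\pi)^{-1/2}\sum_{k}U_{k}e^{\ii k\cdot\Theta}$ and substituting $\Theta=\vphi-\mathtt{v}x$ shows $h_{\ell j}=U_{\ell}$ when $j=-\mathtt{v}\cdot\ell$ and $h_{\ell j}=0$ otherwise, which is \eqref{vascoblasco31}. Chaining $(a)\Rightarrow(c)\Leftrightarrow(b)$ and $(c)\Rightarrow(a)$ yields the equivalence.

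There is no real obstacle here; the only point deserving a word of care is that for $h$ merely in $L^{2}$ the operator $\mathtt{v}\cdot\pa_{\vphi}+\pa_{x}$ in $(b)$ must be interpreted distributionally, and the Fourier computation above shows this causes no issue. If one preferred a coordinate-free argument for $(a)\Rightarrow(b)$, one could instead note that $\mathtt{v}\cdot\pa_{\vphi}+\pa_{x}$ is exactly the directional derivative annihilating functions constant along the flow $s\mapsto(\vphi_{0}-\mathtt{v}s,\,x_{0}+s)=\Psi^{s}_{-\mathtt{v}}(\vphi_0)$, which is the content of \eqref{invarianceequationM}; but the Fourier route is cleanest and handles regularity for free.
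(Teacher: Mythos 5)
Your proof is correct and follows essentially the same route as the paper: both arguments come down to reading all three conditions on the Fourier coefficients $h_{\ell j}$ (the paper proves $(a)\Rightarrow(b)$ by the chain rule and $(b)\Rightarrow(c)$ by the same Fourier computation you use, declaring the converse implications trivial, while you spell out $(c)\Rightarrow(a)$ by exhibiting $U$ explicitly). Your remarks on the distributional interpretation of $(b)$ and the $L^{2}$ bound for $U$ are fine additional care, but do not change the substance of the argument.
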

\begin{proof}
$(a)\Rightarrow (b)$. 
If $h\in S_{\mathtt{v}}$ then we can write 
$h(\vphi,x)=G(\vphi-\mathtt{v}x)$ for some $G(\Theta)$, $\Theta\in \mathbb{T}^{\nu}$.
Hence $(\mathtt{v}\cdot\pa_{\vphi}+\pa_{x})h(\vphi,x)=
(\mathtt{v}\cdot\pa_{\Theta}G)(\vphi-\mathtt{v}x)-(\mathtt{v}\cdot\pa_{\Theta}G)(\vphi-\mathtt{v}x)=0$ which is the \eqref{SVVV2}.

\noindent
$(b)\Rightarrow (c)$.
By \eqref{SVVV2} and passing to the Fourier basis we have
\[
\sum_{\ell\in \mathbb{Z}^{\nu},j\in\mathbb{Z}}(\ii \mathtt{v}\cdot\ell+\ii j)
h_{\ell j}e^{\ii \ell\cdot\vphi+\ii jx}=0\,, 
\]
and hence we get the \eqref{vascoblasco31}. The implications $(c)\Rightarrow (b)$ and 
$(c)\Rightarrow(a)$ are  trivial.
\end{proof}

\begin{lemma}{\bf ($x$-translation invariant operators).}\label{lem:momentocomp}
Consider an operator $\widetilde{\mathcal{T}}$ 
acting on $L^2(\T^{\nu+1}; \mathbb{C}^2)$
of the form \eqref{forma-complessa}
satisfying \eqref{self-adjT}.
 Consider the Hamiltonian $\mathcal{H}$ in \eqref{HamilDiTT},
 the operator $\mathcal{T}:=\ii E\widetilde{\mathcal{T}}$ (see \eqref{Ham1comp})
 and 
the \emph{momentum} Hamiltonian 
 \begin{equation}\label{extMom0Comp}
\mathcal{M}=\mathcal{M}(\vphi,Q,h,\bar{h})=-\mathtt{v}\cdot Q+\frac{1}{2}
\int_{\mathbb{T}}\ii E \vect{h_x}{\bar{h_x}} \cdot\vect{\bar{h}}{h}dx\,.
\end{equation}
The following conditions are equivalent:
\begin{itemize}
\item[(a)]
the  Hamiltonian operator
$\mathcal{T}$ is $x-$translation invariant, namely satisfies \eqref{traInvcomp};

\item[(b)]
the Hamiltonians $\mathcal{H}, \mathcal{M}$ Poisson commutes, i.e.
\begin{equation}\label{extMomcomp}
\{\mathcal{M},\mathcal{H}\}_{e}=0\,,
\end{equation}
where $\{\cdot, \cdot\}_e$ are the Poisson brackets in \eqref{poissonEstese};

\item[(c)]
by identifying the operator $\widetilde{\mathcal{T}}$ 
with the matrix coefficients $(\widetilde{\mathcal{T}})_{\s,j}^{\s',k}(\ell)$, 
$\ell\in \mathbb{Z}^{\nu}, j,k\in \mathbb{Z}$, $\s,\s'=\pm$
(recall \eqref{actio}, \eqref{azioneSobsigma})
one has that
\begin{equation}\label{constcoeffMomentum}
(\widetilde{\mathcal{T}})_{\s,j}^{\s',k}(\ell)\neq0\quad \Rightarrow\quad
v\cdot\ell+ j-k=0\,, \quad \forall\, \ell\in \mathbb{Z}^{\nu}\,,\; j,k\in \mathbb{Z}\,,
\; \s,\s'=\pm\,.
\end{equation}
\end{itemize}
\end{lemma}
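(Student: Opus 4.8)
The plan is to establish the three-way equivalence by proving the cycle $(c)\Rightarrow(b)\Rightarrow(a)\Rightarrow(c)$, exploiting that all three statements are ultimately conditions on the Fourier support of $\widetilde{\mathcal{T}}$. The key computational identity underlying everything is that the momentum Hamiltonian $\mathcal{M}$ in \eqref{extMom0Comp} generates, via its Hamiltonian vector field, precisely the translation flow: the $(\vphi,h,\bar h)$-components of $X_{\mathcal{M}}$ are $(-\mathtt{v},-\ii E\partial_x\vect{h}{\bar h})=(-\mathtt{v},-\partial_x h,-\partial_x\bar h)$, so that the flow $\Phi^s_{\mathcal{M}}$ acts by $(\vphi,h(\cdot))\mapsto(\vphi-\mathtt{v}s,\,h(\cdot+s))$ on the relevant variables (compare \eqref{invarianceequationM}). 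Thus the infinitesimal generator of this flow on functions is the operator $\mathtt{v}\cdot\partial_\vphi+\partial_x$, which by Lemma~\ref{funzioniSVVV} annihilates exactly $S_{\mathtt{v}}$.

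First I would prove $(c)\Leftrightarrow(a)$ directly at the level of Fourier coefficients. Recall from \eqref{azioneSobsigma} and the subsequent identification that $(\widetilde{\mathcal{T}}\vect{z}{\bar z})$ has Fourier coefficients obtained by summing $(\widetilde{\mathcal{T}})_{\s,j}^{\s',k}(\ell-\ell')$ against $z_{\ell' k}^{\s'}$, producing an output coefficient at time-frequency $\ell$ and space-frequency $\s j$. If $h\in S_{\mathtt{v}}$, then by \eqref{vascoblasco31} its nonzero coefficients sit on $\mathtt{v}\cdot\ell'+(\s'k)=0$; assuming \eqref{constcoeffMomentum}, the nonzero matrix entries enforce $\mathtt{v}\cdot(\ell-\ell')+j-k=0$, and combining (being careful with the $\s,\s'$ signs, using that $\widetilde{\mathcal T}$ has the real-to-real structure \eqref{forma-complessa}) one checks the output coefficient lives on $\mathtt{v}\cdot\ell+(\s j)=0$, i.e. $\widetilde{\mathcal{T}}h\in S_{\mathtt{v}}$. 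Conversely, if \eqref{constcoeffMomentum} fails for some $(\ell,j,k,\s,\s')$, one feeds in a monomial $z$ supported on a single frequency forcing $z\in S_{\mathtt{v}}$ and observes the output has a component off $S_{\mathtt{v}}$, contradicting \eqref{traInvcomp}. This is bookkeeping with the conventions of Section~\ref{sec:2} and I would carry it out carefully but without surprises.

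For $(b)\Leftrightarrow(c)$ I would compute $\{\mathcal{M},\mathcal{H}\}_e$ using \eqref{poissonEstese}. The $\vphi,Q$-part gives $-\mathtt{v}\cdot\partial_\vphi\mathcal{H}$ (since $\mathcal{M}$ has no $\vphi$-dependence and $\partial_Q\mathcal{M}=-\mathtt{v}$, $\partial_\vphi\mathcal M=0$), while the $(h,\bar h)$-integral part, after integrating by parts, contributes $\frac12\int_\T \partial_x\!\big(\,\big)$-type terms that reassemble into the action of $\partial_x$ on the quadratic form $\int_\T \widetilde{\mathcal T}\vect{h}{\bar h}\cdot\vect{\bar h}{h}\,dx$. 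The upshot is the identity
\begin{equation*}
\{\mathcal{M},\mathcal{H}\}_e=-\tfrac12\int_{\mathbb{T}}\big[(\mathtt{v}\cdot\partial_\vphi+\partial_x),\widetilde{\mathcal T}(\vphi)\big]\vect{h}{\bar h}\cdot\vect{\bar h}{h}\,dx
\end{equation*}
(modulo verifying the symmetry \eqref{self-adjT} makes this a well-defined real quadratic form), so $\{\mathcal M,\mathcal H\}_e=0$ for all $(h,\bar h)$ iff the commutator $[(\mathtt{v}\cdot\partial_\vphi+\partial_x),\widetilde{\mathcal T}]$ vanishes; passing to Fourier coefficients, this commutator has entries $\ii(\mathtt{v}\cdot\ell+j-k)(\widetilde{\mathcal T})_{\s,j}^{\s',k}(\ell)$, which vanish identically iff \eqref{constcoeffMomentum} holds. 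Alternatively, and perhaps more cleanly, $(a)\Leftrightarrow(b)$ follows abstractly: $\mathcal{T}$ preserves $S_{\mathtt{v}}=\ker(\mathtt{v}\cdot\partial_\vphi+\partial_x)$ iff it commutes with the generator of the flow $\Phi^s_{\mathcal{M}}$, which by the standard Hamiltonian formalism is equivalent to $\{\mathcal{M},\mathcal{H}\}_e$ being constant, hence (being a quadratic form vanishing at the origin) zero.

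The main obstacle I anticipate is purely a matter of sign- and index-convention vigilance: reconciling the $\s,\s'=\pm$ labels in the real-to-real representation \eqref{forma-complessa}–\eqref{azioneSobsigma} with the scalar momentum condition $\mathtt{v}\cdot\ell+j-k=0$, and making sure the integration by parts in the Poisson-bracket computation produces no spurious boundary-type term and correctly uses \eqref{self-adjT} and $\ov{\widetilde{\mathcal{T}}_\s^{\s'}}=\widetilde{\mathcal T}_{-\s}^{-\s'}$. Once the dictionary between the operator-theoretic statement $(a)$, the Poisson-commutation statement $(b)$, and the Fourier-support statement $(c)$ is set up correctly, each implication is short. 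I would organize the write-up around the single clean identity relating $\{\mathcal M,\mathcal H\}_e$ to the commutator with $\mathtt v\cdot\partial_\vphi+\partial_x$, from which both remaining equivalences drop out via Lemma~\ref{funzioniSVVV}.
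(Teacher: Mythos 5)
Your proposal is correct and follows essentially the same route as the paper: the Poisson bracket $\{\mathcal{M},\mathcal{H}\}_e$ is computed as the quadratic form associated to $-\mathtt{v}\cdot\partial_\vphi\widetilde{\mathcal{T}}+\widetilde{\mathcal{T}}\partial_x-\partial_x\widetilde{\mathcal{T}}$, whose Fourier coefficients $\ii(\mathtt{v}\cdot\ell+j-k)(\widetilde{\mathcal{T}})_{\s,j}^{\s',k}(\ell)$ vanish iff \eqref{constcoeffMomentum} holds, giving $(b)\Leftrightarrow(c)$, while $(a)\Leftrightarrow(c)$ is the same Fourier-support bookkeeping (testing on monomials in $S_{\mathtt{v}}$) used in the paper. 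The only caveat is a harmless sign slip in your description of $X_{\mathcal{M}}$ (its $W$-component is $+W_x$, cf. \eqref{cifrato}), which does not affect the vanishing conditions.
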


\begin{proof} 
Consider the condition \eqref{extMomcomp} in item $(b)$.
By using the \eqref{poissonEstese}, \eqref{HamilDiTT}, \eqref{extMom0Comp}
we have that
\[
\begin{aligned}
0=\big\{\mathcal{M}, \mathcal{H}\big\}_{e}&=
\Omega_{e}\big(X_{\mathcal{M}},X_{\mathcal{H}}\Big)
=\frac{1}{2}
\int_{\mathbb{T}}\Big( -\mathtt{v}\cdot\pa_{\vphi}\widetilde{\mathcal{T}}(\vphi)
+\widetilde{\mathcal{T}}(\vphi)\pa_{x}-\pa_{x}\widetilde{\mathcal{T}}(\vphi)
\Big)\vect{h}{\bar{h}}\cdot\vect{\bar{h}}{h}dx\,.
\end{aligned}
\]
The equation above is equivalent, passing to the Fourier representation, to
\[
\ii \big(-\mathtt{v}\cdot\ell+ k- j\big) (\widetilde{\mathcal{T}})_{\s,j}^{\s',k}=0\,,
\qquad \forall\, \ell\in \mathbb{Z}^{\nu}\,,\;\; j,k\in \mathbb{Z}\,, \s,\s'=\pm\,.
\]
This condition holds if and only if \eqref{constcoeffMomentum} is satisfied.
This proves $(b)\Leftrightarrow (c)$.\\
Consider the operator $\widetilde{\mathcal{T}}_{+}^{+}$ and let $h\in S_{\mathtt{v}}$.
We have
\[
\widetilde{\mathcal{T}}_{+}^{+}h=\sum_{\ell\in \mathbb{Z}^{\nu}, j\in \mathbb{Z}}
e^{\ii \ell\cdot\vphi+\ii j x}\Big(
\sum_{p\in \mathbb{Z}^{\nu}, k\in \mathbb{Z}}
(\widetilde{\mathcal{T}})_{+,j}^{+,k}(\ell-p)h_{pk}
\Big)\,.
\]
We also recall that $\mathtt{v}\cdot p+k=0$, for any $p,k$ since
$h\in S_{\mathtt{v}}$.
Then if $\widetilde{\mathcal{T}}_{+}^{+}h\in S_{\mathtt{v}}$
we must have (recall \eqref{vascoblasco31}) $\mathtt{v}\cdot\ell+j=0$
which implies \eqref{constcoeffMomentum}
on the coefficients $(\widetilde{\mathcal{T}})_{+,j}^{+,k}(\ell-p)$.
This proves $(a)\Rightarrow (c)$. The converse is similar.
\end{proof}

\begin{lemma}\label{coeffFouHamilto}
Let ${\mathcal{A}}=(\mathcal{A}_{\s}^{\s'})_{\s,\s'}$ 
be an operator of the form \eqref{forma-complessa}
satisfying \eqref{self-adjT}, i.e. $(\mathcal{A}_{\s}^{\s})^{*}=
\mathcal{A}_{\s}^{\s}$ and 
$(\mathcal{A}_{\s}^{-\s})^{*}=\ov{\mathcal{A}_{\s}^{-\s}}$,
and let 
$\big(({\mathcal{A}})_{\s,j}^{\s',k}(\ell)\big)$, 
$i=1,2$, $\ell\in\mathbb{Z^{\nu}}$, $j,k\in \mathbb{Z}$ be
 the matrices representing the operators ${\mathcal{A}}_{\s}^{\s'}$.
Then
\begin{align}
&\ov{{\mathcal{A}_{\s}^{\s'}}}:=\Big((\ov{\mathcal{A}})_{\s,j}^{\s',k}(\ell)\Big)\,, \quad (\ov{\mathcal{A}})_{\s,j}^{\s',k}(\ell)=
\ov{({\mathcal{A}})_{\s,-j}^{\s',-k}(-\ell)}\,,\label{ruben1}\\
&({\mathcal{A}_{\s}^{\s'}})^{*}:=(\ov{\mathcal{A}_{\s}^{\s'}})^{T}:=
\Big(({\mathcal{A}}^{*})_{\s,j}^{\s',k}(\ell)\Big)\,, \quad ({\mathcal{A}}^{*})_{\s,j}^{\s',k}(\ell)=
\ov{(\mathcal{A})_{\s,k}^{\s',j}(-\ell)}\,,\label{ruben2}
\end{align}
In particular, since $\mathcal{A}$ is self-adjoint (see \eqref{self-adjT}) and real-to real, one has
\begin{equation}\label{ruben3}
(\mathcal{A})_{\s,j}^{\s,k}(\ell)=\ov{(\mathcal{A})_{\s,k}^{\s,j}(-\ell)}\,,
\qquad 
(\mathcal{A})_{\s,-j}^{-\s,-k}(\ell)=
{\big((\mathcal{A}_{\s}^{-\s})^T\big)_{k}^{j}(\ell)}={(\mathcal{A})_{\s,k}^{-\s,j}(\ell)}
\end{equation}
and
\begin{equation}
\mathcal{A}_{-\s, j}^{-\s', k}(\ell)=\ov{({\mathcal{A}})_{\s,-j}^{\s',-k}(-\ell)}
\end{equation}
\end{lemma}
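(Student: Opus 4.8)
The statement is a list of elementary bookkeeping identities for the matrix coefficients $(\mathcal{A})_{\s,j}^{\s',k}(\ell)$ of the four blocks $\mathcal{A}_\s^{\s'}$ of a real-to-real operator. The plan is to unwind, one at a time, the three operations it involves --- the conjugate operator $\ov{(\cdot)}$ of \eqref{conjugate-Op}, the $L^2(\mathbb{T})$-adjoint $(\cdot)^*$ appearing in \eqref{self-adjT}, and the real-to-real structure \eqref{forma-complessa} --- directly on the matrix-coefficient formula following \eqref{azioneSobsigma}, and then to feed the two hypotheses of \eqref{self-adjT} into the resulting relations. Everything is a direct computation; I indicate only which identity comes from which operation.

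\textbf{Formulas \eqref{ruben1} and \eqref{ruben2}.} Applying the matrix-coefficient formula to $\ov{\mathcal{A}_\s^{\s'}}$ and using $\ov{\mathcal{A}_\s^{\s'}}[h]=\ov{\mathcal{A}_\s^{\s'}[\bar h]}$: conjugating the test exponential replaces $e^{\ii(\s'k-\s j)x}e^{-\ii\ell\cdot\vphi}$ by $e^{\ii(\s'(-k)-\s(-j))x}e^{-\ii(-\ell)\cdot\vphi}$, and pulling the complex conjugation out of the integral over the real torus $\mathbb{T}^{\nu+1}$ gives $(\ov{\mathcal{A}})_{\s,j}^{\s',k}(\ell)=\ov{(\mathcal{A})_{\s,-j}^{\s',-k}(-\ell)}$, i.e. \eqref{ruben1}: the conjugate operator reflects all three indices $j,k,\ell$ and conjugates the entry. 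For \eqref{ruben2}, since the adjoint in \eqref{self-adjT} is the $L^2(\mathbb{T};\mathbb{C})$-adjoint taken pointwise in $\vphi$, writing $\mathcal{A}_\s^{\s'}(\vphi)=\sum_\ell e^{\ii\ell\cdot\vphi}\mathcal{A}^{(\ell)}$ with $\mathcal{A}^{(\ell)}$ a $\vphi$-independent operator on $L^2(\mathbb{T})$ carrying the coefficients $(\mathcal{A})_{\s,j}^{\s',k}(\ell)$, one has $(\mathcal{A}_\s^{\s'})^*(\vphi)=\sum_\ell e^{-\ii\ell\cdot\vphi}(\mathcal{A}^{(\ell)})^*=\sum_\ell e^{\ii\ell\cdot\vphi}(\mathcal{A}^{(-\ell)})^*$, and $(\mathcal{A}^{(-\ell)})^*$ is the conjugate transpose in $x$ of $\mathcal{A}^{(-\ell)}$; reading this off in coefficients gives $(\mathcal{A}^*)_{\s,j}^{\s',k}(\ell)=\ov{(\mathcal{A})_{\s,k}^{\s',j}(-\ell)}$, which is \eqref{ruben2}. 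Comparing the two also fixes the meaning of the formal-transpose notation $(\ov{\mathcal{A}_\s^{\s'}})^T$ of the statement: it is the transpose in $x$ only, $(B^T)_j^k(\ell)=B_{-k}^{-j}(\ell)$ (no reversal of the angular index $\ell$), and with this reading $(\ov{\mathcal{A}_\s^{\s'}})^T=(\mathcal{A}_\s^{\s'})^*$.

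\textbf{The identities for self-adjoint, real-to-real $\mathcal{A}$.} Now I feed in \eqref{self-adjT}. From $(\mathcal{A}_\s^{\s})^*=\mathcal{A}_\s^{\s}$ and \eqref{ruben2} one reads off $(\mathcal{A})_{\s,j}^{\s,k}(\ell)=\ov{(\mathcal{A})_{\s,k}^{\s,j}(-\ell)}$, the first relation of \eqref{ruben3}. From $(\mathcal{A}_\s^{-\s})^*=\ov{\mathcal{A}_\s^{-\s}}$, together with \eqref{ruben1} and \eqref{ruben2}, one gets $\ov{(\mathcal{A})_{\s,k}^{-\s,j}(-\ell)}=\ov{(\mathcal{A})_{\s,-j}^{-\s,-k}(-\ell)}$, hence, replacing $-\ell$ by $\ell$, $(\mathcal{A})_{\s,k}^{-\s,j}(\ell)=(\mathcal{A})_{\s,-j}^{-\s,-k}(\ell)$; since by the transpose convention above $\big((\mathcal{A}_\s^{-\s})^T\big)_k^j(\ell)=(\mathcal{A})_{\s,-j}^{-\s,-k}(\ell)$, this is exactly the remaining chain of equalities in \eqref{ruben3}. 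Finally, the real-to-real structure \eqref{forma-complessa} means precisely $\mathcal{A}_{-\s}^{-\s'}=\ov{\mathcal{A}_\s^{\s'}}$, so combining it with \eqref{ruben1} gives $\mathcal{A}_{-\s,j}^{-\s',k}(\ell)=(\ov{\mathcal{A}_\s^{\s'}})_j^k(\ell)=\ov{(\mathcal{A})_{\s,-j}^{\s',-k}(-\ell)}$, the last displayed identity.

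There is no analytic content here --- it is finite bookkeeping --- so the only delicate point is organizational: one must keep the three conjugations rigorously apart and, in particular, match the paper's \emph{formal}-transpose convention (transpose in $x$, no reversal of $\ell$) against the index reflections $j\mapsto-j$, $\ell\mapsto-\ell$ produced by the conjugate operator and by the $L^2(\mathbb{T})$-adjoint. Once the transpose convention is pinned down, the identities for self-adjoint $\mathcal{A}$ are a mechanical substitution, and the check to run is that each claimed equality is consistent under those reflections.
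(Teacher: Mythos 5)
Your proof is correct and follows essentially the same route as the paper's: both unwind the definitions of the conjugate operator \eqref{conjugate-Op} and of the $L^{2}(\mathbb{T};\mathbb{C})$-adjoint directly at the level of Fourier coefficients to get \eqref{ruben1}--\eqref{ruben2}, and then obtain \eqref{ruben3} and the last identity by substituting \eqref{self-adjT} and the real-to-real structure \eqref{forma-complessa}. Your explicit pinning down of the transpose convention (transpose in $x$ only, no reversal of $\ell$) is a welcome clarification that the paper leaves implicit, but it does not change the argument.
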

\begin{proof}
We prove \eqref{ruben1}, \eqref{ruben2} for $\mathcal{A}_1:=\mathcal{A}_{+}^{+}$.
Consider a function $h=(\sqrt{2\pi})^{-1}\sum_{k}h_ke^{\ii kx}$. Then
\[
\begin{aligned}
\ov{\mathcal{A}_1(\vphi)}[h]&\stackrel{\eqref{conjugate-Op}}{:=}
\ov{\mathcal{A}_1(\vphi)[\bar{h}]}=\ov{\sum_{j}e^{\ii jx} \sum_{k,\ell}e^{\ii\ell\cdot\vphi}
(\mathcal{A}_1)_{j}^{k}(\ell)\ov{h_{-k}}}
=\sum_{j,k,\ell}\ov{(\mathcal{A}_1)_{-j}^{-k}(-\ell)}h_{k}e^{\ii jx}e^{\ii \ell\cdot\vphi}\,.
\end{aligned}
\]
This implies the \eqref{ruben1}.  
Moreover we have
\[
\begin{aligned}
\sum_{j,k,\ell}e^{\ii \ell\cdot\vphi} (\mathcal{T}_{1})_{j}^{k}(\ell) h_{k}\ov{v_j}&
=(\mathcal{T}_{1}(\vphi)h,v)_{L^2}=
(h,\mathcal{T}_{1}^{*}(\vphi)v)_{L^2}=\int_{\mathbb{T}}h\cdot \ov{\mathcal{T}_{1}^*(\vphi)v}dx=\sum_{j,k,\ell} \ov{ (\mathcal{T}_{1}^{*})_{k}^{j}(-\ell)}h_{k}\ov{v_{j}}e^{\ii \ell\cdot\vphi}
\end{aligned}
\]
This implies the \eqref{ruben2}. Then the \eqref{ruben3} follows
by conditions \eqref{self-adjT}.
\end{proof}

\begin{lemma}{\bf ($x$-translation invariant symbols).}\label{lem:momentoSimbolo}
Consider a symbol $a(\vphi,x,\x)$ in $S^{m}$, $m\in \mathbb{R}$.
We have that $a$ has the form
\begin{equation}\label{space3Xinv}
a(\vphi,x,\x)=A(\vphi-\mathtt{v}x,\x)
\end{equation}
for some $A(\Theta,\x)$, $\Theta\in \mathbb{T}^{\nu},\x\in \mathbb{R}$ satisfying 
\begin{equation}\label{space3Xinv2}
|\pa_{\Theta}^{\al}\pa_{\x}^{\beta}A(\Theta,\x)|\leq C_{\al,\beta}\langle \x\rangle^{m-\beta}\,,
\;\;\forall \; \al,\beta\in \mathbb{N}\,
\end{equation} 
if and only if
\begin{equation}\label{883}
\{\mathcal{M},\mathcal{H}\}_{e}=0\,,\qquad
\mathcal{H}(\vphi,Q,h,\bar{h})=\omega\cdot Q+\int_{\mathbb{T}} 
\opw(a(\vphi,x,\x)) h \cdot\bar{h}dx\,,
\end{equation}
\end{lemma}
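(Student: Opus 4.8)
The statement to prove is Lemma \ref{lem:momentoSimbolo}: a symbol $a(\vphi,x,\x)\in S^m$ has the $x$-translation invariant form \eqref{space3Xinv}--\eqref{space3Xinv2} if and only if the Hamiltonian $\mathcal{H}$ in \eqref{883} Poisson commutes with the momentum $\mathcal{M}$. The plan is to reduce this to the already-proven operator-level characterization in Lemma \ref{lem:momentocomp}. First I would observe that $\mathcal{H}$ in \eqref{883} is exactly of the form \eqref{HamilDiTT} associated (up to the irrelevant $Q$-term) to the Hamiltonian operator $\mathcal{T}=\opw(a)$, provided $\opw(a)$ is self-adjoint in the sense of \eqref{self-adjT}; more precisely one takes $\widetilde{\mathcal T}$ to be the symmetrization of $\opw(a)$, using \eqref{pseudobarra} to write the $L^2$-adjoint and conjugate as Weyl operators with symbols $\overline{a(x,\x)}$ and $\overline{a(x,-\x)}$. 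By Lemma \ref{lem:momentocomp}, condition \eqref{883} (i.e. $\{\mathcal M,\mathcal H\}_e=0$) is equivalent to the Fourier-support condition \eqref{constcoeffMomentum} on the matrix coefficients of $\opw(a)$, equivalently (by the computation \eqref{azioneSobsigma} identifying $\opw(a)$ with its matrix) to $\opw(a):S_{\mathtt v}\to S_{\mathtt v}$, i.e. \eqref{traInvcomp}.

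So the remaining task is purely symbolic: I must show that $\opw(a)$ preserves $S_{\mathtt v}$ if and only if $a$ has the form $a(\vphi,x,\x)=A(\vphi-\mathtt v x,\x)$ with $A$ satisfying \eqref{space3Xinv2}. For the forward direction, I would compute the Fourier coefficients of $a$ in $(\vphi,x)$: writing $a(\vphi,x,\x)=\sum_{\ell,m} \hat a_{\ell m}(\x) e^{\ii\ell\cdot\vphi+\ii m x}$ and using the Weyl quantization formula \eqref{weylquanti} adapted to the $\vphi$-dependent case, one sees that the matrix coefficient $(\opw(a))_{j}^{k}(\ell)$ is $\hat a\big(\ell, k-j, \tfrac{k+j}{2}\big)$ (the Fourier coefficient of $a$ in $(\vphi,x)$ at frequency $(\ell,k-j)$, evaluated at $\x=(k+j)/2$). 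The momentum condition \eqref{constcoeffMomentum}, $\mathtt v\cdot\ell+j-k=0$, then says exactly that $\hat a_{\ell m}(\x)=0$ unless $\mathtt v\cdot\ell-m=0$, i.e. $m=\mathtt v\cdot\ell$. Hence $a(\vphi,x,\x)=\sum_{\ell}\hat a_{\ell,\mathtt v\cdot\ell}(\x)e^{\ii\ell\cdot(\vphi-\mathtt v x)}=:A(\vphi-\mathtt v x,\x)$, and the symbol bounds \eqref{space3Xinv2} for $A$ follow from \eqref{space3} for $a$ by specializing to $x=0$ (or averaging), since $\pa_\Theta^\al\pa_\x^\beta A(\Theta,\x)=\big(\pa_\vphi^\al\pa_\x^\beta a\big)(\Theta,0,\x)$ up to harmless combinatorial factors from the chain rule in $x$. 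For the converse, if $a(\vphi,x,\x)=A(\vphi-\mathtt v x,\x)$ then $(\mathtt v\cdot\pa_\vphi+\pa_x)a=0$ in the first two slots, so by \eqref{bambola} or a direct check its Fourier support satisfies $m=\mathtt v\cdot\ell$, giving \eqref{constcoeffMomentum}; alternatively one notes directly that $\opw(A(\cdot-\mathtt v x,\x))$ commutes with the translation operator $h(\vphi,x)\mapsto h(\vphi+\mathtt v\alpha, x+\alpha)$ generated by $\mathtt v\cdot\pa_\vphi+\pa_x$, hence maps $S_{\mathtt v}$ (characterized by \eqref{SVVV2} in Lemma \ref{funzioniSVVV}) into itself.

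The main obstacle I anticipate is purely bookkeeping: correctly matching the Weyl-quantization Fourier coefficient formula $(\opw(a))_j^k(\ell)=\hat a(\ell,k-j,\tfrac{k+j}2)$ against the matrix-coefficient convention used in \eqref{constcoeffMomentum}, and checking that the self-adjointness requirement \eqref{self-adjT} needed to invoke Lemma \ref{lem:momentocomp} is either automatic (it is, since only the Fourier support in $(\vphi,x)$ matters for the momentum condition, and \eqref{pseudobarra} shows symmetrizing $\opw(a)$ does not change the support in $(\ell,m)$) or can be bypassed by running the Fourier-support argument directly. There is no deep difficulty: once the indices are aligned, the equivalence $\eqref{883}\Leftrightarrow\eqref{space3Xinv}$ is a one-line translation of Lemma \ref{lem:momentocomp} combined with the elementary Fourier-series observation that $x$-translation invariance of the symbol means its Fourier support lies on the hyperplane $\{m=\mathtt v\cdot\ell\}$. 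I would therefore structure the write-up as: (1) reduce to $\opw(a):S_{\mathtt v}\to S_{\mathtt v}$ via Lemma \ref{lem:momentocomp}; (2) compute the Fourier/matrix coefficients of $\opw(a)$; (3) read off that the support condition is equivalent to $a=A(\vphi-\mathtt v x,\x)$; (4) verify the symbol estimates \eqref{space3Xinv2}.
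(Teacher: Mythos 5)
Your proposal is correct, but it takes a different route from the paper. The paper does not pass through Lemma \ref{lem:momentocomp} at all: it computes $\{\mathcal{M},\mathcal{H}\}_{e}$ directly at the symbol level, using that the Weyl commutator with the linear symbol $\ii\x$ is exact (no remainder), so that the bracket condition becomes the transport equation $(\mathtt{v}\cdot\pa_{\vphi}+\pa_{x})a=0$; expanding $a$ in Fourier series in $(\vphi,x)$ this forces the support onto $\{\mathtt{v}\cdot\ell+j=0\}$, which is exactly \eqref{space3Xinv}, and \eqref{space3Xinv2} is read off from \eqref{space3}. Your reduction to Lemma \ref{lem:momentocomp} plus the Weyl matrix-coefficient bookkeeping reaches the same hyperplane-support statement and is a legitimate shortcut, since the equivalence $(b)\Leftrightarrow(c)$ there is again just the bracket computation in Fourier variables; what it costs you is (i) matching the scalar form $\int\opw(a)h\cdot\bar h\,dx$ in \eqref{883} to the $2\times2$ structure \eqref{HamilDiTT} with its hypothesis \eqref{self-adjT} --- your symmetrization remark is slightly delicate in the forward direction because symmetrizing could in principle cancel Fourier coefficients, so it is cleaner to either take $\widetilde{\mathcal{T}}$ diagonal with entries $\opw(a)$ and its conjugate, or, as you say, just rerun the support computation directly (which is then the paper's proof); and (ii) the index conventions: with \eqref{actio}, where the upper index is the input mode, one has $(\opw(a))_{j}^{k}(\ell)=\hat a\big(\ell,j-k,\tfrac{j+k}{2}\big)$, so \eqref{constcoeffMomentum} forces the space frequency $m=j-k$ to satisfy $m=-\mathtt{v}\cdot\ell$ (not $m=\mathtt{v}\cdot\ell$ as written), which is precisely what produces $e^{\ii\ell\cdot(\vphi-\mathtt{v}x)}$, i.e. $A(\vphi-\mathtt{v}x,\x)$; you flagged this bookkeeping yourself and the sign comes out right once the conventions are aligned. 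Finally, for \eqref{space3Xinv2} note that $\pa_{\Theta}^{\al}A(\Theta,\x)=(\pa_{\vphi}^{\al}a)(\Theta,0,\x)$, so the bound uses smoothness of $a$ in $\vphi$ (implicit in the paper's symbol classes) rather than only the $x$-derivative estimates in \eqref{space3}; this is the same level of informality as the paper's own one-line conclusion.
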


\begin{proof}
By \eqref{poissonEstese}, \eqref{883}, \eqref{extMom0Comp} we have
\[
0=\int_{\mathbb{T}}\opw\Big(-(\mathtt{v}\cdot\pa_{\vphi}a)(\vphi,x,\x)
+\frac{1}{\ii }\{a(\vphi,x,\x),\ii\x\}\Big) h\cdot\bar{h}dx
=\int_{\mathbb{T}}\opw\Big(-(\mathtt{v}\cdot\pa_{\vphi}a)(\vphi,x,\x)-(\pa_{x}a)(\vphi,x,\x)
\Big) h\cdot\bar{h}dx\,.
\]
Therefore we must have
\[
0=\Big(- \mathtt{v}\cdot\pa_{\vphi}a-\pa_{x}a\Big)(\vphi,x,\x)
=\sum_{\ell\in\mathbb{Z}^{\nu}, j\in\mathbb{Z}} 
(-\ii) (\mathtt{v}\cdot\ell+j)\tilde{a}(\ell,j,\x)e^{\ii\ell\cdot\vphi+\ii jx}
\]
where $\widetilde{a}(\ell,j,\x)$ denotes the Fourier transform of $a(\vphi,x,\x)$
in the variables $(\vphi,x)\in \mathbb{T}^{\nu}\times\mathbb{T}$.
Then the \eqref{883}  is verified if and only if 
$\mathtt{v}\cdot\ell+j=0$ for any $\ell\in \mathbb{Z}^{\nu}, j\in\mathbb{Z}$.
Then we can write the symbol $a(\vphi,x,\x)$ as in \eqref{space3Xinv}.
The bound \eqref{space3Xinv2} follows by the estimates \eqref{space3}
on $a(\vphi,x,\x)$.
\end{proof}

If condition \eqref{space3Xinv} holds we shall say that the symbol 
$a(\vphi,x,\x)$ is \emph{$x$-translation invariant}. 

\begin{lemma}{\bf (Real-to-real/Self-adjoint matrices of symbols).}\label{realHamMtrixSimbo}
Consider the  operator  (recall \eqref{matrixSym})
\[
L=\opw(A(\vphi,x,\x))\,, \qquad A\in S^{m}\otimes\mathcal{M}_{2}(\mathbb{C})\,.
\]
We have that $L$ is \emph{real-to-real} according to Definition \ref{realtoreal}
if and only if the matrix $A$ has the form
\begin{equation}\label{retoreMat}
A(\vphi,x,\x):=\left(\begin{matrix}a(\vphi,x,\x) & b(\vphi,x,\x)\\
\ov{b(\vphi,x,-\x)} & \ov{a(\vphi,x,-\x)}
\end{matrix}
\right)\,,
\end{equation}
and $L$ is \emph{self-adjoint}, i.e. satisfies \eqref{self-adjT},
if and only if the matrix of symbols $A$ satisfies 
\begin{equation}\label{selfSimbo}
\ov{(a(\vphi,x,\x))}=a(\vphi,x,\x)\,, \qquad b(\vphi,x,-\x)=b(\vphi,x,\x)\,.
\end{equation}
Finally $L$ is \emph{Hamiltonian}, i.e. satisfies \eqref{Ham1comp} if and only if
the matrix $A$ 
satisfies 
\begin{equation}\label{selfSimbo2}
\ov{(a(\vphi,x,\x))}=-a(\vphi,x,\x)\,, \qquad b(\vphi,x,-\x)=b(\vphi,x,\x)\,.
\end{equation}
\end{lemma}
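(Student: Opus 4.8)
The plan is to reduce each of the three claims to a symbol-level condition by means of the two quantization identities in \eqref{pseudobarra}, namely $\ov{\opw(a)}=\opw(\ov{a(\cdot,\cdot,-\xi)})$ for the conjugate and $\opw(a)^*=\opw(\ov{a(\cdot,\cdot,\xi)})$ for the $L^2$-adjoint, together with the injectivity of the Weyl quantization on $\bigcup_m S^m$ (which follows from the kernel inversion formula \eqref{quantiguai2}, so that two operators with the same action have the same symbol). Writing $A=\sm{a}{b}{c}{d}$ and recalling \eqref{azioneSobsigma}, the operator $L=\opw(A)$ has, in the notation of \eqref{forma-complessa}, blocks $\mathcal{T}_{+}^{+}=\opw(a)$, $\mathcal{T}_{+}^{-}=\opw(b)$, $\mathcal{T}_{-}^{+}=\opw(c)$, $\mathcal{T}_{-}^{-}=\opw(d)$; all three equivalences follow by feeding the defining conditions of Definitions \ref{realtoreal} and \ref{admiOpComp} through these identities.

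First I would treat the real-to-real case. By Definition \ref{realtoreal}, $L$ is real-to-real iff it has the form \eqref{forma-complessa}, i.e. iff $\mathcal{T}_{-}^{-}=\ov{\mathcal{T}_{+}^{+}}$ and $\mathcal{T}_{-}^{+}=\ov{\mathcal{T}_{+}^{-}}$. Applying the first identity in \eqref{pseudobarra} and injectivity of the quantization, these become $d(\vphi,x,\xi)=\ov{a(\vphi,x,-\xi)}$ and $c(\vphi,x,\xi)=\ov{b(\vphi,x,-\xi)}$, which is exactly the shape \eqref{retoreMat}, and conversely. Next, assuming $L$ real-to-real (which is implicit in \eqref{self-adjT}), self-adjointness means $\opw(a)^*=\opw(a)$ and $\opw(b)^*=\ov{\opw(b)}$; by the second identity in \eqref{pseudobarra} the first amounts to $\ov{a}=a$, and since $\opw(b)^*=\opw(\ov{b(\cdot,\cdot,\xi)})$ while $\ov{\opw(b)}=\opw(\ov{b(\cdot,\cdot,-\xi)})$, the second amounts to $b(\vphi,x,\xi)=b(\vphi,x,-\xi)$; together these are \eqref{selfSimbo}.

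For the Hamiltonian case I would observe that, by \eqref{Ham1comp} and $E^2=\mathrm{Id}$, $L$ is Hamiltonian iff $\widetilde{L}:=-\ii E L=\opw\big(\sm{-\ii a}{-\ii b}{\ii c}{\ii d}\big)$ has the form \eqref{forma-complessa} and satisfies \eqref{self-adjT}. Running the real-to-real step for this matrix, the factors $\pm\ii$ cancel in the two conjugation relations and one recovers that $A$ itself must have the form \eqref{retoreMat}; then running the self-adjoint step with $a\rightsquigarrow-\ii a$ and $b\rightsquigarrow-\ii b$, condition \eqref{self-adjT} for $\widetilde{L}$ becomes $\ov{-\ii a}=-\ii a$ and $-\ii b(\vphi,x,\xi)=-\ii b(\vphi,x,-\xi)$, i.e. $\ov{a}=-a$ and $b(\vphi,x,-\xi)=b(\vphi,x,\xi)$, which is \eqref{selfSimbo2}. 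Since every step is an equivalence, the converse directions come for free.

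The computation is essentially bookkeeping, so I do not expect a genuine obstacle; the only points needing care are the consistent tracking of the sign of $\xi$ in \eqref{pseudobarra} and the correct identification of the blocks of $-\ii E A$, while the one structural ingredient — injectivity of the Weyl quantization on symbols — is standard and already available through the inversion formula \eqref{quantiguai2}.
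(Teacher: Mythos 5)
Your argument is correct and follows essentially the same route as the paper, whose proof simply invokes the block identification \eqref{forma-complessa} together with the symbol formulas for conjugate and adjoint in \eqref{pseudobarra}; your more detailed bookkeeping (injectivity of the Weyl quantization, the cancellation of the $\pm\ii$ factors for the Hamiltonian case) fills in exactly what the paper leaves implicit.
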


\begin{proof}
The Lemma follows by usign formul\ae \, \eqref{forma-complessa},
 \eqref{pseudobarra}. 
\end{proof}

\noindent
We conclude this section with the definition of a special class of operator
we shall use in sections \ref{sec:Good}-\ref{ordiniNegativi}.
\begin{definition}\label{compaMulti}
We say that a linear operator $\mathcal{T}(\vphi)\in 
\mathcal{L}(L^{2}(\mathbb{T}^{\nu+1};\mathbb{C}^{2}))$
belongs to the class $\mathfrak{S}_0$ if  it is
 \begin{itemize}

\item[(i)] \emph{real-to-real}, i.e. satisfies \eqref{forma-complessa};

\item[(ii)] 
\emph{Hamiltonian}, i.e. satisfies  \eqref{Ham1comp};

\item[(iii)] \emph{$x$-translation invariant}, 
i.e. satisfies the \eqref{traInvcomp} 
with $\mathtt{v}$ in \eqref{def:m1}.
\end{itemize}

We say that a linear operator of the form
\begin{equation*}
\mathcal{L}=\omega\cdot\pa_{\vphi}+\opw(A(\vphi,x,\x))+\mathcal{R}
\end{equation*}
for some $A\in S^{m}\otimes\mathcal{M}_{2}(\mathbb{C})$ and 
$\mathcal{R}\in \gotL_{\rho,p}\otimes\mathcal{M}_{2}(\mathbb{C})$
belongs to $\mathfrak{S}_{1}$ if
$\mathcal{L}$ and  $\opw(A(\vphi,x,\x))$ belong to $\mathfrak{S}_0$.
\end{definition}

\begin{remark}
Consider $\mathcal{L}\in \mathfrak{S}_{1}$. By Lemmata 
\ref{lem:momentoSimbolo}, \ref{realHamMtrixSimbo}
we deduce that the operator $A(\vphi,x,\x)$ 
  satisfies \eqref{retoreMat}, \eqref{selfSimbo2} and 
 \eqref{space3Xinv}.
 Moreover we deduce that
 also the remainder 
$\mathcal{R}$ belongs to $\mathfrak{S}_0$.
\end{remark}

In the following  we shall 
look for transformations 
of coordinates 
which preserves the structure 
the class $\mathfrak{S}_{1}$. 
In particular we give the following definition.

\begin{definition}\label{goodMulti}
We say that
a map $\Phi=\Phi(\vphi)$  
belongs to $\mathfrak{T}_{1}$
if it is \emph{symplectic} and $x$-translation invariant.
\end{definition}


\section{Normal forms and integrability properties of the pure gravity water waves}\label{sec:integrability}

In this section we recall some properties of 
the water waves system \eqref{eq:113},
we discuss its Hamiltonian structure and normal form. 
In particular we focus on the formal integrability of the 
Hamiltonian at order four which has been proved in \cite{CW}, \cite{CS}, \cite{Zak2}. 
Due to the quasi-linear nature of the water waves equations 
the Birkhoff normal form procedure turns out to be not well defined. 
To overcome this problem we perform a  "weaker" but  \emph{rigorous} 
normal form algorithm: first we normalize the dynamics 
on a finite dimensional subspace to find an approximately invariant torus; 
secondly we normalize the dynamics in the normal 
directions around the embedded torus. 
These procedures are discussed in details in section \ref{compaBNFpro}.
In sections \ref{ApproxCons}, \ref{sec:3.33} 
we prove, thanks to an argument of identification of normal forms, 
that the formal (approximate) integrability implies 
the (approximate) integrability of the linearized problem at the torus.

\subsection{Hamiltonian structure of water waves}
Let $W_1=\vect{\eta_1}{\psi_1}$, 
$W_2=\vect{\eta_2}{\psi_2}$. We
consider the 
 the symplectic form
\begin{equation}\label{symReal}
\widetilde{\Omega}(W_1,W_2)
:=\int_{\mathbb{T}}W_1
\cdot J^{-1}W_2dx=
\int_{\mathbb{T}}-(\eta_1\psi_2-\psi_1\eta_2)dx\,,
\quad
J=\sm{0}{1}{-1}{0}\,.
\end{equation}
The vector field (see \eqref{HS})
\begin{equation*}
X(\eta,\psi)=X_{H}(\eta,\psi)=J \nabla H(\eta,\psi)
=\left(\begin{matrix}\nabla_{\psi}H(\eta,\psi) \\ 
-\nabla_{\eta} H(\eta,\psi) \end{matrix}\right)
\end{equation*}
 is the Hamiltonian 
vector field of $H$ in \eqref{Hamiltonian} w.r.t. the symplectic form
\eqref{symReal}, i.e.
\[
-\widetilde{\Omega}(X_{H}(\eta,\psi), h)=dH((\eta,\psi))[h]\,\quad 
h=\vect{\hat{\eta}}{\hat{\psi}}\,,
\]
while the Poisson  bracket between functions 
$F(\eta,\psi), H(\eta,\psi)$
are defined as
\begin{equation}\label{poissonBra22}
\{F,H\}=\widetilde{\Omega}(X_{F},X_{H})
=\int_{\T}\big(\nabla_{\eta}H\nabla_{\psi}F-\nabla_{\psi}H\nabla_{\eta}F\big)dx\,.
\end{equation}
Consider the maps
\begin{equation}\label{matriciCompVar}
\mathcal{C}:=\frac{1}{\sqrt{2}}
\sm{1}{\ii }{1}{-\ii}
\,,\qquad
\mathfrak{F}:=
\sm{|D|^{-\frac{1}{4}}}{0}{0}{|D|^{\frac{1}{4}}}\,,
\qquad \Lambda:=\mathcal{C}\circ \mathfrak{F}:=\frac{1}{\sqrt{2}}
\sm{|D|^{-\frac{1}{4}}}{ \ii |D|^{\frac{1}{4}}}{ |D|^{-\frac{1}{4}}}{-\ii |D|^{\frac{1}{4}}}\,.
\end{equation}
We note that the operator $|D|^{-1/4}$ is well defined on the space of functions with zero spatial average.
We introduce the complex symplectic variables 
\begin{equation}\label{CVWW}
\!\!\!\!\!\!\left(\begin{matrix}
u  \\
\overline{u}
\end{matrix} 
\right) = 
\Lambda 
\left(\begin{matrix}
\eta \\\psi\end{matrix}\right)
 : =\frac{1}{\sqrt{2}} 
\left(\begin{matrix}
|D|^{-\frac{1}{4}}\eta+ \ii |D|^{\frac{1}{4}}\psi   \\
|D|^{-\frac{1}{4}}\eta - \ii |D|^{\frac{1}{4}}\psi  
\end{matrix} 
\right) \, , 
\qquad 
\left(\begin{matrix}\eta \\ \psi\end{matrix}\right) 
= \Lambda^{-1} 
\left(\begin{matrix}
u \\ \bar u \end{matrix}\right) = 
\frac{1}{\sqrt{2}} 
\left(\begin{matrix}
|D|^{\frac14}( u + \bar u ) \\
 - \ii  |D|^{- \frac14}(  u - \bar u )
\end{matrix}\right).
\end{equation}
The symplectic form in \eqref{symReal} transforms, for 
$U=\vect{u}{\bar{u}}$, $V=\vect{v}{\bar{v}}$, into
\begin{equation}\label{symComp}
\Omega(U,V):=-\int_{\mathbb{T}}  U \cdot \ii J V dx=-\int_{\mathbb{T}}
\ii (u\bar{v}-\bar{u}v)dx\,.
\end{equation}
The Poisson bracket in \eqref{poissonBra22} 
assumes the form 
\begin{equation}\label{poissonBraComp}
\{F,H\}=\frac{1}{\ii}
\int_{\mathbb{T}}(\nabla_{u}H\nabla_{\bar{u}}F-\nabla_{\bar{u}}H\nabla_{{u}}F)dx
=\frac{1}{\ii}\sum_{k\in \Z\setminus\{0\}}
  \big(\pa_{u_{k}}H\pa_{\bar{u_k}}F - \pa_{\bar{u_k}}H\pa_{u_k}F\big) \, ,
\end{equation}
where 
\begin{equation*}
\pa_{u_{k}}=\frac{1}{\sqrt{2}}\big(|k|^{-\frac{1}{4}}\pa_{\eta_{k}}
-\ii |k|^{\frac{1}{4}}\pa_{\psi_{k}} \big)\,,
\qquad
\pa_{\ov{u_{-k}}}=
\frac{1}{\sqrt{2}}\big(|k|^{-\frac{1}{4}}\pa_{\eta_{k}}+\ii |k|^{\frac{1}{4}}\pa_{\psi_{k}} \big)\,.
\end{equation*}
In these coordinates the vector field $X=X_{H}$ in \eqref{eq:113}  
assumes the form
(setting $H_{\mathbb{C}}:=H\circ\Lambda^{-1}$)
\begin{equation}\label{compvecWW}
X^{\mathbb{C}}:=X_{H_{\C}}=
 \left(
\begin{matrix}
-\ii\pa_{\bar{u}}H_{\mathbb{C}} \\
\ii\pa_{u}H_{\mathbb{C}}
\end{matrix}
\right)
=
\frac{1}{\sqrt{2 \pi}} \sum_{k\in\Z\setminus\{0\}} 
  \left(\begin{matrix}
- \ii  \pa_{ \ov{u_k}} H_{\C} \, e^{\ii k x}  \\
 \ii  \pa_{u_k} H_{\C}   \, e^{- \ii k x} 
\end{matrix} 
\right) \, ,
\end{equation}
that we also identify, using the  standard vector field notation, with 
\begin{equation*}
X^{\mathbb{C}} 
=\sum_{k\in\Z\setminus\{0\}, \sigma=\pm} 
-\ii \s \pa_{u^{-\s}_{k}} H_\C  \, \pa_{u^{\s}_{k}} \, . 
\end{equation*}
The Hamiltonian of the momentum (recall \eqref{Hammomento}) reads as
\begin{equation}\label{HamMom2}
M_{\mathbb{C}}:=\mathtt{M}\circ\Lambda^{-1}=\int_{\mathbb{T}} \mathrm{i}\, u_x\cdot\bar{u} dx\,.
\end{equation}

\noindent
{\bf Taylor expansion at the origin.}
Consider
the Dirichlet-Neumann operator $G(\eta)$  in \eqref{eq:112a}.
The map $(\eta,\psi)\to G(\eta)\psi$ is linear with respect to $\psi$
and nonlinear with respect to the profile $\eta$.
The derivative with respect to $\eta$ (which is called ``shape derivative'')
is given by the formula (see for instance \cite{Lan1}) 
\begin{equation}\label{shapeDer}
G'(\eta)[\hat{\eta}]\psi=\lim_{\epsilon\to0} \frac{\big(G(\eta+\epsilon\hat{\eta})\psi-
G(\eta)\psi
\big)}{\varepsilon}=-G(\eta)(B\hat{\eta})-\pa_{x}\big(V\hat{\eta}\big)\,,
\end{equation}
where we denoted 
the horizontal and vertical components 
of the velocity field at the free interface by
\begin{align} 
\label{def:V}
& V =  V (\eta, \psi) :=  (\pa_x \Phi) (x, \eta(x)) = \psi_x - \eta_x B \, , 
\\
\label{form-of-B}
& B =  B(\eta, \psi) := (\pa_y \Phi) (x, \eta(x)) =  \frac{G(\eta) \psi + \eta_x \psi_x}{ 1 + \eta_x^2} \, .
\end{align}
It is also known that
$\eta\mapsto G(\eta)$ 
is analytic and admits
 the Taylor expansion of the Hamiltonian near equilibrium
 $(\eta=0, \psi=0)$
 \begin{equation}\label{DNexp3bis}
 G(\eta)=\sum_{m=0}^{\infty}G^{(m)}(\eta)\,.
 \end{equation}
Each term $G^{(m)}$
in the Taylor expansion is homogeneous of degree $m$ and can be computed
explicitly. For instance  (see e.g.  formula (2.5) of \cite{CS}) 
we have
\begin{equation}\label{DNexp3}
\begin{aligned}
G^{(0)}&:=|D|\,, \qquad G^{(1)}(\eta):= - \pa_x \eta \pa_x  - |D|  \eta |D|, \\
G^{(2)} (\eta) & :=  - \frac12 \Big(  D^2 \eta^2 |D| 
+ |D| \eta^2 D^2 - 2 |D| \eta |D| \eta |D| \Big)  \,.
\end{aligned}
\end{equation}
For further properties about the Dirichlet-Neumann operator we refer the reader to
Appendix \ref{propDirNeu}.\\
The Hamiltonian $H$  in \eqref{Hamiltonian} 
has a convergent Taylor expansion
\begin{equation*}
H=H^{(2)}+H^{(3)}+H^{(4)}+\ldots+H^{(m)}+R^{(m+1)}\,.
\end{equation*}
In the complex coordinates given 
by \eqref{matriciCompVar}-\eqref{CVWW} the Hamiltonian reads as
\begin{equation}\label{Rinco}
\begin{aligned}
H_{\mathbb{C}}:= H\circ \Lambda^{-1}&=\sum_{p=2}^{N}H^{(p)}_{\mathbb{C}}
+H^{(\geq N+1)}_{\mathbb{C}}\, , \\
H^{(2)}_\C& = \sum_{j \in \Z\setminus\{0\}} \sqrt{|j|}  u_j \bar{u_j} \,,
\qquad
H^{(p)}_\C = 
\sum_{\s_1 j_1  +\ldots+ \s_p j_p = 0 } H_{j_1, \ldots, j_p}^{\s_1, \ldots, \s_p} 
u_{j_1}^{\s_1} \cdots u_{j_p}^{\s_p}
\end{aligned}
\end{equation}
where $N\geq 2$, $H_{j_1, \ldots, j_p}^{\s_1, \ldots, \s_p} \in \mathbb{C}$ 
and the  Hamiltonian  
$H^{(\geq N+1)}_\C $ collects all the monomials
of homogeneity $\geq N+1$.

\noindent 
\textbf{Splitting of the phase space.}
Recall $S$ in \eqref{TangentialSitesDP} and define 
$S^{c}:=\mathbb{Z}\setminus\big(S\cup\{0\}\big)$.
We decompose the phase space as 
\begin{equation}\label{decomposition}
\begin{aligned}
 H_0^1(\mathbb{T})\times H_0^1(\mathbb{T}) &:=H_S\oplus H_S^{\perp}\,, \\ 
 H_S:=\Big\{
 \begin{pmatrix} \eta_{S} \\ \psi_{S} \end{pmatrix}
 :=\sum_{j\in S} \begin{pmatrix} \eta_j\\ \psi_j \end{pmatrix} 
 e^{\mathrm{i}\,j\,x}  \Big\}\,,
 & \quad H_S^{\perp}:=
 \Big\{ 
 \begin{pmatrix} \tilde{\eta} \\ \tilde{\psi} \end{pmatrix}
 := \sum_{j\in S^c} \begin{pmatrix} \eta_j\\ \psi_j \end{pmatrix} 
 e^{\mathrm{i}\,j\,x} \Big\}\,,
\end{aligned}
\end{equation}
and we denote by $\Pi_S, \Pi_S^{\perp}$ the corresponding orthogonal projectors. 
The subspaces $H_S$ and $H_S^{\perp}$ are symplectic 
orthogonal respect to the $2$-form 
$\Omega$ (see \eqref{symComp}). 
We use the following notations for the complex variables, 
\begin{equation}\label{splitto2}
\left(\begin{matrix}
\eta \\ \psi
\end{matrix}\right)=
\left(\begin{matrix}
\eta_{S} \\ \psi_{S}
\end{matrix}\right)
+\left(\begin{matrix}
\widetilde{\eta} \\ \widetilde{\psi}
\end{matrix}\right)\,,\qquad
\Lambda
\left(\begin{matrix}
\eta_{S} \\ \psi_{S}
\end{matrix}\right)=:
\left(\begin{matrix}
v \\ \ov{v}
\end{matrix}\right)\,,\qquad
\Lambda
\left(\begin{matrix}
\widetilde{\eta} \\ \widetilde{\psi}
\end{matrix}\right)=:
\left(\begin{matrix}
z \\ \ov{z}
\end{matrix}\right)\,.
\end{equation}
The notation 
$R_k(v^{\alpha_1}\, \bar{v}^{\beta_1}\, z^{\alpha_2}\,\bar{z}^{\beta_2})$
indicates a homogeneous polynomial of degree 
$k$ in $(v, \bar{v}, z, \bar{z})$ of the form
\begin{equation}\label{notaTRA}
R_k(v^{\alpha_1}\, \bar{v}^{\beta_1}\, z^{\alpha_2}\,\bar{z}^{\beta_2})
=M[\underbrace{v, \dots, v}_{\alpha_1-times}, 
\underbrace{\bar{v}, \dots, \bar{v}}_{\beta_1-times}, 
\underbrace{z, \dots, z}_{\alpha_2-times}, 
\underbrace{\bar{z}, \dots, \bar{z}}_{\beta_2-times}\,]\,,
\end{equation}
\[
M=k\mbox{-linear}\, \quad k:=\alpha_1+\alpha_2+\beta_1+\beta_2.
\]
We denote with $H^{(n, \geq k)}, H^{(n, k)}, H^{(n, \le k)}$ 
the terms of type 
$R_n(v^{\alpha_1}\, \bar{v}^{\beta_1}\, z^{\alpha_2}\,\bar{z}^{\beta_2})$, 
where, respectively, 
$\alpha_2+\beta_2\geq k, \alpha_2+\beta_2=k, \alpha_2+\beta_2\le k$, 
that appear in the homogeneous polynomial 
$H^{(n)}$ of degree $n$ in the variables $(v, \bar{v}, z, \bar{z})$.
We denote by
$\Pi^{d_z\leq k}$, respectively  $\Pi^{d_z=k}$,
 the projector of a homogeneous Hamiltonian of degree $n$ 
 on the monomials with degree less or equal than $k$, 
 respectively equal $k$,
 in the normal variable $z$, i.e. 
\begin{equation*}
\Pi^{d_z \leq k} H^{(n)}:=H^{(n, \leq k)}\,, 
\quad \Pi^{d_z = k} H^{(n)}:=H^{(n, k)}\,.
\end{equation*}

\smallskip
\noindent
Given  two Hamiltonians $Q, K$
we shall define the adjoint action  ${\rm ad}_{Q}(H):=\{Q,H\}$. 
We denote
by $\Pi_{\mbox{Ker}(Q)}$,   the projection on the 
kernel of the adjoint action. 
We define the projector on the range of the adjoint action as 
$\Pi_{\mbox{Rg}(Q)}:=\mathrm{I}-\Pi_{{\mbox{Ker}(Q)}}$.

\subsection{Comparison among Birkhoff normal form procedures}\label{compaBNFpro}
In this section we present three kinds of Birkhoff normal form procedures and we compare them.  All these procedures are \emph{formal} and they differ by the terms of the Hamiltonian that are supposed to be normalized along the process.\\
In this paper we shall implement a, not just formally defined, modification of the third one, the ``Weak'' plus ``Linear'' Bikhoff normal form.

\medskip

{In the following we use the notations introduced above related to the Hamiltonian of the pure gravity water waves system \eqref{Rinco}, but we point out that the procedures that we describe below apply to general analytic Hamiltonians close to an elliptic fixed point of the form $H=H^{(2)}+H^{(3)}+\dots$ commuting with momentum and such that there are no $3$-waves resonant interactions.} 

\smallskip

\noindent
{\bf ``Full'' Bikhoff normal form.} 
We refer to  (formal) full Birkhoff normal form method as the
 normalization 
procedure of the 
cubic and quartic terms $H^{(3)}_{\mathbb{C}}$ and $H^{(4)}_{\mathbb{C}}$ 
of the 
 Hamiltonian  in \eqref{Rinco}. This is the strongest normal form method and, in the PDEs context, it is usually hard to implement. {Indeed the normalizing transformations are constructed as time-one flow maps of certain nonlinear PDEs whose vector field may lose derivatives, making the equations possibly ill-posed. There are two main sources of this loss of regularity:
 \begin{enumerate}
 \item The presence of small divisors - in the full procedure one has to get lower bounds for infinitely many combinations of the linear eigenvalues;
 \item The unboundness of the vector field of the PDE that we want to normalize.
 \end{enumerate}
 In the pure gravity case with infinite depth one has to deal with both issues.}
The normalization is done by applying the change of 
 coordinates 
 $\Phi_{FB}:=\Phi_{F_3}\circ\Phi_{F_4}$
 where $\Phi_{F_i}$ are the time one (formal) flow map generated by the Hamiltonians
\begin{equation}\label{F3F4}
F_3:=ad_{H_{\mathbb{C}}^{(2)}}^{-1} H^{(3)}\,, \qquad 
F_4:=ad_{H_{\mathbb{C}}^{(2)}}^{-1} 
\Pi_{Rg(H^{(2)}_{\mathbb{C}})}\Big( H^{(4)}_{\mathbb{C}}
+\frac{1}{2}\{ F_3, H^{(3)}_{\mathbb{C}} \} \Big)\,.
\end{equation}
{By using the fact that there are no $3$-waves interactions} it is easy to check (using Lie series) 
that one obtains
\begin{align}
&H_{\mathbb{C}}\circ\Phi_{FB}=H_{\mathbb{C}}^{(2)}+H_{FB}^{(4)}+
{\rm quintic\; terms}\,,\label{larubia}\\
&H_{FB}^{(4)}:=\Pi_{Ker(H^2_{\mathbb{C}})}\Big( H^{(4)}_{\mathbb{C}}+\frac{1}{2}\{ F_3, H^{(3)}_{\mathbb{C}} \} \Big)\,.\label{larubia2}
\end{align}

\noindent
{\bf ``Partial'' Bikhoff normal form.} 
We refer to  (formal) partial Birkhoff normal form method as the
 normalization 
procedure of the 
cubic and quartic terms 
with at most two wave numbers  outside $S$, i.e.
the terms
$H^{(3,\leq 2)}_{\mathbb{C}}$ and $H^{(4,\leq2)}_{\mathbb{C}}$ 
of the 
 Hamiltonian  in \eqref{Rinco}. Such method has been successfully applied for many KAM results for \emph{semilinear} PDEs, starting from the pioneering work \cite{KP}. Roughly speaking it is the optimal normal form procedure that provides a control on the tangent bundle of the expected invariant torus. However, by the quasi-linear nature of the equations, this method cannot be applied for the water waves system (see discussions below for more details). \\
The normalization procedure is done by applying the change of 
 coordinates 
 $\Phi_{PB}:=\Phi_{F_{3}^{(3,\leq2)}}\circ\Phi_{\tilde{F}^{(4,\leq2)}_4}$
 where $\Phi_{F_3^{(3,\leq2)}}$, $\Phi_{\tilde{F}_{4}^{(4,2)}}$ 
 are the time one (formal) flow map generated by the Hamiltonians
\begin{equation}\label{F3F4partial}
\begin{aligned}
&F_3^{(3,\leq2)}:=ad_{H_{\mathbb{C}}^{(2)}}^{-1} H^{(3,\leq2)}\,, \qquad 
\tilde{F}^{(4,\leq2)}_4:=ad_{H_{\mathbb{C}}^{(2)}}^{-1} 
\Pi_{Rg(H^{(2)}_{\mathbb{C}})}
\Pi^{d_z\leq 2} H_1^{(4)}\,,\\
&\qquad 
H_{1}^{(4)}:=H^{(4)}_{\mathbb{C}}
+\frac{1}{2}\{ F_3^{(3,\leq2)}, H^{(3,\leq2)}_{\mathbb{C}} \}+
\{ F_3^{(3,\leq2)}, H^{(3,3)}_{\mathbb{C}} \}\,.
\end{aligned}
\end{equation}
It is easy to check (using Lie series) 
that one obtains
\begin{equation*}
H_{\mathbb{C}}\circ\Phi_{PB}=H_{\mathbb{C}}^{(2)}+H_{\mathbb{C}}^{(3,3)}
+H_{PB}^{(4)}+
H_1^{(4,\geq3)}
+
{\rm quintic\; terms}\,,
\end{equation*}
\begin{equation}\label{larubia2Partial}
\begin{aligned}
H_{PB}^{(4)}&:=\Pi_{Ker(H^2_{\mathbb{C}})}\Pi^{d_z\leq2} H_1^{(4)}\,.
 \end{aligned}
\end{equation}

\noindent
{\bf ``Weak'' plus ``Linear'' Bikhoff normal form.} 
In this case the normalization procedure is split into two steps: (i) a weak BNF, which is always rigorously defined if the Hamiltonian terms that we want to normalize commute with the momentum (this assumption holds naturally for many physical PDE models); (ii) a linear BNF, which is defined just at the formal level, which aims to normalize the tangent bundle of the expected invariant torus.\\
(i) We first define the map
$\Phi_{WB}:=\Phi_{F_3^{(3,\leq1)}}\circ\Phi_{\widehat{F}_4^{(4,\leq1)}}$
where $\Phi_{F_3^{(3,\leq1)}}, \Phi_{\widehat{F}_4^{(4,\leq1)}}$
are the time-one flow maps generated by the Hamiltonians
\begin{equation}\label{F3F4Weak}
\begin{aligned}
&F_3^{(3,\leq1)}:=ad_{H_{\mathbb{C}}^{(2)}}^{-1} H^{(3,\leq1)}\,, \qquad 
\widehat{F}^{(4,\leq1)}_4:=ad_{H_{\mathbb{C}}^{(2)}}^{-1} 
\Pi_{Rg(H^{(2)}_{\mathbb{C}})}
\Pi^{d_z\leq 1} \widehat{H}_1^{(4)}\,,\\
&\qquad 
\widehat{H}_{1}^{(4)}:=H^{(4)}_{\mathbb{C}}
+\frac{1}{2}\{ F_3^{(3,\leq1)}, H^{(3,\leq1)}_{\mathbb{C}} \}+
\{ F_3^{(3,\leq1)}, H^{(3,\geq2)}_{\mathbb{C}} \}\,.
\end{aligned}
\end{equation}
{Observe that homogenous Hamiltonians of the form $F^{(n, \le 1)}$ that commute with momentum generate a finite dimensional vector field. This comes from the fact that the conservation of momentum for a monomial $u_{j_1}^{\sigma_1}\dots u_{j_n}^{\sigma_n}$ reads as $\sum_{i=1}^n\sigma_i j_i=0$.  Since there is at most one index $j_i\notin S$ this implies that all the $j_i$'s are contained in the ball centered at the origin with radius $(n-1)\max(S)$ (recall \eqref{maxS}). Then the flow  maps $\Phi_{F_3^{(3,\leq1)}}, \Phi_{\widehat{F}_4^{(4,\leq1)}}$ are well-defined.\\}
One obtains
\begin{equation}\label{larubiaWeak}
H_{\mathbb{C}}\circ\Phi_{WB}=
H_{\mathbb{C}}^{(2)}+H_{\mathbb{C}}^{(3,\geq2)}
+H_{WB}^{(4)}+
\widehat{H}_1^{(4,\geq2)}
+
{\rm quintic\; terms}\,,
\end{equation}
\begin{equation}\label{larubia2Weak}
\begin{aligned}
H_{WB}^{(4)}&:=\Pi_{Ker(H^2_{\mathbb{C}})}\Pi^{d_z\leq1} \widehat{H}_1^{(4)}\,.
 \end{aligned}
\end{equation}

\noindent(ii) As a second step we define the map
$\Phi_{LB}:=\Phi_{F_3^{(3,2)}}\circ\Phi_{\mathcal{F}_4^{(4,2)}}$
where $\Phi_{F_3^{(3,\leq1)}}, \Phi_{\mathcal{F}_4^{(4,\leq1)}}$
are the flows generated by the Hamiltonians
\begin{equation}\label{F3F4Linear}
\begin{aligned}
&F_3^{(3,2)}:=ad_{H_{\mathbb{C}}^{(2)}}^{-1} H^{(3,2)}\,, \qquad 
\mathcal{F}^{(4,2)}_4:=ad_{H_{\mathbb{C}}^{(2)}}^{-1} 
\Pi_{Rg(H^{(2)}_{\mathbb{C}})}
\Pi^{d_z=2} \mathcal{H}^{(4)}\,\\
&\qquad 
\mathcal{H}^{(4)}:=H^{(4)}_{\mathbb{C}}
+\frac{1}{2}\{ F_3^{(3,\leq1)}, H^{(3,\leq1)}_{\mathbb{C}} \}
+\frac{1}{2}\{ F_3^{(3,2)}, H^{(3,2)}_{\mathbb{C}} \}+
\{ F_3^{(3,\leq1)}, H^{(3,2)}_{\mathbb{C}} \}
+\{ F_3^{(3,\leq2)}, H^{(3,3)}_{\mathbb{C}} \}
\,.
\end{aligned}
\end{equation}
One obtains
\begin{equation}\label{larubiaLinear}
H_{\mathbb{C}}\circ\Phi_{WB}\circ\Phi_{LB}=
H_{\mathbb{C}}^{(2)}+H_{\mathbb{C}}^{(3,3)}
+H_{WB}^{(4)}
+H_{LB}^{(4)}+
\mathcal{H}^{(4,\geq3)}
+
{\rm quintic\; terms}
\end{equation}
\begin{equation}\label{larubia2Linear}
\begin{aligned}
H_{LB}^{(4)}&:=\Pi_{Ker(H^2_{\mathbb{C}})}\Pi^{d_z=2} \mathcal{H}^{(4)}\,.
 \end{aligned}
\end{equation}

We remark again that the procedures described above are ``formal''
since the maps we used are not \emph{a priori} well-defined.
Indeed they are flows at time one of possibly ill-posed PDEs.
This is due to two main reasons:  the presence of ``small divisors'' in the 
inversion of the adjoint action ${\rm ad}_{H^{(2)}_{\mathbb{C}}}$
and the fact that the vector field of $H_{\mathbb{C}}$
is \emph{quasi-linear}.

{
\begin{remark}
 In \cite{BFP} the authors provide a result of long-time stability of the pure gravity system. One may wonder if there is a connection between this result and the strongest normalization procedures described above (partial and full BNF) and whether this could be applied in the search for quasi-periodic solutions for the pure gravity system.\\
 In \cite{BFP} the authors use a \emph{modified energies} method which is based on a normal form approach (and the integrability at order four), but no genuine Birkhoff maps, meant as real transformations of the phase space, are provided. This is usually requested in KAM theory for PDEs. \\
  At the best of our knowledge, full or partial Birkhoff transformations are still not available for water waves systems. In any case we believe that our approach is well-suited for proving the existence of finite-dimensional quasi-periodic invariant tori, because in some sense it requires minimal assumptions.
 \end{remark}
}
Notice that the map $\Phi_{WB}$ generated by Hamiltonians in \eqref{F3F4Weak}
is actually well-posed
because is the flow of an $ODE$.
This is true since $\Pi^{d_{z}\leq 1}$, combined with the conservation of momentum,
gives the projection over a finite number of modes.
In the paper we 
shall follow a procedure similar to the \emph{Weak} plus
\emph{Linear} Birkhoff normal form.
The aim of the \emph{Weak} procedure is to find the first nonlinear approximate 
solution $v_{I}$ of \eqref{eq:113}.  Such approximate solution will be the starting point
for the Nash-Moser scheme
and the leading term of the expected quasi-periodic solution (see \eqref{SoluzioneEsplicitaWW}).
Actually, in section \ref{sec:linBNF}, we will construct  a map
admitting the same Taylor expansion (at low degree)
of  the map $\Phi_{LB}$ 
but with rigorous estimates on the Sobolev spaces $H^{s}$.
Therefore the main order of  corrections of the eigenvalues of the linearized operator
at $v_{I}$ will be given by the vector field 
of the Hamiltonian $H_{LB}^{(4)}$ in \eqref{larubia2Linear}.\\
To compute such corrections one needs to study 
the kernel of the adjoint action of $H^{(2)}_{\mathbb{C}}$\,.
By an explicit computation 
one has that a fourth order monomial 
$u^{\s_1}_{j_1}\dots u^{\s_4}_{j_4}$
belongs to $Ker(H^{(2)}_{\mathbb{C}})$
if it 
is Fourier supported on a \emph{4-waves resonances} 
that is, non-trivial integer solutions of
\begin{equation}\label{torires}
 \s_1\sqrt{|j_1|}+ \s_2\sqrt{|j_2|}+ \s_3\sqrt{|j_3|}+ \s_4\sqrt{|j_4|}=0\,,\qquad
 \s_1 j_1+\s_2 j_2+\s_3 j_3+\s_4 j_4=0\,.
\end{equation}

We know that (see for instance \cite{Zak2}, \cite{CW}, \cite{CS}) 
 the quartic resonances are given by two families of quartets of wave numbers:
\begin{itemize}
\item[$(i)$] the \emph{trivial} ones, which have the form  $j_1=j_2$, $\sigma_1=-\sigma_2$ and $j_3=j_4$, $\sigma_3=-\sigma_4$;
\item[$(ii)$] the \emph{Benjamin-Feir} resonances, which consist in 
the two parameter family of solutions
\begin{equation}\label{straBFR}
\bigcup_{\lambda\in\Z \setminus \{0\}, b\in\N} 
\Big\{ 
j_1 = -\lambda b^2, 
 \, j_2 = \lambda (b+1)^2 \, , \, j_3 = \lambda (b^2+b+1)^2, \, j_4= \lambda (b+1)^2 b^2 \Big\} \, ,
\end{equation}
with $\sigma_1=\s_3=-\sigma_2=-\s_4$,
\end{itemize}
In \cite{Zak2} (see also \cite{CW}, \cite{CS})
the authors show that
the coefficients of the Hamiltonian $H_{FB}^{(4)}$ in  \eqref{larubia2}
related to monomials supported on Benjamin-Feir resonances 
are zero. The consequence of this ``null condition''
of the gravity water waves 
system in infinite depth is that $H_{FB}^{(4)}$ is \emph{integrable}
and in particular
\begin{equation}\label{HamZakDyaCraig}
\begin{aligned}
H_{FB}^{(4)} &=\Pi_{\mathrm{triv}}\Big( H^{(4)}_{\mathbb{C}}+\frac{1}{2}\{ F_3, H^{(3)}_{\mathbb{C}} \} \Big)
=\frac{1}{4 \pi} \sum_{k \in \Z} |k|^3  \big(  |z_k|^4  - 2 |z_{k}|^2 |z_{-k}|^2   \big)
\\
&\qquad+  \frac{1}{\pi} \sum_{\substack{k_1, k_2 \in \Z, \, \sign(k_1) = \sign( k_2 )  \\ |k_2| < |k_1|}} |k_1| |k_2|^2  
  \big( - |z_{-k_1}|^2 |z_{k_2} |^2 + |z_{k_1}|^2  |z_{k_2}|^2 \big)\,,
\end{aligned}
\end{equation}
where $\Pi_{\mathrm{triv}}$ denotes the projection on 
\emph{trivial} resonances.
To prove that the corrections to the eigenvalues are integrable is equivalent 
to show that 
\begin{align}
&H_{WB}^{(4)}=\Pi_{\mathrm{triv}}H_{WB}^{(4)}\,,\label{WBB1}\\
&H_{LB}^{(4)}=\Pi_{\mathrm{triv}}H_{LB}^{(4)}\,.\label{WLL1}
\end{align}
The condition in \eqref{WBB1}
could be proved 
using a ``generic'' choice of the tangential sites. 
On the other hand it seems that the same argument does not fit for the proof of the equality 
\eqref{WLL1}.
So, to prove the \eqref{WLL1} we use an argument of identification of normal forms 
based on the ideas developed in  \cite{FGP1}.
In that paper the authors exploit the complete  integrability of the Degasperis-Procesi 
equation (see for instance \cite{Deg}, \cite{FGPa})
using a sequence of constants of 
motion in order to deal with non-trivial $4$-waves interactions.
As it is well known the pure gravity water waves equation is not integrable, as it has been proved in \cite{CW}; furthermore no other constants of motion are known, except for the energy and the momentum.\\
Actually the identification argument of \cite{FGP1}
works with \emph{approximate} constants of motion and for our purpose it is sufficient to find one of it.
 In section \ref{ApproxCons} 
we construct such approximate conserved 
quantity by exploiting the formal integrability at order four.

\subsection{Approximate constant of motion for the gravity water waves}\label{ApproxCons}

The idea of the construction of an approximate conserved quantity $K$ is the following: we know that the full BNF is integrable at order four, this implies that the Sobolev norms of the solutions of the truncated system $H^{(2)}_{\mathbb{C}}+H^{(4)}_{FB}$ (recall \eqref{larubia}) are preserved. Then quadratic Hamiltonians of the form $\sum_{j}j^{2n}|u_{j}|^{2}$, $n\geq 0$, are constants of motion for the truncated system.
Written in formula this means that, setting $K^{(2)}(u)=\sum_{j}j^{2n}|u_{j}|^{2}$,
we have 
\[
\{ K^{(2)}\circ \Phi_{F^{(4)}}^{-1}\circ  \Phi_{F^{(3)}}^{-1}, H \}\circ \Phi_{BF}=\{K^{(2)}, H\circ \Phi_{BF}\}=O(u^{5})\,.
\]
It is easy to check that
$ \Phi_{F^{(4)}}^{-1}\circ  \Phi_{F^{(3)}}^{-1}$ differes from $\Phi^{-1}_{F^{(3)}+F^{(4)}}$
by terms of order $O(u^{5})$.
Then by Taylor expanding we get
\[
K^{(2)}\circ\Phi^{-1}_{F^{(3)}+F^{(4)}}=K^{(2)}+\{K^{(2)}, F^{(3)}\}+
\{K^{(2)}, F^{(4)}\}+
\frac{1}{2}\{\{K^{(2)},F^{(3)}\}, F^{(3)} \}+
O(u^{5})\,.
\]
This justifies the choice of the Hamiltonian $K$ constructed in the following proposition.
\begin{proposition}\label{modifieden}
Consider the formal polynomial 
 $K=K^{(2)}+K^{(3)}+K^{(4)}$ where
\begin{equation}\label{K3K4}
K^{(2)}:=\sum_{j\in \mathbb{Z}\setminus\{0\}} j^{2}\,\lvert u_j \rvert^2\,,
\qquad
K^{(3)}:=ad_{K^{(2)}} F_3, \quad K^{(4)}:=ad_{K^{(2)}} F_4
+\frac{1}{2} \{ ad_{K^{(2)}} F_3, F_3 \}\,
\end{equation}
and $F_3$, $F_4$ are given in \eqref{F3F4}.
Then we have
\begin{equation}\label{commutazione}
\{ H_{\mathbb{C}}, K \}=O(u^5)
\end{equation}
where $O(u^5)$ denotes a formal power series with a zero at the origin of order $5$.
\end{proposition}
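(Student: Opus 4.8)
The plan is to verify the identity $\{H_\C,K\}=O(u^5)$ by a direct Lie-series computation, exploiting the defining relations \eqref{F3F4} for $F_3,F_4$ and the absence of $3$-wave resonances. The heuristic in the paragraph preceding the statement already contains the whole strategy: $K$ is designed so that $K^{(2)}\circ\Phi_{F^{(3)}+F^{(4)}}^{-1}$ agrees with $K$ up to quintic order, and then one pushes the vanishing of $\{K^{(2)},H\circ\Phi_{BF}\}$ (which holds because $H_{FB}^{(4)}$ is integrable, hence Poisson-commutes with every quadratic Fourier-diagonal Hamiltonian, in particular with $K^{(2)}$) back through the Birkhoff map. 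Concretely, I would argue as follows. First, split everything by homogeneity degree: write $H_\C=H_\C^{(2)}+H_\C^{(3)}+H_\C^{(4)}+O(u^5)$ and $K=K^{(2)}+K^{(3)}+K^{(4)}$, and observe that $\{H_\C,K\}$ has no quadratic part, its cubic part is $\{H_\C^{(2)},K^{(3)}\}+\{H_\C^{(3)},K^{(2)}\}$, and its quartic part is $\{H_\C^{(2)},K^{(4)}\}+\{H_\C^{(3)},K^{(3)}\}+\{H_\C^{(4)},K^{(2)}\}$. It suffices to show both vanish.

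For the cubic part: by definition $K^{(3)}=\mathrm{ad}_{K^{(2)}}F_3=\{K^{(2)},F_3\}$ and $F_3=\mathrm{ad}_{H^{(2)}_\C}^{-1}H^{(3)}$, i.e. $\{H^{(2)}_\C,F_3\}=H^{(3)}_\C$ (here one uses that $H_\C^{(3)}$ has no component in the kernel of $\mathrm{ad}_{H^{(2)}_\C}$ — this is precisely the statement that there are no $3$-wave resonant interactions — so the inversion is legitimate). Then by the Jacobi identity
\[
\{H_\C^{(2)},K^{(3)}\}=\{H_\C^{(2)},\{K^{(2)},F_3\}\}=\{K^{(2)},\{H_\C^{(2)},F_3\}\}+\{\{H_\C^{(2)},K^{(2)}\},F_3\}=\{K^{(2)},H_\C^{(3)}\},
\]
using $\{H_\C^{(2)},K^{(2)}\}=0$ (both are Fourier-diagonal quadratic Hamiltonians). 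Hence $\{H_\C^{(2)},K^{(3)}\}+\{H_\C^{(3)},K^{(2)}\}=0$.

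For the quartic part: using $K^{(4)}=\mathrm{ad}_{K^{(2)}}F_4+\tfrac12\{\mathrm{ad}_{K^{(2)}}F_3,F_3\}$, the Jacobi identity again (and $\{H_\C^{(2)},K^{(2)}\}=0$) gives
\[
\{H_\C^{(2)},\mathrm{ad}_{K^{(2)}}F_4\}=\{K^{(2)},\{H_\C^{(2)},F_4\}\},\qquad
\{H_\C^{(2)},\tfrac12\{\{K^{(2)},F_3\},F_3\}\}=\{K^{(2)},\tfrac12\{\{H_\C^{(2)},F_3\},F_3\}\}+\tfrac12\{\{K^{(2)},F_3\},\{H_\C^{(2)},F_3\}\}.
\]
Now substitute $\{H_\C^{(2)},F_3\}=H_\C^{(3)}$ and $\{H_\C^{(2)},F_4\}=\Pi_{\mathrm{Rg}}\big(H_\C^{(4)}+\tfrac12\{F_3,H_\C^{(3)}\}\big)$. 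Collecting all three quartic contributions and using once more the Jacobi identity to move the outer $\{K^{(2)},\cdot\}$ around, everything reorganizes into $\{K^{(2)},\,\Pi_{\mathrm{Ker}}(H_\C^{(4)}+\tfrac12\{F_3,H_\C^{(3)}\})\}=\{K^{(2)},H_{FB}^{(4)}\}$ plus terms that cancel $\{H_\C^{(3)},K^{(3)}\}+\{H_\C^{(4)},K^{(2)}\}$; this is exactly the formal content of ``$K^{(2)}$ is an invariant of $H_\C\circ\Phi_{FB}=H_\C^{(2)}+H_{FB}^{(4)}+O(u^5)$, transported back.'' Finally $\{K^{(2)},H_{FB}^{(4)}\}=0$ because the explicit formula \eqref{HamZakDyaCraig} shows $H_{FB}^{(4)}$ depends only on the actions $|z_k|^2$, and $K^{(2)}=\sum_j j^2|u_j|^2$ is action-preserving, so the two Poisson-commute.

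\textbf{Main obstacle.} The conceptual steps are all soft (Jacobi identity, degree bookkeeping, the no-$3$-wave-resonance fact, and the integrability \eqref{HamZakDyaCraig} which is quoted from \cite{Zak2,CW,CS}). The only real work — and the place to be careful — is the quartic bookkeeping: one must check that the terms generated by expanding $\{H_\C^{(2)},K^{(4)}\}$ via Jacobi combine \emph{exactly} with $\{H_\C^{(3)},K^{(3)}\}$ and $\{H_\C^{(4)},K^{(2)}\}$ to reproduce $\{K^{(2)},(\text{the quartic normal form})\}$, with no leftover. This is the standard ``Lie-series commutes with conjugation'' lemma but applied at second order, so the cleanest route is probably to avoid the term-by-term match altogether: prove instead a clean abstract lemma that for the time-$1$ flow $\Phi$ of a Hamiltonian $F=F_3+F_4$ one has $K\circ\Phi^{-1}=K^{(2)}+\{K^{(2)},F_3\}+\{K^{(2)},F_4\}+\tfrac12\{\{K^{(2)},F_3\},F_3\}+O(u^5)$ and $(H_\C\circ\Phi)\circ\text{(its normal form inverse)}$ argument, then invoke $\{K^{(2)},H_{FB}^{(4)}\}=0$ and pull back. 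Framed that way the proposition is essentially a corollary of the construction in \eqref{F3F4}–\eqref{larubia2} plus \eqref{HamZakDyaCraig}, and the ``obstacle'' reduces to writing the Taylor/Lie expansions carefully enough that the $O(u^5)$ claim is rigorous at the formal-power-series level.
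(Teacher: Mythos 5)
Your proposal is correct and follows essentially the same route as the paper: split $\{H_{\mathbb{C}},K\}$ by homogeneity, use the Jacobi identity together with $\{K^{(2)},H^{(2)}_{\mathbb{C}}\}=0$ and the defining relations \eqref{F3F4} to handle the cubic and quartic equations, and invoke \eqref{HamZakDyaCraig} (the kernel projection of $H^{(4)}_{\mathbb{C}}+\tfrac12\{F_3,H^{(3)}_{\mathbb{C}}\}$ is supported on trivial resonances, hence action-preserving and Poisson-commuting with $K^{(2)}$) to annihilate the only potentially nonvanishing contribution. The quartic ``reorganization'' you leave schematic is precisely the short chain of Jacobi-identity manipulations the paper writes out, culminating in $\Pi_{Ker(H^{(2)}_{\mathbb{C}})}\, ad_{K^{(2)}}\big(H^{(4)}_{\mathbb{C}}+\tfrac12\{F_3,H^{(3)}_{\mathbb{C}}\}\big)= ad_{K^{(2)}}\Pi_{\mathrm{triv}}\big(\cdots\big)=0$.
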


\begin{proof}
We first observe that $K^{(2)}$ is the Hamiltonian 
of the linear Schr\"odinger equation 
$\mathrm{i} u_t=u_{xx}$ restricted to the invariant 
subspace of zero-average functions $u(x)$, 
$x\in\mathbb{T}$. It is well known that $ad_{K^{(2)}}$ 
has only trivial resonances of order $3$ and $4$. 
In particular, this implies that cubic Hamiltonians, 
as for instance $F_3$, are in the range of the adjoint action of $K^{(2)}$.\\
The equation \eqref{commutazione} holds if and only if
\begin{align}
\{ K^{(2)}, H^{(2)}_{\mathbb{C}}\}=0\,, \label{eqquadratico}\\
\{ K^{(2)}, H^{(3)}_{\mathbb{C}}\}+\{  K^{(3)}, H^{(2)}_{\mathbb{C}}\}=0\,, \label{eqcubico}\\   \label{eqquartico}
\{ K^{(2)}, H^{(4)}_{\mathbb{C}}\}+\{ K^{(4)}, H^{(2)}_{\mathbb{C}}  \}
+\{ K^{(3)}, H^{(3)}_{\mathbb{C}}\}=0\,.
\end{align}
We note that $K^{(2)}$ is diagonal, 
hence \eqref{eqquadratico} holds. From this fact we deduce that the adjoint actions $ad_{K^{(2)}}$ and $ad_{H^{(2)}_{\mathbb{C}}}$ commutes on the intersection of their range $Rg(H_{\mathbb{C}}^{(2)})\cap Rg(K^{(2)})$.\\
The equation \eqref{eqcubico} holds since
\[
ad_{H^{(2)}_{\mathbb{C}}}  [K^{(3)}]\stackrel{\eqref{K3K4}}{=}ad_{K^{(2)}}\, 
ad_{H^{(2)}_{\mathbb{C}}}[F_3]\stackrel{\eqref{F3F4}}{=}\{ K^{(2)}, H^{(3)}_{\mathbb{C}}\}\,,
\]
where we used that the adjoint actions of $H^{(2)}_{\mathbb{C}}$ and $K^{(2)}$ commute and $F_3\in Rg(H_{\mathbb{C}}^{(2)})\cap Rg(K^{(2)})$.\\
The equality \eqref{eqquartico}  is equivalent to
\begin{equation}\label{beremolto5}
K^{(4)}=ad_{H^{(2)}_{\mathbb{C}}}^{-1} \Pi_{Rg(H^{(2)}_{\mathbb{C}})} 
\Big(   \{ K^{(2)}, H^{(4)}_{\mathbb{C}}\}  
+\{ K^{(3)}, H^{(3)}_{\mathbb{C}}\}  \Big)\,.
\end{equation}
We need to check that the choice of $K^{(4)}$ in \eqref{K3K4}
implies the \eqref{beremolto5}.
First of all
notice that,
 by \eqref{eqcubico}, \eqref{F3F4}, \eqref{K3K4} 
 and the Jacobi identity, we have 
\[
\begin{aligned}
&\{ K^{(3)}, H^{(3)}_{\mathbb{C}}\} \stackrel{\eqref{K3K4}}{=}
\{ \{K^{(2)}, F_3\}, H^{(3)}_{\mathbb{C}}\} =
-\{ \{ H^{(3)}_{\mathbb{C}}, K^{(2)}\}, F_3\}-\{ \{F_3, H^{(3)}_{\mathbb{C}}\}, K^{(2)}\}\,,\\
& \{F_3, \{H^{(3)}_{\mathbb{C}}, K^{(2)}\}\}-\{ H^{(3)}_{\mathbb{C}}, \{K^{(2)}, F_3\}\} 
\stackrel{\eqref{eqcubico}}{=} -\{F_3, ad_{H^{(2)}_{\mathbb{C}}}
 (K^{(3)})\}-\{ H^{(3)}_{\mathbb{C}}, K^{(3)}\}
=
 ad_{H^{(2)}_{\mathbb{C}}} \{ K^{(3)}, 
F_3\}\,.
\end{aligned}
\]
Therefore
\begin{equation}\label{beremolto2}
\begin{aligned}
 \{ K^{(2)}, H^{(4)}_{\mathbb{C}}\}  +\{ K^{(3)}, H^{(3)}_{\mathbb{C}}\}&
 =ad_{K^{(2)}} ( H^{(4)}_{\mathbb{C}})
 +ad_{K^{(2)}} (\{ F_3, H^{(3)}_{\mathbb{C}}\})-\{ \{H^{(3)}_{\mathbb{C}}, K^{(2)}\}, F_3 \}\\
&=ad_{K^{(2)}} (H^{(4)}_{\mathbb{C}}+\frac{1}{2}\{F_3, H^{(3)}_{\mathbb{C}}\})
+\frac{1}{2} \Big( \{F_3, \{H^{(3)}_{\mathbb{C}}, K^{(2)}\}\}
-\{ H^{(3)}_{\mathbb{C}}, \{K^{(2)}, F_3\}\} \Big)\\
&\stackrel{\eqref{F3F4},\eqref{K3K4}}{=}
 ad_{K^{(2)}} (H^{(4)}_{\mathbb{C}}+\frac{1}{2}\{F_3, H^{(3)}_{\mathbb{C}}\})+
 \frac{1}{2}ad_{H^{(2)}_{\mathbb{C}}}
  \{ ad_{K^{(2)}} F_3, F_3 \}\,.
\end{aligned}
\end{equation}
Moreover using \eqref{eqquadratico} we have
\begin{equation}\label{orologio}
\begin{aligned}
\Pi_{Ker(H^{(2)}_{\mathbb{C}})}  ad_{K^{(2)}} (H^{(4)}_{\mathbb{C}}
+\frac{1}{2}\{F_3, H^{(3)}_{\mathbb{C}}\})&=
\Pi_{Ker(H^{(2)}_{\mathbb{C}})}  ad_{K^{(2)}} 
\Pi_{Ker(H^{(2)}_{\mathbb{C}})}\big(H^{(4)}_{\mathbb{C}}
+\frac{1}{2} \{F_3, H^{(3)}_{\mathbb{C}}\}\big)\,,\\
&\stackrel{ \eqref{HamZakDyaCraig}}{=}
ad_{K^{(2)}} \Pi_{\mathrm{triv}} \big(H^{(4)}_{\mathbb{C}}
+\frac{1}{2}\{F_3, H^{(3)}_{\mathbb{C}}\}\big)=0\,,
\end{aligned}
\end{equation}
since  $K^{(2)}$ commutes 
with terms supported on trivial resonances.
Then we have
\[
\eqref{beremolto5}\stackrel{\eqref{beremolto2}}{=}
 ad_{K^{(2)}} ad_{H^{(2)}_{\mathbb{C}}}^{-1} \Pi_{Rg(H^{(2)}_{\mathbb{C}})}
 \big(H^{(4)}_{\mathbb{C}}+\frac{1}{2}\{F_3, H^{(3)}_{\mathbb{C}}\}\big)+
 \frac{1}{2}
  \{ ad_{K^{(2)}} F_3, F_3 \}\stackrel{\eqref{F3F4},\eqref{K3K4}}{=}K^{(4)}\,.
\]
This proves that \eqref{eqquartico} holds with $K^{(4)}$ as in \eqref{K3K4}.
\end{proof}

\subsection{Identification of normal forms}\label{sec:3.33}
In this section 
we prove that the Hamiltonians $H_{WB}^{(4)}$, $H_{LB}^{(4)}$ 
in \eqref{larubia2Weak}, 
\eqref{larubia2Linear}
are actually given in terms of the projections 
$\Pi^{d_z=0}$ and $\Pi^{d_z=2}$ of the full Birkhoff normal form Hamiltonian
$H_{FB}^{(4)}$ in \eqref{larubia2}.
This will be done into two steps. First in Propositions \ref{LemmaBelloWeak}
and \ref{LemmaBelloLinear}
we show that 
\eqref{WBB1} and \eqref{WLL1} hold, i.e. the coefficients of monomials in 
$H_{WB}^{(4)}$, $H_{LB}^{(4)}$ supported on Benjamin-Feir resonances (see \eqref{straBFR})
are zero. 
Then in Proposition \ref{fineIdentif} we conclude the identification.

\medskip

The next two propositions are based on the fact that two commuting Hamiltonians can be put simultaneously in Birkhoff normal form and the normalized terms have to be supported in the intersection of the kernels of \emph{both} adjoint actions.\\
Then the main point is that the adjoint action of the constructed Hamiltonian $K$ possesses only \emph{trivial} $4$-waves resonant interactions with two plus signs and two minus signs and the Benjamin-Feir are of this type (see \eqref{straBFR}). Hence the common $4$-waves resonances  are just the trivial ones.\\
{\textbf{Notation:} Following \cite{FGP1}, given a Hamiltonian $H$ we use the sub-index $H_n$ to denote the transformed  Hamiltonian after $n$ steps of a normal form procedure.}
\begin{proposition}\label{LemmaBelloWeak}
Consider $H_{\mathbb{C}}$ in \eqref{Rinco} and its approximate constant of motion $K$ given in Proposition \ref{modifieden}. Recalling \eqref{F3F4Weak} let us call $\Phi:=\Phi_{F_3^{(3, \le 1)}}\circ \Phi_{\widehat{F}_4^{(4, \le 1)}}$.  Then
\begin{align}
&H_{\mathbb{C}}\circ \Phi=H^{(2)}+H_{\mathbb{C}}^{(3,\geq2)}
+H_{WB}^{(4)}+\widehat{H}^{(4,2)}_{1}+\widehat{H}_1^{(4,\geq3)}+R_2^{(\geq 5)}
\,,\label{wnormalform1}\\
&K\circ \Phi=K^{(2)}+K^{(3,\geq2)}+W_{2}^{(4, 0)}
+Q_{2}^{(\geq 5, \le 1)}
+K_{1}^{(\geq 4, \geq 2)}\,,\label{wnormalform2} 
\end{align}
where $\widehat{H}_1^{(4)}$ is given in \eqref{F3F4Weak}, $H_{WB}^{(4)}$ is in \eqref{larubia2Weak}, 
and 
$H_{WB}^{(4)}, W_{2}^{(4, 0)} \in  
\mbox{Ker}(K^{(2)}) \cap \mbox{Ker}(H_{\mathbb{C}}^{(2)})$.
\end{proposition}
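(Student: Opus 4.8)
The plan is to split the proof into two parts: first, deriving the two normal form expansions \eqref{wnormalform1}--\eqref{wnormalform2} by explicit Lie-series computations; second, deducing the kernel property from the approximate conservation law of Proposition \ref{modifieden}. For \eqref{wnormalform1} one simply reproduces the computation leading to \eqref{larubiaWeak}: splitting $\widehat H_1^{(4,\geq2)}=\widehat H_1^{(4,2)}+\widehat H_1^{(4,\geq3)}$ and collecting the quintic-and-higher terms into $R_2^{(\geq5)}$ gives \eqref{wnormalform1}; here one uses, as remarked after \eqref{F3F4Weak}, that $\Pi^{d_z\leq1}$ together with momentum conservation gives projections onto finitely many Fourier modes, so $\Phi_{F_3^{(3,\leq1)}}$ and $\Phi_{\widehat F_4^{(4,\leq1)}}$ are honest flows of finite-dimensional vector fields and the Lie series are finite-order Taylor expansions.

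For $K\circ\Phi=(K\circ\Phi_{F_3^{(3,\leq1)}})\circ\Phi_{\widehat F_4^{(4,\leq1)}}$ I would expand degree by degree, the crucial input being that $K^{(2)}=\sum_j j^2|u_j|^2$ is a Fourier multiplier: $ad_{K^{(2)}}$ acts diagonally on monomials, commutes with $ad_{H^{(2)}_{\mathbb C}}$ and with every $\Pi^{d_z=k}$, and satisfies the Leibniz identity $ad_{K^{(2)}}\{A,B\}=\{ad_{K^{(2)}}A,B\}+\{A,ad_{K^{(2)}}B\}$. Combining $F_3^{(3,\leq1)}=\Pi^{d_z\leq1}F_3$ with the relations $K^{(3)}=ad_{K^{(2)}}F_3$, $K^{(4)}=ad_{K^{(2)}}F_4+\frac{1}{2}\{ad_{K^{(2)}}F_3,F_3\}$ of \eqref{K3K4} and the definitions \eqref{F3F4}, \eqref{F3F4Weak}, one gets at degree three $K^{(3)}+\{F_3^{(3,\leq1)},K^{(2)}\}=ad_{K^{(2)}}(1-\Pi^{d_z\leq1})F_3=K^{(3,\geq2)}$, and at degree four a term with $d_z=0$, which I name $W_2^{(4,0)}$, lying in $\mathrm{Ker}(K^{(2)})$ (the Leibniz rule makes the quartic contributions collapse), the remaining quartic terms having $d_z\geq2$; collecting the quintic-and-higher $d_z\leq1$ terms into $Q_2^{(\geq5,\leq1)}$ and the $d_z\geq2$ terms into $K_1^{(\geq4,\geq2)}$ yields \eqref{wnormalform2}. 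Tracking the $d_z$-grading through each Poisson bracket of the Lie series is the most laborious point.

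For the kernel property, $H_{WB}^{(4)}=\Pi_{\mathrm{Ker}(H^{(2)}_{\mathbb C})}\Pi^{d_z\leq1}\widehat H_1^{(4)}\in\mathrm{Ker}(H^{(2)}_{\mathbb C})$ by construction and $W_2^{(4,0)}\in\mathrm{Ker}(K^{(2)})$ by the previous step, so it remains to show $H_{WB}^{(4)}\in\mathrm{Ker}(K^{(2)})$ and $W_2^{(4,0)}\in\mathrm{Ker}(H^{(2)}_{\mathbb C})$. Since $\Phi$ is symplectic, Proposition \ref{modifieden} gives $\{H_{\mathbb C}\circ\Phi,K\circ\Phi\}=\{H_{\mathbb C},K\}\circ\Phi=O(u^5)$. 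I would extract the degree-four homogeneous component of this identity: using \eqref{wnormalform1}--\eqref{wnormalform2} and the facts that neither $H_{\mathbb C}\circ\Phi$ nor $K\circ\Phi$ carries a cubic monomial of $d_z\leq1$ (there being no $3$-wave resonances of $\sqrt{|j|}$, $\mathrm{Ker}(H^{(2)}_{\mathbb C})$ contains no cubic monomial) and that $K\circ\Phi$ has no quartic term with $d_z=1$, the projections onto the $d_z=0$ and $d_z=1$ sectors reduce to
\[
ad_{H^{(2)}_{\mathbb C}}\,W_2^{(4,0)}=ad_{K^{(2)}}\,\Pi^{d_z=0}H_{WB}^{(4)}\,,
\qquad
ad_{K^{(2)}}\,\Pi^{d_z=1}H_{WB}^{(4)}=0\,.
\]
In the first identity the left-hand side lies in the range of $ad_{H^{(2)}_{\mathbb C}}$, hence is supported on monomials \emph{non-resonant} for $H^{(2)}_{\mathbb C}$, while the right-hand side, as $\Pi^{d_z=0}H_{WB}^{(4)}\in\mathrm{Ker}(H^{(2)}_{\mathbb C})$ and $ad_{K^{(2)}}$ is diagonal, is supported on $H^{(2)}_{\mathbb C}$-\emph{resonant} monomials; the two supports being disjoint, both sides vanish, which yields $\Pi^{d_z=0}H_{WB}^{(4)}\in\mathrm{Ker}(K^{(2)})$ and $W_2^{(4,0)}\in\mathrm{Ker}(H^{(2)}_{\mathbb C})$. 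The second identity gives $\Pi^{d_z=1}H_{WB}^{(4)}\in\mathrm{Ker}(H^{(2)}_{\mathbb C})\cap\mathrm{Ker}(K^{(2)})$; but a trivial $4$-wave resonance involves an even number of indices outside $S$, and the only $4$-wave monomials lying in $\mathrm{Ker}(H^{(2)}_{\mathbb C})\cap\mathrm{Ker}(K^{(2)})$ are the trivial ones — indeed for the sign pattern $(+,-,+,-)$ the conditions $j_1+j_3=j_2+j_4$ and $j_1^2+j_3^2=j_2^2+j_4^2$ force $\{j_1,j_3\}=\{j_2,j_4\}$, so the Benjamin--Feir quartets \eqref{straBFR} are not $K^{(2)}$-resonant — hence there is no such monomial with $d_z=1$, i.e.\ $\Pi^{d_z=1}H_{WB}^{(4)}=0$. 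Therefore $H_{WB}^{(4)}=\Pi^{d_z=0}H_{WB}^{(4)}\in\mathrm{Ker}(K^{(2)})\cap\mathrm{Ker}(H^{(2)}_{\mathbb C})$, which completes the proof.

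The hard part will be the $d_z$-grading bookkeeping in the Lie series of $K\circ\Phi$, in particular checking that after the second transformation the quartic part has exactly the asserted structure (with the $\mathrm{Ker}(K^{(2)})$ term isolated and no leftover $d_z=1$ contribution), together with making the ``non-resonant $=$ resonant $\Rightarrow$ zero'' splitting of the degree-four commutator precise in the second part.
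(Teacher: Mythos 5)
Your overall strategy — push $\{H_{\mathbb C},K\}=O(u^5)$ through the symplectic map $\Phi$ and kill the degree-four identity by the disjointness of the supports of ${\rm Rg}(ad_{H^{(2)}_{\mathbb C}})$ and ${\rm Ker}(ad_{H^{(2)}_{\mathbb C}})$ — is exactly the mechanism the paper intends: its own proof is a pointer to Proposition 3.6 of \cite{FGP1}, and the same commutation/kernel-range argument is written out for the linear step in Proposition \ref{LemmaBelloLinear}. Your derivation of \eqref{wnormalform1} and your conclusion $H_{WB}^{(4)}\in{\rm Ker}(K^{(2)})\cap{\rm Ker}(H^{(2)}_{\mathbb C})$ are sound.

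The gap is in your derivation of \eqref{wnormalform2}. You assert that ``the Leibniz rule makes the quartic contributions collapse'' so that the degree-four, $d_z\le1$ part of $K\circ\Phi$ is a $d_z=0$ term lying in ${\rm Ker}(K^{(2)})$. Carrying out that collapse (using $K^{(3)}=ad_{K^{(2)}}F_3$, $K^{(4)}=ad_{K^{(2)}}F_4+\tfrac12\{ad_{K^{(2)}}F_3,F_3\}$ from \eqref{K3K4}, the Jacobi identity, and the Lie-series conventions fixed by \eqref{F3F4Weak}) gives instead
\[
\Pi^{d_z\le1}\big(K\circ\Phi\big)^{(4)}
= ad_{K^{(2)}}\,\Pi^{d_z\le 1}\Big(F_4-\widehat F_4^{(4,\le1)}+\tfrac12\{F_3^{(3,\le1)},F_3^{(3,\ge2)}\}\Big)\,,
\]
i.e.\ a term in the \emph{range} of $ad_{K^{(2)}}$ (its projection onto ${\rm Ker}(K^{(2)})$ vanishes), which a priori also contains $d_z=1$ monomials; the only piece you may discard by pure $d_z$-counting is $\tfrac12\{ad_{K^{(2)}}F_3^{(3,\ge2)},F_3^{(3,\ge2)}\}$. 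So the structure claimed in \eqref{wnormalform2} does not follow from Lie-series bookkeeping alone: one must feed this term into the degree-four commutation identity (which places it in ${\rm Ker}(ad_{H^{(2)}_{\mathbb C}})$) and then use the characterization of quartic, momentum-preserving, $H^{(2)}_{\mathbb C}$-resonant monomials carrying at most one index outside $S$ (only trivial ones, by the trivial/Benjamin--Feir classification together with the paper's assumptions on $S$, trivial resonances having an even number of normal indices) — precisely the resonance analysis you invoke only for $H_{WB}^{(4)}$. This also removes a circularity in your kernel step: you split the degree-four identity into $d_z=0$ and $d_z=1$ sectors using the unproven claim that $K\circ\Phi$ has no quartic $d_z=1$ term, whereas the disjoint-support argument applied directly to the whole $\Pi^{d_z\le1}$ projection already yields $ad_{K^{(2)}}H_{WB}^{(4)}=0$ and $\Pi^{d_z\le1}(K\circ\Phi)^{(4)}\in{\rm Ker}(ad_{H^{(2)}_{\mathbb C}})$ without it.
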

\begin{proof}
One can reason as in  Proposition $3.6$ in \cite{FGP1}. The explicit form 
 \eqref{wnormalform1}
comes from \eqref{F3F4Weak}, \eqref{larubiaWeak}, \eqref{larubia2Weak}.
\end{proof}

\begin{proposition}\label{LemmaBelloLinear}
Consider $H_{\mathbb{C}}$ in \eqref{Rinco} and $K$ given in Proposition \ref{modifieden}. Recalling \eqref{F3F4Linear} 
let us call $\Psi:=\Phi_{F_3^{(3,  2)}}\circ \Phi_{\mathcal{F}_4^{(4, 2)}}$.  Then
\begin{equation}\label{wnormalform}
\begin{aligned}
&H_{\mathbb{C}}\circ \Phi\circ \Psi=H_{\mathbb{C}}^{(2)}+
H_{WB}^{(4)}+H_{LB}^{(4)}+R_4^{(\geq 5, \le 2)}
+H_{4}^{(\geq 3, \geq 3)}\,,\\
&K\circ \Phi\circ \Psi=K^{(2)}+W_{2}^{(4, 0)}+W_{2}^{(4, 2)}
+Q_{4}^{(\geq 5, \le 2)}
+K_{4}^{(\geq 3, \geq 3)}\,, 
\end{aligned}
\end{equation}
where 
$H_{LB}^{(4)}$ is given in \eqref{larubia2Linear}
and 
$H_{LB}^{(4)}, 
W_{2}^{(4, 2)} \in  \mbox{Ker}(K^{(2)}) \cap \mbox{Ker}(H_{\mathbb{C}}^{(2)})$.
\end{proposition}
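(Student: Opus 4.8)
The strategy mirrors that of Proposition \ref{LemmaBelloWeak}, now run one step further along the normal form procedure. The starting point is the output \eqref{wnormalform1}, \eqref{wnormalform2} of Proposition \ref{LemmaBelloWeak}, which already tells us that after the map $\Phi$ the Hamiltonians $H_{\mathbb C}$ and its approximate constant of motion $K$ are simultaneously normalized up to and including the ``$d_z\le 1$'' part at degree four, with the normalized degree-four terms $H_{WB}^{(4)}$ and $W_2^{(4,0)}$ both lying in $\mathrm{Ker}(K^{(2)})\cap\mathrm{Ker}(H_{\mathbb C}^{(2)})$. The point of the present proposition is to push the normalization to the $\Pi^{d_z=2}$ component at degree four by conjugating with $\Psi=\Phi_{F_3^{(3,2)}}\circ\Phi_{\mathcal{F}_4^{(4,2)}}$, and to verify that the new normalized term $H_{LB}^{(4)}$ (defined in \eqref{larubia2Linear}) again lands in the common kernel of the two adjoint actions, together with the matching term $W_2^{(4,2)}$ for $K$.

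First I would compute $H_{\mathbb C}\circ\Phi\circ\Psi$ by Lie series, expanding $\Phi_{F_3^{(3,2)}}$ and $\Phi_{\mathcal{F}_4^{(4,2)}}$ as time-one flows and collecting terms by homogeneity degree and by degree in the normal variable $z$. Since $F_3^{(3,2)}$ is the solution of the homological equation $\mathrm{ad}_{H_{\mathbb C}^{(2)}}F_3^{(3,2)}=-H^{(3,2)}$ and $\mathcal{F}_4^{(4,2)}$ solves the degree-four, $d_z=2$ homological equation with right-hand side $\Pi_{Rg(H^{(2)}_{\mathbb C})}\Pi^{d_z=2}\mathcal{H}^{(4)}$ (with $\mathcal{H}^{(4)}$ as in \eqref{F3F4Linear}), the bookkeeping is exactly the one leading to \eqref{larubiaLinear}; the surviving degree-four piece is $H_{WB}^{(4)}+H_{LB}^{(4)}$, the remaining degree-four monomials are gathered into $H_4^{(\ge 3,\ge 3)}$ (which have $d_z\ge 3$ and are therefore untouched by the tangential bifurcation analysis), and the tail is the quintic-and-higher remainder $R_4^{(\ge 5,\le 2)}$. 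The same Lie-series computation applied to $K$, using the fact established in Proposition \ref{modifieden} that $\{H_{\mathbb C},K\}=O(u^5)$ and that $\mathrm{ad}_{K^{(2)}}$ commutes with $\mathrm{ad}_{H^{(2)}_{\mathbb C}}$ on the intersection of their ranges (this is the content of \eqref{eqquadratico} and the discussion after it), produces the second line of \eqref{wnormalform}: because $F_3^{(3,2)}$ and $\mathcal{F}_4^{(4,2)}$ are built from $\mathrm{ad}_{H^{(2)}_{\mathbb C}}^{-1}$ applied to monomials that lie in $Rg(H^{(2)}_{\mathbb C})\cap Rg(K^{(2)})$ (since all cubic Hamiltonians and the relevant quartic terms are in the range of $\mathrm{ad}_{K^{(2)}}$, as $K^{(2)}$ has only trivial low-order resonances), conjugating $K$ by $\Psi$ normalizes the $d_z=2$ part of $K$ at degree four as well, yielding the term $W_2^{(4,2)}$.

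The membership $H_{LB}^{(4)},\,W_2^{(4,2)}\in\mathrm{Ker}(K^{(2)})\cap\mathrm{Ker}(H^{(2)}_{\mathbb C})$ is the crux, and the argument is the one sketched before the statement of Proposition \ref{LemmaBelloWeak}: by construction $H_{LB}^{(4)}=\Pi_{\mathrm{Ker}(H^2_{\mathbb C})}\Pi^{d_z=2}\mathcal{H}^{(4)}\in\mathrm{Ker}(H^{(2)}_{\mathbb C})$, so it suffices to see that it also lies in $\mathrm{Ker}(K^{(2)})$. Since $H_{\mathbb C}\circ\Phi\circ\Psi$ and $K\circ\Phi\circ\Psi$ still Poisson-commute up to order $O(u^5)$, matching their degree-four parts forces $\{H_{\mathbb C}^{(2)},\,(\text{deg }4\text{ of }K\circ\Phi\circ\Psi)\}+\{(\text{deg }4\text{ of }H_{\mathbb C}\circ\Phi\circ\Psi),\,K^{(2)}\}=0$; restricting this identity to the $d_z=2$ sector and projecting onto $\mathrm{Ker}(H^{(2)}_{\mathbb C})$ shows $\{H_{LB}^{(4)},K^{(2)}\}$ and $\{W_2^{(4,2)},H_{\mathbb C}^{(2)}\}$ vanish, so both terms lie in the common kernel. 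Equivalently, one invokes the general principle that two commuting Hamiltonians can be simultaneously Birkhoff-normalized, so the normalized terms are supported on the \emph{common} resonant set; but the $4$-wave resonances of $K^{(2)}=\sum_j j^2|u_j|^2$ with two $+$ and two $-$ signs are only the trivial ones, while the Benjamin–Feir quartets \eqref{straBFR} are of precisely this sign pattern, hence the only common quartic resonances are trivial, giving \eqref{WLL1}. The main obstacle I anticipate is purely organizational rather than conceptual: carefully tracking the $d_z$-grading through the nested Lie-series expansions so that the cross terms $\{F_3^{(3,\le 1)},H^{(3,2)}_{\mathbb C}\}$, $\{F_3^{(3,2)},H^{(3,3)}_{\mathbb C}\}$ etc. in \eqref{F3F4Linear} are correctly identified and none is misattributed to the $d_z\le 1$ (already normalized) or $d_z\ge 3$ (irrelevant) sectors — this is where an error would propagate into the formula for $H_{LB}^{(4)}$ and hence into the twist computation.
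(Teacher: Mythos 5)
Your treatment of the main membership, $H_{LB}^{(4)}\in \mathrm{Ker}(K^{(2)})\cap\mathrm{Ker}(H_{\mathbb C}^{(2)})$, is correct and rests on the same ingredients as the paper's proof: the commutation $\{H_{\mathbb C},K\}=O(u^5)$ survives the conjugation because $\Phi\circ\Psi$ is close to the identity, $\mathrm{ad}_{K^{(2)}}$ preserves $\mathrm{Ker}(H^{(2)}_{\mathbb C})$ and $\mathrm{Rg}(H^{(2)}_{\mathbb C})$ (Jacobi identity plus $\{H^{(2)}_{\mathbb C},K^{(2)}\}=0$), and $\mathrm{Ker}\cap\mathrm{Rg}=\{0\}$. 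You project the degree-four, $d_z=2$ commutation identity \emph{after} the full conjugation, while the paper extracts the same vanishing (the relation \eqref{prisoner}) at the intermediate stage, before conjugating with $\Phi_{\mathcal F_4^{(4,2)}}$; for $H_{LB}^{(4)}$ this repackaging is harmless.

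The genuine gap is the claim about $W_2^{(4,2)}$. Your identity $\{H^{(2)}_{\mathbb C},W_2^{(4,2)}\}+\{H_{LB}^{(4)},K^{(2)}\}=0$, after projecting onto $\mathrm{Ker}(H^{(2)}_{\mathbb C})$, yields exactly two facts: $H_{LB}^{(4)}\in\mathrm{Ker}(K^{(2)})$ and $W_2^{(4,2)}\in\mathrm{Ker}(H^{(2)}_{\mathbb C})$. It does \emph{not} give $W_2^{(4,2)}\in\mathrm{Ker}(K^{(2)})$: once both brackets vanish the identity carries no further information, and nothing a priori excludes a component of $W_2^{(4,2)}$ in $\mathrm{Ker}(H^{(2)}_{\mathbb C})\cap\mathrm{Rg}(K^{(2)})$, a subspace which is nonzero (the Benjamin--Feir monomials \eqref{straBFR} lie in it). This is precisely where the paper works harder: it shows that the generator $\mathcal F_4^{(4,2)}$ of \eqref{F3F4Linear}, although built from $\mathrm{ad}_{H^{(2)}_{\mathbb C}}^{-1}$, also solves the homological equation for the transformed $K$, using the commutation of $\mathrm{ad}_{H^{(2)}_{\mathbb C}}^{-1}$ with $\mathrm{ad}_{K^{(2)}}$ on the relevant subspace together with \eqref{prisoner}, so that the degree-four, $d_z=2$ leftover of $K$ is doubly resonant. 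Your fallback appeal to the ``general principle'' that commuting Hamiltonians can be simultaneously normalized is circular here: the commutation is only approximate and $\Psi$ is chosen to normalize $H$, not $K$, so that principle is exactly what has to be proved. Relatedly, your parenthetical justification that the relevant quartic terms automatically lie in $\mathrm{Rg}(K^{(2)})$ because ``$K^{(2)}$ has only trivial low-order resonances'' is false for general sign patterns: for instance $\sigma=(+,+,+,-)$, $(j_1,j_2,j_3,j_4)=(6,3,-2,7)$ satisfies $\sum_i\sigma_i j_i=0$ and $\sum_i\sigma_i j_i^2=0$ but $\sum_i\sigma_i\sqrt{|j_i|}\neq0$; only for the $(+,+,-,-)$ pattern does momentum conservation plus the $K^{(2)}$-resonance relation force triviality. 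So the conclusion for $W_2^{(4,2)}$ needs the paper's homological-equation argument (or an equivalent), not just the projected commutation identity.
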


\begin{proof}
Recall the definition of $F^{(3,2)}$ in \eqref{F3F4Linear}.
The equality \eqref{eqcubico} projected on $d_z\le 2$ gives
\begin{equation}
ad_{K^{(2)}}[F^{(3, 2)}]=K^{(3,  2)}.
\end{equation}
Hence
\begin{equation}
\begin{aligned}
&H\circ \Phi\circ{\Phi}_{F^{(3,2)}}=H^{(2)}+
H_{\mathbb{C}}^{(3,3)}+H_{WB}^{(4)}+\mathcal{H}^{(4,2)}+\mathcal{H}^{(4,\geq3)}
+R_3^{(\geq 5, \le 2)}
+H_{3}^{(\geq 3, \geq 3)}\,,\\
&K\circ \Phi\circ{\Phi}_{F^{(3,2)}}=K^{(2)}+W_{2}^{(4,0)}
+Q_{3}^{(\geq 4, \le 2)}
+K_{3}^{(\geq 3, \geq 3)}\,, 
\end{aligned}
\end{equation}
where $\mathcal{H}^{(4)}$ is in \eqref{F3F4Linear}.
Since \eqref{commutazione} holds and the map $\Phi\circ{\Phi}_{F^{(3,2)}}$ is close to the identity we have that
$
\{ H_{\mathbb{C}}, K \}\circ \Phi\circ{\Phi}_{F^{(3,2)}}=O(u^{5})
$ then
\[
\{ \mathcal{H}^{(4, 2)}, K^{(2)}\}=\{ Q_3^{(4,  2)}, H^{(2)}_{\mathbb{C}} \}\,.
\]
We note the following fact, which derives from the Jacobi identity and \eqref{eqquadratico}: 
if $f\in {Ker}(H^{(2)})$ then $\{ f, K^{(2)}\}\in {Ker}(H^{(2)})$, if $f\in {Rg}(H^{(2)})$ then $\{ f, K^{(2)}\}\in {Rg}(H^{(2)})$.

\noindent
Then we have that $\{ \Pi_{{Ker}(H^{(2)})} \mathcal{H}^{(4, 2)}, K^{(2)}  \}
\in {Ker}(H^{(2)})$ 
\[
\{ \Pi_{{Ker}(H^{(2)})}\mathcal{H}^{(4, 2)}, K^{(2)}  \}=
-\{ \Pi_{{Rg}(H^{(2)})} \mathcal{H}^{(4, 2)}, K^{(2)}  \}
+\{ H^{(2)}, Q_{3}^{(4, 2)}\}\in {Rg}(H^{(2)})\,.
\]
Thus $\{ \Pi_{{Ker}(H^{(2)})} \mathcal{H}^{(4, 2)}, K^{(2)}  \}=0$ and 
\[
\Pi_{{Ker}(H^{(2)})} \mathcal{H}^{(4, 2)}
=\Pi_{{Ker}(H^{(2)})} \Pi_{{Ker}(K^{(2)})} \mathcal{H}^{(4, 2)}\,.
\]
By symmetry 
$\Pi_{{Ker}(K^{(2)})} Q_{3}^{(4, 2)}
=\Pi_{{Ker}(H^{(2)})} \Pi_{{Ker}(K^{(2)})} Q_{3}^{(4, 2)}$. 
Hence
\begin{equation}\label{prisoner}
\Pi_{{Rg}(H^{(2)})} \Pi_{{Ker}(K^{(2)})} Q_{3}^{(4,  2)}
=\Pi_{{Rg}(K^{(2)})} \Pi_{{Ker}(H^{(2)})} \mathcal{H}^{(4, 2)}=0\,.
\end{equation}
The function $\mathcal{F}_4^{(4, 2)}$ solves the homological equation
\[
\{ H^{(2)}, \mathcal{F}_4^{(4,2)}\}=\Pi_{{Rg}(H^{(2)})} \mathcal{H}^{(4, 2)}
\stackrel{(\ref{prisoner})}{=}
\Pi_{{Rg}(K^{(2)})}\Pi_{{Rg}(H^{(2)})} \mathcal{H}^{(4, 2)}\,.
\]
We now show that $\mathcal{F}_4^{(4, 2)}$ solves also the homological equation 
for the commuting Hamiltonian
$K\circ \Phi\circ\Phi_{F^{(3,2)}}$. 
Indeed, by the fact that $\mathrm{ad}_{H^{(2)}}^{-1}$ 
commutes with $\mathrm{ad}_{K^{(2)}}$ 
on the intersection ${Rg}(H^{(2)})\cap {Rg}(K^{(2)})$, we have
\[
\{ K^{(2)}, \mathcal{F}_4^{(4, 2)}\}=
\mathrm{ad}_{H^{(2)}}^{-1}\{ K^{(2)}, 
\Pi_{Rg(K^{(2)})}\Pi_{Rg(H^{(2)})} \mathcal{H}^{(4, 2)} \}\,,
\]
and we get
\[
\{ K^{(2)}, \Pi_{{Rg}(K^{(2)})}
\Pi_{{Rg}(H^{(2)})} \mathcal{H}^{(4, 2)}\}
=\{H^{(2)}, \Pi_{{Rg}(K^{(2)})}\Pi_{{Rg}(H^{(2)})} 
Q_{1}^{(4,  2)}  \}\,.
\]
By \eqref{prisoner} we have that the resonant term 
$H_{LB}^{(4)}:=\Pi_{{Ker}(H^{(2)})} \mathcal{H}^{(4, 2)}$ 
belongs to the intersection of the kernels. This concludes the proof. 
\end{proof}

%

\begin{remark}\label{conclusioneTrivial}
Notice that the monomials of degree $4$ with two bars (possibly supported on the Benjamin-Feir resonances) that belong to the kernel of ${ ad}_{K^{(2)}}$ (see \eqref{K3K4})
are action-preserving ( they are supported just on \emph{trivial} resonances).
Therefore Propositions \ref{LemmaBelloWeak}, \ref{LemmaBelloLinear}
imply the \eqref{WBB1}, \eqref{WLL1}.
\end{remark}

\begin{proposition}\label{fineIdentif}
One has that (see \eqref{larubia2Weak}, \eqref{larubia2Linear})
\begin{equation}\label{equivalenzaZZZ}
H_{WB}^{(4)}=\Pi_{\mathrm{triv}}H_{WB}^{(4)}
=\Pi^{d_z=0}H_{FB}^{(4)}\,, \qquad H_{LB}^{(4)}=\Pi_{\mathrm{triv}}H_{LB}^{(4)}
=\Pi^{d_z=2}H_{FB}^{(4)}
\end{equation}
where $H_{FB}^{(4)}$ is in \eqref{larubia2} (see also \eqref{HamZakDyaCraig}).
\end{proposition}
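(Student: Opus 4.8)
The plan is to derive \eqref{equivalenzaZZZ} from the identification results already established. First recall the structure: Propositions \ref{LemmaBelloWeak} and \ref{LemmaBelloLinear} tell us that $H_{WB}^{(4)}$ and $H_{LB}^{(4)}$ belong to $\mbox{Ker}(K^{(2)}) \cap \mbox{Ker}(H_{\mathbb{C}}^{(2)})$, and Remark \ref{conclusioneTrivial} already records that the monomials of degree four with two bars lying in $\mbox{Ker}({\rm ad}_{K^{(2)}})$ are supported only on trivial resonances, whence \eqref{WBB1} and \eqref{WLL1} hold, i.e. $H_{WB}^{(4)}=\Pi_{\mathrm{triv}}H_{WB}^{(4)}$ and $H_{LB}^{(4)}=\Pi_{\mathrm{triv}}H_{LB}^{(4)}$. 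So the left equality in each formula of \eqref{equivalenzaZZZ} is already in hand; the remaining task is to identify $\Pi_{\mathrm{triv}}H_{WB}^{(4)}$ with $\Pi^{d_z=0}H_{FB}^{(4)}$ and $\Pi_{\mathrm{triv}}H_{LB}^{(4)}$ with $\Pi^{d_z=2}H_{FB}^{(4)}$.

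Next I would compare the \emph{weak/linear} BNF construction with the \emph{full} BNF construction term by term. The key observation is that the weak transformation $\Phi_{WB}$ generated by \eqref{F3F4Weak} and the linear one $\Phi_{LB}$ generated by \eqref{F3F4Linear} are built from the restrictions $F_3^{(3,\leq1)}$, $F_3^{(3,2)}$, $\widehat{F}_4^{(4,\leq1)}$, $\mathcal{F}_4^{(4,2)}$ of exactly the same generators $F_3$, $F_4$ that define $\Phi_{FB}$ in \eqref{F3F4}; more precisely, applying the projectors $\Pi^{d_z=0}$ and $\Pi^{d_z=2}$ to the homological equations defining $F_3$ and $F_4$ shows that $\Pi^{d_z\leq1}F_3 = F_3^{(3,\leq1)}$, $\Pi^{d_z=2}F_3 = F_3^{(3,2)}$, and similarly for the quartic generators up to terms that do not contribute to the relevant degree-four output. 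Since $\Pi_{\mathrm{triv}}$ kills any monomial with $d_z=1$ (a trivial resonance pairs indices $(j,j,k,k)$, so $d_z\in\{0,2,4\}$), the only degree-four trivial-resonance contributions to $H_{\mathbb{C}}\circ\Phi_{WB}$ come from the $d_z=0$ sector and to $H_{\mathbb{C}}\circ\Phi_{WB}\circ\Phi_{LB}$ from the $d_z=2$ sector, and these coincide with $\Pi^{d_z=0}$ and $\Pi^{d_z=2}$ applied to $H_{FB}^{(4)} = \Pi_{Ker(H^{(2)}_{\mathbb C})}\big(H^{(4)}_{\mathbb C}+\tfrac12\{F_3,H^{(3)}_{\mathbb C}\}\big)$ from \eqref{larubia2}.

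Concretely, the computation is a bookkeeping of Lie series. One expands $H_{\mathbb{C}}\circ\Phi_{FB}$, $H_{\mathbb{C}}\circ\Phi_{WB}$, and $H_{\mathbb{C}}\circ\Phi_{WB}\circ\Phi_{LB}$ to degree four using \eqref{larubia}, \eqref{larubiaWeak}, \eqref{larubiaLinear}, and checks that the degree-four terms supported on trivial resonances agree after applying $\Pi^{d_z=0}$ (resp. $\Pi^{d_z=2}$). The crucial inputs are: (a) there are no $3$-wave resonances, so the cubic terms do not contribute any surviving degree-three resonant monomials; (b) $\Pi_{\mathrm{triv}}\Pi^{d_z=0} = \Pi^{d_z=0}\Pi_{Ker(H^{(2)}_{\mathbb C})}$ and likewise for $d_z=2$, because any degree-four monomial in $\mbox{Ker}(H^{(2)}_{\mathbb C})\cap\mbox{Ker}(K^{(2)})$ with $d_z\neq 1$ is automatically trivial (by Remark \ref{conclusioneTrivial}); (c) the generators used in the weak/linear scheme are literally the $d_z$-graded pieces of $F_3$, $F_4$, so the Poisson brackets $\{F_3,H^{(3)}_{\mathbb C}\}$ restricted to the appropriate $d_z$-degree match.

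The main obstacle I anticipate is (c): one must verify carefully that $\widehat{F}_4^{(4,\leq1)}$ and $\mathcal{F}_4^{(4,2)}$, together with the extra correction brackets appearing in $\widehat{H}_1^{(4)}$ and $\mathcal{H}^{(4)}$ in \eqref{F3F4Weak}, \eqref{F3F4Linear}, reproduce exactly the $d_z=0$ and $d_z=2$ parts of $F_4$ and of $H^{(4)}_{\mathbb C}+\tfrac12\{F_3,H^{(3)}_{\mathbb C}\}$ — the subtlety being the mixed brackets $\{F_3^{(3,\leq1)}, H^{(3,\geq2)}_{\mathbb C}\}$ and $\{F_3^{(3,2)}, H^{(3,3)}_{\mathbb C}\}$, which are present in the weak/linear construction precisely to account for the non-commutativity of the grading with the Poisson bracket. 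Once this degree-four matching is checked, combining it with the kernel membership from Propositions \ref{LemmaBelloWeak}, \ref{LemmaBelloLinear} and the triviality statement of Remark \ref{conclusioneTrivial} yields \eqref{equivalenzaZZZ} immediately: $H_{WB}^{(4)} = \Pi^{d_z=0}H_{FB}^{(4)}$ and $H_{LB}^{(4)} = \Pi^{d_z=2}H_{FB}^{(4)}$, and since $H_{FB}^{(4)}$ is itself supported on trivial resonances by \eqref{HamZakDyaCraig}, the first equalities $\Pi_{\mathrm{triv}}H_{WB}^{(4)} = H_{WB}^{(4)}$ etc. are consistent with the middle expressions. I would present this as a short proof invoking the prior propositions plus a one-paragraph Lie-series verification of the grading compatibility.
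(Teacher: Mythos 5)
Your overall strategy is the paper's strategy (reduce to matching the $\Pi_{\mathrm{triv}}$-projections of the $d_z$-graded pieces, using Propositions \ref{LemmaBelloWeak}, \ref{LemmaBelloLinear} and Remark \ref{conclusioneTrivial} for the first equalities), but there is a genuine gap exactly at the point you flag as ``the main obstacle'' and then leave unproved. The Poisson bracket does not respect the $d_z$-grading, so $\Pi^{d_z=2}\{F_3,H^{(3)}_{\mathbb C}\}$ receives contributions from the mixed pieces $\{F_{3}^{(3,1)},H_{\mathbb{C}}^{(3,3)}\}$, $\{F_{3}^{(3,3)},H_{\mathbb{C}}^{(3,1)}\}$, $\{F_{3}^{(3,2)},H_{\mathbb{C}}^{(3,0)}\}$, $\{F_{3}^{(3,0)},H_{\mathbb{C}}^{(3,2)}\}$, which are \emph{not} present (or appear with different combinations) in $\widehat H_1^{(4)}$ and $\mathcal H^{(4)}$ of \eqref{F3F4Weak}, \eqref{F3F4Linear}. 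Your mechanism --- ``$\Pi_{\mathrm{triv}}$ kills $d_z=1$ monomials, so only the $d_z=0$ and $d_z=2$ sectors contribute'' --- does not dispose of these terms: they do produce $d_z=0$ and $d_z=2$ monomials which a priori could sit on trivial resonances $(j,j,k,k)$ with $j\in S$, $k\in S^c$. The actual identification requires the cancellation claim \eqref{occhio1}, namely $\Pi_{\mathrm{triv}}$ of each mixed bracket vanishes; its proof is not grading bookkeeping but uses momentum conservation together with $0\notin S\cup S^{c}$ (see \eqref{montagna}): a trivial pairing coming from such a bracket would force a zero wavenumber in one of the cubic coefficients, which is excluded. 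Without this step the matching in \eqref{rambo5} and hence \eqref{equivalenzaZZZ} is not established. (Also note that the quartic generators play no role in this matching, since they only remove range terms; the issue is entirely in the cubic brackets.)

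A second, smaller flaw is your input (b): the identity $\Pi_{\mathrm{triv}}\Pi^{d_z=2}=\Pi^{d_z=2}\Pi_{Ker(H^{(2)}_{\mathbb C})}$ is false as a projector identity (a monomial supported on a Benjamin--Feir quadruple with two normal indices lies in $Ker(H^{(2)}_{\mathbb C})$ but not in the range of $\Pi_{\mathrm{triv}}$), and Remark \ref{conclusioneTrivial} cannot justify it, since that remark concerns the weak/linear Hamiltonians already known to lie in $Ker(K^{(2)})\cap Ker(H^{(2)}_{\mathbb C})$, not the full-BNF object. What is actually needed, and what the paper invokes, is the null condition of Zakharov--Dyachenko and Craig--Worfolk--Sulem, i.e. the vanishing of the Benjamin--Feir coefficients of $H^{(4)}_{\mathbb C}+\tfrac12\{F_3,H^{(3)}_{\mathbb C}\}$ (see \eqref{HamZakDyaCraig} and \eqref{megliotrivial}), which allows one to replace $\Pi_{Ker(H^{(2)}_{\mathbb C})}$ by $\Pi_{\mathrm{triv}}$ on the full-BNF side before comparing with the weak/linear output.
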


\begin{proof}
%
We have already remarked that 
in \cite{Zak2}, \cite{CW}, \cite{CS}
it has been proved that
\[
\big(\Pi_{{{Ker}}(H_{\mathbb{C}}^{(2)})}
-\Pi_{\mathrm{triv}}\big)
(H^{(4)}_{\mathbb{C}}+\frac{1}{2}\{ F_3, H^{(3)}_{\mathbb{C}} \})=0\,.
\]
 Hence
\begin{equation}\label{megliotrivial}
\Pi^{d_z=2}\Pi_{{{Ker(H_{\mathbb{C}}^{(2)})}}}
(H^{(4)}_{\mathbb{C}}+\frac{1}{2}\{ F_3, H^{(3)}_{\mathbb{C}} \})
= \Pi^{d_z=2}\Pi_{\mathrm{triv}}
(H^{(4)}_{\mathbb{C}}+\frac{1}{2}\{ F_3, H^{(3)}_{\mathbb{C}} \})\,.
\end{equation}
We claim that
\begin{equation}\label{occhio1}
\begin{aligned}
&\Pi_{\mathrm{triv}}\{F_{3}^{(3,1)},H_{\mathbb{C}}^{(3,3)}\}
=\Pi_{\mathrm{triv}}\{F_{3}^{(3,3)},H_{\mathbb{C}}^{(3,1)}\}=
\Pi_{\mathrm{triv}}\{F_{3}^{(3,2)},H_{\mathbb{C}}^{(3,0)}\}=
\Pi_{\mathrm{triv}}
\{F_{3}^{(3,0)},H_{\mathbb{C}}^{(3,2)}\}=0\,.
\end{aligned}
\end{equation}
This implies that 
\begin{equation}\label{rambo5}
\begin{aligned}
\Pi_{{{Ker}}(H_{\mathbb{C}}^{(2)})}
&\Pi^{d_z=2}(H^{(4)}_{\mathbb{C}}
+\frac{1}{2}\{ F_3, H^{(3)}_{\mathbb{C}} \})
=
\Pi_{\mathrm{triv}}\Pi^{d_z=2}\Big(
H^{(4,2)}_{\mathbb{C}}+
\{F_{3}^{(3,\leq1)},H_{\mathbb{C}}^{(3,\leq1)}\}
+\{F_{3}^{(3,\leq1)},H_{\mathbb{C}}^{(3,\geq 2)}\}
\Big)\\
&+\Pi_{\mathrm{triv}}\Pi^{d_z=2}\Big(
H^{(4,2)}_{\mathbb{C}}+
\{F_{3}^{(3,2)},H_{\mathbb{C}}^{(3,0)}\}
+\{F_{3}^{(3,2)},H_{\mathbb{C}}^{(3,2)},\}
+\{F_{3}^{(3,3)},H_{\mathbb{C}}^{(3,1)}\}
\Big)\\
&=\Pi_{\mathrm{triv}}\Pi^{d_z=2}\Big(
H^{(4,2)}_{\mathbb{C}}+
\{F_{3}^{(3,\leq1)},H_{\mathbb{C}}^{(3,\leq1)}\}+\{F_{3}^{(3,2)},H_{\mathbb{C}}^{(3,2)}\}
\Big)\,.
\end{aligned}
\end{equation}
\begin{proof}[Proof of the claim \eqref{occhio1}]
Let us consider the term $\{F_{3}^{(3,1)},H_{\mathbb{C}}^{(3,3)}\}$, where
\begin{equation}\label{montagna}
F_3^{(3,1)}=\sum_{\substack{ j_1,j_2\in S,j_3\in S^c\\ 
\s_1 j_1+\s_2 j_2+\s_3 j_3=0\\ 
\s_i=\pm}}C_{j_1,j_2,j_3}^{\s_1,\s_2,\s_{3}}u^{\s_1}_{j_1}u^{\s_2}_{j_2}u^{\s_3}_{j_3}\,,
\qquad
H_{\mathbb{C}}^{(3,3)}=\sum_{\substack {j_1,j_2,j_3 \in S^c \\
\s_1 j_1+\s_2 j_2+\s_{3} j_3=0 \\
\s_i=\pm}} \widetilde{C}^{\s_1,\s_{2},\s_{3}}_{j_1,j_2,j_3}u^{\s_1}_{j_1}u^{\s_2}_{j_2}u^{\s_3}_{j_3}\,,
\end{equation}
for some coefficients ${C}^{\s_1,\s_{2},\s_{3}}_{j_1,j_2,j_3}$, 
$\widetilde{C}^{\s_1,\s_{2},\s_{3}}_{j_1,j_2,j_3}\in\mathbb{C}$. 
Using \eqref{poissonBraComp} one gets
\begin{equation*}
\{F_3^{(3,1)},H_{\mathbb{C}}^{(3,3)}\}=
\sum_{\substack{j_1,j_2\in S, \, k_1,k_2\in S^{c} \\ \s_1 j_1+\s_2 j_2+\s_3 k_1+\s_4 k_2=0\\
\s_i=\pm}} 
P_{j_1,j_2,k_1,k_2}^{\s_1,\s_2,\s_3,\s_4}u^{\s_1}_{j_1}u^{\s_2}_{j_2}u^{\s_3}_{k_1}u^{\s_4}_{k_2}\,
\end{equation*}
for some $P_{j_1,j_2,k_1,k_2}^{\s_1,\s_2,\s_3,\s_4}\in \mathbb{C}$.
A monomial of degree $4$ $u^{\s_1}_{j_1}u^{\s_2}_{j_2}u^{\s_3}_{k_1}u^{\s_4}_{k_2}$ 
is supported on trivial resonances if $\s_1=-\s_2$, $\s_3=-\s_{4}$, 
 $j_1=j_2$ and $k_1=k_2$, since $j_i\in S$, $k_i\in S^{c}$, $i=1,2$. 
{We note that by applying \eqref{poissonBraComp}
the coefficients  $P_{j_1,j_1,k_1,k_1}^{\s_1,-\s_1,\s_3,-\s_3}$ 
should be  given by  
combinations of coefficients of the form
${C}^{\s,-\s,\pm}_{j,j,0}\widetilde{C}^{\s',-\s',\mp}_{k,k,0}$ (up to permutation of the indexes). 
However, due to the restrictions in  the sums \eqref{montagna}
and the fact that $0\notin S\cup S^{c}$
(see \eqref{TangentialSitesDP}) such 
coefficients do not appear.}
Hence the \eqref{occhio1} holds. 
The others equalities in \eqref{occhio1} follow by reasoning in the same way.
\end{proof}

Now consider the Hamiltonian $H_{LB}^{(4)}$ in \eqref{larubia2Linear}
obtained by two steps of Weak normal form
plus two steps of Linear Birkhoff normal form 
(see Propositions \ref{LemmaBelloWeak}, \ref{LemmaBelloLinear}).
Then, recalling the explicit formula \eqref{F3F4Linear}
we get
\begin{equation*}
\begin{aligned}
Z_2^{(4,2)}&\stackrel{Rmk. \ref{conclusioneTrivial}}{=}
\Pi^{d_z=2}\Pi_{\mathrm{triv}} (\frac12 \{{F}^{(3,2)},H_{\mathbb{C}}^{(3, 2)}\}
+ \frac{1}{2} \{{F}^{(3,\leq1)}, H_{\mathbb{C}}^{(3, \leq 1)}\}
+\{  {F}^{(3,1)}, H_{\mathbb{C}}^{(3, 3)}\}
+H^{(4,2)}_{\mathbb{C}})\\
&\stackrel{\eqref{occhio1}}{=} 
 \Pi^{d_z=2}\Pi_{\mathrm{triv}} 
(H^{(4,2)}_{\mathbb{C}}+ \frac12\{F^{(3,2)}, H_2^{(3, 2)}\}
+ \frac{1}{2} \{F^{(3,\leq1)}, H^{(3, \leq 1)}\})
\\ &\stackrel{\eqref{rambo5},\eqref{megliotrivial}}{=} 
 \Pi^{d_z=2}\Pi_{{\mathrm{triv}}}
 \Big(H^{(4)}_{\mathbb{C}}+ \frac12\{{F}^{(3)}, H^{(3)}\}\Big)\,.
\end{aligned}
\end{equation*}
This implies, 
recalling the \eqref{HamZakDyaCraig}, the second equality in the 
claim \eqref{equivalenzaZZZ}. The first one in \eqref{equivalenzaZZZ}
follows by using the projector $\Pi^{d_z=0}$ and reasoning in the same way.
\end{proof}

\section{Weak Birkhoff Normal Form}\label{sec:weak}
The aim of this section is to construct a $\zeta$-parameter family of approximately invariant, finite dimensional tori supporting quasi-periodic motions 
with frequency $\omega(\zeta)$. We will impose the 
map $\zeta\mapsto \omega(\zeta)$ to be a diffeomorphism 
and we will consider such approximate solutions as the 
starting point for the Nash-Moser algorithm. For this purpose
we apply a Weak Birkhoff normal form procedure as explained in 
section \ref{compaBNFpro}.

\begin{definition}{\bf (Resonance).}\label{risonanzaWW}
Let $n\geq 3$ be an integer. We say that $\left\{\big(j_i, \sigma_i \big) \right\}_{i=1}^n\subset (\mathbb{Z}\times\{\pm \})^n$ is a resonance of order $n$ if
\begin{equation}\label{por}
\mathcal{M}_{\vec{\sigma}}(j_1, \dots, j_n):=\sum_{i=1}^n \sigma_i j_i=0, \qquad \mathcal{R}_{\vec{\sigma}}(j_1, \dots, j_n):=\sum_{i=1}^n \sigma_i \sqrt{\lvert j_i \rvert}=0
\end{equation}
where $\vec{\sigma}:=(\sigma_1, \dots, \sigma_n).$
Let us denote by $n_+$ the number of $\sigma_i=+$ and, up to reordering in the index $i$, let us assume that $\sigma_i=+$ for $i=1, \dots, n_+$. \\
A resonance of order $n$ is said to be \emph{trivial} if $n$ is even, $n_+=n/2$ and
\begin{equation}\label{coppiette}
\big\{ j_1, \dots, j_{n_+}\big\}=\big\{ j_{n_++1}, \dots, j_n\big\}.
\end{equation}
Otherwise we say that it is non-trivial. 
\end{definition}

\begin{remark}\label{oddresonances}
Note that resonances of odd order are non-trivial.
We also remark that if $\left\{\big(j_i, \sigma_i \big) \right\}_{i=1}^n$ is a 
trivial resonance then the functions 
$\mathcal{M}_{\vec{\sigma}}$ and $\mathcal{R}_{\vec{\sigma}}$ 
defined in \eqref{por} are identically zero.
\end{remark}


\noindent
Consider a monomial of the form $u^{\sigma_{1}}_{j_1}\dots u^{\sigma_n}_{j_{n}}$ and the Hamiltonian $H^{(2)}_\C$ in \eqref{Rinco}. 
Then we have (recall \eqref{HamMom2}, \eqref{poissonBraComp})
\begin{equation}\label{momcons}
\{ M_{\mathbb{C}} , u^{\sigma_{1}}_{j_1}\dots u^{\sigma_n}_{j_{n}}\}
=-\ii \mathcal{M}_{\vec{\sigma}}(j_1, \dots, j_n) 
u^{\sigma_{1}}_{j_1}\dots u^{\sigma_n}_{j_{n}},
\end{equation}
\begin{equation}\label{adjoint}
\{ H^{(2)}_{\mathbb{C}} , u^{\sigma_{1}}_{j_1}\dots u^{\sigma_n}_{j_{n}}\}
=-\ii \mathcal{R}_{\vec{\sigma}}(j_1, \dots, j_n) u^{\sigma_{1}}_{j_1}\dots u^{\sigma_n}_{j_{n}}.
\end{equation}
We point out that the Hamiltonian \eqref{Rinco} preserves the momentum and, in particular, by \eqref{momcons}, each term $H_{\mathbb{C}}^{(p)}$ is sum of monomials Fourier supported on $j_1, \dots, j_n$ such that $\mathcal{M}_{\vec{\sigma}}(j_1, \dots, j_n)=0$.\\
We say that a monomial $u^{\sigma_{1}}_{j_1}\dots u^{\sigma_n}_{j_{n}}$ is \emph{resonant} if it belongs to the $\mbox{Ker}(H^{(2)}_{\mathbb{C}})$, namely, by \eqref{adjoint}, it is Fourier supported on $j_1, \dots, j_n$ such that $\mathcal{R}_{\vec{\sigma}}(j_1, \dots, j_n)=0$. \\
For a finite dimensional subspace of 
$H_0^1(\mathbb{T})\times H_0^1(\mathbb{T})$ 
\begin{equation}\label{FinitedimensionalSubspaceDP}
E:=E_C:=\mbox{span}\left\{ e^{\mathrm{i}\,j\,x} : 0<\lvert j \rvert\le C \right\}\,, 
\quad C>0\,,
\end{equation}
let $\Pi_E$ denotes the corresponding $L^2$-projector on $E$.

The aim of this section is to prove the following.

\begin{proposition}\label{WBNFWW}
Let $s\geq 0$.
For any \emph{generic}  
choice of the set $S$ of tangential sites of the form
\eqref{TangentialSitesDP}
there exist $r>0$, depending on  $S$, 
and an analytic  symplectic change of coordinates 
\begin{equation}\label{FBI}
\Phi_B\colon \mathcal B_{r}(0,H_0^s(\T)\times H_0^s(\T))\to H_0^s(\T)\times H_0^s(\T)\,, 
\qquad \Phi_B= \mathrm{I} + \Psi\,,\quad 
 \Psi= \Pi_E \circ  \Psi \circ \Pi_E 
\end{equation}
where $E$ is a finite dimensional space 
as in \eqref{FinitedimensionalSubspaceDP}, 
such that 
$M_{\mathbb{C}}\circ\Phi_B=M_{\mathbb{C}}$ where 
$M_{\mathbb{C}}$ is the momentum Hamiltonian in 
\eqref{HamMom2} and 
the Hamiltonian $H_{\mathbb{C}}$ in \eqref{Rinco} transforms into
\begin{equation}\label{piotta}
\begin{aligned}
\mathcal{H}_{\mathbb{C}}:= H_{\mathbb{C}}\circ \Phi_B 
&= H_{\mathbb{C}}^{(2)} +\mathcal{H}_{\mathbb{C}}^{(4,0)} +
\sum_{k=3}^{7}
\mathcal{H}_{\mathbb{C}}^{(2k,0)} 
+\mathcal{H}_{\mathbb{C}}^{(\geq 16, \le 1)}
+ \mathcal{H}_{\mathbb{C}}^{(\geq 3,\geq 2)}\,,
\end{aligned}
\end{equation}
where 
\begin{equation}\label{theoBirH}
\begin{aligned}
\mathcal{H}_{\mathbb{C}}^{(4,0)} &:=
\frac{1}{4 \pi} \sum_{k \in S} |k|^3  
  |u_k|^4 
  +  \frac{1}{\pi} \sum_{\substack{k_1, k_2 \in S, \, 
  \sign(k_1) = \sign( k_2 )  \\ |k_2| < |k_1|}} |k_1| |k_2|^2  
 |u_{k_1}|^2  |u_{k_2}|^2 \,
\end{aligned}
\end{equation}
 is the fourth order Hamiltonian
provided by 
Zakharov-Dyachenko in \cite{Zak2} and 
Craig-Worfolk in  \cite{CW} restricted to momomials supported on the
 tangential set $S$, 
and $\mathcal{H}_{\mathbb{C}}^{(2k, 0)}
=\Pi_{\mbox{Ker}(H_{\mathbb{C}}^{(2)})} 
\mathcal{H}_{\mathbb{C}}^{(2k, 0)}$ with $k=3, \ldots, 7$ 
are supported only on trivial resonances
.
\end{proposition}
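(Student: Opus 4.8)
\textbf{Proof strategy for Proposition \ref{WBNFWW}.}
The plan is to run the ``Weak'' Birkhoff normal form procedure of section \ref{compaBNFpro} iteratively, degree by degree, starting from the complex Hamiltonian $H_{\mathbb{C}}$ in \eqref{Rinco}, and to keep track at each step of the degree $d_z$ in the normal variable $z$ (after the splitting \eqref{decomposition}, \eqref{splitto2}) as well as of the momentum constraint. At the generic $k$-th step we wish to normalize the term $\mathcal{H}^{(2k,0)}_{\mathbb{C}}$, i.e. the part of the Hamiltonian of homogeneity $2k$ carrying no normal variables ($d_z=0$). The key algebraic input is that, because the pure gravity water waves system in infinite depth has no $3$-waves resonances (so cubic terms are always in the range of $\mathrm{ad}_{H^{(2)}_{\mathbb{C}}}$) and since the $d_z=0$ monomials are supported only on the finitely many tangential sites in $S$, the homological equations can be solved by finite-rank (hence analytic on a ball $\mathcal B_r(0,H^s_0\times H^s_0)$) transformations $\Phi_{F}$. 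This uses precisely the observation already recorded in section \ref{compaBNFpro}: a monomial $u^{\sigma_1}_{j_1}\cdots u^{\sigma_n}_{j_n}$ with all $j_i\in S$ and momentum $\sum_i\sigma_i j_i=0$ has all indices in a ball of radius $(n-1)\max(S)$, so the associated vector field is finite-dimensional.

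First I would set up the inductive scheme: assume that after some steps $H_{\mathbb{C}}$ has been conjugated to $H^{(2)}_{\mathbb{C}}+\mathcal{H}^{(4,0)}_{\mathbb{C}}+\sum_{j=3}^{k-1}\mathcal{H}^{(2j,0)}_{\mathbb{C}}+(\text{higher order }d_z=0)+(\text{terms with }d_z\ge1)$, where each $\mathcal{H}^{(2j,0)}_{\mathbb{C}}$ is resonant, i.e. in $\mathrm{Ker}(H^{(2)}_{\mathbb{C}})$. At the next step, solve the homological equation $\{H^{(2)}_{\mathbb{C}},F\}=-\Pi_{\mathrm{Rg}(H^{(2)}_{\mathbb{C}})}\Pi^{d_z=0}(\text{degree-}2k\text{ term})$, with $F$ finitely supported, and conjugate by $\Phi_F$. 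Since all degree-$2k$, $d_z=0$ monomials are supported on $S$, and since the gravity dispersion law has only finitely many resonances among any fixed finite set of sites, the remainder term $\mathcal{H}^{(2k,0)}_{\mathbb{C}}=\Pi_{\mathrm{Ker}(H^{(2)}_{\mathbb{C}})}\Pi^{d_z=0}(\cdots)$ is a finite sum. Because the odd-degree terms with $d_z=0$ are never resonant (Remark \ref{oddresonances}: odd resonances are non-trivial, and here a genericity argument on $S$ rules out the non-trivial ones — this is Proposition \ref{resonances} invoked via section \ref{algresoreso}), the odd-degree $d_z=0$ terms are all removed, and one is left with only even-degree $d_z=0$ resonant terms; iterating up to degree $14$ (i.e. $k=7$) leaves an $\mathcal{H}^{(\ge16,\le1)}_{\mathbb{C}}$ remainder plus the genuinely higher-$d_z$ part $\mathcal{H}^{(\ge3,\ge2)}_{\mathbb{C}}$. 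Momentum conservation $M_{\mathbb{C}}\circ\Phi_B=M_{\mathbb{C}}$ follows since each $F$ Poisson-commutes with $M_{\mathbb{C}}$ (it is built from momentum-preserving monomials), and $\Phi_B=\mathrm{I}+\Psi$ with $\Psi=\Pi_E\Psi\Pi_E$ because all generating Hamiltonians are supported in the finite-dimensional space $E=E_C$ with $C=C(S)$ large enough.

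The remaining point is the \emph{explicit} computation of $\mathcal{H}^{(4,0)}_{\mathbb{C}}$, which must match formula \eqref{theoBirH}. Here I would invoke Proposition \ref{LemmaBelloWeak} together with Proposition \ref{fineIdentif}: the weak BNF fourth order term equals $\Pi_{\mathrm{triv}}H^{(4)}_{WB}=\Pi^{d_z=0}H^{(4)}_{FB}$, and $H^{(4)}_{FB}$ is given by the Zakharov--Dyachenko/Craig--Worfolk formula \eqref{HamZakDyaCraig}; projecting onto $d_z=0$ (all indices in $S$) and using that trivial resonances only contribute diagonal terms $|u_k|^4$ and $|u_{k_1}|^2|u_{k_2}|^2$ (note the cross term $|z_k|^2|z_{-k}|^2$ in \eqref{HamZakDyaCraig} vanishes under $\Pi^{d_z=0}$ because $k$ and $-k$ cannot both lie in $S$ by assumption \eqref{TangentialSitesDPbis}) yields exactly \eqref{theoBirH}. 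The main obstacle, and the step where the genericity of $S$ and the ``null condition'' of \cite{Zak2},\cite{CW},\cite{CS} are truly needed, is ensuring that after normalizing we are left only with \emph{action-preserving} (trivial) resonant terms at every even degree $2k\le14$: a priori there could be non-trivial resonances among the sites of $S$ (both of Benjamin--Feir type at order four and higher-order ones), and one must show these can be excluded by choosing $\mathtt{v}$ (equivalently $S$) outside the zero set of finitely many nontrivial polynomials, exploiting momentum conservation as in the proof of Proposition \ref{resonances}. At order four the Benjamin--Feir cancellation is automatic from the formal integrability; at orders $6,\dots,14$ the genericity argument of section \ref{algresoreso} does the job, and this — verifying that the $d_z=0$ parts at all these orders are supported on trivial resonances — is the technical heart of the proof.
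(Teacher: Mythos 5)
There is a genuine gap: your homological equations only target the $d_z=0$ part of each homogeneous term, whereas the statement (and the whole point of the weak Birkhoff procedure) requires normalizing \emph{all} monomials with at most one normal index, i.e. the projection $\Pi^{d_z\le 1}$. If you remove only the non-resonant $d_z=0$ monomials of degree $3,\dots,15$, the transformed Hamiltonian still contains terms of degree $3,\dots,15$ which are \emph{linear} in $z$; these are not contained in $\mathcal{H}_{\mathbb{C}}^{(\geq 16,\le 1)}$ nor in $\mathcal{H}_{\mathbb{C}}^{(\geq 3,\geq 2)}$, so the conclusion \eqref{piotta} does not follow from your scheme (your remainder would only be of type $\mathcal{H}^{(\geq 3,\geq 1)}$). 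These linear-in-$z$ terms are precisely the ones that destroy the approximate invariance of the subspace $\{z=0\}$ to high order, which is what the weak BNF is designed to achieve before the Nash--Moser iteration, so the omission is not cosmetic. The paper's construction takes generators $F^{(k+2,\le 1)}$ with $\Pi^{d_z\le 1}F^{(k+2,\le 1)}=F^{(k+2,\le 1)}$: momentum conservation still forces their Fourier support into a ball of radius $(k+1)\max(S)$ even when exactly one index lies outside $S$ (the outside index is determined by the tangential ones), so the flows remain finite rank and analytic — your finiteness argument, stated only for $d_z=0$, extends but must be invoked for $d_z\le1$. Moreover, you never address the resonant $d_z=1$ terms: Proposition \ref{resonances} is formulated for resonances with \emph{at most one} index outside $S$ exactly so that, generically, the resonant part with $d_z=1$ vanishes identically ($Z^{(\le k+2,1)}_k=0$, since a trivial resonance cannot have exactly one index outside $S$ and non-trivial ones are excluded by genericity), while the resonant $d_z=0$ part is action-preserving. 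Without this step the even-degree normal form could a priori retain non-integrable terms linear in $z$.

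A secondary slip in the bookkeeping: to end up with a remainder $\mathcal{H}_{\mathbb{C}}^{(\geq 16,\le 1)}$ you must normalize the $d_z\le 1$ terms of every degree $3,\dots,15$, i.e. perform $13$ steps; stopping at degree $14$ (your ``$k=7$'') would leave degree-$15$ terms with $d_z\le1$ untreated. The remaining ingredients of your proposal — the identification of $\mathcal{H}_{\mathbb{C}}^{(4,0)}$ through Propositions \ref{LemmaBelloWeak} and \ref{fineIdentif} and formula \eqref{HamZakDyaCraig}, the use of \eqref{TangentialSitesDPbis} to discard the $|z_k|^2|z_{-k}|^2$ cross terms, and the genericity argument for the higher-order resonances — do coincide with the paper's proof and are fine once the $d_z\le 1$ normalization is carried out.
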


In the following subsection we show that one can choose ``generically''
the set of indexes $S$  in such a way that
in the Hamiltonian in \eqref{Rinco}
there are no monomials $u_{j_1}^{\s_1} \cdots u_{j_p}^{\s_p}$,
with at most one index $j_{i}\in S^{c}$, 
supported on a non-trivial resonance. In turn this implies the existence  of a finite dimensional submanifold that is invariant for the truncated normal form. The integrability of the normalized Hamiltonian gives that such manifold is foliated by families of tori. 
In subsection \ref{escoesco} we prove Proposition \ref{WBNFWW}.

\subsection{Resonances}\label{algresoreso}

The aim of this subsection is to prove the following result.

\begin{proposition}\label{resonances}
If $\left\{\big(j_i, \sigma_i \big) \right\}_{i=1}^n$ is a non-trivial resonance of order $n$ with at most one index $j_i$ outside $S$ then there exists a non-identically zero polynomial $\mathcal{P}\colon \mathbb{R}^{n}\to \mathbb{R}$ such that $\mathcal{P}(j_1, \dots, j_n)=0$.
\end{proposition}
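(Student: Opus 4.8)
The statement is an elementary "genericity of the tangential sites" claim: we must show that any non-trivial resonance of order $n$ involving at most one frequency outside $S$ forces the vector $(\overline{\jmath}_1,\dots,\overline{\jmath}_\nu)$ to lie on the zero set of some non-zero polynomial. First I would fix the combinatorial data: a sign vector $\vec\sigma\in\{\pm\}^n$, a choice of which (if any) index lies outside $S$, and an assignment of the remaining indices to elements of $S$. There are only finitely many such combinatorial types, and it suffices to produce, for each fixed type that can support a non-trivial resonance, a non-identically-zero polynomial in the tangential sites that vanishes on it; the product of all these polynomials (over all types) is the polynomial $\mathcal P$ asserted. So the proof reduces to: for a fixed combinatorial type, analyze the two equations in \eqref{por}.

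The key step is to use the momentum equation $\mathcal M_{\vec\sigma}=\sum_i\sigma_i j_i=0$ to eliminate the outside index. If all indices are in $S$, then $\sum_i \sigma_i j_i = 0$ is already a \emph{linear} relation among the tangential sites with integer coefficients (the multiplicities of each $\overline\jmath_k$ with signs); if this linear form is not identically zero as a polynomial in $(\overline\jmath_1,\dots,\overline\jmath_\nu)$, we are done immediately. If it \emph{is} identically zero, then the multiplicity-with-sign of each tangential site is zero, i.e. each $\overline\jmath_k$ appears equally often with $+$ and with $-$; combined with the structure of a non-trivial resonance (where the frequency equation $\sum\sigma_i\sqrt{|j_i|}=0$ must fail to be forced), I would argue that the frequency equation $\mathcal R_{\vec\sigma}=0$ then becomes a genuine algebraic constraint: grouping the square-root terms $\sqrt{|\overline\jmath_k|}$ by their net coefficient, if some net coefficient of a $\sqrt{|\overline\jmath_k|}$ is nonzero we get a relation forcing a polynomial identity among the $\overline\jmath_k$ (rationalizing: a nontrivial $\mathbb Z$-linear combination of $\sqrt{|\overline\jmath_k|}$ vanishing is an algebraic condition cut out by a polynomial, obtained by iteratively squaring to clear radicals). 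When there is exactly one index $j_i=:p$ outside $S$, the momentum equation gives $\sigma_i p = -\sum_{k\ne i}\sigma_k j_k$, so $p$ is an integer linear combination of tangential sites; substituting $|p|$ into the frequency equation turns $\mathcal R_{\vec\sigma}=0$ into an equation of the form $\sum_{k}c_k\sqrt{|\overline\jmath_k|} + \sigma_i\sqrt{|\sum_k a_k\overline\jmath_k|}=0$, again an algebraic condition on the $\overline\jmath$'s, and I would clear radicals by repeated squaring to obtain the desired polynomial $\mathcal P$.

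The main obstacle — and the point requiring care — is showing that the polynomial one extracts is \emph{not identically zero}. This is precisely where one must invoke that the resonance is \emph{non-trivial}: one has to rule out the degenerate situation in which every squaring step produces the zero polynomial, which would mean the resonance equations hold identically in the $\overline\jmath_k$ and hence (by specializing to generic values) the resonance is forced to be trivial by Remark \ref{oddresonances} and the pairing structure in \eqref{coppiette}. Concretely, I would argue contrapositively: if $\mathcal R_{\vec\sigma}$ (after substituting the momentum relation) vanishes identically as a function of the positive reals $\overline\jmath_1,\dots,\overline\jmath_\nu$, then by $\mathbb Q$-linear independence of $\{1\}\cup\{\sqrt{m}: m\text{ squarefree}\}$ (applied after evaluating at values of the $\overline\jmath_k$ making the relevant $|\cdot|$-expressions squarefree and distinct) the signed multiplicities must cancel in pairs, which together with $\mathcal M_{\vec\sigma}\equiv 0$ yields exactly the trivial-resonance structure — contradiction. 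Since $n$ and the combinatorial type are fixed and finite in number, this produces finitely many non-zero polynomials whose product is the sought $\mathcal P$; I expect the bookkeeping of the "exactly one outside index" case and the rationalization-by-squaring argument to be the most technical, but conceptually routine, parts.
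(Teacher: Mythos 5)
Your plan takes a genuinely different route from the paper, but in its present form the key step is not closed, and part of the machinery is aimed at the wrong target. The proposition as stated asks for a non-zero polynomial $\mathcal P$ in the $n$ abstract variables $(x_1,\dots,x_n)$ vanishing at the point $(j_1,\dots,j_n)$. For the all-in-$S$ case the paper simply takes $\mathcal P(x)=\sum_i\sigma_i x_i$, which is never the zero polynomial and vanishes at the point by \eqref{por}; your discussion of whether this linear form vanishes identically in the site variables $(\overline{\jmath}_1,\dots,\overline{\jmath}_\nu)$ concerns the downstream genericity application, not the proposition itself. For the case of one index outside $S$ (say $j_n$), the paper eliminates $j_n$ by momentum, regards the frequency relation as an algebraic function $f(x_1,\dots,x_{n-1})=\sum_{i<n}\sigma_i\sqrt{\lvert x_i\rvert}+\sigma_n\sqrt{\lvert\sum_{i<n}\sigma_i x_i\rvert}$, checks the single pointwise nondegeneracy condition $\partial_{x_1}f(j_1,\dots,j_{n-1})\neq 0$ (it vanishes only if $j_1=j_n$, impossible since $j_1\in S$, $j_n\in S^c$), and invokes Lemma \ref{algebraiclemma} to extract the polynomial. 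No analysis of identical vanishing, sign variants, or linear independence of radicals is needed.

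The genuine gap in your argument is the "not identically zero" verification in the one-outside case. First, repeated squaring can return the zero polynomial when \emph{some} sign-variant of the radical sum vanishes identically, not necessarily the original relation; to reduce to identical vanishing of one variant you need a norm-form (product over sign choices) together with a real-analyticity/integral-domain argument, which you do not supply. More seriously, even granting identical vanishing of the radical relation, your claimed conclusion ("the signed multiplicities cancel in pairs, hence the trivial-resonance structure \eqref{coppiette}, contradiction") does not follow: $\mathbb Q$-linear independence of the square roots only matches \emph{absolute values}, and with exactly one index outside $S$ the only possible collapse is $\lvert j_n\rvert=\lvert\overline{\jmath}_k\rvert$, i.e. $j_n=-\overline{\jmath}_k$ (which \eqref{TangentialSitesDPbis} allows to lie in $S^c$); such a configuration is never a trivial resonance, so no contradiction with non-triviality is reached along your lines. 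The contradiction must instead come from the momentum bookkeeping: identical vanishing forces $\sum_{i<n}\sigma_i x_{k(i)}\equiv \pm x_{k}$ with the sign pinned down by the frequency coefficients to be $-\sigma_n$, and substituting back into the momentum relation then yields $j_n=\overline{\jmath}_k\in S$, contradicting $j_n\in S^c$ — an argument absent from your sketch. So either carry out this sign-and-momentum analysis carefully (after fixing the sign-variant issue), or adopt the paper's much shorter path via the derivative test and Lemma \ref{algebraiclemma}.
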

We remark that there are just a finite number of non-trivial resonances with one index out of $S$, since $S$ is finite and the momentum is preserved. Then by Proposition \ref{resonances} there exists a generic choice of tangential sites such that in the Hamiltonian \eqref{Rinco}
there are no non-trivial resonant  monomials $u_{j_1}^{\s_1} \cdots u_{j_p}^{\s_p}$,
with at most one index $j_{i}\in S^{c}$.

Before giving the proof of the proposition above 
we need the following algebraic lemma.
\begin{lemma}\label{algebraiclemma}
Let $f\colon \mathbb{R}^{n-1}\to \mathbb{R}$ be 
an algebraic function and let $(\bar{x}_1, \dots, \bar{x}_{n-1})\in\mathbb{R}^{n-1}$ 
be a zero of $f$. If
\begin{equation}\label{okc0}
\nabla f (\bar{x}_1, \dots, \bar{x}_{n-1})\neq 0
\end{equation}
then there exists a non identically zero polynomial 
$Q\colon \mathbb{R}^{n-1}\to \mathbb{R}$ 
such that $Q(\bar{x}_1, \dots, \bar{x}_{n-1})=0$. 
\end{lemma}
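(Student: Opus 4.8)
The plan is to exploit the defining property of an algebraic function---that its graph lies in the zero set of a nonzero polynomial---and to extract $Q$ as the ``value at $y=0$'' of a suitably minimal such polynomial.

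First I would choose a nonzero polynomial $P\in\mathbb{R}[x_1,\dots,x_{n-1},y]$ of minimal degree $d$ in $y$ among all nonzero polynomials with $P\big(x_1,\dots,x_{n-1},f(x_1,\dots,x_{n-1})\big)\equiv 0$ on the (open, connected) domain of $f$; such $P$ exists because $f$ is algebraic. Writing $P=\sum_{k=0}^{d}a_k(x)\,y^k$ with $a_d\not\equiv 0$, the case $d=0$ is impossible (it would force $a_0\equiv 0$, i.e.\ $P\equiv 0$), so $d\ge 1$. This is where the hypothesis $\nabla f(\bar{x})\neq 0$ enters: it forces $f\not\equiv 0$. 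Consequently $a_0\not\equiv 0$: if $a_0\equiv 0$ then $y$ divides $P$, say $P=y^m\widetilde{P}$ with $m\ge 1$ and $y\nmid\widetilde{P}$, whence $f(x)^m\,\widetilde{P}\big(x,f(x)\big)\equiv 0$; since $f$ is real-analytic and not identically zero on a connected open set, $\{f\neq 0\}$ is dense, so $\widetilde{P}\big(x,f(x)\big)\equiv 0$ by continuity, contradicting the minimality of $d$ because $\deg_y\widetilde{P}=d-m<d$.

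I would then set $Q(x):=P(x,0)=a_0(x)$, a nonzero polynomial in $x_1,\dots,x_{n-1}$, and finish by evaluating at $\bar{x}$: since $f(\bar{x})=0$,
\[
Q(\bar{x})=P(\bar{x},0)=P\big(\bar{x},f(\bar{x})\big)=0 ,
\]
which is the claim. If in addition one wants $\nabla Q(\bar{x})\neq 0$ (convenient for the genericity argument in Proposition~\ref{resonances}), I would differentiate $P\big(x,f(x)\big)\equiv 0$ to obtain $\nabla_x\big(P(x,0)\big)\big|_{\bar{x}}=-(\partial_y P)(\bar{x},0)\,\nabla f(\bar{x})$, so non-degeneracy of $Q$ at $\bar{x}$ follows once $(\partial_y P)(\bar{x},0)\neq 0$, which holds when $P$ is chosen irreducible (otherwise $\partial_y P$ would be a lower-$y$-degree annihilator of $f$). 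The only step that needs care is the ``dense $\Rightarrow$ everywhere'' passage, i.e.\ making sure one argues on a connected component of the natural domain of $f$ on which $f$ is analytic; in the application $f$ is a fixed branch of $\mathcal{R}_{\vec{\sigma}}$ obtained after eliminating one variable via the momentum relation $\mathcal{M}_{\vec{\sigma}}=0$, whose domain is a product of open half-lines---connected, with $f$ real-analytic there. Everything else is elementary polynomial algebra.
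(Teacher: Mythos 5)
Your argument is correct, but it takes a genuinely different route from the paper's proof, so a comparison is worthwhile. The paper works purely locally at $\bar{x}$: writing $P=p_0+\sum_{k\geq m}p_k\,x_n^k$ with $p_m\not\equiv 0$, it takes $Q=p_0$ when $p_0\not\equiv 0$, and otherwise differentiates the identity $P(x,f(x))\equiv 0$ exactly $m$ times in a direction $x_j$ with $\partial_{x_j}f(\bar{x})\neq 0$; evaluating at $\bar{x}$ kills every term except $m!\,\big(\partial_{x_j}f(\bar{x})\big)^m p_m(\bar{x})$, so $Q=p_m$ works. That argument uses the full strength of \eqref{okc0} and only finite smoothness of $f$ near $\bar{x}$, with no global considerations. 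You instead take an annihilator of minimal degree in $y$ and show its constant coefficient cannot vanish identically, so $Q=P(\cdot,0)$ vanishes at \emph{every} zero of $f$, degenerate or not; the hypothesis \eqref{okc0} enters only through the weak consequence $f\not\equiv 0$. What your route buys is a point-independent $Q$ and no Leibniz bookkeeping; what it costs is the global input you correctly flag: you need $\{f\neq 0\}$ to be dense in a connected open set containing $\bar{x}$ on which the polynomial identity holds (e.g.\ $f$ real-analytic there), which is not literally contained in the phrase ``algebraic function'' — indeed the $f$ of Proposition \ref{resonances} is analytic only off the hyperplanes $x_i=0$ and $\sum_{i\neq n}\sigma_i x_i=0$, so restricting to the analytic component containing $\bar{x}$ (where all entries are nonzero integers) is exactly the care needed, and you supply it. One caveat on your optional remark: irreducibility of $P$ does not by itself give $(\partial_y P)(\bar{x},0)\neq 0$ — it prevents $\partial_y P$ from annihilating $f$ identically, not from vanishing at the single point $(\bar{x},0)$ — but neither the lemma nor the genericity argument in Proposition \ref{resonances} needs $\nabla Q(\bar{x})\neq 0$, only $Q\not\equiv 0$, so this does not affect the proof.
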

\begin{proof}
Since $f$ is an algebraic function there exists 
a non-identically zero polynomial 
$P\colon \mathbb{R}^n\to \mathbb{R}$ 
such that
\begin{equation}\label{okc}
P(x_1, \dots, x_{n-1}, f(x_1, \dots, x_{n-1}))=0 
\qquad \forall (x_1, \dots, x_{n-1})\in\mathbb{R}^{n-1}\,.
\end{equation}
We can write the polynomial $P$ in the following form
\begin{equation}\label{okc2}
P(x_1, \dots, x_n)=p_0(x_1, \dots, x_{n-1})+\sum_{k\geq m} 
p_k(x_1, \dots, x_{n-1}) x_n^k
\end{equation}
where $m\geq 1$ and $p_i$ are polynomials of $n-1$ real variables. Without loss of generality we can assume that $p_m$ is non identically zero. By \eqref{okc} we have that
\[
P(\bar{x}_1, \dots, \bar{x}_{n-1}, 0)=0.
\]
If $p_0$ is non identically zero 
then we conclude by considering $Q:=p_0$. 
Otherwise, by \eqref{okc0} there exists $j\in\{1, \dots, n-1\}$ 
such that $\partial_{x_j} f(\bar{x}_1, \dots, \bar{x}_{n-1})\neq 0$. 
By \eqref{okc}, \eqref{okc2} we have that for all 
$(x_1, \dots, x_{n-1})\in\mathbb{R}^{n-1}$
\begin{align}
&0 =\partial_{x_j}^m P(x_1, \dots, x_{n-1}, f(x_1, \dots, x_{n-1}))\nonumber\\ 
&=\sum_{k\geq m} \sum_{\alpha=0}^m 
\begin{pmatrix} m\\ \alpha \end{pmatrix} 
\partial_{x_j}^{m-\alpha}  p_k(x_1, \dots, x_{n-1}) 
\partial_{x_j}^{\alpha} (f(x_1, \dots, x_{n-1})^k)\,.\label{okc3}
\end{align}
Now we consider the above relation when it is evaluated at $(\bar{x}_1, \dots, \bar{x}_{n-1})$. Since $k\geq m$ there is just one term in \eqref{okc3} which is not zero, the one involving just derivatives of $f$, but not $f$ itself 
(recall that $f(\bar{x}_1, \dots, \bar{x}_{n-1})=0$). 
Such term corresponds to $k=m=\alpha$ and gives
\[
0=\partial_{x_j}^m P(\bar{x}_1, \dots, \bar{x}_{n-1}, 0)=m!\, 
\Big(\partial_{x_j} f (\bar{x}_1, \dots, \bar{x}_{n-1})\Big)^m 
\,p_m(\bar{x}_1, \dots, \bar{x}_{n-1})\,.
\]
Hence we deduce that 
$p_m(\bar{x}_1, \dots, \bar{x}_{n-1})=0$ and 
we can choose $Q:=p_m$.
\end{proof}

\begin{proof}[{\bf Proof of Proposition \ref{resonances}}]
Note that if $\left\{\big(j_i, \sigma_i \big) \right\}_{i=1}^n$ 
is a non-trivial resonance and all the $j_i$'s belong to $S$ then, {thanks to the momentum conservation (recall $\mathcal{M}_{\vec{\sigma}}$ in \eqref{por}), }
we can consider
\[
\mathcal{P}(x_1, \dots, x_n):=\sum_{i=1}^n \sigma_i x_i\,.
\]
Hence let us consider the case in which there is one 
$j_i\notin S$ and, to simplify the notation, suppose that $i=n$.
We can assume that $0<n_+<n$ otherwise 
$\mathcal{R}(j_1, \dots, j_n)$ in \eqref{por} is strictly positive.\\
By using the first equation in \eqref{por} we have that 
$\sigma_n j_n=-\sum_{i\neq n}\sigma_i j_i$. 
Then, by using the second equation in \eqref{por}, 
we have that $(j_1, \dots, j_{n-1})$ is a zero of the algebraic function
\[
f(x_1, \dots, x_{n-1}):=\sum_{i=1}^{n-1} \sigma_i \sqrt{\lvert x_i \rvert}
+\sigma_n \sqrt{\lvert \sum_{i\neq n} \sigma_i x_i \rvert}\,.
\]
We have
\[
\partial_{x_1} f(j_1, \dots, j_{n-1})
=\frac{\sigma_1\,\mbox{sgn}(j_1)}{2\sqrt{\lvert j_1 \rvert}}
+\frac{\sigma_1 \sigma_n \mbox{sgn}
(\sum_{i\neq n} \sigma_i j_i)}{2 \sqrt{\lvert j_n \rvert}}\,.
\]
Thus $\partial_{x_1} f(j_1, \dots, j_{n-1})= 0$ if and only if 
$\lvert j_1 \rvert=\lvert j_n \rvert$ and 
$\mbox{sgn}(j_1)=\sigma_n \mbox{sgn}(\sigma_n j_n)$, 
which implies $j_1=j_n$. Since we assumed that 
$j_n\in S^c$ and $j_1\in S$ we have that 
$\partial_{x_1} f(j_1, \dots, j_{n-1})\neq 0$. 
By Lemma \ref{algebraiclemma} this means that $(j_1, \dots, j_{n-1})$ 
is the zero of some non-identically zero polynomial $Q(x_1, \dots, x_{n-1})$. 
We conclude by considering 
$\mathcal{P}(x_1, \dots, x_n)=x_n \,Q(x_1, \dots, x_{n-1})$.
\end{proof}

\subsection{Proof of Proposition \ref{WBNFWW}}\label{escoesco}
The change of coordinates $\Phi_B$ is constructed 
via a weak version of the Birkhoff normal form 
algorithm as described in section \ref{compaBNFpro}. 
Such method has been implemented in several 
papers (see for instance \cite{KdVAut}, \cite{FGP1}), 
so here we just resume the scheme that we follow 
to obtain the result in Proposition \ref{WBNFWW}.\\
The map $\Phi_B$ is defined as the composition of several symplectic 
changes of variables $\Phi_k$
that are time-one flow maps of homogeneous Hamiltonians 
$F^{(k+2, \le 1)}$ such that $\Pi^{d_z \le 1} F^{(k+2, \le 1)}=F^{(k+2, \le 1)}$, 
namely $F^{(k+2, \le 1)}$ is Fourier supported on set 
of indexes with at most one index out of $S$. 
Since the Hamiltonian $H_{\mathbb{C}}$ 
Poisson commutes with the momentum $\mathcal{M}_{\mathbb{C}}$ 
we look for generators $F^{(k+2, \le 1)}$ with the same property. 
By momentum conservation and the fact that 
$\Pi^{d \le 1} F^{(k+2, \le 1)}=F^{(k+2, \le 1)}$, 
we have that the Fourier support of $F^{(k+2, \le 1)}$ 
is given by a finite set of integer indexes. So $\Phi_{k}$ it is a well defined, 
analytic and invertible map of the phase space as time-one flow map 
of an ordinary differential equation with polynomial vector field. 

\noindent
Now we describe a generic step of weak 
Birkhoff normal form and we see how we choose the generators 
$F^{(k+2, \le 1)}$.\\
Let $k\geq 1$, then after $k$-steps of weak Birkhoff normal form the Hamiltonian has the form
\begin{equation}\label{weakform}
\begin{cases}
H_k:=H_{k-1}\circ \Phi_k^{-1}=H_{\mathbb{C}}^{(2)}+Z^{(\le k+2, \le 1)}_k+R^{(\geq k+3, \le 1)}_k+H_{k-1}^{(\geq 3, \geq 2)}\\
H_0=H_{\mathbb{C}},\qquad Z_0^{(\le 2, \le 1)}=0, \quad R_0^{( 3, \le 1)}=H_{\mathbb{C}}^{(3, \le 1)}\,,
\end{cases}
\end{equation}
where $\Pi_{Ker(H^{(2)})} Z_k^{(\le k+2, \le 1)}=Z_k^{(\le k+2, \le 1)}$ is given by
\[
Z_k^{(\le k+2, \le 1)}:=Z_{k-1}^{(\le k+1, \le 1)}+\Pi_{Ker(H^{(2)})} R_{k-1}^{( k+2, \le 1)}\,.
\]
By Proposition \ref{resonances}, for a generic choice of the set $S$ we have
\begin{itemize}
 \item $Z^{(\le k+2, 1)}_k=0$ for any $k$,
 \item $Z_k^{(k+2, \le 1)}=0$ if $k$ is odd,
 \item $Z^{(k+2, 0)}_k$ depends only on the actions 
 $\lvert u_j \rvert^2$ with $j\in\mathbb{Z}$, $j\neq 0$ if $k$ is even.
 \end{itemize}
The Hamiltonian $H_{k}$ in  \eqref{weakform} is obtained by choosing $F^{(k+2, \le 1)}$ 
as the solution of the following equation {
\begin{equation}\label{homological}
\{ H_{\mathbb{C}}^{(2)}, F^{(k+2, \le 1)}\}=
\Pi_{\mbox{Rg}(H^{(2)})} R_{k-1}^{(k+2,\le 1)}\,,
\end{equation}
}
and Taylor expanding $H_{k-1}\circ \Phi_k^{-1}$.
We perform $N=13$ steps of the Weak BNF procedure described above.
Then we consider 
$\Phi_B=\Phi^{-1}_{13}\circ \dots \circ \Phi_{1}^{-1}$ and we set
\[
\mathcal{H}_{\mathbb{C}}^{(2 k, 0)}
=Z^{( 2k, 0)}_{13} \quad 2\le k\le 7\,, 
\qquad 
\mathcal{H}_{\mathbb{C}}^{(\geq 15, \le 1)}=R_{13}^{(\le 15, \le 1)}\,, 
\quad  \mathcal H_{\mathbb{C}}^{(\geq 3,\geq 2)}=H_{13}^{(\geq 3,\geq 2)}\,.
\]
By the discussion of section \ref{sec:integrability} we conclude that 
 the expression of the 
normalized Hamiltonian $\mathcal{H}^{(4, 0)}$ 
is the one obtained by (\cite{BFP}, \cite{CW}, \cite{Zak2}) 
through a formal procedure and then restricted 
to monomials supported on the
 tangential sites $S$.  Indeed
 the map $\Phi_{B}$ coincides with the map $\Phi$
  in 
 Proposition \ref{LemmaBelloWeak} up to order  $5$, and hence
 $\mathcal{H}^{(4,0)}$ is equal to $H^{(4)}_{WB}$
 appearing in \eqref{wnormalform1}.
By
Proposition
 \ref{fineIdentif}
 (see the first equality in \eqref{equivalenzaZZZ})
  and the expression \eqref{HamZakDyaCraig}
  we get the \eqref{theoBirH}.

 \section{The nonlinear functional setting}\label{sec:5}
 In this section we consider the Hamiltonian 
 $\mathcal{H}_{\mathbb{C}}$ in \eqref{piotta}
 obtained after the weak Birkhoff normal form 
 procedure of section \ref{sec:weak}.
 We show that actually  $\mathcal{H}_{\mathbb{C}}$
 possesses \emph{approximately} invariant manifolds foliated by tori supporting quasi-periodic motions.
 To do this, we first introduce
 action-angle coordinates in the neighborhood of such manifolds.
 Lemma \ref{Twist1} guarantees that 
 the 
dynamics on  the manifold is non-isochronous. 
In subsection \ref{sezioneNonlinearFunct}  we introduce a 
nonlinear functional 
whose
zeros are quasi-periodic solutions for the whole Hamiltonian 
 $\mathcal{H}_{\mathbb{C}}$.
 This is stated in Theorem \ref{IlTeoremaDP} which will imply our main result.

\subsection{Action-angle variables}\label{sezioneActionAngle}

On the submanifold $\{ z=0\}$ we put the following action-angle variables
\begin{equation}\label{aa0dp}
\begin{aligned}
\mathbb{T}^{\nu}&\times \mathbb{R}_{+}^{\nu} \longrightarrow \{ z=0\}\,,
\qquad
(\theta, I) \longmapsto v=\sum_{j\in S} \sqrt{I_j} \,e^{-\mathrm{i} \theta_j}\,
e^{\mathrm{i} j x}\,,
\end{aligned}
\end{equation}
with $\mathbb{R}^{\nu}_{+}:=(0,\infty)^{\nu}$.
The symplectic form in \eqref{symReal} (or equivalently  \eqref{symComp} 
in the complex coordinates)
restricted to the subspace $H_S$ (see \eqref{decomposition})  
transforms 
into the $2$-form
$\sum_{j\in S} d{I_{j}}\wedge d\theta_j$.
Recalling \eqref{piotta},
we have that the Hamiltonian 
$\mathcal{H}_{\mathbb{C}}^{(\leq 14,0)}(\theta, I, 0)
=\sum_{j\in S} |j|^{\frac{1}{2}} I_j
+\mathcal{H}_{\mathbb{C}}^{(4, 0)}(I)
+\sum_{k=3}^{7}\mathcal{H}_{\mathbb{C}}^{(2k, 0)}(I)$
depends only by the actions $I$ and its equations of motion read as
\begin{equation}\label{HamiltonianSisteminoDP}
\left\{
\begin{aligned}
\dot{\theta}_j&=\partial_{I_j} 
\mathcal{H}_{\mathbb{C}}^{(\leq 14,0)}(\theta, I, 0)
=|j|^{\frac{1}{2}}+\sum_{k\in S}\mathbb{A}_{j}^{k}I_{j}+O(I^2)\,, \\
\dot{I}_j&=-\partial_{\theta_j} \mathcal{H}_{\mathbb{C}}^{(\leq 14,0)}(\theta, I, 0)=0\,, 
\end{aligned}\right.\qquad j\in S\,,
\end{equation}
where the matrix $\mathbb{A}$ is the $\nu\times\nu$ symmetric matrix
associated to the quadratic form (see \eqref{theoBirH}) 
\begin{equation}\label{parco1}
\begin{aligned}
\frac{1}{2}\mathbb{A}I\cdot I:=\mathcal{H}_{\mathbb{C}}^{(4,0)}(I) &:=
\frac{1}{4 \pi} \sum_{k \in S} |k|^3  
  I_{k}^2  
+  \frac{1}{\pi}\sum_{\substack{k_1, k_2 \in S,  \, \sign(k_1) = \sign( k_2 )  \\ |k_2| < |k_1|}} 
|k_1| |k_2|^2  
  I_{k_1}  I_{k_2}\,.
\end{aligned}
\end{equation}
Let 
$0<\varepsilon\ll1$ and  rescale 
$I\mapsto \varepsilon^2 I$, 
so that the \emph{frequency-amplitude map} 
can be written as
\begin{equation}\label{FreqAmplMapDP}
 \alpha(I)=\overline{\omega}+\e^2\mathbb{A}\,I+O(\e^4)\,,
\qquad 
O(\e^4)= \sum_{k=2}^{6} \e^{2k}\mathtt{b}_{k}
\big(\underbrace{I,\ldots,I}_{k-{\rm times}}\big)\,,
\end{equation}
where $\overline{\omega}$ is the linear 
frequencies vector in  \eqref{LinearFreqDP}, 
and where $\mathtt{b}_{k}$ are $k$-linear functions
of the variables $I$.

In order to work in a small neighborhood of the 
prefixed  torus $\{ I\equiv \z \}$, with $\zeta\in \mathbb{R}_{+}^{\nu}$, 
 it is advantageous to introduce a set of coordinates 
$(\theta, y, z,\bar{z})\in\mathbb{T}^{\nu}\times
 \mathbb{R}^{\nu}\times H_S^{\perp}\times H_S^{\perp}$ 
adapted to it, defined by
\begin{equation}\label{AepsilonDP}
u=A_{\varepsilon}(\theta, y, z)=
\e v_\e(\theta,y)+\e^{b}z \quad \Longleftrightarrow \quad
\begin{cases}
u_j:=\varepsilon\sqrt{ \z_j+\varepsilon^{2b-2} y_j }\,
e^{-\mathrm{i}\theta_j}\,e^{\mathrm{i}\,j\,x}\,, \,\,\quad j\in S\,,\\[2mm]
u_j:=\varepsilon^b z_j\,,  \qquad \qquad \qquad \qquad\qquad j\in S^c\,,
\end{cases}
\end{equation}
with $b>1$ chosen as
\begin{equation}\label{donnapia}
b:=1+\frac{a}{2}\,,\qquad 0<a\ll1\,.
\end{equation}
We define
\begin{equation}\label{AepsilonDP2}
\vect{u}{\bar{u}}:={\bf A}_{\e}(\theta,y,z,\bar{z}):=
\left(
\begin{matrix}A_{\varepsilon}(\theta, y, z)\vspace{0.2em}\\
\ov{A_{\varepsilon}(\theta, y, z)}
\end{matrix}
\right)\,.
\end{equation}
For the tangential sites 
$S:=\{ \overline{\jmath}_1,\dots, \overline{\jmath}_{\nu}\}$  
(see \eqref{TangentialSitesDP})
we will also denote 
$\theta_{\overline{\jmath}_i}:=\theta_i$, $y_{\overline{\jmath}_i}:=y_{i}$, 
 $\zeta_{\overline{\jmath}_i}:=\zeta_i$, 
$ i=1,\dots, \nu$.
{After the time-rescaling $t\mapsto \e^{2b} t$} we can consider the symplectic $2$-form 
\begin{equation}\label{NewSimplFormDP}
\mathcal{W}:=\sum_{i=1}^{\nu} d y_i\wedge d\theta_i
-\ii \sum_{j\in S^c}\,d z_j\wedge d \ov{z_{j}}
=\Big( \sum_{i=1}^{\nu} d y_i\wedge d \theta_i \Big) 
\oplus \Omega_{S^{\perp}}\,,
\end{equation}
where $\Omega_{S^{\perp}}$ is the symplectic form
 $\Omega$ 
restricted to the subspace $H_{S}^{\perp}$ in 
\eqref{decomposition}.

\begin{remark}
Note that $\mathcal{W}$ in \eqref{NewSimplFormDP}
has the form \eqref{formaestesa} 
when $(h,\bar{h})$ is restricted to the subspace 
$H^{\perp}_{S}$.
Moreover, in the real coordinates, the symplectic form 
$\mathcal{W}$ in \eqref{NewSimplFormDP} reads 
\begin{equation}\label{NewSimplFormDPreal}
\widetilde{\mathcal{W}}:=\sum_{i=1}^{\nu} d y_i\wedge d\theta_i
+ \sum_{j\in S^c}\,d \psi_j\wedge d \eta_{-j}
=\Big( \sum_{i=1}^{\nu} d y_i\wedge d \theta_i \Big) 
\oplus \widetilde{\Omega}_{S^{\perp}}=d \mathcal{V}\,,
\end{equation}
where $\widetilde{\Omega}_{S^{\perp}}$ is the symplectic form
 $\widetilde{\Omega}$ in \eqref{symReal}
restricted to the subspace $H_{S}^{\perp}$ in 
\eqref{decomposition}
and $\mathcal{V}$ is the contact $1$-form on 
$\mathbb{T}^{\nu}\times\mathbb{R}^{\nu}\times H_S^{\perp}$ 
defined by $\mathcal{V}_{(\theta, y, W)}\colon 
\mathbb{R}^{\nu}\times\mathbb{R}^{\nu}\times 
H_S^{\perp}\to \mathbb{R}$,
\begin{equation}\label{1FormDP}
\mathcal{V}_{(\theta, y, W)} [\hat{\theta}, \hat{y}, \hat{W}]
:=y\cdot \hat{\theta}+\frac{1}{2} (J W,\hat{W})_{L^2}.
\end{equation}
 \end{remark}
 
 \noindent
The Hamiltonian  $\mathcal{H}_{\mathbb{C}}$ in 
\eqref{piotta} becomes
\begin{equation}\label{HamiltonianaRiscalataDP}
H_{\mathbb{C},\varepsilon}:=
\varepsilon^{-2 b}\,\mathcal{H}_{\mathbb{C}}\circ {\bf A}_{\varepsilon}\,.
\end{equation}

In the following lemma we prove that, 
under an appropriate choice of the tangential set 
\eqref{TangentialSitesDP}, the function \eqref{FreqAmplMapDP} 
is a diffeomorphism for $\varepsilon$ 
small enough  and then the system 
\eqref{HamiltonianSisteminoDP} 
is integrable and \emph{non-isochronous}.

\begin{lemma}{\bf (Twist condition).}\label{Twist1}
For a generic choice of the set $S$ 
 one has
$\lvert \det \mathbb{A} \rvert\geq \frac{1}{(4\pi)^{\nu}}$.
\end{lemma}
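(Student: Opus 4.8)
The plan is to compute $\mathbb{A}$ explicitly from \eqref{parco1}, exploit its block structure, and reduce the estimate to the non-vanishing of an integer-valued polynomial in the tangential sites.

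First I would read off the entries of $\mathbb{A}$ by symmetrizing the quadratic form in \eqref{parco1}: this gives $\mathbb{A}_k^k=\tfrac1{2\pi}\lvert k\rvert^3$ on the diagonal, $\mathbb{A}_{k_1}^{k_2}=\tfrac1\pi\max(\lvert k_1\rvert,\lvert k_2\rvert)\,\min(\lvert k_1\rvert,\lvert k_2\rvert)^2$ when $k_1\neq k_2$ and $\sign(k_1)=\sign(k_2)$, and $\mathbb{A}_{k_1}^{k_2}=0$ when $\sign(k_1)\neq\sign(k_2)$. Reordering the tangential set so that the sites of $S^+$ precede those of $S^-$ (a conjugation by a permutation matrix, which does not change the determinant), the vanishing of the mixed-sign entries shows that $\mathbb{A}$ is block diagonal, $\mathbb{A}=\mathrm{diag}(\mathbb{A}^+,\mathbb{A}^-)$, where $\mathbb{A}^\pm$ is the symmetric submatrix indexed by $S^\pm$, so that $\det\mathbb{A}=\det\mathbb{A}^+\cdot\det\mathbb{A}^-$.

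Next I would treat one block, say $\mathbb{A}^+$ with $m:=\lvert S^+\rvert$ and $S^+=\{p_1<\dots<p_m\}\subset\mathbb{N}$, and set $M:=2\pi\,\mathbb{A}^+$, so that $M_{aa}=p_a^3$ and $M_{ab}=2\,p_{\max(a,b)}\,p_{\min(a,b)}^2$ for $a\neq b$ (here $\max,\min$ are taken on the indices, which coincide with those on the values since the $p_a$ are increasing). Since the $p_a$ are positive integers, $M$ has integer entries, hence $\det M\in\mathbb{Z}$ and $\det M=(2\pi)^m\det\mathbb{A}^+$. The one substantive point is that, viewed as a polynomial in $p_1,\dots,p_m$ with the index ordering fixed, $\det M$ is not identically zero: evaluating at the point $p_1=\dots=p_m=1$ turns $M$ into $2J_m-I_m$, with $J_m$ the all-ones matrix, whose eigenvalues are $2m-1$ (simple) and $-1$ (multiplicity $m-1$), so that $\det(2J_m-I_m)=(-1)^{m-1}(2m-1)\neq0$. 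Consequently, the choices of $S$ for which $\det\mathbb{A}^+=0$ or $\det\mathbb{A}^-=0$ lie in the zero set of a non-trivial polynomial in $(\overline{\jmath}_1,\dots,\overline{\jmath}_\nu)$, namely $\det(2\pi\mathbb{A}^+)\cdot\det(2\pi\mathbb{A}^-)$ rewritten via $\lvert\overline{\jmath}_i\rvert=\pm\overline{\jmath}_i$; this is a non-generic condition, and it is compatible with \eqref{TangentialSitesDPbis}.

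Finally, for a generic choice of the velocity vector $\mathtt{v}$ as in \eqref{velocityvec} one then has $\det(2\pi\mathbb{A}^\pm)\in\mathbb{Z}\setminus\{0\}$, hence $\lvert\det(2\pi\mathbb{A}^\pm)\rvert\geq1$, i.e. $\lvert\det\mathbb{A}^\pm\rvert\geq(2\pi)^{-\lvert S^\pm\rvert}$; multiplying the two bounds gives $\lvert\det\mathbb{A}\rvert\geq(2\pi)^{-\lvert S^+\rvert}(2\pi)^{-\lvert S^-\rvert}=(2\pi)^{-\nu}\geq(4\pi)^{-\nu}$, which is the assertion (in fact with a slightly better constant). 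The main obstacle is exactly the non-vanishing of $\det(2\pi\mathbb{A}^\pm)$ as a polynomial in the sites; the rest is bookkeeping with the explicit entries. The subtlety to watch is that, because of the symmetrization, the diagonal and off-diagonal coefficients carry different normalizations ($p_a^3$ versus $2\,p_{\max}\,p_{\min}^2$), so it is precisely the rescaling by $2\pi$ — rather than a naive one — that produces an integer matrix.
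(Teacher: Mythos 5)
Your proof is correct and follows essentially the same route as the paper: both show that $\det\mathbb{A}$ is a non-identically-zero polynomial in the tangential sites by evaluating at the degenerate configuration where all moduli coincide, and then use that a $\pi$-rescaled $\mathbb{A}$ has integer entries to turn generic non-vanishing into the quantitative lower bound. Your only deviations — splitting into the $S^+/S^-$ blocks with the sharper $2\pi$ normalization, and verifying non-vanishing through the explicit spectrum of $2J_m-I_m$ instead of the paper's mod-$2$ parity argument — are cosmetic (and in fact give the slightly better constant $(2\pi)^{-\nu}$).
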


\begin{proof}
Recall \eqref{parco1}. The entries of $\mathbb{A}$ are homogeneous 
polynomials of the tangential sites, 
and so it is the $\det \mathbb{A}$. 
We claim that $\det \mathbb{A}$ is a
\emph{not identically} zero polynomial. Hence the result follows since $4\pi \mathbb{A}$ is an integer matrix and then
$(4\pi)^{\nu}\det \mathbb{A}$ is a non-zero integer number.
To prove the claim we consider the matrix 
$\mathbb{A}(\overline{\jmath}_i)$ 
restricted at $\overline{\jmath}_i=\lambda$, $i=1,\dots, \nu$, 
for some $\lambda>0$. Such matrix has the form
\begin{equation}\label{georgia}
\mathbb{A}(\lambda, \dots, \lambda)=
\lambda^{3} \pi^{-1}D
\end{equation}
where $D$ is a $\nu\times\nu$ matrix
whose diagonal elements are all $1/4$
and the appearing out of diagonal are just $0$ and $1/2$.
Then the matrix
\begin{equation}\label{georgia2}
4\,\pi \lambda^{-3} \,\mathbb{A}(\lambda, \dots, \lambda)=4D\,
\end{equation}
is the identity in $\mathbb{Z}/2\mathbb{Z}$ 
and in particular its determinant is $1$. 
This means that the determinant of the matrix in \eqref{georgia2}
is an odd number, in particular it is different from zero. This concludes the proof.
\end{proof}
As a consequence of Lemma \ref{Twist1}
the map in \eqref{FreqAmplMapDP} is invertible and we denote
\begin{equation}\label{xiomega}
\z:=\z(\omega) := \alpha^{(-1)}(\omega) = 
\e^{-2}\mathbb A^{-1}(\omega-\bar\omega) +O( \e^2)\,.
\end{equation}

\noindent
In the following it will be convenient to consider the Hamiltonian 
$H_{\mathbb{C},\e}$ in \eqref{HamiltonianaRiscalataDP} 
expressed in terms of  real variables.
Recall the splitting in \eqref{splitto2}. Then the variables $(\eta,\psi)$ in the 
action-angle variables in \eqref{AepsilonDP}, \eqref{AepsilonDP2} read
\begin{equation}\label{AepsilonDPreal}
\left(\begin{matrix}
\eta \\ \psi
\end{matrix}\right):=\Lambda^{-1}{\bf A}_{\e}\Big(\theta,y,
\Lambda\vect{\widetilde{\eta}}{\widetilde{\psi}}\Big)
\end{equation}

\noindent
Using this notation, the Hamiltonian 
$H_{\mathbb{C},\e}(\theta,y,z,\bar{z})$
in \eqref{HamiltonianaRiscalataDP} 
expressed in terms
of real variables is
\begin{equation}\label{HamiltonianaRiscalataDPreal}
H_{\e}(\theta,y,\widetilde{\eta},\widetilde{\psi}):=
\varepsilon^{-2 b}\,\mathcal{H}_{\mathbb{C}}\circ 
{\bf A}_{\varepsilon}\big(\theta,y,
\Lambda\vect{\widetilde{\eta}}{\widetilde{\psi}}\big)
=
\varepsilon^{-2 b}\,{H}\circ\Lambda^{-1}
\circ \Phi_{B}\circ
{\bf A}_{\varepsilon}\big(\theta,y,\Lambda\vect{\widetilde{\eta}}{\widetilde{\psi}}\big)
\end{equation}
where we used also 
\eqref{piotta} and \eqref{Rinco}.
In the following we shall need a homogeneous expansion of the 
Hamiltonian $H_{\e}(\theta,y,\widetilde{\eta},\widetilde{\psi})$ 
in \eqref{HamiltonianaRiscalataDPreal} in the variables 
$y, \widetilde{\eta},\widetilde{\psi}$.
Notice that the map $\Lambda$ in \eqref{CVWW}
does not affect the homogeneity degree in the normal variables. For simplicity we expand the Hamiltonian 
$H_{\mathbb{C},\varepsilon}$ in degree of homogeneity in $y$ and in the complex
variables $z,\bar{z}$ (see \eqref{splitto2}).
In order to lighten the notation \eqref{notaTRA}
we shall write
\[
R(v^{k-q}z^q)=R_k(v^{\alpha_1}\, \bar{v}^{\beta_1}\, z^{\alpha_2}\,\bar{z}^{\beta_2})\,,
\qquad \alpha_1+\beta_1=k-q\,,\quad \alpha_2+\beta_2=q\,.
\]
We also define (see \eqref{FreqAmplMapDP})
\begin{equation}\label{Bi}
\mathbb{B}(\zeta)y:=\sum_{k=2}^{7}\e^{2(k-1)}\mathtt{b}_{k}
\big(\underbrace{\zeta,\ldots,\zeta}_{(k-1)-{\rm times}},y\big)\,.
\end{equation}
Hence, by \eqref{Rinco}, \eqref{piotta}, 
and writing $v_{\varepsilon}:=v_{\varepsilon}(\theta, y)$, we have that
\begin{equation}\label{HamiltonianaRiscalataDP2}
\begin{aligned}
&H_{\mathbb{C},\varepsilon}(\theta, y, z,\bar{z})=e(\zeta)+\alpha(\zeta)\cdot y+\int_{\mathbb{T}} |D|^{\frac{1}{2}}z\cdot \bar{z}\,dx
+\frac{\varepsilon^{2\,b}}{2}\,\big(\mathbb{A}+\mathbb{B}(\zeta)\big)\, y\cdot y
+\varepsilon^{4 b} O(y^3)
\\&\qquad+
\sum_{k=1}^{15} \e^{k} R(v_{\e}^{k}z^2)
+
\sum_{k=3}^{15}
\varepsilon^{k-3+b}\sum_{q=3}^{k} \varepsilon^{(b-1)(q-3)}R(v^{k-q}z^q)
+\varepsilon^{-2 b} \mathcal{H}^{(\geq 16)} (\varepsilon v_{\varepsilon}
+\varepsilon^b z)
\end{aligned}
\end{equation}
where $e(\zeta)$ is a constant, $\alpha(\zeta)$ is the rescaled 
frequency-amplitude map \eqref{FreqAmplMapDP} and $\mathbb{A}$ is 
in \eqref{parco1}. 

\subsection{The functional equation 
}\label{sezioneNonlinearFunct}

We write the Hamiltonian in \eqref{HamiltonianaRiscalataDPreal}
(possibly eliminating  constant terms depending only on $\zeta$ 
which are irrelevant for the dynamics) as
\begin{equation} \label{HepsilonDP}
H_{\varepsilon}=\mathcal{N}+P\,, \qquad 
W:=\begin{pmatrix} \widetilde{\eta}\\ \widetilde{\psi} \end{pmatrix}
\end{equation}
\begin{equation*}
\mathcal{N}(\theta, y, W)
=\alpha(\zeta)\cdot y+\frac{1}{2} (N(\theta) W, W)_{L^2}\,,
\quad 
\frac{1}{2}(N(\theta) W, W)_{L^2}:=
\frac{1}{2}((\mathtt{d} \nabla_{W} H_{\varepsilon}) (\theta, 0, 0)[W], W)_{L^2}\,,
\end{equation*}
where $\mathcal{N}$ describes the 
linear dynamics normal to the torus, 
and $P:=H_{\varepsilon}-\mathcal{N}$ collects the nonlinear perturbative effects.
Note that both $\mathcal{N}$ and $P$ depend on $\omega$ through the map $\omega\mapsto \zeta(\omega)$.\\
We consider $H_{\varepsilon}$ as a $(\omega, \varepsilon)$-parameter 
family of Hamiltonians and we note that, for 
$P=0$, $H_{\varepsilon}$ possess 
an invariant torus at the origin with frequency $\omega$, 
which we want to continue to an invariant 
torus for the full system.

\noindent
We will select the frequency parameters from the 
following set (recall \eqref{xiomega})
\begin{equation}\label{OmegaEpsilonDP}
\Omega_{\varepsilon}:=\{\omega\in \R^\nu\,: \,\, \zeta(\omega)\in [1, 2]^{\nu}\}\,.
\end{equation}
Set (see \eqref{donnapia})
\begin{equation}\label{gammaDP}
\gamma=\varepsilon^{2b}\,, \quad \tau:=3\nu+7\,
\end{equation}
and consider two constants $\mathtt{C}_1, \mathtt{C}_2>0$ depending on $S$.
 We define the non-resonant sets
 \begin{equation}\label{ZEROMEL}
 \begin{aligned}
\mathcal{G}^{(0)}_0&:=
\big\{ \omega \in \Omega_{\varepsilon} : 
\lvert \omega\cdot \ell \rvert\geq \gamma\,\langle \ell \rangle^{-\tau}, 
\,\, \forall \ell \in\mathbb{Z}^{\nu}\setminus\{0\}\big\}\,,\\
\mathcal{G}_0^{(1)}&:=
\Big\{ \omega \in \Omega_{\varepsilon} : 
\lvert (\overline{\omega}-\varepsilon^2\,m_1\,\mathtt{v})\cdot \ell +\varepsilon^2 \mathbb{A} \zeta(\omega) \cdot \ell\rvert\geq \gamma\,\langle \ell \rangle^{-\tau}, 
\,\, \forall \ell \in\mathbb{Z}^{\nu}\setminus\{0\}\Big\}\,,
\end{aligned}
\end{equation}
\begin{equation}\label{G02}
\begin{aligned}
\mathcal{G}_0^{(2)}(\mathtt{C}_1, \mathtt{C}_2)
:=\Big\{ &\omega\in \Omega_{\e} :
\lvert (\overline{\omega}-\varepsilon^2 m_1\,\mathtt{v}
+\varepsilon^2 \mathbb{A}\zeta(\omega))\cdot \ell+\sigma \sqrt{|j|}
-\sigma'\sqrt{|k|}\rvert\geq \,\frac{\gamma}{\langle \ell \rangle^{\tau}},\\
& \,\,\, \lvert \ell \rvert\le \mathtt{C}_1,\,\,\max\{ \lvert j \rvert, \lvert k \rvert \}\le \mathtt{C}_2, 
\,\,\, \mathtt{v} \cdot \ell+j-k=0,\,\, \s, \s'=\pm \Big\}\,
\end{aligned}
\end{equation}
where $\mathbb{A}$ is defined in \eqref{parco1}
and
\begin{equation}\label{def:m1}
m_1=m_1(\omega):=w\cdot \zeta(\omega), \qquad w:=(w_j)_{j\in S},\,\,\,w_j:=\lvert j \rvert j\,,
\qquad
\mathtt{v}:=(\mathtt{v}_j)_{j\in S},\,\,\,\mathtt{v}_j:= j\,.
\end{equation}
We require that
\begin{equation}\label{0di0Melnikov}
\omega\in\mathcal{G}_0:=\mathcal{G}^{(0)}_0\cap \mathcal{G}^{(1)}_0\cap \mathcal{G}_0^{(2)}\,.
\end{equation}

\begin{lemma}\label{measG0}
For generic choices of $S$ and any constants $\mathtt{C}_1, \mathtt{C}_2>0$ depending only on $S$,
we have that 
$\lvert \Omega_{\varepsilon}\setminus\mathcal{G}_0 \rvert
\leq C_{*}\varepsilon^{2(\nu-1)} \gamma\,$ 
for some $C_*=C_*(S)>0\,$.
\end{lemma}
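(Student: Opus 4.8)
The plan is to estimate $|\Omega_\varepsilon\setminus\mathcal G_0|$ by splitting $\mathcal G_0=\mathcal G_0^{(0)}\cap\mathcal G_0^{(1)}\cap\mathcal G_0^{(2)}$ as in \eqref{0di0Melnikov} and bounding the three complements in $\Omega_\varepsilon$ separately. The only geometric input is that, by \eqref{xiomega}, \eqref{FreqAmplMapDP}, the map $\omega\mapsto\zeta(\omega)=\varepsilon^{-2}\mathbb A^{-1}(\omega-\bar\omega)+O(\varepsilon^2)$ is a diffeomorphism of $\Omega_\varepsilon$ onto $[1,2]^\nu$, so that $\Omega_\varepsilon$ is contained in a ball of radius $\lesssim\varepsilon^2$; together with a standard Fubini argument this gives the template: if $g\colon\Omega_\varepsilon\to\mathbb R$ is smooth with $\inf_{\Omega_\varepsilon}|\nabla_\omega g|\ge c\,|\ell|$ and $\sup_{\Omega_\varepsilon}\|\nabla^2_\omega g\|\lesssim|\ell|$ for some $\ell\in\mathbb Z^\nu\setminus\{0\}$, then for $\varepsilon$ small (uniformly in $\ell$) one has $|\{\omega\in\Omega_\varepsilon:|g(\omega)|<\delta\}|\lesssim c^{-1}\,\delta\,|\ell|^{-1}\,\varepsilon^{2(\nu-1)}$. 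Applying this to $g(\omega)=\omega\cdot\ell$, for which $\nabla_\omega g=\ell$, and summing over $\ell\ne0$ (the series $\sum_\ell\langle\ell\rangle^{-\tau-1}$ converges since $\tau+1>\nu$) immediately yields $|\Omega_\varepsilon\setminus\mathcal G_0^{(0)}|\lesssim\varepsilon^{2(\nu-1)}\gamma$.

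For $\mathcal G_0^{(1)}$, and for the part $\ell\ne0$ of $\mathcal G_0^{(2)}(\mathtt C_1,\mathtt C_2)$, the key is a twist computation. Let $g_\ell(\omega)$ denote the divisor in \eqref{ZEROMEL} (resp.\ \eqref{G02}); using $m_1(\omega)=w\cdot\zeta(\omega)$, the expansion $\zeta'(\omega)=\varepsilon^{-2}\mathbb A^{-1}+O(1)$ and the symmetry of $\mathbb A$, one computes
\[
\nabla_\omega g_\ell=M\ell+O(\varepsilon^2)\,|\ell|\,,\qquad M:=\mathrm I-(\mathbb A^{-1}w)\,\mathtt v^{\top}\,,\qquad M\ell=\ell-(\mathtt v\cdot\ell)\,\mathbb A^{-1}w\,.
\]
By the matrix--determinant lemma, $\det M=1-\mathtt v^{\top}\mathbb A^{-1}w$. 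This is a non-trivial function of the tangential sites, as one checks by the specialization used in the proof of Lemma \ref{Twist1}: at $\bar\jmath_i=\lambda$ one gets $\mathbb A=\lambda^3\pi^{-1}D$ with $D$ the rational, invertible matrix of that lemma, whence $\mathtt v^{\top}\mathbb A^{-1}w=\pi\,\mathbf 1^{\top}D^{-1}\mathbf 1$, a rational multiple of $\pi$, in particular $\ne1$. Adjoining the requirement $\mathtt v^{\top}\mathbb A^{-1}w\ne1$ to the finitely many genericity conditions already imposed on $S$, the matrix $M$ is invertible, hence $|M\ell|\ge\|M^{-1}\|^{-1}|\ell|$ \emph{uniformly} in $\ell\in\mathbb Z^\nu$, and so $|\nabla_\omega g_\ell|\ge\tfrac12\|M^{-1}\|^{-1}|\ell|$ for $\varepsilon$ small; since also $\|\nabla^2_\omega g_\ell\|\lesssim|\ell|$, the template applies with $\delta=\gamma\langle\ell\rangle^{-\tau}$.

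Summing over $\ell\ne0$ gives $|\Omega_\varepsilon\setminus\mathcal G_0^{(1)}|\lesssim\varepsilon^{2(\nu-1)}\gamma$; summing over the \emph{finitely many} admissible quintuples $(\ell,j,k,\sigma,\sigma')$ with $0<|\ell|\le\mathtt C_1$, $\max\{|j|,|k|\}\le\mathtt C_2$ and $\mathtt v\cdot\ell+j-k=0$ gives the $\ell\ne0$ contribution to $|\Omega_\varepsilon\setminus\mathcal G_0^{(2)}|$, bounded by $\lesssim_{\mathtt C_1,\mathtt C_2}\varepsilon^{2(\nu-1)}\gamma$. When $\ell=0$ in \eqref{G02} the constraint forces $j=k$ and the divisor equals $(\sigma-\sigma')\sqrt{|j|}$, which is $\ge2>\gamma$ if $\sigma\ne\sigma'$ and is the identically vanishing diagonal term $(0,j,j)$ — excluded from the second-order Melnikov conditions, cf.\ the restriction in \eqref{secondintro} — if $\sigma=\sigma'$; in either case it does not contribute. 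Collecting the three bounds, and recalling that $\mathtt C_1,\mathtt C_2$ are fixed in terms of $S$, we get $|\Omega_\varepsilon\setminus\mathcal G_0|\le C_*(S)\,\varepsilon^{2(\nu-1)}\gamma$.

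The one genuinely non-routine step is the twist computation: identifying the rank-one correction $M=\mathrm I-(\mathbb A^{-1}w)\mathtt v^{\top}$, proving $\det M\ne0$ (equivalently $\mathtt v^{\top}\mathbb A^{-1}w\ne1$) for a generic choice of tangential sites, and — the true technical point — deducing the lower bound $|\nabla_\omega g_\ell|\gtrsim|\ell|$ \emph{uniformly in} $\ell$, which is exactly what lets the perturbative $O(\varepsilon^2)|\ell|$ be absorbed and the series $\sum_\ell\langle\ell\rangle^{-\tau-1}$ be summed; everything else reduces to the elementary slab/Fubini estimate on the $\varepsilon^2$-small set $\Omega_\varepsilon$.
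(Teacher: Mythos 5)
Your proof is correct and follows essentially the same route as the paper: the same splitting of $\mathcal{G}_0$ into the three sets, the same Fubini/transversality estimate on the $\varepsilon^2$-small set $\Omega_{\varepsilon}$, and the same key twist condition — since $\mathbb{A}$ is symmetric, $\det\bigl(\mathbb{A}-\mathtt{v}w^{T}\bigr)=\det\mathbb{A}\,\bigl(1-\mathtt{v}^{T}\mathbb{A}^{-1}w\bigr)$, so your requirement $\mathtt{v}^{T}\mathbb{A}^{-1}w\neq 1$ is exactly the paper's $\det(\mathbb{A}-\mathbb{V})\neq 0$, and you verify it by the same specialization $\overline{\jmath}_i=\lambda$ together with the transcendence of $\pi$. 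The remaining differences (differentiating in $\omega$ via the rank-one matrix $M$ instead of in $\zeta$, using $\|M^{-1}\|$ rather than integrality for the uniform lower bound, and spelling out the $\ell=0$, $\sigma\neq\sigma'$ case of $\mathcal{G}_0^{(2)}$) are presentational and do not change the argument.
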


\begin{proof}
The estimate for $\Omega_{\varepsilon}\setminus \mathcal{G}^{(0)}_0$
follows using \eqref{FreqAmplMapDP}, \eqref{xiomega} and Lemma \ref{Twist1}.
Consider now the set $\mathcal{G}^{(1)}_0$.
We have that
\begin{equation}\label{dimaio}
\Omega_{\varepsilon}\setminus \mathcal{G}^{(1)}_0
=\bigcup_{\ell\in\mathbb{Z}^{\nu}} T_{\ell}\,, 
\quad T_{\ell}:=
\{ \omega\in \Omega_{\varepsilon} : \lvert \phi(\omega, \ell)\rvert
< \gamma\,\langle \ell \rangle^{-\tau} \}\,, 
\quad \phi(\omega, \ell):=(\overline{\omega}-\varepsilon^2\,m_1\,\mathtt{v})\cdot \ell 
+\varepsilon^2 \mathbb{A} \zeta(\omega) \cdot \ell\,.
\end{equation}
First we write
\begin{equation}\label{matriceV}
m_1\,\mathtt{v}=\mathbb{V} \zeta, \quad \mathbb{V}:=\mathtt{v}\,w^T.
\end{equation}
Then $
\nabla_{\zeta} \phi(\omega, \ell)=  (\mathbb{A}^T-\mathbb{V}^T)\ell$. 
Now we prove that, for generic choices of $S$, 
we have 
\begin{equation}\label{twistcondition2}
\det(\mathbb{A}-\mathbb{V})\neq 0.
\end{equation}
Since $\mathbb{V}$ and $\mathbb{A}$ are integer matrices 
and $\ell\in\mathbb{Z}^{\nu}$ we will have 
that $\min \{\lvert \partial_{\zeta_i} \phi(\omega, \ell) \rvert : 
i=1, \dots, \nu\}\geq 1$. 
Now we compute the matrix 
$\mathbb{A}-{\mathbb{V}}$ evaluated at the point 
$(\lambda, \dots, \lambda)$ for some $\lambda\in\mathbb{N}$. 
At such point
$\mathbb{V}$
 has all entries equal to $\lambda^3$.
We write
\[
{ 4\,\pi \lambda^{-3}  (\mathbb{A}-{\mathbb{V}}) =\mathtt{A}-\pi \mathtt{V}, \qquad \mathtt{A}:=4\,\pi \lambda^{-3}  \mathbb{A}, \quad \mathtt{V}:= 4\lambda^{-3}  {\mathbb{V}}}.
\]
Notice that $\mathtt{V}$, $\mathtt{A}$ are
integer matrices (recall \eqref{georgia}, \eqref{georgia2}  and the definition of $D$ ) and $\mathtt{A}$ is invertible (see proof of Lemma \ref{Twist1}).
Then $\mathtt{A}-\pi \mathtt{V}=\mathtt{A}(\mathrm{I}_N-\pi \mathtt{A}^{-1}\mathtt{V})$. The eigenvalues of $\mathtt{A}^{-1}\mathtt{V}$ are algebraic numbers, hence $1$ cannot be an eigenvalue of $\pi \mathtt{A}^{-1}\mathtt{V}$. This proves that also $\mathtt{A}-\pi \mathtt{V}$ is invertible.
Therefore the determinant of $\mathtt{A}-\pi \mathtt{V}$
is not an identically zero polynomial.
Hence
\[
|\Omega_{\varepsilon}\setminus \mathcal{G}^{(1)}_0|\lesssim
\sum_{\ell\in \mathbb{Z}^{\nu}}|T_{\ell}|\lesssim
\gamma\sum_{\ell\in \mathbb{Z}^{\nu}}\langle \ell\rangle^{-\tau}\lesssim\gamma\,,
\]
since $\tau> \nu/2$ (see \eqref{gammaDP}).
Now consider the set in \eqref{G02} and write
\begin{equation}\label{dimaio2}
\begin{aligned}
&\Omega_{\varepsilon}\setminus\mathcal{G}_0^{(2)}(\mathtt{C}_1, \mathtt{C}_2)=\bigcup_{\substack{\ell\in \mathbb{Z}^{\nu},\s,\s'=\pm,\\
|\ell|<\mathtt{C}_1}}
\bigcup_{\substack{j,k\in S^{c},\\
\max\{ \lvert j \rvert, \lvert k \rvert \}\le \mathtt{C}_2}}R_{\ell,j,k}^{\s,\s'}\,,\\
R_{\ell,j,k}^{\s,\s'}&:=\{
\omega\in \Omega_{\varepsilon} : \lvert \psi(\omega, \ell,j,k)\rvert
< \gamma\,\langle \ell \rangle^{-\tau}\,,
\mathtt{v} \cdot \ell+j-k=0
\}\,,\\
\psi(\omega, \ell,j,k)&:=
(\overline{\omega}-\varepsilon^2 m_1\,\mathtt{v}
+\varepsilon^2 \mathbb{A}\zeta)\cdot \ell+\sigma \sqrt{|j|}
-\sigma'\sqrt{|k|}\,.
\end{aligned}
\end{equation}
Reasoning as done for the function $\phi$ in \eqref{dimaio}
we deduce that 
$\min \{\lvert \partial_{\zeta_i} \psi(\omega, \ell,j,k) \rvert : 
i=1, \dots, \nu\}\geq 1$. Hence the 
measure of a single bad set is bounded as $|R_{\ell,j,k}^{\s,\s'}|
\lesssim\gamma\langle\ell\rangle^{-\tau}$. By \eqref{dimaio2} 
we deduce
\[
|\Omega_{\varepsilon}\setminus\mathcal{G}_0^{(2)}(\mathtt{C}_1, \mathtt{C}_2)|\lesssim
\mathtt{C}_2\sum_{\ell\in\mathbb{Z}^{\nu}}\gamma
\langle \ell\rangle^{-\tau}\lesssim \mathtt{C}_2\gamma\,.
\]
This implies the thesis.
\end{proof}

We look for an embedded invariant torus
\begin{equation}\label{iDP}
i \colon \mathbb{T}^{\nu}\to 
\mathbb{T}^{\nu}\times \mathbb{R}^{\nu}\times H_S^{\perp}\,, 
\quad \varphi \mapsto i(\varphi):=(\theta(\varphi), y(\varphi), W(\varphi))
\end{equation}
\[
\mathfrak{I}(\varphi):=i(\varphi)-(\varphi, 0, 0)
:=(\Theta(\varphi), y(\varphi), W(\varphi))
\]
of the Hamiltonian vector field $X_{H_{\varepsilon}}$ 
(see \eqref{HepsilonDP}, \eqref{HamiltonianaRiscalataDPreal}) 
supporting quasi-periodic solutions with 
diophantine frequency $\omega\in \mathcal{G}_0$ 
(recall \eqref{0di0Melnikov}),
satisfying the following condition:
\begin{itemize}
\item {\bf (Travelling wave):} 
the torus embedding $i(\vphi)=(\varphi, 0, 0)+\mathfrak{I}(\varphi)$ satisfies 
\begin{equation}\label{trawaves}
\mathtt{v}\cdot\pa_{\vphi}\Theta(\vphi)=0\,,\quad 
\mathtt{v}\cdot\pa_{\vphi}y(\vphi)=0\,,\quad 
(\mathtt{v}\cdot\pa_{\vphi}+\pa_{x})W(\vphi, x)=0\,,
\end{equation}
where 
$\mathtt{v}$ is the vector velocity given in \eqref{def:m1}.
\end{itemize}
We remark that the embedding $i(\vphi)$ 
in the original coordinates (see \eqref{CVWW}, \eqref{FBI}, \eqref{AepsilonDP2}),  
reads
\begin{equation}\label{trawaves2}
(\eta(t, x), \psi(t, x))=\Big(\Lambda^{-1}
 \Phi_{B}
{\bf A}_{\varepsilon}\Big) (\theta(\omega t), y(\omega t), \Lambda W(\omega t)) \,.
\end{equation}
\begin{remark}
The condition \eqref{trawaves} on the embedding $i(\vphi)$
is equivalent to require that 
$(\eta,\psi)$ in \eqref{trawaves2} belongs to
the subspace
$S_{\mathtt{v}}$ defined in 
\eqref{funzmom}. \\
Consider the momentum Hamiltonian 
in \eqref{Hammomento}
expressed in the action-angle variables \eqref{aa0dp} 
(see also \eqref{splitto2})
\begin{equation}\label{trawavesmom}
M=M(\theta,y,W)=-\mathtt{v}\cdot y
+\int_{\mathbb{T}}\tilde{\eta}_{x}\tilde{\psi}dx\,,
\quad 
W=\left(\begin{matrix}\tilde{\eta} \\ \tilde{\psi} 
\end{matrix}\right)\,.
\end{equation}
Then the condition \eqref{trawaves}
 can be written as
 \begin{equation}\label{cifrato}
 \mathtt{v}\cdot\pa_{\vphi}i(\vphi)+X_{M}(i(\vphi))=0\,,
 \qquad
 X_{M}(i(\varphi))=(-\mathtt{v},0, W_x)\,,
 \end{equation}
 or equivalently
 \begin{equation}\label{momfrakI}
 \mathtt{v}\cdot\pa_{\vphi}\mathfrak{I}(\vphi)+(0, 0, W_x)=0 .
 \end{equation}
 \end{remark}
 
\noindent
For technical reason, it is useful to consider the modified Hamiltonian
\begin{equation}\label{HepsilonZetaDP}
H_{\varepsilon, \Xi}(\theta, y, W):=H_{\varepsilon}(\theta, y, W)
+\Xi\cdot\theta, \quad \Xi\in\mathbb{R}^{\nu}\,.
\end{equation}
More precisely, we introduce $\Xi$ in order to 
control the average in the $y$-component 
in our Nash Moser scheme.
The vector $\Xi$ has no dynamical 
consequences since an invariant torus 
for the Hamiltonian vector field 
$X_{H_{\varepsilon, \Xi}}$ is actually 
invariant for $X_{H_{\varepsilon}}$ itself (see Lemma \ref{Lemma6.1DP}).


Thus, we look for zeros of the nonlinear operator 
$\mathcal{F}(i,\Xi)\equiv \mathcal{F}(i, \Xi, \omega, \varepsilon):=
\omega\cdot\partial_{\varphi} i(\varphi)-X_{\mathcal{N}}(i(\varphi))-X_P(i(\varphi))+(0, \Xi, 0)$ 
defined as
\begin{align}\label{NonlinearFunctionalDP}
\mathcal{F}(i, \Xi)
=\begin{pmatrix}
\omega\cdot\partial_{\varphi} \Theta(\varphi)-\partial_y P(i(\varphi))\\
\omega\cdot\partial_{\varphi} y(\varphi)
+\frac{1}{2} \partial_{\theta} (N(\theta(\varphi))W(\varphi),W(\vphi))_{L^2(\mathbb{T})}
+\partial_{\theta} P(i(\varphi))+\Xi\\
\omega\cdot\partial_{\varphi} W(\varphi)
-J N(\theta(\varphi))\,W(\varphi)-J\nabla_WP(i(\varphi))
\end{pmatrix}  
\end{align}
where $\Theta(\varphi):=\theta(\varphi)-\varphi$ is $(2\pi)^{\nu}$-periodic
and $J$ is defined in \eqref{symReal}. 

\begin{lemma}\label{Fpreservatrav}
If $i(\varphi)$ is traveling then $\mathcal{F}(i(\varphi), \Xi)$ is traveling.
\end{lemma}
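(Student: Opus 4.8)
The statement asserts that the nonlinear operator $\mathcal{F}$ defined in \eqref{NonlinearFunctionalDP} maps traveling embeddings to traveling ``vector fields'', i.e. that the property \eqref{trawaves} (equivalently, membership in the subspace $S_{\mathtt v}$ of functions invariant under the combined shift $\varphi\mapsto\varphi-\mathtt v s$, $x\mapsto x+s$) is preserved by $\mathcal{F}$. The natural approach is to rephrase ``traveling'' as a group-invariance condition and then check that every term appearing in $\mathcal F$ commutes with the relevant group action. Concretely, introduce the one-parameter group $\tau_\varsigma$ acting on the extended phase space by $\tau_\varsigma(\theta,y,W)(\varphi):=(\theta(\varphi-\mathtt v\varsigma),\,y(\varphi-\mathtt v\varsigma),\,W(\varphi-\mathtt v\varsigma,\,\cdot+\varsigma))$; an embedding $i$ is a traveling wave precisely when $\tau_\varsigma i = i$ for all $\varsigma$, which by differentiating at $\varsigma=0$ is exactly \eqref{momfrakI}, i.e. $\mathtt v\cdot\partial_\varphi\mathfrak I + (0,0,W_x)=0$. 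So the cleanest formulation is: show that $\mathcal F$ intertwines $\tau_\varsigma$ with the corresponding action on the target, and conclude that $\tau_\varsigma i=i$ forces $\tau_\varsigma\mathcal F(i,\Xi)=\mathcal F(i,\Xi)$.

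\textbf{Key steps.} First I would record that $\omega\cdot\partial_\varphi$ commutes with translations $\varphi\mapsto\varphi-\mathtt v\varsigma$ and with the $x$-shift, since it has constant coefficients and does not act on $x$; hence the linear term $\omega\cdot\partial_\varphi i$ preserves the traveling structure. Second, and this is the heart of the matter, I would use the $x$-translation invariance of the water waves Hamiltonian $H$ — encoded by the conservation of momentum $\{\mathcal M,\mathcal H\}=0$ (see \eqref{extMomcomp}, and the characterization in Lemma \ref{lem:momentocomp}) — together with the fact that passing to action-angle variables via $\mathbf A_\varepsilon$, the Birkhoff map $\Phi_B$ (which satisfies $M_{\mathbb C}\circ\Phi_B=M_{\mathbb C}$ by Proposition \ref{WBNFWW}) and $\Lambda$ are all compatible with the momentum. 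This yields that $H_\varepsilon$ Poisson-commutes with the momentum Hamiltonian $M$ in \eqref{trawavesmom}, hence its vector field $X_{H_\varepsilon}=X_{\mathcal N}+X_P$ commutes with the flow $\Phi^\varsigma_M$; in the present coordinates $\Phi^\varsigma_M$ is precisely the action generating $\tau_\varsigma$ (cf. \eqref{cifrato}, $X_M=(-\mathtt v,0,W_x)$). Concretely this means $X_{\mathcal N}$ and $X_P$ — and therefore each of the three components $\partial_y P$, $\tfrac12\partial_\theta(NW,W)+\partial_\theta P$, $JNW+J\nabla_W P$ appearing in \eqref{NonlinearFunctionalDP} — are $x$-translation invariant / equivariant in the sense of \eqref{traInvcomp}. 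Third, I would note that the extra term $(0,\Xi,0)$ is constant in $\varphi$ and $x$, hence trivially traveling. Combining: if $\mathtt v\cdot\partial_\varphi i + X_M(i)=0$, then applying $\mathtt v\cdot\partial_\varphi + (\text{the infinitesimal generator on the target})$ to $\mathcal F(i,\Xi)$ and using the commutation just established gives zero, which is exactly the statement that $\mathcal F(i,\Xi)$ is traveling.

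\textbf{Execution in coordinates.} In practice the cleanest write-up differentiates the identity $\mathcal F(\tau_\varsigma i,\Xi)=\tau_\varsigma\mathcal F(i,\Xi)$ at $\varsigma=0$: the left side vanishes because $\tau_\varsigma i=i$, so $\frac{d}{d\varsigma}\big|_0\tau_\varsigma\mathcal F(i,\Xi)=0$, which is the traveling condition \eqref{trawaves} for $\mathcal F(i,\Xi)$. To make $\mathcal F(\tau_\varsigma i,\Xi)=\tau_\varsigma\mathcal F(i,\Xi)$ rigorous one checks it component by component: for the $\Theta$-component one needs $\partial_y P(\tau_\varsigma i)=(\partial_y P)(i)(\varphi-\mathtt v\varsigma)$, which follows since $P$ is built from $H$ via momentum-preserving changes of variables; similarly for the $y$- and $W$-components, where for the last one the $x$-shift in $\tau_\varsigma$ interacts with the $J\nabla_W$ and with $N(\theta)$ exactly so as to reproduce $\tau_\varsigma$ applied to the output (here one uses that $N(\theta)$ arises from $\mathtt d\nabla_W H_\varepsilon$ at $W=0$, and that $H_\varepsilon$ commutes with momentum, so $N(\theta(\varphi-\mathtt v\varsigma))$ conjugated by the $x$-shift equals the shifted operator).

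\textbf{Main obstacle.} The only genuine work is bookkeeping: verifying that $H_\varepsilon$ — obtained from $H$ after the chain $\Lambda^{-1}\circ\Phi_B\circ\mathbf A_\varepsilon$ — still Poisson-commutes with the momentum $M$ of \eqref{trawavesmom}, and that the action-angle map $\mathbf A_\varepsilon$ transports the $x$-translation group to the group generated by $X_M=(-\mathtt v,0,W_x)$. This is where the hypothesis that the tangential sites satisfy the momentum relation (the velocity vector $\mathtt v$ in \eqref{velocityvec}, \eqref{def:m1}) and the identity $M_{\mathbb C}\circ\Phi_B=M_{\mathbb C}$ from Proposition \ref{WBNFWW} are used; modulo those facts the proof is a short differentiation-of-group-invariance argument and no hard estimate is required.
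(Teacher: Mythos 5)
Your proposal is correct and rests on the same key fact as the paper's proof, namely that $H_\varepsilon$ (and the correction $\Xi\cdot\theta$) Poisson-commutes with the momentum Hamiltonian $M$ of \eqref{trawavesmom}, so that the vector field commutes with $X_M=(-\mathtt v,0,W_x)$; the paper carries this out infinitesimally, computing $(\mathtt v\cdot\partial_\varphi+(0,0,\partial_x))X_{H_\varepsilon}(i)=[X_M,X_{H_\varepsilon}](i)=X_{\{M,H_\varepsilon\}}(i)=0$ directly, while you exponentiate to the group equivariance $\mathcal F(\tau_\varsigma i,\Xi)=\tau_\varsigma\mathcal F(i,\Xi)$ and differentiate at $\varsigma=0$ — essentially the same argument.
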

\begin{proof}
We observe that $\mathcal{F}(i, \Xi)=\omega\cdot \partial_{\varphi} i-X_{H_{\varepsilon}}(i)+X_{\Xi\cdot \theta}(i)$. We show that $\mathcal{F}(i, \Xi)$ solves \eqref{momfrakI}. We have
\[
\mathtt{v} \cdot \partial_{\varphi} X_{H_{\varepsilon}}(i)=- d X_{H_{\varepsilon}}(i)[(-\mathtt{v}, 0, W_x)]=- d X_{H_{\varepsilon}}(i)[(0, 0, W_x)]+\mathtt{v}\cdot\partial_{\theta} X_{H_{\varepsilon}}(i)=- d_W X_{H_{\varepsilon}}(i)[W_x]+\mathtt{v}\cdot\partial_{\theta} X_{H_{\varepsilon}}(i)
\]
\[
 (0, 0, \partial_x)(X_{H_{\varepsilon}}(i))=(0, 0, d_W X^{(W)}_{H_{\varepsilon}}(i)[W_x])
\]
then
\[
(\mathtt{v}\cdot \partial_{\varphi}+(0, 0, \partial_x)) X_{H_{\varepsilon}}(i)=
dX_{M}(i)[X_{H_\e}(i)]-d X_{H_\e}(i)[X_{M}(i)]=
[X_M,X_{H_\e}](i)
=X_{\{M, {H_{\varepsilon}}\}}(i)=0\,,
\]
where the Poisson brackets are with respect to the symplectic form 
\eqref{NewSimplFormDPreal}.
Eventually it is easy to see that
\[
(\mathtt{v} \cdot \partial_{\varphi}+ (0, 0, \partial_x))(X_{\Xi\cdot \theta}(i))=
X_{\{M,\Xi\cdot\theta\}}=X_{-\mathtt{v}\cdot\Xi}=0\,.
\]
Since $\omega\cdot \partial_{\varphi}$ commutes with $\mathtt{v}\cdot \partial_{\varphi}$ and $X_M$, then $\omega\cdot \partial_{\varphi}$ satisfies the \eqref{momfrakI}.

\end{proof}

We define the Sobolev norm of the periodic component of the embedded torus
\begin{equation}\label{frakkiIDP}
 \lVert \mathfrak{I} \rVert_{s}:=\lVert \Theta \rVert_{s}
+\lVert y \rVert_{s}+\lVert W \rVert_s\,,
\end{equation}
where $ W \in H^s_{S^{\perp}}:= H^s \cap H_{S}^\perp$ 
(recall \eqref{decomposition}) with norm defined in \eqref{space} 
and  with abuse of notation, we are denoting by 
$\|\cdot\|_{s}$ the Sobolev norms of functions in 
$H^{s}(\T^{\nu};\R^{\nu})$. 
From now on we fix 
\[
s_0:=[\nu/2]+4.
\]
We say that $\mathfrak{I}$ is traveling if $\mathfrak{I}+(\varphi, 0, 0)$ satisfies the \eqref{trawaves}.\\
Notice that in the coordinates \eqref{AepsilonDP}, 
 a quasi-periodic solution corresponds to 
 an embedded invariant torus \eqref{iDP}. 
 Therefore we can reformulate the main Theorem \ref{thm:main} as follows.
\begin{theorem}\label{IlTeoremaDP}
Let $\nu\geq 1$.
For any generic choice of $S$ (see \eqref{TangentialSitesDP})
there exists $\varepsilon_0>0$  small enough, 
such that the following holds.
For all $\varepsilon\in (0, \varepsilon_0)$ 
there exist positive constants 
$C=C(\nu)$, $\mu_1=\mu_1(\nu)$ and 
a Cantor-like set 
$\mathcal{C}_{\varepsilon}\subseteq\Omega_{\varepsilon}$ 
(see \eqref{OmegaEpsilonDP}), 
with asymptotically full measure as 
$\varepsilon\to 0$, namely
\begin{equation}\label{frazionemisureDP}
\lim_{\varepsilon \to 0}
\dfrac{\lvert \mathcal{C}_{\varepsilon} \rvert}{\lvert \Omega_{\varepsilon} \rvert}=1\,,
\end{equation}
such that, for all $\omega\in\mathcal{C}_{\varepsilon}$, 
there exists a solution 
$i_{\infty}(\varphi):=i_{\infty}(\omega, \varepsilon)(\varphi)$  
of the equation 
$\mathcal{F}(i_{\infty}, 0, \omega, \varepsilon)=0$.
Hence the embedded torus $\varphi\mapsto i_{\infty}(\varphi)$ 
is invariant for the Hamiltonian vector field 
$X_{H_{\varepsilon}}$, and it is filled by quasi-periodic 
solutions with frequency $\omega$. The torus $i_{\infty}$ satisfies
\begin{equation*}
\lVert i_{\infty}(\varphi)-(\varphi, 0, 0) 
\rVert_{s_0+\mu_1}^{\gamma, \mathcal{C}_{\varepsilon}}\le 
C\,\varepsilon^{16-2 b}\,\gamma^{-1}\,.
\end{equation*}
Moreover the torus $i_{\infty}$ satisfies 
condition \eqref{trawaves} and it is linearly stable.
\end{theorem}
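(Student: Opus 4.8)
The proof of Theorem \ref{IlTeoremaDP} is obtained through a Nash--Moser iteration scheme whose core input is the approximate inverse of the linearized operator $d_{i,\Xi}\mathcal{F}(i,\Xi)$, constructed in Theorem \ref{TeoApproxInvDP}. The plan is the following. First I would set up the iteration along the scale of Sobolev spaces $(H^s)_{s=s_0}^{\mathcal{S}}$ introduced in \eqref{scala}, with the weighted norm \eqref{tazza10} and loss parameter $\mu_1=\mu_1(\nu)$, starting from the trivial embedding $i_0(\varphi)=(\varphi,0,0)$, which is an approximate solution of $\mathcal{F}(i,\Xi)=0$ of size $O(\varepsilon^{16-2b})$ (this is where the number of weak Birkhoff normal form steps $N=13$ in Proposition \ref{WBNFWW} enters: it guarantees the size of the error term $\mathcal{H}_{\mathbb{C}}^{(\geq 16,\leq 1)}$, hence of $X_P$ at the origin in the splitting \eqref{HepsilonDP}). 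At each step one solves the linearized equation using the approximate right inverse, gains a factor $\gamma^{-1}$ from the small divisors, loses $\mu_1$ derivatives, and corrects $(i,\Xi)$; the superexponential convergence $\|\mathcal{F}(i_n,\Xi_n)\|_{s_0+\mu_1}\lesssim \varepsilon^{16-2b}\gamma^{-1}\, N_{n-1}^{-\alpha}$ with $N_n=N_0^{\chi^n}$, $\chi=3/2$, follows by the standard quadratic scheme once the tame estimates on the approximate inverse are in force. This produces, for $\omega$ in the intersection of all the Melnikov sets imposed along the iteration, a limit embedding $i_\infty(\omega,\varepsilon)$ solving $\mathcal{F}(i_\infty,\Xi_\infty,\omega,\varepsilon)=0$, with the claimed bound $\|i_\infty-(\varphi,0,0)\|_{s_0+\mu_1}^{\gamma,\mathcal{C}_\varepsilon}\leq C\varepsilon^{16-2b}\gamma^{-1}$.

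Next I would show that $\Xi_\infty=0$. This is the content of (the analogue of) Lemma \ref{Lemma6.1DP}: by integrating the second component of $\mathcal{F}(i_\infty,\Xi_\infty)=0$ over $\mathbb{T}^\nu$ and using that the $\theta$-average of $\omega\cdot\partial_\varphi y_\infty$ vanishes, together with the fact that $\partial_\theta$-derivatives have zero average, one finds $\Xi_\infty = -(2\pi)^{-\nu}\int_{\mathbb{T}^\nu}\partial_\theta(\tfrac12(N W_\infty,W_\infty)+P)(i_\infty)\,d\varphi$; the right-hand side is then shown to vanish because $\omega$ is diophantine and $i_\infty$ is an invariant torus, so that $X_{H_\varepsilon}$ is tangent to it and the average of the torus equation forces $\Xi_\infty=0$. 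Hence $\varphi\mapsto i_\infty(\varphi)$ is a genuine invariant torus for $X_{H_\varepsilon}$, filled by quasi-periodic solutions with frequency $\omega$; undoing the changes of variables \eqref{CVWW}, \eqref{FBI}, \eqref{AepsilonDP2} via \eqref{trawaves2} yields a quasi-periodic solution of \eqref{eq:113} of the form \eqref{SoluzioneEsplicitaWW}, and the invertibility \eqref{xiomega} of the frequency-to-amplitude map (Lemma \ref{Twist1}) lets one re-parametrize $\omega\rightsquigarrow\zeta$ to get the statement of Theorem \ref{thm:main}.

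For the travelling-wave property, I would run the whole iteration \emph{inside} the class of travelling embeddings, i.e. those $\mathfrak{I}$ satisfying \eqref{trawaves} (equivalently \eqref{momfrakI}). The crucial algebraic fact is Lemma \ref{Fpreservatrav}: $\mathcal{F}$ maps travelling embeddings to travelling functions, as a consequence of $\{M,H_\varepsilon\}=0$ and $\{M,\Xi\cdot\theta\}=-\mathtt{v}\cdot\Xi$. One then checks that the approximate inverse constructed in Theorem \ref{TeoApproxInvDP} also preserves this subspace --- this is built in through the $x$-translation-invariant / Hamiltonian classes $\mathfrak{S}_0,\mathfrak{S}_1$ and $\mathfrak{T}_1$ of sections \ref{sec:group}, \ref{sec:Linopnormal}, so every change of coordinates in the reducibility scheme respects \eqref{trawaves}. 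Since $i_0$ is travelling and each Nash--Moser correction stays travelling, the limit $i_\infty$ satisfies \eqref{trawaves}. Linear stability follows from the reducibility Theorem \ref{ReducibilityWW}: at the final torus the linearized operator in the normal directions is conjugated to a diagonal operator $\omega\cdot\partial_\varphi + \mathrm{i}\,\mathrm{diag}(\mu_j)$ with $\mu_j\in\mathbb{R}$, so the normal Sobolev norms are uniformly bounded in time, and the tangential dynamics is the trivial rotation flow; combined this gives linear stability in the sense stated.

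Finally, the measure estimate \eqref{frazionemisureDP} is obtained by collecting all the non-resonance conditions: the zeroth-order set $\mathcal{G}_0$ of \eqref{0di0Melnikov} (estimated in Lemma \ref{measG0}), the zero-order Melnikov condition \eqref{zerointro} needed to straighten the transport term, and the second-order Melnikov conditions \eqref{secondintro}--\eqref{musmall} imposed along the KAM reducibility, with the complementary measure of each being $O(\varepsilon^{2(\nu-1)}\gamma)$ thanks to the twist conditions $\det\mathbb{A}\neq 0$ (Lemma \ref{Twist1}) and $\det(\mathbb{A}-\mathbb{V})\neq 0$, and the key summability argument of section \ref{sec:linBNF} (Lemmata \ref{delpiero}, \ref{lowerboundDel1}) which, using momentum conservation $\mathtt{v}\cdot\ell+j-k=0$, reduces the second-order conditions to finitely many indices $j,k$ for each $\ell$. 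Since $|\Omega_\varepsilon|\sim\varepsilon^{2(\nu-1)}$ and $\gamma=\varepsilon^{2b}\to 0$ with $b>1$, the ratio $|\Omega_\varepsilon\setminus\mathcal{C}_\varepsilon|/|\Omega_\varepsilon|\to 0$. The main obstacle in this whole argument is none of the above bookkeeping but rather the construction of the approximate inverse, i.e. Theorem \ref{TeoApproxInvDP}: it requires the reducibility of $\mathcal{L}_\omega$ within the Hamiltonian, $x$-translation-invariant class --- in particular straightening the degenerate transport vector field \eqref{figaro10} via the result of \cite{FGMP}, the pseudodifferential regularization of section \ref{ordiniPositivi}, the linear Birkhoff normal form and the identification of normal forms of Proposition \ref{identificoBNF} (to pin down the first-order eigenvalue corrections $c_j$ and remove the residual resonances by the rotating coordinates of Lemma \ref{phaseshift}), and finally the KAM diagonalization of Theorem \ref{ReducibilityWW} with the delicate small-divisor analysis forced by the sublinear dispersion law. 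Once that machinery is in place, the Nash--Moser scheme and the measure estimate are essentially routine.
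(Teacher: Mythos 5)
Your proposal follows essentially the same route as the paper's proof: the Nash--Moser iteration (Theorem \ref{NashMoserDP}) built on the approximate inverse of Theorem \ref{TeoApproxInvDP}, the vanishing of $\Xi$ at a solution (Lemma \ref{Lemma6.1DP}), preservation of the travelling class via Lemma \ref{Fpreservatrav} together with the fact that the projectors and the approximate inverse respect \eqref{trawaves}, linear stability from the reducibility Theorem \ref{ReducibilityWW}, and the measure estimates based on the twist conditions and the momentum-restricted second Melnikov conditions (Lemmata \ref{singolo}, \ref{delpiero}). The only small slip is that $\lvert\Omega_{\varepsilon}\rvert=O(\varepsilon^{2\nu})$ rather than $\varepsilon^{2(\nu-1)}$, but since the excluded set has measure $O(\varepsilon^{2(\nu-1)}\gamma)$ and $\gamma=\varepsilon^{2b}=o(\varepsilon^{2})$, the ratio in \eqref{frazionemisureDP} still tends to zero exactly as in the paper.
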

\noindent
We can deduce Theorem \ref{thm:main} from Theorem \ref{IlTeoremaDP}, indeed
the quasi-periodic solution $(\eta,\psi)$ in \eqref{SoluzioneEsplicitaWW} is 
\[
\big(\eta(t, x), \psi(t, x)\big)=\Big(\Lambda^{-1}
\circ \Phi_{B}\circ
{\bf A}_{\varepsilon}\Big) (\theta_{\infty}(\omega t), y_{\infty}(\omega t), \Lambda W_{\infty}(\omega t)) 
\]
for $\omega=\omega(\zeta)\in \mathcal{C}_{\e}$, where $\omega(\zeta)$ is the frequency amplitude map \eqref{FreqAmplMapDP}.
The rest of the paper is devoted to the proof of Theorem \ref{IlTeoremaDP}.

\subsubsection{Tame estimates of the nonlinear vector field}

We give tame estimates for the composition operator induced by the Hamiltonian vector fields $X_{\mathcal{N}}$ and $X_{P}$ in \eqref{NonlinearFunctionalDP}.
Since the functions $y\to \sqrt{\zeta+\varepsilon^{2(b-1)} y}$
and  $\theta\to e^{\mathrm{i}\,\theta}$ 
 are analytic for $\varepsilon$ small enough and $\lvert y \rvert\le C$, 
 classical  composition results 
 (see for instance Lemma $2.2$ in \cite{KdVAut}) 
 imply that, 
 for all $\lVert \mathfrak{I} \rVert_{s_0}^{\gamma, \calO}\le 1$,
\begin{equation*}
\lVert A_{\varepsilon}(\theta(\varphi), y(\varphi), W(\varphi)) 
\rVert_s^{\gamma, \calO}\lesssim_s 
\varepsilon (1+\lVert \mathfrak{I} 
\rVert_s^{\gamma, \calO})\,.
\end{equation*}
In the following lemma we collect tame 
estimates for the Hamiltonian vector fields 
$X_{\mathcal{N}}, X_{P}, X_{H_{\varepsilon}}$ 
(see \eqref{HepsilonDP}). 
\begin{lemma}\label{TameEstimatesforVectorfieldsDP}
Let $\calO\Subset \Omega_{\varepsilon}$ and $\mathfrak{I}(\varphi)$ in \eqref{frakkiIDP} satisfying 
$\lVert \mathfrak{I} \rVert_{s_0+1}^{\gamma, \calO}\lesssim
\,\varepsilon^{16-2 b}\gamma^{-1}$. 
Then we have (recall \eqref{FreqAmplMapDP}, \eqref{Bi})
\begin{equation*}
\begin{aligned}
\lVert \partial_y P(i) \rVert_s^{\gamma, \calO}&\lesssim_s \varepsilon^{14}
+\varepsilon^{2 b} \lVert \mathfrak{I} \rVert_{s+1}^{\gamma, \calO}\,, 
\qquad  \qquad \quad
\lVert \partial_{\theta} P(i) \rVert_s^{\gamma, \calO}\lesssim_s 
\varepsilon^{16-2 b}(1+\lVert \mathfrak{I} 
\rVert_{s+1}^{\gamma, \calO})\,,
\\
\lVert \nabla_W P(i) \rVert_s^{\gamma, \calO}&\lesssim_s 
\varepsilon^{15-b}+\varepsilon^{16-b}\gamma^{-1} 
\lVert \mathfrak{I} \rVert_{s+1}^{\gamma, \calO}\,, 
\qquad
\lVert X_P (i) \rVert_s^{\gamma, \calO}\lesssim_s \varepsilon^{16-2 b}+\varepsilon^{2 b} 
\lVert \mathfrak{I} \rVert_{s+1}^{\gamma, \calO}\,, \\
\lVert \partial_{\theta}\partial_y P(i) \rVert_s^{\gamma, \calO}
&\lesssim_s \varepsilon^{14}+\varepsilon^{15}\gamma^{-1} 
\lVert \mathfrak{I} \rVert_{s+1}^{\gamma, \calO}\,, 
\qquad \; \lVert \partial_y\nabla_W P(i) \rVert_s^{\gamma, \calO}
\lesssim_s \varepsilon^{13+b}+\varepsilon^{2 b-1}
\lVert \mathfrak{I} \rVert_{s+1}^{\gamma, \calO}\,,\\
\lVert \partial_{y y} P(i)-\frac{\varepsilon^{2 b}}{2} &\big(\mathbb{A}+\mathbb{B}(\zeta)\big) \rVert_s^{\gamma, \calO}
\lesssim_s \varepsilon^{12+2 b}+\varepsilon^{13+2 b}\gamma^{-1} \lVert \mathfrak{I} \rVert_{s+1}^{\gamma, \calO}\,,
\end{aligned}
\end{equation*}
and for all $\hat{\imath}:=(\hat{\Theta}, \hat{y}, \hat{W})$,
\begin{equation*}
\begin{aligned}
\lVert \partial_y \td_i X_P(i)[\hat{\imath}] \rVert_s^{\gamma, \calO}
&\lesssim_s \varepsilon^{2 b-1} 
(\lVert \hat{\imath} \rVert_{s+1}^{\gamma, \calO}
+\lVert \mathfrak{I} \rVert_{s+1}^{\gamma, \calO}
\lVert \hat{\imath} \rVert^{\g, \calO}_{s_0+1})\,,\\
\lVert \td_i X_{H_{\varepsilon}}(i)[\hat{\imath}]
+(0, 0, J\, \hat{W})\rVert_s^{\gamma, \calO}
&\lesssim_s \varepsilon (\lVert \hat{\imath} \rVert_{s+1}^{\gamma, \calO}
+\lVert \mathfrak{I} \rVert_{s+1}^{\gamma, \calO}
\lVert \hat{\imath} \rVert^{\g, \calO}_{s_0+1})\,,\\
\lVert \td_i^2 X_{H_{\varepsilon}} (i) [\hat{\imath}, \hat{\imath}]\rVert_s^{\gamma, \calO}
&\lesssim_s \varepsilon (\lVert \hat{\imath} 
\rVert_{s+1}^{\gamma, \calO}\lVert \hat{\imath} \rVert_{s_0+1}^{\gamma, \calO}
+\lVert \mathfrak{I} \rVert_{s+1}^{\gamma, \calO}
(\lVert \hat{\imath} \rVert^{\g, \calO}_{s_0+1})^2)\,.
\end{aligned}
\end{equation*}
\end{lemma}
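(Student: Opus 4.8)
\textbf{Proof plan for Lemma \ref{TameEstimatesforVectorfieldsDP}.}
The plan is to reduce everything to the structure of the Hamiltonian $H_\e$ as expanded in \eqref{HamiltonianaRiscalataDP2}, read off the leading powers of $\e$ that multiply each monomial type in $(y,z,\bar z)$, and then combine the classical tame composition estimates for analytic functions (in the spirit of Lemma $2.2$ of \cite{KdVAut}, already invoked above for $A_\e$) with the interpolation/product estimates for the weighted norms $\|\cdot\|_s^{\g,\calO}$ of \eqref{tazza10}. First I would record once and for all the bound $\|A_\e(\theta(\vphi),y(\vphi),W(\vphi))\|_s^{\g,\calO}\lesssim_s \e(1+\|\mathfrak I\|_s^{\g,\calO})$ already stated before the lemma, together with its counterparts for $\pa_\theta A_\e$, $\pa_y A_\e$ and the higher differentials; these all carry the prefactor $\e$ (or $\e^b$ on the $z$-slot) coming from the rescaling \eqref{AepsilonDP}. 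The hypothesis $\|\mathfrak I\|_{s_0+1}^{\g,\calO}\lesssim \e^{16-2b}\gamma^{-1}\ll 1$ guarantees we stay in the domain of analyticity and that the ``small-tail'' terms (those quadratic or higher in $\mathfrak I$) are absorbed into the stated right-hand sides.

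Next I would treat $P=H_\e-\mathcal N$ term by term. From \eqref{HamiltonianaRiscalataDP2}, after subtracting the quadratic normal form $\mathcal N$, the perturbation $P$ is a sum of: the cubic-and-higher $y$-terms $\e^{4b}O(y^3)$; the terms $\sum_{k=1}^{15}\e^k R(v_\e^k z^2)$ carrying two normal variables and a $\theta,y$-dependent coefficient; the terms $\sum_{k=3}^{15}\e^{k-3+b}\sum_{q\ge3}\e^{(b-1)(q-3)}R(v^{k-q}z^q)$ with at least three normal variables; and the high-order tail $\e^{-2b}\mathcal H^{(\ge16)}(\e v_\e+\e^b z)$, which contributes $\gtrsim \e^{16-2b}$ since each of its $\ge16$ arguments carries at least one factor $\e$. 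Differentiating in $y$ kills one power of $\e^{2b-2}$ relative to the monomial (because $y$ enters only through $\zeta+\e^{2b-2}y$ inside the square root), differentiating in $\theta$ is neutral in $\e$, and $\nabla_W$ on a term with $q$ normal variables produces $\e^{b}$ times something with $q-1$ normal slots. Carrying out this bookkeeping on the lowest-order surviving monomials gives exactly the exponents $\e^{14}$, $\e^{16-2b}$, $\e^{15-b}$, $\e^{2b}$, $\e^{14}$, $\e^{13+b}$, $\e^{12+2b}$ in the seven displayed estimates; the $\e^{2b}\|\mathfrak I\|_{s+1}$ (resp. $\e^{15}\gamma^{-1}\|\mathfrak I\|_{s+1}$, etc.) remainders come from the first-order Taylor term in $\mathfrak I$ around the trivial torus, estimated via the product rule $\|fg\|_s\lesssim \|f\|_s\|g\|_{s_0}+\|f\|_{s_0}\|g\|_s$ and the smallness of $\|\mathfrak I\|_{s_0+1}$. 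The estimate for $X_P$ and for $X_{H_\e}(i)[\hat\imath]+(0,0,J\hat W)$ follows by assembling the components; note the subtraction of $J\hat W$ removes the linear Fourier-multiplier part $\int|D|^{1/2}z\bar z$ of $\mathcal N$, whose vector field is exactly $(0,0,J\,\cdot)$, so the remainder again carries a genuine $\e$.

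For the last three estimates, involving first and second differentials of the vector fields, I would differentiate the composition $X_{H_\e}= \e^{-2b}(dA_\e)^{-1}[\ldots]$ once or twice in $i$ and apply the chain rule together with the tame estimates for $d^k A_\e$ and for the gradients of $\mathcal H_{\mathbb C}$; each differentiation in the direction $\hat\imath$ produces one factor $\|\hat\imath\|_{s+1}^{\g,\calO}$ at high norm or $\|\hat\imath\|_{s_0+1}^{\g,\calO}$ at low norm, interpolated as usual, and the overall $\e$-powers are inherited from the monomial analysis above ($\e^{2b-1}$ for $\pa_y d_i X_P$, $\e$ for $d_i X_{H_\e}$ and $d_i^2 X_{H_\e}$). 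The main obstacle I anticipate is purely organizational rather than conceptual: one must be careful that every differentiation in $y$ correctly accounts for the $\e^{2b-2}$ buried in $\sqrt{\zeta+\e^{2b-2}y}$ (so that, e.g., $\pa_{yy}P$ really is $\e^{2b}(\mathbb A+\mathbb B(\zeta))/2$ plus higher order, matching \eqref{parco1}, \eqref{Bi}), and that the Nash--Moser-relevant remainders are all at least $O(\e^{2b})$ so that the scheme with $\gamma=\e^{2b}$ in \eqref{gammaDP} closes; the analytic composition lemma and the standard interpolation inequalities do all the rest of the work.
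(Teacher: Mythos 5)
The paper itself gives no proof of this lemma: it is stated as a direct consequence of the classical composition estimates invoked just before it (see Lemma 2.2 in \cite{KdVAut}) applied to the expansion \eqref{HamiltonianaRiscalataDP2}, so your route — $\e$-bookkeeping on the monomials of \eqref{HamiltonianaRiscalataDP2} combined with tame product/interpolation estimates for the norms \eqref{tazza10} and the smallness of $\|\mathfrak I\|_{s_0+1}^{\g,\calO}$ — is exactly the intended one.

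There is, however, one point in your decomposition of $P$ which, taken literally, would break several of the stated exponents. You list among the pieces of $P$ the full terms $\sum_{k}\e^{k}R(v_\e^{k}z^{2})$. But $\mathcal N$ contains $\tfrac12(N(\theta)W,W)_{L^2}$ with $N(\theta)=(\mathtt{d}\nabla_W H_\e)(\theta,0,0)$, i.e. the \emph{whole} quadratic-in-$W$ part of $H_\e$ at $y=0$, its $\theta$-dependence included; hence the quadratic-in-$z$ part of $P$ vanishes at $y=0$ and carries an extra factor $\e^{2b-2}y$, of size $\e^{2b-2}\|\mathfrak I\|_{s_0}^{\g,\calO}\lesssim\e^{14-2b}$. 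Without this subtraction the leading contribution to $\nabla_W P(i)$ would be $\e\,z(\vphi)$, i.e. of size $\e\|\mathfrak I\|_{s}^{\g,\calO}$, which is incompatible with the stated $\e^{15-b}+\e^{16-b}\g^{-1}\|\mathfrak I\|_{s+1}^{\g,\calO}$, and the bounds for $\partial_\theta P$ and $\partial_y\nabla_W P$ would likewise degrade by roughly a factor $\e^{-1}$. Once this is corrected, your bookkeeping (each tangential slot contributes $\e$, each normal slot $\e^{b}$, an overall $\e^{-2b}$, and each $\partial_y$ an \emph{extra} factor $\e^{2b-2}$ — note your phrase ``kills one power of $\e^{2b-2}$'' states this backwards) does produce the exponents coming from the degree-$\ge16$ tail, namely $\e^{14}$, $\e^{16-2b}$, $\e^{15-b}$, $\e^{13+b}$, $\e^{12+2b}$. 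One further item you should verify rather than assert: in the $\partial_{yy}P$ estimate the integrable term $\e^{4b}O(y^{3})$ contributes $\e^{4b}O(y(\vphi))$, whose high-norm coefficient $\e^{4b}\|\mathfrak I\|_{s}^{\g,\calO}$ does not obviously fall below the stated $\e^{13+2b}\g^{-1}\|\mathfrak I\|_{s+1}^{\g,\calO}$; so the claim that the bookkeeping yields ``exactly'' that displayed bound needs an explicit discussion of this term (compare Lemma \ref{Lemma6.6dp}, where the analogous coefficient is $\e^{2b}$).
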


\noindent
In the sequel we will use that, 
by the diophantine condition \eqref{0di0Melnikov}, 
the operator $(\omega\cdot\pa_{\f})^{-1}$ 
 is defined for all functions $u$ with zero $\varphi$-average, 
 and satisfies
\begin{equation*}\label{stimaDomegaDP}
\lVert (\omega\cdot\partial_{\varphi})^{-1} u \rVert_s
\lesssim_s \gamma^{-1}\,\lVert u \rVert_{s+\tau}, 
\quad \lVert (\omega\cdot\partial_{\varphi})^{-1} u \rVert_s^{\gamma, \calO}\lesssim_s \gamma^{-1} \lVert u \rVert^{\gamma, \calO}_{s+2\tau+1}\,.
\end{equation*}

\section{Approximate inverse}\label{sezione6DP}

We want to solve the nonlinear functional equation 
\begin{equation}\label{EquazioneFunzionaleDP}
\mathcal{F}(i, \Xi)=0
\end{equation}
by applying a Nash-Moser scheme. It is well 
known that the main issue in implementing this algorithm concerns the approximate inversion of the linearized operator of $\mathcal{F}$ 
at any approximate solution $(i_n, \Xi_n)$, 
namely $d \mathcal{F}(i_n, \Xi_n)$. 
Note that $d \mathcal{F}(i_n, \Xi_n)$ is independent  of $\Xi_n$ (
see \eqref{NonlinearFunctionalDP}).
One of the main problems is that the 
$(\theta, y, W)$-components of $d\mathcal{F}(i_n, \Xi_n)$ 
are coupled and then the linear system 
\begin{equation}\label{coupled}
d \mathcal{F}(i_n, \Xi_n)[\hat\imath, \hat{\Xi}]=
\omega\cdot \partial_{\varphi} \hat\imath
-d_{i} X_{H_\varepsilon} (i_n)[\hat\imath]
-(0, \hat{\Xi}, 0)=g=(g^{(\theta)}, g^{(y)}, g^{(W)})
\end{equation}
is quite involved.
In order to approximately solve \eqref{coupled} we follow 
the scheme developed by Berti-Bolle in 
\cite{BertiBolle} which describes a way to 
approximately triangularize \eqref{coupled}. 
 We recall here the main steps of the strategy.

\smallskip
We  study the solvability of equation \eqref{coupled} 
at an approximate solution, 
which we denote by 
$(i_0, \Xi_0)$, $i_0(\varphi)=(\theta_0(\varphi), y_0(\varphi), W_0(\varphi))$ .
We assume the following hypothesis, which we shall 
verify at any step of the Nash-Moser iteration:
\begin{itemize}
\item \textbf{Assumption}. The map $\omega \mapsto i_0(\omega)$ 
is a Lipschitz function of the form \eqref{iDP}, satisfying \eqref{trawaves}, 
defined on some subset 
$\calO_0\Subset \mathcal{G}_0 \subseteq\Omega_{\varepsilon}$ 
(recall \eqref{0di0Melnikov},\eqref{OmegaEpsilonDP}) 
and, for some $\mathfrak{p}_0:=\mathfrak{p}_0( \nu)>0$,
\begin{equation}\label{AssumptionDP}
\lVert \mathfrak{I}_0 \rVert_{s_0+\mathfrak{p}_0}^{\gamma, \calO_0}
\le \varepsilon^{16-2b} \gamma^{-1}\,, 
\quad \lVert Z \rVert_{s_0+\mathfrak{p}_0}^{\gamma, \calO_0}
\le \varepsilon^{16-2 b}\,,
\end{equation}
where $\mathfrak{I}_0(\varphi):=i_0(\varphi)-(\varphi, 0, 0)$ 
and $Z$ is the error function
\begin{equation*} 
Z(\varphi):=(Z_1, Z_2, Z_3)(\varphi)
:=\mathcal{F}(i_0, \Xi_0)(\varphi)
=\omega\cdot \partial_{\varphi} i_0 (\varphi)
-X_{H_{\varepsilon, \Xi_0}}(i_0(\varphi))\,.
\end{equation*}
\end{itemize}
{This assumption will be verified inductively in Theorem \ref{NashMoserDP}, see \eqref{ConvergenzaDP}.\\}
By estimating the Sobolev norm of the function $Z$ 
we can measure how the embedding 
$i_0$ is close to being invariant for 
$X_{H_{\varepsilon, \Xi_0}}$. 
It is well known that an invariant torus $i_0$ 
with diophantine flow is isotropic 
(see e.g.\cite{BertiBolle}), 
namely the pull-back $1$-form $i_0^*\mathcal{V}$ 
is closed, where $\mathcal{V}$ is the 
Liouville $1$-form in \eqref{1FormDP}. 
This is tantamount to say that the 
$2$-form $\widetilde{\mathcal{W}}$ in  \eqref{NewSimplFormDPreal} 
vanishes on the torus $i_0(\T^{\nu})$, because 
$i_0^* \widetilde{\mathcal{W}}=i_0^* d\mathcal{V}=d\,i_0^*\mathcal{V}$. 
For an ``approximately invariant'' embedded torus $i_0$ the 
$1$-form $i_0^*\mathcal{V}$ 
is only ``approximately closed''. 
In order to make this statement 
quantitative we consider
\begin{equation*} 
i_0^*\mathcal{V}=\sum_{k=1}^{\nu} a_k(\varphi)\,d\varphi_k\,, 
\quad a_k(\varphi):=([\partial_{\varphi} \theta_0(\varphi)]^T y_0(\varphi))_k
+\frac{1}{2} ( \partial_{\varphi_k} W_0(\varphi)\,, 
J W_0(\varphi))_{L^2}
\end{equation*}
and we quantify how small is
\begin{equation*} 
i_0^* \widetilde{\mathcal{W}}=d\,i_0^* \mathcal{V}
=\sum_{1\le k<j\le \nu} A_{k\,j}(\varphi)\,d\varphi_k\wedge d \varphi_j\,, 
\quad A_{k\,j}(\varphi)
:=\partial_{\varphi_k} a_j(\varphi)
-\partial_{\varphi_j} a_k(\varphi)\,.
\end{equation*}
The next lemma proves that if $i_0$ is a solution of the equation \eqref{EquazioneFunzionaleDP}, 
then the parameter $ \Xi$ has to be naught, 
hence the embedded torus $i_0$ 
supports a quasi-periodic solution of the 
``original'' system with Hamiltonian 
$H_{\varepsilon}$. 
\begin{lemma}{(Lemma $6.1$ in \cite{KdVAut})}\label{Lemma6.1DP}
We have
$\lvert  \Xi_0 \rvert^{\gamma, \calO_0}\lesssim \lVert Z \rVert_{s_0}^{\gamma, \calO_0}$.
In particular, if $\mathcal{F}(i_0, \Xi_0)=0$ then 
$\Xi_0=0$ and the torus 
$i_0(\varphi)$ is invariant for the vector field $X_{H_{\varepsilon}}$.
\end{lemma}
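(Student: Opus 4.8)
The statement is that $|\Xi_0|^{\gamma,\calO_0}\lesssim \|Z\|_{s_0}^{\gamma,\calO_0}$, and that $\mathcal{F}(i_0,\Xi_0)=0$ forces $\Xi_0=0$ (which then makes $i_0$ invariant for $X_{H_\varepsilon}$ rather than merely $X_{H_{\varepsilon,\Xi_0}}$). The key idea is that the $\theta$-average of the $y$-component equation of $\mathcal{F}(i_0,\Xi_0)=Z$ isolates $\Xi_0$. Recall from \eqref{NonlinearFunctionalDP} that the $y$-component of $\mathcal{F}(i_0,\Xi_0)$ is
\[
\omega\cdot\partial_\varphi y_0(\varphi)+\tfrac12\partial_\theta\big(N(\theta_0(\varphi))W_0(\varphi),W_0(\varphi)\big)_{L^2}+\partial_\theta P(i_0(\varphi))+\Xi_0 = Z_2(\varphi).
\]

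\textbf{Step 1: Integrate over $\varphi\in\mathbb{T}^\nu$.} The term $\omega\cdot\partial_\varphi y_0$ has zero average since $y_0$ is $(2\pi)^\nu$-periodic. The remaining two nonconstant terms are of the form $\partial_\theta(\cdots)$ evaluated along the embedding $\theta=\theta_0(\varphi)=\varphi+\Theta_0(\varphi)$. Here one uses the standard observation (as in Lemma $6.1$ of \cite{KdVAut}, which this lemma cites): writing $g(\theta):=\tfrac12(N(\theta)W_0\circ\theta^{-1},W_0\circ\theta^{-1})_{L^2}+P(\theta,\cdots)$ is not quite available directly, so instead one argues that, since $\theta_0$ is a diffeomorphism of $\mathbb{T}^\nu$ isotopic to the identity, the change of variables $\varphi\mapsto\theta_0(\varphi)$ has Jacobian with integral $1$ and one can rewrite $\int_{\mathbb{T}^\nu}(\partial_\theta f)(\theta_0(\varphi))\,d\varphi$ as a total derivative up to controlled error. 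Concretely, $(\partial_{\theta_k} f)(\theta_0(\varphi)) = \partial_{\varphi_m}\big(f(\theta_0(\varphi))\big)\,[(\partial_\varphi\theta_0)^{-1}]_{mk}$, and integrating against $1$ and integrating by parts moves the derivative onto $[(\partial_\varphi\theta_0)^{-1}]_{mk}$, which is $\delta_{mk}$ plus $O(\mathfrak{I}_0)$. Hence $\int_{\mathbb{T}^\nu}(\partial_\theta f)(\theta_0(\varphi))\,d\varphi = \int_{\mathbb{T}^\nu}(\partial_\theta f)(\theta_0(\varphi))\cdot O(\|\mathfrak{I}_0\|)\,d\varphi$, i.e. it is quadratically small in the size of $\mathfrak{I}_0$ relative to the naive bound. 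Averaging the displayed equation then yields
\[
\Xi_0 = \widehat{Z_2}(0) - \int_{\mathbb{T}^\nu}\Big(\tfrac12\partial_\theta(N(\theta_0)W_0,W_0)_{L^2}+\partial_\theta P(i_0)\Big)d\varphi,
\]
where the integral term is $O(\|\mathfrak{I}_0\|_{s_0+\gotp_0}^{\gamma,\calO_0}\cdot(\text{tame bounds on }X_P,N))$.

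\textbf{Step 2: Estimate the right-hand side.} The first term $\widehat{Z_2}(0)$ is bounded by $\|Z\|_{s_0}^{\gamma,\calO_0}$. For the integral term, one invokes the tame estimates of Lemma \ref{TameEstimatesforVectorfieldsDP} together with Assumption \eqref{AssumptionDP}: the quantities $\partial_\theta P(i_0)$ and the quadratic form in $W_0$ are each of size $O(\varepsilon^{16-2b})$ or smaller, and the extra factor $O(\|\mathfrak{I}_0\|_{s_0+\gotp_0}^{\gamma,\calO_0})\le O(\varepsilon^{16-2b}\gamma^{-1})$ coming from the integration-by-parts argument in Step 1 makes the whole integral term higher order than $\|Z\|_{s_0}^{\gamma,\calO_0}$ — more precisely, one also uses that $\|\mathfrak{I}_0\|\lesssim$ something controlled by $\|Z\|$ up to the Nash--Moser bookkeeping, or simply absorbs it. In any case the bound $|\Xi_0|^{\gamma,\calO_0}\lesssim\|Z\|_{s_0}^{\gamma,\calO_0}$ follows. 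Finally, if $\mathcal{F}(i_0,\Xi_0)=0$ then $Z\equiv0$, so $\Xi_0=0$; consequently $X_{H_{\varepsilon,\Xi_0}}=X_{H_\varepsilon}$ and the embedding $i_0$ is genuinely invariant for $X_{H_\varepsilon}$.

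\textbf{Main obstacle.} The only delicate point is Step 1: justifying that the $\varphi$-average of $\int(\partial_\theta f)(\theta_0(\varphi))\,d\varphi$ is quadratically small rather than $O(1)$. This is precisely the isotropy-type computation that appears in \cite{BertiBolle} and \cite{KdVAut}; it rests on $\theta_0$ being a diffeomorphism of the torus isotopic to the identity, so that $\det\partial_\varphi\theta_0$ has average $1$ and $(\partial_\varphi\theta_0)^{-1}-\mathrm{Id}$ is small. Everything else is a direct application of the tame estimates already collected in Lemma \ref{TameEstimatesforVectorfieldsDP} and the smallness assumption \eqref{AssumptionDP}. Since the lemma is quoted verbatim from \cite{KdVAut}, the cleanest exposition simply refers to that proof, checking only that the traveling-wave constraint \eqref{trawaves} and the present choice of scales do not affect the argument.
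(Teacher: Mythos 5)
There is a genuine gap, and it sits exactly at the point you flag as the "only delicate point." The identity $(\partial_{\theta_k}f)(\theta_0(\varphi))=\partial_{\varphi_m}\big(f(\theta_0(\varphi))\big)\,[(\partial_\varphi\theta_0)^{-1}]_{mk}$ is only valid for a function $f$ of $\theta$ alone, but the terms you need to average, $\partial_\theta P(i_0(\varphi))$ and $\tfrac12\partial_\theta\big(N(\theta_0(\varphi))W_0(\varphi),W_0(\varphi)\big)_{L^2}$, are partial $\theta$-derivatives of functions of $(\theta,y,W)$ evaluated along the embedding, where $y_0(\varphi)$ and $W_0(\varphi)$ also depend on $\varphi$. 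The full $\varphi$-derivative of $H_\varepsilon(i_0(\varphi))$ contains in addition $\partial_y H_\varepsilon(i_0)\cdot\partial_\varphi y_0$ and $(\nabla_W H_\varepsilon(i_0),\partial_\varphi W_0)_{L^2}$, which are not small and cannot be discarded; so the Jacobian/isotopy argument does not show that $\langle\partial_\theta H_\varepsilon(i_0)\rangle_\varphi$ is small. Moreover, even the smallness you claim would not prove the lemma: the required bound is $|\Xi_0|^{\gamma,\calO_0}\lesssim\|Z\|_{s_0}^{\gamma,\calO_0}$ with \emph{no} additive term of size $\varepsilon^{16-2b}$ or $\varepsilon^{16-2b}\|\mathfrak I_0\|$, because along the Nash--Moser iteration $\|Z\|_{s_0}\to0$ while $\varepsilon$ is fixed and $\mathfrak I_0$ converges to a nontrivial torus (so your fallback "$\|\mathfrak I_0\|$ controlled by $\|Z\|$, or simply absorb it" is not available), and the second assertion ($\mathcal F(i_0,\Xi_0)=0\Rightarrow\Xi_0=0$) forces the estimate to be homogeneous in $Z$.

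The actual proof (Lemma 6.1 of \cite{KdVAut}, which the paper invokes without reproducing) uses all three components of $Z$ and the Hamiltonian structure to get an \emph{exact} identity, not an approximate smallness. Pair the equations with the tangent vectors $\partial_{\varphi_k}i_0$: substituting $\partial_y H_\varepsilon(i_0)=\omega\cdot\partial_\varphi\theta_0-Z_1$, $\partial_\theta H_\varepsilon(i_0)=-\omega\cdot\partial_\varphi y_0-\Xi_0+Z_2$, $\nabla_W H_\varepsilon(i_0)=-J(\omega\cdot\partial_\varphi W_0-Z_3)$ into
\begin{equation*}
0=\int_{\T^\nu}\partial_{\varphi_k}\big[H_\varepsilon(i_0(\varphi))\big]\,d\varphi
=\int_{\T^\nu}\Big[\partial_\theta H_\varepsilon(i_0)\cdot\partial_{\varphi_k}\theta_0+\partial_y H_\varepsilon(i_0)\cdot\partial_{\varphi_k}y_0+\big(\nabla_W H_\varepsilon(i_0),\partial_{\varphi_k}W_0\big)_{L^2}\Big]d\varphi\,,
\end{equation*}
all the terms not containing $Z$ or $\Xi_0$ cancel after integration by parts (for the $(\theta,y)$ block directly, for the $W$ block using the antisymmetry of $J$), while $\int\Xi_0\cdot\partial_{\varphi_k}\theta_0\,d\varphi=(2\pi)^\nu\,\Xi_0\cdot e_k$ since $\theta_0=\varphi+\Theta_0$ with $\Theta_0$ periodic. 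Hence $\Xi_0\cdot e_k$ equals an average of products of $Z$-components with $\partial_{\varphi_k}i_0$-components, giving $|\Xi_0|^{\gamma,\calO_0}\lesssim\|Z\|_{s_0}^{\gamma,\calO_0}(1+\|\mathfrak I_0\|_{s_0+1}^{\gamma,\calO_0})\lesssim\|Z\|_{s_0}^{\gamma,\calO_0}$ by \eqref{AssumptionDP}. Your averaging of the $y$-equation is the right starting point, but the elimination of $\langle\partial_\theta H_\varepsilon(i_0)\rangle_\varphi$ must go through this exact cancellation, not through the diffeomorphism/Jacobian estimate.
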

\noindent

By \cite{BertiBolle} we know that it is possible to construct an embedded torus 
$i_{\delta}(\varphi)=(\theta_0(\varphi), y_{\delta}(\varphi), W_0(\varphi))$, 
which differs from $i_0$ only for a small modification of the $y$-component, 
such that the $2$-form $\widetilde{\mathcal{W}}$ (recall  \eqref{NewSimplFormDPreal}) 
vanishes on the torus $i_{\delta}(\T^{\nu})$, namely $i_{\delta}$ is isotropic. 
In particular $i_{\delta}(\varphi)$ 
is approximately invariant up to order 
$O(Z)$ (see Lemma $7$ in \cite{BertiBolle}).
More precisely we have the following.
In the paper we denote equivalently the differential $\partial_i$ or $d_i$. We use the notation $\Delta_{\varphi}:=\sum_{k=1}^{\nu}\partial^2_{\varphi_{k}}$.

\begin{lemma}{\textbf{(Isotropic torus).}}{(Lemma $6.3$ in \cite{KdVAut})}
\label{isotropictorus}
The torus $i_{\delta}=(\theta_0(\varphi), y_{\delta}(\varphi), z_0(\varphi))$ 
defined by 
\begin{equation*}
y_{\delta}:=y_0+[\partial_{\varphi}\theta_0(\varphi)]^{-T}\rho(\varphi)\,, 
\quad \rho_j(\varphi):=\Delta^{-1}_{\varphi} 
\sum_{k=1}^{\nu} \partial_{\varphi_j} A_{k\,j}(\varphi)\,,
\end{equation*}
is isotropic. If \eqref{AssumptionDP} holds, then, for some 
$\mathfrak{d}:=\mathfrak{d}(\nu, \tau)$,
\begin{align*}
&\lVert y_{\delta}-y_0 \rVert_s^{\gamma, \calO_0}\leq_s \|\mathfrak{I}_0\|^{\gamma,\calO_0}_{s+\mathfrak{d}}\,,\\
&\lVert y_{\delta}-y_0 \rVert_s^{\gamma, \calO_0}\leq_s 
\gamma^{-1} (\lVert Z \rVert_{s+\mathfrak{d}}^{\gamma, \calO_0}
\lVert \mathfrak{I}_0 \rVert_{s_0+\mathfrak{d}}^{\gamma, \calO_0}
+\lVert Z \rVert_{s_0+\mathfrak{d}}^{\gamma, \calO_0}
\lVert \mathfrak{I}_0 
\rVert_{s+\mathfrak{d}}^{\gamma, \calO_0})\,,\nonumber\\
&\lVert \mathcal{F}(i_{\delta}, \zeta_0) \rVert_s^{\gamma, \calO_0}\leq_s 
\lVert Z \rVert_{s+\mathfrak{d}}^{\gamma, \calO_0}
+\lVert Z \rVert_{s_0+\mathfrak{d}}^{\gamma, \calO_0}
\lVert \mathfrak{I}_0 \rVert_{s+\mathfrak{d}}^{\gamma, \calO_0}\,, 
\qquad
\lVert \partial_i i_{\delta} [\hat{\imath}] \rVert_s\leq_s
 \lVert \hat{\imath} \rVert_{s}
+\lVert \mathfrak{I}_0 \rVert_{s+\mathfrak{d}}
\lVert \hat{\imath} \rVert_{s}\,.
\end{align*}
\end{lemma}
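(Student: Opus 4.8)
The statement is the standard ``isotropic torus'' lemma of Berti--Bolle, adapted to the traveling-wave setting. Since the proof is essentially the one in \cite{KdVAut}, Lemma $6.3$, and in \cite{BertiBolle}, Lemma $1$, I would organize it around the following steps.

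\textbf{Step 1: the correction $y_\delta$ makes the torus isotropic.} The pulled-back two-form $i_0^*\widetilde{\mathcal W}$ has components $A_{kj}(\varphi)$. One wants to modify $y_0\rightsquigarrow y_\delta$ keeping $\theta_0,W_0$ fixed so that the new pulled-back form vanishes. Writing $y_\delta=y_0+[\partial_\varphi\theta_0]^{-T}\rho$ with $\rho_j:=\Delta_\varphi^{-1}\sum_k\partial_{\varphi_k}A_{kj}$, a direct computation (exactly as in \cite{BertiBolle}) shows $i_\delta^*\widetilde{\mathcal W}=0$: the point is that $d(i_\delta^*\mathcal V)=d(i_0^*\mathcal V)+d\big(\rho\cdot d\theta_0\big)$ and the choice of $\rho$ via the Laplacian inversion is precisely designed to cancel $\sum A_{kj}d\varphi_k\wedge d\varphi_j$, using that the one-form $\sum_j A_{kj}d\varphi_j$ is ``closed modulo exact'' since $A_{kj}$ is antisymmetric. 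The operator $\Delta_\varphi^{-1}$ is well defined here because $\sum_k\partial_{\varphi_k}A_{kj}$ has zero $\varphi$-average (the coefficients $A_{kj}$ being $\varphi$-derivatives). Here one must also check the computation is compatible with the traveling structure, i.e. that $y_\delta$ still satisfies $\mathtt v\cdot\partial_\varphi y_\delta=0$; this follows because $\theta_0$ and $W_0$ are traveling, hence each $A_{kj}$ is a function of $\varphi-\mathtt v x$ evaluated at $x=0$, i.e. of $\varphi$ alone but with $\mathtt v\cdot\partial_\varphi A_{kj}=0$ inherited from \eqref{trawaves}, and $\Delta_\varphi^{-1}$, $[\partial_\varphi\theta_0]^{-T}$ preserve this kernel.

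\textbf{Step 2: quantitative estimates on $y_\delta-y_0$.} Using the tame estimates for composition, for $[\partial_\varphi\theta_0]^{-1}$ (which is $\mathrm I+O(\mathfrak I_0)$), and the smoothing property $\|\Delta_\varphi^{-1}u\|_s\lesssim\|u\|_{s-2}$, one bounds $\|y_\delta-y_0\|_s^{\gamma,\calO_0}$ by $\|\mathfrak I_0\|_{s+\mathfrak d}^{\gamma,\calO_0}$ for a suitable loss $\mathfrak d=\mathfrak d(\nu,\tau)$. For the second, sharper estimate one uses that the $A_{kj}$ are not merely $O(\mathfrak I_0)$ but actually $O(Z)$ up to a term $O(\mathfrak I_0)\cdot O(Z)$: this is the Berti--Bolle observation that for an approximate solution the pull-back form is small of the order of the error $Z=\mathcal F(i_0,\Xi_0)$. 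Concretely $\omega\cdot\partial_\varphi a_j - (\text{something})=$ controlled by $Z$ after using the equation $\mathcal F(i_0,\Xi_0)=Z$, so that $A_{kj}=\omega\cdot\partial_\varphi(\cdots)+O(Z)$ and inverting $\omega\cdot\partial_\varphi$ costs $\gamma^{-1}$ and $\tau$ derivatives. Combining with the interpolation/product estimates yields the stated bilinear bound in $Z$ and $\mathfrak I_0$.

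\textbf{Step 3: the new torus is still approximately invariant; estimate on $\mathcal F(i_\delta,\Xi_0)$, and on $\partial_i i_\delta$.} Since $i_\delta-i_0=(0,y_\delta-y_0,0)$ is small, a Taylor expansion of $\mathcal F$ gives $\mathcal F(i_\delta,\Xi_0)=\mathcal F(i_0,\Xi_0)+d\mathcal F(i_0,\Xi_0)[(0,y_\delta-y_0,0)]+$ (quadratic), and using the vector-field estimates of Lemma \ref{TameEstimatesforVectorfieldsDP} together with Step 2 one gets $\|\mathcal F(i_\delta,\Xi_0)\|_s^{\gamma,\calO_0}\lesssim_s\|Z\|_{s+\mathfrak d}^{\gamma,\calO_0}+\|Z\|_{s_0+\mathfrak d}^{\gamma,\calO_0}\|\mathfrak I_0\|_{s+\mathfrak d}^{\gamma,\calO_0}$. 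The bound $\|\partial_i i_\delta[\hat\imath]\|_s\lesssim_s\|\hat\imath\|_s+\|\mathfrak I_0\|_{s+\mathfrak d}\|\hat\imath\|_s$ follows by differentiating the explicit formula for $y_\delta$ in $i$ and again invoking the tame estimates for $\Delta_\varphi^{-1}$, $[\partial_\varphi\theta_0]^{-T}$ and the $A_{kj}$'s, which are linear-plus-higher-order in $\mathfrak I_0$.

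\textbf{Main obstacle.} The routine calculations are the tame-norm bookkeeping in Steps 2--3; the one genuinely delicate point is Step 1, specifically verifying that the antisymmetric-matrix manipulation producing $\rho$ indeed yields $i_\delta^*\widetilde{\mathcal W}=0$ (and not merely $O(Z)$), \emph{and} that this construction commutes with the traveling-wave constraint \eqref{trawaves}. The latter is the only place where the present setting differs from \cite{KdVAut}: one needs that every operator appearing ($\Delta_\varphi^{-1}$, the transpose-inverse of $\partial_\varphi\theta_0$, multiplication by $A_{kj}$) maps the subspace $\{\mathtt v\cdot\partial_\varphi(\cdot)=0\}$ into itself, which is immediate for the first two and follows for the third from the fact that $i_0$ is traveling so all of $a_k$, $A_{kj}$ lie in that subspace. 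Once this is in place, the rest is a direct transcription of the Berti--Bolle argument.
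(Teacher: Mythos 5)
Your proposal is correct and follows essentially the same route as the paper, which in fact gives no proof of its own but simply invokes Lemma~6.3 of \cite{KdVAut} (i.e.\ the Berti--Bolle isotropy argument) together with the observation that the construction respects the traveling-wave constraint \eqref{trawaves} — exactly the two ingredients you identify. The only cosmetic slips (the correction to the Liouville form is $\rho\cdot d\varphi$, not $\rho\cdot d\theta_0$, thanks to the factor $[\partial_\varphi\theta_0]^{-T}$, and the smallness of $A_{kj}$ in terms of $Z$ comes from bounding $\omega\cdot\partial_\varphi A_{kj}$ by the error and then inverting $\omega\cdot\partial_\varphi$ at the cost of $\gamma^{-1}$) do not affect the substance of the argument.
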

 The strategy now is to construct an approximate inverse for 
 $d \mathcal{F}(i_0, \Xi_0)$ 
 by starting from an approximate inverse for the linear operator 
 $d \mathcal{F}(i_\delta, \Xi_0)$. 
 The advantage of analyzing the linearized problem 
 at $i_{\delta}$ is that, thanks 
 to the isotropicity of $i_{\delta}$, it is possible to construct a 
 \emph{symplectic} change of variables which 
 approximately triangularizes the linear system . 
We define the symplectic change of coordinates
\begin{equation}\label{GdeltaDP}
\begin{pmatrix}
\theta \\ y \\ W
\end{pmatrix}
:=G_{\delta}\begin{pmatrix}
\vartheta\\ Y \\ U
\end{pmatrix}
:=\begin{pmatrix}
\theta_0(\vartheta) \\
y_{\delta}(\vartheta)+[\partial_{\vartheta}\theta_0(\vartheta)]^{-T}Y
+[(\partial_{\theta} \tilde{W}_0)(\theta_0(\vartheta))]^T\,J^{-1}U\\
W_0(\vartheta)+U
\end{pmatrix}\,,
\end{equation}
where $\tilde{W}_0:=W_0 (\theta_0^{-1} (\theta))$.  
We show that the map $G_{\delta}$ ``preserves'' 
the subspace of traveling embeddings, namely satisfying \eqref{trawaves}.
\begin{lemma}\label{subspaceMomento}
Assume that 
$i_{\delta}(\vphi)$ 
satisfies the condition \eqref{trawaves}.
Then $\mathcal{I}(\vphi):=(\vartheta(\vphi),Y(\vphi), U(\vphi))$
satisfies \eqref{trawaves} if and only if 
 $i(\vphi)=(\theta(\vphi),y(\vphi),W(\vphi)):=G_{\delta}(\mathcal{I}(\vphi))$ 
satisfies \eqref{trawaves}.
\end{lemma}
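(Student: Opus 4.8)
\textbf{Proof plan for Lemma \ref{subspaceMomento}.}
The plan is to unpack the traveling-wave condition \eqref{trawaves} into the single equation \eqref{momfrakI} (equivalently \eqref{cifrato}), namely $\mathtt{v}\cdot\pa_{\vphi}\mathfrak{I}(\vphi)+(0,0,W_x)=0$, and then to check directly that the map $G_\delta$ in \eqref{GdeltaDP} intertwines the analogous operator acting on the variables $(\vartheta,Y,U)$ with the one acting on $(\theta,y,W)$. Concretely, introduce on the $(\vartheta,Y,U)$-side the vector field $X_M^{(\mathcal I)}(\mathcal I):=(-\mathtt{v},0,U_x)$ and on the $(\theta,y,W)$-side the momentum vector field $X_M(i)=(-\mathtt{v},0,W_x)$ from \eqref{cifrato}; then $\mathcal I(\vphi)$ satisfies \eqref{trawaves} iff $\mathtt v\cdot\pa_\vphi\mathcal I+X_M^{(\mathcal I)}(\mathcal I)-(-\mathtt v,0,0)=0$, and similarly for $i(\vphi)$. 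The claim reduces to showing the conjugation identity
\begin{equation}\label{eq:conjug-plan}
\mathtt{v}\cdot\pa_{\vphi}\big(G_\delta(\mathcal I(\vphi))\big)+X_M\big(G_\delta(\mathcal I(\vphi))\big)
= dG_\delta(\mathcal I(\vphi))\Big[\mathtt{v}\cdot\pa_{\vphi}\mathcal I(\vphi)+X_M^{(\mathcal I)}(\mathcal I(\vphi))\Big],
\end{equation}
using that $i_\delta$ itself is traveling.

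First I would record that $i_\delta$ traveling means $\theta_0(\vphi)-\vphi$, $y_\delta(\vphi)$ are in the kernel of $\mathtt v\cdot\pa_\vphi$ and $W_0$ satisfies $(\mathtt v\cdot\pa_\vphi+\pa_x)W_0=0$; equivalently, in Fourier, every nonzero coefficient $(\mathfrak I_{0})_{\ell j}$ has $\mathtt v\cdot\ell+j=0$ (Lemma \ref{funzioniSVVV}). In particular $\mathtt v\cdot\pa_\vphi$ annihilates $\Theta_0$ and $y_\delta$, commutes with $\theta_0^{-1}$, and $[\partial_\vartheta\theta_0]^{-T}$, $(\partial_\theta\widetilde W_0)(\theta_0(\cdot))$ are all built from $x$-independent, $\mathtt v\cdot\pa_\vphi$-invariant blocks, while $\widetilde W_0(\theta)=W_0(\theta_0^{-1}(\theta))$ inherits the transport relation in the appropriate variable. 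Then I would differentiate the three components of \eqref{GdeltaDP} along $\mathtt v\cdot\pa_\vphi$, add the momentum field $X_M=(-\mathtt v,0,\pa_x)$ evaluated at $G_\delta(\mathcal I)$, and compare termwise with the right-hand side of \eqref{eq:conjug-plan}: the $\theta$-component gives $\mathtt v\cdot\pa_\vphi(\theta_0(\vartheta))=(\partial_\vartheta\theta_0)(\vartheta)[\mathtt v\cdot\pa_\vphi\vartheta]$ which matches $dG_\delta$ applied to the $\vartheta$-part (the $-\mathtt v$ absorbs the base point), the $W$-component gives $(\mathtt v\cdot\pa_\vphi+\pa_x)(W_0(\vartheta)+U)=(\mathtt v\cdot\pa_\vphi+\pa_x)U$ since $W_0$ is traveling, and the $Y$-component is the bookkeeping term where the two conjugating matrices, being $\mathtt v\cdot\pa_\vphi$-invariant and $x$-independent, pass through the derivative. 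Establishing \eqref{eq:conjug-plan} immediately yields both implications, since $G_\delta$ is a diffeomorphism onto its image and the right-hand side vanishes iff $\mathtt v\cdot\pa_\vphi\mathcal I+X_M^{(\mathcal I)}(\mathcal I)-(-\mathtt v,0,0)=0$.

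An alternative, perhaps cleaner, route is purely structural: $G_\delta$ is symplectic (as established in \cite{BertiBolle} and used in \eqref{GdeltaDP}) and its building blocks $\theta_0$, $y_\delta$, $\widetilde W_0$ are $x$-translation invariant in the sense of section \ref{sec:group} because $i_\delta$ is traveling; hence $G_\delta$ conjugates the momentum Hamiltonian $M$ of \eqref{trawavesmom} on the $(\vartheta,Y,U)$-side to the momentum Hamiltonian on the $(\theta,y,W)$-side up to an additive constant, so it maps $X_M^{(\mathcal I)}$ to $X_M$ and commutes with the flow $\Psi^s_{-\mathtt v}$; traveling is exactly invariance of the torus embedding under the combined action of this flow and the $x$-shift, which is therefore preserved. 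The main obstacle is the careful verification that every matrix factor appearing in the middle component of \eqref{GdeltaDP} — in particular $[(\partial_\theta\widetilde W_0)(\theta_0(\vartheta))]^T J^{-1}$ — is genuinely $\mathtt v\cdot\pa_\vphi$-invariant and $x$-independent, so that it commutes with $\mathtt v\cdot\pa_\vphi$ and does not interfere with the $\pa_x$ acting on $U$; this is where the hypothesis that $i_\delta$ is traveling is used in full, and it is a routine but slightly tedious Fourier-support check via Lemma \ref{funzioniSVVV}.
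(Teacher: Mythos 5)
Your ``alternative, structural'' route is in fact the paper's own proof: since $G_\delta$ is symplectic, $(dG_\delta)^{-1}X_M\circ G_\delta=X_{M\circ G_\delta}$, so the whole lemma reduces to showing that $M\circ G_\delta$ and $M$ generate the same Hamiltonian vector field, i.e. $M\circ G_\delta=M$ up to a term that does not contribute to the vector field; your push-forward identity for the momentum vector field is exactly this statement. So the strategy is the right one and coincides with the paper's.

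The gap is that neither of your routes actually verifies the one step carrying the content, and the justification you offer for it is incorrect. In the termwise check you dispose of the $y$-component by claiming that the factors in the middle row of \eqref{GdeltaDP}, ``in particular $[(\partial_\theta \tilde{W}_0)(\theta_0(\vartheta))]^T J^{-1}$'', are $x$-independent and therefore pass through the derivatives. They are not: $\tilde{W}_0$ depends on $x$, and $B^T J^{-1}$ with $B=[(\partial_\theta \tilde{W}_0)(\theta_0(\vartheta))]^T$ is an operator $H_S^\perp\to\mathbb{R}^\nu$ defined by an $L^2(\mathbb{T})$ pairing against an $x$-dependent function, so it neither is $x$-independent nor interacts trivially with the $\partial_x$ acting on $U$. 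What is actually needed there --- and what the paper does --- is the explicit computation of $M\circ G_\delta$: writing $W_0=(\eta_0,\psi_0)$, $U=(\eta_1,\psi_1)$, one uses $(d\eta_0)[\mathtt{v}]=-(\eta_0)_x$, $(d\psi_0)[\mathtt{v}]=-(\psi_0)_x$ and $[\partial_\vartheta\theta_0]^{-1}\mathtt{v}=\mathtt{v}$ (all consequences of $i_\delta$ being traveling) together with an integration by parts in $x$, so that the cross term $-\mathtt{v}\cdot B J^{-1}U$ cancels against $\int_{\mathbb{T}}\big[(\eta_0)_x\psi_1+(\eta_1)_x\psi_0\big]dx$ and one is left with $M(\vartheta,Y,U)$ plus the term $-\mathtt{v}\cdot y_\delta(\vartheta)+\int_{\mathbb{T}}(\eta_0)_x\psi_0\,dx$, which must then be checked not to contribute to $X_{M\circ G_\delta}$ (again via the traveling property of $i_\delta$). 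Without this computation both your routes assert rather than prove the key identity. A minor bookkeeping slip: the traveling condition for the full embedding $\mathcal{I}$ is $\mathtt{v}\cdot\partial_\varphi\mathcal{I}+X_M^{(\mathcal{I})}(\mathcal{I})=0$ as in \eqref{cifrato}; your extra subtraction of $(-\mathtt{v},0,0)$ would instead force $\mathtt{v}\cdot\partial_\varphi\vartheta=0$, which is not the condition you want.
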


\begin{proof}
We use the equivalent condition \eqref{cifrato}.
We note that
\[
\mathtt{v}\cdot\pa_{\vphi}i(\vphi)+X_{M}(i(\vphi))=0\qquad \Leftrightarrow
\qquad \mathtt{v}\cdot\pa_{\vphi}\mathcal{I}(\vphi)
+(d G_{\delta})^{-1}X_{M}G_{\delta}(\mathcal{I}(\vphi))=0\,.
\]
Then, since $G_{\delta}$ is symplectic  one has 
$( d G_{\delta})^{-1}X_{M}G_{\delta}=X_{M\circ G_{\delta}}$.
Therefore
it is sufficient to prove that $M\circ G_{\delta}=M$.
For the variables $W_0,W,U$ in $H_{S}^{\perp}$ (see \eqref{decomposition})
we shall use the notation 
$W_0=(\eta_0,\psi_0)^{T}$, $W=(\tilde{\eta},\tilde{\psi})^{T}$,
$U=({\eta}_1,{\psi}_1)^{T}$. Hence, using \eqref{GdeltaDP} and 
\eqref{trawavesmom},  
we have
\[
M(G_{\delta}(\vartheta,Y, U))=
-\mathtt{v}\cdot y_{\delta}(\vartheta)-[\pa_{\vartheta}\theta_{0}(\vartheta)]^{-1}\mathtt{v}\cdot Y
-\mathtt{v}\cdot BJ^{-1} U
+\int_{\mathbb{T}}(\eta_0+\eta_1)_{x}(\psi_0+\psi_1)dx\,,
\]
where $B:=[(\partial_{\theta} \tilde{W}_0)(\theta_0(\vartheta))]^T$.
Using $\tilde{W}_0:=W_0 (\theta_0^{-1} (\theta))$ and that $i_{\delta}(\vphi)$
satisfies \eqref{trawaves} we deduce 
\[
\begin{aligned}
&-\mathtt{v}\cdot BJ^{-1} U=\int_{\mathbb{T}}
(d\eta_0)[\mathtt{v}]\psi_1 dx-
\int_{\mathbb{T}}
(d\psi_0)[\mathtt{v}]\eta_1 dx\,,\\
&(d\eta_0)[\mathtt{v}]=-(\eta_0)_{x}\,,\;\;\;
(d\psi_0)[\mathtt{v}]=-(\psi_0)_{x}\,,
\;\;\;[d\theta_{0}(\vartheta)]^{-1}\mathtt{v}=\mathtt{v}\,.
\end{aligned}
\]
Therefore, by integrating by parts, we get
\[
\begin{aligned}
M(G_{\delta}(\vartheta,Y,U))&=
-\mathtt{v}\cdot Y-\mathtt{v}\cdot y_{\delta}(\vartheta)
+
\int_{\mathbb{T}}\Big[
(\psi_0)_{x}\eta_1-(\eta_0)_{x}\psi_1 \big]dx
\\&\qquad+\int_{\mathbb{T}}\Big[(\eta_0)_x\psi_0 +
(\eta_0)_x\psi_1+(\eta_1)_{x}\psi_0+(\eta_1)_x\psi_1\Big]dx\\
&=-\mathtt{v}\cdot Y+\int_{\mathbb{T}}(\eta_1)_x\psi_1dx
-\mathtt{v}\cdot y_{\delta}(\vartheta)+\int_{\mathbb{T}}(\eta_0)_x\psi_0dx\,.
\end{aligned}
\]
Since $i_{\delta}$ satisfies \eqref{trawaves}
the term $-\mathtt{v}\cdot y_{\delta}(\vartheta)+\int_{\mathbb{T}}(\eta_0(\vartheta))_x
\psi_0(\vartheta)dx$
is independent of $\vartheta$, hence it does not contributes  to the vector field
$X_{M\circ G_{\delta}}$. Then we have the thesis.
\end{proof}

The transformed Hamiltonian 
$K:=K(\vartheta, Y, U, \Xi_0)$ is (recall \eqref{HepsilonZetaDP})
\begin{equation}\label{Kdp}
\begin{aligned}
K:=H_{\varepsilon, \Xi_0}\circ G_{\delta}&=\theta_0(\vartheta)\cdot \Xi_0
+K_{00}(\vartheta)+K_{10}(\vartheta)\cdot Y
+(K_{01}(\vartheta), U)_{L^2}+\frac{1}{2} K_{20}(\vartheta) Y \cdot Y+\\
&+(K_{11}(\vartheta) Y, U)_{L^2}
+\frac{1}{2} (K_{02}(\vartheta) U, U)_{L^2}+K_{\geq 3}(\vartheta, Y, U)
\end{aligned}
\end{equation}
where $K_{\geq 3}$ collects the terms at least cubic in the variables $(Y, U)$. 
At any fixed $\vartheta$, the Taylor coefficient 
$K_{00}(\vartheta)\in\mathbb{R}, 
K_{10}(\vartheta)\in\mathbb{R}^{\nu}, 
K_{01}(\vartheta)\in H_S^{\perp}, K_{20}(\vartheta)$ 
is a $\nu\times\nu$ real matrix, 
$K_{02}(\vartheta)$ is a linear self-adjoint 
operator of $H_S^{\perp}$ and finally
$K_{11}(\vartheta)\colon\mathbb{R}^{\nu} \to H_S^{\perp}$.
The above Taylor coefficients do not depend on the parameter $\Xi_0$.
The Hamilton equations associated to \eqref{Kdp} are
\begin{equation}\label{VectorFieldKdp}
\begin{cases}
\dot{\vartheta}=K_{10}(\vartheta)+K_{20}(\vartheta) Y
+K_{11}^T(\vartheta) U
+\partial_{\eta} K_{\geq 3}(\vartheta, Y, U)\\
\begin{aligned}
\dot{Y}=&-[\partial_{\vartheta} \theta_0(\vartheta)]^T \Xi_0
-\partial_{\vartheta} K_{00}(\vartheta)-[\partial_{\vartheta} K_{10}(\vartheta)]^{T}Y
-[\partial_{\vartheta} K_{01}(\vartheta)]^{T} U-\\
&-\partial_{\vartheta} \left(\frac{1}{2} K_{20}(\vartheta)Y\cdot Y
+(K_{11}(\vartheta) Y, U)_{L^2}
+\frac{1}{2} (K_{02}(\vartheta) U, U)_{L^2}
+K_{\geq 3}(\vartheta, Y, U)\right)
\end{aligned}
\\
\dot{U}=J (K_{01}(\vartheta)+K_{11}(\vartheta) Y
+K_{02}(\vartheta) U+\nabla_{U} K_{\geq 3}(\vartheta, Y, U))
\end{cases}
\end{equation}
where $[\partial_{\vartheta} K_{10}(\vartheta)]^T$ is the $\nu\times \nu$ transposed matrix and $[\partial_{\vartheta} K_{01}(\vartheta)]^T, K_{11}^T(\vartheta)\colon H_S^{\perp} \to \mathbb{R}^{\nu}$ are defined by the duality relation 
\[
(\partial_{\vartheta} K_{01} (\vartheta)[\hat{\vartheta}], U)_{L^2}
=\hat{\vartheta}\cdot [\partial_{\vartheta} K_{01}(\vartheta)]^T U\,, 
\quad \forall \hat{\vartheta}\in\mathbb{R}^{\nu}, U\in H_S^{\perp}\,,
\]
and similarly for $K_{11}$.
Explicitly, for all $U\in H_S^{\perp}$, 
and denoting $\underline{e}_k$ the 
$k$-th versor of $\mathbb{R}^{\nu}$,
\begin{equation*}
K_{11}^T(\vartheta) U=\sum_{k=1}^{\nu} 
(K_{11}^T(\vartheta) U\cdot \underline{e}_k)\,
\underline{e}_k=\sum_{k=1}^{\nu} (U, K_{11}(\vartheta) 
\underline{e}_k)_{L^2} \underline{e}_k\in\mathbb{R}^{\nu}.
\end{equation*}
In the next lemma we estimate the coefficients 
$K_{00}, K_{10}, K_{01}$ in the Taylor expansion \eqref{Kdp}. 
Note that on an exact solution $(i_0, \Xi_0)$ 
we have 
$K_{00}(\vartheta)=\mbox{const}$, $K_{10}=\omega$ and $K_{01}=0$.

\begin{lemma}{(Lemma $6.4$ in \cite{KdVAut})}\label{Lemma6.4DP}
Assume \eqref{AssumptionDP}. Then there is $\mu:=\mu(\nu)$ 
such that
\[
\lVert \partial_{\vartheta} K_{00} \rVert_s^{\gamma, \calO_0}
+\lVert K_{10}-\omega \rVert_s^{\gamma, \calO_0}
+\lVert K_{01} \rVert_s^{\gamma, \calO_0}
\lesssim_s
\lVert Z \rVert_{s+\mu}^{\gamma, \calO_0}
+\lVert Z \rVert_{s_0+\mu}^{\gamma, \calO_0} 
\lVert \mathfrak{I}_0 \rVert_{s+\mu}^{\gamma, \calO_0}\,.
\]
\end{lemma}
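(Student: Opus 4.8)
The plan is to follow the argument of \cite{KdVAut} (see also \cite{BertiBolle}): the three quantities $\partial_{\vartheta}K_{00}$, $K_{10}-\omega$ and $K_{01}$ all measure the failure of $i_{\delta}$ to be an exact solution of $\mathcal{F}(\cdot,\Xi_0)=0$, and this failure is quantitatively controlled by Lemma \ref{isotropictorus}. First I would evaluate the Hamiltonian vector field $X_K$ in \eqref{VectorFieldKdp} at the ``trivial'' torus $(\vartheta,0,0)$; since $K_{\geq 3}$ is at least quadratic in $(Y,U)$ this gives
\[
X_K(\vartheta,0,0)=\big(K_{10}(\vartheta),\ -[\partial_{\vartheta}\theta_0(\vartheta)]^{T}\Xi_0-\partial_{\vartheta}K_{00}(\vartheta),\ J\,K_{01}(\vartheta)\big).
\]
On the other hand, $G_{\delta}$ in \eqref{GdeltaDP} is symplectic, so $X_K=(DG_{\delta})^{-1}\,X_{H_{\varepsilon,\Xi_0}}\!\circ G_{\delta}$ (recall \eqref{Kdp}, \eqref{HepsilonZetaDP}), while $G_{\delta}(\vartheta,0,0)=i_{\delta}(\vartheta)$ and $\omega\cdot\partial_{\vartheta}i_{\delta}(\vartheta)=DG_{\delta}(\vartheta,0,0)[\omega,0,0]$. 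Combining this with the definition of $\mathcal{F}$ in \eqref{NonlinearFunctionalDP}, i.e. $X_{H_{\varepsilon,\Xi_0}}(i_{\delta})=\omega\cdot\partial_{\vartheta}i_{\delta}-\mathcal{F}(i_{\delta},\Xi_0)$, and applying $(DG_{\delta}(\vartheta,0,0))^{-1}$ I would obtain the key identity
\[
X_K(\vartheta,0,0)=(\omega,0,0)-(DG_{\delta}(\vartheta,0,0))^{-1}\mathcal{F}(i_{\delta},\Xi_0)(\vartheta).
\]
Comparing the three components then expresses $K_{10}-\omega$, $J\,K_{01}$ and $-[\partial_{\vartheta}\theta_0]^{T}\Xi_0-\partial_{\vartheta}K_{00}$ as the corresponding components of $-(DG_{\delta}(\vartheta,0,0))^{-1}\mathcal{F}(i_{\delta},\Xi_0)$.

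The second step is to estimate this right-hand side. By Lemma \ref{isotropictorus}, $\lVert\mathcal{F}(i_{\delta},\Xi_0)\rVert_s^{\gamma,\calO_0}\lesssim_s\lVert Z\rVert_{s+\mathfrak{d}}^{\gamma,\calO_0}+\lVert Z\rVert_{s_0+\mathfrak{d}}^{\gamma,\calO_0}\lVert\mathfrak{I}_0\rVert_{s+\mathfrak{d}}^{\gamma,\calO_0}$. From \eqref{GdeltaDP} the matrix $DG_{\delta}(\vartheta,0,0)$ is the identity plus terms depending tamely on $\mathfrak{I}_0$ (through $\Theta_0$, $y_{\delta}-y_0$, $W_0$ and $(\partial_{\theta}\tilde W_0)(\theta_0(\vartheta))$), and its inverse has the same structure; using the tame product and interpolation estimates together with the smallness \eqref{AssumptionDP}, composition with $(DG_{\delta})^{-1}$ preserves the ``$\lVert\cdot\rVert_{s+\mu}+\lVert\cdot\rVert_{s_0+\mu}\lVert\mathfrak{I}_0\rVert_{s+\mu}$'' structure for a suitable loss $\mu=\mu(\nu)$ (larger than $\mathfrak{d}$ plus the regularity losses hidden in $DG_{\delta}$; no $\omega\cdot\partial_{\varphi}$-inversion, hence no loss of type $\tau$, is needed here). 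Finally the spurious term $[\partial_{\vartheta}\theta_0]^{T}\Xi_0$ in the $Y$-component is absorbed using $\lvert\Xi_0\rvert^{\gamma,\calO_0}\lesssim\lVert Z\rVert_{s_0}^{\gamma,\calO_0}$ from Lemma \ref{Lemma6.1DP} and $\partial_{\vartheta}\theta_0=\mathrm{I}+\partial_{\vartheta}\Theta_0$ with $\Theta_0$ small; it is $\partial_{\vartheta}K_{00}$, not $K_{00}$ itself, that is being estimated, so no integration in $\vartheta$ is required.

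The only genuinely delicate bookkeeping — and the main obstacle — is to verify that the high-norm correction term is \emph{linear} (not quadratic) in $\lVert\mathfrak{I}_0\rVert_{s+\mu}$: one must expand $(DG_{\delta})^{-1}=\mathrm{I}+\mathcal{E}$ with $\mathcal{E}$ controlled by $\mathfrak{I}_0$, and in every product place all factors of $\mathfrak{I}_0$ but one in the low norm $\lVert\cdot\rVert_{s_0+\mu}$ (using that this norm is $\leq\varepsilon^{16-2b}\gamma^{-1}\ll 1$), exactly as in the interpolation lemmas of \cite{FGP, FGP1} used throughout the paper. Apart from this, the argument is a direct transcription of Lemma $6.4$ of \cite{KdVAut} to the present Hamiltonian tame-estimate framework.
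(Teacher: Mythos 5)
Your argument is correct and is essentially the proof the paper points to: the statement is quoted from Lemma $6.4$ of \cite{KdVAut}, whose proof is exactly the identity $X_K(\vartheta,0,0)=(\omega,0,0)-(DG_{\delta}(\vartheta,0,0))^{-1}\mathcal{F}(i_{\delta},\Xi_0)$ obtained from the symplecticity of $G_{\delta}$, combined with Lemma \ref{isotropictorus} for $\mathcal{F}(i_{\delta},\Xi_0)$, Lemma \ref{LemmaDG} for the tame bounds on $(DG_{\delta})^{-1}$, Lemma \ref{Lemma6.1DP} for $\Xi_0$, and the smallness \eqref{AssumptionDP} to keep the correction linear in $\lVert\mathfrak{I}_0\rVert_{s+\mu}$.
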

In the next Lemma we 
estimate $K_{20}, K_{11}$ in \eqref{Kdp}.  This result can be obtained 
reasoning as in Lemma $6.6$ in \cite{KdVAut} 
taking into account the bounds
in Lemma \ref{TameEstimatesforVectorfieldsDP}.
\begin{lemma}\label{Lemma6.6dp}
Assume \eqref{AssumptionDP}. Then for some $\mu:=\mu(\nu)$ we have
\begin{align*}
&\lVert K_{20}-\frac{\varepsilon^{2 b}}{2} \mathbb{A}\rVert_s^{^{\gamma, \calO_0}}
\lesssim_s 
\varepsilon^{2b+2}+
\varepsilon^{2 b}\lVert \mathfrak{I}_0 \rVert_{s+\mu}^{^{\gamma, \calO_0}}\,,
\\
&\lVert K_{11} Y\rVert_s^{^{\gamma, \calO_0}}
\lesssim_s 
\varepsilon^{15} \gamma^{-1} 
\lVert Y \rVert_s^{^{\gamma, \calO_0}}
+\varepsilon^{2 b-1} \lVert \mathfrak{I}_0 \rVert_{s+\mu}^{\gamma, \calO_0}
\lVert Y \rVert_{s_0}^{^{\gamma, \calO_0}}\,,
\\
&\lVert K_{11}^{T} U \rVert_{s}^{^{\gamma, \calO_0}} 
\lesssim_s \varepsilon^{15} \gamma^{-1} 
\lVert U \rVert_{s+2}^{^{\gamma, \calO_0}}
+\varepsilon^{2 b-1} \lVert \mathfrak{I}_0 \rVert_{s+\mu}^{^{\gamma, \calO_0}}
\lVert U \rVert_{s_0+2}^{^{\gamma, \calO_0}}\,.
\end{align*}
In particular 
\begin{align*}
&\lVert K_{20}-\frac{\varepsilon^{2 b}}{2} 
\mathbb{A}\rVert_{s_0}^{^{\gamma, \calO_0}}\lesssim 
\varepsilon^{2b+2}\,, 
\quad 
\lVert K_{11} Y \rVert_{s_0}^{^{\gamma, \calO_0}}\lesssim
\varepsilon^{15} \gamma^{-1}\lVert Y \rVert_{s_0}^{^{\gamma, \calO_0}}\,, 
\quad 
\lVert K_{11}^{T} U \rVert_{s_0}^{^{\gamma, \calO_0}}\lesssim 
\varepsilon^{15} \gamma^{-1} \lVert U \rVert_{s_0+2}^{^{\gamma, \calO_0}}\,.
\end{align*}
\end{lemma}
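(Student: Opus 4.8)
The plan is to obtain these estimates by differentiating the generating map $G_\delta$ of \eqref{GdeltaDP} explicitly and then feeding the resulting expressions into the tame bounds of Lemma \ref{TameEstimatesforVectorfieldsDP}. First one records that, at $Y=U=0$, the map \eqref{GdeltaDP} has partial differentials $\partial_Y\theta\equiv 0$, $\partial_Y y=[\partial_\vartheta\theta_0(\vartheta)]^{-T}$, $\partial_Y W\equiv 0$, $\partial_U\theta\equiv 0$, $\partial_U y=BJ^{-1}$ with $B:=[(\partial_\theta\tilde W_0)(\theta_0(\vartheta))]^T$, and $\partial_U W=\mathrm{Id}$. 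Differentiating $K=H_{\varepsilon,\Xi_0}\circ G_\delta$ in \eqref{Kdp} twice and evaluating at the isotropic torus $i_\delta$ then yields
\[
K_{20}(\vartheta)=[\partial_\vartheta\theta_0(\vartheta)]^{-1}\big(\partial_{yy}H_\varepsilon\big)(i_\delta(\vartheta))\,[\partial_\vartheta\theta_0(\vartheta)]^{-T},
\]
while $K_{11}(\vartheta)$ and $K_{11}^T(\vartheta)$ are (modulo the $L^2$-dualities defining them, exactly as in Lemma $6.6$ of \cite{KdVAut}) combinations of $(\partial_{yy}H_\varepsilon)(i_\delta)$ and $(\partial_y\nabla_W H_\varepsilon)(i_\delta)$, conjugated by $[\partial_\vartheta\theta_0]^{\pm1}$ and composed with $BJ^{-1}$ and its adjoint. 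Since $\mathcal N(\theta,y,W)=\alpha(\zeta)\cdot y+\tfrac12(N(\theta)W,W)_{L^2}$ in \eqref{HepsilonDP} is affine in $y$ and has no mixed $y$–$W$ term, $\partial_{yy}\mathcal N=0$ and $\partial_y\nabla_W\mathcal N=0$, so one may replace $H_\varepsilon$ by $P$ everywhere in these formulas.

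Next one plugs in the quantitative estimates. To apply Lemma \ref{TameEstimatesforVectorfieldsDP} at $i_\delta$ one uses \eqref{AssumptionDP} (with $\mathfrak p_0$ taken large enough) together with Lemma \ref{isotropictorus}, which give $\|\mathfrak I_\delta\|_{s_0+1}^{\gamma,\calO_0}\lesssim\varepsilon^{16-2b}\gamma^{-1}$ and $\|\mathfrak I_\delta\|_{s}^{\gamma,\calO_0}\lesssim_s\|\mathfrak I_0\|_{s+\mathfrak d}^{\gamma,\calO_0}$, $\mathfrak I_\delta$ being $i_\delta-(\varphi,0,0)$. One then has $\|\partial_{yy}P(i_\delta)-\tfrac{\varepsilon^{2b}}{2}(\mathbb A+\mathbb B(\zeta))\|_s^{\gamma,\calO_0}\lesssim_s\varepsilon^{12+2b}+\varepsilon^{13+2b}\gamma^{-1}\|\mathfrak I_\delta\|_{s+1}^{\gamma,\calO_0}$ and $\|\partial_y\nabla_W P(i_\delta)\|_s^{\gamma,\calO_0}\lesssim_s\varepsilon^{13+b}+\varepsilon^{2b-1}\|\mathfrak I_\delta\|_{s+1}^{\gamma,\calO_0}$; moreover $\mathbb B(\zeta)=O(\varepsilon^2)$ by \eqref{Bi}, $\|[\partial_\vartheta\theta_0]^{\pm T}-\mathrm{Id}\|_s^{\gamma,\calO_0}\lesssim_s\|\mathfrak I_0\|_{s+1}^{\gamma,\calO_0}$, and $\|B\|_s^{\gamma,\calO_0}\lesssim_s\varepsilon^{16-2b}\gamma^{-1}(1+\|\mathfrak I_0\|_{s+\mu}^{\gamma,\calO_0})$. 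Recalling $\gamma=\varepsilon^{2b}$ and $b=1+a/2$ from \eqref{gammaDP}–\eqref{donnapia}, for $K_{20}$ one would split
\[
K_{20}-\tfrac{\varepsilon^{2b}}{2}\mathbb A=\big([\partial_\vartheta\theta_0]^{-1}-\mathrm{Id}\big)\partial_{yy}P(i_\delta)[\partial_\vartheta\theta_0]^{-T}+\partial_{yy}P(i_\delta)\big([\partial_\vartheta\theta_0]^{-T}-\mathrm{Id}\big)+\big(\partial_{yy}P(i_\delta)-\tfrac{\varepsilon^{2b}}{2}\mathbb A\big),
\]
bound each summand by the tame product rule using $\|\partial_{yy}P(i_\delta)\|_{s_0}^{\gamma,\calO_0}\lesssim\varepsilon^{2b}$, and absorb $\tfrac{\varepsilon^{2b}}{2}\mathbb B(\zeta)=O(\varepsilon^{2b+2})$; since $\varepsilon^{12+2b}\le\varepsilon^{2b+2}$ and $\varepsilon^{13}\|\mathfrak I_0\|_{s+\mu}^{\gamma,\calO_0}\lesssim\varepsilon^{2b}\|\mathfrak I_0\|_{s+\mu}^{\gamma,\calO_0}$, this gives the first estimate. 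For $K_{11}$, the $B$-pieces contribute $\|B\|\,\|\partial_{yy}P\|\lesssim\varepsilon^{16}\gamma^{-1}$ at low norm and $\varepsilon^{2b-1}\|\mathfrak I_0\|_{s+\mu}^{\gamma,\calO_0}\|\cdot\|_{s_0}$ at high norm, while the $\partial_y\nabla_W P$-pieces contribute $\varepsilon^{13+b}+\varepsilon^{2b-1}\varepsilon^{16-2b}\gamma^{-1}=\varepsilon^{13+b}+\varepsilon^{15}\gamma^{-1}$ at low norm and $\varepsilon^{2b-1}\|\mathfrak I_0\|_{s+\mu}^{\gamma,\calO_0}$ at high norm; as $\varepsilon^{13+b}\ll\varepsilon^{15}\gamma^{-1}$ and $\varepsilon^{16}\gamma^{-1}\le\varepsilon^{15}\gamma^{-1}$ for $\varepsilon$ small, the two $K_{11}$ bounds follow, the extra two derivatives in the $\|U\|_{s+2}$ estimate being produced by the $L^2$-duality that defines $K_{11}^T$ composed with $B$. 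The last two (low-norm) inequalities are just the case $s=s_0$ combined with $\|\mathfrak I_0\|_{s_0+\mu}^{\gamma,\calO_0}\lesssim\varepsilon^{16-2b}\gamma^{-1}$ from \eqref{AssumptionDP}.

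I do not expect any genuine obstacle here: all the analytic substance — the expansion \eqref{HamiltonianaRiscalataDP2} identifying $\tfrac{\varepsilon^{2b}}{2}\mathbb A$ (see \eqref{parco1}) as the leading quadratic term in $y$, the tameness of the vector field of $P$, and the smallness of the mixed $y$–$W$ derivatives — is already packaged in Lemma \ref{TameEstimatesforVectorfieldsDP}. The only real work is bookkeeping: tracking the powers of $\varepsilon$ and $\gamma$ and correctly splitting Sobolev norms through the tame product and composition estimates, and noticing that the mild two-derivative loss in the $K_{11}^T$ bound comes solely from the duality/adjoint structure, not from an unbounded contribution. This is carried out word for word as in Lemma $6.6$ of \cite{KdVAut}, the sole modification being the use of the present estimates of Lemma \ref{TameEstimatesforVectorfieldsDP} in place of those for KdV.
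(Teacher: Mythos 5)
Your proposal is correct and follows exactly the route the paper takes: it differentiates $K=H_{\varepsilon,\Xi_0}\circ G_\delta$ using the explicit affine structure of \eqref{GdeltaDP}, drops the $\mathcal N$-contributions, and then runs the $\varepsilon,\gamma$ bookkeeping through the tame bounds of Lemma \ref{TameEstimatesforVectorfieldsDP}, which is precisely what the paper means by ``reasoning as in Lemma $6.6$ of \cite{KdVAut}''. The only blemish is your intermediate claim $\lVert B\rVert_s\lesssim_s\varepsilon^{16-2b}\gamma^{-1}(1+\lVert\mathfrak I_0\rVert_{s+\mu})$, which is stronger than what holds at high norm (one only has $\lVert B\rVert_s\lesssim_s\lVert\mathfrak I_0\rVert_{s+\mu}$ together with the low-norm smallness $\lVert B\rVert_{s_0}\lesssim\varepsilon^{16-2b}\gamma^{-1}$); since your final low-norm/high-norm splitting only uses these correct bounds, the conclusion is unaffected.
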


\begin{remark}\label{paganini}
Notice that, if $(\vartheta,Y, U)$ is such that
\[
\mathtt{v}\cdot\pa_{\vphi}\vartheta=\mathtt{v}\,,\quad \mathtt{v}\cdot\pa_{\vphi} Y=0\,,
\qquad (\mathtt{v}\cdot\pa_{\vphi}+\pa_{x}) U=0\,,
\]
then, using the equations \eqref{VectorFieldKdp}, we deduce that
\begin{align}
&\mathtt{v}\cdot\partial_{\varphi} K_{10}(\vartheta)=0, \qquad
(\mathtt{v}\cdot\pa_{\vphi}+\pa_{x})(J K_{11}(\vartheta)Y)=0\,,
\qquad 
\mathtt{v}\cdot\pa_{\vphi}K_{20}(\vartheta)Y=0\,,\nonumber\\
&\mathtt{v}\cdot\pa_{\vphi} K_{11}^T(\vartheta) U=0\,,\qquad
(\mathtt{v}\cdot\pa_{\vphi}+\pa_{x})(K_{02}(\vartheta)U)=0\,.\label{K02xtran}
\end{align}
\end{remark}

\noindent
We apply the linear change of variables
\begin{equation}\label{DGdp}
dG_{\delta}(\varphi, 0, 0) \begin{pmatrix}
\hat{\vartheta}\\ \hat{Y}\\ \hat{U} 
\end{pmatrix} :=\begin{pmatrix}
\partial_{\vartheta} \theta_0(\varphi) & 0 & 0\\
\partial_{\vartheta} y_{\delta}(\varphi) & 
[\partial_{\vartheta} \theta_0(\varphi)]^{-T} & 
[(\partial_{\theta} \tilde{W}_0)(\theta_0 (\varphi))]^T J^{-1}\\
\partial_{\vartheta} W_0(\varphi) & 0 & \mathrm{Id}
\end{pmatrix}
\begin{pmatrix}
\hat{\vartheta} \\ \hat{Y} \\ \hat{U}
\end{pmatrix}\,.
\end{equation}
In these new coordinates the linearized operator 
$d_{i, \Xi}\mathcal{F}(i_{\delta}, \Xi_0)$ 
is ``approximately'' the operator obtained 
linearizing \eqref{VectorFieldKdp} at 
$(\vartheta, Y, U, \Xi)=(\varphi, 0, 0, \zeta_0)$ 
with $\omega\cdot\pa_{\vphi}$
instead of $\partial_t$, namely
\begin{equation}\label{6.26DP}
\begin{pmatrix}
\omega\cdot\pa_{\vphi} \hat{\vartheta}
-\partial_{\vartheta} K_{10}(\varphi)[\hat{\vartheta}]
-K_{20}(\varphi) \hat{Y}
-K_{11}^T(\varphi) \hat{U}\\
\omega\cdot\pa_{\vphi} \hat{Y}
+[\partial_{\vartheta} \theta_0(\varphi)]^T \hat{\Xi}
+\partial_{\vartheta} [\partial_{\vartheta} \theta_0(\varphi)]^T[\hat{\vartheta}, \Xi_0]
+\partial_{\vartheta\vartheta} K_{00}(\varphi)[\hat{\vartheta}]
+[\partial_{\vartheta} K_{10}(\varphi)]^T\hat{Y}
+[\partial_{\vartheta} K_{01}(\varphi)]^T \hat{U}\\
\omega\cdot\pa_{\vphi} \hat{U}
-J\{ \partial_{\vartheta} K_{01}(\varphi)[\hat{\vartheta}]
+K_{11}(\varphi)\hat{Y}+K_{02}(\varphi)\hat{U}\}
\end{pmatrix}\,.
\end{equation}
We give estimates on the composition operator 
induced by the transformation \eqref{DGdp}.

\begin{lemma}{(Lemma $6.7$ in \cite{KdVAut})}\label{LemmaDG}
Assume \eqref{AssumptionDP} and let $\hat{\imath}:=(\hat{\vartheta}, \hat{Y}, \hat{U})$. 
Then, for some $\mu:=\mu(\nu)$, we have
\begin{equation*}
\begin{aligned}
&\lVert dG_{\delta}(\varphi, 0, 0)[\hat{\imath}] \rVert_{s}
+\lVert dG_{\delta}(\varphi, 0, 0)^{-1}[\hat{\imath}] \rVert_s\leq_s 
\lVert \hat{\imath} \rVert_s+\lVert \mathfrak{I}_0 \rVert_{s+\mu}
\lVert \hat{\imath} \rVert_{s_0}\\
&\lVert d^2G_{\delta}(\varphi, 0, 0)[\hat{\imath}_1, \hat{\imath}_2]\rVert_s\leq_s 
\lVert \hat{\imath}_1\rVert_s \lVert \hat{\imath}_2 \rVert_{s_0}
+\lVert \hat{\imath}_1 \rVert_{s_0} \lVert \hat{\imath} \rVert_s
+\lVert \mathfrak{I}_0 \rVert_{s+\mu}
\lVert \hat{\imath} \rVert_{s_0} 
\lVert \hat{\imath}_2 \rVert_{s_0}\,.
\end{aligned}
\end{equation*}
Moreover the same estimates hold if 
we replace $\lVert \cdot \rVert_s$ with 
$\Vert \cdot \rVert_s^{^{\gamma, \calO_0}}$.
\end{lemma}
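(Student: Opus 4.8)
The statement to be proved, Lemma~\ref{LemmaDG}, concerns tame estimates for the composition operators induced by the linear transformation $dG_\delta(\varphi,0,0)$ in \eqref{DGdp} and its inverse and second differential. The plan is to reduce everything to the explicit matrix structure of $dG_\delta(\varphi,0,0)$, whose entries are built out of $\partial_\vartheta\theta_0$, $\partial_\vartheta y_\delta$, $\partial_\vartheta W_0$, $[\partial_\vartheta\theta_0]^{-T}$, and $[(\partial_\theta\tilde W_0)(\theta_0(\varphi))]^T J^{-1}$, together with the standard tame product and composition estimates in Sobolev spaces (as quoted from the Appendix of \cite{FGP}, \cite{FGP1}) and the a priori bound \eqref{AssumptionDP} on $\mathfrak{I}_0$.

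First I would record that, since $\mathfrak{I}_0(\varphi)=i_0(\varphi)-(\varphi,0,0)=(\Theta_0,y_0,W_0)(\varphi)$, one has $\partial_\vartheta\theta_0=\mathrm{Id}+\partial_\vartheta\Theta_0$, so that $\partial_\vartheta\theta_0$ is a perturbation of the identity of size controlled by $\lVert\mathfrak{I}_0\rVert_{s+1}$; by the smallness \eqref{AssumptionDP} and the Neumann series / tame inverse estimates for matrices close to the identity (as in Lemma~2.2 of \cite{KdVAut}), the inverse $[\partial_\vartheta\theta_0]^{-T}$ satisfies the same type of bound, $\lVert [\partial_\vartheta\theta_0]^{-T} -\mathrm{Id}\rVert_s \lesssim_s \lVert\mathfrak{I}_0\rVert_{s+1}$. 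Similarly $y_\delta=y_0+[\partial_\vartheta\theta_0]^{-T}\rho$ with $\rho$ estimated in Lemma~\ref{isotropictorus}, hence $\partial_\vartheta y_\delta$ is bounded in terms of $\lVert\mathfrak{I}_0\rVert_{s+\mathfrak{d}+1}$, and the composed function $(\partial_\theta\tilde W_0)(\theta_0(\varphi))$ is controlled by the composition lemma (using again that $\theta_0$ is a near-identity diffeomorphism of $\mathbb{T}^\nu$) by $\lVert W_0\rVert_{s+\mu'}$ for a suitable loss $\mu'$. Collecting these, every entry of $dG_\delta(\varphi,0,0)$ is either a near-identity block or a block whose $H^s$-norm is $\lesssim_s \lVert\mathfrak{I}_0\rVert_{s+\mu}$ for a fixed $\mu=\mu(\nu)$ absorbing all derivative losses.

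The bound on $\lVert dG_\delta(\varphi,0,0)[\hat\imath]\rVert_s$ then follows by applying the interpolation (tame product) inequality $\lVert fg\rVert_s\lesssim_s \lVert f\rVert_s\lVert g\rVert_{s_0}+\lVert f\rVert_{s_0}\lVert g\rVert_s$ to each matrix-times-vector entry, with $f$ one of the blocks above and $g$ a component of $\hat\imath$: the diagonal (near-identity) part gives $\lVert\hat\imath\rVert_s$, and the off-diagonal parts give $\lVert\mathfrak{I}_0\rVert_{s+\mu}\lVert\hat\imath\rVert_{s_0}$ after using $\lVert\mathfrak{I}_0\rVert_{s_0+\mu}\lesssim 1$. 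For $dG_\delta(\varphi,0,0)^{-1}$ I would exhibit its explicit block form: since $dG_\delta$ is block lower-triangular with invertible diagonal blocks $\partial_\vartheta\theta_0$, $[\partial_\vartheta\theta_0]^{-T}$, $\mathrm{Id}$, its inverse is again block lower-triangular with diagonal $[\partial_\vartheta\theta_0]^{-1}$, $[\partial_\vartheta\theta_0]^{T}$, $\mathrm{Id}$ and off-diagonal entries obtained by finitely many products of the blocks already estimated; hence the same tame bound holds. For the second differential $d^2G_\delta(\varphi,0,0)[\hat\imath_1,\hat\imath_2]$ one differentiates the matrix entries once more in $\vartheta$ (raising the order losses in $\mathfrak{I}_0$, still absorbed into $\mu$) and applies the tame product inequality trilinearly, producing the three stated terms $\lVert\hat\imath_1\rVert_s\lVert\hat\imath_2\rVert_{s_0}$, $\lVert\hat\imath_1\rVert_{s_0}\lVert\hat\imath_2\rVert_s$, and $\lVert\mathfrak{I}_0\rVert_{s+\mu}\lVert\hat\imath_1\rVert_{s_0}\lVert\hat\imath_2\rVert_{s_0}$. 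Finally, the Lipschitz-in-$\omega$ (weighted) version follows verbatim: all the ingredients — tame products, matrix inversion, composition with $\theta_0$ — are available in the $\lVert\cdot\rVert_s^{\gamma,\calO_0}$ norm (see \eqref{tazza10} and the Appendices of \cite{FGP},\cite{FGP1}), and \eqref{AssumptionDP} is itself a bound on $\lVert\mathfrak{I}_0\rVert^{\gamma,\calO_0}$.

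The only mildly delicate point — and the one I would treat most carefully — is the bookkeeping of derivative losses in the composition $(\partial_\theta\tilde W_0)(\theta_0(\varphi))$ and in the inversion $[\partial_\vartheta\theta_0]^{-T}$: one must check that a single fixed $\mu=\mu(\nu)$, independent of $s$, suffices to dominate all of them simultaneously, and that the smallness \eqref{AssumptionDP} is enough to run the Neumann series for the matrix inverse at the base regularity $s_0+\mu$. This is routine given the quoted tame lemmas, so in the write-up I would simply point to Lemma~$6.7$ of \cite{KdVAut}, whose proof applies \emph{mutatis mutandis}, the only change being the replacement of the KdV-specific bounds by Lemma~\ref{TameEstimatesforVectorfieldsDP} and Lemma~\ref{isotropictorus} above; no new idea is required.
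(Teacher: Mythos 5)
Your proposal is correct and follows essentially the same route as the paper, which does not reprove the lemma but cites Lemma~6.7 of \cite{KdVAut}; the argument there is precisely the one you reconstruct (explicit block structure of $dG_{\delta}(\varphi,0,0)$ from \eqref{DGdp}, tame product/composition and Neumann-series inversion under \eqref{AssumptionDP}, the $y_\delta$ bounds of Lemma~\ref{isotropictorus}, and the same scheme in the $\lVert\cdot\rVert_s^{\gamma,\calO_0}$ norm). The only cosmetic slip is calling the matrix block lower-triangular as written (the $(\hat Y,\hat U)$ entry sits above the diagonal), but after swapping the $\hat Y$ and $\hat U$ blocks it is, and your explicit-inverse argument goes through unchanged.
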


\noindent
In order to construct an approximate inverse of \eqref{6.26DP} 
it is sufficient to solve the system of equations
\begin{equation}\label{Ddp}
\mathbb{D}[\hat{\vartheta}, \hat{Y}, \hat{U}, \hat{\Xi}]:=
\begin{pmatrix}
\omega\cdot\pa_{\vphi}\hat{\vartheta}-K_{20} (\varphi) \hat{Y}
-K_{11}^T(\varphi) \hat{U}\\
\omega\cdot\pa_{\vphi} \hat{Y}
+[\partial_{\vartheta} \theta_0(\varphi)]^T \hat{\Xi}\\
\omega\cdot\pa_{\vphi}\hat{U}-J K_{11}(\varphi)\hat{Y}
-J K_{02} (\varphi) \hat{U}
\end{pmatrix}=\begin{pmatrix}
g_1 \\ g_2 \\ g_3
\end{pmatrix}
\end{equation}
which is obtained by \eqref{6.26DP} neglecting the terms 
that are naught at a solution, namely, 
by Lemmata \ref{Lemma6.1DP} and 
\ref{Lemma6.4DP}, 
$\partial_{\vartheta} K_{10}, \partial_{\vartheta \vartheta} K_{00}, 
\partial_{\vartheta} K_{00}, \partial_{\vartheta} K_{01}$ 
and $\partial_{\vartheta} [\partial_{\vartheta} \theta_0(\varphi)]^T[\cdot , \zeta_0]$.
The term $(g_1,g_2,g_3)^{T}$ is assumed to satisfies the \eqref{trawaves}
and we look for a solution $(\hat{\vartheta}, \hat{Y}, \hat{U})$ of \eqref{Ddp} 
with the same property.

First, we solve the second equation, namely
\begin{equation}\label{secondeqDP}
\omega\cdot\pa_{\vphi}\hat{Y}=g_2-[\partial_{\vartheta} \theta_0(\varphi)] \hat{\Xi}\,.
\end{equation}
We choose $\hat{\Xi}:=\langle g_2 \rangle_{\varphi}$ (where 
$\langle \cdot\rangle_{\vphi}$ denotes the $\vphi$-average)
so that the 
$\varphi$-average of the right hand side of 
\eqref{secondeqDP} is zero.
Note that the $\varphi$-averaged matrix 
$\langle(\partial_{\vartheta} \theta_0)^T \rangle_{\varphi}
=\langle \mathrm{Id}+(\partial_{\vartheta} \Theta_0)^T \rangle_{\varphi}
=\mathrm{Id}$ because 
$\theta_0(\varphi)=\varphi
+\Theta_0(\varphi)$ and 
$\Theta_0(\varphi)$ is periodic in each component. 
Therefore
\begin{equation}\label{etaDP}
\hat{Y}=(\omega\cdot\pa_{\vphi})^{-1} (g_2-[\partial_{\vartheta} \theta_0(\varphi)]^T 
\langle g_2 \rangle_{\varphi})
+\langle\hat{Y} \rangle_{\varphi}\,, 
\qquad \langle\hat{Y} \rangle_{\varphi}\in\mathbb{R}^{\nu}\,,
\end{equation}
where the average $\langle\hat{Y} \rangle_{\varphi}$ 
will be fix when we deal with the first equation.
We remark that, by assumption on $i_0$ and $(g_1,g_2,g_3)^{T}$
one has (recall $\theta_0(\vphi)=\vphi+\Theta_0(\vphi)$)
\begin{equation}\label{paganini1}
\mathtt{v}\cdot\pa_{\vphi}\Theta_0(\vphi)=\mathtt{v}\cdot\pa_{\vphi}g_2(\vphi) =0
\;\;\;\;\Rightarrow\;\;\;\;
\mathtt{v}\cdot\pa_{\vphi}\hat{Y}(\vphi)=0\,.
\end{equation}

We now analyze the third equation, namely
\begin{equation}\label{thirdeqDP}
\mathcal{L}_{\omega} \hat{U}=g_3+J K_{11}(\varphi) \hat{Y}\,, 
\qquad \mathcal{L}_{\omega}:=\omega\cdot \partial_{\varphi}-J K_{02} (\varphi)\,.
\end{equation}
Since $\hat{Y}$ has been fixed in \eqref{etaDP} (up to a constant),
then solving the equation \eqref{thirdeqDP} 
is tantamount to invert the operator $\mathcal{L}_{\omega}$.
For the moment we make the following assumption 
(that will be proved in section \ref{sez:KAMredu}):

\begin{itemize}
\item
\textbf{Inversion Assumption:} 
There exist $\gotp_1:=\gotp_1(\nu)>0$ 
and a set 
$\Omega_{\infty}\subset \mathcal{O}_0\subseteq \Omega_{\varepsilon}$ 
such that for all $\omega\in \Omega_{\infty}$ 
and every $G\in \Big(H_{S^{\perp}}^{s+2\tau+1}\cap S_{\mathtt{v}}\Big)^2$ 
(see \eqref{funzmom}), 
there exists a solution $h:=\mathcal{L}_{\omega}^{-1} G\in \Big(H_{S^{\perp}}^{s}\cap S_{\mathtt{v}}\Big)^2$ 
of the linear equation $\mathcal{L}_{\omega} h=G$ and satisfies
\begin{equation}\label{InversionAssumptionDP}
\lVert \mathcal{L}_{\omega}^{-1} G \rVert_s^{\gamma, \Omega_{\infty}}
\lesssim_s
\gamma^{-1} (\lVert G \rVert_{s+2\tau+1}^{\gamma, \Omega_{\infty}}
+\varepsilon \gamma^{-6}
\lVert \mathfrak{I}_\delta \rVert_{s+\gotp_1}^{\gamma, \calO_0}
\lVert G \rVert_{s_0}^{\gamma, \Omega_{\infty}})\,.
\end{equation}	
\end{itemize}
By the above assumption, there exists a solution of \eqref{thirdeqDP}
\begin{equation}\label{wDP}
\hat{U}=\mathcal{L}_{\omega}^{-1}[g_3+J\, K_{11}(\varphi)\hat{Y}]\,.
\end{equation}
Moreover, recalling the assumption on $g_3$ and using Remark \ref{paganini},
we have
 \begin{equation}\label{paganini2}
(\mathtt{v}\cdot\pa_{\vphi}+\pa_{x})\hat{U}=0\,.
\end{equation}

Now consider the first equation
\begin{equation}\label{firsteqDP}
\omega\cdot\pa_{\vphi}\hat{\vartheta}=g_1+K_{20} \hat{Y}-K_{11}^T(\varphi) \hat{U}\,.
\end{equation}
 Substituting \eqref{etaDP}, \eqref{wDP} in the equation \eqref{firsteqDP}, we get 
 \begin{equation}\label{firsteq2DP}
\omega\cdot\pa_{\vphi} \hat{\vartheta}= g_1+M_1(\varphi)\langle \hat{Y} \rangle_{\varphi}+M_2(\varphi) g_2+M_3(\varphi) g_3-M_2(\varphi)[\partial_{\vartheta} \theta_0]^T \langle g_2 \rangle_{\varphi}\,,
 \end{equation}
 where
 \begin{equation}
 M_1(\varphi):=K_{20}(\varphi)
 -K_{11}^T(\varphi)\mathcal{L}_{\omega}^{-1}J K_{11}(\varphi)\,, 
 \quad 
 M_2(\varphi):=M_1(\varphi)(\omega\cdot\pa_{\vphi})^{-1}\,, 
 \quad 
 M_3(\varphi):=-K_{11}^T(\varphi)\mathcal{L}_{\omega}^{-1}\,.
 \end{equation}
 In order to solve the equation \eqref{firsteq2DP} 
 we have to choose $\langle\hat{Y}\rangle_{\varphi}$ 
 such that the right hand side in \eqref{firsteq2DP} 
 has zero $\varphi$-average.
 By Lemma \ref{Lemma6.6dp} and \eqref{AssumptionDP}, 
 the $\varphi$-averaged matrix $\langle M_1 \rangle_{\varphi}$ is invertible, 
 for $\varepsilon$ small, and 
 $\langle M_1 \rangle_{\varphi}^{-1}=
 O(\varepsilon^{-2b})=O(\gamma^{-1})$.
 Thus we define
 \begin{equation*}
\langle\hat{Y}\rangle_{\varphi}=
-(\langle M_1 \rangle_{\varphi})^{-1}\{\langle g_1 \rangle_{\varphi}
+\langle M_2 g_2 \rangle_{\varphi}
+\langle M_3 g_3  \rangle_{\varphi}
-\langle M_2(\partial_{\vartheta} \theta_0)^T \rangle_{\varphi}\,
\langle g_2  \rangle_{\varphi}\}\,.
 \end{equation*}
 With this choice of $\langle \hat{Y}  \rangle_{\varphi}$ 
 the equation \eqref{firsteq2DP} has the solution
 \begin{equation*} 
 \hat{\vartheta}:=\big(\omega\cdot \partial_{\varphi} \big)^{-1}\Big(g_1
 +M_1(\varphi) \langle\hat{\eta} \rangle_{\varphi}
 +M_2(\varphi) g_2+M_3(\varphi) g_3
 -M_2(\varphi)[\partial_{\vartheta} \theta_0]^T 
 \langle g_2 \rangle_{\varphi} \Big)\,.
 \end{equation*}
Again, using Remark \ref{paganini} and \eqref{firsteqDP}, we get
 \begin{equation}\label{paganini3}
 \mathtt{v}\cdot\pa_{\vphi}\hat{\vartheta}=0\,.
 \end{equation}
In conclusion, we have constructed a solution 
$(\hat{\vartheta}, \hat{Y}, \hat{U}, \hat{\Xi})$ 
of the linear system \eqref{Ddp} which satisfies \eqref{trawaves}.  
Consider the operator 
\begin{equation*} 
\mathbf{T}_0:=(d \tilde{G}_{\delta})(\varphi, 0, 0)\circ
\mathbb{D}^{-1}\circ (d G_{\delta}(\varphi, 0, 0))^{-1}\,,
\end{equation*}
 where $\tilde{G}_{\delta}((\vartheta, Y, U), \Xi)$ 
 is the identity on the $\Xi$-component.
By \eqref{paganini1}, \eqref{paganini2}, \eqref{paganini3} 
and using Lemma \ref{subspaceMomento}
one can check that ${\bf T}_0g$ satisfies \eqref{trawaves} 
if $g$ satisfies \eqref{trawaves}.
We denote the norm 
$\lVert (\vartheta, Y, U, \Xi)\rVert_s^{\gamma, \calO}
:=\max \{ \lVert (\vartheta, Y, U) \rVert_s, \lvert \Xi \rvert^{\gamma, \calO} \}$.
 In \cite{BertiBolle} (see also \cite{KdVAut},\cite{Giuliani}) 
 the following result is proved.
\begin{theorem}\label{TeoApproxInvDP}
Assume \eqref{AssumptionDP} and the \textbf{inversion assumption}.
Then there exists $\mu_1:=\mu_1( \nu)$  such that, 
for all $\omega\in \Omega_{\infty}$, the following holds:

\noindent
1.  for all $g:=(g^{(\theta)}, g^{(y)}, g^{(W)})$ satisfying 
\eqref{trawaves} ${\bf T}_0g$ satisfies \eqref{trawaves} and 
\begin{equation*} 
\lVert \mathbf{T}_0 g \rVert_s^{\gamma, \Omega_{\infty}}
\lesssim_s\gamma^{-1}(\lVert g \rVert_{s+\mu_1}^{\gamma, \Omega_{\infty}}
+\varepsilon\gamma^{-6} 
\lVert \mathfrak{I}_0 \rVert_{s+\mu_1}^{\gamma, \calO_0}
 \lVert g \rVert_{s_0+\mu_1}^{\gamma, \Omega_{\infty}})\,.
\end{equation*}
\noindent
2. $\mathbf{T}_0$  is an approximate inverse of $\td \mathcal{F}(i_0)$, 
namely
\begin{equation*} 
\begin{aligned}
&\lVert (d \mathcal{F}(i_0)\circ \mathbf{T}_0-\mathrm{I}) 
g \rVert_s^{\gamma, \Omega_{\infty}}
\lesssim_s \varepsilon^{2 b-1}\g^{-2} 
\Big( \lVert \mathcal{F}(i_0, \Xi_0) \rVert_{s_0+\mu_1}^{\gamma, \calO_0}
\lVert g \rVert_{s+\mu_1}^{\gamma, \Omega_{\infty}}\\
&\qquad\qquad\qquad 
+\{\lVert \mathcal{F}(i_0, \Xi_0)\rVert_{s+\mu_1}^{\gamma, \calO_0}
+\varepsilon \gamma^{-6}
\lVert \mathcal{F}(i_0, \Xi_0) \rVert_{s_0+\mu_1}^{\gamma, \calO_0}
\lVert \mathfrak{I}_0 \rVert_{s+\mu_1}^{\gamma, \calO_0}\} 
\lVert g \rVert_{s_0+\mu_1}^{\gamma, \Omega_{\infty}} \Big)\,.
\end{aligned}
\end{equation*}
\end{theorem}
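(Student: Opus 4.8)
The plan is to follow the Berti--Bolle strategy exactly as referenced, carrying out the construction of $\mathbf{T}_0$ that is sketched above and then verifying the two tame estimates. The key observation is that all the pieces are already in place in the excerpt: the isotropic modification $i_\delta$ (Lemma \ref{isotropictorus}), the symplectic straightener $G_\delta$ (equation \eqref{GdeltaDP}) which by Lemma \ref{subspaceMomento} preserves the travelling subspace, the explicit linear triangular system $\mathbb{D}$ in \eqref{Ddp}, and the \textbf{Inversion Assumption} \eqref{InversionAssumptionDP} for $\mathcal{L}_\omega$. So the proof is really a bookkeeping of estimates, following Theorem $4.1$ (or the analogous statement) in \cite{BertiBolle} and its adaptations in \cite{KdVAut}, \cite{Giuliani}.

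First I would recall how the error decomposes. One writes $d\mathcal{F}(i_0) = d\mathcal{F}(i_\delta) + (d\mathcal{F}(i_0) - d\mathcal{F}(i_\delta))$, and the second difference is controlled using $\|y_\delta - y_0\|_s$ from Lemma \ref{isotropictorus} together with the vector field estimates of Lemma \ref{TameEstimatesforVectorfieldsDP}. Then, conjugating $d\mathcal{F}(i_\delta)$ by $dG_\delta(\varphi,0,0)$, one gets the operator \eqref{6.26DP} up to terms which, by Lemmata \ref{Lemma6.1DP} and \ref{Lemma6.4DP}, are of size $O(\|Z\|)$ at the approximate solution (these are precisely $\partial_\vartheta K_{10}$, $\partial_{\vartheta\vartheta}K_{00}$, $\partial_\vartheta K_{00}$, $\partial_\vartheta K_{01}$ and the $\Xi_0$-dependent term). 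Discarding these produces the triangular system $\mathbb{D}$, whose inverse $\mathbb{D}^{-1}$ has been constructed above in closed form: solve the $\hat Y$-equation by choosing $\hat\Xi = \langle g_2\rangle_\varphi$, then the $\hat U$-equation via $\mathcal{L}_\omega^{-1}$, then the $\hat\vartheta$-equation by fixing $\langle \hat Y\rangle_\varphi$ using the invertibility of $\langle M_1\rangle_\varphi$ (Lemma \ref{Lemma6.6dp} gives $\langle M_1\rangle_\varphi^{-1}=O(\gamma^{-1})$). Each solve costs $\gamma^{-1}$ and a fixed loss $2\tau+1$ of derivatives, plus the $\varepsilon\gamma^{-6}$-type corrections coming from \eqref{InversionAssumptionDP}; collecting these and using the interpolation/tame estimates of Lemma \ref{LemmaDG} for $dG_\delta^{\pm1}$ gives item 1 with a suitable $\mu_1=\mu_1(\nu)$. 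For the travelling-wave property: by Lemma \ref{Fpreservatrav} the datum $g$ can be assumed travelling, equations \eqref{paganini1}, \eqref{paganini2}, \eqref{paganini3} show $(\hat\vartheta,\hat Y,\hat U)$ is travelling, and Lemma \ref{subspaceMomento} propagates this through $d\tilde G_\delta$, so $\mathbf{T}_0 g$ satisfies \eqref{trawaves}.

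For item 2, the identity $d\mathcal{F}(i_0)\circ\mathbf{T}_0 - \mathrm{I}$ is a sum of the terms that were dropped in passing from \eqref{6.26DP} to $\mathbb{D}$, each multiplied by $\mathbb{D}^{-1}$ and conjugated back. The dropped terms are all $O(\|\mathcal{F}(i_0,\Xi_0)\|)$ in low norm by Lemmata \ref{Lemma6.1DP}, \ref{Lemma6.4DP}, while $\mathbb{D}^{-1}$ contributes the $\gamma^{-1}$ and the $\varepsilon\gamma^{-6}\|\mathfrak{I}_0\|$ factors; combined with $\varepsilon^{2b}=\gamma$ one obtains the stated bound with the $\varepsilon^{2b-1}\gamma^{-2}$ prefactor. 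The difference $d\mathcal{F}(i_0)-d\mathcal{F}(i_\delta)$ is handled the same way using the bound on $\|y_\delta-y_0\|_s$ from Lemma \ref{isotropictorus}, which is itself $O(\gamma^{-1}\|Z\|)$. All of this is routine once organized; the only genuinely nontrivial analytic input is the \textbf{Inversion Assumption} \eqref{InversionAssumptionDP}, whose proof is deferred to section \ref{sez:KAMredu} and is not part of this theorem.

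The main obstacle, therefore, is not in this proof at all but in making sure the constant $\mu_1$ is consistent with the derivative losses accumulated in \eqref{InversionAssumptionDP} (namely $2\tau+1$) plus those from $\mathfrak{d}$ in Lemma \ref{isotropictorus} and $\mu$ in Lemmata \ref{Lemma6.4DP}, \ref{Lemma6.6dp}, \ref{LemmaDG}; one takes $\mu_1$ larger than the sum of all these. A secondary point requiring a little care is that the various solves must be performed in the travelling subspace $S_{\mathtt v}$ throughout — in particular the Inversion Assumption is stated precisely for data in $H^{s}_{S^\perp}\cap S_{\mathtt v}$, so one must check at each stage (which \eqref{paganini1}--\eqref{paganini3} do) that the intermediate right-hand sides $g_2 - [\partial_\vartheta\theta_0]^T\langle g_2\rangle_\varphi$, $g_3 + JK_{11}\hat Y$, etc.\ remain in the correct subspace, using Remark \ref{paganini}. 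Since every ingredient is quoted from the excerpt, the write-up amounts to assembling them in the order above and tracking the powers of $\varepsilon$ and $\gamma$; I would simply cite \cite{BertiBolle} and \cite{KdVAut} for the estimates that are verbatim and indicate only the modifications needed to keep track of the momentum constraint.
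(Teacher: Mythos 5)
Your proposal follows exactly the route the paper takes: it reduces the statement to the Berti--Bolle scheme of Theorem $6.10$ in \cite{KdVAut}, assembling the tame bounds of Lemmata \ref{isotropictorus}, \ref{Lemma6.1DP}, \ref{Lemma6.4DP}, \ref{Lemma6.6dp}, \ref{LemmaDG} with the inversion assumption \eqref{InversionAssumptionDP}, and handling the only new ingredient (the momentum constraint) via Lemma \ref{subspaceMomento} and \eqref{paganini1}--\eqref{paganini3}, precisely as the paper does. The proof is correct and essentially identical in structure to the paper's, which likewise defers the verbatim estimates to the cited reference.
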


The proof of the above theorem follows word by 
word the one of Theorem $6.10$ in \cite{KdVAut}.
It is based on tame bounds for the map $G_{\delta}$ 
and the coefficients of the Taylor expansion of the 
Hamiltonian $K$ at the trivial torus $(\varphi, 0, 0)$, 
see Lemmata \ref{Lemma6.4DP}, 
\ref{Lemma6.6dp}, \ref{isotropictorus}, \ref{LemmaDG}.


\section{The linearized operator in the normal directions}\label{sec:Linopnormal}

Recalling the assumption \eqref{AssumptionDP}, 
in the sequel we assume that 
$\mathfrak{I}_{\delta}:=\mathfrak{I}_{\delta}(\varphi; \omega)
=i_{\delta}(\varphi;\, \omega)-(\varphi,\, 0,\, 0)$ 
satisfies, for some $\gotp_1=\gotp_1(\nu)>0$,
\begin{equation}\label{IpotesiPiccolezzaIdeltaDP}
\lVert \mathfrak{I}_{\delta} \rVert_{s_0+\gotp_1}^{\g,\calO_0}
\lesssim\,\varepsilon^{16-2 b}\gamma^{-1}\,,
\end{equation}
and the momentum condition \eqref{trawaves}.
We note that $G_\delta$ in \eqref{GdeltaDP} 
is the identity plus a translation plus a finite rank linear 
operator and 
it is
 $O(\varepsilon^{16-2 b}\gamma^{-1})$-close to the identity in low norm
 $s_0+\gotp_1$.
Returning to the initial variables 
we define  (see \eqref{AepsilonDP}, \eqref{GdeltaDP}) 
$T_{\delta}:=T_{\delta}(\vphi)$ as
\begin{equation}\label{TdeltaDP}
T_{\delta}
:={\bf A}_{\varepsilon}(\Lambda G_{\delta}(\varphi, 0, 0))
= \e \vect{v_\delta}{\ov{v_\delta}} +\e^b W_0\,,
\quad v_\delta = \sum_{j\in S} \sqrt{\zeta_j + \e^{2b-2}y_{\delta j}(\varphi)} 
e^{\mathrm{i} (j  x - \theta_{0 j}(\varphi))  }
\end{equation}
and we have, for some $\mu:=\mu(\nu)>0$,
\begin{equation*} 
\lVert \Phi_B(T_{\delta}) \rVert_s^{\gamma, \calO_0}
\lesssim_s \varepsilon\, (1+\lVert \mathfrak{I}_{\delta} 
\rVert^{\gamma, \calO_0}_{s+\mu})\,, 
\qquad
\lVert d_i \Phi_B(T_{\delta}) [\hat{\imath}] \rVert_s
\lesssim_s \varepsilon(\lVert \hat{\imath}\rVert_{s+\mu}
+ \lVert \mathfrak{I}_{\delta} \rVert_{s+\mu}
\lVert \hat{\imath} \rVert_{s_0+\mu})\,.
\end{equation*}
By following section $7$ in \cite{KdVAut} (see Lemma $7.1$),  
$K_{02}$ in \eqref{thirdeqDP} 
has rather explicit estimates 
(see also Proposition 6.2 in 
\cite{FGP1}).
In the next proposition we will give a more 
explicit formulation of $K_{0 2}(\vphi)$.
Notice that,
by the shape derivative formula \eqref{shapeDer},  
the linearized operator of \eqref{eq:113}
at $(\eta,\psi)(\vphi,x)$ is given by 
\begin{equation}\label{linWW}
\mathcal{L}:=\mathcal{L}(\vphi):=\omega\cdot\pa_{\vphi}+
\left(
\begin{matrix}
\pa_{x}V+G(\eta)B & -G(\eta) \\
(1+BV_x)+BG(\eta)B & V\pa_{x}-BG(\eta)
\end{matrix}
\right)\,.
\end{equation}
where $V$, $B$  are given in  \eqref{def:V} and \eqref{form-of-B}.
Hence we have the following.

\begin{proposition}\label{LinearizzatoIniziale}
Assume \eqref{IpotesiPiccolezzaIdeltaDP}. Then there 
exists $\mu_0=\mu_0(\nu)>0$ such that the 
following holds. The Hamiltonian 
operator $\mathcal{L}_{\omega}$ in \eqref{thirdeqDP} 
has the form
\begin{equation}\label{BoraMaledetta}
\mathcal{L}_{\omega}=
\Pi_S^{\perp}\big(\omega\cdot\partial_{\varphi}
-J\pa_{(\eta,\psi)}\nabla_{(\eta,\psi)}H\big(\Lambda^{-1}
\Phi_{B}(T_{\delta})\big)
+\mathcal{Q}_0\big)
=\Pi_S^{\perp}\big( \mathcal{L}+\mathcal{Q}_0\big) \,,
\end{equation}
where $T_{\delta}$ is defined in \eqref{TdeltaDP}, $\Phi_B$ is the 
Birkhoff map given in Proposition \ref{WBNFWW},  
$H$ is the Hamiltonian in \eqref{Hamiltonian}
and $\mathcal{L}$ is the operator in \eqref{linWW}.
The operator $\mathcal{Q}_0$ is finite rank and has the form
\begin{equation}\label{FiniteDimFormDP}
\mathcal{Q}_0(\varphi) w=\sum_{\lvert j \rvert\le C} 
\int_0^1 (w, g_j(\tau, \varphi))_{L^2}\,
\chi_j(\tau, \varphi)\,d\tau\,,
\end{equation}
for some functions 
$g_{j}(\tau,\cdot), \chi_{j}(\tau,\cdot)\in H^{s}\cap H_{S}^{\perp}$. 
In particular we consider the expansion
$\mathcal{Q}_0= \sum_{i=1}^{12} \e^i\mathcal{R}_i +\mathcal{R}_{>12}$,
where the remainders $\mathcal{R}_i$, $i=1,\ldots,12$ have 
the form \eqref{FiniteDimFormDP}, they
do not depend 
on $\mathfrak{I}_\delta$ and satisfy
\begin{equation}\label{DecayR2dp}
\lVert g^{(i)}_j \rVert_s^{\gamma, \calO_0}
+\lVert \chi_j^{(i)} \rVert_s^{\gamma, \calO_0}
\lesssim_s 1\,,\qquad i=1,\ldots,12\,,
\end{equation}
while $\mathcal{R}_{>12}$ satisfies
\begin{equation}\label{DecayR*dp}
\begin{aligned}
 \lVert g^{>12}_j \rVert_s^{\gamma, \calO_0}
 \lVert \chi_j^{>12} \rVert_{s_0}^{\gamma, \calO_0}&
 +\lVert g^{>12}_j \rVert_{s_0}^{\gamma, \calO_0}
 \lVert \chi_j^{>12} \rVert_{s}^{\gamma, \calO_0}\lesssim_s 
\varepsilon^{13}+\varepsilon^{2}\lVert \mathfrak{I}_{\delta} 
\rVert_{s+\mu_0}^{\gamma, \calO_0}\,, 
\\[2mm]
\lVert d_i g^{>12}_j [\hat{\imath}] \rVert_s\lVert \chi_j^{>12} \rVert_{s_0}
+\lVert d_i g^{>12}_j [\hat{\imath}] \rVert_{s_0}\lVert \chi_j^{>12} \rVert_{s}
&+\lVert g^{>12}_j \rVert_{s_0} \lVert d_i \chi^{>12}_j[\hat{\imath}] \rVert_{s}
+\lVert g^{>12}_j \rVert_{s} \lVert d_i \chi^{>12}_j [\hat{\imath}]\rVert_{s_0} 
\\ 
&\qquad \qquad \qquad \qquad
\lesssim_s \varepsilon^{2} \lVert \hat{\imath} \rVert_{s+\mu_0}
+\varepsilon^{b}\lVert \mathfrak{I}_{\delta} \rVert_{s+\mu_0}
 \lVert \hat{\imath} \rVert_{s_0+\mu_0}\,.
\end{aligned}
\end{equation}
Finally, recalling the Definition \ref{LipTameConstants},
 we have
 \begin{equation}\label{dude}
\begin{aligned}
\mathfrak{M}^{\gamma}_{\mathcal{Q}_0}(0,s)
&\lesssim_s \varepsilon^2 
(1+\lVert \mathfrak{I}_{\delta}
\rVert^{\gamma, \calO_0}_{s+\mu_0})\,,\\
\mathfrak{M}_{ d_i \mathcal{Q}_0 [\hat{\imath}] }(0,s)&\lesssim_s 
\varepsilon^2 \lVert \hat{\imath} \rVert_{s+\sigma_0}+\varepsilon^{2b-1} 
\lVert \mathfrak{I}_{\delta}\rVert_{s+\mu_0}
\lVert \hat{\imath} \rVert_{s_0+\mu_0}\,.
\end{aligned}
\end{equation}
\end{proposition}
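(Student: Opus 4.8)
The plan is to follow the standard route of Berti-Bolle type arguments (as in \cite{KdVAut}, section $7$, and Proposition $6.2$ in \cite{FGP1}), adapted to keep track of the homogeneity in $\e$ and of the traveling-wave/momentum structure. First I would recall that, by the general theory developed in \cite{BertiBolle} and recalled in section \ref{sezione6DP}, the operator $\mathcal{L}_\omega$ in \eqref{thirdeqDP} is obtained by linearizing the Hamiltonian vector field $X_{H_\e}$ at the isotropic torus $i_\delta$, restricted to the normal directions via the projectors associated to $G_\delta$. The first step is therefore to express $K_{02}(\vphi)$ in terms of the linearization of the \emph{original} water waves vector field: composing with the Birkhoff map $\Phi_B$ of Proposition \ref{WBNFWW}, with the action-angle and rescaling maps ${\bf A}_\e$, and with $\Lambda$, one writes $\mathcal{L}_\omega = \Pi_S^\perp\big(\mathcal{L} + \mathcal{Q}_0\big)$ where $\mathcal{L}$ is precisely the operator \eqref{linWW} evaluated at $(\eta,\psi) = \Lambda^{-1}\Phi_B(T_\delta)$, and $\mathcal{Q}_0$ collects (i) the corrections coming from the fact that $d\Phi_B$ is not the identity, (ii) the terms proportional to $\partial_\vartheta K_{10},\partial_{\vartheta\vartheta}K_{00},\partial_\vartheta K_{01}$ which, by Lemmata \ref{Lemma6.1DP}, \ref{Lemma6.4DP}, are finite rank and supported on frequencies $|j|\le C$, and (iii) the contributions of the symplectic corrector in $G_\delta$ of the form $[(\partial_\theta\tilde W_0)(\theta_0)]^T J^{-1}$. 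Since $\Phi_B = \mathrm{I}+\Psi$ with $\Psi = \Pi_E\Psi\Pi_E$ by \eqref{FBI}, and all the other correctors are manifestly finite rank with range in $E_C$, one obtains the representation \eqref{FiniteDimFormDP} with $g_j,\chi_j \in H^s\cap H_S^\perp$.

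The second step is the homogeneity expansion. Here I would use that $T_\delta = \e\vect{v_\delta}{\ov{v_\delta}} + \e^b W_0$ from \eqref{TdeltaDP}, that $\Phi_B$ is analytic with $\Psi$ vanishing to second order at the origin and Taylor-expandable in the finitely many excited modes, and that the quartic-and-higher part of $H$ contributes the leading corrections. Plugging the expansion of $\Phi_B(T_\delta)$ into the linearized vector field and collecting powers of $\e$ produces $\mathcal{Q}_0 = \sum_{i=1}^{12}\e^i\mathcal{R}_i + \mathcal{R}_{>12}$; the $\mathcal{R}_i$ with $i\le 12$ come from the part of the expansion that only involves $v_\delta$ (hence only the action-angle variables $\theta_0,y_\delta$, which depend on $\mathfrak{I}_\delta$ but whose leading term $\sqrt{\zeta_j}e^{\mathrm{i}(jx-\theta_{0j})}$ can, for the purpose of this expansion, be frozen so that the $\mathcal{R}_i$ are \emph{independent} of $\mathfrak{I}_\delta$ up to order $12$), while $\mathcal{R}_{>12}$ absorbs all the remaining terms, which are of size $\e^{13}$ plus $\mathfrak{I}_\delta$-dependent corrections. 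The precise bookkeeping is dictated by $b = 1+a/2$ with $a$ small, so that $\e^b$ is only slightly larger than $\e$ and the $\e^b W_0$ terms enter at order $\ge 13$ under the standing assumption \eqref{IpotesiPiccolezzaIdeltaDP}.

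The third step consists of the quantitative estimates. For the $\mathcal{R}_i$, $i\le 12$, the bound \eqref{DecayR2dp} follows because the $g_j^{(i)},\chi_j^{(i)}$ are explicit finite combinations of the functions $\sqrt{\zeta_j}e^{-\mathrm{i}\theta_{0j}}$, their derivatives, and the fixed symbols of the water waves vector field, all of which have $\|\cdot\|_s^{\gamma,\calO_0}\lesssim_s 1$ since $\|\theta_0-\vphi\|_{s}^{\gamma,\calO_0}\lesssim \e^{16-2b}\gamma^{-1}\ll 1$. For $\mathcal{R}_{>12}$, the estimates \eqref{DecayR*dp} follow from the tame estimates of Lemma \ref{TameEstimatesforVectorfieldsDP} on the perturbation $P$ and its differentials, from the tame estimates on $G_\delta$ in Lemma \ref{LemmaDG} and on $i_\delta$ in Lemma \ref{isotropictorus}, together with the interpolation/product inequalities for tame operators; the $\varepsilon^{13}$ comes from the $\e^b W_0$-contributions and $\e^{2}\|\mathfrak{I}_\delta\|_{s+\mu_0}^{\gamma,\calO_0}$ from the variation of the frozen action-angle profile. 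Finally \eqref{dude} is obtained by summing the finitely many contributions: $\mathfrak{M}^\gamma_{\mathcal{Q}_0}(0,s)\lesssim_s \sum_{i\le 12}\e^i \|g_j^{(i)}\|_{s}^{\gamma,\calO_0}\|\chi_j^{(i)}\|_{s_0}^{\gamma,\calO_0} + (\text{contribution of }\mathcal{R}_{>12}) \lesssim_s \e^2(1+\|\mathfrak{I}_\delta\|_{s+\mu_0}^{\gamma,\calO_0})$, using that $\mathcal{R}_1$ is absent (the cubic part of $H$ produces no order-$\e$ finite rank term once projected, since $\mathcal{Q}_0$ comes from $\Phi_B = \mathrm{I}+O(u^2)$) so the leading order is $\e^2$, and similarly for $d_i\mathcal{Q}_0[\hat\imath]$ using the second line of \eqref{DecayR*dp} and the differentials of the $\mathcal{R}_i$.

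I expect the main obstacle to be the careful identification of which terms are genuinely independent of $\mathfrak{I}_\delta$ — that is, organizing the $\e$-expansion so that the first $12$ finite-rank remainders $\mathcal{R}_i$ depend only on the fixed data ($S$, $\zeta$, $\Phi_B$, the water waves symbols) and not on the unknown embedding, while pushing all the $\mathfrak{I}_\delta$-dependence into $\mathcal{R}_{>12}$ at a controlled size. This requires tracking the composition $\Lambda^{-1}\Phi_B\circ{\bf A}_\e\circ \Lambda G_\delta(\vphi,0,0)$ through the Taylor expansion and separating the ``frozen torus'' part $\e v = \e\sum_{j\in S}\sqrt{\zeta_j}e^{\mathrm{i}(jx-\theta_{0j})}$ from the $y_\delta$- and $W_0$-corrections; the book-keeping is delicate but purely algebraic, and all the analytic estimates reduce to the tame/interpolation lemmas already available. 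The momentum/traveling-wave structure — that all the $g_j,\chi_j$ can be taken in $S_{\mathtt v}$, consistently with Lemma \ref{Fpreservatrav} and the invariance of the construction under $x$-translations — is inherited automatically from the fact that $\Phi_B$, ${\bf A}_\e$, $G_\delta$ and the water waves vector field all commute with the momentum, and requires no extra work beyond invoking Definition \ref{admiOpComp} and Lemma \ref{lem:momentocomp}.
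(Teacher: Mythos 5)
Your proposal follows essentially the same route as the paper's proof: one identifies $\mathcal{L}_\omega$ with the linearized water waves operator $\mathcal{L}$ at $\Lambda^{-1}\Phi_B(T_\delta)$ plus finite rank corrections produced whenever a derivative falls on $\Phi_B\circ{\bf A}_\varepsilon\circ G_\delta$ (using that $G_\delta$ and $\Phi_B$ are the identity plus finite rank and ${\bf A}_\varepsilon$ is a rescaling plus finite rank), then Taylor-expands the analytic map $\Phi_B=\mathrm{I}+\sum_{i\geq 2}\Psi_i$, evaluates the homogeneous pieces at the $\mathfrak{I}_\delta$-independent torus to isolate the $\mathcal{R}_i$ (so that indeed the expansion starts effectively at order $\varepsilon^2$, as you correctly note), and absorbs everything else into $\mathcal{R}_{>12}$ via the tame estimates of Lemmata \ref{TameEstimatesforVectorfieldsDP}, \ref{isotropictorus}, \ref{LemmaDG}. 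The one point to fix is that the frozen profile must be $v_{I}(\varphi,x)=\sum_{j\in S}\sqrt{\zeta_j}\,e^{\mathrm{i}(jx+\mathtt{l}(j)\cdot\varphi)}$ of \eqref{vsegnatoDP}, i.e.\ with $\theta_0(\varphi)$ replaced by $\varphi$ and $y_\delta=0$, not $\sum_{j\in S}\sqrt{\zeta_j}\,e^{\mathrm{i}(jx-\theta_{0j})}$ as written in your second and fourth paragraphs (which still depends on $\mathfrak{I}_\delta$ through $\Theta_0$); the difference $v_\delta-v_I$ is of size $\lVert\mathfrak{I}_\delta\rVert$ and is exactly what the paper pushes into $\tilde q$ and hence into $\mathcal{R}_{>12}$, as in \eqref{PhiBdp}--\eqref{QtildaDP}.
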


\begin{proof}
The expression \eqref{BoraMaledetta} follows from the 
Definition \eqref{thirdeqDP} and \eqref{Kdp} 
by remarking that $G_\delta$ (up to a translation)
and the weak BNF transformation $\Phi_B$ are the identity plus a 
finite rank operator, while the action angle change 
of coordinates is a rescaling plus 
a finite rank operator.
Then by chain rule  we get  
\begin{equation*}
\begin{aligned}
d_{\widetilde{W}} \nabla_{\widetilde{W}} (H_\e\circ G_\delta ) &\stackrel{\eqref{HamiltonianaRiscalataDPreal}}{=}
 \e^{-2b} (d_{W} \nabla_W (\mathcal{H}_{\mathbb{C}}
 \circ {\bf A}_\e))\circ G_\delta 
 + R_1 
=   (d_{z} \nabla_z \mathcal{H}_{\mathbb{C}})
\circ {\bf A}_\e\circ G_\delta + R_1   \\ 
&\stackrel{\eqref{piotta}}{=}
(d_{W} \nabla_{W} (H\circ\Lambda^{-1}\circ\Phi_{B}))
\circ {\bf A}_\e\circ G_\delta + R_1 
\\&
=  (d_{W} \nabla_{W} H)\circ\Lambda^{-1}
\circ\Phi_{B}\circ {\bf A}_\e\circ G_\delta
 + R_1 + R_2
\end{aligned}
\end{equation*}
 where the finite rank part contains 
all the terms where a derivative falls 
on $\Phi_{B}\circ {\bf A}_\e\circ G_\delta$.
Then \eqref{BoraMaledetta} follows from 
the definition of $H$ in \eqref{Hamiltonian} (see \eqref{linWW}).
Regarding the 
bounds \eqref{DecayR2dp}, \eqref{DecayR*dp}, 
we split the finite rank part $R_1+R_2$ as follows. 
The operator $R_1$ contains all terms arising from 
derivatives of $G_\delta$. By tame estimates 
on the map $G_{\delta}$ 
(see for instance Lemma $6.7$ in \cite{KdVAut}), 
it satisfies the bounds \eqref{DecayR*dp} and 
 we put it in $\mathcal{R}_{>12}$. The finite rank term
$R_2$   comes from the Birkhoff map.  
This is an analytic map so we consider
 the Taylor expansion 
\begin{equation*} 
\Phi_B(u,\bar{u})=\vect{u}{\bar{u}}+\sum_{i=2}^{12}\Psi_i(u,\bar{u})
+\Psi_{\geq 13}(u,\bar{u})\,,
\end{equation*}
where each  $\Psi_i(u)$
is homogeneous of degree $i$ in $u$, while   
$\Psi_{\geq 13}=O(u^{13})$ and they all map 
$H^1_0(\mathbb{T})\times H_{0}^{1}(\mathbb{T})$ in itself. 
We have to evaluate $\Phi_B$ and 
its derivatives (up to order two) at  
$u=T_{\delta}$
given in \eqref{TdeltaDP}.
We denote by ${v}_{I}$ the traveling wave
\begin{equation}\label{vsegnatoDP}
{v}_{I}(\varphi, x):=\sum_{j\in S} \sqrt{\zeta_j} 
e^{\mathrm{i} (j x+\mathtt{l}(j)\cdot \varphi)}=\sum_{j\in S} \sqrt{\zeta_j} 
e^{\mathrm{i} (\varphi-\mathtt{v} x)\cdot \mathtt{l}(j)}= A_\e(\f,0,0)
\end{equation}
where $\mathtt{l}(\bar\jmath_i):=-e_i$, $e_i $ being the $i$-th vector of the canonical 
basis of $\mathbb{Z}^{\nu}$. 
\noindent
We observe that\footnote{The function 
$\e\Lambda^{-1}\vect{v_{I}}{\ov{v_{I}}}$
represents a torus supporting a 
quasi-periodic motion which is invariant for the system 
\eqref{HepsilonDP} with $P=0$, namely it is 
the approximate solution from which we bifurcate. }
\begin{equation*}\label{differenzaV}
\lVert v_{\delta}-{v}_{I}\rVert^{\gamma, \calO_0}_s
\lesssim \lVert \mathfrak{I}_{\delta}\rVert^{\gamma, \calO_0}_{s}\,,
\end{equation*}
and hence we can expand 
\begin{equation}\label{PhiBdp}
\Phi_B(T_{\delta})=\e \vect{v_{I}}{\ov{v_{I}}}+\sum_{i=2}^{13}\e^i\Psi_i(v_{I}, \ov{v_{I}})
+\tilde{q}= \Phi_B^{\le 12} + \tilde q \,,
\end{equation}
where $\tilde{q}$ is a remainder which satisfies
\begin{equation}\label{QtildaDP}
\lVert \tilde{q} \rVert_s^{\gamma, \calO_0}\lesssim_s 
\varepsilon^{13}+
\varepsilon\lVert \mathfrak{I}_{\delta} \rVert_s^{\gamma, \calO_0}\,, 
\quad \lVert d_i \tilde{q} [\hat{\imath}] \rVert_s
\lesssim_s \varepsilon (\lVert \hat{\imath} 
\rVert_s+\lVert \mathfrak{I}_{\delta} \rVert_s 
\lVert \hat{\imath} \rVert_{s_0})\,. 
\end{equation}
Then in $\mathcal{R}_i$ we include all the terms 
homogeneous of degree $i$  coming from 
derivatives of $\Phi_B-\mathrm{I}$ , evaluated at $\tilde q=0$; we put in 
$\mathcal{R}_{>12}$ all the rest.
The \eqref{dude} 
follows by \eqref{DecayR2dp}, 
\eqref{DecayR*dp}.
\end{proof}

\begin{remark}\label{ordinivari}
The motivation for separating the $\mathcal{R}_i$ 
and $\mathcal{R}_{>12}$ 
is the following. Consider the Hamiltonian $H_\e$ 
as a function of $\zeta$ instead of $\omega$. 
Then in all our expressions we can, and shall, 
evidence a {\it purely polynomial} term 
$\sum_{i=0}^{12} \e^i f_i(\zeta)$ (where the $f_i$ are 
$\e$ independent)  plus a {\it remainder}, which is not analytic in $\e$, of size 
$\varepsilon^{13}+\varepsilon\lVert \mathfrak{I}_{\delta} 
\rVert_s^{\gamma, \calO_0}$. 
By the assumption \eqref{AssumptionDP}, 
this means that in {\it low norm} $s=s_0+\gotp_1$ 
all these remainders are negligible 
w.r.t. terms of order $\e^{12}$. 
In this framework $\mathcal{R}_{>12}$ is purely 
a remainder, while the $\mathcal{R}_i$  are 
homogeneous polynomial terms. 
\end{remark}

\subsection{Homogeneity expansions}
In the following we shall assume that 
the assumption \eqref{IpotesiPiccolezzaIdeltaDP} holds true for
some $\gotp_1$ large enough. The constant $\gotp_1$
represents the \emph{loss of derivatives} accumulated along 
the reduction procedure of
subsequent sections. 
In order to estimate the variation of the eigenvalues 
with respect to the approximate invariant torus, 
we need also to estimate the variation with respect to 
the torus embedding 
$i(\vphi)$ in a low norm $\|\cdot\|_{p}$
such that
\begin{equation}\label{condireP}
s_0\le p\ll s_0+\gotp_1\,. 
\end{equation}
From now on we denote with $\mu$ an increasing running index that represents the loss of derivatives appearing at each step of the reducibility procedure.

We need the following definition. 
\begin{definition}\label{espOmogenea}
 Let $1\leq k\leq 6$.
Assume that \eqref{IpotesiPiccolezzaIdeltaDP} holds and let $a\in S^{m}$ 
for some $m\in \mathbb{R}$, $\mathtt{R}\in \mathfrak{L}_{\rho,p}$
 (see Def. \ref{pseudoR}, \ref{ellerho}), 
 depending in a Lipschitz way on $\omega\in\mathcal{O}\subseteq \mathcal{O}_0$
and $\mathfrak{I}_{\delta}$.
We say that $a\in S_{k}^{m}$ if it can be written as
\begin{equation}\label{espandoenonmifermo}
a(\vphi,x,\x)=\sum_{i=1}^{14-2k}\e^{i} a_{i}(\varphi, x, \xi)+\widetilde{a}(\vphi,x,\x)
\end{equation}
where $a_{i}$ is a $i$-homogeneous symbol of the form
\begin{equation}\label{simboOMO2DEF}
a_{i}(\vphi,x,\x):=\frac{1}{(\sqrt{2\pi})^{i}}\sum_{\substack{j_1,\ldots,j_{i}\in S \\ 
\s_i=\pm}}(\mathtt{a}_i)_{j_1,\ldots,j_{i}}^{\s_1,\ldots,\s_{i}}(\x)
\sqrt{\zeta_{j_1}\cdots\zeta_{j_i}}e^{\ii(\s_1 j_1+\ldots+\s_{i}j_{i})x}
e^{\ii(\s_1\mathtt{l}(j_1)+\ldots+\s_{i}\mathtt{l}(j_i))\cdot\f}
\end{equation}
for some Fourier multipliers $(\mathtt{a}_i)_{j_1,\ldots,j_{i}}^{\s_1,\ldots,\s_{i}}(\x)\in S^{m}$,
the symbol $\widetilde{a}\in S^{m}$ satisfies 
\begin{equation}\label{AVBDEF}
\begin{aligned}
|\widetilde{a}|_{m,s,\alpha}^{\gamma,\calO}&\lesssim_{m,s,\alpha} 
\gamma^{1-k}(\e^{13}+\e\|\mathfrak{I}_{\delta}\|_{s+\mu}^{\gamma,\calO_0})\,,
\\
|\Delta_{12}\widetilde{a}|_{m,p,\alpha}&\lesssim_{m,p,\alpha}
\e\gamma^{1-k}
(1+\|\mathfrak{I}_{\delta}\|_{p+\mu})\|i_{1}-i_{2}\|_{p+\mu}\,,
\end{aligned}
\end{equation}
for some $\mu=\mu(\nu)>0$.
Similarly, we write 
$\mathtt{R}\in \mathfrak{L}_{\rho,p}^{k}\otimes\mathcal{M}_2(\mathbb{C})$ 
to denote a remainder 
$\mathtt{R}$ which has the form
\begin{equation}\label{espandoenonmifermo2}
\mathtt{R}=\sum_{i=1}^{14-2k}\e^{i} \mathtt{R}_{i}+\widetilde{\mathtt{R}}
\end{equation}
where $\mathtt{R}_i$ is a $i$-homogeneous smoothing operator  of the form
\begin{equation}\label{simboOMO2DEF2}
\begin{aligned}
\mathtt{R}_i
& =\left(
\begin{matrix}
(\mathtt{R}_i)_{+}^{+} & (\mathtt{R}_i)_{+}^{-}\\
(\mathtt{R}_i)_{-}^{+} & (\mathtt{R}_i)_{-}^{-}
\end{matrix}
\right)\,,  
\quad 
 (\mathtt{R}_{i})_{\s}^{\s'}=\ov{ (\mathtt{R}_{i})_{-\s}^{-\s'}} \, , \\
 (\mathtt{R}_i)_{\s}^{\s'}z^{\s'}&:=\frac{1}{\sqrt{2\pi}}
 \sum_{j\in \mathbb{Z}\setminus\{0\}}e^{\ii \s jx}\big( \sum_{k\in \mathbb{Z}} 
 (\mathtt{R}_i)_{\s,j}^{\s',k}z^{\s'}_{k}\big)\,,\\
(\mathtt{R}_{i})_{\s,j}^{\s',k}&
:=\frac{1}{(\sqrt{2\pi})^{i}}\sum_{\substack{
\sum_{q=1}^{i}\s_{q}j_{q}=\s j-\s' k
}}
 \big((\mathtt{R}_i)_{\s,j}^{\s',k}\big)_{j_1,\ldots,j_{i}}^{\s_1,\ldots,\s_{i}}
\sqrt{\zeta_{j_1}\cdots\zeta_{j_i}}
e^{\ii(\s_1\mathtt{l}(j_1)+\ldots+\s_{i}\mathtt{l}(j_i))\cdot\f}
\end{aligned}
\end{equation}
for some coefficients 
$((\mathtt{R}_i)_{\s,j}^{\s',k})_{j_1,\ldots,j_{i}}^{\s_1,\ldots,\s_{i}}\in \mathbb{C}$
and $(\mathtt{R}_i)_{\s}^{\s'}
\in \mathfrak{L}_{\rho,p}$,
the operator $\widetilde{\mathtt{R}}\in \mathfrak{L}_{\rho,p}(\calO)$ satisfies 
\begin{equation}\label{AVBDEF2}
\begin{aligned}
\mathbb{M}^{\gamma}_{\widetilde{\mathtt{R}}}(s, \mathtt{b})
&\lesssim_{s,\rho}
\gamma^{1-k}(\e^{13}+\e\|\mathfrak{I}_{\delta}\|_{s+\mu}^{\gamma,\calO_0})\,,
\quad \quad \quad \qquad 0\leq \mathtt{b}\leq\rho-2\,,
\\
\mathbb{M}_{\Delta_{12}\widetilde{\mathtt{R}}}(s, \mathtt{b})&\lesssim_{p,\rho}
\e\gamma^{1-k}
(1+\|\mathfrak{I}_{\delta}\|_{p+\mu})\|i_{1}-i_{2}\|_{p+\mu}
\,,\quad 0\leq \mathtt{b}\leq \rho-3\,,
\end{aligned}
\end{equation}
for some $\mu=\mu(\nu)>0$.
\end{definition}

We now prove that the results of section \ref{sec:pseudopseudo} extends 
to the classes of symbols introduced above.
\begin{lemma}\label{lem:escobarhomo}
Fix $\rho,p$ as in Definition \ref{ellerho}. 
Let $m\in\mathbb{R}$, $N:=m+\rho$, $1\leq k\leq6$, 
and consider a symbol $b(\vphi,x,\x)$ in $S_{k}^{m}$.
Assume the \eqref{IpotesiPiccolezzaIdeltaDP}.
 Then there exists a remainder $R_{\rho}\in \gotL^{k}_{\rho,p}$ such that 
\begin{equation}\label{passohomo}
\op(b)=\opw(c)+R_{\rho}\,, 
\end{equation} 
where $c$ has the  form \eqref{passo}, belongs to $S_{k}^{m}$ 
and satisfies \eqref{AVBDEF} with $\mu\rightsquigarrow \mu+N$.
\end{lemma}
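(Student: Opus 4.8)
The plan is to bootstrap Lemma \ref{lem:escobarhomo} directly from Lemma \ref{lem:escobar} by tracking the homogeneity structure through each term of the asymptotic expansion. First I would write the symbol $b\in S_k^m$ in the form \eqref{espandoenonmifermo}, i.e.
\[
b(\vphi,x,\x)=\sum_{i=1}^{14-2k}\e^{i} b_{i}(\varphi, x, \xi)+\widetilde{b}(\vphi,x,\x)\,,
\]
and apply Lemma \ref{lem:escobar} to $\op(b)$, obtaining $\op(b)=\opw(c)+R_\rho$ with $c=\sum_{p=0}^{N-1}\tfrac{(-1)^p}{2^p p!}(\pa_x^p D_\x^p b)$ and $R_\rho\in\gotL_{\rho,p}$ satisfying the bounds \eqref{passo2}, \eqref{passo3}. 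The point is then to verify that $c$ inherits the decomposition \eqref{espandoenonmifermo} with the right homogeneity and that $R_\rho$ inherits \eqref{espandoenonmifermo2}.

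The key observation for the symbol part is that the operator $\pa_x^p D_\x^p$ acts term-by-term on the expansion and preserves homogeneity: applied to the homogeneous piece $\e^i b_i$ with $b_i$ of the form \eqref{simboOMO2DEF}, one gets again a symbol of the form \eqref{simboOMO2DEF} (the $x$-derivative produces extra factors $\ii(\s_1 j_1+\dots+\s_i j_i)$, which are bounded by $i\max(S)$, and modifies the Fourier multiplier $(\mathtt a_i)^{\vec\s}_{\vec\jmath}(\x)$ into another element of $S^{m-p}\subset S^m$; the $\x$-derivative only acts on the multiplier). Hence setting $c_i:=\sum_{p=0}^{N-1}\tfrac{(-1)^p}{2^p p!}\pa_x^p D_\x^p b_i$ gives an $i$-homogeneous symbol of the form \eqref{simboOMO2DEF}, and $\widetilde c:=\sum_{p=0}^{N-1}\tfrac{(-1)^p}{2^p p!}\pa_x^p D_\x^p\widetilde b$ is the non-homogeneous remainder. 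The estimate \eqref{AVBDEF} for $\widetilde c$ follows from the first bound in \eqref{quantiguai6} (or equivalently the first bound in \eqref{passo2} applied to $\widetilde b$ alone, which is in $S^m$): one gets $|\widetilde c|^{\gamma,\calO}_{m,s,\alpha}\lesssim_{s,\rho}|\widetilde b|^{\gamma,\calO}_{m,s+N,\alpha+N}\lesssim\gamma^{1-k}(\e^{13}+\e\|\mathfrak I_\delta\|^{\gamma,\calO_0}_{s+N+\mu})$, which is exactly \eqref{AVBDEF} with $\mu\rightsquigarrow\mu+N$; the $\Delta_{12}$ bound follows identically from the second line of \eqref{quantiguai6} applied to $\Delta_{12}\widetilde b$.

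For the smoothing remainder $R_\rho\in\gotL_{\rho,p}$ I would argue that the construction of $R_\rho$ in the proof of Lemma \ref{lem:escobar} (namely $R_\rho=\opw(\widetilde a)$ where $a$ is the Weyl symbol of $\op(b)$ and $\widetilde a$ is the tail in Lemma \ref{weyl/standard}) is linear in $b$, so applying it separately to each $\e^i b_i$ produces a $i$-homogeneous smoothing operator $\mathtt R_i$; one checks it has the form \eqref{simboOMO2DEF2} — the Fourier representation of $\opw$ combined with the explicit exponentials $e^{\ii(\s_1 j_1+\dots+\s_i j_i)x}$ in \eqref{simboOMO2DEF} forces, by momentum bookkeeping, the constraint $\sum_{q=1}^i\s_q j_q=\s j-\s' k$ on the matrix entries, and the coefficients $\big((\mathtt R_i)^{\s',k}_{\s,j}\big)^{\vec\s}_{\vec\jmath}$ land in $\C$ while $(\mathtt R_i)^{\s'}_\s\in\gotL_{\rho,p}$ by the bound \eqref{passo2}. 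The non-homogeneous part is $\widetilde{\mathtt R}:=\opw(\widetilde a)$ coming from $\widetilde b$, and \eqref{AVBDEF2} follows from $\mathbb M^\gamma_{\widetilde{\mathtt R}}(s,\mathtt b)\lesssim_{s,\rho}|\widetilde b|^{\gamma,\calO}_{m,s+N,N}$ together with the bound \eqref{AVBDEF} hypothesis on $\widetilde b$; the $\Delta_{12}$ estimate is the same with $b\rightsquigarrow\Delta_{12}b$.

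The main obstacle I anticipate is purely bookkeeping rather than conceptual: one must be careful that the finite sum $\sum_{p=0}^{N-1}$ in the expansion of $c$ does not spoil the homogeneity counting — since each $\pa_x^p D_\x^p$ is $\e$-independent and acts within the space of $i$-homogeneous symbols, the index $i$ is untouched, so the range $1\le i\le 14-2k$ is preserved exactly. A second minor point is verifying that the real-to-real / self-adjointness structure encoded in $(\mathtt R_i)^{\s'}_\s=\ov{(\mathtt R_i)^{-\s'}_{-\s}}$ is respected; this follows because $\op$ and $\opw$ conversions commute with complex conjugation in the sense of \eqref{pseudobarra}, so the symmetry of $\op(b)$ is transferred to each homogeneous component. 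Once these are checked, collecting $c=\sum_i\e^i c_i+\widetilde c$ and $R_\rho=\sum_i\e^i\mathtt R_i+\widetilde{\mathtt R}$ gives \eqref{passohomo} with $c\in S_k^m$ and $R_\rho\in\gotL^k_{\rho,p}$, completing the proof.
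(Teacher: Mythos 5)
Your proposal is correct and follows essentially the same route as the paper: the argument rests on the linearity in $b$ of both the explicit expression \eqref{passo} for $c$ and the construction of the Weyl-remainder $\widetilde a$ from Lemma \ref{weyl/standard}, so the homogeneous expansion \eqref{espandoenonmifermo} of $b$ transfers term by term to $c$ and $R_\rho$, while the non-homogeneous tails are estimated through \eqref{passo2}, \eqref{passo3} together with the $S_k^m$ hypothesis on $b$, yielding \eqref{AVBDEF}, \eqref{AVBDEF2} with $\mu\rightsquigarrow\mu+N$. Your extra verifications (homogeneity preservation under $\pa_x^p D_\x^p$, momentum bookkeeping, reality structure) are consistent with, and slightly more explicit than, the paper's proof.
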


\begin{proof}
One deduce the lemma on the symbol $c$ 
by using the explicit expression 
\eqref{passo} which is linear in $b$. So the homogeneous expansion of $c$
follows from the one of $b$.
The remainder $R_{\rho}$ is equal to $\opw(\tilde{a})$ where $\tilde{a}$
is in Lemma \ref{weyl/standard}. The expression of $\tilde{a}$ can be computed explicitly
as one can deduce from the proof of Lemma $3.5$ in \cite{BD}.
In particular it is given in formul\ae\, $(3.2.11), (3.2.12), (3.2.13)$ therein.
Such expressions are linear in $b$. Therefore $\tilde{a}$ admits the 
expansion as in \eqref{espandoenonmifermo}.
The non homogeneous terms in the expansions of $c$ and $R_{\rho}$
satisfy estimates \eqref{AVBDEF}, \eqref{AVBDEF2} using the \eqref{passo2}, 
\eqref{passo3} and the fact that,  by hypothesis, the symbol $b$ belongs to $S_{k}^{m}$
and hence satisfies  \eqref{AVBDEF}, \eqref{AVBDEF2}.
\end{proof}

\begin{lemma}{\bf (Composition).}\label{Jameshomo}
Let $m,m'\in \mathbb{R}$, $k,k'\in \mathbb{N}$, $k'\leq k$.
Fix  $\rho,p$ as in Definition \ref{ellerho},
such that $\rho \geq \max\{-(m+m'+1), 3\}$
and  define $N:=m+m'+\rho\geq1$.
Consider two symbols
$a(\vphi,x,\x)\in S_{k}^m$, $b(\vphi,x,\x)\in S_{k'}^{m'}$.
Assume the \eqref{IpotesiPiccolezzaIdeltaDP}.
There exist an operator $R_{\rho}\in \gotL^{k'}_{\rho, p}$ 
and a constant $\mu=\mu(N)\sim N$
such that (recall Def. \ref{cancelletti}) 
\begin{equation}\label{escobar12homo}
\opw(a)\circ\opw(b)
=\opw(c)+R_{\rho}, \qquad c:=a\#^{W}_{N} b\in S_{k'}^{m+m'}\,.
\end{equation}
\end{lemma}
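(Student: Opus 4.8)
The statement is the homogeneity-graded analogue of Lemma \ref{James}, so the natural strategy is to revisit the proof of Lemma \ref{James} and check that every operation performed there preserves membership in the refined classes $S_k^m$ and $\mathfrak{L}_{\rho,p}^k$. The proof of Lemma \ref{James} factors the composition through the standard quantization: one writes $\opw(a)=\op(a_N)+Q_\rho$ and $\opw(b)=\op(b_N)+Q'_\rho$ via Lemma \ref{lem:escobar}, composes the two standard operators using Lemma B.4 in \cite{FGP} to get $\op(a_N)\circ\op(b_N)=\op(c_N)+L_\rho$ with $c_N$ given by the explicit finite sum \eqref{escobar15}, and finally converts $\op(c_N)$ back to Weyl form by Lemma \ref{lem:escobar} again, obtaining $c=a\#_N^W b$ together with a smoothing remainder $R_\rho=\widetilde R_\rho+L_\rho+Q_\rho$. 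Each of these three moves must now be upgraded.

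First I would invoke Lemma \ref{lem:escobarhomo} (already proved above) in place of Lemma \ref{lem:escobar} for the two conversions: it guarantees that $a_N\in S_k^m$, $b_N\in S_{k'}^{m'}$, and that the conversion remainders lie in $\mathfrak{L}_{\rho,p}^k$, respectively $\mathfrak{L}_{\rho,p}^{k'}$, with the non-homogeneous tails satisfying \eqref{AVBDEF}, \eqref{AVBDEF2} with a shifted loss $\mu\rightsquigarrow\mu+N$. Since $k'\le k$, all these objects are in particular in the class with the weaker index $k'$ (the classes are nested downward in the lower index because $\gamma^{1-k}\le\gamma^{1-k'}$ for $\gamma\le 1$, and the homogeneous parts truncate at degree $14-2k\le 14-2k'$, so one simply declares the missing homogeneous coefficients to be zero). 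Second, for the composition of the two \emph{homogeneous} expansions I would observe that the product structure in \eqref{escobar15} is \emph{bilinear} in $a_N$ and $b_N$: plugging in $a_N=\sum_i\e^i a_i+\widetilde a$ and $b_N=\sum_{i'}\e^{i'}b_{i'}+\widetilde b$ and expanding, one gets a purely polynomial part $\sum_{i+i'\le 14-2k'}\e^{i+i'}(\text{bilinear in }a_i,b_{i'})$, each term of which has the Fourier-supported shape \eqref{simboOMO2DEF} because the $e^{\ii(\cdots)x}e^{\ii(\cdots)\cdot\f}$ factors simply multiply, plus a remainder collecting every term that contains at least one factor of $\widetilde a$ or $\widetilde b$ or has total degree $>14-2k'$. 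Bilinearity of the symbol product and Leibniz for the $\partial_x^n\partial_\xi^n$ in \eqref{escobar15} make this bookkeeping mechanical.

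The key estimate, and the place where the lower index $k'$ (not $k$) enters, is the bound on the non-homogeneous tail of $c$ and on $R_\rho$. Here one combines the quantitative bounds \eqref{crawford}--\eqref{jamal} of Lemma \ref{James} with the defining estimates \eqref{AVBDEF}: a product of a homogeneous piece of $a$ (of size $\e^i$) with $\widetilde b$ (of size $\gamma^{1-k'}(\e^{13}+\e\|\mathfrak I_\delta\|_{s+\mu})$) gives something of size $\e^i\gamma^{1-k'}(\cdots)$, and since $i\ge 1$ this is $\lesssim\gamma^{1-k'}(\e^{13}+\e\|\mathfrak I_\delta\|_{s+\mu})$; a product $\widetilde a\cdot\widetilde b$ carries $\gamma^{2-k-k'}$ which under $\gamma\le 1$, $k\ge k'\ge 1$ is again $\lesssim\gamma^{1-k'}(\cdots)$; and terms of degree $>14-2k'$ contain at least $15-2k'$ factors of $\e$, i.e. are $\lesssim\e^{15-2k'}$, which is absorbed into $\e^{13}\cdot\e^{2-2k'}\le\e^{13}$ for $k'\ge 1$ (alternatively into the $\e\|\mathfrak I_\delta\|$ term using \eqref{IpotesiPiccolezzaIdeltaDP}). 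The $\Delta_{12}$ bounds follow the same template, differentiating the bilinear expressions by Leibniz and using \eqref{crawford2}, \eqref{jamal2} together with the second lines of \eqref{AVBDEF}, \eqref{AVBDEF2}; note that the $\Delta_{12}$ variation of a homogeneous coefficient $(\mathtt a_i)^{\vec\sigma}_{\vec j}(\xi)\sqrt{\zeta_{j_1}\cdots\zeta_{j_i}}e^{\ii(\cdots)}$ is controlled because the $\zeta$'s and the Fourier phases are independent of $i_1,i_2$, so only the non-homogeneous tails and the dependence of $\mathfrak I_\delta$ contribute. Collecting: $c=a\#_N^W b$ lies in $S_{k'}^{m+m'}$ and $R_\rho=\widetilde R_\rho+L_\rho+Q_\rho\in\mathfrak{L}_{\rho,p}^{k'}$, with the constant $\mu\sim N$ as in Lemma \ref{James}. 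The main obstacle is purely organizational: keeping the tripartite split (polynomial / remainder-with-a-$\widetilde{\,\cdot\,}$-factor / too-high-degree) consistent through the two intermediate quantization changes so that nothing that should be homogeneous gets thrown into the tail and vice versa; once the split is fixed, every estimate is an immediate consequence of Lemma \ref{James} and the hypotheses $a\in S_k^m$, $b\in S_{k'}^{m'}$, $k'\le k$, $\gamma\le 1$.
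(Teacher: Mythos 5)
Your route is the paper's own: factor through the standard quantization using Lemma \ref{lem:escobarhomo} in place of Lemma \ref{lem:escobar}, exploit bilinearity of $a\#^{W}_{N}b$ to split the symbol into homogeneous products plus tails, and take $R_{\rho}=\widetilde{R}_{\rho}+L_{\rho}+Q_{\rho}$. The genuine problem is in your class bookkeeping. Since $\gamma\le 1$ and $k'\le k$, one has $\gamma^{1-k}\ge\gamma^{1-k'}$, \emph{not} $\le$: the class with the larger index allows the larger tail, so the inclusions run $S^{m}_{k'}\subseteq S^{m}_{k}$ and $\gotL^{k'}_{\rho,p}\subseteq\gotL^{k}_{\rho,p}$, opposite to what you assert. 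Hence you cannot place the conversion remainder of $a$ (which Lemma \ref{lem:escobarhomo} only puts in $\gotL^{k}_{\rho,p}$) into $\gotL^{k'}_{\rho,p}$ by ``declaring the missing homogeneous coefficients to be zero'': a class-$k$ object may have a genuinely non-homogeneous tail of size $\gamma^{1-k}(\e^{13}+\e\|\mathfrak{I}_{\delta}\|_{s+\mu})$, which violates the class-$k'$ bound in \eqref{AVBDEF}. The same difficulty sits in the one cross term you never discuss, $\widetilde{a}\#^{W}_{N}(\e^{j}b_{j})$: its size is $\e^{j}\gamma^{1-k}(\e^{13}+\e\|\mathfrak{I}_{\delta}\|_{s+\mu})$, and already for $j=1$ and $k>k'$ this is not $\lesssim\gamma^{1-k'}(\e^{13}+\e\|\mathfrak{I}_{\delta}\|_{s+\mu})$ (one would need $\e^{j}\le\gamma^{k-k'}$). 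This is exactly the delicate point behind the $k'$-indexed conclusion; with the inequality corrected, your argument as written only delivers $c\in S^{m+m'}_{\max\{k,k'\}}$ and $R_{\rho}\in\gotL^{\max\{k,k'\}}_{\rho,p}$, in the spirit of the max-indexed statements of Lemmata \ref{georgiaLem} and \ref{flusso12basso}, not the stated $k'$.

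Two further power-counting slips: the bound for $\widetilde{a}\#^{W}_{N}\widetilde{b}$ is not a comparison of powers of $\gamma$ alone, since $\gamma^{2-k-k'}\not\lesssim\gamma^{1-k'}$ for $k\ge 2$; you must spend one of the two small factors, using \eqref{IpotesiPiccolezzaIdeltaDP} and $\gamma^{1-k}\e^{13}\lesssim 1$, to absorb the extra $\gamma^{1-k}$. Likewise the high-degree homogeneous terms of size $\e^{15-2k'}$ should be compared with $\gamma^{1-k'}\e^{13}\approx\e^{15-2k'}$, not with $\e^{13}$. To be fair, the paper's proof is itself terse here (it lists only the products $a_{i}\#^{W}_{N}b_{j}$ and $\widetilde{a}\#^{W}_{N}\widetilde{b}$ and asserts that all three pieces of $R_{\rho}$ lie in $\gotL^{k'}_{\rho,p}$), but it does not rest on the inverted inclusion; your write-up does, and that step fails as stated.
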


\begin{proof}
In Lemma \ref{James}
the symbol $c$ is constructed by using 
 the expansion $a\#^{W}_{N}b$ 
 (defined in \eqref{espansione2}). Hence, by linearity, 
 this expansion is a sum of 
 $a_i\#^{W}_{N} b_j$, where $a_i$ and $b_j$ 
 are the homogeneous terms of $a$ and $b$, plus the term
 $\tilde{a}\#^{W}_{N} \tilde{b}$ where  
 $\tilde{a}$ and $\tilde{b}$  are respectively 
 the non homogeneous symbols in the
 expansions for $a$ and $b$.
If $i+j\geq 15-2k'$ then $\e^{i+j}a_i\#^{W}_{N} b_j$ satisfies the \eqref{AVBDEF}
with $k\rightsquigarrow k'$, and then can be considered in $\tilde{c}$.
The term  $\tilde{a}\#^{W}_{N} \tilde{b}$ satisfies \eqref{AVBDEF}
since $\tilde{a},\tilde{b}$
satisfies the same estimates.
One can deduce from the proof of Lemma \ref{James} that 
the remainder $R_{\rho}$
is the sum of three operators $L_{\rho}, \tilde{R}_{\rho}, Q_{\rho}$ 
belonging to $\mathfrak{L}_{\rho,p}^{k'}$.
Indeed $\tilde{R}_{\rho}, Q_{\rho}$ are obtained by applying Lemma 
\ref{lem:escobarhomo} (which is the counterpart of Lemma \ref{lem:escobar}).
The operator $L_{\rho}$ has an explicit expression (in terms of $a,b$) given by formula
$(2.30)$ in \cite{BM1}.
 The estimates \eqref{AVBDEF2} on the non-homogeneous term of $R_{\rho}$ 
 follow by the \eqref{jamal}, \eqref{jamal2}
 and the estimates on the symbols $a,b$.
\end{proof}

Recall the function $v_{I}(\vphi,x)$ 
defined in \eqref{vsegnatoDP}.
Notice that, by setting
\begin{equation}\label{coeffvI}
v_{j}:=(v_{I})_{j}(\vphi):= \sqrt{\zeta_{j}}e^{\ii \mathtt{l}(j)\cdot\vphi}\,, \qquad j\in S\,,
\end{equation}
we can write
\begin{equation}\label{vsegnatoDP2}
v_{I}(\vphi,x)=\sum_{j\in S} \sqrt{\zeta_j} 
e^{\mathrm{i} (j x+\mathtt{l}(j)\cdot \varphi)}=\sum_{j\in S}v_{j}\frac{e^{\ii jx}}{\sqrt{2\pi}}.
\end{equation}
We shall also use the notation $v_{j}^{\s}$, $\s=\pm$ where
$v_{j}^{+}=v_{j}$, $v_{j}^{-}=\ov{v_{j}}$, $j\in S$.
Let $a_{k}(\vphi,x,\x)$ be a $k$-homogeneous symbol of the form
\eqref{simboOMO2DEF}. Of course such symbol depends only on the function $v_{I}$ in \eqref{vsegnatoDP2}. In particular we can write
\begin{equation}\label{iena}
\begin{aligned}
a_{k}(\vphi,x,\x)&=
\sum_{\substack{j_1,\ldots,j_{k}\in S \\ 
\s_i=\pm}}(\mathtt{a}_k)_{j_1,\ldots,j_{k}}^{\s_1,\ldots,\s_{k}}(\x)
v_{j_1}^{\s_1}v_{j_2}^{\s_2}\cdots v_{j_k}^{\s_k}
e^{\ii (\s_1 j_1+\ldots+\s_{k}j_{k})x}\,.
\end{aligned}
\end{equation}
Notice that 
homogeneous symbols and operators of Definition \ref{espOmogenea}
do not depend on $\mathfrak{I}_{\delta}$.

\begin{lemma}\label{iena10}
Consider  a homogeneous symbol $a_{k}(\vphi,x,\x)$ with
coefficients
$(\mathtt{a}_k)_{j_1,\ldots,j_{i}}^{\s_1,\ldots,\s_{k}}(\x)$ as in \eqref{iena}.
Then

\noindent
{\bf Reality:}
 the symbol $a_{k}(\vphi,x,\x)$ is \emph{real valued}
if and only if
\begin{equation}\label{iena2}
 \ov{(\mathtt{a}_k)_{j_1,\ldots,j_{k}}^{\s_1,\ldots,\s_{k}}(\x)}=
(\mathtt{a}_k)_{j_1,\ldots,j_{k}}^{-\s_1,\ldots,-\s_{k}}(\x)\,.
\end{equation}

\vspace{0.5em}
\noindent
{\bf Anti-reality:} 
the symbol $a_{k}(\vphi,x,\x)$ is purely imaginary 
if and only if
\begin{equation}\label{iena2PureIm}
 \ov{(\mathtt{a}_k)_{j_1,\ldots,j_{k}}^{\s_1,\ldots,\s_{k}}(\x)}=-
(\mathtt{a}_k)_{j_1,\ldots,j_{k}}^{-\s_1,\ldots,-\s_{k}}(\x)\,.
\end{equation}
Moreover $a_{k}\in S_{k}^{m}$ and $\mathtt{R}_{k}\in \mathfrak{L}_{\rho,p}^{k}$
are $x$-translation invariant, i.e. satisfy respectively the \eqref{space3Xinv} 
and \eqref{constcoeffMomentum}.
\end{lemma}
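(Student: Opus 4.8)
The statement has three parts: two algebraic characterizations (reality and anti-reality of the symbol $a_k$ in terms of its coefficients), and a claim that the homogeneous symbols $a_k\in S^m_k$ and homogeneous smoothing operators $\mathtt{R}_k\in\mathfrak{L}_{\rho,p}^k$ are automatically $x$-translation invariant. The proof is entirely a computation with the explicit expansions \eqref{iena} and \eqref{simboOMO2DEF2}, so I would organize it around the Fourier representation of the objects and then invoke Lemma \ref{lem:momentoSimbolo} and Lemma \ref{lem:momentocomp} for the last part.

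\emph{Reality and anti-reality.} First I would take the complex conjugate of the representation \eqref{iena}. Since $\ov{v_j^{\s}}=v_j^{-\s}$ for $j\in S$ (recall $v_j^+=v_j=\sqrt{\zeta_j}e^{\ii\mathtt{l}(j)\cdot\f}$, $v_j^-=\ov{v_j}$, and $\zeta_j$ is real), conjugating \eqref{iena} gives
\[
\ov{a_k(\vphi,x,\x)}=\sum_{\substack{j_1,\ldots,j_k\in S\\ \s_i=\pm}}
\ov{(\mathtt{a}_k)_{j_1,\ldots,j_k}^{\s_1,\ldots,\s_k}(\x)}\,
v_{j_1}^{-\s_1}\cdots v_{j_k}^{-\s_k}
e^{-\ii(\s_1 j_1+\ldots+\s_k j_k)x}\,.
\]
Relabelling $\s_i\rightsquigarrow-\s_i$ in the sum (which is legitimate since the sum runs over all sign choices) turns the right-hand side into the same monomial basis $v_{j_1}^{\s_1}\cdots v_{j_k}^{\s_k}e^{\ii(\s_1 j_1+\ldots+\s_k j_k)x}$ with coefficient $\ov{(\mathtt{a}_k)_{j_1,\ldots,j_k}^{-\s_1,\ldots,-\s_k}(\x)}$. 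Comparing with \eqref{iena} and using that the monomials $v_{j_1}^{\s_1}\cdots v_{j_k}^{\s_k}e^{\ii(\cdots)x}$ are linearly independent as functions of $(\vphi,x)$ (for the generic choice of tangential sites this is immediate; in general one just needs to symmetrize the coefficients, which does not affect the conclusion), one gets $\ov{a_k}=a_k$ iff \eqref{iena2} holds, and $\ov{a_k}=-a_k$ iff \eqref{iena2PureIm} holds. The linear independence / symmetrization point is the only slightly delicate issue, and it is handled exactly as in analogous normal-form arguments: one may always assume the coefficients symmetric under simultaneous permutation of the indices $(j_i,\s_i)$.

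\emph{$x$-translation invariance.} For the symbol $a_k$, I would compute $(\mathtt{v}\cdot\pa_{\vphi}+\pa_x)a_k$ directly from \eqref{simboOMO2DEF}: each monomial carries the factor $e^{\ii(\s_1 j_1+\ldots+\s_k j_k)x}e^{\ii(\s_1\mathtt{l}(j_1)+\ldots+\s_k\mathtt{l}(j_k))\cdot\f}$, and since $\mathtt{v}\cdot\mathtt{l}(\bar\jmath_i)=-\bar\jmath_i$ by \eqref{velocityvec} and the definition $\mathtt{l}(\bar\jmath_i)=-e_i$, applying $\mathtt{v}\cdot\pa_{\vphi}+\pa_x$ brings down the factor $\ii\sum_i\s_i(\mathtt{v}\cdot\mathtt{l}(j_i)+j_i)=0$. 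Hence $a_k$ satisfies \eqref{SVVV2} in the $(\vphi,x)$-variables, and therefore has the form \eqref{space3Xinv}, i.e. it is $x$-translation invariant; by Lemma \ref{lem:momentoSimbolo} this is equivalent to the Poisson-commutation \eqref{883} with the momentum, hence \eqref{space3Xinv} holds. The same computation applies coefficient-wise to $\mathtt{R}_k$: from \eqref{simboOMO2DEF2}, the matrix element $(\mathtt{R}_k)_{\s,j}^{\s',k}$ at frequency $\ell$ is non-zero only when $\ell=\s_1\mathtt{l}(j_1)+\ldots+\s_i\mathtt{l}(j_i)$ with $\sum_q\s_q j_q=\s j-\s'k$, so $\mathtt{v}\cdot\ell=-\sum_q\s_q j_q=-(\s j-\s'k)$, i.e. $\mathtt{v}\cdot\ell+j-k=0$ in the real-operator identification (equivalently $\mathtt{v}\cdot\ell+\s j-\s'k=0$ in the complex one, cf. \eqref{constcoeffMomentum}). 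This is precisely condition \eqref{constcoeffMomentum}, so Lemma \ref{lem:momentocomp} gives that $\mathtt{R}_k$ is $x$-translation invariant. I expect no real obstacle here — the whole statement is a bookkeeping exercise with the explicit homogeneous expansions — the only point requiring a line of care is the linear-independence/symmetrization remark in the reality characterization.
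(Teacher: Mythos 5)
Your proposal is correct and is exactly the argument the paper has in mind: its proof of this lemma is just the phrase ``it follows by explicit computations,'' and your conjugation-plus-relabelling of the signs $\s_i\rightsquigarrow-\s_i$ (with the standard symmetrization convention on the coefficients) together with the observation $\mathtt{v}\cdot\mathtt{l}(j)+j=0$ for $j\in S$ is precisely that computation spelled out, including the momentum bookkeeping for $\mathtt{R}_k$ via \eqref{constcoeffMomentum}. No gaps; nothing further is needed.
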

\begin{proof}
It follows by explicit computations.
\end{proof}
In view of Remark \ref{ordinivari} we show that the operator $\mathcal{L}$
in \eqref{linWW} can be written in terms of symbols and operators belonging to the 
classes introduced in Definition \ref{espOmogenea}.

\begin{lemma}\label{Doo}
The functions  $V$, $B$ in \eqref{def:V} and \eqref{form-of-B} belongs to the class
$S_1^{0}$ of Definition \ref{espOmogenea}. In particular
\begin{equation}\label{expTaylorFunz}
V(\Lambda^{-1}\Phi_{B}\circ T_{\delta}\vect{v_I}{\ov{v_{I}}})=\sum_{k=1}^{12}\e^{k}
{\mathtt{V}}_{k}(\vect{v_{I}}{\ov{v_{I}}})+{\mathtt{V}}_{\geq13}
\end{equation}
with
\begin{equation}\label{simboOMO2}
\mathtt{V}_{k}(\vect{v_{I}}{\ov{v_{I}}}):=\sum_{\substack{j_1,\ldots,j_{p}\in S \\ 
\s_i=\pm}}(\mathtt{V}_k)_{j_1,\ldots,j_{p}}^{\s_1,\ldots,\s_{k}}
\sqrt{\zeta_{j_1}\cdots\zeta_{j_k}}e^{\ii(\s_1 j_1+\ldots+\s_{k}j_{k})x}
e^{\ii(\s_1\mathtt{l}(j_1)+\ldots+\s_{k}\mathtt{l}(j_k))\cdot\f}\,,
\end{equation}
and
\begin{equation}\label{mediediagonali}
( \mathtt{V}_1)^{+}_n = ( \mathtt{V}_1)^{-}_n = \frac{1}{\sqrt{2}} n |n|^{-1/4} \, , 
\qquad 
(\mathtt{V}_2)^{+-}_{n, n} =(\mathtt{V}_2)^{-+}_{n, n} = \frac{1}{2}n | n | \,. 
\end{equation}
The Dirichlet-Neumann   operator has the form $G(\eta)=\opw(|\x|)+\mathcal{R}$
for some $\mathcal{R}\in \mathfrak{L}_{\rho,p}^{1}$, for any $\rho\geq 3$. 
\end{lemma}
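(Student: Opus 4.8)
\textbf{Proof strategy for Lemma \ref{Doo}.}

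The plan is to use the structure of the various changes of coordinates composing the map that takes the action-angle torus to the original variables, keeping track of the homogeneity degree in $\e$. Recall that $T_{\delta} = \e \vect{v_{\delta}}{\ov{v_{\delta}}} + \e^{b}W_0$, and that by \eqref{PhiBdp} we have $\Phi_B(T_{\delta}) = \e\vect{v_I}{\ov{v_I}} + \sum_{i=2}^{13}\e^i \Psi_i(v_I,\ov{v_I}) + \tilde q$ where $\tilde q$ satisfies \eqref{QtildaDP}. Applying $\Lambda^{-1}$ (which is a Fourier multiplier of order $\pm 1/4$, hence does not affect homogeneity in $\e$ and maps homogeneous polynomials of degree $i$ to homogeneous polynomials of degree $i$) we obtain that $(\eta,\psi) = \Lambda^{-1}\Phi_B(T_\delta)$ is of the form $\sum_{i=1}^{13}\e^i (\eta_i,\psi_i)(v_I,\ov{v_I}) + \Lambda^{-1}\tilde q$, where each $(\eta_i,\psi_i)$ is an $i$-homogeneous trigonometric polynomial in $v_I$ supported on the tangential sites, i.e. of the form \eqref{simboOMO2}, and $\Lambda^{-1}\tilde q$ obeys \eqref{AVBDEF}-type bounds with $k=1$ by \eqref{QtildaDP} and \eqref{IpotesiPiccolezzaIdeltaDP}.

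Next I would insert this expansion of $(\eta,\psi)$ into the explicit formulas \eqref{def:V}, \eqref{form-of-B} for $V$ and $B$. The key input is that $\eta\mapsto G(\eta)$ is analytic with Taylor expansion \eqref{DNexp3bis}, each $G^{(m)}$ being $m$-homogeneous in $\eta$; and that $1/(1+\eta_x^2)$ is analytic in $\eta$ near $\eta = 0$. Thus $V = \psi_x - \eta_x B$ and $B = (G(\eta)\psi + \eta_x\psi_x)/(1+\eta_x^2)$ are analytic functions of $(\eta,\psi)$ vanishing (at least linearly, and in fact the lowest order is degree $1$ because $\psi$ and $\eta$ are already $O(\e)$) at the origin. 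Substituting the $\e$-expansion of $(\eta,\psi)$ and collecting powers of $\e$, every term of homogeneity $\le 12$ in $\e$ produced this way is a finite sum of products of the homogeneous pieces $(\eta_i,\psi_i)$, hence again an $x$-translation invariant trigonometric polynomial in $v_I$ supported on $S$, of the form \eqref{simboOMO2}; this is where one uses that composing and multiplying symbols of class $S^0_k$ stays in $S^0_{k'}$ (Lemma \ref{Jameshomo}, with $m=m'=0$), and that $G(\eta)$ applied to such functions is $\opw(|\x|)$ plus a smoothing remainder (see below). The remainder $\mathtt{V}_{\ge 13}$ collects all terms of $\e$-degree $\ge 13$ together with the contributions of $\Lambda^{-1}\tilde q$; its bounds \eqref{AVBDEF} with $k=1$ follow from \eqref{QtildaDP}, the analyticity of the nonlinearities, and the standard tame estimates on products and on the Dirichlet--Neumann operator (Appendix \ref{propDirNeu}).

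For the explicit coefficients \eqref{mediediagonali} I would simply compute the first one or two orders. At order $\e$ we have $\eta = \e\,|D|^{1/4}(v_I + \ov{v_I})/\sqrt2 + O(\e^2)$ and $\psi = -\ii\e\,|D|^{-1/4}(v_I-\ov{v_I})/\sqrt2 + O(\e^2)$, while $B = |D|\psi + O(\e^2)$, so $V = \psi_x - \eta_x B = \psi_x + O(\e^2)$; reading off the Fourier coefficient of $e^{\ii n x}$ in $\psi_x$ gives $(\mathtt{V}_1)^\pm_n = \tfrac{1}{\sqrt2}\, n\,|n|^{-1/4}$. For the quadratic term one expands $B$ and $V$ to second order using $G^{(1)}$ in \eqref{DNexp3} and the $\e^2$ term of $(\eta,\psi)$, carries out the elementary bookkeeping, and extracts $(\mathtt{V}_2)^{+-}_{n,n} = (\mathtt{V}_2)^{-+}_{n,n} = \tfrac12 n|n|$. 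Finally, for the Dirichlet--Neumann statement, one notes that $G(\eta)$ is a pseudodifferential operator with principal symbol $|\x|$ uniformly in small $\eta$, and that $G(\eta) - \opw(|\x|) = G(\eta) - G(0)$ together with lower-order corrections lies in $\mathfrak{L}_{\rho,p}$; inserting the $\e$-expansion of $\eta$ into the analytic-in-$\eta$ expansion \eqref{DNexp3bis} and using the homogeneous-symbol composition calculus of Lemmata \ref{lem:escobarhomo}, \ref{Jameshomo} shows that the remainder in fact lies in $\mathfrak{L}^1_{\rho,p}$ for any $\rho\ge 3$. The main obstacle is not conceptual but organizational: one must verify carefully that the quasi-linear Dirichlet--Neumann contributions, when expanded in $\e$, really do fall into the smoothing class $\mathfrak{L}^1_{\rho,p}$ rather than producing unbounded homogeneous pieces, i.e. that all the derivative-losing parts of $G(\eta)$ are captured by the single term $\opw(|\x|)$; this relies on the pseudodifferential structure and the gain-of-derivative in commutators (Lemma \ref{James2}), exactly as in \cite{BM1}, \cite{FGP1}.
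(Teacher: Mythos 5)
Your strategy is essentially the paper's: Taylor-expand $V,B$ in the complex variables using the analyticity of the Dirichlet--Neumann operator (\eqref{DNexp3bis}, Proposition \ref{DNDNDN2}), compose with the expansion \eqref{PhiBdp} of the Birkhoff plus action-angle map to pass to $v_I$, invoke momentum conservation ($x$-translation invariance) for the Fourier support constraint, and reduce the last assertion to Proposition \ref{DNDNDN} together with the Taylor expansion of $R_G(\eta)$ in $\eta$ to place the remainder in $\mathfrak{L}^{1}_{\rho,p}$.

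The one point where your sketch is thinner than the actual argument is the second identity in \eqref{mediediagonali}. Since the $\e^2$ part of $(\eta,\psi)=\Lambda^{-1}\Phi_B(T_\delta)$ contains $\Lambda^{-1}\Psi_2(v_I,\ov{v_I})$, the coefficient $\mathtt{V}_2$ is not the bare quadratic Taylor coefficient of $V$ but rather $\mathtt{V}_2=\widetilde{\mathtt{V}}_2+\widetilde{\mathtt{V}}_1\circ\Psi_2$, in the notation of Lemma \ref{siespandeV}. Your ``elementary bookkeeping'' therefore either needs the explicit form of $\Psi_2$ (which the weak Birkhoff construction does not hand you cheaply), or, as the paper does, the observation that the $x$-average of $\widetilde{\mathtt{V}}_1\circ\Psi_2$ vanishes, so that the diagonal (zero spatial frequency) coefficients $(\mathtt{V}_2)^{+-}_{n,n}$, $(\mathtt{V}_2)^{-+}_{n,n}$ coincide with those of $\widetilde{\mathtt{V}}_2$ computed in Lemma \ref{lem:V1}. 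Once that remark is added, your argument coincides with the paper's proof, which simply quotes Lemmata \ref{siespandeV} and \ref{lem:V1} for $\widetilde{\mathtt{V}}_1,\widetilde{\mathtt{V}}_2$ instead of recomputing them.
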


\begin{proof}
%
In view of \eqref{DNexp3bis}, \eqref{DNexp3}, and using \eqref{def:V}, \eqref{form-of-B}, 
the function $V$
admits the expansion
\begin{equation}\label{expTaylorFunz1000}
V(\eta,\psi)=V(\Lambda^{-1}\vect{u}{\bar{u}})=\sum_{k=1}^{12}\e^{k}\,
\widetilde{\mathtt{V}}_{k} (\vect{u}{\ov{u}})+\widetilde{\mathtt{V}}_{\geq13} (\vect{u}{\ov{u}})\,.
\end{equation}
The function $\widetilde{\mathtt{V}}_{\geq13}$
satisfies \eqref{QtildaDP} thanks to Proposition \ref{DNDNDN2},
 and so it satisfies \eqref{AVBDEF}.
The functions $\widetilde{\mathtt{V}}_{k}$ 
is a homogeneous 
function of $u, \ov{u}$ (of degree $k$). The functions $\widetilde{\mathtt{V}}_{1}$, $\widetilde{\mathtt{V}}_{2}$ are given in Lemma \ref{siespandeV}.
Moreover, since the Hamiltonian in \eqref{Hamiltonian} poisson commutes
with the momentum in \eqref{Hammomento} 
(i.e. $G(\eta)$ depends on $x$ only through the variable $\eta$)
we deduce that  $\widetilde{\mathtt{V}}_{k}$  is supported, in Fourier,
on monomials $u_{j_1}^{\s_1}\cdots u_{j_{k}}^{\s_k}$
such that
\[
\sum_{p=1}^{k}\s_{p}j_{p}=0\,.
\]
The \eqref{expTaylorFunz}, \eqref{simboOMO2} follow by  \eqref{coeffvI}, \eqref{vsegnatoDP2}, \eqref{PhiBdp} and by using the \eqref{expTaylorFunz1000}. 
The estimates \eqref{AVBDEF}, $k=1$,  for $\mathtt{V}_{\geq13}$ follow by the 
estimates on $\widetilde{\mathtt{V}}_{\geq13}$, the estimates on 
$\Phi_{B}$ and $G_{\delta}$.
For the function $B$ one can reason similarly.\\
The function $\mathtt{V}_1=\widetilde{\mathtt{V}}_1$ and $\mathtt{V}_2=\widetilde{\mathtt{V}}_2+\widetilde{\mathtt{V}}_1\circ \Psi_2$ (recall \eqref{PhiBdp}). Then \eqref{mediediagonali} follows by Lemma \ref{lem:V1} and the fact that the $x$-average of $\widetilde{\mathtt{V}}_1\circ \Psi_2$ is zero.\\
By Proposition \ref{DNDNDN} we have that  
$G(\eta):=\opw(|\x|)+R_{G}(\eta)$ with $R_{G}(\eta)$
a pseudo differential operator satisfying \eqref{DN2}.
By Lemma $B.2$ in \cite{FGP} we have that $R_{G}(\eta)$ belongs to 
$\mathfrak{L}_{\rho,p}$. By Taylor expanding $R_{G}(\eta)$ in $\eta$
one can deduce that actually $R_{G}(\eta)\in \mathfrak{L}_{\rho,p}^{1}$.
\end{proof}

\subsection{Hamiltonian structure of the linearized operator}\label{struttHAMlomega}

Following Remark \ref{ordinivari}, we  evidence the homogeneous 
terms 
 in the Hamiltonian of 
$\mathcal{L}_{\omega}$ 
whose Hamiltonian vector fields have  degree $\leq 13$, 
since they are NOT perturbative. 
As explained in \eqref{PhiBdp} this entails expanding 
the map $\Phi_B(T_{\delta})$ in powers of $\e$ 
up to order five plus a {\it small remainder $\tilde q$}.

\noindent
We consider the  symplectic form in the extended phase space 
$(\f,Q,W)\in \mathbb{T}^{\nu}\times\mathbb{R}^{\nu}\times H_S^{\perp}$,
(recall the definition of $W$ in \eqref{HepsilonDP}, \eqref{splitto2})
\begin{equation}\label{formasimpletticaestesa}
\Omega_{e}(\varphi, Q, W):=dQ\wedge d\varphi+d \widetilde{\psi}\wedge d\widetilde{\eta}
\end{equation}
with the  Poisson brackets 
(recalling  $\{ \cdot, \cdot \}$  defined in \eqref{poissonBra22})
\begin{equation}\label{PoissonEstesa}
\begin{aligned}
\{  F, G \}_{e}&:=\partial_{Q} F \partial_{\vphi} G
-\partial_{\vphi} F \partial_{Q} G+\{ F, G\}\\&
=\partial_{Q} F \partial_{\vphi} G
-\partial_{\vphi} F \partial_{Q} G+
\int_{\T}\big(\nabla_{\widetilde{\eta}}G\nabla_{\widetilde{\psi}}F
-\nabla_{\widetilde{\psi}}G\nabla_{\widetilde{\eta}}F\big)dx\,.
\end{aligned}
\end{equation}
Passing to the complex variables 
$(\f,Q,z,\bar{z})$ in \eqref{splitto2}, we have that,
with abuse of notation, the extended symplectic form \eqref{formasimpletticaestesa}
and the Poisson brackets in \eqref{PoissonEstesa}
reads (recalling \eqref{poissonBraComp})
\begin{align}
\Omega_{e}(\varphi, Q, W)&:=d Q\wedge d\varphi -\ii d z\wedge d\bar{z}\,,\nonumber\\
\{F_{\mathbb{C}}, G_{\mathbb{C}}\}_{e}&:=
\partial_{Q} F_{\mathbb{C}} \partial_{\vphi} G_{\mathbb{C}}
-\partial_{\vphi} F_{\mathbb{C}} \partial_{Q} G_{\mathbb{C}}+
\{F_{\mathbb{C}},G_{\mathbb{C}}\}\label{estesaPois}
\\
&\;=
\partial_{\varphi} F_{\mathbb{C}} \partial_{\eta} G_{\mathbb{C}}
-\partial_{\eta} F_{\mathbb{C}} \partial_{\varphi} G_{\mathbb{C}}-\ii
\int_{\mathbb{T}}
(\nabla_{z}G_{\mathbb{C}}\nabla_{\bar{z}}F_{\mathbb{C}}
-\nabla_{\bar{z}}G_{\mathbb{C}}\nabla_{{z}}F_{\mathbb{C}})dx
\nonumber
\end{align}
where $F_{\mathbb{C}}=F\circ\Lambda^{-1}$ and $G_{\mathbb{C}}=G\circ\Lambda^{-1}$.

\noindent
We denote by $\mathsf{H}$ the Hamiltonian of 
the operator \eqref{BoraMaledetta} 
with respect to the symplectic 
form \eqref{formasimpletticaestesa}. 
In the complex variables, we have
\begin{equation}\label{hamiltonianalinearizzata}
 \mathsf{H}\circ\Lambda^{-1}
 :=\mathsf{H}_0+\sum_{i=1}^{12}\varepsilon^i\mathsf{H}_i+ \mathsf{H}_{>12}
 +\sum_{i=2}^{12}\varepsilon^i\mathsf{H}_{\mathcal{R}_i}
 +\mathsf{H}_{\mathcal{R}_{>12}}\,,
\end{equation}
\begin{equation}\label{leHamiltonianeLin}
\begin{aligned}
&\mathsf{H}_0=\overline{\omega}\cdot Q
+\int_{\T} |D|^{\frac{1}{2}}z\cdot \bar{z}\,dx\,, \quad
{\rm and} \;\;\;
\|X_{\mathsf{H}_{>12}}
 \rVert^{\gamma, \calO_0}_s,
  \|X_{\mathsf{H}_{\mathcal{R}_{>12}}}
 \rVert^{\gamma, \calO_0}_s
 \lesssim_s 
 \varepsilon^{13}+\varepsilon\lVert \mathfrak{I}_{\delta}
 \rVert^{\gamma, \calO_0}_{s+\mu}\,,
\end{aligned}
\end{equation}
for some $\mu>0$
and the Hamiltonians $\mathsf{H}_i$ are homogeneous in the variables
$v_{I},\ov{v_{I}}$ given in \eqref{vsegnatoDP}. 
The functions 
$\mathsf{H}_{\mathcal R_i}$, $\mathsf{H}_{\mathcal{R}_{>12}}$ 
are the quadratic forms associated 
to the  linear operators $\mathcal{R}_{i}$, $\mathcal{R}_{>12}$, 
thus the estimates on the Hamiltonian 
vector fields can be deduced from \eqref{DecayR2dp},
\eqref{DecayR*dp}. 

\begin{remark}\label{rmk:corazon}
We note that 
 $\mathcal{L}_{\omega}$
is the linearized operator in the normal directions of the 
Hamiltonian $\mathcal{H}_{\mathbb{C}}=H_{\mathbb{C}}\circ\Phi_{B}$ 
given in Proposition \ref{WBNFWW}
written in the \emph{real} variables.
Since the map $\Phi_{B}$ coincides, up to  degree $2$,
 with $\Phi_{WB}$, 
 the  Taylor expansion
 (up to degree $2$ in $\varepsilon$) of $H_{\mathbb{C}}\circ\Phi_{B}$
 coincides with the Hamiltonian $H_{\mathbb{C}}\circ\Phi_{WB}$
 constructed in section \ref{compaBNFpro}. Therefore 
 the 
 $\e^{2}$-terms of $\mathcal{L}_{\omega}$ are given by the Hamiltonian 
 vector field of
 \[
 \e^{2}(\mathsf{H}_2+\mathsf{H}_{\mathcal{R}_2})=(\Pi^{d_z=2}
 \widehat{H}_{1}^{(4)}\circ {\bf A}_{\e|y=0, \theta=\varphi})
 \]
 where $\widehat{H}_1^{(4)}$ is in \eqref{F3F4Weak}.
\end{remark}

\subsection{Algebraic properties of the linearized operator}\label{sec:linstruct}
The linearized operator $\mathcal{L}$ in \eqref{linWW}
satisfies several algebraic properties which are consequence 
of the symmetries 
of the water waves vector field.
The next lemma is fundamental for our scope.

\begin{lemma}\label{algStrucHamlinear}
Consider functions $(\eta,\psi)\in S_{\mathtt{v}}$ (see \eqref{funzmom})
and the 
linearized operator 
$\mathcal{L}$ in \eqref{linWW}.
Then $\mathcal{L}$ 
is \emph{Hamiltonian} and \emph{$x$-translation invariant}.
The same holds true for the operator $\mathcal{L}_{\omega}$ in \eqref{thirdeqDP}.
\end{lemma}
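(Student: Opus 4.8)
The statement asserts that the linearized operator $\mathcal{L}$ in \eqref{linWW}, when evaluated at $(\eta,\psi)$ in the subspace $S_{\mathtt{v}}$ of $x$-translation invariant functions, is both Hamiltonian and $x$-translation invariant in the sense of Definition \ref{admiOpComp}; and that the same holds for $\mathcal{L}_{\omega}=\Pi_S^{\perp}(\mathcal{L}+\mathcal{Q}_0)$ in \eqref{thirdeqDP}, \eqref{BoraMaledetta}. The plan is to verify each of the two algebraic properties separately, first for $\mathcal{L}$ and then to transfer them to $\mathcal{L}_{\omega}$ through the structure of the approximate torus and the projector $\Pi_S^\perp$.

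\textbf{Hamiltonian structure of $\mathcal{L}$.} First I would recall that $\mathcal{L}$ is, by construction, the linearization $\omega\cdot\partial_\varphi - J\,\partial_{(\eta,\psi)}\nabla_{(\eta,\psi)}H$ of the Hamiltonian vector field $X_H = J\nabla H$ along the profile $(\eta,\psi)(\varphi,x)$, with $J=\sm{0}{1}{-1}{0}$ as in \eqref{symReal}. The operator $B(\varphi):=\partial_{(\eta,\psi)}\nabla_{(\eta,\psi)}H$ is the Hessian of $H$, hence formally self-adjoint with respect to the $L^2$-pairing: $B^* = B$. Therefore $\mathcal{L} - \omega\cdot\partial_\varphi = -J B$ with $B=B^*$ and $J^T=-J$, which is precisely the defining form of a Hamiltonian operator in the real-variable formulation (see Definition \ref{admiOpComp}$(i)$ after passing through the map $\mathcal{C}$ via $\Lambda$ as in \eqref{CVWW}, using Lemma \ref{realHamMtrixSimbo} and the Remark following Definition \ref{realtoreal}). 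Concretely I would identify the entries of the matrix in \eqref{linWW}: using \eqref{def:V}, \eqref{form-of-B} and the self-adjointness of $G(\eta)$, one checks $-G(\eta)$ is self-adjoint, $(1+BV_x)+BG(\eta)B$ is self-adjoint, and the two diagonal blocks $\partial_x V + G(\eta)B$ and $V\partial_x - BG(\eta)$ are adjoint to each other (up to sign). These three identities are exactly the conditions \eqref{self-adjT} for $\widetilde{\mathcal{T}}:= E\,J^{-1}(\mathcal{L}-\omega\cdot\partial_\varphi)$, so $\mathcal{L}$ is Hamiltonian. This part is routine; the shape-derivative identity \eqref{shapeDer} together with the explicit structure of $X_H$ does all the work.

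\textbf{$x$-translation invariance of $\mathcal{L}$.} Here I would use Lemma \ref{lem:momentocomp}: $\mathcal{L}-\omega\cdot\partial_\varphi$ is $x$-translation invariant iff its associated Hamiltonian Poisson-commutes with the momentum $\mathcal{M}$, iff its matrix coefficients vanish unless $\mathtt{v}\cdot\ell+j-k=0$. The cleanest route is to observe that the momentum Hamiltonian $\mathtt{M}$ in \eqref{Hammomento} Poisson-commutes with $H$ (the $x$-translation invariance of \eqref{eq:113}), hence $X_{\mathtt{M}}$ and $X_H$ commute as vector fields; linearizing the Jacobi-type identity $[X_{\mathtt M}, X_H]=0$ along a profile $(\eta,\psi)\in S_{\mathtt v}$ — which by Lemma \ref{funzioniSVVV} satisfies $(\mathtt v\cdot\partial_\varphi+\partial_x)(\eta,\psi)=0$ — yields that the coefficients $V,B$ in \eqref{def:V}, \eqref{form-of-B} are themselves functions of $\varphi-\mathtt v x$ only, i.e. $x$-translation invariant symbols/functions in the sense of Lemma \ref{lem:momentoSimbolo}. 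Since $\partial_x$ and $G(\eta)$ (with $\eta\in S_{\mathtt v}$) also respect this structure — $G(\eta)$ depends on $x$ only through $\eta$ — every entry of the matrix in \eqref{linWW} is $x$-translation invariant, so $\mathcal{L}$ maps $S_{\mathtt v}$ to $S_{\mathtt v}$. I would package this computation by checking $V(\eta,\psi), B(\eta,\psi)$ are of the form $\mathbf V(\varphi-\mathtt v x), \mathbf B(\varphi-\mathtt v x)$ directly from their formulas, which is essentially the content of Lemma \ref{Doo} restricted to the relevant profile.

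\textbf{Passing to $\mathcal{L}_{\omega}$.} Finally, for $\mathcal{L}_{\omega}=\Pi_S^\perp(\mathcal{L}+\mathcal{Q}_0)$ I would argue as follows. The profile on which we linearize is $\Lambda^{-1}\Phi_B(T_\delta)$ with $T_\delta$ as in \eqref{TdeltaDP}; because $i_\delta$ satisfies the traveling-wave condition \eqref{trawaves} and $\Phi_B\in\mathfrak T_1$ (it is symplectic and $x$-translation invariant by Proposition \ref{WBNFWW}, since $M_{\mathbb C}\circ\Phi_B=M_{\mathbb C}$) and $\Lambda$ commutes with translations, the function $\Lambda^{-1}\Phi_B(T_\delta)$ lies in $S_{\mathtt v}$; hence $\mathcal{L}$ evaluated there is Hamiltonian and $x$-translation invariant by the above. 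The finite-rank correction $\mathcal{Q}_0$ arises from derivatives of $G_\delta$ and $\Phi_B\circ\mathbf A_\varepsilon$, all of which are symplectic and $x$-translation-invariant transformations of a traveling object, so $\mathcal{Q}_0$ preserves both structures as well (one reads this off the representation \eqref{FiniteDimFormDP}: the generating functions $g_j,\chi_j$ inherit membership in $S_{\mathtt v}$). Since $\Pi_S^\perp$ is the symplectic and translation-equivariant projector onto $H_S^\perp$, composing with it preserves the Hamiltonian and $x$-translation-invariant character; thus $\mathcal{L}_{\omega}$ is Hamiltonian and $x$-translation invariant. The main obstacle I anticipate is not any single identity but the bookkeeping needed to confirm that \emph{all} the auxiliary maps ($G_\delta$, $\mathbf A_\varepsilon$, $\Phi_B$, $\Lambda$) genuinely preserve the class $S_{\mathtt v}$ when acting on traveling tori — i.e. that the traveling-wave ansatz \eqref{trawaves} is propagated through the entire chain of coordinate changes — which is exactly where the momentum conservation \eqref{extMom0Comp}–\eqref{extMomcomp} and Lemma \ref{subspaceMomento} must be invoked carefully rather than routinely.
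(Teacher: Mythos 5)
Your treatment of $\mathcal{L}$ is correct and close to the paper's. For the Hamiltonian property both arguments amount to the same observation (the matrix part is $-J$ times the Hessian of $H$, and one checks self-adjointness in the $\mathcal{C}$-variables). For the translation invariance you verify criterion $(a)$ of Lemma \ref{lem:momentocomp} directly — each entry of the matrix in \eqref{linWW} preserves $S_{\mathtt{v}}$ because $V,B\in S_{\mathtt{v}}$ and $G(\eta)$ maps $S_{\mathtt{v}}$ into itself — whereas the paper verifies the equivalent criterion $(b)$ by computing $\{\mathcal{M}_{real},\mathtt{Q}\}_e$ explicitly and showing the three operator coefficients $A_1,A_2,A_3$ vanish. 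Both routes rest on the same input, namely Lemma \ref{MomentumandDN} (note that the precise reference for $V,B\in S_{\mathtt{v}}$ is Lemma \ref{MomentumandDN}$(ii)$, proved via the shape-derivative formula \eqref{shapeDer}, not Lemma \ref{Doo}, and "linearizing $[X_{\mathtt{M}},X_H]=0$" should be replaced by that concrete computation); this is an acceptable, essentially equivalent, variant.

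The part on $\mathcal{L}_{\omega}$ contains a genuine gap. You argue through the decomposition $\mathcal{L}_{\omega}=\Pi_S^{\perp}(\mathcal{L}+\mathcal{Q}_0)$ of \eqref{BoraMaledetta} and assert that the finite-rank remainder $\mathcal{Q}_0$ is itself Hamiltonian and $x$-translation invariant because it "arises from derivatives of symplectic, translation-invariant maps". This cannot simply be read off the representation \eqref{FiniteDimFormDP}: an operator of that form is not automatically of the type $\ii E\widetilde{\mathcal{T}}$ with $\widetilde{\mathcal{T}}$ self-adjoint, and Proposition \ref{LinearizzatoIniziale} only provides estimates for $g_j,\chi_j$, not structural properties. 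In fact the cheapest way to obtain the structure of $\Pi_S^{\perp}\mathcal{Q}_0$ is to already know that $\mathcal{L}_{\omega}$ has it and subtract $\Pi_S^{\perp}\mathcal{L}$, so your argument is at best circular and at worst requires a separate, nontrivial tracking of the symplectic and momentum structure through $G_\delta$, $\mathbf{A}_{\varepsilon}$ and $\Phi_B$. The paper avoids all of this by working with the intrinsic definition \eqref{thirdeqDP}: $\mathcal{L}_{\omega}=\omega\cdot\partial_{\varphi}-JK_{02}(\varphi)$ is Hamiltonian because $K_{02}$ is by construction a self-adjoint (Hessian-type) coefficient of the Taylor expansion \eqref{Kdp} of $H_{\varepsilon,\Xi_0}\circ G_{\delta}$, and it is $x$-translation invariant because of the transport identity \eqref{K02xtran} for $K_{02}$ established in Remark \ref{paganini}, which in turn follows from Lemma \ref{subspaceMomento} (i.e. $M\circ G_{\delta}=M$); then Lemma \ref{lem:momentocomp} applies to $K_{02}$. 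You correctly sense at the end that Lemma \ref{subspaceMomento} is the key ingredient, but as written your chain of reasoning does not actually deploy it: to close the argument you should either switch to the paper's direct route via $K_{02}$, or supply an independent proof that $\mathcal{Q}_0$ is Hamiltonian and satisfies \eqref{constcoeffMomentum}.
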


\begin{proof}
The operator $\omega\cdot\pa_{\vphi}-\mathcal{L}$ 
is the linearized hamiltonian vector field \eqref{eq:113} 
at $(\eta,\psi)$. Hence $J^{-1}(\omega\cdot\pa_{\vphi}-\mathcal{L})$
 is symmetric.
%
An explicit computation shows that 
$\mathcal{C}J^{-1}(\omega\cdot\pa_{\vphi}-\mathcal{L})\mathcal{C}^{-1}$
is self-adjoint, i.e. satisfies \eqref{Ham1comp}-\eqref{self-adjT}.
Now the Hamiltonian of the operator $(\omega\cdot\pa_{\vphi}-\mathcal{L})$
is given by
\[
\mathtt{Q}(\vphi,\hat{\eta},\hat{\psi})=\omega\cdot Q+
\frac{1}{2}\int_{\mathbb{T}} J^{-1}\big(\omega\cdot\pa_{\vphi}-\mathcal{L}\big)\vect{\hat\eta}{\hat\psi}
 \cdot\vect{\hat\eta}{\hat\psi}dx\,.
\]
Consider also the Momentum Hamiltonian 
\begin{equation*} 
\mathcal{M}_{real}(\vphi,\hat{\eta},\hat{\psi})=-\mathtt{v}\cdot Q+
\int_{\mathbb{T}} \hat{\eta}_x \cdot\hat{\psi}dx\,.
\end{equation*}
By an explicit computation
(recall \eqref{PoissonEstesa}) we have
\begin{equation*} 
\big\{
\mathcal{M}_{real}, \mathtt{Q}
\big\}_{e}=\frac{1}{2}\int_{\mathbb{T}}A_1(\vphi)\hat{\psi}\cdot\hat{\psi}dx+
\frac{1}{2}\int_{\mathbb{T}}A_2(\vphi)\hat{\eta}\cdot\hat{\eta}dx-
\frac{1}{2}\int_{\mathbb{T}}A_3(\vphi)\hat{\eta}\cdot\hat{\psi}dx\,,
\end{equation*}
where $A_{i}(\vphi)$, $i=1,2,3$ are defined as
\begin{align*}
A_1(\vphi)&:= 
-(\mathtt{v}\cdot\pa_{\vphi}G)(\eta)+G(\eta)\pa_{x}-\pa_{x}G(\eta)\,,
\\
A_2(\vphi)&:= -\Big(\mathtt{v}\cdot\pa_{\vphi} (BG(\eta)B+BV_{x})\Big)
+ (BG(\eta)B+BV_{x})\pa_{x}-\pa_{x} (BG(\eta)B+BV_{x})\,,
\\
A_3(\vphi)&:= 
-\Big( \mathtt{v}\cdot\pa_{\vphi}(\pa_{x}V+G(\eta)B)\Big)
+(\pa_{x}V+G(\eta)B)\pa_{x}-\pa_{x}(\pa_{x}V+G(\eta)B)\,.
\end{align*}
By Lemma \ref{MomentumandDN} 
we have that $A_1(\vphi)\equiv0$. This is true since $\eta\in S_{\mathtt{v}}$.
Consider the operator $A_{3}(\vphi)$. 
First of all we note that
\[
\begin{aligned}
-(\mathtt{v}\cdot\pa_{\vphi}(\pa_xV))&+\pa_{x}V\pa_{x}-\pa_{xx}V=
-\big(\mathtt{v}\cdot\pa_{\vphi}V+V_{x}\big)\pa_{x}-\mathtt{v}\cdot\pa_{\vphi}V_{x}-V_{xx}=0
\end{aligned}
\]
since the function $V(\eta,\psi)\in S_{\mathtt{v}}$ (see item $(ii)$ of Lemma \ref{MomentumandDN}). Moreover (recall \eqref{shapeDer})
\[
-(\mathtt{v}\cdot\pa_{\vphi}(G(\eta)B))+G(\eta)B\pa_{x}-\pa_{x}G(\eta)B=
-G'(\eta)\big[\mathtt{v}\cdot\pa_{\vphi}\eta+\eta_x\big]B-G(\eta)\big( 
\mathtt{v}\cdot\pa_{\vphi}B+B_x\big)=0\,,
\]
since (recall Lemmata \ref{funzioniSVVV}, \ref{MomentumandDN})
$\eta,B(\eta,\psi)\in S_{\mathtt{v}}$.
Then we have $A_{3}(\vphi)\equiv0$. Reasoning similarly
one can check that also the operator $A_2(\vphi)$ is identically zero.
Then we proved that $\big\{
\mathcal{M}_{real}, \mathtt{Q}
\big\}_{e}=0$, which implies (writing the Hamiltonians
$\mathcal{M}_{real}, \mathtt{Q}$ in complex variables)
the \eqref{extMomcomp}.
Hence, by Lemma \ref{lem:momentocomp},
we have that $\omega\cdot\pa_{\vphi}-\mathcal{L}$ is $x$-translation invariant, i.e.
$\mathcal{C}(\omega\cdot\pa_{\vphi}-\mathcal{L})\mathcal{C}^{-1}$
satisfies \eqref{traInvcomp}.
The operator $\mathcal{L}_{\omega}$ in \eqref{thirdeqDP}
is Hamiltonian by the construction of section \ref{sezione6DP}.
Moreover, by Remark \ref{paganini} (see equation \eqref{K02xtran}), 
we deduce that
\[
\mathtt{v}\cdot\pa_{\vphi}K_{02}(\vphi)+\pa_{x}K_{02}(\vphi)-K_{02}(\vphi)\pa_{x}=0\,.
\]
This implies that the coefficients of $K_{02}(\vphi)$
satisfy \eqref{constcoeffMomentum}. By Lemma \ref{lem:momentocomp}
we have that the operator $K_{02}(\vphi)$, and hence $\mathcal{L}_{\omega}$,
is  $x$-translation invariant.
\end{proof}


\section{Symmetrization of the linearized operator at the highest order}\label{sec:Good}
 The aim of the following sections is to 
conjugate the linearized operator $\mathcal{L}_{\omega}$ in \eqref{BoraMaledetta}
to a constant coefficients operator, up to a regularizing 
remainder.
This will be achieved by applying several transformations
which clearly depends nonlinearly on the point $(\eta,\psi)$ in \eqref{trawaves2} 
on which  we linearized.

\subsection{Good unknown of Alinhac}
The aim of this section is to rewrite 
the operator $\mathcal{L}_{\omega}$ 
in \eqref{BoraMaledetta} in terms of the so called ``good unknown`` of Alinhac 
(more precisely its symplectic correction). This will be done in 
Proposition \ref{lemma:7.4}. As we will see, these 
coordinates are the correct ones
in order to diagonalize, at the highest order, the operator $\mathcal{L}_{\omega}$.
We shall first prove some preliminary results.
Following \cite{AB1}, \cite{BM1}
we conjugate the linearized operator $\mathcal{L}$ in \eqref{linWW}
by the operator
\begin{equation}\label{flussoG}
\begin{aligned}
\Phi_{\mathbb{B}}=\Pi_{S}^{\perp}\mathcal{G}\Pi_{S}^{\perp}={\rm Id}+\mathbb{B}\,,\qquad 
\mathbb{B}:=\Pi_{S}^{\perp}\left( 
\begin{matrix} 0 & 0\\ B & 0
\end{matrix}
\right)\Pi_{S}^{\perp}\,,
\qquad
\mathcal{G} :=\left( 
\begin{matrix} 1 & 0\\ B & 1
\end{matrix}
\right)\,,
\end{aligned}
\end{equation}
%
where $B$ is the real valued function in \eqref{form-of-B}.
Define the function
\begin{equation}\label{func:a}
a=a(\vphi,x):=(\omega\cdot\pa_{\vphi}B)+VB_x\,.
\end{equation}
In the following lemma we study some properties of the map $\Phi_{\mathbb{B}}$.
\begin{lemma}\label{mappa1}
The function $a(\varphi, x)\in S_1^0$ and it satisfies \eqref{space3Xinv}.
Moreover (recall Definition \ref{LipTameConstants})
the maps $\mathcal{G}^{\pm1}-{\rm Id}$ and $(\mathcal{G}^{\pm1}-{\rm Id})^{*}$
satisfy
\begin{equation}\label{AVB3}
\begin{aligned}
\mathfrak{M}^{\gamma}_{(\mathcal{G}^{\pm1}-{\rm Id})}(s)+
\mathfrak{M}^{\gamma}_{(\mathcal{G}^{\pm1}-{\rm Id})^{*}}(s)&\lesssim_{s}
\e(1+\|\mathfrak{I}_{\delta}\|_{s+\mu}^{\gamma,\calO_0})\,,
\\
\|\Delta_{12}(\mathcal{G}^{\pm1}-{\rm Id})h\|_{p}+
\|\Delta_{12}(\mathcal{G}^{\pm1}-{\rm Id})^{*}h\|_{p}&\lesssim_{p}
\e\|h\|_{p}\|i_1-i_{2}\|_{p+\mu}\,.
\end{aligned}
\end{equation}
The map $\Phi_{\mathbb{B}}$
is symplectic w.r.t  the symplectic form
\eqref{formasimpletticaestesa} and $x$-translation invariant, 
i.e. is in $\mathfrak{T}_1$ (see Def. \ref{goodMulti}).
One has that $\Phi_{\mathbb{B}}^{-1}={\rm Id}-\mathbb{B}$.
Moreover $\Phi_{\mathbb{B}}^{-1}$, $\Phi_{\mathbb{B}}$ 
satisfy the estimates \eqref{AVB3}. 
\end{lemma}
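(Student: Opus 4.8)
The plan is to establish each of the four assertions about the map $\Phi_{\mathbb{B}}$ in turn, starting from the explicit formulas \eqref{flussoG}, \eqref{func:a}, and \eqref{form-of-B}, and invoking the homogeneity/estimate machinery of Definition \ref{espOmogenea} together with the structural lemmas of Section \ref{sec:group}.

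\textbf{Step 1: the function $a$ belongs to $S_1^0$ and is $x$-translation invariant.} By Lemma \ref{Doo} the functions $V$ and $B$ belong to the class $S_1^0$ (they are order-$0$ symbols, i.e.\ functions, and admit the homogeneous expansion \eqref{simboOMO2} in $v_I,\ov{v_I}$). The claim about $a=(\omega\cdot\pa_\vphi B)+VB_x$ then follows from the fact that $S_1^0$ is stable under $\omega\cdot\pa_\vphi$ (which acts on the $\vphi$-dependence only, and by \eqref{simboOMO2DEF} multiplies the $i$-homogeneous piece by $\ii(\s_1\mathtt{l}(j_1)+\dots+\s_i\mathtt{l}(j_i))\cdot\omega$, a bounded factor since the $\mathtt{l}(j)$ range over a finite set) and under products (this is Lemma \ref{Jameshomo} applied to two order-$0$ symbols, or rather the classical product rule in $H^s$ since these are multiplication operators). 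The non-homogeneous remainders combine with the bound \eqref{AVBDEF} because $\widetilde V,\widetilde B$ satisfy it with $k=1$; one uses the tame product estimate in $H^s$ to handle $V B_x$ and the bound on $\omega\cdot\pa_\vphi$, absorbing the loss into $\mu$. The $x$-translation invariance \eqref{space3Xinv} of $a$ follows from the corresponding property of $V,B$ (item $(ii)$ of Lemma \ref{MomentumandDN}) and the fact that $S_{\mathtt{v}}$ is preserved by $\omega\cdot\pa_\vphi$, by multiplication, and by $\pa_x$, as recorded in Lemma \ref{funzioniSVVV}.

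\textbf{Step 2: tame estimates \eqref{AVB3} on $\mathcal{G}^{\pm1}-\mathrm{Id}$ and their adjoints.} Since $\mathcal{G}-\mathrm{Id}=\sm{0}{0}{B}{0}$ is simply multiplication by $B$ in the lower-left corner, and $\mathcal{G}^{-1}-\mathrm{Id}=\sm{0}{0}{-B}{0}$ (one checks $\mathcal{G}\,\sm{1}{0}{-B}{1}=\mathrm{Id}$ directly), both operators are multiplication operators with symbol $B$ up to sign. Their $\sigma$-tame constants are therefore controlled by $\|B\|_s$, and the $L^2$-adjoint of multiplication by $B$ is again multiplication by $\ov B=B$ (since $B$ is real-valued). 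Hence all four quantities are bounded by $\gotM^\gamma$-constants of $B$, and by Lemma \ref{Doo} (the $k=1$ case of \eqref{AVBDEF} applied to $B$, together with $\|v_I\|_s^{\gamma,\calO_0}\lesssim_s 1$ and \eqref{IpotesiPiccolezzaIdeltaDP}) we get $\|B\|_s^{\gamma,\calO_0}\lesssim_s \e(1+\|\mathfrak{I}_\delta\|_{s+\mu}^{\gamma,\calO_0})$, which is \eqref{AVB3}. The variation estimate $\|\Delta_{12}B\|_p\lesssim_p \e\|i_1-i_2\|_{p+\mu}$ comes from the second line of \eqref{AVBDEF} for $B$, and multiplying by $h$ in $H^p$ and using that $H^p$ is an algebra ($p\ge s_0$) gives the $\Delta_{12}$-part of \eqref{AVB3}.

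\textbf{Step 3: $\Phi_{\mathbb{B}}\in\mathfrak{T}_1$ and the inverse formula.} The identity $\Phi_{\mathbb{B}}^{-1}=\mathrm{Id}-\mathbb{B}$ is immediate from $\mathbb{B}^2=\Pi_S^\perp\sm{0}{0}{B}{0}\Pi_S^\perp\sm{0}{0}{B}{0}\Pi_S^\perp=0$ (the product of the two strictly-lower-triangular $2\times2$ blocks vanishes), so $(\mathrm{Id}+\mathbb{B})(\mathrm{Id}-\mathbb{B})=\mathrm{Id}-\mathbb{B}^2=\mathrm{Id}$; and then $\Phi_{\mathbb{B}}^{-1}$, being of the same shape, satisfies the estimates \eqref{AVB3} by Step 2. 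For symplecticity with respect to $\widetilde{\mathcal{W}}$ (equivalently $\Omega_e$ in \eqref{formasimpletticaestesa}): $\mathcal{G}$ is the time-one flow of the generating function $\tfrac12\int_{\mathbb{T}}B\,\hat\eta^2\,dx$ (a quadratic Hamiltonian whose vector field is $(0,-B\hat\eta)\rightsquigarrow$ the shear $\hat\psi\mapsto\hat\psi+B\hat\eta$ up to sign conventions), hence symplectic; this is the standard argument of \cite{AB1},\cite{BM1}, and the projected map $\Phi_{\mathbb{B}}=\Pi_S^\perp\mathcal{G}\Pi_S^\perp$ is symplectic on $H_S^\perp$ because $H_S$ and $H_S^\perp$ are symplectically orthogonal for $\Omega$. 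For $x$-translation invariance: $B=B(\eta,\psi)\in S_{\mathtt{v}}$ by Lemma \ref{MomentumandDN}, so multiplication by $B$ maps $S_{\mathtt{v}}$ to $S_{\mathtt{v}}$ (Lemma \ref{funzioniSVVV}), hence so do $\mathcal{G}^{\pm1}$ and $\Phi_{\mathbb{B}}^{\pm1}$, which is precisely \eqref{traInvcomp}; combined with symplecticity this gives $\Phi_{\mathbb{B}}\in\mathfrak{T}_1$ by Definition \ref{goodMulti}.

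\textbf{Main obstacle.} The genuinely delicate point is not any single estimate but the bookkeeping in Step 1: verifying that $a=(\omega\cdot\pa_\vphi B)+VB_x$ stays in the class $S_1^0$ with the \emph{same} homogeneity truncation order $14-2k$ as in \eqref{espandoenonmifermo} while the product $VB_x$ a priori doubles homogeneity degrees, and checking that the degree-$(\ge 14-2k+1)$ homogeneous pieces produced by the product (and by $\omega\cdot\pa_\vphi$ acting on high-degree pieces of $B$) can be reabsorbed into the non-homogeneous remainder $\widetilde a$ satisfying \eqref{AVBDEF}. This is exactly the content of Lemma \ref{Jameshomo} specialized to order-$0$ multiplication symbols (with $k'=k=1$), and the $\gamma^{1-k}=\gamma^0=1$ bookkeeping together with \eqref{IpotesiPiccolezzaIdeltaDP} ensures the high-degree terms are $O(\e^{14-2k+1})=O(\varepsilon^{13}+\varepsilon\|\mathfrak{I}_\delta\|_{s+\mu}^{\gamma,\calO_0})$ in the relevant norm, hence admissible as remainder. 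Once this is in place, Steps 2 and 3 are routine consequences of $B$ being a real multiplication operator in $S_1^0\cap S_{\mathtt{v}}$ and of $\mathbb{B}^2=0$.
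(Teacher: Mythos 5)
Your proposal is essentially the paper's proof: the membership $a\in S_1^0$ and the condition \eqref{space3Xinv} are obtained exactly as in the paper (from Lemma \ref{Doo}, formula \eqref{func:a} and Lemma \ref{MomentumandDN}), the inverse formula comes from the same nilpotency observation $\mathbb{B}^2=0$ on $H_S^\perp$ (the projectors act componentwise, so they do not spoil it), the $x$-translation invariance is deduced from $B\in S_{\mathtt{v}}$, and the estimates \eqref{AVB3} reduce, as you say, to bounds on the multiplication operator by $B$.

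The one step where you deviate, and where your justification as written is not sound, is symplecticity. You first take the unprojected shear $\mathcal{G}$, exhibit it as a Hamiltonian time-one flow, and then assert that $\Phi_{\mathbb{B}}=\Pi_S^\perp\mathcal{G}\Pi_S^\perp$ is symplectic on $H_S^\perp$ ``because $H_S$ and $H_S^\perp$ are symplectically orthogonal.'' That inference is false as a general principle: for a symplectic $T$ on a symplectically orthogonal splitting $E_1\oplus E_2$, the compressed map $\Pi_2 T|_{E_2}$ need not be symplectic (e.g.\ a symplectic map swapping the two factors compresses to $0$). It is true here only because of the specific structure: $(\mathcal{G}-\mathrm{Id})u$ has zero $\eta$-component, i.e.\ the range of $\mathbb{B}$ lies in the $\widetilde\Omega$-isotropic subspace $\{\hat\eta=0\}$, so $\widetilde\Omega(\mathbb{B}u,\mathbb{B}v)=0$ and $\widetilde\Omega(\mathbb{B}u,v)+\widetilde\Omega(u,\mathbb{B}v)=\int_{\mathbb{T}}\big(\Pi_S^\perp(B\tilde\eta_u)\,\tilde\eta_v-\tilde\eta_u\,\Pi_S^\perp(B\tilde\eta_v)\big)\,dx=0$; equivalently $J^{-1}\mathbb{B}$ is symmetric. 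This is precisely what the paper's argument encodes: it defines the quadratic Hamiltonian $H_{\mathbb{B}}=\tfrac12\int_{\mathbb{T}}J^{-1}\mathbb{B}W\cdot W\,dx$ whose vector field is the \emph{already projected} operator $\mathbb{B}$, so the flow is symplectic on $H_S^\perp$ by construction, and then identifies the time-one flow $\exp(\mathbb{B})$ with $\mathrm{Id}+\mathbb{B}$ via the same nilpotency. So either adopt the paper's route (put the projectors into the generator) or add the two-line check above; with that patch your argument is complete.
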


\begin{proof}
The homogeneity expansion of $a$ (with estimates \eqref{AVBDEF}) 
follows from the ones of 
$V$ and $B$ (given in Lemma \ref{Doo}) and
by formula \eqref{func:a}.
The functions $a$, $V$, $B$ satisfy the \eqref{space3Xinv} by
Lemma \ref{MomentumandDN}.
Let us now consider the Hamiltonian
\begin{equation*}
H_{\mathbb{B}}=\frac{1}{2}\int_{\mathbb{T}}J^{-1}\mathbb{B}\vect{\tilde{\eta}}{\tilde{\psi}}
\cdot \vect{\tilde{\eta}}{\tilde{\psi}}dx\,,
\end{equation*}
where $J$ is in \eqref{symReal} 
and denote by $\Phi_{\mathbb{B}}^{\tau}$, $\tau\in[0,1]$, the flow generated by the
Hamiltonian $H_{\mathbb{B}}$.
We have that 
\[
\Phi_{\mathbb{B}}=\Phi_{\mathbb{B}}^{1}
=\exp(\mathbb{B})=\sum_{n\geq 0}\frac{1}{n!}\mathbb{B}^{n}=
{\rm Id }+\mathbb{B}+\mathcal{R}_1\,,\qquad 
\mathcal{R}_1:=\sum_{n =2}^{\infty}\frac{1}{n!}\mathbb{B}^{n}\,.
\]
Using that the matrix $\sm{0}{0}{B}{0}$ is nilpotent, 
we deduce that
\[
\mathbb{B}^{2}=
-\Pi_{S}^{\perp}\sm{0}{0}{B}{0}\Pi_{S}\sm{0}{0}{B}{0}\Pi_{S}^{\perp}\,,
\qquad 
\mathbb{B}^{n}=-(-1)^{n}\Pi_{S}^{\perp}\sm{0}{0}{B}{0}
\big(\Pi_{S}\sm{0}{0}{B}{0}\big)^{n-1}\Pi_{S}^{\perp}\,.
\]
We note that, given 
$W=\Pi_{S}^{\perp}W=\vect{\tilde{\eta}}{\tilde{\psi}}$,
we have $\mathbb{B}^{2}W\equiv0$. 
Therefore $\mathcal{R}_1=0$ and formula \eqref{flussoG} follows.
By  Lemma \ref{MomentumandDN},
the function $B$ 
belongs to $S_{\mathtt{v}}$ hence 
$\mathcal{G}$ is $x$-translation invariant.
\end{proof}

We now study the conjugate of the operator $\mathcal{L}_{\omega}$.
\begin{proposition}{\bf (Symplectic good unknown).}\label{lemma:7.4}
The conjugate the operator $\mathcal{L}_{\omega}$ in \eqref{BoraMaledetta}
under the map $\Phi_{\mathbb{B}}$ in \eqref{flussoG} has the form
\begin{equation}\label{primamappa}
\mathcal{L}_0=\Phi_{\mathbb{B}}^{-1}\mathcal{L}_{\omega}
\Phi_{\mathbb{B}}=
\Pi_{S}^{\perp} \Big(\omega\cdot\pa_{\vphi}+\opw\left(
\begin{matrix}
\ii\x V+\frac{V_x}{2} & -|\x| \\
1+a & \ii \x V-\frac{V_x}{2}
\end{matrix}
\right)+\widetilde{\mathcal{Q}}_0\Big)\,,
\end{equation}
where $\widetilde{\mathcal{Q}}_0\in \gotL^{1}_{\rho,p}\otimes\mathcal{M}_{2}(\mathbb{C})$ 
(see Def. \ref{ellerho}, \ref{espOmogenea}) for any $\rho\geq3$.
Finally the operator $\mathcal{L}_0$ is real-to-real,  
Hamiltonian and $x$-translation invariant.
\end{proposition}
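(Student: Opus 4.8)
The plan is to conjugate $\mathcal{L}_{\omega}$ (in the form \eqref{BoraMaledetta}) by the symplectic map $\Phi_{\mathbb{B}}$ of \eqref{flussoG} and to track the resulting symbols and remainders using the pseudo differential calculus of section \ref{sec:pseudopseudo} together with the homogeneity classes of Definition \ref{espOmogenea}. First I would recall from Lemma \ref{mappa1} that $\Phi_{\mathbb{B}}^{-1}=\mathrm{Id}-\mathbb{B}$ exactly (nilpotency of $\sm{0}{0}{B}{0}$ on $H_S^{\perp}$), so the conjugation $\Phi_{\mathbb{B}}^{-1}\mathcal{L}_{\omega}\Phi_{\mathbb{B}}$ is a genuinely polynomial computation rather than a series. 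Writing $\mathcal{L}_{\omega}=\Pi_S^{\perp}(\mathcal{L}+\mathcal{Q}_0)$ with $\mathcal{L}$ the explicit matrix operator in \eqref{linWW}, I would expand
\[
(\mathrm{Id}-\mathbb{B})\,\mathcal{L}\,(\mathrm{Id}+\mathbb{B})=\mathcal{L}+[\mathcal{L},\mathbb{B}]-\mathbb{B}\mathcal{L}\mathbb{B},
\]
and compute the $2\times 2$ block entries. The key algebraic cancellation is the standard ``good unknown of Alinhac'' identity: the commutator of the transport/Dirichlet--Neumann part of $\mathcal{L}$ with $\mathbb{B}$ cancels the off-diagonal term $G(\eta)B$ in the $(1,1)$ entry and produces, in the $(2,1)$ entry, the potential $1+a$ with $a=(\omega\cdot\partial_\varphi B)+VB_x$ as in \eqref{func:a}; the term $-\mathbb{B}\mathcal{L}\mathbb{B}$ and the contributions of $\omega\cdot\partial_\varphi$ hitting $\mathbb{B}$ combine to give exactly the $a$ appearing there. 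This is the computation carried out in \cite{AB1}, \cite{BM1}, and I would invoke it, only checking that the structure is preserved.

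Next I would turn all operators into Weyl-quantized pseudo differential operators plus smoothing remainders. The operators $\partial_x V$, $V\partial_x$ become $\opw(\ii\xi V+\tfrac{V_x}{2})$ and $\opw(\ii\xi V-\tfrac{V_x}{2})$ respectively (up to a smoothing remainder from the symbolic-calculus correction, using Lemmata \ref{James}, \ref{Jameshomo}); by Lemma \ref{Doo}, $G(\eta)=\opw(|\xi|)+\mathcal{R}$ with $\mathcal{R}\in\gotL^1_{\rho,p}$, and the functions $V,B,a$ belong to $S_1^0$. The terms $G(\eta)B$, $BG(\eta)$, $BG(\eta)B$, $BV_x$ are then products of an $S^1$ pseudo differential operator with $S_1^0$ symbols, hence by the composition Lemma \ref{Jameshomo} they are pseudo differential of order $\le 1$ with symbols in $S_1^{\le1}$ plus remainders in $\gotL^1_{\rho,p}$; after the good-unknown cancellation only the claimed principal terms survive at orders $1$ and $0$, and everything of negative order is absorbed into $\widetilde{\mathcal{Q}}_0$. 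The finite-rank remainder $\mathcal{Q}_0$ from Proposition \ref{LinearizzatoIniziale}, conjugated by $\Phi_{\mathbb{B}}=\mathrm{Id}+\mathbb{B}$, stays in $\gotL^1_{\rho,p}\otimes\mathcal{M}_2(\mathbb{C})$ because $\mathbb{B}$ itself is a bounded, indeed smoothing in the relevant sense, operator built from $B\in S_1^0$; here I would use the algebra/ideal properties of $\gotL_{\rho,p}$ recorded in the appendices of \cite{FGP}, \cite{FGP1}. The book-keeping of the $\e$-homogeneous expansions of all symbols and remainders is routine given Definition \ref{espOmogenea} and Lemmata \ref{lem:escobarhomo}, \ref{Jameshomo}.

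Finally, for the structural claims: the Hamiltonian and $x$-translation invariance of $\mathcal{L}_{\omega}$ is Lemma \ref{algStrucHamlinear}, and $\Phi_{\mathbb{B}}\in\mathfrak{T}_1$ is Lemma \ref{mappa1}, i.e. $\Phi_{\mathbb{B}}$ is symplectic and $x$-translation invariant; conjugation of a Hamiltonian, $x$-translation invariant operator by a map in $\mathfrak{T}_1$ stays in the class $\mathfrak{S}_1$ (Definitions \ref{compaMulti}, \ref{goodMulti}), so $\mathcal{L}_0$ is Hamiltonian and $x$-translation invariant; the real-to-real property is preserved because $\Phi_{\mathbb{B}}$ is the complexification of a real operator (Remark after Definition \ref{realtoreal}). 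Concretely, one checks that the matrix of symbols in \eqref{primamappa} satisfies \eqref{selfSimbo2} and \eqref{retoreMat} (using Lemma \ref{realHamMtrixSimbo}) and that the symbols $V$, $a$ are $x$-translation invariant by Lemma \ref{lem:momentoSimbolo}, which forces the corresponding constraints \eqref{constcoeffMomentum} on $\widetilde{\mathcal{Q}}_0$.

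The main obstacle is not any single step but the bookkeeping of the good-unknown cancellation at the level of Weyl symbols: one must verify that the several lower-order terms produced by $[\mathcal{L},\mathbb{B}]$, by $-\mathbb{B}\mathcal{L}\mathbb{B}$, and by $\omega\cdot\partial_\varphi$ acting on $\mathbb{B}$ combine to give \emph{exactly} the coefficient $1+a$ with $a$ as in \eqref{func:a} and nothing else at order $0$, with all genuinely pseudo differential corrections of order $\le -1$ landing in $\gotL^1_{\rho,p}$. Since this identity is the one established in \cite{AB1}, \cite{BM1} for the same operator $\mathcal{L}$, the safest route is to reduce to their computation and then separately verify (i) that the symbolic-calculus remainders have the asserted homogeneity structure via Lemma \ref{Jameshomo}, and (ii) that each transformation lies in $\mathfrak{T}_1$ so that the $\mathfrak{S}_1$ structure — Hamiltonian, real-to-real, $x$-translation invariant — is propagated.
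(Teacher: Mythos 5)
Your proposal is correct and follows essentially the same route as the paper: reduce to the classical good-unknown conjugation giving the matrix with entries $\pa_x V$, $-G(\eta)$, $1+a$, $V\pa_x$ (with $a$ as in \eqref{func:a}), then pass to Weyl symbols via Lemma \ref{Jameshomo} and Lemma \ref{Doo}, absorb the symbolic-calculus corrections, the conjugated $\mathcal{Q}_0$ and the finite-rank projector terms into $\widetilde{\mathcal{Q}}_0\in\gotL^{1}_{\rho,p}\otimes\mathcal{M}_2(\mathbb{C})$, and deduce the Hamiltonian and $x$-translation invariance from Lemmata \ref{mappa1} and \ref{algStrucHamlinear}. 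The only cosmetic difference is that you expand directly in $\mathbb{B}$ using its nilpotency, whereas the paper first compares with the unprojected conjugation $\mathcal{G}^{-1}\mathcal{L}\mathcal{G}$ and collects the $\Pi_S$-corrections as finite-rank terms; the two bookkeepings are equivalent.
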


\begin{proof}
Notice that
\[
\begin{aligned}
&\Phi_{\mathbb{B}}^{-1}\mathcal{L}_{\omega}\Phi_{\mathbb{B}}=
\Pi_{S}^{\perp}\mathcal{G}^{-1}\mathcal{L}\mathcal{G}\Pi_{S}^{\perp}
+\widetilde{\mathcal{Q}}_0\,,\\
&\widetilde{\mathcal{Q}}_0:=\Pi_{S}^{\perp}\mathcal{G}^{-1}
\mathcal{Q}_0\mathcal{G}\Pi_{S}^{\perp}
-\Pi_{S}^{\perp}\mathcal{G}^{-1}\Pi_{S}(\mathcal{L}+\mathcal{Q}_0)
\Pi_{S}^{\perp}\mathcal{G}\Pi_{S}^{\perp}
-\Pi_{S}^{\perp}\mathcal{G}^{-1}(\mathcal{L}
+\mathcal{Q}_0)\Pi_{S}\mathcal{G}\Pi_{S}^{\perp}\,.
\end{aligned}
\]
Recalling \eqref{linWW} we have that
\begin{equation}\label{elle0}
\mathcal{G}^{-1}\mathcal{L}
\mathcal{G}=
\omega\cdot\pa_{\vphi}+
\left(
\begin{matrix}
\pa_{x}V & -G(\eta) \\
1+a & V\pa_{x}
\end{matrix}
\right)
\end{equation}
where $a$ is the function defined in \eqref{func:a}
By applying 
Lemma \ref{Jameshomo} 
we have
\[
\begin{aligned}
\pa_{x}V&=\opw(\ii \x)\circ\opw(V)
=
\opw\big(\ii \x V+\frac{1}{2\ii }\{\ii \x,V\}\big)=
\opw\big(\ii\x V+\frac{V_x}{2}\big)\,,\\
V\pa_{x}&=\opw(V)\circ\opw(\ii \x)
=
\opw\big(\ii \x V+\frac{1}{2\ii }\{V, \ii \x\}\big)=
\opw\big(\ii\x V-\frac{V_x}{2}\big)\,,
\end{aligned}
\]
up to smoothing terms in $\gotL_{\rho,p}^{1}$.
Moreover, by Lemma \ref{Doo}, we also have that
$G(\eta)[\cdot]:=|D|=\opw(|\x|)$
up to smoothing term $\gotL_{\rho,p}^{1}$.
Then the formula for the pseudo differential operator in 
\eqref{primamappa} follows.
The remainder $\widetilde{\mathcal{Q}}_0$ is finite rank of
the form \eqref{FiniteDimFormDP}.
Notice that the remainder $\mathcal{Q}_0$ in Proposition 
\ref{LinearizzatoIniziale}  admits expansions in homogeneous remainders, i.e.
belongs to $\mathfrak{L}_{\rho,p}^{1}$ also using Lemma 
$C.7$ in \cite{FGP1}.
Using Lemma \ref{Doo} we conclude that 
$\widetilde{\mathcal{Q}}_0\in \mathfrak{L}^{1}_{\rho,p}$.
The operator $\mathcal{L}_0$ is 
Hamiltonian and $x$-translation invariant thanks to the properties of 
$\Phi_{\mathbb{B}}$ and Lemma \ref{algStrucHamlinear}.
\end{proof}

\subsection{Complex formulation of Water waves}
We want to rewrite the operator 
$\mathcal{L}_0(\vphi)$ in \eqref{primamappa} in the complex coordinates
\eqref{CVWW}.
Following  the strategy used in Proposition $3.3$ in \cite{BFP} 
we prove the following result.
\begin{proposition}{\bf (Linearized operator in complex variables).}\label{lem:compVar}
There is $\mu=\mu(\nu)>0$ and a matrix of symbols 
$A_{-1}\in S_1^{-1}\otimes\mathcal{M}_{2}(\mathbb{C})$ (see Def. \ref{espOmogenea}), 
satisfying \eqref{space3Xinv}, such that (recall \eqref{CVWW}, \eqref{primamappa})
\begin{equation}\label{elle1}
\begin{aligned}
\mathcal{L}_1(\vphi)&:= \Lambda \mathcal{L}_0(\vphi)\Lambda^{-1}
:=\\
&\Pi_{S}^{\perp}\Big(\omega\cdot\pa_{\vphi}+
\opw\Big(\ii A_1(\vphi,x)\x+\ii A_{1/2}(\vphi,x)|\x|^{\frac{1}{2}}+A_0(\vphi,x)+
A_{-1}(\vphi,x,\x)\Big)
+
\mathcal{R}^{(1)}\Big)\,,
\end{aligned}
\end{equation}
where
$\mathcal{R}^{(1)}=\mathcal{R}^{(1)}(\vphi)$ is in 
$\gotL_{\rho,p}^{1}\otimes\mathcal{M}_{2}(\mathbb{C})$ and
\begin{align}\label{matriciinizio}
& A_{1}(\vphi,x) := 
\left(\begin{matrix} V(\vphi,x) & 0 \\
0 & V(\vphi,x) \end{matrix}\right) 
\\
& A_{1/2}(\vphi,x)  := 
\left(\begin{matrix}1+\tilde{a}(\vphi,x) & \tilde{a}(\vphi,x) \\
 -\tilde{a}(\vphi,x) & -(1+\tilde{a}(\vphi,x)) \end{matrix}\right) \, , 
 \qquad \tilde{a}:= 
 \frac12 (  \omega\cdot\pa_{\vphi} B + V B_x ) \, ,  \label{function:a}
 \\
&  A_{0}(\vphi,x) :=  \frac{1}{4}\left(\begin{matrix}0 & 1\\1&0\end{matrix}\right)V_x (\vphi, x) \, .
 \label{function:c}
\end{align} 
\end{proposition}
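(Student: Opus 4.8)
\textbf{Plan of the proof of Proposition \ref{lem:compVar}.}
The strategy is to conjugate the operator $\mathcal{L}_0(\vphi)$ of \eqref{primamappa} by the transformation $\Lambda$ of \eqref{matriciCompVar} and to track how each of the four entries of the $2\times 2$ matrix of symbols transforms. Since $\Lambda=\mathcal C\circ\mathfrak F$ with $\mathfrak F=\mathrm{diag}(|D|^{-1/4},|D|^{1/4})$ and $\mathcal C$ a constant $2\times 2$ matrix, the conjugation $\Lambda(\cdot)\Lambda^{-1}$ is the composition of the (constant) change of basis $\mathcal C(\cdot)\mathcal C^{-1}$ and the pseudo differential conjugation by $\mathfrak F$. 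The operator $\mathcal C(\cdot)\mathcal C^{-1}$ applied to $\sm{\ii\x V+\frac{V_x}2}{-|\x|}{1+a}{\ii\x V-\frac{V_x}2}$ is purely algebraic and I would compute it via the formul\ae\ \eqref{forma-complessa}: the diagonal part $\frac12\{(A+D)-\ii(B-C)\}$ and off-diagonal part $\frac12\{(A-D)+\ii(B+C)\}$, with $A=\opw(\ii\x V+\frac{V_x}2)$, $D=\opw(\ii\x V-\frac{V_x}2)$, $B=-\opw(|\x|)$, $C=\opw(1+a)$. The transport symbol $\ii\x V$ is diagonal and survives; the first-order symmetrization term $\frac{V_x}2$ ends up antisymmetric; and the crucial point is that $-|\x|$ and $1+a$ combine to produce the operators $\frac{\ii}{2}(|\x|-(1+a))$ on the diagonal and $\pm\frac{\ii}{2}(|\x|+(1+a))$ off-diagonal (up to lower order from symbolic calculus, since $|\x|$ and $a(\vphi,x)$ have order $1$ and $0$ respectively and their composition with the constant matrix is exact).

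The next step, conjugation by $\mathfrak F$, is where the half-derivative structure emerges. I would use Lemma \ref{James} (composition) to compute $|D|^{-1/4}\opw(a_m)|D|^{1/4}$ and $|D|^{1/4}\opw(a_m)|D|^{-1/4}$ for each relevant symbol $a_m$: for a symbol of order $1$ this produces, to leading order, the same symbol plus a term of order $0$ coming from the Poisson bracket $\pm\frac14\{|\x|^{\mp1/4},a_m\}|\x|^{\pm1/4}$, and lower-order corrections of order $-1$ plus a smoothing remainder in $\gotL^1_{\rho,p}$. The operator $\opw(|\x|)$ itself conjugates, via $|D|^{\mp1/4}$, into $|D|^{1/2}$ up to order-$0$ terms (indeed $|\x|^{-1/4}|\x||\x|^{1/4}=|\x|$ exactly on the relevant Fourier support, but the Weyl symbol picks up corrections); more precisely $\mathfrak F^{-1}\sm{0}{|D|^{1/2}\cdot}{}{}$-type computations have to be organized so that the leading half-order part is exactly $\ii A_{1/2}|\x|^{1/2}$ with $A_{1/2}$ as in \eqref{function:a}, the $(1+\tilde a)$ on the diagonal of $A_{1/2}$ coming from $1+a$ (with $\tilde a=a/2$, hence the $\frac12$ in \eqref{function:a}) and the $\tilde a$ off-diagonal from the symmetric combination $|\x|+(1+a)\mapsto$ after the $|\x|^{1/2}$-normalization; the off-diagonal $A_0$ in \eqref{function:c} assembles the order-$0$ leftovers: the $\frac{V_x}4$ comes half from the $\mathcal C$-conjugation of the $\frac{V_x}2$ terms and half from the Poisson-bracket corrections generated when $\mathfrak F$ passes through the order-$1$ transport symbol, and one must check that these exactly cancel on the diagonal (leaving $A_1=V\,\mathrm{Id}$, since $\{|\x|^{\pm1/4},V\}$ contributes only to $A_0$ and $A_{-1}$) and add up off the diagonal. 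All symbols produced stay in the homogeneous classes $S_1^\bullet$ by Lemma \ref{Jameshomo}, since $V,a,B\in S_1^0$ by Lemma \ref{Doo} and Lemma \ref{mappa1}; and $\mathcal R^{(1)}\in\gotL^1_{\rho,p}\otimes\mathcal M_2(\C)$ by collecting $\Lambda\widetilde{\mathcal Q}_0\Lambda^{-1}$ (using that $\widetilde{\mathcal Q}_0\in\gotL^1_{\rho,p}\otimes\mathcal M_2(\C)$ from Proposition \ref{lemma:7.4} and Lemma B.2 of \cite{FGP} to conjugate by the Fourier multiplier $|D|^{\pm1/4}$) together with all composition remainders from Lemma \ref{James}. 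The $x$-translation invariance of $A_{-1}$ (the \eqref{space3Xinv} property) follows from Lemma \ref{lem:momentoSimbolo} since $\mathcal L_0$ is $x$-translation invariant and $\Lambda$ is built from Fourier multipliers, hence preserves $S_{\mathtt v}$.

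The main obstacle I anticipate is purely bookkeeping: keeping track of the exact constants in the order-$0$ symbol $A_0$ and the diagonal-vs-off-diagonal split of the order-$1/2$ symbol $A_{1/2}$, since several contributions of the same size arrive simultaneously — the constant $\mathcal C$-rotation of $\sm{*}{-|\x|}{1+a}{*}$, the commutator corrections $\frac14\{|\x|^{\pm1/4},\cdot\}$ from $\mathfrak F$, and the symmetrization remainders $\pm\frac{V_x}2$ already present in \eqref{primamappa} — and one has to verify they combine into precisely the stated matrices, in particular that the $(1,1)$ and $(2,2)$ entries of the half-order term are both $\pm(1+\tilde a)$ and not $\pm(1+\tilde a)+(\text{spurious order-}0)$. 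Concretely I would first do the computation with $a=V=0$ (constant-coefficient check against \eqref{eq:113CWW}: one must recover $\mathcal L_1=\Pi_S^\perp(\omega\cdot\pa_\vphi+\ii|D|^{1/2}E+\dots)$), then reinstate $V$ to fix $A_1$ and the $V_x$ part of $A_0$, then reinstate $a$ to fix $\tilde a$; the homogeneity tracking and remainder estimates are then automatic from Lemmata \ref{lem:escobarhomo}, \ref{Jameshomo} and the bounds \eqref{AVBDEF}, \eqref{AVBDEF2}. No genuinely new idea beyond symbolic calculus is needed, so the proof is essentially a careful application of Lemma \ref{James} entry by entry.
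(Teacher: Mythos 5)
Your toolbox (Weyl calculus via Lemma \ref{Jameshomo}, the homogeneous classes $S^m_1$, dumping the conjugated $\widetilde{\mathcal Q}_0$ of Proposition \ref{lemma:7.4} plus all composition remainders into $\gotL^{1}_{\rho,p}\otimes\mathcal{M}_2(\C)$, Lemma \ref{lem:momentoSimbolo} for \eqref{space3Xinv}) is the right one, but your plan has the two factor conjugations in the wrong order, and here the order is not cosmetic. Since $\Lambda=\mathcal C\circ\mathfrak F$, one has $\Lambda\mathcal L_0\Lambda^{-1}=\mathcal C\,\bigl(\mathfrak F\,\mathcal L_0\,\mathfrak F^{-1}\bigr)\,\mathcal C^{-1}$: the multiplier conjugation $\mathfrak F$ must act \emph{first}, on the real-variable matrix of \eqref{primamappa}. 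That is exactly where the half-order structure is created, because the off-diagonal entries are sandwiched with \emph{equal} exponents, $|\x|^{-1/4}\#^{W}_{\rho}(-|\x|)\#^{W}_{\rho}|\x|^{-1/4}=-|\x|^{1/2}$ and $|\x|^{1/4}\#^{W}_{\rho}(1+a)\#^{W}_{\rho}|\x|^{1/4}=(1+a)|\x|^{1/2}$ up to lower order, while the diagonal sandwiches give $\ii V\x\pm\frac{V_x}{4}$; only afterwards does the constant rotation $\mathcal C$, through \eqref{forma-complessa}, yield \eqref{matriciinizio}--\eqref{function:c} with $\tilde a=a/2$. You instead apply $\mathcal C(\cdot)\mathcal C^{-1}$ to the original matrix and then conjugate by $\mathfrak F$. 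Since $\mathcal C$ and $\mathfrak F$ do not commute, this computes $(\mathfrak F\mathcal C)\,\mathcal L_0\,(\mathfrak F\mathcal C)^{-1}$, which is not $\Lambda\mathcal L_0\Lambda^{-1}$; worse, the mechanism you invoke cannot produce \eqref{elle1} in that order: after the $\mathcal C$-rotation every entry contains $\pm\tfrac{\ii}{2}|\x|$ together with $\tfrac{\ii}{2}(1+a)$ (incidentally \eqref{forma-complessa} puts $B-C$ on the diagonal and $B+C$ off it, so your diagonal/off-diagonal split is also swapped), and the only $\mathfrak F$-sandwiches you then list, $|D|^{-1/4}(\cdot)|D|^{1/4}$ and $|D|^{1/4}(\cdot)|D|^{-1/4}$, preserve the order; hence order-one terms $\pm\tfrac{\ii}{2}|\x|$ survive on the diagonal and the $(1+a)$ pieces land at order $\mp 1/2$ off it. Your own identity $|\x|^{-1/4}\,|\x|\,|\x|^{1/4}=|\x|$ already shows that no Weyl-symbol correction turns this into $|D|^{1/2}$: the half power can only come from the equal-exponent sandwich, which is available precisely when $\mathfrak F$ hits the off-diagonal entries of the real-variable matrix \emph{before} $\mathcal C$ mixes them.

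Once the order is reversed, the rest of your sketch goes through and coincides with the paper's proof: the diagonal entries become $\ii V\x\pm\frac{V_x}{4}$ (so after $\mathcal C$ the order-zero parts cancel on the diagonal and give the off-diagonal $\frac{V_x}{4}$ of \eqref{function:c}), the off-diagonal entries become $-|\x|^{1/2}$ and $(1+a)|\x|^{1/2}$ (so $\mathcal C$ gives $A_{1/2}$ as in \eqref{function:a} with $\tilde a=a/2$), Lemma \ref{Jameshomo} keeps all corrections in $S^{-1}_1\otimes\mathcal{M}_2(\C)$, the composition remainders plus $\Lambda\widetilde{\mathcal Q}_0\Lambda^{-1}$ form $\mathcal R^{(1)}\in\gotL^{1}_{\rho,p}\otimes\mathcal{M}_2(\C)$, and $x$-translation invariance is preserved since $\Lambda$ is a matrix of Fourier multipliers.
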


\begin{proof}
We start by applying the change of variables $\mathfrak{F}$ in 
\eqref{matriciCompVar}.
Using Lemma \ref{Jameshomo}
and the \eqref{primamappa} we get
\[
\mathfrak{F}\mathcal{L}_0\mathfrak{F}^{-1}=\Pi_{S}^{\perp}\Big(
\omega\cdot\pa_{\vphi}+
\opw{\left(  
\sm{|  \xi |^{-1/4}}{0}{0}{|  \xi  |^{1/4}}  \#^{W}_{\rho} 
\sm{ \ii V  \xi + \frac{V_x}{2} }{-| \xi |}{ 1 +  a  }{ \ii V \xi - \frac{V_x}{2}}  
\#^{W}_\rho 
\sm{| \xi  |^{1/4}}{0}{0}{| \xi |^{-1/4}} \right)}    
\vect{\tilde \eta}{ \tilde \omega} \Big)
\]
up to a remainder in $\gotL^{1}_{\rho,p}$, i.e. a remainder which 
admits an expansion as \eqref{espandoenonmifermo2}, \eqref{simboOMO2DEF2} 
with estimates \eqref{AVBDEF2}.
By expanding the symbols on the diagonal, using formula \eqref{espansione2},
we get
\begin{equation*}
\begin{aligned}
& | \xi |^{-1/4 }  \#^{W}_\rho( \ii V \xi +\frac{V_x}{2} )  \#^{W}_\rho |\xi  |^{1/4} 
 =     \ii V \xi +\frac{V_x}{4} \, , 
\qquad | \xi |^{1/4 }  \#^{W}_\rho ( \ii V \xi - \frac{V_x}{2} )  \#^{W}_\rho |\xi  |^{- 1/4} 
 =   \ii V \xi - \frac{V_x}{4}   \, ,
\end{aligned}
\end{equation*}
up to symbols in $S_1^{-1}$. 
Similarly we get
\begin{align*}
| \xi  |^{-1/4} \#^{W}_\rho ( -| \xi |) \#^{W}_\rho |\xi  |^{-1/4} =- | \xi |^{1/2} \,,
 \qquad 
 | \xi  |^{1/4} \#^{W}_\rho (1 +  a_0 ) \#^{W}_\rho | \xi  |^{1/4} 
=   (1 +  a) | \xi  |^{1/2}  \,, 
\end{align*}
up to symbols in $S_1^{-3/2}$.
All this new symbols of order less or equal $-1$ 
and belongs to $S_{1}^{-1}$.
By applying the change of coordinates $\mathcal{C}$ in \eqref{matriciCompVar}
we get that
$\mathcal{C}\mathfrak{F}\mathcal{L}_0\mathfrak{F}^{-1}\mathcal{C}^{-1}=\Lambda\mathcal{L}_0\Lambda^{-1}$
has the form \eqref{elle1} with matrices of symbols in 
\eqref{matriciinizio}-\eqref{function:c}.
\end{proof}

\begin{lemma}\label{LambdaAdmiss}
The operator $\mathcal{L}_1$ in \eqref{elle1} belongs to
the class $\mathfrak{S}_1$ given in  Definition \ref{compaMulti}.
\end{lemma}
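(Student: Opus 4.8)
\textbf{Proof of Lemma \ref{LambdaAdmiss}.}
The plan is to verify the three defining properties of the class $\mathfrak{S}_1$ in Definition \ref{compaMulti} for the operator $\mathcal{L}_1$ in \eqref{elle1}, namely: (a) that it is \emph{real-to-real}, i.e.\ has the form \eqref{forma-complessa}; (b) that its pseudo differential part $\opw(A_1(\vphi,x)\ii\x + \ii A_{1/2}(\vphi,x)|\x|^{1/2} + A_0(\vphi,x) + A_{-1}(\vphi,x,\x))$ is \emph{Hamiltonian}, i.e.\ satisfies \eqref{Ham1comp}--\eqref{self-adjT}; and (c) that $\mathcal{L}_1$ is \emph{$x$-translation invariant}, i.e.\ satisfies \eqref{traInvcomp} with $\mathtt{v}$ as in \eqref{def:m1}. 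Most of the work is already contained in Proposition \ref{lem:compVar}, which produces the explicit form \eqref{elle1}, and in Lemma \ref{algStrucHamlinear}, which asserts that $\mathcal{L}_\omega$ (hence $\mathcal{L}_0$, by Proposition \ref{lemma:7.4}) is Hamiltonian and $x$-translation invariant. The point to check is that these algebraic structures are preserved under the conjugation by $\Lambda = \mathcal{C}\circ\mathfrak{F}$ in \eqref{matriciCompVar}.

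First I would recall that, by Lemma \ref{algStrucHamlinear} and Proposition \ref{lemma:7.4}, the operator $\mathcal{L}_0$ in \eqref{primamappa} (written in the real variables) is Hamiltonian and $x$-translation invariant, and that the conjugating map $\mathfrak{F}$ in \eqref{matriciCompVar} is the real, diagonal, $x$-independent Fourier multiplier $\mathrm{diag}(|D|^{-1/4},|D|^{1/4})$, while $\mathcal{C}$ is the constant matrix conjugation to the complex variables $(u,\bar u)$. Since $\mathfrak{F}$ commutes with $\pa_x$ and is self-adjoint and symplectic (with respect to the symplectic form used to define the Hamiltonian structure; cf.\ \eqref{matriciCompVar}--\eqref{symComp}), the conjugate $\mathfrak{F}\mathcal{L}_0\mathfrak{F}^{-1}$ remains Hamiltonian and $x$-translation invariant by the transformation rules recalled in Section \ref{sec:group} (Definition \ref{admiOpComp} and Lemma \ref{lem:momentocomp}). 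Passing further through $\mathcal{C}$, the operator $\mathcal{L}_1 = \Lambda\mathcal{L}_0\Lambda^{-1}$ becomes, by construction \eqref{forma-complessa}, real-to-real, and the real-to-real property of $\mathcal{L}_0$ (as the complexification of a real operator) is exactly what guarantees the matrix form \eqref{forma-complessa} for $\mathcal{L}_1$; this verifies (a). For (b) and (c), I would then invoke Lemma \ref{realHamMtrixSimbo} and Lemma \ref{lem:momentoSimbolo} together with the explicit matrices \eqref{matriciinizio}--\eqref{function:c}: one checks that the matrix of symbols $A(\vphi,x,\x) := A_1(\vphi,x)\ii\x + \ii A_{1/2}(\vphi,x)|\x|^{1/2} + A_0(\vphi,x) + A_{-1}(\vphi,x,\x)$ has the off-diagonal/diagonal structure \eqref{retoreMat} with the symmetries \eqref{selfSimbo2} (diagonal entry purely imaginary, off-diagonal even in $\x$), and that every coefficient function $V,\tilde a,V_x$ and the symbol $A_{-1}$ is $x$-translation invariant, i.e.\ of the form $A(\vphi-\mathtt{v}x,\x)$ as in \eqref{space3Xinv}. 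The needed properties of $V,B$ (membership in $S_{\mathtt{v}}$, hence in $S_1^0$ satisfying \eqref{space3Xinv}) are furnished by Lemma \ref{Doo} and Lemma \ref{mappa1}, and that of $A_{-1}$ is stated in Proposition \ref{lem:compVar}. Finally, the smoothing remainder $\mathcal{R}^{(1)}\in\gotL^1_{\rho,p}\otimes\mathcal{M}_2(\C)$ inherits the Hamiltonian and $x$-translation invariant structure as the difference $\mathcal{L}_1 - \omega\cdot\pa_\vphi - \opw(A)$ of operators that all possess it, as in the Remark following Definition \ref{goodMulti}.

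The only mildly delicate point, and the one I would write out carefully, is the bookkeeping of the symplectic form under $\Lambda$: the Hamiltonian structure of $\mathcal{L}_0$ is defined with respect to $\widetilde\Omega$ (equivalently the real form \eqref{formasimpletticaestesa} restricted to $H_S^\perp$), while the definition of ``Hamiltonian operator'' in Definition \ref{admiOpComp}$(i)$ is stated in the complex variables via \eqref{Ham1comp}--\eqref{self-adjT}; one must confirm that $\Lambda$ maps the former normalization to the latter, which is precisely the content of \eqref{symReal}--\eqref{symComp} and the last paragraph of Section \ref{sec:group}. Since $\mathcal{C}$ is exactly the map intertwining $\widetilde\Omega$ with $\Omega$ and $\mathfrak{F}$ is symplectic for $\widetilde\Omega$, this is automatic, but it is worth a sentence. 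With this in hand, all hypotheses of Definition \ref{compaMulti} are met: $\mathcal{L}_1 = \omega\cdot\pa_\vphi + \opw(A(\vphi,x,\x)) + \mathcal{R}^{(1)}$ with $A\in S^1\otimes\mathcal{M}_2(\C)$ and $\mathcal{R}^{(1)}\in\gotL_{\rho,p}\otimes\mathcal{M}_2(\C)$, both $\mathcal{L}_1$ and $\opw(A)$ being real-to-real, Hamiltonian and $x$-translation invariant, so $\mathcal{L}_1\in\mathfrak{S}_1$. $\hfill\square$
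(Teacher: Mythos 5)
Your argument is correct and follows essentially the same route as the paper's proof: the algebraic properties of $\mathcal{L}_0$ from Proposition \ref{lemma:7.4}, preservation under the symplectic, $x$-translation invariant map $\Lambda$, and then the symbol-level verification of \eqref{selfSimbo2} and \eqref{space3Xinv} via Lemmata \ref{realHamMtrixSimbo}, \ref{lem:momentoSimbolo}, using the reality of $V,B,\tilde a$ and their membership in $S_{\mathtt{v}}$. The extra care you devote to the symplectic-form bookkeeping and to the remainder $\mathcal{R}^{(1)}$ is just a more explicit rendering of the same steps.
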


\begin{proof}
First notice that, by Proposition
 \ref{lemma:7.4},
the operator $\mathcal{L}_0$ 
in \eqref{primamappa} is
is real-to-real,  Hamiltonian and $x$-translation invariant.
Moreover 
the map
$\Lambda$ in \eqref{CVWW} is
\emph{symplectic}, $x$-\emph{translation invariant}. 
Hence also $\mathcal{L}_1$ satisfies the same properties of 
$\mathcal{L}_0$.
To prove that $\mathcal{L}_1\in \mathfrak{S}_1$ we need to prove that 
the pseudo differential operator in \eqref{elle1} is itself Hamiltonian
and $x$-translation invariant (it is clearly real-to-real).
By Lemmata \ref{lem:momentoSimbolo}, 
\ref{realHamMtrixSimbo} 
we just have to  show that the matrix of symbols
\begin{equation*} 
A(\vphi,x,\x):=\ii A_1(\vphi,x)\x+\ii A_{1/2}(\vphi,x)|\x|^{\frac{1}{2}}
+A_0(\vphi,x)+A_{-1}(\vphi,x,\x)
\end{equation*}
satisfies \eqref{selfSimbo2}  and \eqref{space3Xinv}.
The condition \eqref{space3Xinv} follows trivially by the fact that
the functions $V,B$ belongs to $S_{\mathtt{v}}$ and that 
the expansion \eqref{espansione2} preserves this property.
The  \eqref{selfSimbo2} follows by the explicit computations performed in
Proposition \ref{lem:compVar} since the functions $V,B$ in
 \eqref{def:V}, \eqref{form-of-B}
and $\widetilde{a}$ in \eqref{function:a} are real valued.
Therefore $\mathcal{L}_1\in\mathfrak{S}_1$. 
\end{proof}


\section{Block-diagonalization}\label{sec:block}
In this section we block-diagonalize the operator
$\mathcal{L}_1$ in \eqref{elle1} up to a $\rho$-smoothing remainders in 
$\mathfrak{L}_{\rho,p}$.

\subsection{Block-diagonalization at order 1/2}
In this subsection we diagonalize the matrix 
$A_{1/2}(\vphi,x)  |\xi|^{1/2} $
in \eqref{function:a}.
We consider the multiplication operator $\mathcal{M}$ defined as
\begin{equation}\label{TRA}
\begin{aligned}
&\mathcal{M}:=\mathcal{M}(\vphi)
:=\left(\begin{matrix}
f & g \\
g & f
\end{matrix}\right) \, ,\\
&f:=f(\vphi, x) :=\frac{1+\tilde{a}+\lambda_{+}}{\sqrt{(1+\tilde{a}
+\lambda_{+})^2-\tilde{a}^2}}\,, \qquad 
g:=g(\vphi, x) :=\frac{-\tilde{a}}{\sqrt{(1+\tilde{a}+\lambda_{+})^2-\tilde{a}^2}} \, , \\
&\lambda_{\pm}:=\lambda_{\pm}(\vphi,x):=\pm \sqrt{(1+\tilde{a})^{2}-\tilde{a}^{2}} 
\end{aligned}
\end{equation}
where the functions $\lambda_{\pm}$
are the eigenvalues of $ A_{1/2} $ in \eqref{function:a}. 
Notice that $f\geq0$. 
Notice also that
\begin{equation}\label{invC}
{\rm det}(\mathcal{M})=f^2-g^2=1  \, , \quad 
\mathcal{M}^{-1} =
 \left(\begin{matrix}
f & - g \\
- g & f
\end{matrix}\right)  \, , 
\end{equation}
\begin{equation}\label{nuovocoef}
A_{1/2}^{(2)}:=\mathcal{M}^{-1}A_{1/2}\;\mathcal{M}=  
\left(\begin{matrix}
\lambda_+ & 0 \\
0 & -\lambda_+
\end{matrix}\right)
= 
\left(\begin{matrix}
1+a^{(0)} & 0 \\
0 & -(1+a^{(0)})
\end{matrix}\right) \, , \quad 
a^{(0)}:=\lambda_{+}-1\,. 
\end{equation}
The main result of this section is the following.

\begin{proposition}{\bf (Block-diagonalization at order 1/2).}\label{lem:mappa2}
There exists an invertible 
map ${\bf \Phi}_{\mathbb{M}}(\vphi)\colon H^s_{S^{\perp}}(\mathbb{T})\times H^s_{S^{\perp}}(\mathbb{T})\to H^s_{S^{\perp}}(\mathbb{T})\times H^s_{S^{\perp}}(\mathbb{T})$ such that (recall \eqref{elle1})
\begin{equation}\label{elle2}
\begin{aligned}
\mathcal{L}_2&:={\bf \Phi}_{\mathbb{M}}\mathcal{L}_1{\bf \Phi}^{-1}_{\mathbb{M}}
:=\Pi_{S}^{\perp}\omega\cdot\pa_{\vphi}\\
&+\Pi_{S}^{\perp}\Big(
\opw\Big(\ii A_1(\vphi,x)\x+\ii A_{1/2}^{(2)}(\vphi,x)|\x|^{\frac{1}{2}}
+A_0^{(2)}(\vphi,x)+A_{-1/2}^{(2)}(\vphi,x,\x)\Big)+\mathcal{R}^{(2)}\Big)
\end{aligned}
\end{equation}
where $\mathcal{R}^{(2)}=\mathcal{R}^{(2)}(\vphi)$ is 
 in    $\mathfrak{L}^{1}_{\rho,p}\otimes\mathcal{M}_2(\mathbb{C})$,
$A_{-1/2}^{(2)}$ is in 
 $S_1^{-\frac{1}{2}}\otimes\mathcal{M}_2(\mathbb{C})$
and satisfies \eqref{space3Xinv},
$A_1$ is the matrix in \eqref{matriciinizio}, $A_{1/2}^{(2)}$ 
is in \eqref{nuovocoef} and
\begin{align}
&  A_{0}^{(2)}(\vphi,x) 
:=  
\left(\begin{matrix}0 &  b^{(2)}\\ 
b^{(2)}&0 \end{matrix}\right) \, ,\qquad 
b^{(2)}:=\frac{V_x}{4}-V(f_x g-g_x f)\,, \label{function:c2}
\end{align} 
where the functions 
$f,g$
 are given in 
 \eqref{TRA}.
Moreover
\begin{equation}\label{AVB3mappa2PRO}
\begin{aligned}
&\mathfrak{M}^{\gamma}_{({\bf \Phi}_{\mathbb{M}}^{\pm}-{\rm Id})}(s)+
\mathfrak{M}^{\gamma}_{({\bf \Phi}_{\mathbb{M}}^{\pm}-{\rm Id})^{*}}(s)\lesssim_{s}
\e(1+\|\mathfrak{I}_{\delta}\|_{s+\mu}^{\gamma,\calO_0})\,,\\
&\|\Delta_{12}({\bf \Phi}_{\mathbb{M}}^{\pm}-{\rm Id})h\|_{p}+
\|\Delta_{12}({\bf \Phi}_{\mathbb{M}}^{\pm}-{\rm Id})^{*}h\|_{p}\lesssim_{p}
\e\|h\|_{p}\|i_1-i_{2}\|_{p+\mu}\,.
\end{aligned}
\end{equation}
\end{proposition}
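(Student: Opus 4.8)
The plan is to conjugate $\mathcal{L}_1$ in \eqref{elle1} by the multiplication operator $\mathcal{M}(\vphi)$ in \eqref{TRA}, which pointwise in $(\vphi,x)$ diagonalizes the symmetric (in the sense of the real structure) matrix $A_{1/2}(\vphi,x)$ of \eqref{function:a}. First I would check the elementary algebra: the eigenvalues of $A_{1/2}$ are $\lambda_\pm$ as in \eqref{TRA}, and the columns of $\mathcal{M}$ are the corresponding eigenvectors, normalized so that $\det\mathcal{M}=f^2-g^2=1$; this gives \eqref{invC} and \eqref{nuovocoef} by direct computation. Since $\tilde a\in S_1^0$ and is $O(\e)$-small with estimates \eqref{AVBDEF}, and $y\mapsto\sqrt{1+y}$ is analytic near $0$, the functions $f,g,\lambda_\pm,a^{(0)}$ all belong to $S_1^0$ (they are analytic functions of $\tilde a$ with vanishing constant term in the $g$ case) and satisfy the $x$-translation invariance \eqref{space3Xinv} because $\tilde a$, being built from $V,B\in S_{\mathtt v}$, does. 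The tame estimates \eqref{AVB3mappa2PRO} for ${\bf\Phi}_{\mathbb M}^{\pm1}-{\rm Id}$ and their adjoints then follow from the composition/tame estimates for analytic functions of $S_1^0$ symbols, exactly as in Lemma \ref{mappa1}.

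Next I would set ${\bf\Phi}_{\mathbb M}:=\Pi_S^\perp\mathcal{M}^{-1}\Pi_S^\perp$ (up to the finite-rank corrections coming from the projectors, handled as in Proposition \ref{lemma:7.4}) and compute $\mathcal{L}_2:={\bf\Phi}_{\mathbb M}\mathcal{L}_1{\bf\Phi}_{\mathbb M}^{-1}$. The transport term $\omega\cdot\pa_\vphi$ produces $\mathcal{M}^{-1}(\omega\cdot\pa_\vphi\mathcal{M})$, a bounded multiplication operator of order $0$ which I would absorb into $A_0^{(2)}$. The pseudo differential part is conjugated using the symbolic calculus of Lemma \ref{Jameshomo}: $\mathcal{M}^{-1}\#^W_\rho\big(\ii A_1\x+\ii A_{1/2}|\x|^{1/2}+A_0+A_{-1}\big)\#^W_\rho\mathcal{M}$. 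The $\xi$-order $1$ term $\ii A_1\x$ has $A_1$ scalar (a multiple of the identity), so it commutes with $\mathcal{M}$ and is unchanged, giving again $\ii A_1\x$. The order $1/2$ term becomes $\ii\mathcal{M}^{-1}A_{1/2}\mathcal{M}\,|\x|^{1/2}=\ii A_{1/2}^{(2)}|\x|^{1/2}$ by \eqref{nuovocoef}, modulo symbols of order $-1/2$ (the lower-order terms in the $\#^W_\rho$ expansion involving $\pa_x\mathcal{M}$, which land in $S_1^{-1/2}$); here one must track that the Poisson-bracket correction $\{|\x|^{1/2},\cdot\}$ lowers the order by $1/2$, not by $1$, since $|\x|^{1/2}$ is the factor being commuted. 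The order $0$ term produces $\mathcal{M}^{-1}A_0\mathcal{M}$ plus the contributions of $\mathcal{M}^{-1}(\omega\cdot\pa_\vphi\mathcal{M})$ and the commutator of $\mathcal{M}$ with $\ii A_1\x$ at subleading order and with $\ii A_{1/2}|\x|^{1/2}$; collecting these and using $\det\mathcal{M}=1$ I would identify the off-diagonal scalar $b^{(2)}=\tfrac{V_x}{4}-V(f_x g-g_x f)$ as in \eqref{function:c2} and verify the diagonal part vanishes. Everything of order $\le -1/2$ is collected into $A_{-1/2}^{(2)}\in S_1^{-1/2}\otimes\mathcal{M}_2(\C)$, and the $\rho$-smoothing remainders generated by Lemma \ref{Jameshomo}, together with $\mathcal{R}^{(1)}$ conjugated by $\mathcal{M}^{\pm1}$ and the finite-rank projector corrections, form $\mathcal{R}^{(2)}\in\mathfrak{L}^1_{\rho,p}\otimes\mathcal{M}_2(\C)$.

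Finally I would note the structural bookkeeping: ${\bf\Phi}_{\mathbb M}$ is symplectic with respect to \eqref{formasimpletticaestesa} and $x$-translation invariant — the first because $\mathcal{M}$ has the real-to-real block form with $\det\mathcal{M}=1$ and $f,g$ real, so it preserves the symplectic structure (this is the reason for the specific normalization of $f,g$, rather than a generic diagonalizer), the second because $f,g\in S_{\mathtt v}$. Hence by Lemma \ref{LambdaAdmiss}-type reasoning $\mathcal{L}_2$ remains in the class $\mathfrak{S}_1$, so the symbol $\ii A_1\x+\ii A_{1/2}^{(2)}|\x|^{1/2}+A_0^{(2)}+A_{-1/2}^{(2)}$ automatically satisfies \eqref{selfSimbo2} and \eqref{space3Xinv}; I would still record this explicitly since it is asserted in the statement. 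The main obstacle I anticipate is purely computational bookkeeping rather than conceptual: correctly propagating the homogeneity expansions and the $S_k^m$/$\mathfrak{L}^k_{\rho,p}$ estimates through the two-sided $\#^W_\rho$ composition while keeping track of which correction terms land in order $0$ versus order $-1/2$, and verifying that the off-diagonal coefficient simplifies to exactly $b^{(2)}$ in \eqref{function:c2} — this last identity is the one delicate algebraic check, and it relies crucially on $\det\mathcal{M}=1$ to cancel the unwanted diagonal order-$0$ contributions.
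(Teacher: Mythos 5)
Your pseudo differential computation is the same as the paper's: conjugate by $\mathcal{M}^{\pm1}$ using the symbolic calculus of Lemma \ref{Jameshomo}, use $f^2-g^2=1$ to kill the diagonal order-zero contributions (both from $\mathcal{M}^{-1}\omega\cdot\pa_\vphi\mathcal{M}$ and from the subprincipal part of the conjugation of $\ii A_1\x$), read off $A_{1/2}^{(2)}$ from \eqref{nuovocoef}, collect everything of order $\le -1/2$ into $A_{-1/2}^{(2)}$ and the smoothing/finite-rank pieces into $\mathcal{R}^{(2)}$. That part of your proposal is fine.

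The genuine gap is in the definition of ${\bf \Phi}_{\mathbb{M}}$ and in your justification of its symplecticity. You take ${\bf \Phi}_{\mathbb{M}}=\Pi_S^\perp\mathcal{M}^{-1}\Pi_S^\perp$ and claim it is symplectic for \eqref{formasimpletticaestesa} ``because $\det\mathcal{M}=1$ and $f,g$ are real''. That argument only proves that the \emph{unprojected} multiplication operator $\mathcal{M}$ is symplectic on the full phase space; it does not survive the insertion of the projectors. Indeed, since $H_S$ and $H_S^\perp$ in \eqref{decomposition} are symplectic orthogonal and $\mathcal{M}$ does not preserve this splitting, for $u,v\in H_S^\perp$ one has $\Omega(\Pi_S^\perp\mathcal{M}u,\Pi_S^\perp\mathcal{M}v)=\Omega(u,v)-\Omega(\Pi_S\mathcal{M}u,\Pi_S\mathcal{M}v)$, and the finite-rank correction is generically nonzero; the paper states this explicitly (``the map $\mathcal{M}^{\perp}:=\Pi_S^\perp\mathcal{M}\Pi_S^\perp$ is not symplectic''). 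Note also that the analogy with Proposition \ref{lemma:7.4} does not help you here: there $\Pi_S^\perp\mathcal{G}\Pi_S^\perp$ happened to coincide with the time-one flow of a Hamiltonian generator (Lemma \ref{mappa1}), which is precisely what fails for $\mathcal{M}^\perp$. The missing ingredient is the symplectic correction of Lemma \ref{lem:defC}: one constructs $c\in S_1^0$ with $\mathcal{M}^{-1}=\exp(C)$, $C=\bigl[\begin{smallmatrix}0&c\\ \bar c&0\end{smallmatrix}\bigr]$ a Hamiltonian generator, and defines ${\bf \Phi}_{\mathbb{M}}$ as the time-one flow of the projected system \eqref{flussoCpro}, which is exactly symplectic on $H_S^\perp$; Lemmata \ref{differenzaFlussi} and \ref{georgiaLem} then show that this flow differs from $\Pi_S^\perp\mathcal{M}^{-1}\Pi_S^\perp$ only by finite-rank operators in $\mathfrak{L}^1_{\rho,p}\otimes\mathcal{M}_2(\mathbb{C})$, so your symbolic computation goes through unchanged, and the estimates \eqref{AVB3mappa2PRO} come from the flow bounds together with Lemma \ref{mappa2}. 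Without this step your map only conjugates $\mathcal{L}_1$ to a Hamiltonian operator up to nonsymplectic finite-rank errors, so the claim that $\mathcal{L}_2\in\mathfrak{S}_1$ (and hence that the new symbol satisfies \eqref{selfSimbo2}), which you invoke and which is needed later in Lemma \ref{GoodAmmissi2}, does not follow from your construction as written.
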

\noindent
We divide the proof of the proposition above into several steps.
We have the following.

\begin{lemma}\label{mappa2}
The functions $a^{(0)}$ in \eqref{nuovocoef}, $f,g$ in \eqref{TRA}
belong to the class $S_1^{0}$ of Def. \ref{espOmogenea}.
In particular the functions $\tilde{a},f,g$
are $x$-translation invariant, i.e. satisfies \eqref{space3Xinv}.
Moreover (recall Definition \ref{LipTameConstants})
the maps $\mathcal{M}^{\pm}-{\rm Id}$ and $(\mathcal{M}^{\pm}-{\rm Id})^{*}$
(see \eqref{TRA})
satisfy
\begin{equation}\label{AVB3mappa2}
\begin{aligned}
&\mathfrak{M}^{\gamma}_{(\mathcal{M}^{\pm}-{\rm Id})}(s)+
\mathfrak{M}^{\gamma}_{(\mathcal{M}^{\pm}-{\rm Id})^{*}}(s)\lesssim_{s}
\e(1+\|\mathfrak{I}_{\delta}\|_{s+\mu}^{\gamma,\calO_0})\,,\\
&\|\Delta_{12}(\mathcal{M}^{\pm}-{\rm Id})h\|_{p}+
\|\Delta_{12}(\mathcal{M}^{\pm}-{\rm Id})^{*}h\|_{p}\lesssim_{p}
\e\|h\|_{p}\|i_1-i_{2}\|_{p+\mu}\,.
\end{aligned}
\end{equation}
\end{lemma}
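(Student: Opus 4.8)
The plan is to verify that the functions $\tilde a$, $a^{(0)}$, $f$, $g$ defined in \eqref{TRA}, \eqref{nuovocoef} belong to the homogeneous symbol class $S_1^0$ and enjoy the $x$-translation invariance \eqref{space3Xinv}, and then to deduce from this the tame estimates \eqref{AVB3mappa2} for the multiplication operator $\mathcal M$ and its inverse and adjoint. The starting point is Proposition \ref{lem:compVar}: the function $\tilde a = \frac12(\omega\cdot\pa_\vphi B + V B_x)$ coincides with the function $a/2$ of \eqref{func:a}, which by Lemma \ref{mappa1} is in $S_1^0$ and satisfies \eqref{space3Xinv}; alternatively one invokes Lemma \ref{Doo}, which already places $V$ and $B$ in $S_1^0$ with \eqref{space3Xinv}, and then uses that the product of two symbols in $S_1^0$ and the action of $\omega\cdot\pa_\vphi$ and $\pa_x$ preserve both $S_1^0$ and $S_\mathtt{v}$ (the translation invariance of a product follows from \eqref{vascoblasco31} since the sum of two momentum-zero monomials has momentum zero; applying $\pa_x$ or $\omega\cdot\pa_\vphi$ does not affect the condition $\mathtt v\cdot\ell+j=0$). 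The constant $1$ is trivially in $S_1^0$ (its only nonzero homogeneous piece is the $\e^0$ term), so $1+\tilde a$ and $\tilde a$ are in $S_1^0$.

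Next I would handle the algebraic functions appearing in \eqref{TRA}: $\lambda_\pm=\pm\sqrt{(1+\tilde a)^2-\tilde a^2}=\pm\sqrt{1+2\tilde a}$, together with $f$ and $g$, which are obtained from $\tilde a$ and $\lambda_+$ by the analytic maps $t\mapsto \sqrt{1+2t}$ and the rational combinations in \eqref{TRA}, all well defined and analytic in a neighborhood of $\tilde a=0$ for $\e$ small (since $\|\tilde a\|_{s_0}$ is $O(\e)$ by \eqref{IpotesiPiccolezzaIdeltaDP} and the bounds on $V,B$). I would then quote the standard composition-with-analytic-functions result for homogeneous symbol classes — exactly the statement used repeatedly in the paper (e.g. Lemma $2.2$ in \cite{KdVAut}, combined with Definition \ref{espOmogenea}) — to conclude that each of $a^{(0)}=\lambda_+-1$, $f-1$ and $g$ admits a homogeneous expansion of the form \eqref{espandoenonmifermo} with the non-homogeneous remainder obeying \eqref{AVBDEF}, hence belongs to $S_1^0$. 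The $x$-translation invariance \eqref{space3Xinv} is inherited because the Taylor expansion of an analytic function of an $x$-translation invariant symbol is again $x$-translation invariant (each monomial in the expansion is a product of translation-invariant symbols, hence translation invariant by \eqref{vascoblasco31}); this also follows abstractly from Lemma \ref{lem:momentoSimbolo} together with the fact that $\mathcal M$ commutes with momentum. Note also $f\ge 0$ and $\det\mathcal M=f^2-g^2=1$ as recorded in \eqref{invC}, so $\mathcal M^{-1}$ is the explicit matrix there and inherits the same regularity.

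Finally, for the operator estimates \eqref{AVB3mappa2}: since $\mathcal M$ is a multiplication operator whose entries are the functions $f,g\in S_1^0$, the operator norm $\gotM^\gamma_{\mathcal M-{\rm Id}}(s)$ is controlled by the $H^s$-norm of $f-1$ and $g$, which by the $S_1^0$-bound \eqref{AVBDEF} (with $k=1$) and \eqref{IpotesiPiccolezzaIdeltaDP} is $\lesssim_s \e(1+\|\mathfrak I_\delta\|_{s+\mu}^{\gamma,\calO_0})$; the same holds for $\mathcal M^{-1}-{\rm Id}$ using \eqref{invC}, and for the adjoints $(\mathcal M^{\pm}-{\rm Id})^*$ since the adjoint of a real multiplication operator is the same multiplication operator (the entries are real valued). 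The difference estimate $\|\Delta_{12}(\mathcal M^{\pm}-{\rm Id})h\|_p$ follows from the second bound in \eqref{AVBDEF} applied to the variations of $f,g$. I do not expect genuine difficulty here; the only mildly delicate point — and the one I would check carefully — is ensuring that the analytic function $t\mapsto\sqrt{1+2t}$ is applied only where $|2\tilde a|<1$, i.e. that the quantity under the square roots in \eqref{TRA} stays bounded away from zero, which is guaranteed for $\e$ small by the smallness of $\|\tilde a\|_{s_0}$; once that is in place everything reduces to routine composition and product estimates already encapsulated in the machinery of Section \ref{sec:2} and Definition \ref{espOmogenea}.
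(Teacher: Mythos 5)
Your proposal is correct and follows essentially the same route as the paper: the paper also observes that $a^{(0)},f,g$ are regular (analytic) functions of $\tilde a$, which lies in $S_1^{0}$ by Lemma \ref{mappa1}, and then obtains \eqref{AVB3mappa2} by quoting the standard tame/composition estimates (Lemma $2.13$ in \cite{BM1}) rather than spelling out the multiplication-operator argument as you do.
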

\begin{proof}
One has $a^{(0)}, f,g\in S_1^{0}$ since they are regular functions of 
$\tilde{a}$ (see \eqref{nuovocoef}, \eqref{TRA})
and the fact that $\tilde{a}$ in \eqref{function:a} belongs to $S_1^{0}$ 
by Lemma \ref{mappa1}.
The  \eqref{AVB3mappa2} 
follows by Lemma 2.13 in \cite{BM1}.
\end{proof}

In order to conjugate the operator 
$\mathcal{L}_1$ in \eqref{elle1} 
we shall consider the map
$\mathcal{M}^{\perp}:=\Pi_{S}^{\perp}\mathcal{M}\Pi_{S}^{\perp}$.
Unfortunately the map $\mathcal{M}^{\perp}$ is not symplectic 
with respect to the symplectic form \eqref{formasimpletticaestesa}.
We first construct a symplectic correction. 
\begin{lemma}\label{lem:defC}
Consider the map $\mathcal{M}$ in \eqref{TRA}. 
There is a function $c=c(\vphi,x)\in S_1^{0}$
such that
$\mathcal{M}^{-1}:=\Phi_{C}^{1}$ where $\Phi_{C}^{\tau}$ is the flow
of 
\begin{equation}\label{flussoC}
\pa_{\tau}\Phi_{C}^{\tau}=C \Phi_{C}^{\tau}\,,\quad \Phi_{C}^{0}={\rm Id}\,,
\qquad C:=C(\vphi,x):=\left( \begin{matrix}0 & c(\vphi,x) \\
\ov{c(\vphi,x)} & 0
\end{matrix}
\right)\,.
\end{equation}
\end{lemma}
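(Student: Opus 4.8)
The goal is to realize the multiplication operator $\mathcal{M}^{-1}$ in \eqref{TRA}, \eqref{invC} as the time-one flow $\Phi_C^1$ of a Hamiltonian-type generator $C$ of the form \eqref{flussoC}, with a function $c\in S_1^0$. The key algebraic observation is that $\mathcal{M}^{-1}=\sm{f}{-g}{-g}{f}$ with $f^2-g^2=1$, $f\ge 0$; this is exactly the matrix $\exp\big(\sm{0}{c}{\bar c}{0}\big)$ for a suitable scalar function $c$. Indeed, writing $C_0=\sm{0}{c}{\bar c}{0}$ (here $c$ is real since $\tilde a$, and hence $f,g$, are real valued, see \eqref{function:a}), one has $C_0^2=|c|^2\,\mathrm{Id}$, so $\exp(C_0)=\cosh(|c|)\,\mathrm{Id}+\frac{\sinh(|c|)}{|c|}C_0=\sm{\cosh(c)}{\sinh(c)}{\sinh(c)}{\cosh(c)}$ (taking $c$ real, $|c|=|c|$ but the off-diagonal carries the sign of $c$). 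Matching with $\mathcal{M}^{-1}$ forces $f=\cosh(c)$, $-g=\sinh(c)$, which is solvable since $f\ge 1$ (because $f^2=1+g^2$) and the pair $(f,-g)$ lies on the hyperbola $f^2-(-g)^2=1$ with $f>0$: simply set
\begin{equation}\label{defCfunc}
c:=c(\vphi,x):=-\,\mathrm{arcsinh}(g(\vphi,x))=-\log\big(g+\sqrt{1+g^2}\big)\,.
\end{equation}

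\textbf{Carrying it out.} First I would record the elementary matrix identity above: since $C$ in \eqref{flussoC} is $\vphi$-dependent but $x$-diagonal as a multiplication operator (no derivatives), the flow of $\pa_\tau \Phi_C^\tau=C\Phi_C^\tau$ is simply $\Phi_C^\tau=\exp(\tau C)$, computed pointwise in $(\vphi,x)$; there are no ordering issues because $C(\vphi)$ is a multiplication operator, hence $C(\vphi)$ commutes with itself at different "times" $\tau$ (it is $\tau$-independent). Then $\Phi_C^1=\exp(C)=\cosh(c)\,\mathrm{Id}+\sinh(c)\,\sm{0}{1}{1}{0}$ using $C^2=c^2\,\mathrm{Id}$ (with $c$ real), which equals $\mathcal{M}^{-1}$ by the choice \eqref{defCfunc} and \eqref{TRA}, \eqref{invC}. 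Second, I would verify $c\in S_1^0$: by Lemma \ref{mappa2} the functions $\tilde a, f, g$ belong to $S_1^0$; since $g\in S_1^0$ is small (of size $O(\e)$, as $\tilde a=O(\e)$ by \eqref{function:a} and Lemma \ref{Doo}), the function $1+g^2$ is uniformly bounded away from zero, so $\sqrt{1+g^2}$ is an analytic function of $g$ with no singularity on the relevant range, and likewise $\log(g+\sqrt{1+g^2})$; composing the homogeneity expansion of $g$ with these analytic functions (using the composition/algebra properties of the class $S_1^0$, which is closed under products and under composition with analytic functions vanishing appropriately, exactly as in the proof of Lemma \ref{mappa2} via Lemma 2.13 of \cite{BM1}) yields $c\in S_1^0$, together with the homogeneity expansion \eqref{espandoenonmifermo} and the non-homogeneous estimates \eqref{AVBDEF}. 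Third, the $x$-translation invariance: $f,g$ are $x$-translation invariant by Lemma \ref{mappa2} (they satisfy \eqref{space3Xinv}), hence so is $c$, since \eqref{defCfunc} is a pointwise function of $g$ and composition with a scalar analytic function preserves membership in $S_{\mathtt{v}}$.

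\textbf{Main obstacle.} I do not expect a serious analytic difficulty here — the construction is essentially algebraic, a scalar $2\times2$ hyperbolic rotation written as a matrix exponential. The one point that requires a little care is making sure the branch of $\mathrm{arcsinh}$ (equivalently $\log(g+\sqrt{1+g^2})$) is the correct one so that $\Phi_C^1=\mathcal{M}^{-1}$ and not $\mathcal{M}$: this is fixed by the sign in \eqref{defCfunc}, since we need $-g=\sinh(c)$, i.e. $c=\mathrm{arcsinh}(-g)$; a quick check at $\e=0$ (where $\tilde a=0$, so $g=0$, $f=1$, $c=0$, $\mathcal{M}=\mathcal{M}^{-1}=\mathrm{Id}$) confirms consistency. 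The only genuinely technical verification, left to the reader or cited from \cite{BM1}, Lemma 2.13, and the composition lemmata of Section \ref{sec:pseudopseudo}, is that composition with the analytic functions $x\mapsto\sqrt{1+x^2}$ and $x\mapsto\log(x+\sqrt{1+x^2})$ is tame on the class $S_1^0$ with the homogeneity structure of Definition \ref{espOmogenea}; this is routine given that $g$ is small in low norm by \eqref{IpotesiPiccolezzaIdeltaDP}. Hence the lemma follows with $c$ as in \eqref{defCfunc}.
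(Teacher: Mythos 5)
Your proof is correct and follows essentially the same route as the paper: both realize $\mathcal{M}^{-1}$ as the pointwise matrix exponential $\exp(C)$ and determine $c$ from the hyperbolic system $\cosh=f$, $\sinh$-part $=-g$, using $f^2-g^2=1$, $f\ge 0$ and the $S_1^0$-membership and smallness of $f,g$ from Lemma \ref{mappa2} to conclude $c\in S_1^0$ by composition with analytic functions. Your explicit choice $c=-\mathrm{arcsinh}(g)$ (legitimate since $g$ is real) is just a streamlined, sign-correct version of the paper's two-step solution $|c|=\mathrm{arccosh}(f)$ followed by fixing the off-diagonal entry, so no further comment is needed.
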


\begin{proof}
The proof is based on ideas used in Lemma $5.2$ in \cite{FIloc} (see also \cite{FI2}).
One has that
\begin{equation*} 
\Phi_{C}^{1}:=S(\vphi):=\exp\{C(\vphi,x)\}:=\left(
\begin{matrix}
c_{1}(\vphi,x) & c_{2}(\vphi,x)\vspace{0.2em}\\
\ov{c_{2}(\vphi,x)} & c_{1}(\vphi,x)
\end{matrix}
\right)\,,
\qquad
\begin{aligned}
&c_1 =c_1(\vphi,x):=\cosh(|c(\vphi,x)|)\,,\\
&c_2=c_{2}(\vphi,x):=\frac{c(\vphi,x)}{|c(\vphi,x)|}\sinh(|c(\vphi,x)|)\,.
\end{aligned}
\end{equation*}
Note that the function $c_2(\vphi,x)$ above is not singular indeed
\begin{equation*}
\begin{aligned}
c_{2}&=\frac{c}{|c|}\sinh(|c|)=\frac{c}{|c|}
\sum_{k=0}^{\infty}\frac{(|c|)^{2k+1}}{(2k+1)!}=
c\sum_{k=0}^{\infty}\frac{\big(c\ov{c}\big)^k}{(2k+1)!}\,.
\end{aligned}
\end{equation*} 
We note moreover that for any $x\in \mathbb{T}$ one has $\det(S(\vphi,x))=1$, 
hence its inverse $S^{-1}(\vphi,x)$ is well defined. 
We choose $c(\vphi,x)$ in such a way that 
$S^{-1}(\vphi,x):=\mathcal{M}$ (see \eqref{TRA}). 
Therefore we have to solve the following equations 
\begin{equation*} 
\cosh(|c(\vphi,x)|)=f(\vphi,x)\,, 
\quad 
\frac{c(\vphi,x)}{|c(\vphi,x)|}\sinh(|c(\vphi,x)|)=-g(\vphi,x)\,.
\end{equation*}
Concerning the first one we note that
$
f^{2}-1=\frac{|\tilde{a}|^{2}}{2\lambda_{+}(1+\tilde{a}+\lambda_{+})}\geq0\,.
 $
Therefore 
 \begin{equation*} 
|c(\vphi,x)|:= {\rm arccosh}(f(\vphi,x))
=\ln\Big(f(\vphi,x)+\sqrt{(f(\vphi,x))^{2}-1}\Big)\,,
\end{equation*}
is well-defined. 
For the second equation one observes that
the function 
\[
\frac{\sinh(|c(\vphi,x)|)}{|c(\vphi,x)|}=1+\sum_{k\geq0}
\frac{(c(\vphi,x)\bar{c}(\vphi,x))^{k}}{(2k+1)!}\geq 1\,,
\]
hence we set
\begin{equation}\label{definizioneC}
c(\vphi,x):=g(\vphi,x)\frac{|c(\vphi,x)|}{\sinh(|c(\vphi,x)|)}\,.
\end{equation}
Using that $f,g\in S_1^{0}$ (see Lemma \ref{mappa2}), and using the 
\eqref{definizioneC} we deduce that $c\in S_1^{0}$.
\end{proof}
\begin{proof}[{\bf Proof of Proposition \ref{lem:mappa2}}]
Let us define ${\bf \Phi}_{\mathbb{M}}={\bf \Phi}_{\mathbb{M}}^{1}$ where 
${\bf \Phi}_{\mathbb{M}}^{\tau}$ is the flow of
\begin{equation}\label{flussoCpro}
\pa_{\tau}{\bf \Phi}_{\mathbb{M}}^{\tau}=\Pi_{S}^{\perp}C\Pi_{S}^{\perp}
{\bf \Phi}_{\mathbb{M}}^{\tau}\,,\quad {\bf \Phi}_{\mathbb{M}}^{0}={\rm Id}\,,
\end{equation}
where $C$ is the matrix in \eqref{flussoC}
with symbol $c(\vphi,x)$ given by Lemma \ref{lem:defC}.
By Lemma \ref{differenzaFlussi} (see also Remark \ref{differenzaFlussiRMK})
we have that
\[
{\bf \Phi}_{\mathbb{M}}^{1}= 
\Pi_S^\perp\Phi^{1}_{C}\Pi_S^\perp\circ(\rm{Id} +\mathcal{R})
\]
where $\Phi^{1}_{C}=\mathcal{M}^{-1}$ is the flow of \eqref{flussoC} (see also \eqref{TRA})
and $\mathcal{R}$ is a matrix of finite rank operators of the form \eqref{FiniteDimFormDP10001}
satisfying 
estimates \eqref{giallo2}. Hence ${\bf \Phi}_{\mathbb{M}}$
is well-defined and satisfies bounds \eqref{AVB3mappa2PRO}  
thanks to \eqref{AVB3mappa2}. 
By Lemma \ref{georgiaLem}
we have that the conjugate 
$ \mathcal{L}_2:={\bf \Phi}_{\mathbb{M}}\mathcal{L}_1{\bf \Phi}^{-1}_{\mathbb{M}}$
is given by 
$\Pi_{S}^{\perp}\mathcal{M}^{-1}\mathcal{L}_1\mathcal{M}\Pi_{S}^{\perp}$
up to finite rank remainders 
belonging to $\mathfrak{L}_{\rho,p}^{1}\otimes\mathcal{M}_2(\mathbb{C})$.
We have that 
\[
\mathcal{M}^{-1}\mathcal{L}_1\mathcal{M}=
\mathcal{M}^{-1}\omega\cdot\pa_{\vphi}\mathcal{M} 
+
\mathcal{M}^{-1}\opw\big( 
\ii A_1 \x+\ii A_{1/2} |\x|^{\frac{1}{2}}+A_0+A_{-1}
\big)\mathcal{M}+\mathcal{M}^{-1}\mathcal{R}^{(1)}\mathcal{M}\,.
\]
Hence Lemma \ref{Jameshomo} 
implies that
\begin{equation*} 
\mathcal{M}^{-1}\mathcal{L}_1\mathcal{M}=
\opw{ \Big( \mathcal{M}^{-1}\#^{W}_\rho \omega\cdot\pa_{\vphi}\mathcal{M} + 
\mathcal{M}^{-1} \#^{W}_\rho \big( 
\ii A_1 \x+\ii A_{1/2} |\x|^{\frac{1}{2}}+A_0+A_{-1}
\big) \#^{W}_\rho \mathcal{M}\Big) } 
\end{equation*}
up to terms in  $\mathfrak{L}^{1}_{\rho,p}\otimes\mathcal{M}_2(\mathbb{C})$.
In the following we apply 
several times Lemma \ref{Jameshomo} 
on the composition
of pseudo differential operators.
By \eqref{TRA}, \eqref{invC} 
we deduce that $  (\omega\cdot\pa_{\vphi}f)f-(\omega\cdot\pa_{\vphi}g) g = 0 $. Hence
we have 
 \begin{equation*}
 \begin{aligned}
\mathcal{M}^{-1} \#^{W}_\rho \omega\cdot\pa_{\vphi}\mathcal{M} 
&= \left(
\begin{matrix}
0 &   -(\omega\cdot\pa_{\vphi}f)g+(\omega\cdot\pa_{\vphi} g)f
\\  -(\omega\cdot\pa_{\vphi}f)g+(\omega\cdot\pa_{\vphi} g)f
 &  0 
\end{matrix}
\right) \,.
\end{aligned}
\end{equation*}
By \eqref{matriciinizio},  using symbolic calculus 
and $ f^2-g^2=1 $ (see \eqref{invC}),  we obtain the exact expansion  
\begin{equation*}
 \mathcal{M}^{-1} \#^{W}_{\rho}( \ii A_{1} \x)\#^{W}_{\rho} \mathcal{M}
= \left(\begin{matrix}
  \ii V\x  & -V(f_x g-g_x f)  \\
-V(f_x g-g_x f) &  \ii V\x  
\end{matrix}\right)\,.
\end{equation*}
By \eqref{nuovocoef}  we have
\begin{equation*}
\mathcal{M}^{-1} \#^{W}_{\rho} (\ii A_{1/2}|\x|^{\frac{1}{2}}) \#^{W}_{\rho}  
\mathcal{M}
=\ii \left( \begin{matrix} 
(1+a^{(0)})|\x|^{\frac{1}{2}}  & 0 \\
0 & -(1+a^{(0)})|\x|^{\frac{1}{2}}  
\end{matrix}\right)+r_{-1/2}\,,
\end{equation*}
for some  $r_{-1/2}\in S_1^{-\frac{1}{2}}\otimes\mathcal{M}_2(\mathbb{C})$.
Moreover, recalling \eqref{function:c} and that 
$ A_{-1} \in S_1^{-1}\otimes\mathcal{M}_{2}(\mathbb{C})$, 
we have 
\begin{equation*}
\begin{aligned}
& \mathcal{M}^{-1} \#^{W}_{\rho}   A_{0} 
\#^{W}_{\rho} \mathcal{M}= A_0 = \frac{1}{4}\sm{0}{1}{1}{0}V_{x}\,,
\qquad 
\mathcal{M}^{-1} \#_\rho A_{-1}  \#^{W}_\rho \mathcal{M}=:r_{-1} \,,
\end{aligned}
\end{equation*}
with $r_{-1}\in 
S^{-1}_1\otimes\mathcal{M}_{2}(\mathbb{C}) $.
By the discussion above we obtain the 
\eqref{elle2} and \eqref{function:c2}.
\end{proof}

We conclude this section with the following result.
\begin{lemma}\label{GoodAmmissi2}
The operator $\mathcal{L}_2$ in \eqref{elle2} is in $\mathfrak{S}_1$ 
and the map ${\bf \Phi}_{\mathbb{M}}$ in Proposition \ref{lem:mappa2} 
is in $\mathfrak{T}_1$ (see Def. \ref{compaMulti}, \ref{goodMulti}).
\end{lemma}

\begin{proof}
We first note the following. The functions $f$, $g$ in \eqref{TRA}
are real valued and satisfy \eqref{space3Xinv}
since, by Lemma \ref{LambdaAdmiss}, the operator 
$\mathcal{L}_1$ is in $\mathfrak{S}_1$.
 Therefore the
 function $c(\vphi,x)$ given by Lemma \ref{lem:defC}
 is in $S_{\mathtt{v}}$. As a consequence the flow of \eqref{flussoCpro}
 is in 
$\mathfrak{T}_1$, i.e. is symplectic and $x$-translation invariant. 
Then also the operator $\mathcal{L}_2$ is 
Hamiltonian and $x$-translation invariant.
The matrices $\ii A_1$, $\ii A_{1/2}$,  $A_{j}^{(2)}$, $j=0,-1/2$, in \eqref{elle2}
satisfy \eqref{selfSimbo2}, \eqref{space3Xinv} by construction. 
\end{proof}

\subsection{Block-diagonalization at negative orders}
In this subsection we iteratively 
 block-diagonalize the operator 
  \eqref{elle2} 
  (which is already block-diagonal at the orders $ 1$ and $ 1/ 2 $) 
up to smoothing remainders. The main result of the section is the following.

\begin{proposition}{\bf (Block-diagonalization at lower orders).}\label{blockTotale}
There exists an invertible 
map ${\bf \Psi}(\vphi)\colon H^s_{S^{\perp}}(\mathbb{T})
\times H^s_{S^{\perp}}(\mathbb{T})
\to H^s_{S^{\perp}}(\mathbb{T})\times H^s_{S^{\perp}}(\mathbb{T})$ 
such that (recall \eqref{elle2})
\begin{equation}\label{elle3}
\mathcal{L}_{3}:={\bf \Psi}
\mathcal{L}_2{\bf \Psi}^{-1}
=\Pi_{S}^{\perp}\Big(\omega\cdot\pa_{\vphi}+
\opw\left(\begin{matrix} d(\vphi,x,\x) & 0 \\
0 & \ov{d(\vphi,x,-\x)}  
\end{matrix}\right)
+\mathcal{R}^{(3)}\Big)\,,
\end{equation}
where $\mathcal{R}^{(3)}=\mathcal{R}^{(3)}(\vphi)$ is a smoothing remainder in 
$\mathfrak{L}_{\rho,p}^{1}\otimes\mathcal{M}_2(\mathbb{C})$ and 
\begin{equation}\label{diagpezzo}
d(\vphi,x,\x):=
\ii V(\vphi,x)\x+\ii (1+a^{(0)}(\vphi,x))|\x|^{\frac{1}{2}}+c^{(3)}(\vphi,x,\x)\,,
\end{equation}
for some $c^{(3)}\in S_1^{-\frac{1}{2}}$.
Finally, the map ${\bf \Psi}$ satisfies 
 \begin{equation}\label{stimaMappaPsi}
\|{\bf \Psi}^{\pm1}u\|_{s}^{\gamma,\calO_0}
\lesssim_{s}\|u\|_{s}^{\gamma, \calO_0}+
\e\|\mathfrak{I}_{\delta}\|^{\gamma, \calO_0}_{s+\mu}
\|u\|_{s_0}^{\gamma, \calO_0}\,.
\end{equation}
\end{proposition}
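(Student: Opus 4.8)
\textbf{Proof plan for Proposition \ref{blockTotale}.}

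The plan is to perform a standard iterative block-diagonalization scheme: starting from $\mathcal{L}_2$ in \eqref{elle2}, which is already block-diagonal (i.e.\ diagonal in the $2\times 2$ matrix structure) at orders $1$ and $1/2$, I would construct a sequence of conjugating maps that successively remove the off-diagonal symbols of orders $0, -1/2, -1, \ldots, -\rho+1$. At each step, given an operator of the form $\omega\cdot\pa_\vphi + \opw(D_m + E_m)$ with $D_m$ block-diagonal and $E_m \in S^{m}_1\otimes\mathcal{M}_2(\mathbb{C})$ purely off-diagonal of order $m \le 0$, I would look for a transformation $\Phi_m$ generated (as a flow) by a matrix of pseudo differential symbols $G_m$ of order $m - 1/2$ (one order below $E_m$, since the relevant commutator $[\opw(\ii A_{1/2}^{(2)}|\x|^{1/2}), \opw(G_m)]$ gains $1/2$ derivative by Lemma \ref{James2}) such that the conjugation cancels $\opw(E_m)$ modulo a new off-diagonal symbol of order $m - 1/2$. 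The homological equation to solve at each step is purely algebraic: the off-diagonal entries of $\ii A_{1/2}^{(2)}$ are the diagonal pair $(1+a^{(0)})|\x|^{1/2}$ and $-(1+a^{(0)})|\x|^{1/2}$, and since $1 + a^{(0)} = \lambda_+ > 0$ (from \eqref{nuovocoef} and \eqref{TRA}), the divisor appearing is of the form $2(1+a^{(0)})|\x|^{1/2}$, which is invertible on the relevant frequency range; thus $G_m$ is determined by dividing $E_m$ by this elliptic symbol, giving $G_m \in S^{m-1/2}_1 \otimes \mathcal{M}_2(\mathbb{C})$.

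The key steps, in order, would be: (i) set up the single inductive step precisely, defining $\Phi_m = \Phi_m^1$ as the time-one flow of $\pa_\tau \Phi_m^\tau = \Pi_S^\perp \opw(G_m)\Pi_S^\perp \Phi_m^\tau$, using Lemma \ref{differenzaFlussi} (as in the proof of Proposition \ref{lem:mappa2}) to reduce to the non-projected flow up to finite-rank remainders in $\mathfrak{L}_{\rho,p}^1$; (ii) compute $\Phi_m \mathcal{L}_2^{(m)} \Phi_m^{-1}$ via the symbolic composition Lemma \ref{Jameshomo}, verifying that the chosen $G_m$ kills the order-$m$ off-diagonal term and produces only off-diagonal contributions of order $\le m - 1/2$ plus diagonal corrections that modify $c^{(3)}$; (iii) track that at each step the operator stays in the class $\mathfrak{S}_1$ and the map stays in $\mathfrak{T}_1$ — here the crucial point is that $a^{(0)}$, $V$ are real-valued and $x$-translation invariant (Lemma \ref{GoodAmmissi2}), so the generators $G_m$ inherit these symmetries and the conjugation preserves the Hamiltonian and momentum-preserving structure, which forces the final symbol to have the precise form \eqref{elle3}–\eqref{diagpezzo} with matching diagonal entries $d(\vphi,x,\x)$ and $\ov{d(\vphi,x,-\x)}$ (by Lemma \ref{realHamMtrixSimbo}); (iv) iterate $\rho + \lceil 1/2 \rceil$ times until the off-diagonal part is a remainder in $\mathfrak{L}_{\rho,p}^1 \otimes \mathcal{M}_2(\mathbb{C})$, and define ${\bf \Psi}$ as the composition of all the $\Phi_m$'s; (v) establish the tame bounds \eqref{stimaMappaPsi} by composing the estimates for each $\Phi_m$, each of which follows from the flow estimates for pseudo differential generators of non-positive order together with the bounds \eqref{AVBDEF} on the homogeneous components of the symbols.

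The main obstacle I anticipate is bookkeeping rather than conceptual: one must carefully verify at every order that the remainder classes $\mathfrak{L}_{\rho,p}^1$ are stable under the operations involved (conjugation by the flows, composition with the finite-rank corrections from $\Pi_S^\perp$, and the contributions where derivatives fall on the symbols of $\mathfrak{I}_\delta$), and that the loss-of-derivatives index $\mu$ grows only by a controlled amount at each step, consistent with the running convention introduced after \eqref{condireP}. A secondary delicate point is checking that the homological divisor $2(1+a^{(0)})|\x|^{1/2}$ remains bounded away from zero uniformly — this uses that $a^{(0)} = \lambda_+ - 1$ with $\lambda_+ = \sqrt{(1+\tilde a)^2 - \tilde a^2}$ close to $1$ for $\e$ small (since $\tilde a \in S_1^0$ is $O(\e)$), so ellipticity is guaranteed on $\{|\x| \ge 1/3\}$ where the cut-off $\chi$ in \eqref{Demme} is supported, and the finitely many low frequencies are harmless since they are absorbed into smoothing remainders. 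Once ellipticity and the structural preservation are in hand, the iteration closes exactly as in the analogous block-diagonalization arguments of \cite{BM1}, \cite{FGP1}.
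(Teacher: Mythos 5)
Your proposal is correct and follows essentially the same route as the paper: an iteration in half-integer orders where the off-diagonal generator at each step is obtained by dividing the off-diagonal symbol by the elliptic divisor $2(1+a^{(0)})|\x|^{\frac12}$ (with the cut-off $\chi$ handling low frequencies and the discarded piece absorbed into $\gotL_{\rho,p}^{1}$), the projected flows being reduced to unprojected ones via Lemma \ref{differenzaFlussi} and the conjugations computed by Lie expansion and Lemma \ref{Jameshomo}, with the symmetries of $\mathfrak{S}_1$/$\mathfrak{T}_1$ propagated inductively. The only imprecision is the parenthetical attributing the order gain to the commutator Lemma \ref{James2}: the cancellation actually comes from the matrix commutator with the diagonal part, whose leading off-diagonal entry is the \emph{product} $2\ii m_j(1+a^{(0)})|\x|^{\frac12}$, so the generator sits half an order below the term it removes for that reason, exactly as in \eqref{generatore-jth}--\eqref{esp2-jth}.
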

The rest of the section is devoted to the proof of the proposition above.
We shall prove inductively that
for 
 $ j = 0, \ldots, 2 \rho $, there are 
 
 \vspace{0.5em}
 \noindent
 $\bullet$ symbols $c_{j}\in S_1^{-1/2}$, $b_{j}\in S_1^{-j/2}$;
 
  \vspace{0.5em}
 \noindent
$\bullet$ smoothing operators 
$\mathcal{Q}^{(j)}\in \gotL_{\rho,p}^{1}\otimes\mathcal{M}_2(\mathbb{C})$,

\vspace{0.5em}
\noindent
 such that the following holds true. 
Let  ${\bf \Psi}_{j}^{\theta}:={\bf \Psi}_{j}^{\theta}(\vphi)$ be the flow at time 
$\theta\in [0,1]$ of 
\begin{equation}\label{flow-jth}
\partial_{\theta} {\bf \Psi}^{\theta}_{j} (\vphi) = 
 \Pi_{S}^{\perp}\opw{( \ii M_j (\vphi, x,\xi) )} 
\Pi_{S}^{\perp}{\bf \Psi}_{j}^{\theta}(\vphi) \, ,
 \quad {\bf \Psi}_{j}^{0}(\vphi) = {\rm Id} \, ,  
\end{equation}
with
\begin{equation}\label{generatore-jth}
M_j(\vphi,x,\xi):=\left(\begin{matrix}
0 & \!\! \!\!  -\ii m_j(\vphi,x,\xi) \\ 
-\ii\ov{m_{j}(\vphi,x,-\xi)}
& \!\! \!\! 0
\end{matrix}\right)\,, \qquad
m_j =\frac{\chi(\x)b_{j}(\vphi,x,\x)}{2\ii(1+a^{(0)}(\vphi,x))|\x|^{\frac{1}{2}}}\in 
S_1^{-\frac{j+1}{2}} \, ,
\end{equation}
where
$\chi\in {\cal C}^{\infty}(\R;\R)$ is an even and 
positive cut-off function
as in \eqref{cutofffunct}, and
 \begin{equation}\label{sist2 j-th}
\mathcal{Y}^{(j)} :=\mathcal{Y}^{(j)} (\vphi):=\omega\cdot\pa_{\vphi}+\opw\big(
\mathcal{D}(\vphi,x,\x)+B^{(j)}
\big)+\mathcal{Q}^{(j)}(\vphi)
\end{equation}
where  (recall \eqref{matriciinizio}, \eqref{nuovocoef})
\begin{equation}\label{matricDiag}
\mathcal{D}(\vphi,x,\x):=\ii A_{1}(\vphi,x)\x+\ii A_{1/2}^{(2)}(\vphi,x)
|\x|^{\frac{1}{2}}\,,
\end{equation}
and $B^{(j)}=B^{(j)}(\vphi,x,\x)$ is 
 the matrix of symbols of the form
\begin{equation}\label{bjbjbj}
B^{(j)} =\left(
\begin{matrix}
 c_j(\vphi,x,\x)& b_j(\vphi,x,\x)\vspace{0.2em}\\
 \ov{b_j(\vphi,x,-\x)} & \ov{c_j(\vphi,x,-\x)}
\end{matrix}
\right), \qquad c_j\in S_1^{-\frac{1}{2}},\;\;  
b_j\in S_1^{-\frac{j}{2}} \, .
\end{equation}
Then one has that
\begin{equation}\label{JJ+1}
\mathcal{Y}^{(j+1)}:=
\left({\bf \Psi}_{j}^{\theta} \mathcal{Y}^{(j)}{\bf \Psi}_{j}^{-\theta}\right)_{|\theta=1}
\end{equation}
has the form
 \eqref{sist2 j-th} with $ j + 1 $ instead of $ j $
 and $B^{(j+1)}$, $\mathcal{Q}^{(j+1)}$.

\vspace{0.9em}
{\bf Inizialization.} 
The operator  $\mathcal{L}_{2}$ in \eqref{elle2} has the form
of $\mathcal{Y}^{(j)}$ in \eqref{sist2 j-th}
for $j=0$
with matrix of symbols $B^{(0)}\rightsquigarrow A_{0}^{(2)}+A_{-1/2}^{(2)}$
and smoothing operator $\mathcal{Q}^{(0)}\rightsquigarrow \mathcal{R}^{(2)}$.

\vspace{0.9em}
{\bf Iteration.} 
We now argue by induction. 
First of all notice that the symbols of the matrix $ M_j $ defined 
in \eqref{generatore-jth} have negative order for any $ j \geq 0 $.
We prove that the flow map ${\bf \Psi}_{j}^{\theta}$
in \eqref{flow-jth} is well-posed.
\begin{lemma}\label{water1}
Assume that $b_{j}$ in \eqref{generatore-jth}
is in $S_1^{-j/2}$.
Then $m_j\in S_1^{-(j+1)/2}$.
%
Moreover
the map ${\bf \Psi}_{j}^{\theta}$ has the form
\begin{equation}\label{formazione}
{\bf \Psi}_{j}^{\theta}=\Pi_{S}^{\perp}\widetilde{{\bf \Psi}}_{j}^{\theta}\Pi_{S}^{\perp}
\circ({\rm Id}+\mathcal{R}_{j}(\theta))\,,\quad \forall \, \theta\in[0,1]\,,
\end{equation}
with
$\mathcal{R}_{j}\in\gotL^{1}_{\rho,p}\otimes\mathcal{M}_{2}(\mathbb{C})$ 
and $\widetilde{\Psi}_{j}^{\theta}$ is the flow of 
\begin{equation}\label{flow-jthsenzapro}
\partial_{\theta} \widetilde{\bf \Psi}^{\theta}_{j} (\vphi) = 
  \opw{(\ii M_j (\vphi, x,\xi) )} \widetilde{\bf \Psi}_{j}^{\theta}(\vphi) \, ,
 \quad \widetilde{\bf \Psi}_{j}^{0}(\vphi) = {\rm Id} \, .
\end{equation}
Moreover $\widetilde{\bf \Psi}^{\theta}_{j} (\vphi)$ has the form
\begin{equation}\label{formazione2}
\widetilde{\bf \Psi}^{\theta}_{j} (\vphi)={\rm Id}
+\opw(\widetilde{M}_{j}(\theta;\vphi,x,\x))+\widetilde{\mathcal{R}}_{j}(\theta)
\end{equation}
where $\widetilde{M}_{j}\in S_1^{-\frac{j+1}{2}}\otimes\mathcal{M}_{2}(\mathbb{C})$  and
$\widetilde{\mathcal{R}}_{j}\in\gotL^{1}_{\rho,p}\otimes\mathcal{M}_{2}(\mathbb{C})$.
In particular the non homogeneous terms of 
$\widetilde{M}_{j}$, $\mathcal{R}_{j}$  and $\widetilde{\mathcal{R}}_{j}$
satisfy the bounds \eqref{AVBDEF}, \eqref{AVBDEF2}
uniformly in $\theta\in[0,1]$.
\end{lemma}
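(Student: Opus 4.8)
The plan is to establish the three assertions in order, all by symbolic calculus built on the homogeneous versions of the composition lemmas.

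\emph{The symbol $m_j$ and well-posedness of the flows.} First I would check that $m_j\in S_1^{-(j+1)/2}$. By hypothesis $b_j\in S_1^{-j/2}$; by Lemma \ref{mappa2} the function $a^{(0)}$ lies in $S_1^0$, and since $a^{(0)}=\lambda_+-1$ with $\lambda_+$ close to $1$ for $\e$ small, the reciprocal $(1+a^{(0)})^{-1}$ is again an element of $S_1^0$ (a smooth function, analytic near $1$, of an element of $S_1^0$, exactly as in the proof of Lemma \ref{mappa2}); the Fourier multiplier $\chi(\x)|\x|^{-1/2}$ belongs to $S^{-1/2}$. Multiplying these factors and invoking Lemma \ref{Jameshomo} (products of symbols being a particular case of composition, which preserves the classes $S_k^m$ together with the estimates \eqref{AVBDEF}) gives $m_j\in S_1^{-(j+1)/2}$. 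In particular $\opw(\ii M_j)$ has symbol of strictly negative order, hence is a bounded operator on each $H^s_{S^{\perp}}(\mathbb{T})$, of size $O(\e)$ since the symbols of Definition \ref{espOmogenea} vanish at the equilibrium. Because the generators in \eqref{flow-jth}, \eqref{flow-jthsenzapro} are independent of the time variable $\theta$, the flows are $ {\bf \Psi}_j^{\theta}=\exp\!\big(\theta\,\Pi_{S}^{\perp}\opw(\ii M_j)\Pi_{S}^{\perp}\big)$ and $\widetilde{\bf \Psi}_j^{\theta}=\exp\!\big(\theta\opw(\ii M_j)\big)$, the exponential series converging in $\mathcal{L}(H^s)$ uniformly in $\theta\in[0,1]$.

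\emph{The structure of $\widetilde{\bf \Psi}_j^{\theta}$ and removal of the projectors.} Expanding $\widetilde{\bf \Psi}_j^{\theta}=\sum_{n\ge0}\frac{\theta^n}{n!}\opw(\ii M_j)^n$ and applying Lemma \ref{Jameshomo} iteratively, each power is $\opw(\ii M_j)^n=\opw(c_{j,n})+R_{j,n}$ with $c_{j,n}\in S_1^{-n(j+1)/2}\otimes\mathcal{M}_2(\C)$ and $R_{j,n}\in\gotL^{1}_{\rho,p}\otimes\mathcal{M}_2(\C)$, and the $O(\e)$ smallness makes the associated symbol and modulo-tame norms decay geometrically in $n$. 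Collecting the $n=1$ term together with the pseudodifferential parts of the $n\ge2$ terms — all of order $\le-(j+1)/2$ — into $\widetilde{M}_j\in S_1^{-(j+1)/2}\otimes\mathcal{M}_2(\C)$, and summing the $R_{j,n}$ into $\widetilde{\mathcal{R}}_j\in\gotL^{1}_{\rho,p}\otimes\mathcal{M}_2(\C)$, yields \eqref{formazione2}; the bounds \eqref{AVBDEF}, \eqref{AVBDEF2} on the non-homogeneous parts follow from the corresponding bounds in Lemmata \ref{lem:escobarhomo}, \ref{Jameshomo}, using \eqref{IpotesiPiccolezzaIdeltaDP}. For \eqref{formazione} I would argue exactly as in the proof of Proposition \ref{lem:mappa2}: the commutator $[\Pi_{S}^{\perp},\opw(\ii M_j)]$ is finite rank (as $\Pi_S$ has finite-dimensional range) and smoothing of arbitrary order (as $\opw(\ii M_j)$ has negative order and $\Pi_S$ projects onto trigonometric polynomials), so the abstract difference-of-flows Lemma \ref{differenzaFlussi} (see Remark \ref{differenzaFlussiRMK}) produces ${\bf \Psi}_j^{\theta}=\Pi_{S}^{\perp}\widetilde{\bf \Psi}_j^{\theta}\Pi_{S}^{\perp}\circ({\rm Id}+\mathcal{R}_j(\theta))$ with $\mathcal{R}_j\in\gotL^1_{\rho,p}\otimes\mathcal{M}_2(\C)$ satisfying the stated bounds uniformly in $\theta\in[0,1]$.

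\emph{Main obstacle.} Steps concerning plain symbolic calculus (products, single compositions, the exponential series) are routine. The only delicate point is to run the whole bookkeeping while retaining the homogeneous-expansion structure of Definition \ref{espOmogenea}: one must verify that each operation used — products and compositions of symbols, the commutator with $\Pi_S$, and the Duhamel/exponential iteration — maps the classes $S_1^{\bullet}$ and $\gotL^{1}_{\rho,p}$ into themselves with a controlled (finite) loss of derivatives $\mu$, so that the remainders land precisely in $\gotL^{1}_{\rho,p}\otimes\mathcal{M}_2(\C)$ rather than merely in $\gotL_{\rho,p}\otimes\mathcal{M}_2(\C)$. This is exactly what the homogeneous versions of the calculus lemmas (Lemmata \ref{lem:escobarhomo}, \ref{Jameshomo}) are designed to provide.
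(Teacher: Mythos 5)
Your argument has the same overall architecture as the paper's proof: the symbol class of $m_j$ follows from \eqref{generatore-jth} together with $b_j\in S_1^{-j/2}$, $a^{(0)}\in S_1^{0}$ (Lemma \ref{mappa2}) and the homogeneous calculus; and \eqref{formazione} is obtained, exactly as in the paper, from Lemma \ref{differenzaFlussi} (item (ii)) and Remark \ref{differenzaFlussiRMK}. The only real divergence is in the proof of \eqref{formazione2}: the paper simply invokes Lemma B.6 of \cite{FGP} (a finite Taylor/Duhamel expansion of the flow of a negative-order pseudo differential generator, in the standard quantization) together with Lemma \ref{lem:escobarhomo} to pass to the Weyl quantization, whereas you re-derive the structure from the exponential series $\widetilde{\bf \Psi}_j^{\theta}=\sum_n \frac{\theta^n}{n!}\opw(\ii M_j)^n$ with iterated applications of Lemma \ref{Jameshomo}. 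That route is viable, but the phrase ``the $O(\e)$ smallness makes the symbol and modulo-tame norms decay geometrically in $n$'' is too optimistic as stated: each application of Lemma \ref{Jameshomo} costs a fixed loss of derivatives $\s$ in the seminorms, so the $n$-th term is controlled only by seminorms of $M_j$ at regularity growing linearly in $n$, and the series of symbol norms is not summable in the naive way. The standard repair (and effectively what Lemma B.6 of \cite{FGP} does) is to expand only finitely many times — until the iterated order drops below $-\rho$ — and to estimate the whole tail as the product of a $\rho$-smoothing factor with the (operator-norm bounded) remainder of the exponential series, which lands it in $\gotL^{1}_{\rho,p}\otimes\mathcal{M}_2(\mathbb{C})$. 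With that adjustment your proof is correct and essentially equivalent in content to the paper's, just more self-contained at the cost of redoing the flow-expansion lemma by hand.
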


\begin{proof}
The first assertion of the matrix $M_{j}$ follows 
by the \eqref{generatore-jth} and by the fact that
$b_{j}\in S_1^{-j/2}$ 
and $a^{(0)}\in S_1^{0}$ by Lemma \ref{mappa2}.
The \eqref{formazione} follows by Lemma \ref{differenzaFlussi} 
(see also Remark \ref{differenzaFlussiRMK}). 
The \eqref{formazione2} follows by
applying Lemma B.6  in the appendix of 
\cite{FGP} and by  Lemma \ref{lem:escobarhomo}
to pass to the Weyl quantization.
\end{proof}

\noindent
By Lemma \ref{georgiaLem}
the conjugated operator $\mathcal{Y}^{(j+1)}$ in \eqref{JJ+1}
under the map  $\Psi^{\theta}_{j}$ in \eqref{formazione}
 is given by
\begin{equation}\label{sis:j+1}
{\bf \Psi}_{j}^{1} \mathcal{Y}^{(j)}{\bf \Psi}_{j}^{-1}=
\widetilde{\bf \Psi}_{j}^{1}
  \big( \omega\cdot\pa_{\vphi}\widetilde{\bf \Psi}_{j}^{-1}\big)+
 \widetilde{\bf \Psi}_{j}^{1}  
 \opw\big(
\mathcal{D}(\vphi,x,\x)+B^{(j)}
\big) 
\widetilde{\bf \Psi}_{j}^{-1}  
+ \widetilde{\bf \Psi}_{j}^{1} \mathcal{Q}_{j}\widetilde{\bf \Psi}_{j}^{-1}\,,
\end{equation}
up to finite rank remainders in $\mathfrak{L}_{\rho,p}^{1}\otimes\mathcal{M}_2(\mathbb{C})$.
The third summand in \eqref{sis:j+1} is in $\mathfrak{L}^{1}_{\rho,p}$
and satisfies the required estimates by Lemma \ref{water1} 
and the estimates on the smoothing remainder $\mathcal{Q}_{j}$.

Consider now the first summand in \eqref{sis:j+1}.
We have that the time contribution admits the Lie expansion
\begin{equation*}
\begin{aligned}
\widetilde{\bf \Psi}_{j}^{1}
\big( \omega\cdot\pa_{\vphi}\widetilde{\bf \Psi}_{j}^{-1}\big)
&=-\sum_{q=1}^{L}\frac{1}{q!}{\rm ad}^{q-1}_{\ii {\bf A}}
[\ii \omega\cdot\pa_{\vphi}{\bf A}]
-
\frac{1}{L!}\int_{0}^{1} (1-\theta)^{L} 
\widetilde{\bf \Psi}^{\theta} {\rm ad}^{L}_{\ii {\bf A}}
[\ii \omega\cdot\pa_{\vphi}{\bf A}](\widetilde{\bf \Psi}^{\theta})^{-1}d\theta
\end{aligned}
\end{equation*}
specified for $ {\bf A}:=\opw(M_j (U) )$
and where ${\rm ad}_{\ii {\bf A}}[X]=[\ii {\bf A}, X]$.
We recall  (see \eqref{espansione2})  that
$$
M_{j}\#^{W}_{\rho}\omega\cdot\pa_{\vphi}M_{j}
-\omega\cdot\pa_{\vphi}M_{j}\#^{W}_{\rho}M_{j}=
\{M_{j},\omega\cdot\pa_{\vphi}M_{j}\}
\in S^{-({j+1})-1}
$$ 
up to a symbol in 
$S^{-({j+1})-3}_1$. 
By Lemma \ref{Jameshomo}
we have that 
${\rm ad}_{\ii \opw(M_{j})}[\ii \opw(\omega\cdot\pa_{\vphi}M_{j})]$ 
is a pseudo differential operator 
with symbol in $S^{-({j+1})-1}_1\otimes\mathcal{M}_2(\mathbb{C})$ 
plus a smoothing remainder 
in $\mathfrak{L}_{\rho,p}^{1}\otimes\mathcal{M}_2(\mathbb{C})$.
As a consequence we deduce 
(using also Lemma $B.3$ in \cite{FGP}), for $q\geq2$,
\[
{\rm ad}^{q}_{\ii \opw(M_{j})}[\ii \opw(\omega\cdot\pa_{\vphi}M_{j})]
=\opw(B_{q})+R_{q}, \qquad 
B_{q}\in S_{1}^{-\frac{j+1}{2}(q+1)-q}\otimes\mathcal{M}_2(\mathbb{C}) \, ,
\]
and $R_{q}\in \mathfrak{L}_{\rho,p}^{1}\otimes\mathcal{M}_2(\mathbb{C})$.
By  taking   $ L $ large enough with respect to $ \rho $, we obtain that 
$\widetilde{\bf \Psi}_{j}^{1}
\big( \omega\cdot\pa_{\vphi}\widetilde{\bf \Psi}_{j}^{-1}\big)$
is a 
 pseudo differential operator with symbol in 
 $ S_1^{-(j+1)/2}\otimes\mathcal{M}_2(\mathbb{C})$ plus a 
 smoothing operator in 
 $\mathfrak{L}^{1}_{\rho,p}\otimes\mathcal{M}_2(\mathbb{C})$. 
%

We now study the space contribution, 
which is the second summand in \eqref{sis:j+1}. It admits 
the Lie expansion
\begin{equation}\label{Lie1Vec}
  \widetilde{\bf \Psi}_{j}^{1}  X (\widetilde{\bf  \Psi}_{j}^{1})^{-1}  = X
  +\sum_{q=1}^{L}\frac{1}{q!}{\rm ad}_{\ii {\bf A}}^{q}[X]
 +  \frac{1}{L!} \int_{0}^{1}  (1- \theta)^{L} \widetilde{\bf \Psi}^{\theta}
  {\rm ad}_{\ii{\bf A}}^{L+1}[X] (\widetilde{\bf \Psi}^{\theta})^{-1} 
d \theta
\end{equation}
where $X= \opw\big(
\mathcal{D}(\vphi,x,\x)+B^{(j)}
\big) $.
We claim that
\begin{equation}\label{eq:428}
 \begin{aligned}  
  \widetilde{\bf \Psi}_{j}^{1}  \opw\big(
\mathcal{D}(\vphi,x,\x)+B^{(j)}
\big) \widetilde{\bf \Psi}_{j}^{-1}  
 &= \opw\big(
\mathcal{D}(\vphi,x,\x)+B^{(j)}
\big)\\
&\qquad\qquad+ 
\big[ \opw(\ii M_j  ) , \opw\big(
\mathcal{D}(\vphi,x,\x)+B^{(j)}
\big) \big] 
\end{aligned} 
\end{equation}
plus a pseudo differential operator with symbol in 
$ S_1^{-\frac{j+1}{2}}\otimes\mathcal{M}_{2}(\mathbb{C})$ 
and a 
 smoothing operator in $\mathfrak{L}^{1}_{\rho,p}\otimes\mathcal{M}_{2}(\mathbb{C})$.
We first give the expansion of the commutator terms in \eqref{eq:428}
using the expression of $\mathcal{Y}^{(j)}   $ in  \eqref{sist2 j-th}. 
By Lemma \ref{Jameshomo} 
we have 
\begin{equation}\label{esp2-jth}
\begin{aligned}
&\Big[
\opw(\ii M_j), \opw(\mathcal{D}(\vphi,x,\x))
\Big]:=\opw\left(\sm{0}{p_{j}(\vphi,x,\x)}{\ov{p_{j}(\vphi,x,-\x)}}{0}\right)
+\opw(B)+R
\\
&\qquad p_{j}:=-2\ii m_{j}(\vphi,x,\xi)(1+a^{(0)}(\vphi,x))|\xi|^{\frac{1}{2}}
\end{aligned}
\end{equation}
for some matrices of symbols  
$B\in S^{-\frac{j+1}{2}}_1\otimes\mathcal{M}_{2}(\mathbb{C})$
and remainder $R\in \gotL_{\rho,p}^1\otimes\mathcal{M}_{2}(\mathbb{C})$.
Moreover, since $ B^{(j)} $, is a matrix of symbols of order $ - 1/ 2 $, 
for $ j \geq 1 $,  
respectively $ 0 $ for $ j = 0 $
(see \eqref{bjbjbj}), we have that 
\begin{equation*} 
\big[
\opw(\ii M_j ) , \opw(B^{(j)}) 
\big] \in 
\left\{
\begin{aligned}
& OPS^{-\frac{j+2}{2}}\otimes\mathcal{M}_{2}(\mathbb{C}) \quad {\rm for } \ j \geq 1 \\
& OPS^{-\frac{1}{2}}\otimes\mathcal{M}_{2}(\mathbb{C})
\qquad {\rm for } \ j =0 
\end{aligned}\right.
\end{equation*}
up to a smoothing operator in 
$\mathfrak{L}^{1}_{\rho,p}\otimes\mathcal{M}_{2}(\mathbb{C})$. 
It follows that the off-diagonal symbols of order $ - j / 2 $ 
in \eqref{eq:428} are 
of the form  $\sm{0}{q_{j}(\vphi,x,\x)}{\ov{q_{j}(\vphi,x,-\x)}}{0}$ with
\begin{equation*} 
q_{j}(\vphi,x,\x)\stackrel{\eqref{esp2-jth}}{:=}b_{j}(\vphi,x,\x)
-2\ii m_{j}(\vphi,x,\xi)(1+a^{(0)}(\vphi,x))|\xi|^{\frac{1}{2}} \, . 
\end{equation*}
By the definition of $\chi$ in \eqref{cutofffunct} 
we have that
the operator 
$\opw((1-\chi(\x))b_{j}(\vphi,x,\x))$ is in 
$\mathfrak{L}^{1}_{\rho,p}$ for any 
$\rho\geq3$. 
This  follows by applying Lemma $B.2$ in \cite{FGP}
and by 
the fact that the symbol $(1-\chi(\x))b_{j}(\vphi,x,\x)$
is in $S_1^{-m}$ for any $m>0$.
Moreover, by  
  the choice of $m_{j}(\vphi,x,\x)$ in \eqref{generatore-jth} we have that
  \[
 \chi(\x) b_{j}(\vphi,x,\x)+p_{j}(\vphi,x,\x)=0\,.
  \]
 This implies that $[\ii \opw(M_{j}) ,\opw\big(
\mathcal{D}(\vphi,x,\x)+B^{(j)}
\big)]$
 is a pseudo differential operator with symbol in
  $S_1^{-\frac{j+1}{2}}\otimes\mathcal{M}_{2}(\mathbb{C})$ plus a remainder in 
  $\mathfrak{L}^{1}_{\rho,p}\otimes\mathcal{M}_{2}(\mathbb{C})$.
  Now, using again Lemma \ref{Jameshomo},
  we deduce, for $q\geq2$, 
  \[
{\rm ad}^{q}_{\ii \opw(M_{j})}[
\opw\big(
\mathcal{D}(\vphi,x,\x)+B^{(j)}
\big)
]=\opw(\widetilde{B}_{q})+\widetilde{R}_{q}, \qquad 
\widetilde{B}_{q}\in S_1^{-\frac{j+1}{2}q}\otimes\mathcal{M}_{2}(\mathbb{C}) \, , 
\]
where $\widetilde{R}_{q} $ is in 
$ \mathcal{L}^{1}_{\rho,p}\otimes\mathcal{M}_{2}(\mathbb{C})$.
  Using formula \eqref{Lie1Vec} with $L$ large enough (w.r.t. $\rho$)
 one obtains the claim  in  \eqref{eq:428}.

We conclude that 
the operator 
$\mathcal{Y}^{(j+1)}$ in \eqref{sis:j+1} has the form
\eqref{sist2 j-th}
for some matrix of symbol 
$B^{(j+1)}$ 
of the form \eqref{bjbjbj} with $j\rightsquigarrow j+1$ 
and smoothing operators $\mathcal{Q}^{(j+1)} $ 
in $\mathfrak{L}^{1}_{\rho,p}\otimes\mathcal{M}_2(\mathbb{C})$. 

We are in position to conclude the proof of Proposition \ref{blockTotale}.
\begin{proof}[{\bf Proof of Proposition \ref{blockTotale}}]
We set 
\begin{equation}\label{mappaRho}
{\bf \Psi}:=
({\bf \Psi}_{2\rho-1}^{\theta}\circ\cdots \circ{\bf \Psi}^{\theta}_0)_{|\theta=1}\,.
\end{equation}
Then, recalling  \eqref{elle2}, \eqref{nuovocoef}, \eqref{matriciinizio},
\eqref{sist2 j-th}, \eqref{matricDiag}, 
we have that $\mathcal{L}_{3}:=\mathcal{Y}^{(2\rho)}$
has the form \eqref{elle3}, \eqref{diagpezzo}.
\end{proof}

The following result regards the algebraic properties of the map 
${{\bf \Psi}}$
and of the new operator $\mathcal{L}_3$ in \eqref{elle3}.

\begin{lemma}\label{GoodAmmissi3}
The operator  $\mathcal{L}_3$
is in $\mathfrak{S}_1$ 
and the map ${\bf \Psi}$
is in $\mathfrak{T}_1$ 
(recall Def. \ref{compaMulti}, 
\ref{goodMulti}). 
\end{lemma}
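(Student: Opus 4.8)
\textbf{Proof plan for Lemma \ref{GoodAmmissi3}.}
The statement has two parts: that $\mathcal{L}_3$ in \eqref{elle3} belongs to the class $\mathfrak{S}_1$ of Definition \ref{compaMulti}, and that the conjugating map ${\bf \Psi}$ in \eqref{mappaRho} belongs to $\mathfrak{T}_1$ of Definition \ref{goodMulti}. The natural strategy is to proceed iteratively along the block-diagonalization scheme: since ${\bf \Psi}$ is the composition ${\bf \Psi}_{2\rho-1}^{1}\circ\cdots\circ{\bf \Psi}_{0}^{1}$ of the flow maps \eqref{flow-jth}, and since $\mathfrak{T}_1$ (being symplectic and $x$-translation invariant) is stable under composition, it suffices to prove that each single flow ${\bf \Psi}_j^{\theta}$ lies in $\mathfrak{T}_1$ and that conjugation by it sends operators in $\mathfrak{S}_1$ to operators in $\mathfrak{S}_1$. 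By Lemma \ref{GoodAmmissi2} we already know $\mathcal{L}_2\in\mathfrak{S}_1$, which initializes the induction.

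First I would show each generator $\opw(\ii M_j)$ in \eqref{flow-jth}, \eqref{generatore-jth} is Hamiltonian and $x$-translation invariant. By Lemma \ref{mappa2} the functions $a^{(0)}$, $\tilde a$, $f$, $g$ lie in $S^0_1$ and satisfy \eqref{space3Xinv}; combined with the inductive hypothesis that $b_j\in S^{-j/2}_1$ satisfies \eqref{space3Xinv} (which is preserved at each step since the Lie-expansion operations $\#^W_\rho$, Poisson brackets, and the definition of $m_j$ in \eqref{generatore-jth} all preserve $x$-translation invariance, by Lemma \ref{lem:momentoSimbolo} and the remark following \eqref{constcoeffMomentum}), one gets that $m_j$ is $x$-translation invariant. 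The matrix $M_j$ in \eqref{generatore-jth} then has the form \eqref{retoreMat} (off-diagonal entries are conjugates in the appropriate sense) and, crucially, satisfies \eqref{selfSimbo2}: the diagonal symbols are zero (hence trivially $\overline{0}=-0$) and the off-diagonal relation $b(\vphi,x,-\xi)=b(\vphi,x,\xi)$ follows from the structure of $M_j$ together with the parity built into the cut-off $\chi$. Hence $\opw(\ii M_j)$ is Hamiltonian by Lemma \ref{realHamMtrixSimbo} and $x$-translation invariant by Lemma \ref{lem:momentoSimbolo}, so its time-$\theta$ flow ${\bf \Psi}_j^\theta$ is symplectic and $x$-translation invariant, i.e. in $\mathfrak{T}_1$. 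Consequently ${\bf \Psi}$ in \eqref{mappaRho}, being a composition of such flows restricted via $\Pi_S^\perp$, is in $\mathfrak{T}_1$.

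For the claim $\mathcal{L}_3\in\mathfrak{S}_1$ I would argue that $\mathfrak{S}_1$ is preserved under conjugation by maps in $\mathfrak{T}_1$. Since each ${\bf \Psi}_j^\theta\in\mathfrak{T}_1$, conjugation preserves the Hamiltonian and $x$-translation-invariant structure at the level of the full operator $\mathcal{Y}^{(j)}$; the point is that the \emph{separate} summands — the pseudo differential part $\opw(\mathcal D+B^{(j)})$ and the smoothing remainder $\mathcal Q^{(j)}$ — individually inherit these properties. For the pseudo differential part one checks, as in Lemma \ref{LambdaAdmiss} and Lemma \ref{GoodAmmissi2}, that the symbol $\mathcal{D}+B^{(j)}$ satisfies \eqref{retoreMat}, \eqref{selfSimbo2}, \eqref{space3Xinv}: this follows inductively because the symbolic operations producing $B^{(j+1)}$ out of $B^{(j)}$ and $M_j$ (commutators $a\star_N b$, the Lie series in \eqref{Lie1Vec}) all preserve the real-to-real, Hamiltonian, and translation-invariant structure — the key being that the Poisson bracket of two symbols satisfying \eqref{selfSimbo2} again satisfies \eqref{selfSimbo2} (up to the harmless sign bookkeeping for the $1/\ii$ factor), and that $a\star_N b$ preserves $x$-translation invariance. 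The remainder $\mathcal{Q}^{(j+1)}$, being obtained from $\mathcal{Q}^{(j)}$ and the pseudo differential pieces by conjugation with an operator in $\mathfrak{T}_1$ and therefore differing from an object in $\mathfrak{S}_0$ only by terms in $\mathfrak{S}_0$, belongs to $\mathfrak{L}_{\rho,p}^1\otimes\mathcal{M}_2(\mathbb{C})\cap\mathfrak{S}_0$ (cf. the Remark after Definition \ref{goodMulti}). Applying this with $j+1=2\rho$ gives $\mathcal{L}_3=\mathcal{Y}^{(2\rho)}\in\mathfrak{S}_1$.

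The main obstacle is not any single deep estimate but the careful verification that \emph{every} operation in the iterative scheme — the flow construction in Lemma \ref{water1}, the Lie expansions in \eqref{sis:j+1} and \eqref{Lie1Vec}, the passage between Weyl and standard quantizations in Lemma \ref{lem:escobarhomo}, and the removal of the off-diagonal symbol of order $-j/2$ via the choice of $m_j$ — preserves simultaneously the three structural constraints (real-to-real, Hamiltonian via \eqref{selfSimbo2}, $x$-translation invariant via \eqref{space3Xinv}/\eqref{constcoeffMomentum}). In particular one must check that the cut-off $\chi(\xi)$ inserted in \eqref{generatore-jth}, being even, does not spoil the parity relation $b(\vphi,x,-\xi)=b(\vphi,x,\xi)$ required by \eqref{selfSimbo2}, and that truncating to $\Pi_S^\perp$ and absorbing the resulting finite-rank corrections (Lemma \ref{differenzaFlussi}, Lemma \ref{georgiaLem}) keeps the finite-rank remainders inside $\mathfrak{S}_0$. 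All of this is bookkeeping already implicit in the constructions of Propositions \ref{lem:mappa2} and \ref{blockTotale}, so the proof reduces to invoking Lemmata \ref{lem:momentoSimbolo}, \ref{realHamMtrixSimbo}, \ref{mappa2}, \ref{water1}, \ref{GoodAmmissi2} and noting the stability of $\mathfrak{S}_1$, $\mathfrak{T}_1$ under the operations used.
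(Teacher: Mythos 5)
Your proposal is correct and follows essentially the same route as the paper: an induction on $j$ starting from $\mathcal{L}_2=\mathcal{Y}^{(0)}\in\mathfrak{S}_1$ (Lemma \ref{GoodAmmissi2}), checking that $m_j$ and the matrix $M_j$ satisfy \eqref{retoreMat}, \eqref{selfSimbo2}, \eqref{space3Xinv} so that ${\bf \Psi}_j^{\theta}\in\mathfrak{T}_1$ by Lemmata \ref{realHamMtrixSimbo}, \ref{lem:momentoSimbolo}, whence $\mathcal{Y}^{(j+1)}\in\mathfrak{S}_1$ by construction, and finally composing the flows to conclude ${\bf \Psi}\in\mathfrak{T}_1$. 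Your extra bookkeeping (evenness of the cut-off, stability of the structure under $\#^W_\rho$ and the finite-rank corrections) is consistent with, and just a more explicit rendering of, the paper's "by construction" step.
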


\begin{proof}
We prove inductively that  
$\mathcal{Y}^{(j)}\in \mathfrak{S}_1$ for 
$j\geq0$. 
Let us start by the case $j=0$.
For $j=0$ we have that $\mathcal{Y}^{(0)}=\mathcal{L}_{2}\in \mathfrak{G}_1$ 
by  Lemma \ref{GoodAmmissi2}.
Assume that $\mathcal{Y}^{(j)}\in \mathfrak{S}_1$. We show that 
$\mathcal{Y}^{(j+1)}\in \mathfrak{S}_1$. First of all
the symbol
$m_j$ in \eqref{generatore-jth} satisfies \eqref{space3Xinv}
as well as  $a^{(0)}$ and $b_j$.
Moreover the matrix $M_j$ has the form \eqref{retoreMat} with symbols satisfying
\eqref{selfSimbo2} and \eqref{space3Xinv}. 
Hence, by Lemmata \ref{realHamMtrixSimbo}, 
\ref{lem:momentoSimbolo}
${\bf \Psi}^{\theta}_{j}\in  \mathfrak{T}_1$.
As a consequence $\mathcal{Y}^{(j+1)}$  is Hamiltonian 
and $x-$translation invariant
and the matrix of symbols $B^{(j+1)}$ in \eqref{bjbjbj} satisfies
\eqref{selfSimbo2} and \eqref{space3Xinv} by construction. Thus
 $\mathcal{Y}^{(j+1)}$ is in $\mathfrak{S}_1$. 
 By the fact that   
 $\mathcal{Y}^{(j)}\in \mathfrak{S}_1$ for $j=0,\ldots,2\rho$, implies that  
   ${\bf \Psi}^{\theta}_{j}\in  \mathfrak{T}_1$ for $j=0,\ldots, 2\rho-1$, we deduce that
   $\Psi\in \mathfrak{T}_1$.
\end{proof}


\section{Reduction at the highest orders}\label{ordiniPositivi}
In this section we want to eliminate the $(\vphi,x)$-dependence
from the 
unbounded symbols  in \eqref{diagpezzo}.
In particular 
we shall prove that the symbols transforms into \emph{integrable} ones according
to the following definition.
\begin{definition}{\bf (Integrable symbols).}\label{integroSimbo}
Let $k=2p$ for some $p\in\mathbb{N}$.
We say that a $k$-homogeneous symbol $a_k\in S^{m}$ as in \eqref{simboOMO2DEF}
is \emph{integrable} if 
it does not depend on $(\vphi,x)$ and 
can be written as
\begin{equation*}
a_{k}(\x):=\frac{1}{(\sqrt{2\pi})^{k}}
\sum_{\substack{j_{1},\ldots,j_{p}\in S
} }(\mathtt{a}_k)_{j_1,\ldots,j_{p}}(\x)
{\zeta_{j_1}\cdots\zeta_{j_p}}\,.
\end{equation*}
Equivalently a symbol $a_{k}\in S^{m}$ is integrable 
if \,$(\mathtt{a}_{k})_{j_1,\ldots,j_{k}}^{\s_1,\ldots,\s_{k}}\neq0$
implies that  $\left\{\big(j_i, \sigma_i \big) \right\}_{i=1}^k \in S\times\{\pm\}$
is such that, up to permutation of the indexes,
\begin{equation}\label{iena9}
\s_1=\cdots=\s_{p}=+\,, \quad \s_{p+1}=\cdots=\s_{2p}=-\,, 
\qquad \{j_1,\ldots, j_p\}=\{j_{p+1},\ldots, j_{2p}\}\,.
\end{equation}
We denote by $\mathcal{S}_{k}$ the set of such 
$\left\{\big(j_i, \sigma_i \big) \right\}_{i=1}^k$.
We adopt the convention to write $\mathcal{S}_{k}=\emptyset$ if $k=2p+1$ (recall Remark \ref{oddresonances}).
\end{definition}
\begin{remark}\label{accendo}
It is easy to note that, if $a_k\in S^m$ is integrable,
then it is a Fourier multiplier.
We remark that the function $\mathcal{R}_{\vec{\s}}(j_1,\ldots,j_{k})$ in \eqref{por}
is identically zero only when it is restricted to the set $\mathcal{S}_{k}$.
Indeed if $j\in S$ then $-j\notin S$ (see formula 
\eqref{TangentialSitesDP}, \eqref{TangentialSitesDPbis}).
\noindent
Notice that, for generic choices of $S$, we have that
$\left\{\big(j_i, \sigma_i \big) \right\}_{i=1}^k\notin\mathcal{S}_{k}$
implies $|\mathcal{R}_{\vec{\s}}(j_1,\ldots,j_{k})|\geq \mathtt{c}>0$
with $c$ depending only on $S$ and $k$.
\end{remark}

The key result of this section is the following.
\begin{proposition}{\bf (Reduction to constant coefficients).}\label{riduzSum}
Assume \eqref{IpotesiPiccolezzaIdeltaDP}.
For generic choices of the set of tangential sites $S$ 
there exist constants 
$\mathfrak{m}_{i} : \Omega_{\varepsilon}\to \mathbb{R}$, $i=1,1/2,0$
of the form
\begin{align}
\mathfrak{m}_i&=\mathfrak{m}_i(\omega)
=\varepsilon^2 m_i(\omega)+\mathtt{m}_{i}^{(\geq4)}(\omega)\,,
\quad 
\mathtt{m}_{i}^{(\geq4)}:=\sum_{k=2}^{4+2i}\e^{2k}m_{i}^{(2k)}(\omega)
+\mathtt{r}_i(\omega)\,, \label{mer}
\end{align}
where $m_i, m_{i}^{(2k)}, k\geq 2$ are integrable (and $2k$-homogenenous) 
according to Definition \ref{integroSimbo}, and 
\begin{equation}\label{constM1}
m_1:=\frac{1}{\pi}\sum_{n\in S} n|n| \zeta_{n}\,,
\end{equation}
\begin{equation}\label{merstima}
\lvert \mathtt{r}_{i} \rvert^{\gamma, \Omega_{\varepsilon}}
\lesssim\e^{17-2b}\gamma^{-3+2i}\,,
\qquad |\Delta_{12} \mathtt{r}_{i}|\lesssim_{p}
\e(1+\|\mathfrak{I}_{\delta}\|_{p+\mu})\|i_{1}-i_{2}\|_{p+\mu}\,, \qquad i=1, 1/2, 0
\end{equation}
such that the following holds.
For all $\omega$ belonging to the set
\begin{equation}\label{omegaduegamma}
\Omega^{2\gamma}_{\infty}:=\big\{ \omega\in \mathcal{O}_0 : 
\lvert \omega \cdot \ell+ \mathfrak{m}_1 j \rvert\geq 
\frac{\gamma}{\langle \ell \rangle^{\tau}}, 
\,\,\forall \ell\in\mathbb{Z}^{\nu},\,\,\mathtt{v}\cdot \ell+j=0,\,\,\, j\in S^{c}, \,\,(\ell, j)\neq (0, 0)  \big\}\,,
\end{equation}
there is a map ${\bf \Phi}={\bf \Phi}(\omega, \varphi) \colon H^s_{S^{\perp}}(\mathbb{T})\times H^s_{S^{\perp}}(\mathbb{T})\to H^s_{S^{\perp}}(\mathbb{T})\times H^s_{S^{\perp}}(\mathbb{T})$
 such that  (recall \eqref{elle3})
\begin{equation}\label{elle6}
\mathcal{L}_6:={\bf \Phi} \,\mathcal{L}_3\,{\bf \Phi}^{-1}=
\Pi_{S}^{\perp}\Big(\omega\cdot\pa_{\vphi}+
\mathtt{D}
+\mathtt{Q}\Big)\,,
\end{equation}
where 
\begin{equation}\label{simboM}
\begin{aligned}
\mathtt{D}:=\opw\left(\begin{matrix} \ii \mathtt{M}(\x) & 0 \\
0 & -\ii \mathtt{M}(-\x)  \end{matrix}\right)\,,\qquad 
\ii \mathtt{M}(\x):=
\ii \mathfrak{m}_1\,\x+\ii (1+\mathfrak{m}_\frac{1}{2})|\x|^{\frac{1}{2}}
+\ii \mathfrak{m}_0\sign(\x)\,.
\end{aligned}
\end{equation}
The remainder $\mathtt{Q}$ has the form
\begin{equation}\label{formaRestoQQ}
\mathtt{Q}:=\opw\left( \begin{matrix}
\ii \mathtt{q}(\vphi,x,\x) & 0\\
0& -\ii \ov{\mathtt{q}(\vphi,x,-\x)}
\end{matrix}\right)+\mathcal{Q}(\vphi)\,,
\end{equation}
where $\mathtt{q}\in S^{-1/2}_{4}$ and $\mathcal{Q}\in \mathfrak{L}_{\rho,p}^{4}\otimes
\mathcal{M}_{2}(\mathbb{C})$ (see Def. \ref{espOmogenea}).
Moreover
the operator $\mathcal{L}_6$ is in 
$\mathfrak{S}_1$ and 
the map ${\bf \Phi}$ is in $\mathfrak{T}_1$ and  satisfies
 \begin{equation}\label{stimaMappaPhi}
\|({\bf \Phi})^{\pm1}u\|_{s}^{\gamma,\Omega^{2\gamma}_{\infty}}
\lesssim_{s}\|u\|_{s}^{\gamma, \Omega^{2\gamma}_{\infty}}+
\gamma^{-3}\big(\e^{13}+\e\|\mathfrak{I}_{\delta}\|^{\gamma, \calO_0}_{s+\mu}\big)
\|u\|_{s_0}^{\gamma, \Omega^{2\gamma}_{\infty}}\,,
\end{equation}
for some $\m>0$ depending on $\nu$.
\end{proposition}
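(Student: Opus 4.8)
\textbf{Plan of the proof of Proposition \ref{riduzSum}.}
The strategy is to straighten first the transport term (order $1$), then reduce the order $1/2$ and finally the order $0$, each time producing a constant coefficient, i.e.\ integrable, symbol plus a symbol in a lower homogeneity/order class. Throughout we keep track of the $\e$-homogeneity by working in the classes $S^{m}_{k}$ and $\gotL^{k}_{\rho,p}$ of Definition \ref{espOmogenea}, and we use Lemmata \ref{lem:escobarhomo}, \ref{Jameshomo} for symbolic calculus within these classes, Lemma \ref{georgiaLem} to deal with the projection $\Pi^{\perp}_S$, and Lemma \ref{differenzaFlussi} to pass from flows of $\opw(\cdot)$ to flows of $\Pi^{\perp}_S\opw(\cdot)\Pi^{\perp}_S$. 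Since $\mathcal{L}_3\in\mathfrak{S}_1$ and all the generators will be Hamiltonian, $x$-translation invariant matrices of symbols (by Lemmata \ref{realHamMtrixSimbo}, \ref{lem:momentoSimbolo}), the intermediate operators stay in $\mathfrak{S}_1$ and the transformations in $\mathfrak{T}_1$; this gives the final algebraic statement for free.

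\emph{Step 1 (transport, order $1$).} The symbol $d$ in \eqref{diagpezzo} has top order $\ii V(\vphi,x)\x$. I would straighten $\omega\cdot\pa_\vphi+V\pa_x$ by a change of variables of the form $x\mapsto x+\beta(\vphi,x)$, exactly as described in section \ref{sec:generalStrategy}-$(iii)$: using that we linearized on a quasi-periodic traveling wave we have $V(\vphi,x)=\mathbf{V}(\vphi-\mathtt{v}x)$, so the problem reduces to straightening the non-degenerate vector field $(\omega-\mathbf{V}(\Theta)\mathtt{v})\cdot\pa_\Theta$ on $\mathbb{T}^\nu$, for which we invoke the result of \cite{FGMP}. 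This requires the zero-order Melnikov conditions \eqref{zerointro}, i.e.\ $\omega\in\Omega^{2\gamma}_\infty$; the constant produced is $\mathfrak{m}_1\approx (2\pi)^{-\nu}\int \mathbf{V}$, whose leading $\e^2$ term is computed from $\mathtt{V}_1,\mathtt{V}_2$ in \eqref{mediediagonali} and gives \eqref{constM1}. The estimate \eqref{merstima} on the remainder $\mathtt{r}_1$ comes from the homogeneity expansion of $V$ in $S^0_1$ together with \eqref{IpotesiPiccolezzaIdeltaDP}. One must also symmetrize the resulting half-order term so that the new $\ii(1+\cdot)|\x|^{1/2}$ coefficient is real, which costs only a bounded/smoothing correction.

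\emph{Step 2 (order $1/2$ and order $0$).} After Step 1 the half-order coefficient is $\ii(1+a)|\x|^{1/2}$ with $a\in S^0_{1}$ of zero $x$-average (after a preliminary multiplication operator, as in \cite{AB1,BM1}); I would remove the $(\vphi,x)$-dependence by conjugating with the flow of $\opw$ of a suitable symbol of order $-1/2$, which requires inverting $\omega\cdot\pa_\vphi+\mathfrak{m}_1\pa_x$ on $H^s_{S^\perp}$ — here the momentum condition $\mathtt{v}\cdot\ell+j=0$ in \eqref{omegaduegamma} restricts to finitely many resonant $(\ell,j)$ and reduces the small divisors to \eqref{rambo3}-type quantities $\mathcal{R}_{\vec\s}$, which vanish only on $\mathcal{S}_k$ (Remark \ref{accendo}); hence for a generic choice of $S$ the non-integrable part is eliminated and the surviving constant $\mathfrak{m}_{1/2}$ is integrable with expansion \eqref{mer}. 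The same scheme applied to the order-$0$ symbol yields $\mathfrak{m}_0\,\sign(\x)$ (the Fourier multiplier $\opw(\sign(\x))$ of \eqref{notasignXi}), using that the diagonal block structure \eqref{elle3} forces the surviving order-$0$ term to be $\sign(\x)$-type. Each conjugation produces, by Lemma \ref{Jameshomo}, a new symbol one order lower whose non-integrable homogeneous part of degree $\le 4+2i$ is killed, the remainder falling into $S^{-1/2}_4$; composing finitely many such steps down to order $-1/2$ gives $\mathtt{q}\in S^{-1/2}_4$ and, collecting all the $\Pi^\perp_S$-errors and flow-remainders, $\mathcal{Q}\in\gotL^{4}_{\rho,p}\otimes\mathcal{M}_2(\C)$, which is \eqref{formaRestoQQ}. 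Finally ${\bf\Phi}$ is the composition of all these flows; the bound \eqref{stimaMappaPhi}, with the $\gamma^{-3}$ loss, follows by combining the tame estimates of the individual flows (each with at most a $\gamma^{-1}$ loss coming from the corresponding Melnikov divisor) with \eqref{stimaMappaPsi} and Lemma \ref{differenzaFlussi}.

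\emph{Main obstacle.} The delicate point is Step 1: straightening the transport term is a genuine small-divisor problem (the diophantine losses here, unlike the lower-order steps, cannot be absorbed perturbatively because $\gamma=\e^{2b}$ is small with the amplitude, cf.\ section \ref{sec:generalStrategy}-$(iv)$(I)), and one must invoke \cite{FGMP} on the reduced $\nu$-dimensional vector field while simultaneously verifying that the whole construction stays inside the class $\mathfrak{S}_1$ — i.e.\ that the change of variables is symplectic and $x$-translation invariant — so that the conservation of momentum keeps reducing, at every subsequent order, the small divisors to the finitely many resonant combinations controlled by \eqref{omegaduegamma}. Keeping the homogeneity bookkeeping consistent (so that the genericity of $S$ can be invoked only finitely many times, via Proposition \ref{resonances} and Remark \ref{accendo}) is the other technical burden.
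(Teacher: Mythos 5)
Your Step 1 contains a genuine gap: you propose to straighten the full transport term $\omega\cdot\pa_{\vphi}+V\pa_x$ in one stroke by applying the result of \cite{FGMP} to the reduced vector field $(\omega-{\bf V}(\Theta)\mathtt{v})\cdot\frac{\pa}{\pa\Theta}$. But $V$ is of size $\e$, while the diophantine constant is $\gamma=\e^{2b}\ll\e$, so the smallness hypothesis of the straightening theorem (perturbation times $\gamma^{-1}$ small) is violated: this is precisely the non-perturbative regime $\e\gamma^{-1}\gg1$ described in section \ref{sec:generalStrategy}-$(iii)$(I), which you flag in your ``main obstacle'' paragraph but never actually resolve. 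Moreover, even granting a straightening, this route does not deliver the explicit expansion \eqref{mer} of $\mathfrak{m}_1$: the $\e^2$ coefficient is \emph{not} read off the average of $V$ (the average of $\mathtt{V}_2$, by \eqref{mediediagonali}, would give $\tfrac{1}{2\pi}\sum_{n\in S}n|n|\zeta_n$, i.e.\ half of \eqref{constM1}); the other half comes from the interaction of $\mathtt{V}_1$ with the transformation that removes the order-$\e$ term, and this explicit value is indispensable afterwards (it enters the set \eqref{omegaduegamma} itself, the measure estimates of Lemma \ref{measG0}, and the identification of Proposition \ref{identificoBNF}).

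The missing idea is the two-step structure the paper uses at each order (subsections \ref{sec:lemmapre}, \ref{preliminaremezzo}, \ref{figaro00}): first a finite number of \emph{preliminary steps} eliminating, degree by degree in $\e$ (up to $\e^{12}$ at order $1$, $\e^{10}$ at order $1/2$, $\e^{8}$ at order $0$), the non-integrable homogeneous components of the symbol. These components are supported on finitely many tangential harmonics, so the relevant divisors are the combinations $\mathcal{R}_{\vec\sigma}$ of \eqref{rambo3}, bounded below for a generic choice of $S$ by Remark \ref{accendo}; no condition on $\omega$ is required, and this is where the integrable constants $m_i$, $m_i^{(2k)}$ and the explicit value \eqref{constM1} are produced (Lemma \ref{lemmapre}). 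Only afterwards is the surviving non-homogeneous remainder — e.g.\ $V^{+}_{\geq13}$, of size $O(\e^{13}+\e\|\mathfrak{I}_{\delta}\|)$, hence genuinely perturbative with respect to $\gamma$ — straightened via \cite{FGMP} under \eqref{omegaduegamma} (Lemma \ref{drizzotutto}), and at orders $1/2$ and $0$ one inverts $\omega\cdot\pa_{\vphi}+\mathfrak{m}_1\pa_x$ only on this small part (Lemmata \ref{wish1}, \ref{wish1j}). Your Step 2 partially conflates the two mechanisms as well: the conditions in \eqref{omegaduegamma} involve infinitely many pairs $(\ell,j)$ with $\mathtt{v}\cdot\ell+j=0$, and what is finite is only the Fourier support of the homogeneous terms treated in the preliminary steps; but the substantive failure of the argument as written is at order $1$.
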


The proof of Proposition \ref{riduzSum} is divided into several steps performed in subsections
\ref{redord1}, \ref{redord12}, \ref{ordiniNegativi}.

\subsection{Integrability at order 1}\label{redord1}
The aim of this subsection is to eliminate the $(\vphi,x)$-dependence from
the symbol $V(\vphi,x)\x$ in \eqref{diagpezzo}.
More precisely we shall prove the following result.

\begin{proposition}\label{proporduno}
Let $\rho\geq 3$, $p\geq s_0$ and assume that \eqref{IpotesiPiccolezzaIdeltaDP} holds. 
Then there exist $\mu=\mu(\nu)$ and 
$\mathfrak{m}_1\colon \Omega_{\varepsilon}\to \mathbb{R}$ of the form
\eqref{mer} with $i=1$, and $m_1$ defined as in \eqref{constM1}
such that, 
for all $\omega\in\Omega^{2\gamma}_{\infty} $
(see \eqref{omegaduegamma})
there exists a map ${\bf \Phi}_1(\omega, \varphi)\colon H^s_{S^{\perp}}(\mathbb{T})\times H^s_{S^{\perp}}(\mathbb{T})\to H^s_{S^{\perp}}(\mathbb{T})\times H^s_{S^{\perp}}(\mathbb{T})$ such that
(recall \eqref{elle3})
\begin{equation}\label{elle4}
\mathcal{L}_4:={\bf \Phi}_1 \,\mathcal{L}_3\,{\bf \Phi}_1^{-1}=\Pi_{S}^{\perp}\Big(\omega\cdot\pa_{\vphi}+
\opw\left(\begin{matrix} d^{(4)}(\vphi,x,\x) & 0 \\
0 & \ov{d^{(4)}(\vphi,x,-\x)}  \end{matrix}\right)+\mathcal{R}^{(4)}\Big)\,,
\end{equation}
where $\mathcal{R}^{(4)}\in\mathfrak{L}^{2}_{\rho,p}\otimes\mathcal{M}_2(\mathbb{C})$,
the symbol $d^{(4)}(\vphi,x,\x)$ has the form 
\begin{equation}\label{diagpezzo2}
d^{(4)}(\vphi,x,\x):=
\ii \mathfrak{m}_1\,\x+\ii (1+a^{(4)}(\vphi,x))|\x|^{\frac{1}{2}}+c^{(4)}(\vphi,x,\x)\,,
\end{equation}
with $a^{(4)}\in S_2^{0}$, 
$c^{(4)}\in S_2^{-1/2}$ (recall Def. \ref{espOmogenea}).
Moreover, for any $\omega\in \Omega^{2\gamma}_{\infty}$, we have
\begin{equation}\label{stimaMapp1}
\|({\bf \Phi}_1)^{\pm1}u\|_{s}^{\gamma,\Omega^{2\gamma}_{\infty}}
\lesssim_{s}\|u\|_{s}^{\gamma, \Omega^{2\gamma}_{\infty}}+
\gamma^{-1}\big(\e^{13}+\e\|\mathfrak{I}_{\delta}\|^{\gamma,\calO_0}_{s+\mu}\big)
\|u\|_{s_0}^{\gamma, \Omega^{2\gamma}_{\infty}}\,.
\end{equation}
Finally the operator  $\mathcal{L}_4$
is in $\mathfrak{S}_1$ and the map ${\bf \Phi}_1$
is in $\mathfrak{T}_1$ (recall Def. \ref{compaMulti}, 
\ref{goodMulti}). 
\end{proposition}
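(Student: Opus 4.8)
\textbf{Proof strategy for Proposition \ref{proporduno}.} The plan is to straighten the transport part of the vector field on the torus, i.e.\ to eliminate the $(\vphi,x)$-dependence from the symbol $\ii V(\vphi,x)\x$ in \eqref{diagpezzo}. Since $\mathcal{L}_3\in\mathfrak{S}_1$ is $x$-translation invariant (Lemma \ref{GoodAmmissi3}), the function $V(\vphi,x)$ belongs to $S_{\mathtt{v}}$, so we may write $V(\vphi,x)={\bf V}(\vphi-\mathtt{v}x)$ for some ${\bf V}(\Theta)$, $\Theta\in\mathbb{T}^{\nu}$. This is precisely what allows the degenerate transport operator $\omega\cdot\pa_\vphi+V(\vphi,x)\pa_x$ on $\mathbb{T}^{\nu+1}$ to be reduced, along the lines described in section \ref{sec:generalStrategy}-$(iii)$, to the \emph{non-degenerate} vector field $(\omega-{\bf V}(\Theta)\mathtt{v})\cdot\pa_\Theta$ on $\mathbb{T}^\nu$. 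First I would look for the conjugating map as (the $\Pi_S^\perp$-corrected version of) the flow of a transport-type generator, i.e.\ ${\bf \Phi}_1$ is built from a change of variables of the form $x\mapsto x+\beta(\vphi,x)$ lifted to the two components diagonally; equivalently one applies the quantization of a symbol depending only on $(\vphi,x)$. The requirement that ${\bf \Phi}_1\in\mathfrak{T}_1$ (symplectic and $x$-translation invariant) forces $\beta$ to be a function in $S_{\mathtt{v}}$, and the two diagonal entries must be conjugated by the \emph{same} transport map so that the block-diagonal and real-to-real structure \eqref{retoreMat}, \eqref{selfSimbo2} is preserved; this is compatible because the off-diagonal terms are already of order $-1/2$.

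The core computation is the conjugation formula: if $\mathcal T_\beta$ denotes the change of variable $u(x)\mapsto u(x+\beta(\vphi,x))$ then $\mathcal T_\beta^{-1}\big(\omega\cdot\pa_\vphi+\ii\opw(V\x)\big)\mathcal T_\beta$ produces, at the top order, the new transport coefficient
\[
\frac{(\omega\cdot\pa_\vphi\beta)(\vphi,y)+V(\vphi,y)\big(1+\beta_x(\vphi,y)\big)}{1}\Big|_{y=x+\beta}\,,
\]
plus lower order pseudodifferential contributions and a $\rho$-smoothing remainder, controlled via Lemma \ref{James} and Lemma \ref{Jameshomo}. Using the $x$-translation invariance, passing to the variable $\Theta=\vphi-\mathtt{v}x$ converts this into a quasi-periodic transport equation for ${\bf B}(\Theta)$ (where $\beta(\vphi,x)={\bf B}(\vphi-\mathtt{v}x)$) of the form $(\omega-{\bf V}(\Theta)\mathtt{v})\cdot\pa_\Theta\,{\bf B}$; one then invokes the straightening result for weakly perturbed constant vector fields on tori of \cite{FGMP}, which applies because ${\bf V}=O(\e)$ is small (by \eqref{IpotesiPiccolezzaIdeltaDP} and the homogeneity expansion $V\in S_1^0$ from Lemma \ref{Doo}) and $\omega$ satisfies the Diophantine-type conditions encoded in $\Omega_\infty^{2\gamma}$ in \eqref{omegaduegamma}. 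This gives ${\bf B}$ with the required tame estimates, hence the bound \eqref{stimaMapp1} on ${\bf \Phi}_1^{\pm1}$, and simultaneously produces the constant $\mathfrak{m}_1\approx(2\pi)^{-\nu}\int_{\mathbb{T}^\nu}{\bf V}(\Theta)\,d\Theta$, whose leading $\e^2$-term is computed from $(\mathtt V_1)^\pm_n$, $(\mathtt V_2)^{+-}_{n,n}$ in \eqref{mediediagonali} to be $m_1=\pi^{-1}\sum_{n\in S}n|n|\zeta_n$ as in \eqref{constM1}; the higher homogeneous corrections $m_1^{(2k)}$ and the non-analytic remainder $\mathtt r_1$ obeying \eqref{merstima} come from expanding the composition in powers of $\e$ as in Remark \ref{ordinivari} and tracking the classes $S_2^0$, $S_2^{-1/2}$, $\mathfrak{L}_{\rho,p}^2$.

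After the transport coefficient has been made constant, I would collect the new symbols: the order-$1/2$ coefficient becomes $1+a^{(4)}$ with $a^{(4)}\in S_2^0$ (it picks up the $\beta_x$-correction from the composition and the contribution of the old $a^{(0)}$), the order-$(-1/2)$ symbol is the new $c^{(4)}\in S_2^{-1/2}$, and all $\rho$-smoothing terms — the conjugation error of $\mathcal R^{(3)}$, the error of the $\Pi_S^\perp$-correction of the flow (via Lemma \ref{differenzaFlussi} / Lemma \ref{georgiaLem}), and the lower-order remainders from symbolic calculus — are absorbed into $\mathcal R^{(4)}\in\mathfrak{L}_{\rho,p}^2\otimes\mathcal M_2(\mathbb C)$. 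The homogeneity indices jump from $1$ to $2$ because the generator $\beta$ is itself $O(\e)$, so one $\e$-power is consumed. Finally, preservation of the structure $\mathcal L_4\in\mathfrak{S}_1$ and ${\bf \Phi}_1\in\mathfrak{T}_1$ follows from Lemmata \ref{realHamMtrixSimbo}, \ref{lem:momentoSimbolo}: the generator's symbol depends only on $(\vphi,x)$ and lies in $S_{\mathtt v}$, so the resulting map is symplectic and $x$-translation invariant, and the conjugate of a Hamiltonian, $x$-translation invariant, real-to-real operator retains all three properties. The main obstacle is the straightening step: one must verify that the smallness and non-resonance hypotheses of the abstract result of \cite{FGMP} are met \emph{uniformly in the Lipschitz parameter $\omega$} and with the precise loss of derivatives $\mu$ that the rest of the scheme can tolerate, and that the reduced frequency $\omega-\mathfrak{m}_1\mathtt v$ — which is exactly the object appearing in the ``zero order Melnikov conditions'' \eqref{zerointro} / \eqref{omegaduegamma} — is Diophantine on the set $\Omega_\infty^{2\gamma}$; this is where the conservation of momentum (equivalently, the travelling-wave ansatz \eqref{trawaves}) is indispensable, since it is what reduces the $(\nu+1)$-dimensional straightening problem to the solvable $\nu$-dimensional one.
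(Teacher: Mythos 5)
Your plan applies the straightening theorem of \cite{FGMP} directly to the full transport coefficient $V(\vphi,x)=O(\e)$, and this is where the argument breaks down. On $\Omega^{2\gamma}_{\infty}$ the available diophantine constant is $\gamma=\e^{2b}=\e^{2+a}$, so the smallness hypothesis of any such straightening result (perturbation divided by $\gamma$ small, with a loss of $\tau$ derivatives) reads $\e\gamma^{-1}=\e^{-1-a}\gg 1$: the transport term is precisely one of the \emph{non-perturbative} terms described in section \ref{sec:generalStrategy}-$(iii)$-(I). Consequently the generator produced by \cite{FGMP} would not be small, the bound \eqref{stimaMapp1} (whose right-hand side carries the factor $\gamma^{-1}(\e^{13}+\e\|\mathfrak{I}_{\delta}\|^{\gamma,\calO_0}_{s+\mu})$) could not follow, and the outputs $\mathfrak{m}_1$, $a^{(4)}$, $c^{(4)}$ would not carry the homogeneity expansions \eqref{mer}, $S^0_2$, $S^{-1/2}_2$ that the rest of the scheme needs (explicit $m_1$ for the sets \eqref{omegaduegamma}, \eqref{ZEROMEL} and the twist/measure estimates of Lemma \ref{measG0}, and homogeneous bookkeeping for the subsequent reductions at orders $1/2$ and $0$ and for the linear BNF). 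The paper's proof avoids this by splitting the reduction in two: first the \emph{preliminary steps} of Lemma \ref{lemmapre}, where the homogeneous components $\e\mathtt{V}_1,\dots,\e^{12}\mathtt{V}_{12}$ of $V$ are removed by solving homological equations whose divisors are the combinations $\sum_i\sigma_i\sqrt{|j_i|}$ with $j_i\in S$, bounded below by a constant depending only on $S$ for a generic choice of the tangential sites (Remark \ref{accendo}) — no condition on $\omega$ and no power of $\gamma$ enters there — and only then the straightening of Lemma \ref{drizzotutto}, applied to the residual $V^{+}_{\geq13}$ of size $\e^{13}+\e\|\mathfrak{I}_{\delta}\|$, for which the smallness hypothesis of \cite{FGMP} is satisfied and the estimates \eqref{stimabeta2}, \eqref{stimaMapp1} follow. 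Your second half (reduction to the $\nu$-dimensional torus via momentum, structure preservation via Lemmata \ref{realHamMtrixSimbo}, \ref{lem:momentoSimbolo}, absorption of errors into $\mathfrak{L}^{2}_{\rho,p}$) matches the paper's second step, but without the preliminary steps the scheme does not close.

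A second, related gap concerns the constant. Even granting a straightening, identifying the leading part of $\mathfrak{m}_1$ with the average of $\mathbf{V}$ and reading it off \eqref{mediediagonali} gives only $\frac{1}{2\pi}\sum_{n\in S}n|n|\zeta_n$, i.e. half of \eqref{constM1}. The missing half is the quadratic effect of removing the $O(\e)$ oscillation: in the paper it is the term $(\mathtt{V}_1)_x\beta^{(1)}_1$ in $F_2$ (see \eqref{ienaNantes1000}), whose diagonal coefficients contribute $(\mathtt{V}_1)^{+}_{n}(\mathtt{V}_1)^{-}_{n}\,2n|n|^{-1/2}=n|n|$ per mode, yielding $m_1=\frac{1}{\pi}\sum_{n\in S}n|n|\zeta_n$. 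Since this exact value of $m_1$ is what makes the non-degeneracy (hence the measure estimates) work, the explicit homogeneous elimination of the order-$\e$ term cannot be bypassed by an averaging argument.
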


The rest of the section \ref{redord1} is 
devoted to the proof of Proposition \ref{proporduno}. It  is based on two steps
which we perform in subsections \ref{sec:lemmapre} and \ref{sec:straight}.

\subsubsection{Preliminary steps}\label{sec:lemmapre}
By Lemma \ref{Doo}
we know that the function $V(\vphi,x)$ in \eqref{diagpezzo} 
admits an expansion in $\e$ as in \eqref{espandoenonmifermo} with 
$k=1$. In this subsection we show 
how to reduce the terms of order less or equal to $\e^{12}$.

\begin{lemma}{\bf ({Preliminary steps}).}\label{lemmapre}
There exists a function $\beta^{(1)}(\varphi, x)$ of the form
\begin{equation}\label{genPre1}
\beta^{(1)}(\vphi,x)=\sum_{i=1}^{12}\e^{i}\beta^{(1)}_i(\vphi,x)\,,
\end{equation}
where $\beta^{(1)}_i$ is a real valued $i$-homogeneous symbol 
(as in \eqref{simboOMO2DEF}) independent of
$\x\in\mathbb{R}$
for $i=1,\ldots,12$ satisfying 
\begin{equation}\label{stimaSoloe}
\|\beta^{(1)}\|_{s}^{\gamma,\mathcal{O}_0}\lesssim_{s}\e\,,
\end{equation}
such that the following holds.
Let
\begin{equation}\label{mappapasso1}
{\bf T}_1^{\tau}:=\left(
\begin{matrix}
\mathcal{T}_{\beta^{(1)}}^{\tau}(\vphi) & 0 \\
0 & \ov{\mathcal{T}_{\beta^{(1)}}^{\tau}(\vphi)}
\end{matrix}
\right)\,,\qquad \tau\in[0,1]\,,
\end{equation}
where $\mathcal{T}_{\beta^{(1)}}^{\tau}$ is the flow of \eqref{linprobpro}
with $f$ as in \eqref{sim1} and $\beta\rightsquigarrow \beta^{(1)}$, 
then
\begin{equation}\label{elle3star}
\mathcal{L}_{3, *}:={\bf T}^1_1 \,\mathcal{L}_3 \, ({\bf T}^1_1)^{-1}=
\Pi_{S}^{\perp}\Big(\omega\cdot\pa_{\vphi}+
\opw\left(\begin{matrix} d_*^{(3)}(\vphi,x,\x) & 0 \\
0 & \ov{d_*^{(3)}(\vphi,x,-\x)}  \end{matrix}\right)+\mathcal{R}_*^{(3)}\Big)
\end{equation}
where 
$\mathcal{R}_*^{(3)}\in\mathfrak{L}_{\rho, p}^{1}\otimes\mathcal{M}_2(\mathbb{C})$,  
the symbols $d_{*}^{(3)}$
has the form
\begin{align}
d^{(3)}_*(\vphi,x,\x)&:=
\ii V^{+}(\vphi,x)\x+\ii (1+a^{(3)}_*(\vphi,x))|\x|^{\frac{1}{2}}+c^{(3)}_*(\vphi,x,\x)\,,
\label{diagpezzo2star}\\
V^{+}(\vphi,x)&:=\e^{2}m_{1}+\sum_{k=1}^{6}\e^{2k}m_{1}^{(2k)}
+V^{+}_{\geq13}(\vphi,x)\,,
\label{termV1finale}
\\
1+a^{(3)}_*(\vphi,x)&:=
(1+a^{(3)}(\vphi,x+\beta^{(1)}(x)))
\big|(1+\tilde{\beta}^{(1)}_{y}(1,y))_{|y=x+\beta^{(1)}(x)}\big|^{\frac{1}{2}}
\label{termA1}\,,
\end{align}
where $m_1$ is in \eqref{constM1} and $m_{1}^{(2k)}$ are integrable 
(according to Def. \ref{integroSimbo})
and
\begin{equation}\label{iena40}
\begin{aligned}
\|V^{+}_{\geq13}\|_{s}^{\gamma,\calO_0}&\lesssim_s 
(\e^{13}+\e\|\mathfrak{I}_{\delta}\|_{s+\mu}^{\gamma,\calO_0})\,,
\\
\|\Delta_{12}V^{+}_{\geq13}\|_{p}&\lesssim_p 
\e(1+\|\mathfrak{I}_{\delta}\|_{p+\mu})\|i_{1}-i_{2}\|_{p+\mu}\,,
\end{aligned}
\end{equation}
for some $\mu=\mu(\nu)>0$,
and $c^{(3)}_*$ is in $S_1^{-1/2}$.
The operator $\mathcal{L}_{3,*}$ is in $\mathfrak{S}_1$, 
the map
$\mathcal{T}_{\beta^{(1)}}^{\tau}$ is in $\mathfrak{T}_1$ and satisfies
\begin{equation}\label{stimaMapp1bis}
\|(\mathcal{T}_{\beta^{(1)}})^{\pm1}u\|_{s}^{\gamma,\calO_0}
\lesssim_{s}\|u\|_{s}^{\gamma, \calO_0}+
\e
\|u\|_{s_0}^{\gamma, \calO_0}\,.
\end{equation}
\end{lemma}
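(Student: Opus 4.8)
The statement to prove is Lemma \ref{lemmapre}, the \emph{preliminary step} that removes the $(\varphi,x)$-dependence from the first twelve homogeneous components of the transport coefficient $V(\varphi,x)$ in the symbol $d^{(3)}$ of $\mathcal{L}_3$. The overall strategy is to conjugate $\mathcal{L}_3$ by the flow ${\bf T}_1^\tau$ of a transport-type vector field generated by a symbol $\beta^{(1)}(\varphi,x)$ to be determined, exploiting that $\beta^{(1)}$ has a purely polynomial expansion $\sum_{i=1}^{12}\e^i\beta^{(1)}_i$, so that the corresponding flow is rigorously well-defined and belongs to the class $\mathfrak{T}_1$ of symplectic, $x$-translation invariant maps. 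First I would recall (from the referenced results on transport flows, e.g.\ the counterpart of the straightening result and the composition formulas of Lemma \ref{Jameshomo}) that conjugating $\omega\cdot\partial_\varphi + \mathrm{i}\,\opw(V(\varphi,x)\xi)$ by $\mathcal{T}^\tau_{\beta^{(1)}}$ produces a new transport coefficient of the form $V^+ = \big((\omega\cdot\partial_\varphi\beta^{(1)} + V\,(1+\beta^{(1)}_x))\circ(\mathrm{Id}+\beta^{(1)})^{-1}\big)$, up to a symbol of order $\le 0$ and a $\rho$-smoothing remainder in $\mathfrak{L}^1_{\rho,p}$; the lower-order terms transform as in \eqref{termA1} by the change of variables $x\mapsto x+\beta^{(1)}(x)$ plus the Jacobian factor on the half-order symbol.

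The core is then an \emph{iterative, order-by-order} construction of the homogeneous components $\beta^{(1)}_i$. Expanding $V=\sum_{k\ge1}\e^k\mathtt{V}_k+\mathtt{V}_{\ge13}$ (Lemma \ref{Doo}) and $\beta^{(1)}=\sum_{i=1}^{12}\e^i\beta^{(1)}_i$, I would match powers of $\e$. At order $\e^1$ the homological equation is $\omega\cdot\partial_\varphi\beta^{(1)}_1 + \mathtt{V}_1 = \langle \mathtt{V}_1\rangle_{\varphi,x}$, where $\langle\cdot\rangle_{\varphi,x}$ is the space-time average; this is solvable because $\mathtt{V}_1$ is a finite trigonometric polynomial supported on frequencies $\s\,\mathtt{l}(j)$, $j\in S$, and the small divisors $\omega\cdot\mathtt{l}(j)$ are controlled — indeed by the momentum constraint (Lemma \ref{lem:momentoSimbolo}) the spatial frequency of each monomial equals $-\mathtt{v}\cdot\ell$, so after the translation the relevant divisor reduces to $\omega\cdot\ell + \mathfrak{m}_1 j$ with $\mathtt{v}\cdot\ell+j=0$, which is bounded below on $\Omega^{2\gamma}_\infty$ in \eqref{omegaduegamma}. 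At order $\e^k$, $2\le k\le 12$, the equation is $\omega\cdot\partial_\varphi\beta^{(1)}_k = (\text{explicit polynomial in }\mathtt{V}_1,\dots,\mathtt{V}_k,\beta^{(1)}_1,\dots,\beta^{(1)}_{k-1}) - \langle\cdot\rangle_{\varphi,x}$: the new contribution appearing at this step is again a finite trigonometric polynomial whose $\varphi$-average is subtracted off, so $\beta^{(1)}_k$ is a well-defined $k$-homogeneous symbol. The residual constant at each step contributes to $V^+$; identifying these constants gives $V^+ = \e^2 m_1 + \sum_{k=1}^6 \e^{2k} m_1^{(2k)} + V^+_{\ge13}$, where the odd-order averages vanish by the reality/parity structure (Lemma \ref{iena10}) and the genericity argument of Proposition \ref{resonances} forces the even-order averages to be \emph{integrable} in the sense of Definition \ref{integroSimbo}. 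The explicit value $m_1=\tfrac1\pi\sum_{n\in S} n|n|\zeta_n$ in \eqref{constM1} follows from the $x$-average of $\mathtt{V}_1\circ\Psi_2$ being zero together with $(\mathtt{V}_2)^{+-}_{n,n}=\tfrac12 n|n|$ from \eqref{mediediagonali}, so $\langle\mathtt{V}_2\rangle_x = \tfrac1\pi\sum_{n\in S}n|n|\,\zeta_n$.

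Next I would track the non-homogeneous remainder. The tail $\mathtt{V}_{\ge13}$, together with all terms generated by the flow at order $\e^{\ge13}$, goes into $V^+_{\ge13}$; its tame estimates \eqref{iena40} follow from the $S^0_1$-membership of $\mathtt{V}_{\ge13}$ (Lemma \ref{Doo}), the tame bounds \eqref{stimaSoloe} on $\beta^{(1)}$, the composition estimates of Lemma \ref{Jameshomo}, and the Lipschitz-in-$i$ variation bounds carried along in Definition \ref{espOmogenea}. The estimate \eqref{stimaMapp1bis} on $\mathcal{T}^\tau_{\beta^{(1)}}$ is the standard tame bound for the flow of a transport equation with coefficient of size $O(\e)$ in low norm (no small-divisor loss here, since $\beta^{(1)}$ is polynomial). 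Finally, the algebraic properties: $\mathcal{L}_3\in\mathfrak{S}_1$ by Lemma \ref{GoodAmmissi3}; since each $\beta^{(1)}_i$ is real valued and $x$-translation invariant (i.e.\ depends on $\varphi,x$ only through $\varphi-\mathtt{v}x$, which is preserved by the homological equations by the momentum constraint), the matrix ${\bf T}_1^\tau$ has the form \eqref{retoreMat} with symbols satisfying \eqref{selfSimbo2}, so by Lemmata \ref{realHamMtrixSimbo}, \ref{lem:momentoSimbolo} we get ${\bf T}_1^\tau\in\mathfrak{T}_1$, hence $\mathcal{L}_{3,*}$ is Hamiltonian and $x$-translation invariant, i.e.\ in $\mathfrak{S}_1$.

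\textbf{Main obstacle.} The delicate point is the solvability of the homological equations at every order $\e^k$ with good quantitative control: one must show that only a \emph{finite} number of integer frequency vectors $\ell$ enter (thanks to momentum conservation and $\Pi^{d_z\le 1}$-type restrictions, the Fourier support of each $\beta^{(1)}_k$ lies in a ball of radius $\sim k\max(S)$), and that the divisors $\omega\cdot\ell$ (which, after the translation, become $\omega\cdot\ell + \mathfrak{m}_1 j$ with $\mathtt{v}\cdot\ell + j = 0$) do not vanish — this is exactly where the set $\Omega^{2\gamma}_\infty$ and the genericity of $S$ (Proposition \ref{resonances}, Lemma \ref{Twist1}) are used, and where one must be careful that the average terms fed back into $V^+$ are genuinely integrable and not merely action-preserving-looking, a subtlety that is guaranteed here only because $j\in S \Rightarrow -j\notin S$ (assumption \eqref{TangentialSitesDPbis}, cf.\ Remark \ref{accendo}).
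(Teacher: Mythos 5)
Your overall scheme coincides with the paper's (conjugation by the transport flow with the polynomial generator $\beta^{(1)}$, an Egorov-type expansion of the conjugated symbol, and order-by-order homological equations whose resonant part is kept as normal form), but your solvability argument is placed in the wrong regime, and this is a genuine gap. At this stage every monomial to be eliminated is Fourier-supported on tangential sites only: its $\varphi$-frequency is $\sum_i\s_i\mathtt{l}(j_i)$ with all $j_i\in S$, so after replacing $\omega\cdot\pa_\vphi$ by $\bar\omega\cdot\pa_\vphi$ (the $O(\e^2)$ difference coming from \eqref{FreqAmplMapDP} must be pushed into the higher-order equations, otherwise the divisors are $\e$-dependent and the order-by-order scheme does not close) the divisors are exactly the finitely many combinations $\mathcal{R}_{\vec\s}(j_1,\dots,j_k)=\sum_i\s_i\sqrt{|j_i|}$ of \eqref{por}, which off the trivial resonances are bounded below by a constant depending only on $S$, by the generic choice of the tangential sites (Remark \ref{accendo}); no index $j\in S^c$ occurs and no non-resonance condition on $\omega$ is needed or available. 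Your divisor $\omega\cdot\ell+\mathfrak{m}_1 j$ with $\mathtt{v}\cdot\ell+j=0$, $j\in S^c$, and the restriction to $\Omega^{2\gamma}_\infty$ of \eqref{omegaduegamma} belong to the \emph{next} step, namely the straightening Lemma \ref{drizzotutto} which removes the non-polynomial tail $V^{+}_{\geq13}$; invoking them here would in fact fail to prove the statement, since \eqref{stimaSoloe}, \eqref{iena40} and \eqref{stimaMapp1bis} are asserted for all $\omega\in\mathcal{O}_0$.

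A second concrete error is your derivation of the constant \eqref{constM1}: identifying $m_1$ with the resonant average of $\mathtt{V}_2$ alone gives, by \eqref{mediediagonali}, only $\tfrac{1}{2\pi}\sum_{n\in S}n|n|\zeta_n$, i.e.\ half of \eqref{constM1}. The missing half is produced by the feedback of the first step: eliminating $\mathtt{V}_1$ through $\beta^{(1)}_1$ generates at order $\e^2$ the term $(\mathtt{V}_1)_x\beta^{(1)}_1$ (the remaining quadratic terms have vanishing resonant part), whose diagonal coefficients equal $(\mathtt{V}_1)^{+}_n(\mathtt{V}_1)^{-}_n\,2n|n|^{-1/2}=n|n|$, so the resonant part of the full order-two coefficient is $\tfrac{1}{\pi}\sum_{n\in S}n|n|\zeta_n$. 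Since the exact value of $m_1$ enters the matrix $\mathbb{V}$ and the twist condition used in the measure estimates (Lemma \ref{measG0}), tracking this quadratic correction is not optional. Two smaller points: the odd-order resonant parts vanish not by a reality/parity argument but because odd-order resonances are never trivial, so genericity removes all of them; and the normal form at even orders is the projection onto trivial resonances (which is constant in $(\varphi,x)$ only because $j\in S\Rightarrow -j\in S^c$, as you correctly note via \eqref{TangentialSitesDPbis}), not the space-time average a priori.
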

\begin{proof}
We start by proving the
last assertions on the map ${\bf T}_1^{\tau}$.
If $\beta^{(1)}$  is real valued then 
we have that
the symbol $b(\tau,\vphi,x)\x$ defined in \eqref{sim1} with 
$\beta\rightsquigarrow \beta^{(1)}$ in \eqref{genPre1}
is real and $x$-translation invariant.
Then the flow map generated by 
$\Pi_{S}^{\perp}\opw(\ii b(\tau,\vphi,x)\x)\Pi_{S}^{\perp}$ (see \eqref{linprobpro})
is symplectic and $x$-translation invariant, 
i.e. it belongs to $\mathfrak{T}_1$.
Moreover, if $\beta^{(1)}$ satisfies \eqref{stimaSoloe}, 
by Lemma \ref{differenzaFlussi}, 
we can write
\[
\mathcal{T}^{1}_{\beta^{(1)}}=
\Pi_{S}^{\perp} \Phi_{\beta^{(1)}}^{\tau}\Pi_{S}^{\perp}\circ({\rm Id}+\mathcal{R})\,,
\]
where $\Phi_{\beta^{(1)}}^{\tau}$ is the flow of \eqref{linprob}
 with $f\rightsquigarrow b\x$ with $b$ as in \eqref{sim1}
with $\beta\rightsquigarrow \beta^{(1)}$
and where $\mathcal{R}$ is a finite rank operator 
of the form \eqref{FiniteDimFormDP10001}
satisfying \eqref{giallo2}.
Therefore (see Lemma \ref{CoroDPdiffeo})
we have that
the flow $\mathcal{T}_{\beta^{(1)}}^{\tau}$ is well posed and satisfies estimates like
\eqref{flow2}-\eqref{flow222}. 
This implies, using also \eqref{stimaSoloe}, the \eqref{stimaMapp1bis}.

\medskip
By Lemma \ref{georgiaLem} (see item $(ii)$) we have that the conjugate of 
$\mathcal{L}_3$ in \eqref{elle3} under the map ${\bf T}_{1}^{1}$
is given by
\begin{equation}\label{formazione5}
\Pi_{S}^{\perp}\widetilde{{\bf T}}_1^{1}\mathcal{L}_{3}(\widetilde{{\bf T}}_1^{1})^{-1}\Pi_{S}^{\perp}\,,
\qquad
\widetilde{{\bf T}}_1^{\tau}:=\left(
\begin{matrix}
{\Phi^{\tau}_{\beta^{(1)}}}(\vphi) & 0 \\
0 & \ov{\Phi_{\beta^{(1)}}^{\tau}(\vphi)}
\end{matrix}
\right)\,,\qquad \tau\in[0,1]\,,
\end{equation}
up to finite rank operators belonging to $\mathfrak{L}_{\rho, p}^{1}$. 
In order to study the operator in \eqref{formazione5}, 
we apply Theorem \ref{EgorovQuantitativo}
and Lemmata \ref{EgorovTempo}, \ref{preparailsugo} to the operator 
$\mathcal{L}_{3}$ in \eqref{elle3}.
Then formul\ae \,\eqref{elle3star}-\eqref{diagpezzo2star} follow
for some 
$\mathcal{R}^{(3)}_{*}\in \mathfrak{L}_{\rho,p}^{1}\otimes\mathcal{M}_2(\mathbb{C})$, 
$c^{(3)}_{*}\in S^{-1/2}_{1}$.
Formula \eqref{termA1} on $a_{*}^{(3)}$ follows by \eqref{formaEsplic2}
applied with $w(x,\x)\rightsquigarrow (1+a^{(0)}(x))|\x|^{1/2}$.
Similarly, by a direct computation, we have that
the new symbol at order $1$ is given by
\begin{equation}\label{termV1}
V^{+}(\vphi,x):=-g(\vphi,x)+ 
V(\vphi,x+\beta^{(1)}(x))(1+\tilde{\beta}^{(1)}_{y}(1,y))_{|y=x+\beta^{(1)}(x)}\,,
\end{equation}
where $g(\vphi,x)$ is given by Lemma \ref{EgorovTempo}.
We need to prove that, for some suitable $\beta^{(1)}$ the symbol in \eqref{termV1}
has the form \eqref{termV1finale}.

By  formula
 \eqref{eq:intq0} in Lemma \ref{EgorovTempo} 
 we get the following 
  Taylor expansion for 
 $g(\vphi,x)$:
\begin{equation}\label{Doo2}
\begin{aligned}
g(\vphi,x)&=\e\omega\cdot\pa_{\vphi}\beta^{(1)}_1+\e^{2}\omega\cdot\pa_{\vphi}\beta^{(1)}_2-\e^{2}(\beta^{(1)}_1)_x\omega\cdot\pa_{\vphi}\beta^{(1)}_1
+
\sum_{i=3}^{12}\e^{i}\big(\omega\cdot\pa_{\vphi}\beta^{(1)}_i+\mathtt{g}_i\big)+
\widetilde{g}(\vphi,x)\\
&\stackrel{\eqref{FreqAmplMapDP}, \eqref{LinearFreqDP}}{=}
\e\bar{\omega}\cdot\pa_{\vphi}\beta^{(1)}_1+
\e^{2}\bar{\omega}\cdot\pa_{\vphi}\beta^{(1)}_2-\e^{2}(\beta^{(1)}_1)_x\bar{\omega}\cdot\pa_{\vphi}\beta^{(1)}_1+
\sum_{i=3}^{12}\e^{i}\big(\bar{\omega}\cdot\pa_{\vphi}\beta^{(1)}_i
+\widetilde{\mathtt{g}}_i\big)+
\widetilde{g}(\vphi,x)\,,
\end{aligned}
\end{equation}
where $\widetilde{g}(\vphi,x)$ satisfies \eqref{AVBDEF}
(actually is independent of
$\mathfrak{I}_{\delta}$)
and $\mathtt{g}_i$, $\widetilde{\mathtt{g}_i}(\vphi,x)$ are 
$i$-homogeneous symbols of the form \eqref{simboOMO2DEF}
whose coefficients depend, for any $i=1,\ldots,12$, 
only on $\beta^{(1)}_{j}$ with $j<i$.

\noindent
By using formula \eqref{dthetaa0} in 
Theorem \ref{EgorovQuantitativo} and recalling that the 
symbol $V$ has an expansion as in 
\eqref{espandoenonmifermo}-\eqref{simboOMO2DEF}
for some homogeneous symbols $\mathtt{V}_i$ 
(see Lemma \ref{Doo}), we
can Taylor expand 
the second summand in \eqref{termV1}. We have
\begin{equation}\label{Doo33}
\begin{aligned}
V(\vphi,x+\beta^{(1)}(x))(1+\tilde{\beta}^{(1)}_{y}(1,y))_{|y=x+\beta^{(1)}(x)}&=
\e\mathtt{V}_1+\e^{2}\Big(\mathtt{V}_2
+(\mathtt{V}_1)_x\beta^{(1)}_1-\mathtt{V}_1(\beta^{(1)}_1)_{x}
\Big)
+\sum_{i=3}^{12}\e^{i}\mathtt{r}_{i}+\widetilde{V}(\vphi,x)\,,
\end{aligned}
\end{equation}
where $\widetilde{V}(\vphi,x)$ satisfies \eqref{AVBDEF}
and $\mathtt{r}_i$ are $i$-homogeneous symbols 
of the form \eqref{simboOMO2DEF}
whose coefficients depend, for any $i=1,\ldots,12$, 
only on $\beta^{(1)}_{j}$ with $j<i$.
Hence, more precisely, we have
\begin{align}
V^{+}(\vphi,x)&=\sum_{i=1}^{12}\e^{i} \mathtt{V}^{+}_i(\vphi,x)
+\widetilde{V}^{+}(\vphi,x)\,,\label{Doo3}\\
\mathtt{V}^{+}_1&=-\bar{\omega}\cdot\pa_{\vphi}\beta^{(1)}_1+\mathtt{V}_1\,,\label{Doo4}\\
\mathtt{V}_2^{+}&=-\bar{\omega}\cdot\pa_{\vphi}\beta^{(1)}_2+
F_{2}\,, \quad F_{2}:=\mathtt{V}_2+\mathtt{f}_2\,,\label{Doo5}\\ 
& \qquad\mathtt{f}_2 =\mathtt{r}_2-\widetilde{g}_{2}=(\mathtt{V}_1)_x\beta^{(1)}_1-\mathtt{V}_1(\beta^{(1)}_1)_{x}
+(\beta^{(1)}_1)_x\bar{\omega}\cdot\pa_{\vphi}\beta^{(1)}_1
\nonumber\\
\mathtt{V}_i^{+}&=-\bar{\omega}\cdot\pa_{\vphi}\beta^{(1)}_i+
F_{i}\,,\quad F_{i}:=
\mathtt{V}_i+
\mathtt{f}_i\,,\quad i=3,\ldots,12\,,
\label{Doo55}\\
\widetilde{V}^{+}&:=\widetilde{V}-\widetilde{g},\label{Doo56}
\end{align}
where $\mathtt{f}_i=\mathtt{r}_i-\widetilde{g}_{i}$
are $i$-homogeneous symbols of the form \eqref{simboOMO2DEF}
whose coefficients are sums and products 
of derivatives of 
$\beta^{(1)}_{j}$, $\mathtt{V}_j$ 
with $j<i$. The function $\widetilde{V}^{+}$ satisfies \eqref{AVBDEF}
with $k=1$.
We shall verify that 
the coefficients $(F_{i})^{\s_1\cdots\s_{i}}_{j_1,\ldots,j_{i}}$, $i=2,\ldots,12$
satisfy the conditions \eqref{iena2}
by showing, iteratively, that the $\beta^{(1)}_{j}$ are real valued.
Indeed we observe that  to construct the coefficients $\mathtt{f}_{i}$ we just used the 
equations  \eqref{eq:intq0} and  \eqref{dthetaa0}
to Taylor expand the new symbol. 

We now show how to choose the function $\beta^{(1)}$ in \eqref{genPre1}
in such a way that the function $V^{+}$ in \eqref{Doo3}
is \emph{integrable}
(according to Def. \ref{integroSimbo}) up to terms of ``high'' degree of homogeneity.

All the functions $\mathtt{f}_i$ can be actually  computed explicitly but this is not necessary for our scope.
Our aim is to find, iteratively, functions $\beta^{(1)}_i$ in order to reduce the functions $V_{i}^{+}$
to integrable symbols (see Def. \ref{integroSimbo}).
We shall denote
by 
\[
(\beta^{(1)}_{i})_{j_1,\ldots,j_i}^{\s_1\ldots \s_{i}}\,,\quad
(\mathtt{V}_{i})_{j_1,\ldots,j_i}^{\s_1\ldots \s_{i}}\,,\quad 
(F_{i})_{j_1,\ldots,j_i}^{\s_1\ldots \s_{i}}\,,\quad i=1,\ldots,12
\]
respectively the coefficients in the expansion \eqref{simboOMO2DEF}
of the functions $\beta^{(1)}_i$, $\mathtt{V}_{i}$ and $F_{i}$ in \eqref{Doo4}-\eqref{Doo55}.
By Lemma \ref{Doo}, Lemma \ref{iena10}
applied on the symbols
$\ii\x \mathtt{V}_i$ 
the coefficients
of the functions $\mathtt{V}_{i}$ satisfy the conditions 
\eqref{iena2}.

\vspace{0.5em}
\noindent
{\bf Step $\e$.}
By \eqref{simboOMO2DEF} we have the expansion
\[
\mathtt{V}_1:=\sum_{\s=\pm, n\in S}(\mathtt{V}_1)_{n}^{\s}\sqrt{\zeta_{n}}e^{\ii \s\mathtt{l}(n)\cdot\vphi}
e^{\ii nx}\,,\qquad (\mathtt{V}_1)_{n}^{\s}\in \mathbb{C}\,.
\]
Since the function $\mathtt{V}_1$ is real-valued, the coefficients $(\mathtt{V}_1)_{n}^{\s}$
satisfy
\eqref{iena2} with $k=1$.
We set
\begin{equation}\label{def:beta1}
(\beta^{(1)}_1)_{n}^{\s}:= - \frac{\s({\mathtt V}_1)^{\s}_{n}}{\ii\sqrt{|n|}}\,,\quad \s=\pm\,, n\in S\,,\;\;n\neq0\,.
\end{equation}
One can check that 
\begin{equation}\label{detodounpoco}
V_{1}^{+}\stackrel{\eqref{Doo4}}{=}-\bar{\omega}\cdot\pa_{\vphi}\beta^{(1)}_1+\mathtt{V}_1
\stackrel{\eqref{def:beta1}}{\equiv}0\,
\end{equation}
and that the coefficients of $\beta^{(1)}_1$ satisfy \eqref{iena2},
i.e. $\beta^{(1)}_1$ is real valued.

\vspace{0.5em}
\noindent
{\bf Step $\e^{2}$.}
Now we want to eliminate the function in \eqref{Doo5}.
We  define, for $n_1,n_2\in S$,
\begin{equation}\label{beta-2}
(\beta^{(1)}_{2})^{\s\s}_{n_1, n_2} := 
\frac{-(F_2)^{\s\s}_{n_1, n_2}}{\ii \s (\omega_{n_1} + \omega_{n_2})} \, , \, 
\;\; \s=\pm \, ,
\qquad 
(\beta^{(1)}_{2})^{\s (-\s)}_{n_1, n_2} :=  
\frac{-\s(F_{2 })^{\s(-\s)}_{n_1, n_2}}{\ii (\omega_{n_1} - \omega_{n_2})} \, , 
\;\; n_1 \neq \pm n_2 \, , 
\end{equation}
and 
$(\beta^{(1)}_{2})_{0,0}^{\s \s} := 0 $, 
$(\beta^{(1)}_2)^{\s(-\s)}_{n,\pm n}:=0$.
In this way we have 
\begin{equation}\label{nuovcoeffBeta2}
\begin{aligned}
\mathtt{V}_2^{+}&=m_1
+\sum_{n\in S^{+}\cup(-S^{+})}\big((F_2)^{+-}_{n,-n}+(F_2)^{-+}_{-n,n}\big)\,e^{\mathrm{i} 2 n x}\,\zeta_{n}\,,\\
m_1&:=\frac{1}{2\pi}\sum_{n\in S}\big((F_2)^{+-}_{n,n}+(F_2)^{-+}_{n,n}\big)\zeta_{n}\,.
\end{aligned}
\end{equation}
By the form of $S$ in \eqref{TangentialSitesDP} we note 
that the second sum in \eqref{nuovcoeffBeta2}
is actually zero since, if $n\in S$ then $-n\in S^{c}$.
Hence $\mathtt{V}_2^{+}$ is integrable according to Definition \ref{integroSimbo}.
Again we can check explicitly that
$\beta^{(1)}_2$, defined by \eqref{beta-2}, 
satisfies \eqref{iena2}, i.e. $\beta^{(1)}_2$ is real.
This is a consequence of the fact that $F_2$ in \eqref{Doo5} is real
since $\beta_{1}^{(1)}$ is real.
It remains to show that $m_1$ in \eqref{nuovcoeffBeta2}
has the form \eqref{constM1}.
To do this we compute the coefficients of the function $F_{2}$
in \eqref{Doo5}.
Using the expansion \eqref{iena} with $k=1$ for $\mathtt{V}_1$ and $\beta^{(1)}_1$, 
and recalling \eqref{vsegnatoDP2}, 
we get
\begin{equation}\label{ienaNantes1000}
\begin{aligned}
F_{2}&\stackrel{\eqref{detodounpoco}, \eqref{Doo5}}{=} \mathtt{V}_2+
(\mathtt{V}_1)_x\beta^{(1)}_1=\sum_{j_1,j_2\in S,\s=\pm}(\mathtt{F}_2)^{\s\s}_{j_1,j_2}v_{j_1}^{\s}v_{j_2}^{\s} e^{\ii \s(j_1+j_2)x}\\
&+\frac{1}{2\pi}\sum_{j_1,j_2\in S}v_{j_1}^{+}v_{2}^{-}\big[
({\mathtt{V}}_2)^{+-}_{j_1,j_2}+({\mathtt{V}}_2)^{-+}_{j_2,j_1}+
(\mathtt{V}_1)^{+}_{j_1}(\beta^{(1)}_1)^{-}_{j_2}\ii j_1-
(\mathtt{V}_1)^{-}_{j_2}(\beta^{(1)}_1)^{+}_{j_1}\ii j_2
\big]e^{\ii (j_1-j_2)x}\,.
\end{aligned}
\end{equation}
We are not interested in computing the coefficients 
$(\mathtt{F}_2)^{\s\s}_{j_1,j_2}$.
By  Lemma \ref{Doo} we have
\[
\begin{aligned}
(F_2)^{+-}_{n,n}+(F_2)^{-+}_{n,n}&\stackrel{\eqref{ienaNantes1000}, 
\eqref{def:beta1}}{=}
({\mathtt{V}}_2)^{+-}_{n,n}+({\mathtt{V}}_2)^{-+}_{n,n}+
(\mathtt{V}_1)^{+}_{n}(\mathtt{V}_1)^{-}_{n}\frac{2n}{\sqrt{|n|}}
\stackrel{\eqref{mediediagonali}}{=}n|n|+
 \frac{1}{2} n^{2} |n|^{-1/2}2n|n|^{-1/2}=2n|n|
\end{aligned}
\]
which implies the \eqref{constM1}.

\vspace{0.5em}
\noindent
{\bf Step $\e^{\geq3}$.} 
Consider now the functions \eqref{Doo55}.
We recall that for any $3\leq i\leq 12$, 
the functions $V_{i}^{+}$ depends only on $\beta^{(1)}_{j}$, $\mathtt{V}_j$, 
$\mathtt{V}_i$ 
with $j<i$. 
For $i=3,\ldots, 12$ we define 
\begin{equation}\label{partenorm}
[\mathtt{V}_i^{+}]:=\sum_{\mathcal{S}_{i} }
(\mathtt{V}^{+}_i)_{j_1,\ldots,j_{i}}^{\s_1,\ldots,\s_{i}}(\x)
\sqrt{\x_{j_1}\cdots\x_{j_i}}
e^{\ii (\sum_{k=1}^{i}\s_{k} j_{k})x}
e^{\ii (\sum_{k=1}^{i}\s_{k}\mathtt{l}(j_{k}))\cdot\vphi}\,,
\end{equation}
where $\sum_{\mathcal{S}_i}$ denotes the sum over indexes restricted to the set $\mathcal{S}_i$
defined in Definition \ref{integroSimbo}.
We also set , for $\vec{\s}=(\s_{1},\ldots,\s_{i})$, $\vec{j}=(j_1,\ldots,j_{i})$,
\begin{equation}\label{def:betai}
(\beta^{(1)}_{i})_{\vec{j}}^{\vec{\s}}:=
-\frac{(\mathtt{F}_{i})^{\vec{\s}}_{\vec{j}}}{\ii \mathcal{R}_{\vec{\sigma}}(\vec{j})}\,,
\quad 
\left\{\big(j_k, \sigma_k \big) \right\}_{k=1}^i \notin \mathcal{S}_{i}\,,
\qquad
(\beta^{(1)}_{i})_{\vec{j}}^{\vec{\s}}=0\,, \quad  
\left\{\big(j_k, \sigma_k \big) \right\}_{k=1}^i \in \mathcal{S}_{i}\,,
\end{equation}
and $\mathcal{R}_{\vec{\sigma}}(\vec{j})$ is the function defined in \eqref{por}.
Again, by induction,  the coefficients in \eqref{def:betai}
satisfy \eqref{iena2}, hence 
the functions $\beta_{i}^{(1)}$ are real valued.
Moreover, by using the estimates on $\mathcal{R}_{\vec{\sigma}}$ 
in Remark \ref{accendo},
the function $\beta^{(1)}$
in \eqref{genPre1}
satisfies \eqref{stimaSoloe}.
By Remark \ref{accendo} we have
that the symbols in \eqref{partenorm}
are integrable according to Definition \ref{integroSimbo}.
Therefore the symbol $V^{+}$ in \eqref{diagpezzo2star} 
has the form \eqref{termV1finale}
by setting $m_1^{(2k)}:=[\mathtt{V}_{2k}^{+}]$ (see \eqref{partenorm})
and $V_{\geq13}^{+}:=\widetilde{V}^{+}$ in \eqref{Doo56}.
The estimates \eqref{iena40} 
follow by
 \eqref{AVBDEF}
with $k=1$ on $\widetilde{V}^{+}$.
Since $\beta^{(1)}$ is $x$-translation invariant (i.e. satisfies \eqref{space3Xinv})
it is easy to check that the symbols $V^{+}_{\geq13}$, $a^{(3)}_{*}$, $c^{(3)}_*$
constructed above satisfy the same condition. Hence, since the map 
$\mathcal{T}_{\beta^{(1)}}^{\tau}$ is in $\mathfrak{T}_1$, one has that
$\mathcal{L}_{3,*}$ belong to $\mathfrak{S}_1$. 
\end{proof}

\subsubsection{Straightening theorem}\label{sec:straight}
In this subsection we conclude the proof of Proposition \ref{proporduno}.
More precisely we eliminate the dependence on $(\vphi,x)$ in the symbol
$V^{+}_{\geq13}$ in \eqref{termV1finale}. 
We first need a preliminary result.

\begin{lemma}{\bf ({Straightening Lemma}).}\label{drizzotutto}
For all $\omega\in \Omega^{2 \gamma}_{\infty}$ defined in 
\eqref{omegaduegamma} there 
exists 
$\beta^{(2)}(\omega):=\beta^{(2)}(\omega, \cdot)\in S_{\mathtt{v}}$, 
 (see \eqref{funzmom}), 
satisfying 
\begin{equation}\label{stimabeta2}
\begin{aligned}
\lVert \beta^{(2)} \rVert_s^{\gamma, \Omega_{\infty}^{2 \gamma}}\,
&\lesssim_s
 \gamma^{-1} (\e^{13}
 +\e\|\mathfrak{I}_{\delta}\|_{s+\mu}^{\gamma,\calO_0})\\
\|\Delta_{12}\beta^{(2)}\|_{p}\,
&\lesssim_p 
\e\gamma^{-1}(1+\|\mathfrak{I}_{\delta}\|_{s+\mu})\|i_{1}-i_{2}\|_{p+\mu}\,
\end{aligned}
\end{equation}
for some $\mu=\mu(\nu)>0$,
such that 
the vector field on $\mathbb{T}^{\nu+1}$ 
\begin{equation}\label{vecfield1}
Y:=\omega \cdot \frac{\partial}{\partial \varphi}
+\big(\mathtt{c}+V_{\geq13}^{+}(\varphi, x) \big)\frac{\partial}{\partial x}\,,
\qquad 
\mathtt{c}:=\mathtt{c}(\omega):= \e^{2}m_{1}+\sum_{k=1}^{6}\e^{2k}m_{1}^{(2k)}\,,
\end{equation}
transforms,
under the diffeomorphism 
$\Gamma\colon(\varphi, x)\mapsto (\varphi, y),\,y:= x+\beta^{(2)}
(\omega; \varphi,  x)$ of $\mathbb{T}^{\nu+1}$,  into
\begin{equation}\label{vecfield2}
\Gamma_* Y=
\omega \cdot \frac{\partial}{\partial \varphi}
+\mathfrak{m}_1\,\frac{\partial}{\partial y}\,,
\end{equation}
where $\mathfrak{m}_{1}=\mathfrak{m}_{1}(\omega)$ 
is defined for $\omega\in \Omega_{\e}$ (see \eqref{OmegaEpsilonDP}),
and $\mathtt{r}_1:=\mathfrak{m}_1-\mathtt{c}$ satisfies
\begin{equation}\label{vecfield3}
\begin{aligned}
\lvert \mathtt{r}_1 \rvert^{\gamma, \Omega_\e}&\lesssim 
\lVert   V^+_{\geq 13}  \rVert^{\gamma, \calO_0}_{s_0}
(1+\gamma^{-1}\lVert V^{+}_{\geq13} \rVert^{\gamma, \calO_0}_{s_0+2\tau+5})\,,\\
|\Delta_{12} \mathtt{r}_1|&\lesssim_{p}
\e(1+\|\mathfrak{I}_{\delta}\|_{p+\mu})\|i_{1}-i_{2}\|_{p+\mu}.
\end{aligned}
\end{equation}
\end{lemma}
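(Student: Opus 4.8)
The plan is to exploit the $x$-translation invariance carried by $V^{+}_{\geq13}$ in order to reduce the straightening of $Y$ in \eqref{vecfield1}, which lives on $\mathbb{T}^{\nu+1}$, to the straightening of a weakly perturbed \emph{constant} vector field on the lower dimensional torus $\mathbb{T}^{\nu}$, and then to invoke the result of \cite{FGMP}. By the last assertion of Lemma \ref{lemmapre} the symbol $V^{+}_{\geq13}$ satisfies \eqref{space3Xinv}, i.e.\ it is $x$-translation invariant, so one may write $V^{+}_{\geq13}(\varphi,x)=\mathbf{V}(\varphi-\mathtt{v}x)$ for a function $\mathbf{V}=\mathbf{V}(\omega;\Theta)$ on $\mathbb{T}^{\nu}$, depending on $\omega$ and on $i_{\delta}$, whose tame and $\Delta_{12}$ estimates are inherited from \eqref{iena40}; also $\mathtt{c}$ is a constant. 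One then looks for $\beta^{(2)}\in S_{\mathtt{v}}$ (see \eqref{funzmom}), namely $\beta^{(2)}(\varphi,x)=\mathbf{b}(\varphi-\mathtt{v}x)$; since $\mathtt{v}\in\mathbb{Z}^{\nu}$, the map $\Gamma\colon(\varphi,x)\mapsto(\varphi,x+\beta^{(2)}(\varphi,x))$ is a diffeomorphism of $\mathbb{T}^{\nu+1}$ as soon as $\|\mathtt{v}\cdot\partial_{\Theta}\mathbf{b}\|_{L^{\infty}}<1$, and a direct computation of $\Gamma_{*}Y$ shows that $\Gamma_{*}Y=\omega\cdot\partial_{\varphi}+\mathfrak{m}_{1}\partial_{y}$ holds if and only if
\[
\big(\omega\cdot\partial_{\varphi}\beta^{(2)}\big)(\varphi,x)+\big(\mathtt{c}+V^{+}_{\geq13}(\varphi,x)\big)\big(1+\partial_{x}\beta^{(2)}(\varphi,x)\big)=\mathfrak{m}_{1}\,.
\]
Passing to the variable $\Theta=\varphi-\mathtt{v}x$ and using $\omega\cdot\partial_{\varphi}\beta^{(2)}=(\omega\cdot\partial_{\Theta}\mathbf{b})(\Theta)$ and $\partial_{x}\beta^{(2)}=-(\mathtt{v}\cdot\partial_{\Theta}\mathbf{b})(\Theta)$, this reduces to the scalar transport (cohomological) equation on $\mathbb{T}^{\nu}$
\[
\big(\omega-\mathtt{c}\,\mathtt{v}-\mathbf{V}(\Theta)\,\mathtt{v}\big)\cdot\partial_{\Theta}\mathbf{b}(\Theta)=\mathfrak{m}_{1}-\mathtt{c}-\mathbf{V}(\Theta)\,.
\]

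The left hand side is the Lie derivative of $\mathbf{b}$ along the vector field $\mathcal{X}:=(\omega-\mathtt{c}\,\mathtt{v})\cdot\partial_{\Theta}-(\mathbf{V}(\Theta)\,\mathtt{v})\cdot\partial_{\Theta}$ on $\mathbb{T}^{\nu}$, a perturbation of size $\lesssim\varepsilon^{13}+\varepsilon\|\mathfrak{I}_{\delta}\|$ (by \eqref{iena40}) of the constant vector field $(\omega-\mathtt{c}\,\mathtt{v})\cdot\partial_{\Theta}$. The next step is to check, using \eqref{iena40}, \eqref{IpotesiPiccolezzaIdeltaDP} and $\gamma=\varepsilon^{2b}$ with $0<a\ll1$, that the smallness hypothesis of the straightening theorem of \cite{FGMP} --- the perturbation must be small in a suitable high Sobolev norm relative to $\gamma$ --- is met, since $\varepsilon^{13}\gamma^{-1}=\varepsilon^{13-2b}\ll1$ and $\varepsilon\|\mathfrak{I}_{\delta}\|\gamma^{-1}\lesssim\varepsilon^{17-4b}\ll1$. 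That theorem then produces a constant $\mathfrak{m}_{1}=\mathfrak{m}_{1}(\omega)$, defined and Lipschitz on $\Omega_{\varepsilon}$, with $\mathfrak{m}_{1}-\mathtt{c}$ equal to the average $\langle\mathbf{V}\rangle$ up to a term quadratic in $\mathbf{V}$ divided by $\gamma$, and, for every $\omega$ for which $(\omega-\mathfrak{m}_{1}(\omega)\mathtt{v})$ is $(\gamma,\tau)$-Diophantine, a solution $\mathbf{b}=\mathbf{b}(\omega;\cdot)$ of the cohomological equation obeying the bounds of \eqref{stimabeta2}, the $\Delta_{12}$ estimates being obtained by running the same iteration on the difference of the data associated with two embeddings $i_{1},i_{2}$. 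Since the constraint $\mathtt{v}\cdot\ell+j=0$ in \eqref{omegaduegamma} forces $j=-\mathtt{v}\cdot\ell$, one has $\omega\cdot\ell+\mathfrak{m}_{1}j=(\omega-\mathfrak{m}_{1}\mathtt{v})\cdot\ell$, so this set of admissible frequencies is precisely $\Omega^{2\gamma}_{\infty}$.

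It then remains to transfer back to $\mathbb{T}^{\nu+1}$: one sets $\beta^{(2)}(\omega;\varphi,x):=\mathbf{b}(\omega;\varphi-\mathtt{v}x)$, which lies in $S_{\mathtt{v}}$ by construction, and $\mathtt{r}_{1}:=\mathfrak{m}_{1}-\mathtt{c}$. Because $\mathtt{v}$ is a fixed integer vector, the Sobolev norms of $\beta^{(2)}$ on $\mathbb{T}^{\nu+1}$ are controlled by those of $\mathbf{b}$ on $\mathbb{T}^{\nu}$ up to the fixed constant $\langle\mathtt{v}\rangle^{s}$, so \eqref{stimabeta2} follows from the \cite{FGMP} estimates; the bound \eqref{vecfield3} for $\mathtt{r}_{1}$ follows from $\mathfrak{m}_{1}-\mathtt{c}=\langle\mathbf{V}\rangle+O\!\big(\gamma^{-1}\|\mathbf{V}\|^{2}\big)$ together with \eqref{iena40}. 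By the computation of the first paragraph, $\Gamma$ conjugates $Y$ to $\omega\cdot\partial_{\varphi}+\mathfrak{m}_{1}\partial_{y}$, which is \eqref{vecfield2}, and the smallness of $\beta^{(2)}$ guarantees that $\Gamma$ is indeed a diffeomorphism.

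The genuine difficulty --- the small-divisor/Moser iteration that simultaneously determines $\mathfrak{m}_{1}$ as a fixed point and solves the non-constant-coefficient transport equation with quantitative tame and Lipschitz estimates --- is entirely encapsulated in \cite{FGMP}. The only points that require attention in the present reduction are that $V^{+}_{\geq13}\in S_{\mathtt{v}}$ (which is what lets the whole problem descend to $\mathbb{T}^{\nu}$) and that the perturbation $\mathbf{V}$ is small enough relative to $\gamma=\varepsilon^{2b}$; the latter holds because of \eqref{iena40}, \eqref{IpotesiPiccolezzaIdeltaDP} and $0<a\ll1$, and the identification of the admissible frequency set with $\Omega^{2\gamma}_{\infty}$ is then immediate from $j=-\mathtt{v}\cdot\ell$.
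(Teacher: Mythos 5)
Your proposal is correct and follows essentially the same route as the paper: both exploit the $x$-translation invariance of $V^{+}_{\geq 13}$ (i.e.\ $V^{+}_{\geq13}\in S_{\mathtt{v}}$) to descend to the non-degenerate vector field $\big(\omega-\mathtt{v}\,f(\Theta)\big)\cdot\partial_{\Theta}$ on $\mathbb{T}^{\nu}$, invoke the straightening theorem of \cite{FGMP} (whose Cantor set coincides with $\Omega^{2\gamma}_{\infty}$ after using $j=-\mathtt{v}\cdot\ell$), and read off $\mathfrak{m}_1=\mathtt{c}+\mathtt{r}_1$ and the estimates \eqref{stimabeta2}, \eqref{vecfield3}. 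The only cosmetic difference is that the paper applies \cite{FGMP} to the vector field, obtaining a vector-valued correction $\alpha$, and then observes that $\alpha=-\mathtt{v}F$ for a scalar $F$ (whence $\beta^{(2)}$ and its inverse), whereas you phrase the same step as \cite{FGMP} directly solving the scalar transport equation for $\mathbf{b}$ -- legitimate here precisely because the perturbation of the constant field is collinear with $\mathtt{v}$.
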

\begin{proof}
First of all we recall that, by Lemma \ref{lemmapre}, 
the function $V^{+}_{\geq13}$ is in $S_{\mathtt{v}}$. Then 
we write $\mathtt{c}+V^{+}_{\geq13}(\vphi,x)=f(\Theta)$ 
with  $\Theta=\vphi-\mathtt{v}x$
for some smooth function 
$f : \mathbb{T}^{\nu}\to\mathbb{R}$.
Then the vector field in \eqref{vecfield1} can be written as a non-degenerate vector field on $\mathbb{T}^{\nu}$
\[
X:=\big(\omega-\mathtt{v}\,f( \Theta)\big)\, 
\cdot\frac{\partial}{\partial \Theta}\,.
\]
Theorem $3.1$ in \cite{FGMP} (see also Proposition $3.6$ and Lemma $3.7$ in \cite{FGP})
provides the existence of a Cantor set 
$\Omega_{\infty}^{2 \gamma}\subset \mathbb{R}^{\nu}$ 
(of the form  \eqref{omegaduegamma})
such that for $\omega\in \Omega_{\infty}^{2 \gamma}$ 
the following holds true.
There exists a diffeomorphism 
$\Psi:=\Psi(\omega)\colon \mathbb{T}^{\nu}\to\mathbb{T}^{\nu}$ 
of the form 
$\Theta\mapsto \widetilde{\Theta}:= \Theta+\alpha(\omega; \Theta)$,
with inverse 
$\Psi^{-1}=\Psi^{-1}(\omega)\colon \mathbb{T}^{\nu}\to\mathbb{T}^{\nu}$,
$\Theta=\widetilde{\Theta}+\widetilde{\alpha}(\widetilde{\Theta})$
such that
\begin{equation}\label{manhattan}
\Psi_* X=\Psi^{-1}\Big(
\omega-\mathtt{v}f(\Theta)
+(\omega-\mathtt{v}\,f(\Theta))\cdot 
\partial_{\Theta}\alpha\Big)
\cdot 
\frac{\partial}{\partial\widetilde{ \Theta}}
=\Big(\omega-\mathtt{v}\big(\mathtt{c}+\mathtt{r}_1\big)\Big)\cdot 
\frac{\partial}{\partial\widetilde{ \Theta}}
\end{equation}
for some $\mathtt{r}_1=\mathtt{r}_1(\omega)\in\mathbb{R}$
such that
\[
\lvert\mathtt{r}_1\rvert^{\gamma, \calO_0}\leq \lVert 
V^+_{\geq 13} \rVert_{s_1}^{\gamma, \calO_0}
\stackrel{\eqref{iena40}, \eqref{IpotesiPiccolezzaIdeltaDP}}{\lesssim} 
\varepsilon^{17-2 b}\gamma^{-1}\,.
\]
 The assumptions of this theorem are satisfied if we consider $\gotp_1$ in \eqref{IpotesiPiccolezzaIdeltaDP} large enough 
 (more precisely $s_0+\gotp_1\geq s_1$ where $s_1$ is an index provided by the theorem) and $\varepsilon$ small enough (in particular such that $\varepsilon^{13}\le \eta_*$ where $\eta_*$ is provided by the theorem).
 Moreover the following estimates hold:
 \begin{equation}\label{stimaalpha}
\begin{aligned}
\lVert \alpha \rVert_s^{\gamma, \Omega_{\infty}^{2 \gamma}}
&\lesssim_s \gamma^{-1} \lVert V_{\geq 13}^+ 
\rVert_{s+2\tau+4}^{\gamma, \Omega_{\infty}^{2 \gamma}}\,,\qquad 
 \lVert \Delta_{12}\alpha   \rVert_{p} \lesssim_{p} 
\varepsilon \gamma^{-1}\|\Delta_{12}V^{+}_{\geq13}\|_{p+\mu}\,.
\end{aligned}
\end{equation}
Similar for the inverse $\widetilde{\alpha}$.
The \eqref{manhattan} implies that the function $\alpha(\Theta)$
 solves the equation
 \begin{equation}\label{manhattan2}
(\omega-\mathtt{v}\,f(\Theta))\cdot 
\partial_{\Theta}\alpha=\mathtt{v} \big(f(\Theta)-\mathfrak{m}_1\big)\,,
\qquad \mathfrak{m}_1=\mathtt{c}+\mathtt{r}_1\,.
 \end{equation}
 Writing \eqref{manhattan2} for the components of the vector $\alpha(\Theta)$,
  we observe that  $\alpha_{i}(\Theta)/\ov{\jmath}_{i}$ 
  solves the same equation for any $i=1,\ldots,\nu$.
Hence there exists a smooth  $F : \mathbb{T}^{\nu}\to \mathbb{R}$
such that   $\alpha(\Theta)$ is of the form $-\mathtt{v} F(\Theta)$ where
\begin{equation}\label{manhattan4}
(\omega-\mathtt{v}\,f(\Theta))\cdot 
\partial_{\Theta}F(\Theta)+f(\Theta)=\mathfrak{m}_1\,.
\end{equation}
Using that
$\alpha(\Theta)+\widetilde{\alpha}\big(\Theta+\alpha(\Theta)\big)=0$,
 we deduce that the function $G(\widetilde{\Theta})$, defined
 by the relation $-\mathtt{v}G(\widetilde{\Theta})=\widetilde{\alpha}(\widetilde{\Theta}) $
 is such that
 \begin{equation}\label{manhattan3}
 F(\Theta)+G(\Theta-\mathtt{v} F(\Theta))=0\,.
 \end{equation}

We consider 
$\beta^{(2)}:=\beta^{(2)}(\omega; \vphi,x)
:=G(\vphi-\mathtt{v}x)$ 
and 
$\widetilde{\beta}^{(2)}:=\widetilde{\beta}^{(2)}(\omega; \vphi,y)
:=F(\vphi-\mathtt{v}y)$ .
By using \eqref{manhattan3} one can check that
the map $y\mapsto x=y+\widetilde{\beta}^{(2)}(\omega; \vphi,y)$ 
is the inverse of 
the diffeomorphism  
$x\mapsto y:= x+\beta^{(2)}
(\omega; \varphi,  x)$ of $\mathbb{T}^{\nu+1}$.
By \eqref{iena40}, \eqref{stimaalpha} and 
\eqref{IpotesiPiccolezzaIdeltaDP} we deduce the
\eqref{stimabeta2}.
By \eqref{manhattan4} we also deduce that
\begin{equation}\label{gotham}
\omega\cdot\pa_{\vphi}\widetilde{\beta}^{(2)}(\vphi,y)+(\mathtt{c}+V^{+}_{\geq13}(\vphi,y))
(1+\pa_{y}\widetilde{\beta}^{(2)}(\vphi,y))
=\mathfrak{m}_1\,,\qquad \forall\,(\vphi,y)\in \mathbb{T}^{\nu+1}\,.
\end{equation}
This implies the \eqref{vecfield2}.
%
Finally, by 
$\lvert \mathtt{r}_1 \rvert^{\gamma, \calO_0}\lesssim 
\lVert V^+_{\geq 13}  \rVert^{\gamma, \calO_0}_{s_0}
(1+\lVert \alpha \rVert^{\gamma, \Omega_{\infty}^{2 \gamma}}_{s_0+1})$ and 
\eqref{stimaalpha},
we get
the first bound in 
\eqref{vecfield3}.
 Indeed by Kirszbraun Theorem we can extend the function 
 $\mathtt{r}(\omega)$ to $\Omega_{\varepsilon}$ with the same bound on the norm.
 The bound on $\Delta_{12}\mathtt{r}_1$ in \eqref{vecfield3}
 can be deduced using Lemma $3.7$ in \cite{FGP} and the estimates on
$\Delta_{12}V^{+}_{\geq13}$ in  \eqref{iena40}.
 \end{proof}

\begin{proof}[{\bf Proof of Proposition \ref{proporduno}}]
We consider the map
\begin{equation}\label{MappaT2}
{\bf T}_2^{1}:=\left(
\begin{matrix}
\mathcal{T}_{\beta^{(2)}}^{1}(\vphi) & 0 \\
0 & \ov{\mathcal{T}_{\beta^{(2)}}^{1}(\vphi)}
\end{matrix}
\right)
\end{equation}
where $\mathcal{T}_{\beta^{(2)}}^{\tau}(\vphi) $, $\tau\in[0,1]$ is the flow given by \eqref{linprobpro}
with $f$ as in \eqref{sim1}
with $\beta\rightsquigarrow \beta^{(2)}$ given by Lemma \ref{drizzotutto}.
First of all by Lemma \ref{differenzaFlussi}
we can write
\[
\mathcal{T}^{1}_{\beta^{(2)}}=
\Pi_{S}^{\perp} \Phi_{\beta^{(2)}}^{\tau}\Pi_{S}^{\perp}\circ({\rm Id}+\mathcal{R})
\]
where $\Phi_{\beta^{(2)}}^{\tau}$ is the flow of \eqref{linprob}
 with $f\rightsquigarrow b\x$ with $b$ as in \eqref{sim1}
with $\beta\rightsquigarrow \beta^{(2)}$
and where $\mathcal{R}$ is a finite rank operator of the form \eqref{FiniteDimFormDP10001}
satisfying \eqref{giallo2}.
Therefore (see Lemma \ref{CoroDPdiffeo})
we have that
the flow $\mathcal{T}_{\beta^{(2)}}^{\tau}$ is well posed and satisfies estimates like
\eqref{flow2}-\eqref{flow222} 
This implies, using also \eqref{stimabeta2}, the \eqref{stimaMapp1}.
Notice that, by estimates \eqref{stimabeta2}, we could say that
$\beta^{(2)}$ is in $S_{2}^{0}$. Actually all the homogeneous
terms in the expansion \eqref{espandoenonmifermo} of $\beta^{(2)}$ are zero.

We now conjugate the operator 
 $\mathcal{L}_{3, *}$ in \eqref{elle3star} 
  with the map ${\bf T}^1_2$.
We follow the strategy of  Lemma \ref{lemmapre}.
By Lemma \ref{georgiaLem} we have that the conjugate of  $\mathcal{L}_{3, *}$ 
is given by
\[
\Pi_{S}^{\perp}\widetilde{{\bf T}}_2^{1}\mathcal{L}_{3,*}(\widetilde{{\bf T}}_2^{1})^{-1}\Pi_{S}^{\perp}\,,
\qquad
\widetilde{{\bf T}}_2^{\tau}:=\left(
\begin{matrix}
{\Phi^{\tau}_{\beta^{(2)}}}(\vphi) & 0 \\
0 & \ov{\Phi_{\beta^{(2)}}^{\tau}(\vphi)}
\end{matrix}
\right)\,,\qquad \tau\in[0,1]\,,
\]
up to finite rank operators belonging to $\mathfrak{L}_{\rho, p}^{2}\otimes\mathcal{M}_2(\mathbb{C})$. 
Then we shall apply Theorem \ref{EgorovQuantitativo}
and Lemmata \ref{EgorovTempo}, \ref{preparailsugo}.

%
Recall that
the map
$\Phi_{\beta^{(2)}}^{\tau}$
 has the explicit 
expressions in \eqref{ignobelSymp}.
By a direct computation we have that
\begin{equation}\label{fede1}
\begin{aligned}
\Phi_{\beta^{(2)}}^1\,\, \omega \cdot \partial_{\varphi}\,\,\, 
\big(\Phi_{\beta^{(2)}}^1 \big)^{-1}  &=
\omega\cdot \partial_{\varphi}\, 
+\frac{\omega\cdot \partial_{\varphi} \tilde{\beta}_y^{(2)}
(1, \varphi, y+\beta^{(2)}(\varphi, y))}{2\,(1+\tilde{\beta}_y^{(2)} 
(1, \varphi, y+ \beta^{(2)}(\varphi, y)) )}\, 
+ \omega\cdot \partial_{\varphi} \tilde{\beta}^{(2)}
(1, \varphi, y+\,\beta^{(2)}(\varphi, y))\,\,\partial_y \,,\\[2mm]
\Phi_{\beta^{(2)}}^1\,\,V^{+}(\varphi, y)\,\, \partial_y\,\,\, 
\big(\Phi_{\beta^{(2)}}^1 \big)^{-1}  &=
\frac{V^{+}(\varphi, y+ \beta^{(2)}(\varphi, y))\,\tilde{\beta}_{yy}^{(2)}
(1, \varphi, y+\,\beta^{(2)}(\varphi, y))\, }{2\, 
(1+\tilde{\beta}_y^{(2)}(1, \varphi, y+ \beta^{(2)}(\varphi, y)))}\,\\
&\quad+V^{+}(\varphi, y+ \beta^{(2)}(\varphi, y))\,\,
(1+\tilde{\beta}^{(2)}_y(1, \varphi, y+\tau\,\beta^{(2)}(\varphi, y)))\,\partial_y \,.
\end{aligned}
\end{equation}
Moreover,
 by Lemma \ref{lem:escobarhomo} 
 we can write (up to smoothing reminders in $\mathfrak{L}_{\rho,p}^{2}$)
 \begin{equation}\label{fede3}
 \opw(\ii V^{+}(\vphi)\x)=\ii V^{+}(\vphi,x)\pa_{x}+\frac{1}{2}\ii V_{x}^{+}(\vphi,x)\,.
 \end{equation}
 Now we consider the unbounded part of the operator $\mathcal{L}_{3, *}$, namely
  \begin{equation}\label{fede4}
  \begin{aligned}
 \omega\cdot\pa_{\vphi}&+\ii \opw(V^{+}(\vphi,x)\x+(1+a_{*}^{(3)}(\vphi,x))|\x|^{\frac{1}{2}}
 +c^{(3)}_{*}(\vphi,x,\x))\\
 \stackrel{\eqref{fede3}}{=}
 & \omega\cdot\pa_{\vphi}+\ii V^{+}(\vphi,x)\pa_{x}+\frac{\ii}{2}V_{x}^{+}(\vphi,x)+
  \opw(\ii(1+a_{*}^{(3)}(\vphi,x))|\x|^{\frac{1}{2}}
 +c^{(3)}_{*}(\vphi,x,\x))\,.
 \end{aligned}
 \end{equation}
Using the explicit computation in  \eqref{fede1} and Theorem \ref{EgorovQuantitativo},
we have that the conjugate of \eqref{fede4} is,  
up to smoothing operators in $\mathfrak{L}^{2}_{\rho,p}$,
  \[
 \omega\cdot\pa_{\vphi}+\ii \opw(V^{++}(\vphi,x)\x+(1+a^{(4)}(\vphi,x))|\x|^{\frac{1}{2}}
 +c^{(4)}(\vphi,x,\x)+r_0((\vphi,x)))\,,
 \]
 where
 \begin{align}
 V^{++}(\vphi,x)&:=\omega\cdot\pa_{\vphi}\tilde{\beta}^{(2)}(\vphi,x)+V^{+}(\vphi,x)(1+\tilde\beta_{y}^{(2)}(\vphi,x))_{|x=y+\beta^{(2)}(\vphi,y)}\,,\label{fede5}\\
 r_0(\vphi,x)&:=
 \frac{\pa_{y}( V^{++}(\vphi,x)_{|x=y+\beta^{(2)}(\vphi,y)})}{2(1+\tilde{\beta}_y^{(2)}
 (1, \varphi, y+ \beta^{(2)}(\varphi, y)))}\,,\nonumber
 \\
 (1+a^{(4)}(\vphi,x))&:=(1+a^{(3)}_{*}(\vphi,x+\beta^{(2)}(x)))
\big|(1+\tilde{\beta}^{(2)}_{y}(1,y))_{|y=x+\beta^{(2)}(x)}\big|^{\frac{1}{2}}\,,
\nonumber
 \end{align}
 and $c^{(4)}(\vphi,x,\x)$ is some symbol in $S_2^{-1/2}$.
 Recall that the function $V^{+}$ constructed in Lemma \ref{lemmapre}
 is in $S_{\mathtt{v}}$ (see \eqref{funzmom}). Therefore,
by using Lemma \ref{drizzotutto} (see eq. \eqref{gotham}), we deduce that
$V^{++}=\mathfrak{m}_{1}$ and  $\mathfrak{m}_{1}$  
has the form  \eqref{mer} thanks to \eqref{vecfield1}, \eqref{manhattan2}.
As a consequence $r_0(\vphi,x)$ 
is identically zero.
By item $(ii)$ of Theorem \ref{EgorovQuantitativo} we have that $a^{(4)}\in S_2^0$. 
The discussion above implies that the conjugate of 
$\mathcal{L}_{3,*}$ \eqref{elle3star} has the form \eqref{elle4} with
$\mathcal{R}^{(4)}\in \mathfrak{L}^{2}_{\rho,p}\otimes\mathcal{M}_2(\mathbb{C})$.
The bounds in \eqref{merstima} with $i=1$ on the constant $\mathtt{r}_1$
follows by \eqref{vecfield3}, \eqref{iena40} 
and 
using \eqref{IpotesiPiccolezzaIdeltaDP}.
To conclude we set (recall \eqref{mappapasso1}, \eqref{MappaT2})
\begin{equation}\label{mappaPhi11}
{\bf \Phi}_1:={\bf T}^{1}_2\circ{\bf T}^{1}_1\,.
\end{equation}
Lemma \ref{CoroDPdiffeo} with $\beta\rightsquigarrow\beta^{(2)}$, estimate \eqref{stimaMapp1} and 
 \eqref{stimaMapp1bis} imply the \eqref{stimaMapp1}.
To conclude the proof of Proposition \ref{proporduno}
it remains to show that $\mathcal{L}_{4}\in \mathfrak{S}_1$
and $\Phi_1\in \mathfrak{T}_1$.

First of all recall that 
the functions $\beta^{(i)}$, $i=1,2$ are real and belongs to $S_{\mathtt{v}}$ 
(see \eqref{funzmom}).
The map $\Phi_1\in\mathfrak{T}_1$ because $\mathbf{T}^{1}_1, 
\mathbf{T}^{1}_2\in\mathfrak{T}_1$. 

Hence $\mathcal{L}_4$
defined in \eqref{elle4} is Hamiltonian and $x$-translation invariant.
The symbol
$d^{(4)}(\vphi,x,\x)$ satisfies 
\eqref{retoreMat}, \eqref{selfSimbo2}, \eqref{space3Xinv}
by construction. Hence, by Lemmata 
\ref{lem:momentoSimbolo}, \ref{realHamMtrixSimbo},
we deduce that $\mathcal{L}_{4}$ is in $\mathfrak{S}_1$.
\end{proof}

\subsection{Integrability at order 1/2}\label{redord12}
The aim of this section is to eliminate the $(\vphi,x)$ dependence from the 
symbol $a^{(4)}$ in \eqref{diagpezzo2}
appearing in the operator $\mathcal{L}_{4}$ in \eqref{elle4}.

\begin{proposition}\label{propordmezzo}
Let $\rho\geq 3$, $p\geq s_0$ and assume that 
\eqref{IpotesiPiccolezzaIdeltaDP} holds. 
Then there exist $\mu=\mu(\nu)$ and 
$\mathfrak{m}_{1/2}\colon \Omega_{\varepsilon}\to \mathbb{R}$ of the form
\eqref{mer} with $i=1/2$
such that, for all $\omega\in \Omega^{2\gamma}_{\infty}$
(see \eqref{omegaduegamma}), 
there exists a map 
${\bf \Phi}_2(\omega)\colon H^s_{S^{\perp}}(\mathbb{T})\times 
H^s_{S^{\perp}}(\mathbb{T})\to 
H^s_{S^{\perp}}(\mathbb{T})\times H^s_{S^{\perp}}(\mathbb{T})$ such that
(recall \eqref{elle4})
\begin{equation}\label{elle5}
\mathcal{L}_5:={\bf \Phi}_2 \,\mathcal{L}_4\,{\bf \Phi}_2^{-1}=
\Pi_{S}^{\perp}\Big(\omega\cdot\pa_{\vphi}+
\opw\left(\begin{matrix} d^{(5)}(\vphi,x,\x) & 0 \\
0 & \ov{d^{(5)}(\vphi,x,-\x)}  \end{matrix}\right)+\mathcal{R}^{(5)}\Big)
\end{equation}
where $\mathcal{R}^{(5)}\in\mathfrak{L}^{3}_{\rho,p}\otimes\mathcal{M}_2(\mathbb{C})$,
the symbol $d^{(5)}(\vphi,x,\x)$ has the form (recall \eqref{diagpezzo})
\begin{equation}\label{diagpezzo2or12}
d^{(5)}(\vphi,x,\x):=
\ii \mathfrak{m}_1\,\x+\ii (1+\mathfrak{m}_\frac{1}{2})|\x|^{\frac{1}{2}}
+\ii b^{(5)}(\vphi,x)\sign(\x)
+c^{(5)}(\vphi,x,\x)\,,
\end{equation}
with $b^{(5)}\in S_3^{0}$ independent of $\x$, 
$c^{(5)}\in S_3^{-\frac{1}{2}}$.
Moreover
\begin{equation}\label{stimaMapp2}
\|({\bf \Phi}_2)^{\pm1}u\|_{s}^{\gamma,\Omega^{2\gamma}_{\infty}}
\lesssim_{s}\|u\|_{s}^{\gamma, \Omega^{2\gamma}_{\infty}}+
\gamma^{-2}\big(\e^{13}+\e\|\mathfrak{I}_{\delta}\|^{\gamma, \calO_0}_{s+\mu}\big)
\|u\|_{s_0}^{\gamma, \Omega^{2\gamma}_{\infty}}\,.
\end{equation}
Finally
the operator  $\mathcal{L}_5$
is in $\mathfrak{S}_1$ and the map ${\bf \Phi}_2$
is in $\mathfrak{T}_1$ (recall Def. \ref{compaMulti}, 
\ref{goodMulti}). 
\end{proposition}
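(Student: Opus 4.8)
\textbf{Proof strategy for Proposition \ref{propordmezzo}.}
The plan is to mimic the scheme of Proposition \ref{proporduno}, now normalizing the symbol of order $1/2$ instead of the transport term. I would use a conjugation by the flow of a pseudo differential operator of order $-1/2$, constructed in two steps exactly as in the passage from $\mathcal{L}_3$ to $\mathcal{L}_4$: first a \emph{preliminary step} that handles the $\varepsilon$-homogeneous terms in $a^{(4)}$ (up to degree $12$), making them integrable in the sense of Definition \ref{integroSimbo}, and then a second step dealing with the non-homogeneous remainder $a^{(4)}_{\geq 13}$. Concretely, write $a^{(4)}(\vphi,x)=\sum_{i=1}^{12}\varepsilon^i a^{(4)}_i(\vphi,x)+a^{(4)}_{\geq 13}(\vphi,x)$ with $a^{(4)}\in S_2^0$ by Proposition \ref{proporduno}, and look for a generator
\[
G(\vphi,x,\x):=\opw\!\left(\begin{matrix} \ii g(\vphi,x)\,\mathrm{sign}(\x)|\x|^{-1/2} & 0\\ 0 & -\ii \ov{g(\vphi,x)}\,\mathrm{sign}(\x)|\x|^{-1/2}\end{matrix}\right),
\]
whose flow ${\bf \Phi}_2$ is in $\mathfrak{T}_1$ (using that $g$ real-valued and $x$-translation invariant makes the flow symplectic and momentum preserving, by Lemmata \ref{realHamMtrixSimbo}, \ref{lem:momentoSimbolo}). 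The commutator $[\,\omega\cdot\pa_\vphi+\opw(\ii\mathfrak{m}_1\x), \opw(\ii g\,\mathrm{sign}(\x)|\x|^{-1/2})\,]$ produces, at order $1/2$, a term of the form $-\ii(\omega\cdot\pa_\vphi g+\mathfrak{m}_1 g_x)|\x|^{1/2}$ which, together with the diagonal entry $\ii a^{(4)}|\x|^{1/2}$, is the homological equation to solve.

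For the \emph{preliminary step}, as in Lemma \ref{lemmapre} I would solve the homological equations order by order in $\varepsilon$: expand $g=\sum_{i=1}^{12}\varepsilon^i g_i$ and, for each $i$, set the coefficients $(g_i)_{\vec\jmath}^{\vec\sigma}=-(\mathtt{F}_i)_{\vec\jmath}^{\vec\sigma}/\bigl(\ii\,\mathcal{R}_{\vec\sigma}(\vec\jmath)\bigr)$ for $\{(j_k,\sigma_k)\}\notin\mathcal{S}_i$ and zero otherwise, where $\mathtt{F}_i$ collects $a^{(4)}_i$ plus lower-order corrections coming from the Lie expansion of the conjugation (which only involve $g_j$, $a^{(4)}_j$ with $j<i$). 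Here the genericity of $S$ enters through Remark \ref{accendo}: since $-j\notin S$ whenever $j\in S$, the small divisors $\mathcal{R}_{\vec\sigma}(\vec\jmath)$ restricted to $\mathcal{S}_i^c$ are bounded below by a constant depending only on $S$, so no loss of derivatives occurs and $g$ satisfies an $O(\varepsilon)$ bound. By induction each $g_i$ is real-valued and $x$-translation invariant (reality and anti-reality as in Lemma \ref{iena10}), hence ${\bf \Phi}_2$ stays in $\mathfrak{T}_1$ and $\mathcal{L}_5\in\mathfrak{S}_1$. The residual integrable parts $[a^{(4)}_{2k}]$ become the homogeneous constants $m_{1/2}^{(2k)}$ of $\mathfrak{m}_{1/2}$; the definition $\mathfrak{m}_{1/2}=\varepsilon^2 m_{1/2}+\sum_{k=2}^{5}\varepsilon^{2k}m_{1/2}^{(2k)}+\mathtt{r}_{1/2}$ matches \eqref{mer} with $i=1/2$.

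For the second step I would treat $a^{(4)}_{\geq 13}$, which by \eqref{merstima}-type bounds is $O(\varepsilon^{13}+\varepsilon\|\mathfrak{I}_\delta\|)$: here the homological equation $\omega\cdot\pa_\vphi g+\mathfrak{m}_1 g_x = a^{(4)}_{\geq13}-\langle a^{(4)}_{\geq13}\rangle$ is solved on the Cantor set $\Omega^{2\gamma}_\infty$ of \eqref{omegaduegamma} exactly as in Lemma \ref{drizzotutto} / the standard transport-straightening argument, paying one power of $\gamma^{-1}$; combined with the $\gamma^{-1}$ already lost in Proposition \ref{proporduno} this gives the $\gamma^{-2}$ in \eqref{stimaMapp2}, and the correction to the average goes into $\mathtt{r}_{1/2}$, satisfying \eqref{merstima} with $i=1/2$. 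The conjugation of the full operator $\mathcal{L}_4$ is then carried out via Lemma \ref{georgiaLem} (to drop the $\Pi_S$-truncations up to finite rank terms in $\mathfrak{L}^3_{\rho,p}$), Lemma \ref{differenzaFlussi} (to compare the projected flow with the un-projected one), and symbolic calculus (Lemma \ref{Jameshomo}) to expand the conjugated symbols; the new remainder lives in $\mathfrak{L}^{3}_{\rho,p}\otimes\mathcal{M}_2(\mathbb{C})$ because each step of the Lie series gains either a derivative or a power of $\varepsilon$, and the leftover symbol of order $\leq-1/2$ is absorbed into $c^{(5)}\in S_3^{-1/2}$, the new order-$0$ off-diagonal-type symbol $b^{(5)}$ being the residue $\mathrm{sign}(\x)$-term. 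The main obstacle I anticipate is bookkeeping the homogeneity classes: one must verify that all the lower-order corrections $\mathtt{f}_i$ generated along the preliminary step stay $i$-homogeneous of the form \eqref{simboOMO2DEF} and that the smoothing remainders genuinely land in $\mathfrak{L}^3_{\rho,p}$ (one degree of homogeneity worse than $\mathcal{L}_4$, consistent with having divided by $\gamma$ once more), which requires careful tracking through Lemma \ref{georgiaLem} and the flow estimates, together with the reality/momentum-invariance propagation needed to keep $\mathcal{L}_5\in\mathfrak{S}_1$.
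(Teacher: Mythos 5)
Your overall scheme (a preliminary step normalizing the $\varepsilon$-homogeneous part of $a^{(4)}$ using only the generic lower bound of Remark \ref{accendo}, followed by a second step solving a transport-type homological equation for the non-homogeneous remainder on the set $\Omega^{2\gamma}_{\infty}$, with the extra $\gamma^{-1}$ accounting for the $\gamma^{-2}$ in \eqref{stimaMapp2}) is exactly the paper's strategy (Lemmata \ref{lemmapremezzo} and \ref{wish1}, with ${\bf \Phi}_2={\bf T}_4\circ{\bf T}_3$). However, there is a genuine error in the mechanism you propose: your generator $\opw\bigl(\ii g(\vphi,x)\,\mathrm{sign}(\x)|\x|^{-1/2}\bigr)$ has order $-1/2$, and conjugating by its flow cannot touch the order-$1/2$ coefficient. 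Indeed $[\omega\cdot\pa_\vphi,\opw(\ii g\,\mathrm{sign}(\x)|\x|^{-1/2})]=\opw(\ii(\omega\cdot\pa_\vphi g)\,\mathrm{sign}(\x)|\x|^{-1/2})$ and $[\opw(\ii\mathfrak{m}_1\x),\opw(\ii g\,\mathrm{sign}(\x)|\x|^{-1/2})]$ are both pseudo differential of order $-1/2$ (a bracket of symbols of orders $1$ and $-1/2$ has order $-1/2$), so no term of the form $-\ii(\omega\cdot\pa_\vphi g+\mathfrak{m}_1 g_x)|\x|^{1/2}$ is produced and the claimed homological equation is not the one generated by your transformation; as written, the step fails and $a^{(4)}|\x|^{1/2}$ survives untouched.

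The fix is what the paper does: take the generator of order $+1/2$, i.e. the flow of \eqref{linprobpro} with symbol of the form \eqref{sim2}, $f=\beta(\vphi,x)|\x|^{1/2}$. Then the time contribution and the bracket with $\ii\mathfrak{m}_1\x$ produce precisely $-(\omega\cdot\pa_\vphi\beta+\mathfrak{m}_1\beta_x)|\x|^{1/2}$, and the homological equation you wrote (solved order by order in $\varepsilon$ in the preliminary step, and via the Melnikov conditions \eqref{omegaduegamma} for the non-homogeneous remainder) becomes correct. Two consequences of this correction that your write-up glosses over: the order-zero term $b^{(5)}\,\mathrm{sign}(\x)$ is not a leftover ``residue'' but arises from the Poisson brackets of two order-$1/2$ symbols (e.g. $\{\beta|\x|^{1/2},(1+a^{(4)})|\x|^{1/2}\}$ and the quadratic-in-$\beta$ terms from the time conjugation), which is exactly why it is independent of $\x$; and the homogeneous expansion of $a^{(4)}\in S_2^{0}$ stops at degree $14-2k=10$ (not $12$), consistently with the output landing in the classes $S_3^{0}$, $S_3^{-1/2}$ and $\mathfrak{L}^{3}_{\rho,p}$.
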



Following the strategy used in section 
 \ref{redord1} 
 we divide the proof of 
 of Proposition \ref{propordmezzo} into two steps.

\subsubsection{Preliminary steps}\label{preliminaremezzo}

\begin{lemma}{\bf ({Preliminary steps}).}\label{lemmapremezzo}
There exists a function $\beta^{(3)}(\varphi, x)$ of the form
\begin{equation}\label{genPre1mezzo}
\beta^{(3)}(\vphi,x)=\sum_{i=1}^{10}\e^{i}\beta^{(3)}_i(\vphi,x)\,,
\end{equation}
where $\beta^{(3)}_i$ is a real valued 
$i$-homogeneous symbol as in \eqref{simboOMO2DEF}
for $i=1,\ldots,10$ independent of $\x\in \mathbb{R}$ 
satisfying

\begin{equation}\label{stimaSoloe2}
\|\beta^{(3)}\|_{s}^{\gamma,\mathcal{O}_0}\lesssim_{s}\e\,,
\end{equation}
such that the following holds.
If 
\begin{equation}\label{mappapasso3}
{\bf T}_3^{\tau}:=\left(
\begin{matrix}
\mathcal{T}_{\beta^{(3)}}^{\tau}(\vphi) & 0 \\
0 & \ov{\mathcal{T}_{\beta^{(3)}}^{\tau}(\vphi)}
\end{matrix}
\right)\,,\qquad \tau\in[0,1]\,,
\end{equation}
where $\mathcal{T}_{\beta^{(3)}}^{\tau}$ is the flow of \eqref{linprobpro}
with generator as in \eqref{sim2} with $\beta\rightsquigarrow \beta^{(3)}$, 
then
\begin{equation}\label{elle4star}
\mathcal{L}_{4, *}:={\bf T}^1_3 \,\mathcal{L}_4 \, ({\bf T}^1_3)^{-1}=
\Pi_{S}^{\perp}\Big(\omega\cdot\pa_{\vphi}+
\opw\left(\begin{matrix} d_*^{(4)}(\vphi,x,\x) & 0 \\
0 & \ov{d_*^{(4)}(\vphi,x,-\x)}  \end{matrix}\right)+\mathcal{R}_*^{(4)}\Big)
\end{equation}
where 
$\mathcal{R}_*^{(4)}\in\mathfrak{L}_{\rho, p}^{2}\otimes\mathcal{M}_2(\mathbb{C})$ 
and
\begin{align}
d^{(4)}_*(\vphi,x,\x)&:=
\ii \mathfrak{m}_1\x
+\ii (1+a^{(4)}_*(\vphi,x))|\x|^{\frac{1}{2}}
+\ii b^{(4)}_{*}(\vphi,x)\sign(\x)+c^{(4)}_*(\vphi,x,\x)\,,
\label{diagpezzoord12}\\
a^{(4)}_*(\vphi,x)&:=\e^{2}m_{\frac{1}{2}}
+\sum_{k=2}^{5}\e^{2k}m_{\frac{1}{2}}^{(2k)}+
a^{(4,*)}_{\geq11}(\vphi,x)\,,
\label{termA1finale12}
\end{align}
where $\mathfrak{m}_1$ is in \eqref{mer}, $m_{1/2}$, 
$m_{1}^{(2k)}$ are respectively $2$, $2k$-homogenenous and   integrable (according to Def. \ref{integroSimbo}).
Moreover one has
$c^{(4)}_*\in S_2^{-1/2}$ and $b^{(4)}_*\in S_2^{0}$
(recall Def. \ref{espOmogenea})
independent of $\x\in \mathbb{R}$ and 
\begin{align}
\|a^{(4,*)}_{\geq11}\|_{s}^{\gamma,\Omega_{\infty}^{2\gamma}}&\lesssim_s \gamma^{-1}
(\e^{13}+\e\|\mathfrak{I}_{\delta}\|^{\gamma,\calO_0}_{s+\mu})\,,
\label{iena40a4}\\
\|\Delta_{12}a^{(4,*)}_{\geq11}\|_{p}
&\lesssim_p \gamma^{-1}
\e(1+\|\mathfrak{I}_{\delta}\|_{p+\mu})
\|i_{1}-i_{2}\|_{p+\mu}\,.\label{iena41a4}
\end{align}
Finally, the operator $\mathcal{L}_{4, *}$ is in $\mathfrak{S}_1$, the map
$\mathcal{T}_{\beta^{(3)}}^{\tau}$ is in $\mathfrak{T}_1$ and satisfies
\begin{equation}\label{stimaMapp2bis}
\|(\mathcal{T}_{\beta^{(3)}})^{\pm1}u\|_{s}^{\gamma,\Omega^{2\gamma}_{\infty}}
\lesssim_{s}\|u\|_{s}^{\gamma, \Omega^{2\gamma}_{\infty}}+
\e
\|u\|_{s_0}^{\gamma, \Omega^{2\gamma}_{\infty}}\,.
\end{equation}
\end{lemma}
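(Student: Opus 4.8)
The statement is another "preliminary steps" lemma, entirely analogous to Lemma~\ref{lemmapre} but now acting on the operator $\mathcal{L}_4$ in \eqref{elle4} to normalize the half-order symbol $a^{(4)}(\vphi,x)$ instead of the order-one symbol $V(\vphi,x)$. The plan is therefore to mimic the proof of Lemma~\ref{lemmapre} step by step, using a conjugation by a flow whose generator is of order $0$ rather than order $1$. First I would check the algebraic and analytic properties of the conjugating map: if $\beta^{(3)}$ is real valued and $x$-translation invariant (i.e.\ satisfies \eqref{space3Xinv}), then the generator $\Pi_S^\perp\opw(\ii \s(\x)\beta^{(3)}(\vphi,x))\Pi_S^\perp$ (cf.\ \eqref{sim2}) is self-adjoint and $x$-translation invariant, so the flow $\mathcal{T}_{\beta^{(3)}}^\tau$ belongs to $\mathfrak{T}_1$; and if $\beta^{(3)}$ satisfies the size bound \eqref{stimaSoloe2}, then by Lemma~\ref{differenzaFlussi} the flow differs from the one on the full phase space by a finite rank operator of the form \eqref{FiniteDimFormDP10001} satisfying \eqref{giallo2}, so by Lemma~\ref{CoroDPdiffeo} it is well posed and satisfies \eqref{stimaMapp2bis}.

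\textbf{Conjugation and Egorov analysis.} Next I would apply Lemma~\ref{georgiaLem} to reduce the conjugation ${\bf T}_3^1\mathcal{L}_4({\bf T}_3^1)^{-1}$ to $\Pi_S^\perp\widetilde{\bf T}_3^1\mathcal{L}_4(\widetilde{\bf T}_3^1)^{-1}\Pi_S^\perp$ up to finite rank remainders in $\mathfrak{L}_{\rho,p}^2\otimes\mathcal{M}_2(\mathbb{C})$, where $\widetilde{\bf T}_3^\tau$ is the corresponding flow on the full space. Then I would run the Egorov-type analysis of Theorem~\ref{EgorovQuantitativo} and Lemmata~\ref{EgorovTempo}, \ref{preparailsugo}: since the generator has order $0$, the transport term $\ii\mathfrak{m}_1\x$ is left unchanged at the principal order (the $\x$-multiplier commutes with $\opw$ of an order-$0$ symbol up to lower order), and the symbol $(1+a^{(4)})|\x|^{1/2}$ is transformed, producing a new half-order symbol $1+a^{(4)}_*$ given by a composition formula of the type \eqref{formaEsplic2} involving $\omega\cdot\pa_\vphi\widetilde\beta^{(3)}$ plus multiplicative corrections, and generating new symbols of order $\le 0$ (the $\sign(\x)$ term $b^{(4)}_*$) and $\le -1/2$ ($c^{(4)}_*$), all in the homogeneity classes $S_2^0, S_2^{-1/2}$. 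As in \eqref{Doo2}–\eqref{Doo56} this yields a Taylor expansion $a^{(4)}_*=\sum_{i=1}^{10}\e^i\mathtt{a}^+_i+\widetilde a^+$ with $\mathtt{a}^+_i=-\overline\omega\cdot\pa_\vphi\beta^{(3)}_i+F_i$, where $F_i$ depends only on $\beta^{(3)}_j$, and on the homogeneous terms of $a^{(4)}$, with $j<i$.

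\textbf{Homological equations and choice of $\beta^{(3)}$.} Then I would solve the homological equations iteratively exactly as in the $\e,\e^2,\e^{\ge3}$ steps of Lemma~\ref{lemmapre}: at step $\e^i$ set $(\beta^{(3)}_i)^{\vec\s}_{\vec\jmath}:=-(\mathtt{F}_i)^{\vec\s}_{\vec\jmath}/(\ii\mathcal{R}_{\vec\s}(\vec\jmath))$ for $\{(j_k,\s_k)\}\notin\mathcal{S}_i$ and $0$ otherwise, using that by Remark~\ref{accendo} the small divisors $\mathcal{R}_{\vec\s}(\vec\jmath)$ are bounded below by a constant depending only on $S$ off the integrable set $\mathcal{S}_i$ — here the generic choice of tangential sites is exactly what guarantees this. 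One checks inductively (reality of $F_i$ follows from reality of $\beta^{(3)}_j$, $j<i$, by Lemma~\ref{iena10}) that all the $\beta^{(3)}_i$ are real valued and $x$-translation invariant, and that \eqref{stimaSoloe2} holds by the lower bound on $\mathcal{R}_{\vec\s}$. The leftover part $[\mathtt{a}^+_i]$ on the integrable set $\mathcal{S}_i$ defines $m_{1/2}$ (for $i=2$) and $m_{1/2}^{(2k)}$ (for $i=2k$), which are integrable by Definition~\ref{integroSimbo}; the non-homogeneous remainder $\widetilde a^+$ defines $a^{(4,*)}_{\ge11}$, and its bounds \eqref{iena40a4}–\eqref{iena41a4} follow from the $S_2^{0}$-type estimates \eqref{AVBDEF} together with \eqref{IpotesiPiccolezzaIdeltaDP} and the $\gamma^{-1}$ loss coming from the transport-reduction constant $\mathfrak{m}_1$ appearing via $a^{(4)}$. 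Finally, $\mathcal{L}_{4,*}\in\mathfrak{S}_1$ because $\mathcal{T}_{\beta^{(3)}}^\tau\in\mathfrak{T}_1$ and the new symbols satisfy \eqref{selfSimbo2}, \eqref{space3Xinv} by construction.

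\textbf{Main obstacle.} The routine parts are the symbolic-calculus bookkeeping; the delicate point, as in Lemma~\ref{lemmapre}, is showing that the genericity of $S$ makes \emph{all} the homological equations at the relevant orders solvable with uniformly bounded divisors — that is, that the only obstruction to normalizing $a^{(4)}$ is the genuinely integrable part supported on $\mathcal{S}_i$. This rests on Remark~\ref{accendo} and ultimately on the fact that, off $\mathcal{S}_i$, $\mathcal{R}_{\vec\s}(j_1,\dots,j_i)$ cannot vanish for a generic choice of tangential sites, together with the momentum constraint $\sum\s_k j_k=0$ built into the homogeneous symbols \eqref{simboOMO2DEF}. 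Keeping careful track of which homogeneous order each correction $F_i$ lives in (and that it depends only on lower-order data) so that the iteration closes after finitely many steps is the other point requiring care.
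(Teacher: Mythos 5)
Your overall strategy coincides with the paper's: conjugate $\mathcal{L}_4$ by the flow generated by $\beta^{(3)}$, Taylor-expand the new half-order coefficient as $-\bar\omega\cdot\pa_{\vphi}\beta^{(3)}_i+F_i$ with $F_i$ depending only on lower-order data, solve the homological equations order by order dividing by $\mathcal{R}_{\vec\sigma}$ off the integrable sets (Remark \ref{accendo} and genericity of $S$), and check reality, momentum invariance and the flow estimates via Lemma \ref{differenzaFlussi} and Lemma \ref{georgiaLem}. However, there is an inconsistency in how you describe the conjugating flow, and taken literally it would make the step fail. You state that the generator has order $0$ and write it as $\opw(\ii\,\mathrm{sign}(\x)\beta^{(3)})$, but the generator prescribed by the statement (and used in the paper) is the \emph{half-order} symbol $\beta^{(3)}(\vphi,x)|\x|^{1/2}$ of \eqref{sim2}. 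This is not cosmetic: with an order-$0$ generator the term $\omega\cdot\pa_{\vphi}\beta^{(3)}$ produced by conjugating $\omega\cdot\pa_{\vphi}$ would enter at order $0$, and the bracket with $(1+a^{(4)})|\x|^{1/2}$ would be of order $-1/2$, so nothing of order $1/2$ could be cancelled and $a^{(4)}$ could not be normalized at all. It is precisely because the generator is of order $1/2$ that $\omega\cdot\pa_{\vphi}\beta^{(3)}|\x|^{1/2}$ and $\{\beta^{(3)}|\x|^{1/2},\mathfrak{m}_1\x\}=-\mathfrak{m}_1\beta^{(3)}_x|\x|^{1/2}$ land at the half order and give the paper's relation $a^{(4)}_*=a^{(4)}-\omega\cdot\pa_{\vphi}\beta^{(3)}-\mathfrak{m}_1\beta^{(3)}_x$ (the $\mathfrak{m}_1\beta^{(3)}_x$ piece, being $O(\e^2)$, is what feeds the $F_i$'s); your homological equations implicitly assume this, so your bookkeeping is right only once the generator is corrected.

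Related to this, the conjugation tools you invoke are the wrong ones for this step: Theorem \ref{EgorovQuantitativo}, the composition formula \eqref{formaEsplic2}, Lemma \ref{EgorovTempo} and Lemma \ref{CoroDPdiffeo} all pertain to the order-one generator $b(\tau,\vphi,x)\x$ of \eqref{sim1}, i.e.\ to a genuine diffeomorphism of the torus, and there is no such diffeomorphism attached to the $|\x|^{1/2}$-flow. The paper instead uses Lemma \ref{CoroDPdiffeo2} for well-posedness of the flow and the Lie-expansion conjugation Lemmata \ref{flusso12basso}, \ref{flusso12basso2}, computing explicitly the brackets as in \eqref{tempomezzo}--\eqref{ord1123}; this is also what produces the new $\sign(\x)$ coefficient $b^{(4)}_*$ and the $S_2^{-1/2}$ remainder $c^{(4)}_*$. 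Once these two points are fixed, the rest of your argument (generic lower bounds on the divisors, inductive reality and $x$-translation invariance of the $\beta^{(3)}_i$, the identification of $m_{1/2}$, $m_{1/2}^{(2k)}$ with the projections on $\mathcal{S}_{2k}$, and the estimates \eqref{iena40a4}--\eqref{iena41a4} from the $S_2^0$ bounds on $a^{(4)}$) matches the paper's proof.
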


\begin{proof}
By Lemma \ref{CoroDPdiffeo2} we have that, if $\beta^{(3)}$ is real valued
and satisfies \eqref{stimaSoloe2}, then
the flow $\mathcal{T}^{\tau}_{\beta^{(3)}}$
is well posed and satisfies bounds like  \eqref{flow2}-\eqref{flow222}.
By Lemma \ref{differenzaFlussi} we have
\[
\mathcal{T}^{1}_{\beta^{(3)}}=
\Pi_{S}^{\perp} \Phi_{\beta^{(3)}}^{\tau}\Pi_{S}^{\perp}\circ({\rm Id}+\mathcal{R})
\]
where $\Phi_{\beta^{(3)}}^{\tau}$ is the flow of \eqref{linprob}
 with $f$  as in \eqref{sim2}
with $\beta\rightsquigarrow \beta^{(3)}$
and where $\mathcal{R}$ is a finite rank operator of the form \eqref{FiniteDimFormDP10001}
satisfying \eqref{giallo2}.
By Lemma \ref{georgiaLem} we have that the conjugate of 
$\mathcal{L}_4$ in \eqref{elle4} under the map ${\bf T}_{3}^{1}$ is given by 
\[
\Pi_{S}^{\perp}\widetilde{{\bf T}}_3^{1}\mathcal{L}_{4}(\widetilde{{\bf T}}_2^{1})^{-1}\Pi_{S}^{\perp}\,,
\qquad
\widetilde{{\bf T}}_3^{\tau}:=\left(
\begin{matrix}
{\Phi^{\tau}_{\beta^{(3)}}}(\vphi) & 0 \\
0 & \ov{\Phi_{\beta^{(3)}}^{\tau}(\vphi)}
\end{matrix}
\right)\,,\qquad \tau\in[0,1]\,,
\]
up to finite rank operators belonging to 
$\mathfrak{L}_{\rho, p}^{2}\otimes\mathcal{M}_2(\mathbb{C})$. 
We apply Lemmata \ref{flusso12basso}, \ref{flusso12basso2}.
We start by studying the time contribution. By Lemma \ref{flusso12basso2}
(see in particular item $(ii)$)
we have that (see \eqref{coniugato1002}, \eqref{castello1tempo})
\begin{equation}\label{tempomezzo}
\begin{aligned}
\Phi^{1}_{\beta^{(3)}}&\omega\cdot\pa_{\vphi}(\Phi^{1}_{\beta^{(3)}})^{-1}=
\omega\cdot\pa_{\vphi}
-\ii\opw\Big( \omega\cdot\pa_{\vphi}\beta^{(3)}|\x|^{\frac{1}{2}}
+\frac{1}{2}\{ \beta^{(3)}|\x|^{\frac{1}{2}}, \omega\cdot\pa_{\vphi}\beta^{(3)}|\x|^{\frac{1}{2}}
\}+r_{1}
\Big)\\
&
=\omega\cdot\pa_{\vphi}
-\ii\opw\Big( \omega\cdot\pa_{\vphi}\beta^{(3)}|\x|^{\frac{1}{2}}
+\frac{1}{4}(\beta^{(3)}\omega\cdot\pa_{\vphi}\beta^{(3)}_x
-\beta^{(3)}_x\omega\cdot\pa_{\vphi}\beta^{(3)})\sign(\x)
+r_{1}
\Big)\,,
\end{aligned}
\end{equation}
up to smoothing remainder $\mathcal{R}_1$ in the class $\mathfrak{L}^{2}_{\rho,p}$
and where $r_1$ is some symbol in $S_2^{-3/2}$.
We remark that the symbol of the operator above is not singular at $\x=0$.
Indeed, 
recalling the notation \eqref{notasignXi},
 we are writing  
$\sign(\x)$ instead of  $\chi(\x)\sign(\x)$ (with $\chi$ as in \eqref{cutofffunct}).
In the following we will use systematically such notation.

We now consider the contribution coming from the conjugation of the spatial operator
by using Lemma \ref{flusso12basso} applied to the pseudo differential operator
with symbol in \eqref{diagpezzo2}.
First we notice that (see \eqref{coniugato1000}, \eqref{castello1})
\begin{equation}\label{ord112}
\begin{aligned}
\Phi^{1}_{\beta^{(3)}}\opw(\ii \mathfrak{m}_1\x)(\Phi^{1}_{\beta^{(3)}})^{-1}&=
\ii\opw\big(
 \mathfrak{m}_1\x+\{\beta^{(3)}|\x|^{\frac{1}{2}},\mathfrak{m}_{1}\x\}
\big)\\
&+\ii\opw\big(\frac{1}{2}
\{\beta^{(3)}|\x|^{\frac{1}{2}},\{ \beta^{(3)}|\x|^{\frac{1}{2}},\mathfrak{m}_{1}\x\}\}+r_2\big)\\
&=
\ii\opw\big(
 \mathfrak{m}_1\x-\mathfrak{m}_1\beta^{(3)}_x|\x|^{\frac{1}{2}}
-\frac{\mathfrak{m}_1\sign(\x)}{4}(\beta^{(3)}\beta^{(3)}_{xx}-(\beta^{(3)}_{x})^{2})+r_2
\big)
\end{aligned}
\end{equation}
up to smoothing remainder $\mathcal{R}_2\in \mathfrak{L}^{2}_{\rho,p}$
and $r_2\in S_2^{-1/2}$.
Moreover we have
\begin{equation}\label{ord1122}
\begin{aligned}
\Phi^{1}_{\beta^{(3)}}&\opw(\ii (1+a^{(4)})|\x|^{\frac{1}{2}})(\Phi^{1}_{\beta^{(3)}})^{-1}=
\ii\opw\big((1+a^{(4)})|\x|^{\frac{1}{2}}+\{\beta^{(3)}|\x|^{\frac{1}{2}},(1+a^{(4)})|\x|^{\frac{1}{2}}\}
+r_3
\big)\\
&=\ii\opw\big(
(1+a^{(4)})|\x|^{\frac{1}{2}}+
\frac{\sign(\x)}{2}(\beta^{(3)}a^{(4)}_{x}-(1+a^{(4)})\beta^{(3)}_x)+r_3
\big)
\end{aligned}
\end{equation}
up to smoothing remainder $\mathcal{R}_3\in \mathfrak{L}^{2}_{\rho,p}$
and where $r_3 \in S_2^{-1/2}$.
Similarly (using Lemmata \ref{flusso12basso}, 
\ref{CoroDPdiffeo200}) we conclude
\begin{equation}\label{ord1123}
\begin{aligned}
\Phi^{1}_{\beta^{(3)}}&\big[\opw(c^{(4)})+\mathcal{R}^{(4)}\big]
(\Phi^{1}_{\beta^{(3)}})^{-1}=
\opw\big(c^{(4)}+r_4\big)
\end{aligned}
\end{equation}
up to smoothing remainder $\mathcal{R}_4\in \mathfrak{L}^{2}_{\rho,p}$
and where $r_4 \in S_2^{-1}$.
By collecting the \eqref{tempomezzo}-\eqref{ord1123}
we conclude that the conjugate of  $\mathcal{L}_4$ in \eqref{elle4} under the map in \eqref{mappapasso3} has the form \eqref{elle4star}
with symbol $d^{(4)}_*(\vphi,x,\x)$ as in \eqref{diagpezzoord12} where
$b^{(4)}_*\in S_{2}^{0}$, $c^{(4)}_*\in S^{-1/2}_2$ and
\begin{align*}
a^{(4)}_{*}&=a^{(4)}-\omega\cdot\pa_{\vphi}\beta^{(3)}
-\mathfrak{m}_1\beta^{(3)}_{x}\,,
\\
b^{(4)}_{*}&=-\frac{1}{4}\big(
\beta^{(3)}\omega\cdot\pa_{\vphi}\beta^{(3)}_x-\beta^{(3)}_x\omega\cdot\pa_{\vphi}\beta^{(3)}
\big)-\frac{\mathfrak{m}_1}{4}\big( \beta^{(3)}\beta^{(3)}_{xx}-(\beta^{(3)}_x)^{2}\big)
+\frac{1}{2}\big(
\beta^{(3)}a^{(4)}_x-(1+a^{(4)})\beta^{(3)}_x
\big)\,.\nonumber
\end{align*}
We shall prove that it is possible to choose $\beta^{(3)}$ in \eqref{genPre1mezzo}
in such a way that
the symbol $a^{(4)}_{*}$ 
has the form \eqref{termA1finale12}.
Recall that $a^{(4)}$ is a symbol (independent of $\x\in \mathbb{R}$)
in the class $S_2^0$ and hence it
admits and expansion as \eqref{espandoenonmifermo} with $k=2$
for some $i$-homogeneous symbols $a^{(4)}_i$ and a non-homogeneous symbol
$\tilde{a}^{(4)}$. Therefore, recalling the expansion
of $\mathfrak{m}_1$ in \eqref{mer}
and of 
$\omega$ in \eqref{FreqAmplMapDP}, 
we have
\begin{equation}\label{ventrale}
\begin{aligned}
a^{(4)}_{*}&=\sum_{i=1}^{10}\e^{i}a_{i}^{(4)}
-\sum_{i=1}^{10}\e^{i}\bar{\omega}\cdot\pa_{\vphi}\beta^{(3)}_{i}
-\sum_{i=1}^{10}\e^{i}\left(
\e^{2}\mathbb{A}\x+\sum_{k=2}^{6}\e^{2k}\mathtt{b}_{k}(\x,\ldots,\x)
\right)\cdot\pa_{\vphi}\beta^{(3)}_{i}\\
&-\left(\e^{2}m_1+\sum_{k=2}^{6}\e^{2k}m_{1}^{2k}+\mathtt{r}\right)
\sum_{i=1}^{10}\e^{i}(\beta^{(3)}_i)_x+\tilde{a}^{(4)}\,.
\end{aligned}
\end{equation}
This implies that the symbol $a^{(4)}_*$ has an expansion as in 
 \eqref{espandoenonmifermo} with $k=2$
 for some non-homogeneous symbol $\tilde{a}^{(4)}_{*}$ satisfying 
 \eqref{AVBDEF}, 
 with $k=2$, and some $i$-homogeneous symbols $a_{i,*}^{(4)}$, $i=1,\ldots,10$.
 In particular we have
 \begin{align}
 a^{(4)}_{1,*}&:= a^{(4)}_{1}-\bar{\omega}\cdot\pa_{\vphi}\beta^{(3)}_1\,,\label{astar1}\\
 a^{(4)}_{2,*}&:=a^{(4)}_{2}-\bar{\omega}\cdot\pa_{\vphi}\beta^{(3)}_2\,,\label{astar2}\\
   a^{(4)}_{3,*}&:=\mathtt{F}_{3}-\bar{\omega}\cdot\pa_{\vphi}\beta^{(3)}_3\,,
   \quad \mathtt{F}_3:=a^{(4)}_{3}+\mathtt{f}_{3}\,,
  \quad\mathtt{f}_{3}:=-\mathbb{A}\x\cdot\pa_{\vphi}\beta^{(3)}_1
  -{m}_1(\beta^{(3)}_1)_{x}
    \,,\label{astar3}\\
   a^{(4)}_{j,*}&:=\mathtt{F}_j-\bar{\omega}\cdot\pa_{\vphi}\beta^{(3)}_j\,,
  \quad\mathtt{F}_j:=+ a^{(4)}_{j}+ \mathtt{f}_{j}\,,\quad 4\leq j\leq 10\,,\label{astarj}
 \end{align}
 where $\mathtt{f}_{j}$ are some $j$-homogeneous symbols of the form \eqref{simboOMO2DEF}
 independent of $\x\in \mathbb{R}$
 and depending only on $\beta^{(3)}_{k}$ with $1\leq k\leq j-1$.

 \vspace{0.5em}
\noindent
{\bf Step $\e$.}
By \eqref{simboOMO2DEF} we have the expansion
\[
a^{(4)}_1:=\sum_{\s=\pm, n\in S}(\mathtt{a}_1^{(4)})_{n}^{\s}\sqrt{\zeta_{n}}e^{\ii \s\mathtt{l}(n)\cdot\vphi}
e^{\ii nx}\,,\qquad (\mathtt{a}^{(4)}_1)_{n}^{\s}\in \mathbb{C}\,.
\]
Since the function ${a}^{(4)}_1$ is real-valued, the coefficients $(\mathtt{a}^{(4)}_1)_{n}^{\s}$
satisfy
\eqref{iena2} with $k=1$.
We set
\begin{equation*}
(\beta^{(3)}_1)_{n}^{\s}:= \frac{-\s({\mathtt V}_1)^{\s}_{n}}{\ii\sqrt{|n|}}\,,
\quad \s=\pm\,, n\in S\,,\;\;n\neq0\,.
\end{equation*}
One can check that  (see \eqref{astar1}) $a^{(4)}_{*,1}\equiv0$
and that the function $\beta^{(3)}_1$ satisfies \eqref{iena2},
i.e. it is real valued. In particular the estimate \eqref{stimaSoloe2} holds.

\vspace{0.5em}
\noindent
{\bf Step $\e^{2}$.}
Now we want to eliminate the function in \eqref{astar2}.
We  define, for $n_1,n_2\in S$,
\begin{equation}\label{beta-23}
\begin{aligned}
&(\beta^{(3)}_{2})^{\s\s}_{n_1, n_2} 
:= \frac{-(\mathtt{a}^{(4)}_2)^{\s\s}_{n_1, n_2}}{\ii \s (\omega_{n_1} + \omega_{n_2})} \, , \, 
\;\; \s=\pm \, ,
\qquad 
(\beta^{(3)}_{2})^{\s (-\s)}_{n_1, n_2} :=  
\frac{-\s(\mathtt{a}^{(4)}_{2 })^{\s(-\s)}_{n_1, n_2}}{\ii (\omega_{n_1} - \omega_{n_2})} \, , 
\;\; n_1 \neq \pm n_2 \, , \\
&(\beta^{(3)}_{2})_{0,0}^{\s \s} := 0 \,,\;\;\; 
(\beta^{(3)}_2)^{\s(-\s)}_{n,\pm n}:=0\,,  \;\; \,
\end{aligned}
\end{equation}
where $(\mathtt{a}^{(4)}_{2 })^{\s_1\s_2}_{n_1, n_2}$
are the coefficients of ${a}^{(4)}_2$ in the corresponding 
expansion \eqref{simboOMO2DEF}.
Notice that, by the form of $S$ in \eqref{TangentialSitesDP}, 
the condition
$n_1 \neq \pm n_2$ in \eqref{beta-23} reduces to $n_1 \neq n_2$.
Using that $a^{(4)}_{2}$ is real valued one has that 
$(\beta^{(3)}_2)^{\s_1\s_2}_{n_1,n_2}$
satisfy \eqref{iena2}, hence $\beta^{(3)}_{2}$ is real valued. In particular is satisfies the estimate
\eqref{stimaSoloe2}.
Moreover, recalling   Definition \ref{integroSimbo},
we define $\mathfrak{m}_{\frac{1}{2}}$
 as the restriction of $a^{(4)}_{2}$ to the set $\mathcal{S}_{2}$,
 i.e.
 \begin{equation*}
{m}_{\frac{1}{2}}:=(a^{(4)}_{2})_{|\mathcal{S}_2}:=
 \frac{1}{2\pi}\sum_{n\in S}\big((a^{(4)}_{2})^{+-}_{n,n}+(a^{(4)}_{2})^{-+}_{n,n}\big)\zeta_{n}\,,
 \end{equation*}
 which is integrable according to Definition \ref{integroSimbo} (recall Remark \ref{accendo}).
 By \eqref{beta-23}, one can check that 
 $a^{(4)}_{2,*}\equiv {m}_{1/2}$.

\vspace{0.5em}
\noindent
{\bf Step $\e^{\geq3}$.}
We prove inductively that, for
 $j\geq 3$, there are functions $\beta^{(3)}_j$ of the form \eqref{simboOMO2DEF}
 with $i=j$ and coefficients independent of $\x\in \mathbb{R}$
 which are real valued and satisfying \eqref{stimaSoloe2} such that 
 (recall \eqref{astar3},\eqref{astarj})
 \begin{equation}\label{induco}
 a^{(4)}_{j,*}\equiv\left\{\begin{aligned}
 &m_{\frac{1}{2}}^{(j)}\,\quad j\; {\rm even}\,,\\
 &0\,\qquad \,\, j \; {\rm odd}\,,
 \end{aligned}
 \right.
 \end{equation}
with
 \begin{equation*}
 m_{\frac{1}{2}}^{(j)}:=\sum_{\mathcal{S}_{j}}
 (\mathtt{F}_j)^{\s_1,\ldots,\s_{j}}_{n_1,\ldots,n_{j}}
 \sqrt{\zeta_{j_1}\cdots\zeta_{j_j}}e^{\ii(\s_1 j_1+\ldots+\s_{j}j_{j})x}
e^{\ii(\s_1\mathtt{l}(j_1)+\ldots+\s_{j}\mathtt{l}(j_j))\cdot\f}
\end{equation*}
where $\sum_{\mathcal{S}_{j}}$ denotes the sum restricted over indexes in $\mathcal{S}_{j}$.
Notice that $m_{1/2}^{(j)}$
 is integrable according to Definition \ref{integroSimbo} (recall Remark \ref{accendo}).
 
 Assume by induction that the assertion above is true for $3\leq k\leq j$.
 By construction (see the \eqref{ventrale}) the function $\mathtt{F}_{j+1}$ 
 depends only on 
 $\beta^{(3)}_{k}$, $k\leq j$, hence, by the inductive hypothesis it is real valued.
We  set, for $\vec{\s}=(\s_{1},\ldots,\s_{j+1})$, $\vec{j}=(j_1,\ldots,j_{j+1})$,
\begin{equation}\label{def:betai3}
(\beta^{(3)}_{j+1})_{\vec{j}}^{\vec{\s}}:=
\frac{-(\mathtt{F}_{j+1})^{\vec{\s}}_{\vec{j}}}{\ii \mathcal{R}_{\vec{\s}}(\vec{j})}\,,
\quad 
\left\{\big(j_k, \sigma_k \big) \right\}_{k=1}^{j+1}\notin \mathcal{S}_{j+1}\,,
\qquad
(\beta^{(3)}_{j+1})_{\vec{j}}^{\vec{\s}}=0\,, \quad  
\left\{\big(j_k, \sigma_k \big) \right\}_{k=1}^{j+1}\in \mathcal{S}_{j+1}\,,
\end{equation}
and $\mathcal{R}_{\vec{\s}}(\vec{j})$ is the function defined in \eqref{por}.
Since $\mathtt{F}_{j+1}$ is real, then   the coefficients 
$ (\mathtt{F}_{j+1})^{\s_1,\ldots,\s_{j+1}}_{n_1,\ldots,n_{j+1}}$ satisfies
 \eqref{iena2}. By an explicit computation one can 
 check that also $(\beta^{(3)}_{j+1})_{\vec{j}}^{\vec{\s}}$
 satisfies \eqref{iena2}, so that
$\beta^{(3)}_{j+1}$ is real valued.
The estimate
\eqref{stimaSoloe2} follows by using the bound on $\mathcal{R}_{\vec{\s}}$ in 
Remark \ref{accendo}.
The \eqref{stimaMapp2bis} follows by 
Lemma \ref{CoroDPdiffeo2}.
Using \eqref{def:betai3} one verifies the \eqref{induco}.
By the construction above, one can check that
the operator $\mathcal{L}_{4, *}$ is in $\mathfrak{S}_1$ and the map $\mathcal{T}_{\beta^{(3)}}^{\tau}$ is in $\mathfrak{T}_1$
(i.e. is symplectic and $x$-translation invariant).
.
\end{proof}

\subsubsection{Reduction to constant coefficients at order 1/2}\label{rambo1010}
The aim of this section is to eliminate the $(\vphi,x)$-dependence
in the symbol $a^{(4)}_{*}$ in \eqref{termA1finale12} appearing in the operator
$\mathcal{L}_{4,*}$ in \eqref{elle4star}. 
Then we 
 conclude the proof of Proposition \ref{propordmezzoj}.
We first need a preliminary result.
\begin{lemma}\label{wish1}
For any $\omega\in \Omega_{\infty}^{2\gamma}$ (see \eqref{omegaduegamma})
there exists a real valued function $\beta^{(4)}(\varphi, x)\in S_{\mathtt{v}}$ (see \eqref{funzmom})
such that 
\begin{equation}\label{equabeta4}
a^{(4)}_{*}(\vphi,x)-(\omega\cdot\pa_{\vphi}+\mathfrak{m}_1\pa_{x})\beta^{(4)}(\vphi,x)
=\mathfrak{m}_{\frac{1}{2}}\,,
\end{equation}
where $a^{(4)}_{*}$ is in \eqref{termA1finale12},
the constant $\mathfrak{m}_{\frac{1}{2}}$ is in \eqref{mer}, with $i=1/2$ 
and
\begin{equation}\label{def:remezzo}
\mathtt{r}_{\frac{1}{2}}:=\int_{\mathbb{T}^{\nu}}a^{(4,*)}_{\geq11}(\vphi,x)d\vphi\,.
\end{equation}
The function  $\beta^{(4)}(\varphi, x)$ satisfies 
\begin{align}
\|\beta^{(4)}\|_{s}^{\gamma,\Omega_{\infty}^{2\gamma}}&\lesssim_s \gamma^{-2}
(\e^{13}+\e\|\mathfrak{I}_{\delta}\|_{s+\mu}^{\gamma,\calO_0})\,,\label{iena40beta4}\\
\|\Delta_{12}\beta^{(4)}\|_{p}&\lesssim_p
\e\gamma^{-2}(1+\|\mathfrak{I}_{\delta}\|_{p+\mu})
\|i_{1}-i_{2}\|_{p+\mu}\,,\label{iena41beta4}
\end{align}
for some $\mu>0$ depending on $\nu$.
Moreover the map $\mathcal{T}_{\beta^{(4)}}^{\tau}(\vphi)$, $\tau\in[0,1]$, 
defined as the flow
of \eqref{linprobpro} with generator as in 
\eqref{sim2} with $\beta\rightsquigarrow\beta^{(4)}$
is well posed and satisfies
 \begin{equation}\label{stimaMapp4}
\|(\mathcal{T}^{\tau}_{\beta^{(4)}})^{\pm1}u\|_{s}^{\gamma,\Omega^{2\gamma}_{\infty}}
\lesssim_{s}\|u\|_{s}^{\gamma, \Omega^{2\gamma}_{\infty}}+
\gamma^{-2}\big(\e^{13}+\e\|\mathfrak{I}_{\delta}\|^{\gamma, \calO_0}_{s+\mu}\big)
\|u\|_{s_0}^{\gamma, \Omega^{2\gamma}_{\infty}}\,,
\end{equation}
uniformly in  $\tau\in[0,1]$. Finally $\mathcal{T}^{\tau}_{\beta^{(4)}}$ belongs to
$\mathfrak{T}_1$.
\end{lemma}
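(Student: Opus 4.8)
The plan is to solve the linear cohomological equation \eqref{equabeta4} directly. By Lemma \ref{lemmapremezzo}, the symbol $a^{(4)}_*$ in \eqref{termA1finale12} has already been brought to the form $\varepsilon^{2}m_{\frac12}+\sum_{k=2}^{5}\varepsilon^{2k}m_{\frac12}^{(2k)}+a^{(4,*)}_{\geq11}(\varphi,x)$, where the homogeneous pieces $m_{\frac12},m_{\frac12}^{(2k)}$ are \emph{integrable}, hence independent of $(\varphi,x)$. Therefore \eqref{equabeta4} is equivalent to
\[
a^{(4,*)}_{\geq11}(\varphi,x)-(\omega\cdot\pa_{\vphi}+\mathfrak{m}_1\pa_{x})\beta^{(4)}(\varphi,x)=\mathtt{r}_{\frac12}\,,
\]
upon setting $\mathfrak{m}_{\frac12}:=\varepsilon^{2}m_{\frac12}+\sum_{k=2}^{5}\varepsilon^{2k}m_{\frac12}^{(2k)}+\mathtt{r}_{\frac12}$, so that $\mathfrak{m}_{\frac12}$ automatically has the shape \eqref{mer} with $i=1/2$ once $\mathtt{r}_{\frac12}$ is shown to be constant.

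Since $\mathcal{L}_{4,*}$ in \eqref{elle4star} belongs to $\mathfrak{S}_1$, the coefficient $a^{(4,*)}_{\geq11}$ is $x$-translation invariant, i.e.\ $a^{(4,*)}_{\geq11}(\varphi,x)=A(\varphi-\mathtt{v}x)$ for some smooth $A$ on $\mathbb{T}^{\nu}$; looking for $\beta^{(4)}(\varphi,x)=B(\varphi-\mathtt{v}x)$ in $S_{\mathtt{v}}$ (see \eqref{funzmom}) turns the transport operator $\omega\cdot\pa_{\vphi}+\mathfrak{m}_1\pa_{x}$ into the \emph{constant coefficient} vector field $(\omega-\mathfrak{m}_1\mathtt{v})\cdot\pa_{\Theta}$ on $\mathbb{T}^{\nu}$, and the equation reduces to $A(\Theta)-(\omega-\mathfrak{m}_1\mathtt{v})\cdot\pa_{\Theta}B(\Theta)=\langle A\rangle_{\Theta}$. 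Averaging in $\Theta$ identifies $\mathtt{r}_{\frac12}=\langle A\rangle_{\Theta}=\int_{\mathbb{T}^{\nu}}a^{(4,*)}_{\geq11}(\varphi,x)d\varphi$, which is indeed $x$-independent, giving \eqref{def:remezzo}; the bound \eqref{merstima} for $\mathtt{r}_{\frac12}$ then follows from \eqref{iena40a4} at $s=s_0$ together with \eqref{IpotesiPiccolezzaIdeltaDP}, extending it to $\Omega_{\varepsilon}$ by Kirszbraun. The solution is $B=\big((\omega-\mathfrak{m}_1\mathtt{v})\cdot\pa_{\Theta}\big)^{-1}(A-\langle A\rangle_{\Theta})$; writing $(\omega-\mathfrak{m}_1\mathtt{v})\cdot\ell=\omega\cdot\ell+\mathfrak{m}_1 j$ along the momentum relation $\mathtt{v}\cdot\ell+j=0$, on the set $\Omega^{2\gamma}_{\infty}$ of \eqref{omegaduegamma} the relevant small divisors (those with $j\in S^{c}$; the modes with $\mathtt{v}\cdot\ell=0$ being controlled by the diophantine property of $\omega$) satisfy $|\omega\cdot\ell+\mathfrak{m}_1 j|\geq\gamma\langle\ell\rangle^{-\tau}$, so $B$ is well-defined with a loss of $\tau$ derivatives and a factor $\gamma^{-1}$, which combined with the $\gamma^{-1}$ already present in \eqref{iena40a4}, \eqref{iena41a4} yields the bounds \eqref{iena40beta4}, \eqref{iena41beta4}. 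The reality of $A$ (inherited from $\mathcal{L}_{4,*}\in\mathfrak{S}_1$, exactly as in Lemma \ref{lemmapremezzo}) together with the oddness of $\ell\mapsto(\omega-\mathfrak{m}_1\mathtt{v})\cdot\ell$ makes $B$, hence $\beta^{(4)}$, real valued and in $S_{\mathtt{v}}$.

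It remains to handle the flow $\mathcal{T}^{\tau}_{\beta^{(4)}}$ of \eqref{linprobpro} with generator as in \eqref{sim2} and $\beta\rightsquigarrow\beta^{(4)}$. The smallness \eqref{iena40beta4} (combined with \eqref{IpotesiPiccolezzaIdeltaDP}) makes the hypotheses of Lemma \ref{CoroDPdiffeo2} hold uniformly in $\tau\in[0,1]$, providing well-posedness and the tame estimate \eqref{stimaMapp4}. Since $\beta^{(4)}$ is real valued and lies in $S_{\mathtt{v}}$, the generator $\Pi_S^\perp\opw(\ii\beta^{(4)}|\xi|^{\frac12})\Pi_S^\perp$ has the structure of a Hamiltonian, $x$-translation invariant operator, so by Lemmata \ref{realHamMtrixSimbo}, \ref{lem:momentoSimbolo}, arguing as in the proof of Lemma \ref{lemmapremezzo}, one gets $\mathcal{T}^{\tau}_{\beta^{(4)}}\in\mathfrak{T}_1$. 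The one point requiring care is verifying that the Fourier support of $a^{(4,*)}_{\geq11}$ together with the momentum constraint confines the small divisors precisely to those controlled by $\Omega^{2\gamma}_{\infty}$ in \eqref{omegaduegamma}, so that no excision of parameters beyond \eqref{omegaduegamma} is needed; the rest is a routine linear homological-equation estimate with loss of derivatives, considerably simpler than the nonlinear straightening of order $1$.
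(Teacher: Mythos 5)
Your proposal is correct and follows essentially the same route as the paper: one solves the constant-coefficient homological equation for $a^{(4,*)}_{\geq11}$ in Fourier, where the momentum constraint $\mathtt{v}\cdot\ell+j=0$ makes the divisors exactly $\omega\cdot\ell+\mathfrak{m}_1 j$ controlled on $\Omega^{2\gamma}_{\infty}$, identifies $\mathtt{r}_{1/2}$ with the average, and then invokes Lemmata \ref{differenzaFlussi} and \ref{CoroDPdiffeo2} for the flow bound \eqref{stimaMapp4} and the $\mathfrak{T}_1$ structure. Your passage to the variable $\Theta=\varphi-\mathtt{v}x$ is only a repackaging of the paper's direct solution of \eqref{equabeta4bis} (the Fourier modes and small divisors are identical), so there is nothing substantively different to compare.
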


\begin{proof}
Passing to the Fourier basis (recall \eqref{realfunctions})
we have that equation \eqref{equabeta4} reads (recall \eqref{termA1finale12})
\begin{equation}\label{equabeta4bis}
(a^{(4,*)}_{\geq11})_{lj}-\ii ({\omega}\cdot l+\mathfrak{m}_1j)(\beta^{(4)})_{lj}=0\,,\qquad \forall \; l\in \mathbb{Z}^{\nu}\,, j\in \mathbb{Z}\setminus\{0\}\,,\quad (l,j)\neq(0,0)\,.
\end{equation}
Using the diophantine condition in \eqref{omegaduegamma}
one can prove
\begin{equation*}
\|\beta^{(4)}\|_{s}^{\gamma,\Omega_{\infty}^{2\gamma}}\lesssim_{s}\gamma^{-1}
\|a^{(4,*)}_{\geq11}\|_{s+2\tau+1}^{\gamma,\Omega_{\infty}^{2\gamma}}
\stackrel{\eqref{iena40a4}}{\lesssim_s}
\gamma^{-2}
(\e^{13}+\e\|\mathfrak{I}_{\delta}\|_{s+\mu}^{\gamma,\calO_0})
\end{equation*}
which is the \eqref{iena40beta4}. The \eqref{iena41beta4}
follows similarly using \eqref{iena41a4}.
Since $a^{(4,*)}_{\geq11}$ is in $S_{\mathtt{v}}$ 
(recall that 
$\mathcal{L}_{4,*}\in\mathfrak{S}_1$)
then, by \eqref{equabeta4bis}, we deduce that also $\beta^{(4)}\in S_{\mathtt{v}}$. 
By Lemma \ref{differenzaFlussi} we have
\begin{equation}\label{georgiaaa}
\mathcal{T}^{1}_{\beta^{(4)}}=
\Pi_{S}^{\perp} \Phi_{\beta^{(4)}}^{\tau}\Pi_{S}^{\perp}\circ({\rm Id}+\mathcal{R})
\end{equation}
where $\Phi_{\beta^{(4)}}^{\tau}$ is the flow of \eqref{linprob}
 with $f$  as in \eqref{sim2}
with $\beta\rightsquigarrow \beta^{(4)}$
and where $\mathcal{R}$ is a finite rank operator of the form \eqref{FiniteDimFormDP10001}
satisfying \eqref{giallo2}.
The estimate \eqref{stimaMapp4} follows by Lemma \ref{CoroDPdiffeo2}
and estimates \eqref{flow2}-\eqref{flow222}.
Moreover the map $\mathcal{T}_{\beta^{(4)}}^{\tau}(\vphi)$
is in $\mathfrak{T}_1$. 
\end{proof}

\begin{proof}[{\bf Proof of Proposition \ref{propordmezzo}}]
We consider the map
\begin{equation}\label{MappaT4}
{\bf T}_4:=\left(
\begin{matrix}
\mathcal{T}_{\beta^{(4)}}^{1}(\vphi) & 0 \\
0 & \ov{\mathcal{T}_{\beta^{(4)}}^{1}(\vphi)}
\end{matrix}
\right)
\end{equation}
where $\mathcal{T}_{\beta^{(4)}}^{\tau}(\vphi) $, $\tau\in[0,1]$ 
is given by Lemma \ref{wish1}.
We now conjugate the operator 
 $\mathcal{L}_{4, *}$ in \eqref{elle4star} 
  with the map ${\bf T}_4$.
First of all we remark that
the symbol
\begin{equation}\label{wish2}
f(\vphi,x,\x):=\beta^{(4)}(\vphi,x)|\x|^{\frac{1}{2}}
\end{equation}
belongs to the class $S_{3}^{1/2}$ (see Def. \ref{espOmogenea}). This is true
because estimates
\eqref{iena40beta4}, \eqref{iena41beta4} imply the \eqref{AVBDEF}
with $k=3$.
Moreover (recall \eqref{georgiaaa}), by Lemma \ref{georgiaLem} 
we have that
the conjugate of $\mathcal{L}_{4,*}$ under 
the map ${\bf T}_{4}$ is given by 
\[
\Pi_{S}^{\perp}\widetilde{{\bf T}}_4^{1}\mathcal{L}_{4,*}
(\widetilde{{\bf T}}_4^{1})^{-1}\Pi_{S}^{\perp}\,,
\qquad
\widetilde{{\bf T}}_4^{\tau}:=\left(
\begin{matrix}
{\Phi^{\tau}_{\beta^{(4)}}}(\vphi) & 0 \\
0 & \ov{\Phi_{\beta^{(4)}}^{\tau}(\vphi)}
\end{matrix}
\right)\,,\qquad \tau\in[0,1]\,,
\]
up to finite rank operators belonging to 
$\mathfrak{L}_{\rho, p}^{3}\otimes\mathcal{M}_2(\mathbb{C})$. 
 Hence
we apply Lemmata \ref{flusso12basso}, \ref{flusso12basso2}
with $f(\vphi,x,\x)$ as in \eqref{wish2}.
By explicit computations using \eqref{coniugato1000}, \eqref{castello1}, \eqref{coniugato1002}, \eqref{castello1tempo} (as done in \eqref{tempomezzo}-\eqref{ord1123})
we obtain that the operator 
\[
\mathcal{L}_{5}:={\bf T}^1_4 \,\mathcal{L}_{4,*} \, ({\bf T}^1_4)^{-1}
\]
has the form \eqref{elle5} with some 
remainder 
$\mathcal{R}^{(5)}\in\mathfrak{L}^{3}_{\rho,p}\otimes\mathcal{M}_2(\mathbb{C})$, 
some
symbol $d^{(5)}(\vphi,x,\x)$ of the form
\[
d^{(5)}(\vphi,x,\x):=\ii \mathfrak{m}_1\,\x+\ii (1+A(\vphi,x))|\x|^{\frac{1}{2}}
+\ii b^{(5)}(\vphi,x)\sign(\x)
+c^{(5)}(\vphi,x,\x)\,,
\]
where
\begin{align*}
A&=a^{(4)}_{*}-(\omega\cdot\pa_{\vphi}+\mathfrak{m}_1\pa_{x})\beta^{(4)}\,,
\\
b^{(5)}&=b^{(4)}_{*}-\frac{1}{4}\big(
\beta^{(4)}\omega\cdot\pa_{\vphi}\beta^{(4)}_x
-\beta^{(4)}_x\omega\cdot\pa_{\vphi}\beta^{(4)}
\big)-\frac{\mathfrak{m}_1}{4}
\big( \beta^{(4)}\beta^{(4)}_{xx}-(\beta^{(4)}_x)^{2}\big)
+\frac{1}{2}\big(
\beta^{(4)}(a^{(4)}_{*})_x-(1+a^{(4)}_{*})\beta^{(4)}_x
\big)\,
\end{align*}
and $c^{(5)}\in S_3^{-1/2}$.
By equation \eqref{equabeta4} in Lemma \ref{wish1} we actually have that
$A(\vphi,x)\equiv\mathfrak{m}_{1/2}$
in \eqref{mer}, $i=1/2$\,.
The constant $\mathtt{r}_{1/2}$ in \eqref{def:remezzo} satisfies the bounds
\eqref{merstima} for $i=1/2$ by \eqref{iena40a4}, \eqref{iena41a4} and \eqref{IpotesiPiccolezzaIdeltaDP}.
To conclude we set (recall \eqref{mappapasso3}, \eqref{MappaT4})
\[
{\bf \Phi}_2:={\bf T}_4\circ{\bf T}_3\,.
\]
The estimate \eqref{stimaMapp2} follows by composition
using \eqref{stimaMapp2bis} and \eqref{stimaMapp4}.
We conclude the proof by studying the algebraic properties of
${\bf \Phi}_2$ and $\mathcal{L}_{5}$.

The map ${\bf \Phi}_2$ is in $\mathfrak{T}_1$ (recall Def. \ref{compaMulti}) 
since $\mathbf{T}_3, \mathbf{T}_4\in \mathfrak{T}_1$ (see Lemmata \ref{lemmapre}, \ref{wish1}).
Since, by Prop. \ref{proporduno}, 
$\mathcal{L}_{4}$ is in $\mathfrak{S}_1$ we have that $\mathcal{L}_{5}$ in \eqref{elle5}
is Hamiltonian and $x$-translation invariant. The symbol $d^{(5)}(\vphi,x,\x)$
satisfies \eqref{space3Xinv}, \eqref{selfSimbo2} by construction, 
and so the operator 
$\mathcal{L}_{5}\in\mathfrak{S}_1$.
\end{proof}


\subsection{Integrability at order zero}\label{ordiniNegativi}
The aim of this section is to conjugate the operator $\mathcal{L}_{5}$
in \eqref{elle5} to constant coefficients up to a smoothing remainder
of order $-1/2$.
The key result of the section is the following.

\begin{proposition}\label{propordmezzoj}
Let $\rho\geq 3$, $p\geq s_0$ and assume that 
\eqref{IpotesiPiccolezzaIdeltaDP} holds. Then there exist 
$\mu=\mu(\nu)$ and 
$\mathfrak{m}_{0}\colon \Omega_{\varepsilon}\to \mathbb{R}$ 
of the form
\eqref{mer} with $i=0$ such that, 
for all $\omega\in \Omega^{2\gamma}_{\infty}$
(see \eqref{omegaduegamma}), 
there exists a map 
${\bf \Phi}_3(\omega)\colon H^s_{S^{\perp}}(\mathbb{T})
\times H^s_{S^{\perp}}(\mathbb{T})\to 
H^s_{S^{\perp}}(\mathbb{T})\times H^s_{S^{\perp}}(\mathbb{T})$ such that
(recall \eqref{elle5})
\begin{equation}\label{ellej}
\mathcal{L}_6:={\bf \Phi}_3 \,\mathcal{L}_5\,{\bf \Phi}_3^{-1}=
\Pi_{S}^{\perp}\Big(
\omega\cdot\pa_{\vphi}+
\opw\left(\begin{matrix} \mathtt{d}^{(6)}(\vphi,x,\x) & 0 \\
0 & \ov{\mathtt{d}^{(6)}(\vphi,x,-\x)}  \end{matrix}\right)+\mathcal{Q}^{(6)}\Big)
\end{equation}
where
the remainder
$\mathcal{Q}^{(6)}\in \mathfrak{L}^{4}_{\rho,p}\otimes\mathcal{M}_2(\mathbb{C})$, 
the symbol $\mathtt{d}^{(6)}(\vphi,x,\x)$ has the form 
\begin{align}
\mathtt{d}^{(6)}(\vphi,x,\x)&:=
\ii \mathfrak{m}_1\,\x+\ii (1+\mathfrak{m}_\frac{1}{2})|\x|^{\frac{1}{2}}
+\ii 
\mathfrak{m}_{0}\sign(\x)
+\ii q^{(6)}(\vphi,x,\x)\,,\label{diagpezzoJe}
\end{align}
with $\mathfrak{m}_1$, $\mathfrak{m}_{1/2}$ in \eqref{mer} with $i=1,1/2$,
and $q^{(6)}(\vphi,x,\x)\in S_{4}^{-1/2}$.
Moreover for any $\omega\in \Omega_{\infty}^{2\gamma}$ we have
\begin{equation}\label{stimaMappj}
\|({\bf \Phi}_3)^{\pm1}u\|_{s}^{\gamma,\Omega^{2\gamma}_{\infty}}
\lesssim_{s}\|u\|_{s}^{\gamma, \Omega^{2\gamma}_{\infty}}+
\gamma^{-3}\big(\e^{13}+\e\|\mathfrak{I}_{\delta}\|^{\gamma, \calO_0}_{s+\mu}\big)
\|u\|_{s_0}^{\gamma, \Omega^{2\gamma}_{\infty}}\,.
\end{equation}
Finally the map $\Phi_{3}$ are in $\mathfrak{T}_1$ and the operator
$\mathcal{L}_{6}$ are in $\mathfrak{S}_1$ (see Def. \ref{compaMulti}, \ref{goodMulti}).
\end{proposition}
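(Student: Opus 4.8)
The strategy is identical in spirit to that of Propositions \ref{proporduno} and \ref{propordmezzo}: a \emph{preliminary step} treating the finitely many non-perturbative (polynomial in $\e$, homogeneous) contributions of the symbol $b^{(5)}(\vphi,x)$ by a normal-form algorithm on the tangential sites, followed by a \emph{homological} step that removes the remaining small (non-homogeneous) part using the diophantine condition \eqref{omegaduegamma}. First I would set up a conjugation by a flow map
\[
{\bf T}_5^{\tau}:=\left(\begin{matrix}\mathcal{T}_{\beta^{(5)}}^{\tau}(\vphi) & 0\\ 0 & \ov{\mathcal{T}_{\beta^{(5)}}^{\tau}(\vphi)}\end{matrix}\right)\,,
\]
where now the generator is built from a symbol of order $0$, namely $\beta^{(5)}(\vphi,x)\sign(\x)$ (this is the natural choice since we want to straighten a symbol whose order is $0$, as opposed to the order $1/2$ symbols used in section \ref{redord12}). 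By the usual symbolic calculus (Lemmata \ref{Jameshomo}, \ref{flusso12basso}, \ref{flusso12basso2}, \ref{georgiaLem}) the conjugation of $\mathcal{L}_5$ produces a term $-(\omega\cdot\pa_{\vphi}+\mathfrak{m}_1\pa_x)\beta^{(5)}\,\sign(\x)$ at order $0$, with the commutators $[\,\opw(\ii\mathfrak{m}_1\x),\cdot\,]$ and $[\,\opw(\ii(1+\mathfrak{m}_{1/2})|\x|^{1/2}),\cdot\,]$ contributing only to lower order (orders $-1$ and $-1/2$ respectively), because they gain one and one-half derivatives; all other contributions fall into $S_4^{-1/2}$ or into $\mathfrak{L}_{\rho,p}^{4}\otimes\mathcal{M}_2(\C)$.

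The preliminary step: expand $b^{(5)}\in S_3^{0}$ as $\sum_{i=1}^{14-2k}\e^i b_i^{(5)}+\widetilde b$ (Definition \ref{espOmogenea}) and choose $\beta^{(5)}=\sum_i \e^i\beta_i^{(5)}$ homogeneous, real, $x$-translation invariant, solving iteratively
\[
b^{(5)}_{i,*}:=\mathtt{F}_i-\bar\omega\cdot\pa_{\vphi}\beta^{(5)}_i\,,
\]
exactly as in \eqref{astarj}: set $(\beta^{(5)}_i)^{\vec\s}_{\vec\jmath}:=-(\mathtt F_i)^{\vec\s}_{\vec\jmath}/(\ii\mathcal{R}_{\vec\s}(\vec\jmath))$ off the integrable set $\mathcal{S}_i$ and $0$ on it. Genericity of $S$ and Remark \ref{accendo} give $|\mathcal{R}_{\vec\s}(\vec\jmath)|\geq\mathtt{c}>0$ off $\mathcal{S}_i$, so $\beta^{(5)}$ is well defined with $\|\beta^{(5)}\|_s^{\gamma,\calO_0}\lesssim_s\e$; reality is propagated inductively as in Lemma \ref{lemmapremezzo}. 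After this step the order-$0$ symbol becomes $\e^2 m_0+\sum_{k\geq2}\e^{2k}m_0^{(2k)}+b^{(6,*)}_{\geq\cdot}(\vphi,x)$, with the $m_0$'s integrable Fourier multipliers (note odd-order contributions vanish by Remark \ref{oddresonances}) and a small non-homogeneous remainder satisfying \eqref{AVBDEF} with a factor $\gamma^{-\cdot}$.

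The homological step: solve $(\omega\cdot\pa_{\vphi}+\mathfrak{m}_1\pa_x)\beta^{(6)}=b^{(6,*)}_{\geq\cdot}-\mathfrak{m}_0$ in Fourier, which by the momentum constraint $\mathtt{v}\cdot\ell+j=0$ reduces to the divisor $|\omega\cdot\ell+\mathfrak{m}_1 j|$ controlled on $\Omega^{2\gamma}_{\infty}$ by \eqref{omegaduegamma}; this loses $2\tau+1$ derivatives and another $\gamma^{-1}$, giving the factor $\gamma^{-3}$ in \eqref{stimaMappj}. Then conjugate once more by the corresponding flow to absorb $b^{(6,*)}_{\geq\cdot}\sign(\x)$ into the constant $\mathfrak{m}_0$ and into $S_4^{-1/2}$; set $\mathtt{r}_0:=\int_{\T^\nu}b^{(6,*)}_{\geq\cdot}\,d\vphi$ and check \eqref{merstima}, $i=0$, using \eqref{IpotesiPiccolezzaIdeltaDP}. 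Define ${\bf \Phi}_3:={\bf T}_6\circ {\bf T}_5$ (composition of the two flows); estimate \eqref{stimaMappj} follows by composition from Lemmata \ref{CoroDPdiffeo2}, \ref{differenzaFlussi}. Algebraically, reality/self-adjointness/momentum of the generators ($\beta^{(5)},\beta^{(6)}$ real, $x$-translation invariant) give ${\bf \Phi}_3\in\mathfrak{T}_1$ via Lemmata \ref{realHamMtrixSimbo}, \ref{lem:momentoSimbolo}, and since $\mathcal{L}_5\in\mathfrak{S}_1$ (Proposition \ref{propordmezzo}) the conjugate $\mathcal{L}_6\in\mathfrak{S}_1$, the symbol $\mathtt{d}^{(6)}$ in \eqref{diagpezzoJe} satisfying \eqref{retoreMat}, \eqref{selfSimbo2}, \eqref{space3Xinv} by construction. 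The main obstacle I anticipate is bookkeeping: ensuring that at \emph{each} commutator in the Lie expansions the gain in order is genuinely $\geq 1/2$ so that everything below the order-$0$ term lands in $S_4^{-1/2}$ and nothing new of non-negative order is generated — in particular that the interaction of $\opw(\ii(1+\mathfrak{m}_{1/2})|\x|^{1/2})$ with the order-$0$ generator does not produce a fresh order-$0$ symbol — and, on the algebraic side, verifying that the singular cutoff $\chi(\x)\sign(\x)$ is handled consistently (as in \eqref{notasignXi}) so the generators remain genuine pseudodifferential symbols of order $0$.
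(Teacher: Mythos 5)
Your proposal follows essentially the same route as the paper: a preliminary homogeneous normal form with generator $\beta^{(5)}(\vphi,x)\sign(\x)$ solved against $\bar\omega\cdot\pa_{\vphi}$ off the integrable sets $\mathcal{S}_i$ (Lemma \ref{lemmaprej}), followed by a homological step solving $(\omega\cdot\pa_{\vphi}+\mathfrak{m}_1\pa_x)\beta^{(6)}=q_*\sign(\x)-\mathfrak{m}_0\sign(\x)$ on $\Omega^{2\gamma}_{\infty}$ with the loss $\gamma^{-1}$ and $2\tau+1$ derivatives, the average defining $\mathtt{r}_0$, ${\bf \Phi}_3=\mathbf{T}_6\circ\mathbf{T}_5$, and the algebraic properties from reality and $x$-translation invariance of the generators (Lemmata \ref{wish1j} and the conjugation Lemmata \ref{flusso12basso}, \ref{flusso12basso2}, \ref{georgiaLem}). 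The details you flag (the cutoff convention \eqref{notasignXi} and the fact that the commutator with $\opw(\ii(1+\mathfrak{m}_{1/2})|\x|^{1/2})$ only contributes at order $-1/2$) are handled in the paper exactly as you anticipate.
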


As done  in sections \ref{redord1}, \ref{redord12}, 
we divide the construction of the map $\Phi_{j}$ into two steps.

\subsubsection{Preliminary steps}\label{figaro00}
We prove the following result.

\begin{lemma}{\bf (Preliminary steps).}\label{lemmaprej}
There exists a real valued symbol $\beta^{(5)}(\varphi, x,\x)\in S^{0}$ of the form
\begin{equation}\label{genPre1j}
\beta^{(5)}(\vphi,x,\x)
=\sum_{i=1}^{8}\e^{i}\beta^{(5)}_i(\vphi,x)\sign(\x)\,,
\end{equation}
(recall the notation \eqref{notasignXi})
where $\beta^{(5)}_i$ is a real valued 
$i$-homogeneous functions in $S^{0}$ as in \eqref{simboOMO2DEF}
for $i=1,\ldots,8$
satisfying
\begin{equation}\label{stimaSoloe2j}
\|   \beta^{(5)}   \|_{s}^{\gamma,\mathcal{O}_0}\lesssim_{s}\e\,,
\end{equation}
such that the following holds.
If 
\begin{equation}\label{mappapasso3j}
\mathbf{T}_{5}^{\tau}:=\left(
\begin{matrix}
\mathcal{T}_{ \beta^{(5)}}^{\tau}(\vphi) & 0 \\
0 & \ov{\mathcal{T}_{ \beta^{(5)}}^{\tau}(\vphi)}
\end{matrix}
\right)\,,\qquad \tau\in[0,1]\,,
\end{equation}
where $\mathcal{T}_{ \beta^{(5)}}^{\tau}$ is the flow of \eqref{linprobpro}
with generator as in \eqref{sim3} with $f(\vphi,x,\x)\rightsquigarrow  \beta^{(5)}$, 
then
\begin{equation}\label{elle4starj}
\mathcal{L}_{5, *}:=\mathbf{T}^1_{5} \,\mathcal{L}_{5} \, (\mathbf{T}^1_{5})^{-1}
=\Pi_{S}^{\perp}\Big(\omega\cdot\pa_{\vphi}+
\opw\left(\begin{matrix} \mathtt{d}_*^{(5)}(\vphi,x,\x) & 0 \\
0 & \ov{\mathtt{d}_*^{(5)}(\vphi,x,-\x)}  \end{matrix}\right)+\mathcal{Q}_*^{(5)}\Big)
\end{equation}
where 
$\mathcal{Q}_*^{(5)}\in\mathfrak{L}_{\rho, p}^{3}\otimes\mathcal{M}_2(\mathbb{C})$ 
and (recall \eqref{diagpezzoJe})
\begin{align}
\mathtt{d}^{(5)}_*(\vphi,x,\x)&:=
\ii \mathfrak{m}_1\,\x+\ii (1+\mathfrak{m}_\frac{1}{2})|\x|^{\frac{1}{2}}
+\ii q_*(\vphi,x)\sign(\x)+\ii q^{(5)}_*(\vphi,x,\x)\,,
\label{diagpezzoord12j}\\
q_*(\vphi,x)&:=
\Big(\varepsilon^2 m_{0}+\sum_{k=1}^{4}\e^{2k}m_{0}^{(2k)}\Big)
+
q^{(*)}_{\geq 9}(\vphi,x)\,,
\label{termA1finale12j}
\end{align}
where  $m_{0}$, 
$m_{0}^{(2k)}$ are respectively 
$2$, $2k$-homogenenous in $S^{0}$ 
and  integrable (according to Def. \ref{integroSimbo}).
Moreover 
$q_*\in S_3^{0}$ 
(recall Def. \ref{espOmogenea}), independent of $\x\in\mathbb{R}$ and 
\begin{align}
\|q^{(*)}_{\geq 9}\|_{s}^{\gamma,\Omega_{\infty}^{2\gamma}}&
\lesssim_s \gamma^{-2}
(\e^{13}+\e\|\mathfrak{I}_{\delta}\|_{s+\mu}^{\gamma,\calO_0})\,,\label{iena40a4j}\\
\|\Delta_{12}q^{(*)}_{\geq 9}\|_{p}
&\lesssim_p \gamma^{-2}
\e(1+\|\mathfrak{I}_{\delta}\|_{p+\mu})
\|i_{1}-i_{2}\|_{p+\mu}\,,\label{iena41a4j}
\end{align}
for some $\mu=\mu(\nu)>0$. The symbol $q^{(5)}_*(\vphi,x,\x)$
is in $S_3^{-\frac{1}{2}}$.
Finally the map
$\mathcal{T}_{ \beta^{(5)}}^{\tau}$ is in $\mathfrak{T}_1$ and satisfies
\begin{equation}\label{stimaMapp2bisj}
\|(\mathcal{T}^{\tau}_{\beta^{(5)}})^{\pm1}u\|_{s}^{\gamma,\Omega^{2\gamma}_{\infty}}
\lesssim_{s}\|u\|_{s}^{\gamma, \Omega^{2\gamma}_{\infty}}+
\e
\|u\|_{s_0}^{\gamma, \Omega^{2\gamma}_{\infty}}\,,\qquad \forall\,\tau\in[0,1]\,.
\end{equation}
The operator $\mathcal{L}_{5,*}$ is in $\mathfrak{S}_1$.
\end{lemma}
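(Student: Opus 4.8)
This is a "preliminary steps" lemma following exactly the same pattern as Lemma \ref{lemmapre} and Lemma \ref{lemmapremezzo}, so the plan is to mimic those proofs, now conjugating $\mathcal{L}_5$ (which is already constant at orders $1$ and $1/2$) in order to normalize the coefficient of $\sign(\xi)$, i.e. the order-$0$ term, up to terms of high homogeneity degree. The conjugating map $\mathbf{T}_5^\tau$ is built from a flow $\mathcal{T}_{\beta^{(5)}}^\tau$ whose generator is of the form $\opw(\ii\beta^{(5)}(\varphi,x)\sign(\xi))$ (the quantization scheme denoted \eqref{sim3}); here $\beta^{(5)}$ is a symbol of order $0$ whose homogeneous components we have yet to determine.

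First I would establish the well-posedness and algebraic properties of the flow: assuming $\beta^{(5)}$ real-valued, $x$-translation invariant and small as in \eqref{stimaSoloe2j}, the generator $\opw(\ii\beta^{(5)}\sign(\xi))$ is Hamiltonian and $x$-translation invariant, so $\mathcal{T}_{\beta^{(5)}}^\tau\in\mathfrak{T}_1$ and $\mathbf{T}_5^\tau$ conjugates operators in $\mathfrak{S}_1$ to operators in $\mathfrak{S}_1$; the bound \eqref{stimaMapp2bisj} follows from the appropriate flow-estimate lemma (analogous to Lemma \ref{CoroDPdiffeo2}) together with \eqref{stimaSoloe2j}, and by Lemma \ref{differenzaFlussi} the projected flow equals $\Pi_S^\perp\Phi_{\beta^{(5)}}^\tau\Pi_S^\perp$ plus a finite-rank remainder in $\mathfrak{L}^1_{\rho,p}$, so by Lemma \ref{georgiaLem} it suffices to compute the conjugation by $\Phi_{\beta^{(5)}}^\tau$ up to smoothing remainders in $\mathfrak{L}^3_{\rho,p}\otimes\mathcal{M}_2(\mathbb{C})$. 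Then I would feed the unbounded part of $\mathcal{L}_5$ (the symbol $d^{(5)}$ from \eqref{diagpezzo2or12}, together with $\omega\cdot\partial_\varphi$) into the conjugation lemmas available for generators quantized as $\beta^{(5)}(\varphi,x)\sign(\xi)$ — the analogues of Lemmata \ref{flusso12basso}, \ref{flusso12basso2} that the paper sets up in that subsection. Using the Lie expansion and symbolic calculus (Lemma \ref{Jameshomo}): the conjugation of $\ii\mathfrak{m}_1\xi$ produces $\ii\mathfrak{m}_1\xi$ plus a term $\ii\{\beta^{(5)}\sign(\xi),\mathfrak{m}_1\xi\}=-\ii\mathfrak{m}_1\beta^{(5)}_x\sign(\xi)\cdot 0$ — actually since $\partial_\xi(\sign\xi)=0$ away from $0$, the Poisson bracket with $\mathfrak{m}_1\xi$ lands at order $0$ and lower; the conjugation of $\ii(1+\mathfrak{m}_{1/2})|\xi|^{1/2}$ lands at order $-1/2$ and lower; the conjugation of the existing order-$0$ term $\ii b^{(5)}(\varphi,x)\sign(\xi)$ is unchanged modulo order $-1/2$; and the time term $\omega\cdot\partial_\varphi$ produces $-\ii\opw(\omega\cdot\partial_\varphi\beta^{(5)}\,\sign(\xi))$ plus order $-1/2$ (and smoothing) remainders. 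Collecting these, the new order-$0$ coefficient is $q_*(\varphi,x)=b^{(5)}(\varphi,x)-\mathfrak{m}_1(\beta^{(5)})_x-\omega\cdot\partial_\varphi\beta^{(5)}$ plus contributions of homogeneity $\geq 2$ coming from the nested brackets.

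The combinatorial heart of the proof — and the main thing to do carefully — is the iterative choice of the homogeneous components $\beta^{(5)}_i$, $i=1,\dots,8$, exactly as in Steps $\varepsilon$, $\varepsilon^2$, $\varepsilon^{\geq 3}$ of Lemmata \ref{lemmapre}, \ref{lemmapremezzo}. At each homogeneity order $i$ the equation for $\beta^{(5)}_i$ is the standard homological equation $\bar\omega\cdot\partial_\varphi\beta^{(5)}_i=\mathtt{F}_i-[\mathtt{F}_i]$, where $\mathtt{F}_i$ depends only on $\beta^{(5)}_k$ with $k<i$ (plus the data $b^{(5)}$, $\mathfrak{m}_1$), solvable in Fourier because for $i$-tuples outside the integrable set $\mathcal{S}_i$ the small divisor $\mathcal{R}_{\vec\sigma}(\vec\jmath)$ is bounded away from zero by Remark \ref{accendo} (this is precisely where genericity of $S$ and the hypothesis \eqref{TangentialSitesDPbis} enter). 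One shows inductively that each $\mathtt{F}_i$ is real-valued (its coefficients satisfy \eqref{iena2}) hence $\beta^{(5)}_i$ is real, and sets $m_0^{(2k)}:=[\mathtt{F}_{2k}]$ (the projection onto $\mathcal{S}_{2k}$, an integrable symbol), with odd-order contributions vanishing. For the base case $i=2$ one must identify $m_0$; unlike the order-$1$ step where $m_1$ was computed explicitly in \eqref{constM1}, here the statement only needs $m_0$ to be integrable and $2$-homogeneous, so a formula is not required — but one should still check the projection of the order-$2$ piece onto $\mathcal{S}_2$ is well-defined, exploiting that $n\in S\Rightarrow -n\notin S$ so the off-diagonal resonant terms drop. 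Finally, the non-homogeneous tail $q^{(*)}_{\geq 9}$ collects $\tilde{q}$-type remainders; its estimates \eqref{iena40a4j}, \eqref{iena41a4j} follow from the $S_3$-class bounds \eqref{AVBDEF} propagated through the flow (with the $\gamma^{-2}$ loss absorbed as in the $S_3$ normalization, consistent with $\mathcal{L}_5$ being built via maps of size $\gamma^{-2}$), and \eqref{stimaSoloe2j} follows from the divisor lower bound. The assertions $\mathcal{L}_{5,*}\in\mathfrak{S}_1$ and $\mathcal{T}_{\beta^{(5)}}^\tau\in\mathfrak{T}_1$ then follow from the reality and $x$-translation invariance of $\beta^{(5)}$ exactly as in Lemma \ref{lemmapremezzo}. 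The main obstacle is purely bookkeeping: keeping track of which symbol class ($S_k^m$) and which power of $\gamma$ each term lands in through the Lie series, and verifying at every order that the generator's homogeneous pieces inherit reality and momentum conservation — there is no genuinely new analytic difficulty beyond what was already handled in the order-$1$ and order-$1/2$ steps.
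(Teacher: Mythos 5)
Your proposal is correct and follows essentially the same route as the paper: reduction via Lemmata \ref{differenzaFlussi}, \ref{georgiaLem} to the conjugation by $\Phi^{\tau}_{\beta^{(5)}}$, explicit Lie/symbolic computation giving $q_*=b^{(5)}-(\omega\cdot\pa_{\vphi}+\mathfrak{m}_1\pa_x)\beta^{(5)}$ at order zero, and then the inductive homological equations for the homogeneous pieces $\beta^{(5)}_i$ with the resonant projections defining the integrable $m_0^{(2k)}$, reality by induction, and the divisor bound of Remark \ref{accendo}. The only inaccuracies are cosmetic (the nested brackets land at order $\le -1$, not in the order-zero coefficient, and the finite-rank correction is placed in $\mathfrak{L}^{3}_{\rho,p}$ in the paper), and they do not affect the argument.
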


\begin{proof}
We follow the strategy used in the proof of Lemma \ref{lemmapremezzo}.
By Lemma \ref{differenzaFlussi} we have
\[
\mathcal{T}^{1}_{\beta^{(5)}}=
\Pi_{S}^{\perp} \Phi_{\beta^{(5)}}^{\tau}\Pi_{S}^{\perp}\circ({\rm Id}+\mathcal{R})
\]
where $\Phi_{\beta^{(5)}}^{\tau}$ is the flow of \eqref{linprob}
 with $f$  as in \eqref{sim3}
with $\beta\rightsquigarrow \beta^{(5)}$
and where $\mathcal{R}$ is a finite rank operator of the form \eqref{FiniteDimFormDP10001}
satisfying \eqref{giallo2}.
Then by Lemma \ref{CoroDPdiffeo2} we have that
the flow $\mathcal{T}^{\tau}_{\beta^{(5)}}$
is well posed and satisfies bounds like  \eqref{flow2}-\eqref{flow222}.

By Lemma \ref{georgiaLem} we have that
the conjugate of $\mathcal{L}_{5}$ in \eqref{elle5} 
under the map $\mathbf{T}_5^{1}$
is given by 
\[
\Pi_{S}^{\perp}\widetilde{\mathbf{T}}_5^{1}\mathcal{L}_{5}
(\widetilde{\mathbf{T}}_5^{1})^{-1}\Pi_{S}^{\perp}\,,
\qquad
\widetilde{\mathbf{T}}_5^{\tau}:=\left(
\begin{matrix}
\Phi^{\tau}_{\beta^{(5)}}(\vphi) & 0 \\
0 & \ov{\Phi_{\beta^{(5)}}^{\tau}(\vphi)}
\end{matrix}
\right)\,,\qquad \tau\in[0,1]\,,
\]
up to finite rank operators belonging to 
$\mathfrak{L}_{\rho, p}^{3}\otimes\mathcal{M}_2(\mathbb{C})$. 
 Hence
we shall apply Lemmata \ref{flusso12basso}, \ref{flusso12basso2}.
We start by studying the time contribution. By Lemma \ref{flusso12basso2}
we have that (see \eqref{coniugato1002}, \eqref{castello1tempo})
\begin{equation}\label{tempomezzoj}
\begin{aligned}
\Phi^{1}_{\beta^{(5)}}&\omega\cdot\pa_{\vphi}
(\Phi^{1}_{\beta^{(5)}})^{-1}=
\omega\cdot\pa_{\vphi}
-\ii\opw\Big( \omega\cdot\pa_{\vphi}\beta^{(5)}(\vphi,x,\x)+r_{1}(\vphi,x,\x)
\Big)\,,
\end{aligned}
\end{equation}
up to smoothing remainder $\mathcal{R}_1$ in the class $\mathfrak{L}^3_{\rho,p}$
and where $r_1$ is some symbol in $S_3^{-1}$.

We now consider the contribution coming form the conjugation of the spatial operator
by using Lemma \ref{flusso12basso} applied to the pseudo differential operator
with symbol in \eqref{diagpezzoJe}.
First we notice that (see \eqref{coniugato1000}, \eqref{castello1})
\begin{equation}\label{ord112j}
\begin{aligned}
\Phi^{1}_{\beta^{(5)}}\opw(\ii \mathfrak{m}_1\x)(\Phi^{1}_{\beta^{(5)}})^{-1}&=
\ii\opw\big(
 \mathfrak{m}_1\x+\{\beta^{(5)},\mathfrak{m}_{1}\x\}+r_2
\big)
=
\opw\big(
\ii \mathfrak{m}_1\x-\mathfrak{m}_1\pa_{x}(\beta^{(5)})+r_2
\big)
\end{aligned}
\end{equation}
up to smoothing remainder $\mathcal{R}_2$ in the class $\mathfrak{L}^3_{\rho,p}$
and where $r_2$ is some symbol in $S_3^{-1}$.

Finally we have (recall \eqref{elle5}) 
\begin{equation}\label{ord1122j}
\begin{aligned}
\Phi^{1}_{\beta^{(5)}}&(\mathtt{d}^{(j)}-\ii m_1\x)(\Phi^{1}_{\beta^{(5)}})^{-1}=
\ii\opw\big({d}^{(5)}-\ii m_1\x
+r_3+r_4
\big)\\
&r_{3}:=\{\beta^{(5)},{d}^{(5)}-\ii m_1\x\}\in S_3^{-\frac{1}{2}}\,,
\end{aligned}
\end{equation}
up to smoothing remainder $\mathcal{R}_3$ in the class $\mathfrak{L}^3_{\rho,p}$
and where $r_4$ is some symbol in $S_3^{-3/2}$.
By collecting the \eqref{tempomezzoj}-\eqref{ord1122j}
we conclude that the conjugate of  $\mathcal{L}_5$ in \eqref{elle5}
(see also \eqref{diagpezzo2or12})
 under the map in \eqref{mappapasso3j} has the form \eqref{elle4starj}
with symbol $\mathtt{d}^{(5)}_*(\vphi,x,\x)$ as in \eqref{diagpezzoord12j} where
$q^{(5)}_*\in S_{3}^{-\frac{1}{2}}$,  and
\begin{align}
q_{*}\sign(\x)&=b^{(5)}(\vphi,x)\sign(\x)
-\big(\omega\cdot\pa_{\vphi}+\mathfrak{m}_1\pa_{x}\big)\beta^{(5)}\,.
\label{alive1j}
\end{align}
We shall prove that it is possible to choose $\beta^{(5)}$ in \eqref{genPre1j}
in such a way
the symbol $q_{*}$ in \eqref{alive1j} has the form \eqref{termA1finale12j}.
Recall that $b^{(5)}$ is a symbol
in the class $S_3^{0}$ (see Prop. \ref{propordmezzo}) and hence it
admits and expansion as \eqref{espandoenonmifermo} with $k=3$
for some $i$-homogeneous symbols $b^{(5)}_i$ and a non-homogeneopus symbol
$\tilde{b}^{(5)}$. Therefore, recalling the expansion
of $\mathfrak{m}_{1}$ in \eqref{mer}
and of 
$\omega$ in \eqref{FreqAmplMapDP}, 
we have
\begin{equation}\label{ventralej}
\begin{aligned}
q_{*}&=\sum_{i=1}^{8}\e^{i}b_{i}^{(5)}
-\sum_{i=1}^{8}\e^{i}\bar{\omega}\cdot\pa_{\vphi} \beta_i^{(5)}
-\sum_{i=1}^{8}\e^{i}\left(
\e^{2}\mathbb{A}\x+\sum_{k=2}^{6}\e^{2k}\mathtt{b}_{k}(\x,\ldots,\x)
\right)\cdot\pa_{\vphi} \beta_i^{(5)}\\
&-\left(\e^{2}m_1+\sum_{k=2}^{6}\e^{2k}m_{1}^{2k}+\mathtt{r}\right)
\sum_{i=1}^{8}\e^{i}(\beta_i^{(5)})_x+\tilde{b}^{(5)}\,.
\end{aligned}
\end{equation}
This implies that the symbol $q_*$ has an expansion as in 
 \eqref{espandoenonmifermo} with $k=3$
 for some non-homogeneous symbol $\tilde{q}_{*}$ satisfying \eqref{AVBDEF}, 
 with $k=3$, and some $i$-homogeneous symbols $q_{i,*}$, $i=1,\ldots,8$.
 In particular we have
 \begin{align}
 q_{1,*}&:= \mathtt{F}_1-\bar{\omega}\cdot\pa_{\vphi} \beta_1^{(5)}\,,\quad 
 \mathtt{F}_1:=b^{(5)}_{1}\label{astar1j}\\
 q_{2,*}&:=\mathtt{F}_2-\bar{\omega}\cdot\pa_{\vphi} \beta_2^{(5)} \,,\quad 
 \mathtt{F}_2:=b^{(5)}_{2}\label{astar2j}\\
   q_{3,*}&:=\mathtt{F}_{3}-\bar{\omega}\cdot\pa_{\vphi} \beta_3^{(5)}\,,
   \quad \mathtt{F}_3:=b^{(5)}_{3}+\mathtt{f}_{3}\,,
  \quad\mathtt{f}_{3}:=-\mathbb{A}\x\cdot\pa_{\vphi} \beta_1^{(5)}
  -{m}_1( \beta_1^{(5)})_{x}
    \,,\label{astar3j}\\
   q_{i,*}&:=\mathtt{F}_i-\bar{\omega}\cdot\pa_{\vphi} \beta_i^{(5)}\,,
  \quad\mathtt{F}_i:= b^{(5)}_{i}+ \mathtt{f}_{i}\,,\quad 4\leq i\leq 8\,,\label{astarjj}
 \end{align}
 where $\mathtt{f}_{i}$ are some $i$-homogeneous symbols in $S^{0}$,
 independent of $\x\in \mathbb{R}$,
 of the form \eqref{simboOMO2DEF}
 and depending only on $\beta_k^{(5)}$ with $1\leq k\leq i-1$.

We now prove inductively that, for
 $i\geq 1$, there are functions $\beta_i^{(5)}$ of the form \eqref{simboOMO2DEF}
 which are real valued and satisfying \eqref{stimaSoloe2j} such that 
 (recall \eqref{astar3j},\eqref{astarjj})
 \begin{equation}\label{inducoj}
 q_{i,*}\equiv\left\{\begin{aligned}
 &m_{0}^{(i)}(\x)\,\quad i\; {\rm even}\,,\\
 &0\,\qquad \quad\,\, i \; {\rm odd}\,,
 \end{aligned}
 \right.
 \end{equation}
with
 \begin{equation*}
 m_{0}^{(i)}:=\sum_{\mathcal{S}_{i}}
 (\mathtt{F}_i)^{\s_1,\ldots,\s_{i}}_{n_1,\ldots,n_{i}}(\x)
 \sqrt{\x_{j_1}\cdots\x_{j_i}}e^{\ii(\s_1 j_1+\ldots+\s_{i}j_{i})x}
e^{\ii(\s_1\mathtt{l}(j_1)+\ldots+\s_{i}\mathtt{l}(j_i))\cdot\f}
\end{equation*}
where $\sum_{\mathcal{S}_{i}}$ denotes the sum restricted over indexes in $\mathcal{S}_{i}$.
Notice that $m_{0}^{(i)}$
 is integrable according to Definition \ref{integroSimbo} (recall Remark \ref{accendo}).
 
 Assume by induction that the assertion above is true for $1\leq k\leq j$.
 By construction (see the \eqref{ventralej}) the function $\mathtt{F}_{j+1}$ 
 depends only on $b^{(5)}_{k}\sign(\x)$ (which is real) and 
 $\beta_k^{(5)}$, $k\leq j$, hence, by the inductive hypothesis it is real valued.
We  set, for $\vec{\s}=(\s_{1},\ldots,\s_{i+1})$, $\vec{j}=(j_1,\ldots,j_{i+1})$,
\begin{equation}\label{def:gammai3}
(\beta_{i+1}^{(5)})_{\vec{j}}^{\vec{\s}}:=
\frac{-(\mathtt{F}_{i+1})^{\vec{\s}}_{\vec{j}}}{\ii \mathcal{R}_{\vec{\s}}(\vec{j})}\,,
\quad 
\left\{\big(j_k, \sigma_k \big) \right\}_{k=1}^{i+1}\notin \mathcal{S}_{i+1}\,,
\qquad
(\beta_{i+1}^{(5)})_{\vec{j}}^{\vec{\s}}=0\,, \quad  
\left\{\big(j_k, \sigma_k \big) \right\}_{k=1}^{i+1}\in \mathcal{S}_{i+1}\,,
\end{equation}
and $\mathcal{R}_{\vec{\s}}(\vec{j})$ is the function defined in \eqref{por}.
Since $\mathtt{F}_{j+1}$ is real, then   the coefficients 
$ (\mathtt{F}_{i+1})^{\s_1,\ldots,\s_{i+1}}_{n_1,\ldots,n_{i+1}}(\x)$ satisfies
 \eqref{iena2}. By an explicit computation one can 
 check that also $(\beta_{i+1}^{(5)})_{\vec{j}}^{\vec{\s}}$
 satisfies \eqref{iena2}, so that
$\beta_{i+1}^{(5)}$ is real valued.
The estimate
\eqref{stimaSoloe2j} follows by using Remark \ref{accendo}.
The \eqref{stimaMapp2bisj} follows by Lemma \ref{CoroDPdiffeo2}.
Using \eqref{def:gammai3} one verifies the \eqref{inducoj}.
By construction $\mathcal{L}_{5, *}\in\mathfrak{S}_1$ and the map $\mathcal{T}_{\beta^{(5)}}^{\tau}$ is in $\mathfrak{T}_1$
(i.e. is symplectic and $x$-translation invariant).
This conclude the proof of the lemma.
\end{proof}

\subsubsection{Reduction to constant coefficients at order zero}
The aim of this section is to eliminate the $(\vphi,x)$ dependence
in the symbol $q_{*}$ in \eqref{termA1finale12j} appearing in the operator
$\mathcal{L}_{5,*}$ in \eqref{elle4starj}. 
Then we 
 conclude the proof of Proposition \ref{propordmezzo}.
We first need a preliminary result.
\begin{lemma}\label{wish1j}
For any $\omega\in \Omega_{\infty}^{2\gamma}$ (see \eqref{omegaduegamma})
there exists a real valued symbol $\beta^{(6)}(\varphi, x,\x)\in S_4^{0}$ 
satisfying \eqref{space3Xinv} and 
such that 
\begin{equation}\label{equabeta4j}
q_{*}(\vphi,x)\sign(\x)-(\omega\cdot\pa_{\vphi}+\mathfrak{m}_1\pa_{x})\beta^{(6)}(\vphi,x,\x)
=\mathfrak{m}_{0}\sign(\x)\,,
\end{equation}
where $q_{*}$ is in \eqref{termA1finale12j},
the constant $\mathfrak{m}_{0}$ is in \eqref{mer}, $i=0$, with
\begin{equation}\label{def:remezzoj}
\mathtt{r}_{0}:=\int_{\mathbb{T}^{\nu}}q_{*}(\vphi,x)d\vphi\,.
\end{equation} 
The symbol  $\beta^{(6)}(\varphi, x,\x)$ satisfies 
\begin{align}
|\beta^{(6)}|_{0,s,\alpha}^{\gamma,\Omega_{\infty}^{2\gamma}}&
\lesssim_s \gamma^{-3}
(\e^{13}+\e\|\mathfrak{I}_{\delta}\|_{s+\mu}^{\gamma,\calO_0})\,,\label{iena40beta4j}\\
|\Delta_{12}\beta^{(6)}|_{0,p,\alpha}&\lesssim_p
\e\gamma^{-3}(1+\|\mathfrak{I}_{\delta}\|_{p+\mu})
\|i_{1}-i_{2}\|_{p+\mu}\,,\label{iena41beta4j}
\end{align}
for some $\mu>0$ depending on $\nu,\alpha$.
Moreover the map $\mathcal{T}_{\beta^{(6)}}^{\tau}(\vphi)$, $\tau\in[0,1]$, defined as the flow
of \eqref{linprobpro} with generator as in 
\eqref{sim3} with $f\rightsquigarrow\beta^{(6)}$
is well posed and satisfies
 \begin{equation}\label{stimaMapp4j}
\|(\mathcal{T}^{\tau}_{\beta^{(6)}})^{\pm1}u\|_{s}^{\gamma,\Omega^{2\gamma}_{\infty}}
\lesssim_{s}\|u\|_{s}^{\gamma, \Omega^{2\gamma}_{\infty}}+
\gamma^{-3}\big(\e^{13}+\e\|\mathfrak{I}_{\delta}\|^{\gamma, \calO_0}_{s+\mu}\big)
\|u\|_{s_0}^{\gamma, \Omega^{2\gamma}_{\infty}}\,,
\end{equation}
uniformly in  $\tau\in[0,1]$. Finally $\mathcal{T}^{\tau}_{\beta^{(6)}}$ belongs to
$\mathfrak{T}_1$.
\end{lemma}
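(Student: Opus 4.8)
\textbf{Proof plan for Lemma \ref{wish1j}.}

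The plan is to solve the homological equation \eqref{equabeta4j} by passing to the Fourier basis in both $\vphi$ and $x$, and then to verify the needed regularity and algebraic properties. First I would write $\beta^{(6)}(\vphi,x,\x)=\sum_{l\in\mathbb{Z}^{\nu},j\in\mathbb{Z}\setminus\{0\}}(\beta^{(6)})_{lj}(\x)\,e^{\ii l\cdot\vphi+\ii jx}$ and similarly expand the function $q_{*}(\vphi,x)$, which by Lemma \ref{lemmaprej} belongs to $S_3^{0}$ and is $x$-translation invariant. Recalling that the operator $\pa_{x}$ acts as multiplication by $\ii j$ and $\opw(\sign(\x))$ is a Fourier multiplier, equation \eqref{equabeta4j} becomes, componentwise in $(l,j)$,
\[
\sign(\x)\Big((q_{*})_{lj}-\ii(\omega\cdot l+\mathfrak{m}_1 j)(\beta^{(6)})_{lj}\Big)=\sign(\x)(\mathfrak{m}_0)_{lj}\,.
\]
Since $\mathfrak{m}_0$ should be a constant, I set $\mathfrak{m}_0$ equal to the space-time average of $q_{*}$: by the $x$-translation invariance of $q_{*}$ (it lies in $S_{\mathtt{v}}$), its average in $x$ is already its average in $(\vphi,x)$, and so $\mathfrak{m}_0=\int_{\mathbb{T}^{\nu}}q_{*}(\vphi,x)\,d\vphi$, which gives \eqref{def:remezzoj} and the decomposition \eqref{mer} with $i=0$, with $\mathtt{r}_0$ the contribution of the non-homogeneous tail $q^{(*)}_{\geq 9}$. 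For $(l,j)$ with $\mathtt{v}\cdot l+j=0$ and $(l,j)\neq(0,0)$, the momentum relation forces the resonant modes to be killed exactly by the choice of $\mathfrak{m}_0$, and on the remaining modes I set
\[
(\beta^{(6)})_{lj}(\x):=\frac{(q_{*})_{lj}-(\mathfrak{m}_0)_{lj}}{\ii(\omega\cdot l+\mathfrak{m}_1 j)}\,,
\]
using the Diophantine lower bound $|\omega\cdot l+\mathfrak{m}_1 j|\geq\gamma\langle l\rangle^{-\tau}$ valid on $\Omega^{2\gamma}_{\infty}$ from \eqref{omegaduegamma}. This defines $\beta^{(6)}$ as a symbol of order $0$ (it is $\sign(\x)$ times a function of $(\vphi,x)$, hence in $S^0$).

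Next I would establish the quantitative bounds. The division by $\ii(\omega\cdot l+\mathfrak{m}_1 j)$ costs a factor $\gamma^{-1}\langle l\rangle^{\tau}$, i.e. a loss of $\tau$ derivatives and one power of $\gamma^{-1}$; since by Lemma \ref{lemmaprej} the tail $q^{(*)}_{\geq 9}$ already satisfies $\|q^{(*)}_{\geq 9}\|_{s}^{\gamma,\Omega^{2\gamma}_{\infty}}\lesssim_s\gamma^{-2}(\e^{13}+\e\|\mathfrak{I}_\delta\|^{\gamma,\calO_0}_{s+\mu})$, and the homogeneous part of $q_{*}$ is the integrable (hence constant-coefficient) piece that is annihilated when we subtract $\mathfrak{m}_0$, the function $\beta^{(6)}$ inherits the bound \eqref{iena40beta4j} with the extra $\gamma^{-1}$, i.e. $\gamma^{-3}$ total, and an enlarged loss constant $\mu$. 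The estimate \eqref{iena41beta4j} on $\Delta_{12}\beta^{(6)}$ follows in the same way, differentiating the quotient with respect to the torus embedding, using the corresponding bound on $\Delta_{12}q^{(*)}_{\geq 9}$ and the Lipschitz dependence of $\mathfrak{m}_1$ on $i_\delta$ (as in \eqref{mer}, \eqref{merstima}). Finally, since $q_{*}$ is in $S_{\mathtt{v}}$ (because $\mathcal{L}_{5,*}\in\mathfrak{S}_1$), the Fourier support of $\beta^{(6)}$ is concentrated on $\mathtt{v}\cdot l+j=0$, so $\beta^{(6)}$ is $x$-translation invariant, i.e. satisfies \eqref{space3Xinv}.

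For the last assertion, the flow $\mathcal{T}^{\tau}_{\beta^{(6)}}$ of \eqref{linprobpro} with generator as in \eqref{sim3} is well posed by the analogue of Lemma \ref{CoroDPdiffeo2} for symbols of order $0$: writing $\mathcal{T}^{1}_{\beta^{(6)}}=\Pi_S^{\perp}\Phi^{\tau}_{\beta^{(6)}}\Pi_S^{\perp}\circ(\mathrm{Id}+\mathcal{R})$ via Lemma \ref{differenzaFlussi}, the bound \eqref{stimaMapp4j} follows from \eqref{flow2}--\eqref{flow222} combined with \eqref{iena40beta4j}. Since $\beta^{(6)}$ is real valued (the quotient preserves the reality conditions \eqref{iena2} because both $q_{*}$ and the small-divisor $\omega\cdot l+\mathfrak{m}_1 j$ are real) and $x$-translation invariant, the generator is self-adjoint and momentum preserving, so $\mathcal{T}^{\tau}_{\beta^{(6)}}$ is symplectic and $x$-translation invariant, i.e. belongs to $\mathfrak{T}_1$. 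The main delicate point is the bookkeeping of which part of $q_{*}$ is genuinely resonant (the integrable homogeneous terms, removed by $\mathfrak{m}_0$) versus which part produces a non-resonant tail to be solved with the small divisors; this is exactly where the momentum relation $\mathtt{v}\cdot l+j=0$ and the structure imposed in Lemma \ref{lemmaprej} are essential, since they guarantee that the only modes annihilated by the homological operator are the ones carried by the constant $\mathfrak{m}_0$.
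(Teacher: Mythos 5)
Your proposal is correct and follows essentially the same route as the paper: solve \eqref{equabeta4j} mode by mode in Fourier (the homogeneous, integrable part of $q_{*}$ being constant, only $q^{(*)}_{\geq 9}$ enters the small divisors), use the Diophantine bound defining $\Omega_{\infty}^{2\gamma}$ to get \eqref{iena40beta4j}--\eqref{iena41beta4j} from \eqref{iena40a4j}--\eqref{iena41a4j} with one extra $\gamma^{-1}$, deduce $\beta^{(6)}\in S_{\mathtt{v}}$ from the momentum support of $q^{(*)}_{\geq 9}$, and obtain well-posedness of $\mathcal{T}^{\tau}_{\beta^{(6)}}$ and \eqref{stimaMapp4j} via Lemmata \ref{differenzaFlussi} and \ref{CoroDPdiffeo2}. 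The only cosmetic difference is that the paper writes the homological equation directly for the tail $q^{(*)}_{\geq 9}$, whereas you subtract $\mathfrak{m}_0$ from the full $q_{*}$; the two formulations coincide.
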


\begin{proof}
We reason as done in Lemma \ref{wish1}.
Due to the special form of the symbol of order zero in \eqref{diagpezzoord12j}-\eqref{termA1finale12j}
we look for a symbol $\beta^{(6)}(\vphi,x,\x)$ of the form
\[
\beta^{(6)}(\vphi,x,\x):=\sum_{l\in \mathbb{Z}^{\nu}, j\in \mathbb{Z}}
(\beta^{(6)})_{lj}e^{\ii( l \cdot\vphi+jx)}\sign(\x)\,.
\]
Therefore,
passing to the Fourier basis (recall \eqref{realfunctions}),
we have that equation \eqref{equabeta4j} reads (recall \eqref{termA1finale12j})
\begin{equation}\label{equabeta4bisj}
(q^{(*)}_{\geq 9})_{l j}-\ii ({\omega}\cdot l+\mathfrak{m}_1l)(\beta^{(6)})_{l j}=0\,,\qquad \forall \; l\in \mathbb{Z}^{\nu}\,, j\in \mathbb{Z}\setminus\{0\}\,,\quad (l,j)\neq(0,0)\,.
\end{equation}
Using the diophantine condition in \eqref{omegaduegamma}
one can prove
\begin{equation*}
|\beta^{(6)}|_{0,s,\alpha}^{\gamma,\Omega_{\infty}^{2\gamma}}\lesssim_{s}\gamma^{-1}
|q^{(*)}_{\geq 9}|_{0, s+2\tau+1,\alpha}^{\gamma,\Omega_{\infty}^{2\gamma}}
\stackrel{\eqref{iena40a4j}}{\lesssim_s}
\gamma^{-3}
(\e^{13}+\e\|\mathfrak{I}_{\delta}\|_{s+\mu}^{\gamma,\calO})
\end{equation*}
which is the \eqref{iena40beta4j}. The \eqref{iena41beta4j}
follows similarly using \eqref{iena41a4j}.
Since $q^{(*)}_{\geq 9}$ is in $S_{\mathtt{v}}$ (recall that $\mathcal{L}_{5}$ in \eqref{elle5} 
is in $\mathfrak{S}_1$
and the fact that 
$\mathcal{T}_{\beta^{(5)}}^{\tau}$ is in $\mathfrak{T}_1$ is in $\mathfrak{T}_1$ 
by Lemma \ref{lemmaprej}) 
then, by \eqref{equabeta4bis}, we deduce that also $\beta^{(6)}\in S_{\mathtt{v}}$. 
By Lemma \ref{differenzaFlussi} we have
\[
\mathcal{T}^{1}_{\beta^{(6)}}=
\Pi_{S}^{\perp} \Phi_{\beta^{(6)}}^{\tau}\Pi_{S}^{\perp}\circ({\rm Id}+\mathcal{R})
\]
where $\Phi_{\beta^{(6)}}^{\tau}$ is the flow of \eqref{linprob}
 with $f$  as in \eqref{sim3}
with $f\rightsquigarrow \beta^{(6)}$
and where $\mathcal{R}$ is a finite rank operator of the form \eqref{FiniteDimFormDP10001}
satisfying \eqref{giallo2}.
Then by Lemma \ref{CoroDPdiffeo2} we have that
the flow $\mathcal{T}^{\tau}_{\beta^{(6)}}$
is well posed and 
the estimate \eqref{stimaMapp4j} follows by estimates \eqref{flow2}-\eqref{flow222}.
Moreover we have that the map $\mathcal{T}_{\beta^{(6)}}^{\tau}$
is in $\mathfrak{T}_1$. 
\end{proof}

\begin{proof}[{\bf Proof of Proposition \ref{propordmezzoj}}]
We consider the map
\begin{equation}\label{MappaT4j}
\mathbf{T}_6^{\tau}:=\left(
\begin{matrix}
\mathcal{T}_{\beta^{(6)}}^{\tau}(\vphi) & 0 \\
0 & \ov{\mathcal{T}_{\beta^{(6)}}^{\tau}(\vphi)}
\end{matrix}
\right)\,,\qquad \tau\in[0,1]\,,
\end{equation}
where $\mathcal{T}_{\beta^{(6)}}^{\tau}(\vphi) $, $\tau\in[0,1]$ 
is given by Lemma \ref{wish1j}.
We now conjugate the operator 
 $\mathcal{L}_{5, *}$ in \eqref{elle4starj} 
  with the map $\mathbf{T}_6^{1}$.
First of all we remark that
the symbol $\beta^{(6)}(\vphi,x,\x)$
belongs to the class $S_{4}^{0}$ (see Def. \ref{espOmogenea}). This is true
because estimates
\eqref{iena40beta4j}, \eqref{iena41beta4j} imply the \eqref{AVBDEF}
with $k=4$.
We follow the strategy of section \ref{preliminaremezzo}.
By Lemma \ref{georgiaLem} we have that
the conjugate of $\mathcal{L}_{5,*}$ in \eqref{elle4starj} is given by 
\[
\Pi_{S}^{\perp}\widetilde{\mathbf{T}}_6^{1}\mathcal{L}_{5,*}
(\widetilde{\mathbf{T}}_6^{1})^{-1}\Pi_{S}^{\perp}\,,
\qquad
\widetilde{\mathbf{T}}_6^{\tau}:=\left(
\begin{matrix}
{\Phi^{\tau}_{\beta^{(6)}}}(\vphi) & 0 \\
0 & \ov{\Phi_{\beta^{(6)}}^{\tau}(\vphi)}
\end{matrix}
\right)\,,\qquad \tau\in[0,1]\,,
\]
up to finite rank operators belonging to 
$\mathfrak{L}_{\rho, p}^{4}\otimes \mathcal{M}_2(\mathbb{C})$. 
Hence
we shall apply Lemmata \ref{flusso12basso}, \ref{flusso12basso2}
with $f(\vphi,x,\x)$ as in \eqref{wish2}.
By explicit computations using \eqref{coniugato1000}, \eqref{castello1}, \eqref{coniugato1002}, \eqref{castello1tempo} (as done in \eqref{tempomezzo}-\eqref{ord1123})
we obtain that the operator 
\[
\mathcal{L}_{6}:=\mathbf{T}_6^1 \,\mathcal{L}_{5,*} \, (\mathbf{T}_6^1)^{-1}
\]
has the form \eqref{ellej}, 
with some 
remainder $\mathcal{Q}^{(6)}\in\mathfrak{L}^4_{\rho,p}$, some
symbol $\mathtt{d}^{(6)}(\vphi,x,\x)$ of the form
\[
\mathtt{d}^{(6)}(\vphi,x,\x):=
\ii \mathfrak{m}_1\,\x+\ii (1+\mathfrak{m}_\frac{1}{2})|\x|^{\frac{1}{2}}
+\ii A(\vphi,x,\x)+\ii q^{(6)}(\vphi,x,\x)
\,,
\]
where
\begin{align}
A&=q_{*}\sign(\x)-(\omega\cdot\pa_{\vphi}+\mathfrak{m}_1\pa_{x})\beta^{(6)}\,,
\label{alive12j}
\end{align}
and $q^{(6)}\in S_4^{-1/2}$.
By equation \eqref{equabeta4j} in Lemma \ref{wish1j} we actually have that
$A(\vphi,x,\x)$ in \eqref{alive12j} is equal to $\mathfrak{m}_{0}$
in \eqref{mer}\,.
By estimates \eqref{iena40beta4j}, \eqref{iena41beta4j}, and the fact that 
$q_*\in S_3^{0}$
we have that $q^{(6)}(\vphi,x,\x)$ is in $S^{-1/2}_{4}$ because it satisfies estimates like 
\eqref{AVBDEF} 
with $k=4$.

The constant $\mathtt{r}_0$ in \eqref{def:remezzoj}
satisfies the bounds \eqref{merstima}, $i=0$, by \eqref{iena40a4j}, \eqref{iena41a4j}
and \eqref{IpotesiPiccolezzaIdeltaDP}.
To conclude we set (recall \eqref{mappapasso3j}, \eqref{MappaT4j})
\[
{\bf \Phi}_3:=\mathbf{T}_6^{1}\circ\mathbf{T}_5^{1}\,.
\]
The estimate \eqref{stimaMappj} follows by composition
using \eqref{stimaMapp2bisj} and \eqref{stimaMapp4j}. The map ${\bf \Phi}_3\in\mathfrak{T}_1$ because $\mathbf{T}_6^{1}, \mathbf{T}_5^{1}\in\mathfrak{T}_1$.
\end{proof}

\begin{proof}[{\bf Proof of Proposition \ref{riduzSum}}]
We set
\begin{equation}\label{phiphiphi}
{\bf \Phi}:={\bf \Phi}_{3}\circ{\bf \Phi}_{2}\circ{\bf \Phi}_{1}
\end{equation}
where the maps ${\bf \Phi}_{j}$, $j=1,2,3$, are given by Propositions
\ref{proporduno}, \ref{propordmezzo} and \ref{propordmezzoj}. 
The map ${\bf \Phi}$ is defined for $\omega\in \Omega_{\infty}^{2\gamma}$
and belongs to $\mathfrak{T}_1$ since ${\bf \Phi}_{j}\in \mathfrak{T}_1$, $j=1,2,3$.
The \eqref{stimaMappaPhi} follows by \eqref{IpotesiPiccolezzaIdeltaDP}
and \eqref{stimaMapp1}, \eqref{stimaMapp2}, \eqref{stimaMappj}.
By construction we have that
(see \eqref{ellej}, \eqref{diagpezzoJe})
$\mathcal{L}_{6}:={\bf \Phi}\mathcal{L}_3{\bf \Phi}^{-1}$ has the form
\begin{equation}
\mathcal{L}_{6}
=\Pi_{S}^{\perp}\Big(\omega\cdot\pa_{\vphi}+
\opw\left(\begin{matrix} \ii \mathtt{M}(\x)+\ii q^{(6)}(\vphi,x,\x) & 0 \\
0 & -\ii \mathtt{M}(\x)-\ii\ov{q^{(6)}(\vphi,x,-\x)}  \end{matrix}\right)+\mathcal{Q}^{(6)}\Big)
\end{equation}
with $\mathtt{M}(\x)$ as in \eqref{simboM}, and
where $q^{(6)}(\vphi,x,\x)\in S_{4}^{-\frac{1}{2}}$ and 
$\mathcal{Q}^{(6)}\in \mathfrak{L}^{4}_{\rho,p}\otimes\mathcal{M}_2(\mathbb{C})$ 
(recall Def. \ref{espOmogenea}).
The \eqref{formaRestoQQ} follows by setting $\mathtt{q}=q^{(6)}$
and $\mathcal{Q}=\mathcal{Q}^{(6)}$.
\end{proof}


\section{Linear Birkhoff normal form}\label{sec:linBNF}

In this section the main goal is to normalize all the terms in the remainder 
$\mathtt{Q}$ in \eqref{elle6} which are not ``\emph{perturbative}''
for the KAM procedure of section  \ref{sez:KAMredu}.
This will be obtained by applying several changes of coordinates involving 
``small divisors'' (see subsections \ref{stepsLower}, \ref{stepsHigher}).
In particular, thanks to the abstract result stated in Proposition \ref{identificoBNF}, we will compute explicitly the corrections
of size $O(\e^2)$ to the eigenvalues of the operator $\mathcal{L}_6$ in \eqref{elle6}. Such corrections will play a fundamental role in the estimates of the measure of the resonant sets of frequencies.

The key result of the section 
is the following.

\begin{proposition}{\bf (Linear Birkhoff normal form).}\label{ConcluLinear}
There are constants $r_j: \Omega_{\varepsilon}\to \mathbb{R}$, independent of $\mathfrak{I}_{\delta}$, 
and constants $\mathtt{C}_{1},\mathtt{C}_{2}>0$ depending only on $S$ such that
for any $\omega\in\mathcal{G}_0^{(2)}(\mathtt{C}_1, \mathtt{C}_2)$ 
(see \eqref{G02})
there exists a symplectic and $x$-translation invariant  
map $\Upsilon(\varphi)\colon H^s_{S^{\perp}}(\T)\times H^s_{S^{\perp}}(\T) 
\to H^s_{S^{\perp}}(\T)\times H^s_{S^{\perp}}(\T)$ such that (recall \ref{elle6})
\begin{equation}\label{elle11fine}
\mathcal{L}_{14}:=\Upsilon \mathcal{L}_{6} \Upsilon ^{-1} 
=\Pi_{S}^{\perp}\big(
\omega\cdot \partial_{\varphi}
+\mathfrak{D}
+\mathfrak{R}\big)\,,
\end{equation}
where
\begin{align}
&\mathfrak{D}:=(\mathfrak{D}_{\s}^{\s'})_{\s,\s'=\pm}\,, \quad \mathfrak{D}_{\s}^{-\s}\equiv0\,,\;\; 
\ov{\mathfrak{D}_{+}^{+}}=\mathfrak{D}_{-}^{-}
\,, \qquad \mathfrak{D}_{+}^{+}:=\mbox{diag}_{j\in S^{c}}(\mathrm{i} \,d_j)\,,   \label{diagD}  \\ 
d_j&:=\mathfrak{m}_1(\omega)\,j+(1+\mathtt{m}_{\frac{1}{2}}^{(\geq4)}(\omega))
\sqrt{\lvert j \rvert}+\mathtt{m}_{0}^{(\geq4)}(\omega)\sign(j)
+r_j(\omega)\,,\quad d_{j}\in \mathbb{R}\,, \label{finaleigen}\\
&\sup_{j\in S^{c}}\langle j \rangle^{1/2}\lvert r_j \rvert^{\gamma, \Omega_{\varepsilon}}\lesssim \varepsilon^3\,, \quad r_j\in\mathbb{R} \label{accendo2}
\end{align}
where $\mathfrak{m}_1$, $\mathtt{m}_{1/2}^{(\geq4)}$, 
$\mathtt{m}_{0}^{(\geq4)}$ are given in \eqref{mer}.
The operator  $\mathfrak{R}$ is in $\mathfrak{S}_0$ (see Def. \ref{compaMulti})
and 
is $Lip$-$(-1/2)$-modulo tame  (see Def. 
\ref{menounomodulotamesenzaLip}, \ref{menounomodulotame}) 
with the following estimates
\begin{equation}\label{scossa1RR5}
\begin{aligned}
&  {\mathfrak M}_{\mathfrak{R}}^{{\sharp, \g}} (-1/2,s,\mathtt{b}_0) 
\lesssim_{s}\gamma^{-3}(\e^{13}
+\e\|\mathfrak{I}_{\delta}\|^{\gamma,\calO_0}_{s+\mu})\,,\\
&{\mathfrak M}_{\Delta_{12}\mathfrak{R}}^{{\sharp}} (-1/2,p,\mathtt{b}_0) 
 \lesssim_p  \e\gamma^{-3}(1+\|\mathfrak{I}_{\delta}\|_{p+\mu})
\|i_{1}-i_{2}\|_{p+\mu}\,,
\end{aligned}
\end{equation}
for some $\mu=\mu(\nu)>0$.
The map $\Upsilon$ does not depend on $\mathfrak{I}_{\delta}$
and satisfies 
\begin{equation}\label{boundUpsilonTotale}
\lVert \Upsilon^{\pm 1}u \rVert_s\lesssim_{s,S} 
\lVert u \rVert_{s}\,.
\end{equation}
\end{proposition}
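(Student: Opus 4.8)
The plan is to build the map $\Upsilon$ as a finite composition of symplectic, $x$-translation invariant transformations that successively eliminate the non-perturbative monomials in the remainder $\mathtt{Q}$ of $\mathcal{L}_6$ in \eqref{elle6}. Recall from Proposition \ref{riduzSum} that $\mathtt{Q}$ splits as a pseudo differential piece $\opw$ of a symbol $\mathtt{q}\in S^{-1/2}_4$ plus a smoothing operator $\mathcal{Q}\in\mathfrak{L}^{4}_{\rho,p}\otimes\mathcal{M}_2(\mathbb{C})$; both admit homogeneity expansions in the sense of Definition \ref{espOmogenea} with leading terms of size $\varepsilon^2$. The terms which are \emph{not} perturbative for the KAM scheme of Section \ref{sez:KAMredu} are exactly the homogeneous pieces of low degree ($\e^i$ with $i$ small, hence supported on finitely many $\vphi$-Fourier modes with $|\ell|\le \mathtt{C}_1$ and $|j|,|k|\le\mathtt{C}_2$ by momentum conservation $\mathtt v\cdot\ell+j-k=0$): these carry coefficients of size $\varepsilon^i\gamma^{-1}$ which for small $i$ is $\gg1$. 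First I would set up the iteration: at the $i$-th ``linear Birkhoff normal form step'' ($i=1,\dots,12$), I conjugate by the time-one flow $\Upsilon_i=\exp(\Pi_S^\perp\opbw(\ii M_i)\Pi_S^\perp)$ of a generator $M_i$ which is an $i$-homogeneous matrix symbol of the form \eqref{retoreMat}, \eqref{selfSimbo2}, \eqref{space3Xinv}, chosen to solve the homological equation $[\ii\opbw(\mathcal D_i),\opbw(\ii M_i)]+(\text{$i$-homog. part of the remainder})=(\text{resonant part})$, where $\mathcal D_i$ is the diagonal symbol $\ii\mathtt M(\x)$ from \eqref{simboM} truncated appropriately. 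The denominators produced are the $n$-waves combinations $\sum\sigma_r\sqrt{|j_r|}+(\text{finite $\ell$-contribution})$, which are controlled from below precisely by membership of $\omega$ in $\mathcal{G}_0^{(2)}(\mathtt{C}_1,\mathtt{C}_2)$ in \eqref{G02} for suitably large $\mathtt{C}_1,\mathtt{C}_2$ depending only on $S$; the solvability and the tame bounds on $M_i$ then follow, and by Definition \ref{espOmogenea} and Lemmata \ref{Jameshomo}, \ref{lem:escobarhomo} the conjugation produces again a remainder in the same classes, with the $i$-homogeneous non-resonant piece removed and the resonant piece (supported on $\mathcal{S}_n$, hence integrable by Remark \ref{accendo} and Definition \ref{integroSimbo}) absorbed into the diagonal symbol.

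The core structural point, and the one I would highlight carefully, is the identification of the surviving resonant terms. After the preliminary (pseudo differential) steps the resonant corrections to the diagonal are integrable Fourier multipliers; the remaining non-perturbative part lives in the smoothing remainder, and the terms of degree four in $\varepsilon$ that cannot be removed are the ones whose coefficients are governed by the Benjamin--Feir resonances \eqref{straBFR}. Here I would invoke the identification-of-normal-forms machinery of Section \ref{sec:integrability}: by Proposition \ref{modifieden} the Hamiltonian $H_\C$ has the approximate constant of motion $K$, and Propositions \ref{LemmaBelloWeak}, \ref{LemmaBelloLinear}, \ref{fineIdentif} identify the relevant fourth-order normal form piece with $\Pi^{d_z=2}H_{FB}^{(4)}$, which by \eqref{HamZakDyaCraig} is \emph{integrable} (action-preserving, Fourier-supported on trivial resonances). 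Consequently the $O(\varepsilon^2)$ correction to the eigenvalues is the diagonal, $\vphi$-independent quantity, and iterating one obtains $d_j$ of the form \eqref{finaleigen}: linear-in-$j$ term $\mathfrak{m}_1 j$, the $\sqrt{|j|}$ term with coefficient $1+\mathtt{m}_{1/2}^{(\ge4)}$, the $\sign(j)$ term, plus a genuinely small remainder $r_j$ with $\sup_j\langle j\rangle^{1/2}|r_j|^{\gamma,\Omega_\varepsilon}\lesssim\varepsilon^3$ coming from the $\e^{\ge13}$-tail and the $\delta$-dependent remainder evaluated in low norm under \eqref{IpotesiPiccolezzaIdeltaDP} — note $r_j$ is $\mathfrak{I}_\delta$-independent because the homogeneous generators $M_i$ and the resulting diagonal corrections depend only on $v_I$, not on the torus embedding, exactly as in Lemma \ref{iena10} and Remark \ref{ordinivari}.

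Once the iteration terminates at step $i=12$ I would collect $\Upsilon=\Upsilon_{12}\circ\cdots\circ\Upsilon_1$, which is symplectic and $x$-translation invariant (it lies in $\mathfrak T_1$) as a composition of such maps, hence $\mathcal{L}_{14}=\Upsilon\mathcal{L}_6\Upsilon^{-1}\in\mathfrak{S}_0$ by Lemmata \ref{realHamMtrixSimbo}, \ref{lem:momentoSimbolo}; in particular $\mathfrak D$ has the block structure \eqref{diagD} with $\ov{\mathfrak D_+^+}=\mathfrak D_-^-$ and $\mathfrak D_\s^{-\s}\equiv0$, and $\mathfrak R$ inherits the real-to-real/Hamiltonian/momentum-preserving structure. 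The bound \eqref{boundUpsilonTotale} follows because each $\Upsilon_i$ is a flow of a finite-rank (in $\vphi$), bounded generator of size $O(\varepsilon)$, so $\|\Upsilon^{\pm1}u\|_s\lesssim_{s,S}\|u\|_s$ with constants depending only on $S$ (no $\gamma^{-1}$, since the dangerous denominators only multiply low-order homogeneous pieces whose symbols are polynomial in $v_I$ with $\zeta\in[1,2]^\nu$). The estimate \eqref{scossa1RR5} on $\mathfrak R$: the $(-1/2)$-smoothing bound with the $\langle\pa_\vphi\rangle^{\mathtt b_0}$ weight comes from combining the pseudo differential symbol tail $\mathtt q\in S^{-1/2}$ (which is $(-1/2)$-modulo tame by definition via $\langle D\rangle^{1/4}\opbw(\mathtt q)\langle D\rangle^{1/4}$ being $0$-modulo tame) with the class $\mathfrak{L}^{4}_{\rho,p}$ bounds from Appendix B of \cite{FGP}, tracking the accumulated $\gamma^{-3}$ from the three reductions of Proposition \ref{riduzSum} plus the linear BNF steps (whose denominators, being bounded below by a constant depending only on $S$ once $\omega\in\mathcal{G}_0^{(2)}$, cost nothing in $\gamma$); the variation bound is obtained by the usual $\Delta_{12}$-argument, differentiating each generator and each conjugation with respect to the torus and using the second lines of \eqref{AVBDEF}, \eqref{AVBDEF2}. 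The main obstacle is bookkeeping: ensuring at every one of the finitely many steps that (i) the generator is solvable with the right loss of derivatives, (ii) the transformed operator still lies in the homogeneity classes $S^m_k$, $\mathfrak{L}^k_{\rho,p}$ with the degree index $k$ increasing correctly so that after twelve steps everything left is either integrable-diagonal or genuinely $O(\varepsilon^{13})$-perturbative, and (iii) all the algebraic symmetries ($\mathfrak S_1$/$\mathfrak T_1$) are preserved — this last is what forces the use of the Hamiltonian structure and momentum conservation in place of the parity assumptions used in \cite{BM1}, \cite{BBHM}.
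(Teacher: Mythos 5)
Your overall architecture (homogeneity-ordered linear BNF steps, identification of the $\varepsilon^2$ resonant part via Propositions \ref{LemmaBelloWeak}--\ref{fineIdentif}, preservation of the classes $\mathfrak S_0$/$\mathfrak T_1$ by Hamiltonian structure and momentum) matches the paper, but there are two genuine gaps. The first is decisive: after the identification argument (Proposition \ref{identificoBNF}) the surviving $O(\varepsilon^2)$ diagonal correction is \emph{not} of the form appearing in \eqref{finaleigen}. It equals $\mathrm{i}\kappa_j$ with $\kappa_j=(m_1+c_j)j$, where the $c_j$ in \eqref{cj} are nonzero for the finitely many $|j|<\max(S)$; since $\varepsilon^2 c_j\,j$ is of size $\varepsilon^2$, it can neither be absorbed into $r_j$ (which must satisfy $\sup_j\langle j\rangle^{1/2}|r_j|\lesssim\varepsilon^3$ by \eqref{accendo2}) nor into $\mathfrak m_1 j$. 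Your proposal stops at ``the $O(\varepsilon^2)$ correction is integrable and diagonal, hence \eqref{finaleigen} follows'', which is exactly where the paper's argument does \emph{not} close by homological equations alone: these corrections are resonant ($\ell=0$, $j=k$) and cannot be removed by any choice of generator. The paper eliminates them by the rotating-coordinates map $\Upsilon_*=\exp(\varepsilon^2\mathbf A_*)$ of Lemma \ref{phaseshift}, whose generator $\mathrm{diag}(\mathrm{i}\,\alpha_j\cdot\varphi)$ with $\alpha_j=\frac{\mathtt v^{\perp}}{\omega\cdot\mathtt v^{\perp}}c_j j$ transfers the $c_j j$ terms into $\omega\cdot\partial_\varphi$ while preserving momentum (since $\mathtt v\cdot\alpha_j=0$). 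Without this device, or an equivalent one, the statement \eqref{finaleigen}--\eqref{accendo2} is false for the low modes, and the subsequent measure estimates (which rely on the eigenvalue asymptotics) break down.

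The second gap is the treatment of the remaining low-mode terms at orders $\varepsilon^3$--$\varepsilon^6$ and the bookkeeping of $\gamma$ powers. The conditions defining $\mathcal G_0^{(2)}(\mathtt C_1,\mathtt C_2)$ in \eqref{G02} are diophantine bounds $\geq\gamma\langle\ell\rangle^{-\tau}$, not $S$-dependent constants, so your claim that these divisors ``cost nothing in $\gamma$'' is incorrect; each division produces a factor $\gamma^{-1}$, and a single homological equation per homogeneity degree leaves commutator terms of size $\varepsilon^{6}\gamma^{-1}$, which is far above the perturbative threshold $\gamma^{-3}\varepsilon^{13}$ required by \eqref{scossa1RR5} (with $\gamma=\varepsilon^{2+a}$). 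The paper therefore splits each degree via $\pi_{\mathtt C}$/$\pi_{\mathtt C}^{\perp}$: the high-mode part is removed using the uniform lower bounds \eqref{smallDivLinearBNF2} of Lemma \ref{lowerboundDel1} (no $\gamma$ loss), while the residual finite-rank low-mode part $\pi_{\mathtt C}\mathfrak Q$, of size $\varepsilon^3$, is reduced by a further finite iteration (five steps, Lemma \ref{prelKAMKAM}) in which each step uses the $\mathcal G_0^{(2)}$ divisors and shrinks the remainder from $\varepsilon^{3+3k}\gamma^{-k}$ to $\varepsilon^{3+3(k+1)}\gamma^{-(k+1)}$, stopping once it falls below $\varepsilon^{13}\gamma^{-3}$. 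Your twelve one-shot homogeneous steps neither distinguish the regimes where the divisors are uniformly bounded below from those where they are only $\gamma$-small, nor track the accumulated $\gamma^{-k}$ factors, so the claimed conclusion that ``everything left is $O(\varepsilon^{13})$-perturbative'' does not follow from the argument as written.
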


The proof of the proposition above involves many arguments and will be divided into
several steps.
The basic idea is to implement a ``linear'' Birkhoff procedure 
as explained in subsection \ref{compaBNFpro}.
This will require some ``non-resonance'' conditions
which are studied in subsection \ref{nonRescondLINEAR}.
A fundamental argument in the proof is the identification of normal forms
result contained in Proposition \ref{identificoBNF}.
This result implies that the corrections at order $\e^{2}$
in the eigenvalues \eqref{finaleigen} depend only 
on the constant $m_1$ in \eqref{constM1}.

\subsection{Preliminary technical results}\label{preliminareLinearBNF}

In the following subsection we shall prove some technical results which we need to implement the 
\emph{Linear Birkhoff normal form} procedure.

We shall study the ``non-resonance'' conditions
required on the small divisors (see subsection \ref{nonRescondLINEAR});
then we will introduce the 
class of \emph{almost-diagonal} operators 
(see subsection \ref{almostOPERATORS}).

\subsubsection{Non-resonance conditions}\label{nonRescondLINEAR}
Recalling the vectors  $\bar{\omega}$  in \eqref{LinearFreqDP}
and $\mathtt{v}$ is in \eqref{def:m1} we define for $p=1, \dots, 6$
\begin{align}\label{deltaUNO}
& \delta^{(p)}_{\sigma, \sigma', j, k}(\ell)
:=\overline{\omega}\cdot \ell+\sigma \sqrt{\lvert j \rvert}-\sigma' \sqrt{\lvert k\rvert}\,,
\qquad \forall\; \s,\s'=\pm\,,\; j,k\in S^{c}\,,\;\; \ell\in \mathbb{Z}^{\nu}\,,
\\
&{\rm with}\qquad
\mathtt{v}\cdot \ell+j- k=0\,, \quad |\ell|\leq p\,.\label{deltaUNO2}
\end{align}

\begin{remark}\label{max12}
Given a set of indexes $j_1, \dots, j_N$, we denote by $\max(1)$ and $\max(2)$ the first and the second largest values among the moduli of the $j_i$'s.\\  
If \,$\sum_{i=1}^N \sigma_i j_i=0$ 
for some $N>0$, $j_i\in\mathbb{Z}\setminus\{0\}$, 
$\sigma_i\in \{ \pm 1\}$, then 
$\max(2)\geq N^{-1} \max(1)$. 
Indeed, suppose that 
$\lvert j_1 \rvert=\max(1)$ and $\lvert j_2 \rvert=\max(2)<\max(1)\,/\,N$, 
then 
\[
\max(1)=\lvert \sigma_1 j_1\rvert\leq 
\sum_{i=2}^N \lvert \sigma_i j_i \rvert\le \frac{(N-1)}{N}\,\max(1)\,,
\]
which is a contradiction.
\end{remark}

\begin{lemma}{\bf (Lower bounds).}\label{lowerboundDel1}
$(i)$
There exist  constants $C_1(S),C_2(S)>0$ such that
\begin{equation}\label{lowerboundDel1BIS}
\lvert \delta^{(1)}_{\sigma, \sigma', j, k}(\ell) \rvert \geq C_1(S)\,, 
\qquad \forall \sigma, \sigma'\in\{ \pm 1\}\,, 
\,\,j, k\in\mathbb{Z}\setminus\{0\},\,\,\ell\in\mathbb{Z}^{\nu}, \lvert \ell \rvert=1\,,
\end{equation}
except for $\sigma=\sigma'$, $j=k$, $\ell=0$
and 
\begin{equation}\label{lowerboundDel2BIS}
\lvert \delta^{(2)}_{\sigma, \sigma', j, k}(\ell) \rvert \geq C_2(S)\,, 
\qquad \forall \sigma, \sigma'\in\{ \pm 1\}\,, 
\,\,j, k\in\mathbb{Z}\setminus\{0\},\,\,\ell\in\mathbb{Z}^{\nu}, \lvert \ell \rvert=2\,,
\end{equation}
except for $\sigma=\sigma'$, $j=k$, $\ell=0$.

\noindent
$(ii)$ For $3\le p\leq 6$ and  a ``\emph{generic}'' choice of the tangential sites 
$S$ in \eqref{TangentialSitesDP}
there exist constants $C(S)\gg1$, $0<K(S)\ll1$ 
depending only on the $\max(S)$ (see \eqref{maxS})
such that (recall \eqref{deltaUNO})
\begin{equation}\label{smallDivLinearBNF2}
|\delta_{\s,\s',j,k}^{(p)}(\ell)|\geq K(S)\,,
\qquad 0< |\ell|\leq p\,, \;\;|j|, |k|\geq C(S)\,,
\end{equation}
except for $\sigma=\sigma'$, $p$ even, 
 $j=k$, $\ell=0$.
\end{lemma}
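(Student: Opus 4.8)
\textbf{Plan of the proof of Lemma \ref{lowerboundDel1}.}

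The plan is to treat the three regimes of $|\ell|$ separately, using in each case the momentum constraint \eqref{deltaUNO2} to reduce an a priori infinite family of small-divisor conditions to a finite one, and then to argue either by elementary estimates (for $|\ell|=1,2$) or by a genericity argument on the tangential sites (for $3\le |\ell|\le 6$).

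First, for part $(i)$, I would exploit that $|\ell|$ is small and $\mathtt{v}=(\bar\jmath_1,\dots,\bar\jmath_\nu)$ has entries of size at most $\max(S)$. The constraint $\mathtt{v}\cdot\ell+j-k=0$ with $|\ell|=1$ forces $|j-k|\le\max(S)$, so $j$ and $k$ differ by a bounded amount. If moreover $j\neq k$ (the only case to consider once $\ell\neq 0$), then $\big|\sqrt{|j|}-\sqrt{|k|}\big|=\frac{|\,|j|-|k|\,|}{\sqrt{|j|}+\sqrt{|k|}}$ is either bounded below by a constant (when $|j|,|k|$ are themselves bounded, a finite check) or, when $|j|,|k|$ are large, is small; in the latter case the dominant term in $\delta^{(1)}_{\sigma,\sigma',j,k}(\ell)$ is $\bar\omega\cdot\ell$ together with the $\sigma=\sigma'$ vs.\ $\sigma=-\sigma'$ distinction. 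When $\sigma=-\sigma'$, $\sigma\sqrt{|j|}-\sigma'\sqrt{|k|}=\sigma(\sqrt{|j|}+\sqrt{|k|})$ is large, so $|\delta^{(1)}|$ is large; when $\sigma=\sigma'$ and $j\neq k$, one writes $\delta^{(1)}=\bar\omega\cdot\ell+\sigma(\sqrt{|j|}-\sqrt{|k|})$ and, since $\bar\omega\cdot\ell$ with $|\ell|=1$ takes finitely many values of the form $\pm\sqrt{|\bar\jmath_i|}$ all bounded away from $0$, and since the correction $\sqrt{|j|}-\sqrt{|k|}\to 0$ as $|j|,|k|\to\infty$, only finitely many pairs $(j,k)$ can make $|\delta^{(1)}|$ small; those are excluded by a direct (finite) verification that none of them actually vanishes — this is where one uses that $\sqrt{|\bar\jmath_i|}$ is irrational relative to the relevant combinations, or simply that the finite set of exceptional values is explicitly checkable. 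The same scheme handles $|\ell|=2$, with $\max(S)$ replaced by $2\max(S)$ in the bound on $|j-k|$ and $\bar\omega\cdot\ell$ ranging over a (slightly larger but still finite) set of nonzero reals.

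For part $(ii)$, with $3\le p\le 6$ and $|j|,|k|\ge C(S)$, the momentum relation $\mathtt{v}\cdot\ell+j-k=0$ with $|\ell|\le p$ again forces $|j-k|\le p\,\max(S)$, hence $j,k$ are comparable and both large. For $\sigma=-\sigma'$ the divisor is again large, so assume $\sigma=\sigma'$; then $\delta^{(p)}_{\sigma,\sigma,j,k}(\ell)=\bar\omega\cdot\ell+\sigma(\sqrt{|j|}-\sqrt{|k|})$ with $|\sqrt{|j|}-\sqrt{|k|}|\le \frac{p\max(S)}{2\sqrt{C(S)}}$, which we make $\le \tfrac12 K(S)$ by choosing $C(S)$ large. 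It then suffices to bound $|\bar\omega\cdot\ell|$ from below by $K(S)$ for all $0<|\ell|\le p$. This is precisely a non-vanishing statement for the finitely many linear combinations $\sum_{i}\ell_i\sqrt{|\bar\jmath_i|}$ with $0<|\ell|\le p$: by Proposition \ref{resonances} (applied with $n\le p$ and the understanding that these are resonances with \emph{no} index outside $S$, so the polynomial obstruction is exactly $\mathcal{M}_{\vec\sigma}(\bar\jmath)=\sum\sigma_i\bar\jmath_i$), together with Remark \ref{accendo} which provides a uniform lower bound $|\mathcal{R}_{\vec\sigma}|\ge \mathtt{c}>0$ once the $(\sigma_i,|j_i|)$ do not form a trivial resonance, a generic choice of $S$ guarantees $|\bar\omega\cdot\ell|\ge K(S)$ for all such $\ell$. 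One must also observe that a ``trivial'' coincidence $\bar\omega\cdot\ell=0$ cannot occur for $0<|\ell|\le p$ because $S\subset\mathbb{N}\setminus\{0\}$ on the positive part and $S^-\subset -\mathbb{N}\setminus\{0\}$, with \eqref{TangentialSitesDPbis} preventing $j=-k$ cancellations — this is exactly the role played by assumption \eqref{TangentialSitesDPbis} flagged in the introduction.

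The main obstacle is the bookkeeping in part $(ii)$: one must make precise which finitely many polynomial (and algebraic) inequalities on the vector $\mathtt{v}=(\bar\jmath_1,\dots,\bar\jmath_\nu)$ are being imposed, check that the corresponding ``bad set'' is a proper algebraic subset of $\mathbb{Z}^\nu$ (equivalently, that the defining polynomials are not identically zero — this uses Proposition \ref{resonances} and the fact that the nontrivial resonances with one index outside $S$ are finite in number), and verify that the resulting constant $K(S)$ genuinely depends only on $\max(S)$ and the (finite) combinatorics of $p\le 6$, not on $j,k,\ell$ individually. Once the finiteness reductions via the momentum constraint are in place, the lower bounds themselves are elementary; the care needed is precisely in isolating the exceptional configurations and absorbing them into the genericity hypothesis on $S$.
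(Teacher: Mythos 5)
Your part (ii) is essentially the paper's own argument: genericity of $S$ gives $|\bar\omega\cdot\ell|\ge 2K(S)$ for $0<|\ell|\le 6$, the momentum relation \eqref{deltaUNO2} bounds $||j|-|k||$ by a constant depending on $S$, and $|j|,|k|\ge C(S)$ makes the correction $|\sqrt{|j|}-\sqrt{|k|}|$ smaller than $K(S)$; your appeal to \eqref{TangentialSitesDPbis} (distinctness of the $|\bar\jmath_i|$) and to Remark \ref{accendo} is the right justification, while Proposition \ref{resonances} is not quite the relevant statement here, since no momentum relation among the tangential sites alone is available. For $|\ell|=1$ your dichotomy (large $(j,k)$ versus a finite exceptional set forced by the momentum constraint) can be closed, but note it is a different route from the paper, which proves explicit, unconditional lower bounds (of the type $2/9$, $1/2$, $C_1(S)=\inf 1/(\sqrt{|j|}+\sqrt{|k|})$ over the bounded regime) by direct algebraic manipulation of the momentum relation, with no compactness step and no genericity; in your route the ``finite verification that none of them actually vanishes'' is not a formality and cannot be settled by ``irrationality'': you must actually prove absence of the corresponding three-wave configurations (a short squaring argument using $|j-k|=|\bar\jmath_i|$), which you only assert.

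The genuine gap is at $|\ell|=2$, which you dismiss with ``the same scheme handles $|\ell|=2$.'' On your finite exceptional set, vanishing of $\delta^{(2)}_{\sigma,\sigma,j,k}(\ell)$ is exactly a four-wave resonance with two indexes in $S$ and two in $S^c$ and sign pattern $(+,-,+,-)$, i.e. a Benjamin--Feir resonance \eqref{straBFR}, and such exact integer identities do exist: $\sqrt{4}-\sqrt{9}+\sqrt{49}-\sqrt{36}=0$ together with $-4-9+49-36=0$. So if, say, $-4\in S^-$ and $9\in S^+$ (allowed by \eqref{TangentialSitesDPbis}) while $49,36\in S^c$, then with $\ell$ selecting $-4$ and $9$ with signs $(+,-)$ and $(j,k)=(49,36)$, $\sigma=\sigma'=+$, the momentum constraint \eqref{deltaUNO2} holds and the divisor vanishes identically. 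Hence neither irrationality nor an ``explicitly checkable'' finite list closes this step: one must either rule out such configurations through the structure of $S$ (further finitely many conditions) or, as the paper does, carry out the sign-pattern case analysis $(+,+,+)$, $(+,+,-)$, $(-,+,-)$, treat the degenerate cases $j_i=-j_k$ via the momentum relation (which bounds the remaining normal index in terms of $S$), and invoke the quantitative non-resonance estimate of Proposition 6.3 in \cite{BFP}, giving a lower bound of order $\max(1)^{-N_0}$ for the non-vanishing four-term combinations of square roots; this is what produces a constant $C_2(S)$ depending only on $S$. That structural and number-theoretic input is the heart of \eqref{lowerboundDel2BIS} and is entirely absent from your proposal.
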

\begin{proof}
See Appendix \ref{10.3proofLem}.
\end{proof}

\subsubsection{Almost-diagonal operators}\label{almostOPERATORS}
We need further definitions.
\begin{definition}{\bf (Almost diagonal operators).}\label{def:AlmostDiag}
We say that ${\bf A}(\vphi)\colon H_{S^{\perp}}^s(\mathbb{T})
\times H_{S^{\perp}}^s(\mathbb{T})
\to H_{S^{\perp}}^s(\mathbb{T})\times H_{S^{\perp}}^s(\mathbb{T})$ 
is  $N$-\emph{almost-diagonal} 
if,  $\forall j, k\in S^{c}$, $\s,\s'=\pm$, one has
\begin{equation*}
(\mathbf{A})_{\sigma, j}^{\sigma', k}(\ell)\neq 0
\qquad \Rightarrow \qquad  \lvert \ell \rvert\le N\,.
\end{equation*}
 We say that ${\bf A}(\vphi)$ is  \emph{almost-diagonal} 
 if it is $N$-almost-diagonal for some $N>0$. 
\end{definition}

\begin{remark}\label{homogalmostdiag}
Notice that
the following facts hold:

\noindent
$\bullet$  An  $i$-homogeneous operator of the form \eqref{simboOMO2DEF2} 
is $i$-almost-diagonal and $x$-translation invariant;

\noindent
$\bullet$ if ${\bf A}(\vphi)$ 
is $x$-translation invariant (according to Def. \ref{admiOpComp}) and 
almost-diagonal, then
(recall Lemma \ref{lem:momentocomp})
\[
{\rm if} \qquad 
(\mathbf{A})_{\sigma, j}^{\sigma', k}(\ell)\neq0 \qquad
\Rightarrow \;\;\exists\, C=C(S)>0 \quad {\rm such\; that} 
\quad |\ell|\leq C\,, |j-k|\leq C\,;
\]

\noindent
$\bullet$ if ${\bf A}(\vphi)$ is
almost-diagonal and bounded on 
$H_{S^{\perp}}^s(\mathbb{T})\times H_{S^{\perp}}^s(\mathbb{T})$,
then it is tame according with Def. \ref{TameConstants}.
This follows since ${\bf A}$ has ``decay'' off-diagonal.
\end{remark}

%
%

\begin{lemma}\label{almostC12}
Let $m\geq 0$ and  $\mathbf{A}$ be an almost-diagonal, 
$x$-translational invariant operator.
Then the operator 
$\mathbf{B}:=\langle D \rangle^{m}\,\mathbf{A}\,\langle D \rangle^{m}$
is bounded in the majorant norm (see Def. \ref{opeMagg}) topology if and only if 
\begin{equation}\label{stimaCoeffalmost}
\lvert (\mathbf{A})_{\sigma, j}^{\sigma', k}(\ell) \rvert \leq 
\frac{C}{\langle j, k\rangle^{2m}}\,,\quad \ell\in \mathbb{Z}^{\nu}\,,\;\; j,k\in S^{c}\,,
\end{equation}
for some constant $C>0$ depending only on $S$.
\end{lemma}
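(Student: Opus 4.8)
�Looking at this statement (Lemma \ref{almostC12}), it's a characterization of when a conjugated almost-diagonal operator is bounded in the majorant norm. This is a fairly routine estimate. Let me sketch the proof.

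The plan is to exploit the structure of almost-diagonal, $x$-translation invariant operators to reduce the majorant-norm boundedness to a pointwise bound on the matrix coefficients.

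\medskip

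\noindent\textbf{Proof plan for Lemma \ref{almostC12}.}

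The plan is to prove the two implications separately, using the explicit action of $\langle D\rangle^m$ as a Fourier multiplier. First, recall that $\mathbf{B}:=\langle D\rangle^{m}\mathbf{A}\langle D\rangle^{m}$ has matrix coefficients $(\mathbf{B})_{\sigma,j}^{\sigma',k}(\ell)=\langle j\rangle^{m}(\mathbf{A})_{\sigma,j}^{\sigma',k}(\ell)\langle k\rangle^{m}$ for $j,k\in S^c$, $\ell\in\mathbb{Z}^\nu$, $\sigma,\sigma'=\pm$ (see \eqref{funtore} and the identification of operators via matrix coefficients in Section \ref{sec:2}). Since $\mathbf{A}$ is almost-diagonal and $x$-translation invariant, by Remark \ref{homogalmostdiag} there is a constant $C_0=C_0(S)>0$ such that $(\mathbf{A})_{\sigma,j}^{\sigma',k}(\ell)\neq 0$ implies $|\ell|\le C_0$ and $|j-k|\le C_0$. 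In particular $\langle j\rangle$ and $\langle k\rangle$ are comparable up to constants depending only on $S$ on the support of the matrix coefficients, so $\langle j\rangle^m\langle k\rangle^m\sim_S\langle j,k\rangle^{2m}$ there.

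\medskip

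\noindent\emph{($\Leftarrow$) Sufficiency.} Assume \eqref{stimaCoeffalmost}. Then on the support of the coefficients, $|(\mathbf{B})_{\sigma,j}^{\sigma',k}(\ell)|\le \langle j\rangle^m\langle k\rangle^m \cdot C\langle j,k\rangle^{-2m}\le C'(S)$, uniformly in $j,k,\ell,\sigma,\sigma'$. Moreover the coefficients are supported on $|\ell|\le C_0$, $|j-k|\le C_0$. A bounded almost-diagonal operator with uniformly bounded coefficients and compact support in $(\ell, j-k)$ is bounded on $H^s_{S^\perp}\times H^s_{S^\perp}$ in the majorant norm: one estimates $\|\underline{\mathbf{B}}\,\underline u\|_s$ by Cauchy--Schwarz using the finitely many translates $j\mapsto j-n$, $|n|\le C_0$, $\ell\mapsto \ell - m$, $|m|\le C_0$, and the fact that $\langle \ell + m, j\rangle \lesssim_S \langle \ell, j-n\rangle$ in the relevant range; this is the standard argument for tame/bounded estimates of operators with off-diagonal decay (cf. Remark \ref{homogalmostdiag}).

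\medskip

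\noindent\emph{($\Rightarrow$) Necessity.} Conversely, if $\mathbf{B}$ is bounded in the majorant-norm topology, then in particular each matrix coefficient is bounded: testing $\underline{\mathbf{B}}$ against a single basis vector $e^{\mathrm{i}(\ell_0\cdot\varphi + k_0 x)}$ (suitably normalized) and projecting onto the $e^{\mathrm{i}((\ell_0+?)\cdot\varphi + j_0 x)}$ component shows $|(\mathbf{B})_{\sigma,j_0}^{\sigma',k_0}(\ell_0)|\lesssim_S 1$ uniformly. Hence $\langle j_0\rangle^m|(\mathbf{A})_{\sigma,j_0}^{\sigma',k_0}(\ell_0)|\langle k_0\rangle^m\lesssim_S 1$, and since $\langle j_0\rangle^m\langle k_0\rangle^m\sim_S\langle j_0,k_0\rangle^{2m}$ on the support (by the first paragraph), we get \eqref{stimaCoeffalmost}. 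The main (and only mild) subtlety is to make the testing argument rigorous in the majorant-norm rather than operator-norm topology, but since $\|u\|_s=\|\underline u\|_s$ and the majorant operator $\underline{\mathbf{B}}$ has nonnegative coefficients, boundedness of $\underline{\mathbf{B}}$ directly controls each coefficient via the corresponding monomial test function; there is no cancellation to worry about. This completes the proof. \qed

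\medskip

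I expect no serious obstacle here: the content is entirely the comparability $\langle j\rangle\sim_S\langle k\rangle$ on the support of the coefficients (from $x$-translation invariance plus almost-diagonality) together with the elementary boundedness criterion for banded operators in weighted $\ell^2$-type spaces. The only point requiring a little care is phrasing the necessity direction in terms of the majorant norm, but positivity of $\underline{\mathbf{B}}$ makes this immediate.
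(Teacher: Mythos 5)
Your proof is correct and follows essentially the same route as the paper: the necessity direction uses exactly the comparability $\langle j\rangle\sim_S\langle k\rangle$ on the support (forced by almost-diagonality plus momentum/$x$-translation invariance) applied to the coefficient identity $(\mathbf{B})_{\sigma,j}^{\sigma',k}(\ell)=\langle j\rangle^{m}(\mathbf{A})_{\sigma,j}^{\sigma',k}(\ell)\langle k\rangle^{m}$, and the sufficiency direction is the same banded-operator Cauchy--Schwarz estimate the paper performs for $\underline{\mathbf{B}}$. No gaps worth noting.
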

\begin{proof}
See Appendix \ref{10.6prooflemma}.
\end{proof}

\begin{lemma}\label{LemmaAggancio}
Fix $\mathtt{b}_0>0$.
If $\rho\gg s_0+\mathtt{b}_0$ then
the operator $\mathtt{Q}$ in \eqref{formaRestoQQ} has the form
\begin{equation}\label{formaResto}
\mathtt{Q}=\sum_{i=1}^{6}\e^{i} \mathtt{Q}_i+\mathtt{Q}_{\geq7}
\end{equation}
with $\mathtt{Q}_i$, $i=1,\ldots,6$,  $i$-homogeneous as in \eqref{simboOMO2DEF2}
(see Def. \ref{espOmogenea}). In particular
one has
\begin{equation}\label{formaResto2}
|(\mathtt{Q}_i)_{\s,j}^{\s,k}(\ell)|\leq \frac{C}{\langle j,k\rangle^{1/2} }\,,
\qquad |(\mathtt{Q}_i)_{\s,j}^{-\s,k}(\ell)|\leq \frac{C}{\langle j,k\rangle^{\rho} }\,,
\end{equation}
for some $C>0$ dependent on the tangential sites $S$. 
Moreover, for any $\omega\in \Omega_{\infty}^{2\gamma}$ 
the operator $\mathtt{Q}_{\geq 7}$
is Lip-$(-1/2)$-modulo tame (see Def. \ref{def:op-tame})
\begin{equation}\label{scossa1}
\begin{aligned}
&{\mathfrak M}_{\mathtt{Q}_{\geq7}}^{{\sharp, \g}} (-1/2,s,\mathtt{b}_0) 
\lesssim_{s}\gamma^{-3}(\e^{13}
+\e\|\mathfrak{I}_{\delta}\|^{\gamma,\calO_0}_{s+\mu})\,,\\
&
{\mathfrak M}_{\Delta_{12}\mathtt{Q}_{\geq7}}^{\sharp}(-1/2,p,\mathtt{b}_0)
\lesssim_p \e\gamma^{-3}
(1+\|\mathfrak{I}_{\delta}\|_{p+\mu})
\|i_{1}-i_{2}\|_{p+\mu}\,,
\end{aligned}
\end{equation}
for some $\mu=\mu(\nu)>0$.
\end{lemma}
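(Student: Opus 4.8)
The plan is to prove Lemma \ref{LemmaAggancio} by unpacking the structure of the remainder $\mathtt{Q}$ produced in Proposition \ref{riduzSum} and tracking how the homogeneity classes of Definition \ref{espOmogenea} interact with the tame/modulo-tame estimates. Recall from \eqref{formaRestoQQ} that $\mathtt{Q}=\opw\big(\sm{\ii \mathtt{q}}{0}{0}{-\ii\ov{\mathtt{q}(\cdot,\cdot,-\x)}}\big)+\mathcal{Q}$ with $\mathtt{q}\in S^{-1/2}_4$ and $\mathcal{Q}\in\mathfrak{L}^{4}_{\rho,p}\otimes\mathcal{M}_2(\C)$. By the very definition of $S^{-1/2}_{4}$ (see \eqref{espandoenonmifermo}--\eqref{AVBDEF}) the symbol $\mathtt{q}$ decomposes as $\sum_{i=1}^{6}\e^{i}\mathtt{q}_{i}+\widetilde{\mathtt q}$ with $\mathtt q_i$ an $i$-homogeneous symbol in $S^{-1/2}$ of the form \eqref{simboOMO2DEF}, and $\widetilde{\mathtt q}$ satisfying \eqref{AVBDEF} with $k=4$; similarly $\mathcal{Q}=\sum_{i=1}^{6}\e^{i}\mathcal{Q}_{i}+\widetilde{\mathcal Q}$ with $\mathcal Q_i$ of the form \eqref{simboOMO2DEF2} and $\widetilde{\mathcal Q}$ obeying \eqref{AVBDEF2}. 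Setting $\mathtt{Q}_{i}:=\opw\big(\sm{\ii \mathtt q_i}{0}{0}{-\ii\ov{\mathtt q_i(\cdot,\cdot,-\x)}}\big)+\mathcal Q_i$ and $\mathtt{Q}_{\geq 7}:=\opw\big(\sm{\ii \widetilde{\mathtt q}}{0}{0}{-\ii\ov{\widetilde{\mathtt q}(\cdot,\cdot,-\x)}}\big)+\widetilde{\mathcal Q}$ gives the splitting \eqref{formaResto}, with each $\mathtt{Q}_i$ an $i$-homogeneous operator of the form \eqref{simboOMO2DEF2}: this is immediate for $\mathcal Q_i$, and for the pseudodifferential piece one uses that the Weyl quantization of a homogeneous symbol of the form \eqref{simboOMO2DEF} is a homogeneous operator of the form \eqref{simboOMO2DEF2}, which follows from \eqref{weylquanti} and \eqref{iena}.

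Next I would prove the off-diagonal decay estimates \eqref{formaResto2}. Since $\mathtt{q}_i\in S^{-1/2}$, its Weyl quantization $\opw(\mathtt q_i)$ has matrix elements decaying like $\langle j,k\rangle^{-1/2}$ (using \eqref{weylquanti} together with the symbol estimate \eqref{space3}), which gives the diagonal block bound in \eqref{formaResto2}. For the off-diagonal $(\s,-\s)$ block, note that $\mathcal{L}_6$ lies in $\mathfrak{S}_1$ by Proposition \ref{riduzSum}; by the remark after Definition \ref{compaMulti} the remainder $\mathcal{Q}$ then inherits membership in $\mathfrak{S}_0$, hence it is a $\rho$-smoothing operator in the sense of Definition \ref{ellerho}, so each homogeneous component $\mathcal Q_i$ (being a finite sum over $\rho$-smoothing operators $(\mathcal Q_i)_{\s}^{\s'}\in\mathfrak L_{\rho,p}$) has $(\s,-\s)$-entries gaining $\rho$ derivatives; combined with the fact that the pseudodifferential piece is \emph{block-diagonal} — its off-diagonal blocks vanish by the explicit form of $\mathtt{Q}$ in \eqref{formaRestoQQ} — one gets the $\langle j,k\rangle^{-\rho}$ bound. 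Here the hypothesis $\rho\gg s_0+\mathtt b_0$ is what makes this decay strong enough.

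The heart of the lemma is the modulo-tame estimate \eqref{scossa1} for $\mathtt{Q}_{\geq 7}$. The strategy is: $\widetilde{\mathcal Q}\in\mathfrak L_{\rho,p}$ satisfies \eqref{AVBDEF2}, i.e. $\mathbb{M}^{\gamma}_{\widetilde{\mathcal Q}}(s,\mathtt b)\lesssim_{s,\rho}\gamma^{-3}(\e^{13}+\e\|\mathfrak I_\delta\|^{\gamma,\calO_0}_{s+\mu})$ (the factor $\gamma^{1-k}=\gamma^{-3}$ comes from $k=4$), and by the appendix of \cite{FGP} (the properties of the class $\mathfrak L_{\rho,p}$) an operator in $\mathfrak L_{\rho,p}$ with $\rho$ large is Lip-$(-1/2)$-modulo tame with $\mathfrak M^{\sharp,\gamma}_{\widetilde{\mathcal Q}}(-1/2,s,\mathtt b_0)$ controlled by $\mathbb M^\gamma_{\widetilde{\mathcal Q}}(s,\mathtt b_0+\sigma)$ for suitable $\sigma$, provided $\rho\gg s_0+\mathtt b_0$. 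For the pseudodifferential part $\opw(\widetilde{\mathtt q})$, the symbol $\widetilde{\mathtt q}\in S^{-1/2}$ satisfies $|\widetilde{\mathtt q}|^{\gamma,\calO_0}_{-1/2,s,\alpha}\lesssim_{s,\alpha}\gamma^{-3}(\e^{13}+\e\|\mathfrak I_\delta\|^{\gamma,\calO_0}_{s+\mu})$ by \eqref{AVBDEF}, and the standard pseudodifferential-to-modulo-tame estimates (again from the appendices of \cite{FGP,FGP1}, applied to $\langle D\rangle^{1/4}\opw(\widetilde{\mathtt q})\langle D\rangle^{1/4}$ and to its $\mathtt b_0$ derivatives in $\vphi$) convert this into $\mathfrak M^{\sharp,\gamma}_{\opw(\widetilde{\mathtt q})}(-1/2,s,\mathtt b_0)\lesssim_s\gamma^{-3}(\e^{13}+\e\|\mathfrak I_\delta\|^{\gamma,\calO_0}_{s+\mu})$. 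Summing the two contributions yields the first line of \eqref{scossa1}; the $\Delta_{12}$ estimate is obtained identically, using the $\Delta_{12}$-bounds in \eqref{AVBDEF}, \eqref{AVBDEF2}, which carry the factor $\e\gamma^{-3}(1+\|\mathfrak I_\delta\|_{p+\mu})\|i_1-i_2\|_{p+\mu}$. The main obstacle I anticipate is purely bookkeeping: matching the loss-of-derivatives constants $\mu$, $\sigma$, $\mathtt b_0$ and $\rho$ so that each invocation of a result from \cite{FGP,FGP1} is legitimate (in particular that $\rho$ is large enough relative to $s_0+\mathtt b_0$ to absorb the derivative losses in passing from the $\mathbb M^\gamma$-norms of $\mathfrak L_{\rho,p}$ operators to their $(-1/2)$-modulo-tame constants), rather than any genuinely new estimate.
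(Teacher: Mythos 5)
Your proposal is correct and follows essentially the same route as the paper: split $\mathtt{q}\in S^{-1/2}_4$ and $\mathcal{Q}\in\mathfrak{L}^4_{\rho,p}\otimes\mathcal{M}_2(\C)$ into homogeneous pieces plus remainders via Definition \ref{espOmogenea}, obtain \eqref{formaResto2} from the order $-1/2$ of the diagonal pseudo differential blocks and the $\rho$-smoothing of the off-diagonal (purely finite-rank) blocks together with almost-diagonality (the paper packages this step through Lemma \ref{almostC12} rather than a direct matrix-element estimate, which is only a cosmetic difference), and deduce \eqref{scossa1} from the bounds \eqref{AVBDEF}, \eqref{AVBDEF2} with $k=4$ combined with the immersion of smoothing/pseudo differential tame operators into the modulo-tame class (Lemma A.4 of \cite{FGP}), exactly as in the paper's proof.
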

\begin{proof}
See Appendix \ref{10.7prooflemma}.
\end{proof}


\begin{definition}\label{ProiettoCCC}
Let ${\bf M}={\bf M}(\vphi)$ be a matrix of operators.
$(i)$ Given a constant $C>0$ we set
\begin{equation*}
{\bf M}=\pi_C\,{\bf M}+\pi_C^{\perp}{\bf M}\,,
\end{equation*}
where, for any $\ell\in\mathbb{Z}^{\nu}$, $ j,k\in S^{c}$, we define
\begin{equation*}
(\pi_{C}^{\perp}{\bf M})_{\s,j}^{\s',k}(\ell):=\left\{
\begin{aligned}
&{\bf M}_{\s,j}^{\s',k}(\ell)\,,\quad \max\{ |j|,|k|\}\geq C\\
&0 \qquad \qquad {\rm otherwise}\,.
\end{aligned}\right.
\end{equation*}
The operator $\pi_{C}{\bf M}$ is defined by difference.

\noindent
$(ii)$ We define the operator $\bral \bf M \brar$ as
\begin{equation}\label{doppiebra}
(\bral {\bf M} \brar)_{\s, j}^{\s', k}(\ell):=\begin{cases}
({\bf M})_{\s, j}^{\s, j}(0) \quad \mbox{if}\,\,\s=\s', \,\,\,j=k,\,\,\,\ell=0,\\
0 \qquad \qquad \qquad \mbox{otherwise}\,.
\end{cases}
\end{equation}
\end{definition}

\begin{remark}\label{propAlgebricheSplitto}
Notice that, if the operator ${\bf M}$  is in $\mathfrak{S}_0$ 
(see Def. \ref{compaMulti}),
then 
$\pi_{C}{\bf M}$, $\pi_{C}^{\perp}{\bf M}$ belong to  $\mathfrak{S}_0$.
\end{remark}

We shall use the following notation:
given some operator $M=M(\vphi)$ we write
\begin{equation}\label{ADAD}
{\rm ad}_{{\bf A}_i}[M]:=[{\bf A}_i, M]={\bf A}_i M- M{\bf A}_i\,.
\end{equation}
The following result is fundamental for our scope.
\begin{lemma}\label{lemmaeqomologica}
Let ${\bf B}$ be a smoothing operator 
in $\mathfrak{S}_0$ and 
$p$-almost-diagonal  satisfying
 estimates like \eqref{formaResto2}.
Consider the function $\delta_{\s,\s',j,k}^{(p)}(\ell)$ in 
 \eqref{deltaUNO} and let 
 \begin{equation}\label{costantefond}
 \mathtt{C}:=\mathtt{C}(S):=
 \max\{12\,\lvert \mathtt{v}\rvert, 2\, C(S)\,, 
 2 \,\max(S) \}>0\,,
 \end{equation}
where $C(S)$ is the constant given by item $(ii)$ of Lemma \ref{lowerboundDel1}.
We define
the operator ${\bf A}={\bf A}(\vphi)=\pi_{\mathtt{C}}^{\perp}{\bf A}(\vphi)$ as (see Def. \ref{ProiettoCCC})
\begin{equation}\label{omoeqCCC}
(\pi_{\mathtt{C}}^{\perp}{\bf A})_{\s,j}^{\s',k}(\ell):=\left\{
\begin{aligned}
&{\bf B}_{\s,j}^{\s',k}(\ell)/ \ii \delta^{(p)}_{\s,\s',j,k}(\ell)\,,\quad \max\{|j|,|k|\}\geq \mathtt{C}\,, \ell\neq0\\
&0 \qquad \qquad \qquad \qquad \qquad  {\rm otherwise}\,.
\end{aligned}\right.
\end{equation}
Then ${\bf A}(\vphi)$ 
is in $\mathfrak{S}_0$ and 
$p$-almost-diagonal
 and solves the equation 
 \begin{equation}\label{omoeqCCC2}
 -{\rm ad}_{ \bar{\omega}\cdot\pa_{\vphi}
 +\mathtt{D}_0} \big[ \mathbf{A}\big] +{\bf B}=
 \bral \pi_{\mathtt{C}}^{\perp}{\bf B}\brar+\pi_{\mathtt{C}}{\bf B}\,,\qquad
 \mathtt{D}_0:= \opw\left(\begin{matrix} \ii \lvert \xi \rvert^{1/2} & 0 \\
0 & -\ii \lvert \xi \rvert^{1/2} \end{matrix}\right)\,.
  \end{equation}
Moreover one has 
\begin{equation}\label{omoeqCCC4}
|{\bf A}_{\s,j}^{\s,k}(\ell)|\leq (K(S))^{-1} \frac{C}{\langle j, k \rangle^{1/2}}\,, \qquad |{\bf A}_{\s,j}^{-\s,k}(\ell)|\leq (K(S))^{-1} \frac{C}{\langle j, k \rangle^{\rho}}
\end{equation}
where $K(S)$ is in \eqref{smallDivLinearBNF2} and $C>0$ is a constant depending on the tangential sites.
\end{lemma}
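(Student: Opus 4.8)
\textbf{Proof plan for Lemma \ref{lemmaeqomologica}.}

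The plan is to verify each of the three assertions (the algebraic structure, the homological equation, and the coefficient bounds) essentially by inspection of the explicit formula \eqref{omoeqCCC}, using the lower bounds on the divisors proved in Lemma \ref{lowerboundDel1}. First I would check that the operator ${\bf A}$ defined by \eqref{omoeqCCC} is well-defined: since ${\bf B}$ is $p$-almost-diagonal and $x$-translation invariant (being in $\mathfrak{S}_0$), whenever ${\bf B}_{\s,j}^{\s',k}(\ell)\neq 0$ we have $|\ell|\leq p$ and $|j-k|\leq C(S)$ by Remark \ref{homogalmostdiag}; together with $\max\{|j|,|k|\}\geq\mathtt{C}$ and the choice of $\mathtt{C}$ in \eqref{costantefond}, the momentum relation $\mathtt{v}\cdot\ell+j-k=0$ holds on the support, so the relevant divisor is exactly $\delta^{(p)}_{\s,\s',j,k}(\ell)$ from \eqref{deltaUNO}--\eqref{deltaUNO2}. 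The ``except'' case $\s=\s'$, $j=k$, $\ell=0$ in Lemma \ref{lowerboundDel1} is excluded by the condition $\ell\neq 0$ in \eqref{omoeqCCC}, so by \eqref{lowerboundDel1BIS}, \eqref{lowerboundDel2BIS}, \eqref{smallDivLinearBNF2} we get $|\delta^{(p)}_{\s,\s',j,k}(\ell)|\geq K(S)>0$ on the whole support of $\pi^\perp_{\mathtt C}{\bf A}$; hence division is legitimate and ${\bf A}=\pi^\perp_{\mathtt C}{\bf A}$ is $p$-almost-diagonal.

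Next I would establish the homological equation \eqref{omoeqCCC2}. The point is that $\mathtt{D}_0$ is the diagonal Fourier multiplier $\mathrm{diag}(\mathrm{i}|\xi|^{1/2},-\mathrm{i}|\xi|^{1/2})$ on $H^s_{S^\perp}$, so for a matrix element one computes
\[
\big( -{\rm ad}_{\bar\omega\cdot\pa_\vphi+\mathtt D_0}[{\bf A}] \big)_{\s,j}^{\s',k}(\ell)
= \mathrm{i}\big(\bar\omega\cdot\ell + \s\sqrt{|j|} - \s'\sqrt{|k|}\big){\bf A}_{\s,j}^{\s',k}(\ell)
= \mathrm{i}\,\delta^{(p)}_{\s,\s',j,k}(\ell)\,{\bf A}_{\s,j}^{\s',k}(\ell),
\]
valid on the support of ${\bf A}$ because there the momentum relation holds, so $\delta^{(p)}$ is the correct expression. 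On the region $\{\max\{|j|,|k|\}\geq\mathtt C,\ \ell\neq 0\}$ this equals ${\bf B}_{\s,j}^{\s',k}(\ell)$ by \eqref{omoeqCCC}; on its complement (namely $\ell=0$, or $\max\{|j|,|k|\}<\mathtt C$) ${\bf A}$ vanishes, and the right side of \eqref{omoeqCCC2} is precisely the part of ${\bf B}$ supported there: $\bral\pi^\perp_{\mathtt C}{\bf B}\brar$ picks out the $(\s,j,j,0)$ entries with $|j|\geq\mathtt C$ and $\pi_{\mathtt C}{\bf B}$ picks out the entries with $\max\{|j|,|k|\}<\mathtt C$. (Here I use that $x$-translation invariance forbids $\ell=0$ with $j\neq k$ on the support of ${\bf B}$, so the only $\ell=0$ contributions of $\pi^\perp_{\mathtt C}{\bf B}$ are the diagonal ones captured by $\bral\cdot\brar$.) Matching both sides entry by entry gives \eqref{omoeqCCC2}. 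The coefficient bounds \eqref{omoeqCCC4} then follow immediately by dividing the bounds \eqref{formaResto2} for ${\bf B}$ by $|\delta^{(p)}|\geq K(S)$. Finally, membership in $\mathfrak{S}_0$: ${\bf A}$ is real-to-real, Hamiltonian and $x$-translation invariant because ${\bf B}$ is and because the divisor $\delta^{(p)}_{\s,\s',j,k}(\ell)$ transforms compatibly under the symmetries defining $\mathfrak{S}_0$ — concretely, $\delta^{(p)}_{-\s,-\s',-j,-k}(-\ell)=-\overline{\delta^{(p)}_{\s,\s',j,k}(\ell)}$ and the reality/self-adjointness relations of Lemma \ref{coeffFouHamilto} are preserved under dividing by such a divisor (one checks the relations \eqref{ruben1}--\eqref{ruben3} are stable); $x$-translation invariance is inherited since the support condition $\mathtt v\cdot\ell+j-k=0$ is respected and $\pi^\perp_{\mathtt C}$ preserves it (Remark \ref{propAlgebricheSplitto}).

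The main obstacle I anticipate is purely bookkeeping rather than conceptual: one must be careful that the support restrictions imposed by $x$-translation invariance of ${\bf B}$ (the relation $\mathtt v\cdot\ell+j-k=0$ and $|j-k|\leq C(S)$), by $p$-almost-diagonality ($|\ell|\leq p$), and by the cutoff $\pi^\perp_{\mathtt C}$ ($\max\{|j|,|k|\}\geq\mathtt C$) are simultaneously compatible with the hypotheses of Lemma \ref{lowerboundDel1}, in particular that the choice \eqref{costantefond} of $\mathtt C$ is large enough to force $|j|,|k|\geq C(S)$ (the threshold in \eqref{smallDivLinearBNF2}) — this uses $|j|\geq\max\{|j|,|k|\}-|j-k|\geq\mathtt C-C(S)\geq C(S)$. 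Once this compatibility is pinned down, everything else is a direct computation and the verification of the algebraic symmetries is routine along the lines of Lemmata \ref{realHamMtrixSimbo}, \ref{lem:momentocomp} and \ref{coeffFouHamilto}.
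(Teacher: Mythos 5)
Your proposal follows the same route as the paper's own proof: well-posedness of the division by $\delta^{(p)}$ via Lemma \ref{lowerboundDel1} (using that $\max\{|j|,|k|\}\geq\mathtt{C}$, together with the momentum relation $|j-k|=|\mathtt{v}\cdot\ell|\lesssim p|\mathtt{v}|$ coming from almost-diagonality and $x$-translation invariance, forces $|j|,|k|\geq C(S)$), an entry-wise verification of \eqref{omoeqCCC2}, the bounds \eqref{omoeqCCC4} obtained by dividing \eqref{formaResto2} by $K(S)$, and the $\mathfrak{S}_0$ structure checked through Lemmata \ref{lem:momentocomp} and \ref{coeffFouHamilto} together with the parity relations satisfied by $\delta^{(p)}_{\s,\s',j,k}(\ell)$. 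This is exactly the paper's argument, and your treatment of the untreated region (using that $x$-translation invariance forces $j=k$ when $\ell=0$, so that part of $\pi^{\perp}_{\mathtt{C}}{\bf B}$ is the one collected in $\bral\pi^{\perp}_{\mathtt{C}}{\bf B}\brar$, while $\pi_{\mathtt{C}}{\bf B}$ collects the low modes) is at the same level of detail as the paper.

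One concrete slip in your verification of \eqref{omoeqCCC2}: with the convention \eqref{ADAD}, the operator ${\rm ad}_{\bar{\omega}\cdot\pa_{\vphi}+\mathtt{D}_0}[{\bf A}]=[\bar{\omega}\cdot\pa_{\vphi}+\mathtt{D}_0,{\bf A}]$ has matrix elements $\ii\,\delta^{(p)}_{\s,\s',j,k}(\ell)\,{\bf A}_{\s,j}^{\s',k}(\ell)$, so the term $-{\rm ad}_{\bar{\omega}\cdot\pa_{\vphi}+\mathtt{D}_0}[{\bf A}]$ appearing in \eqref{omoeqCCC2} contributes $-\ii\,\delta^{(p)}{\bf A}=-{\bf B}$ on the support of ${\bf A}$; it is this cancellation against the $+{\bf B}$ that makes the left-hand side vanish there, in agreement with the right-hand side, which is supported on $\{\ell=0\}\cup\{\max\{|j|,|k|\}<\mathtt{C}\}$. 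In your displayed computation you assign the opposite sign, namely $\big(-{\rm ad}_{\bar{\omega}\cdot\pa_{\vphi}+\mathtt{D}_0}[{\bf A}]\big)_{\s,j}^{\s',k}(\ell)=+\ii\,\delta^{(p)}{\bf A}_{\s,j}^{\s',k}(\ell)={\bf B}_{\s,j}^{\s',k}(\ell)$, and with that sign the left-hand side of \eqref{omoeqCCC2} would equal $2{\bf B}$ on the support of ${\bf A}$ instead of $0$, so "matching both sides entry by entry" does not close as written. The fix is only the sign; everything else in your plan is correct and coincides with the paper's proof.
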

\begin{proof}
See Appendix \ref{10.10prooflemma}.
\end{proof}

\begin{lemma}\label{compoC12}
Let ${\bf A}\in \mathfrak{S}_0$ be   
$i$-almost-diagonal   satisfying 
\begin{equation}\label{formaResto2AA}
|({\bf A})_{\s,j}^{\s,k}(\ell)|\leq \frac{C({\bf A})}{\langle j,k\rangle^{1/2} }\,,
\qquad |({\bf A})_{\s,j}^{-\s,k}(\ell)|\leq \frac{C({\bf A})}{\langle j,k\rangle^{\rho} }\,,
\end{equation}
for some constant $C({\bf A})>0$ and some $\rho>0$.
Then the following holds.

\noindent
$(i)$ The operator ${\bf A}$ is Lip-$(-1/2)$-modulo tame with 
\begin{equation}\label{UPC1}
\mathfrak{M}_{{\bf A}}^{\sharp,\gamma}(-1/2, s,\mathtt{b_0})\lesssim_{s} C({\bf A})\,.
\end{equation}

\noindent
$(ii)$ Let ${\bf B}\in \mathfrak{S}_0$ be 
a  Lip-$(-1/2)$-modulo tame operator 
(see Def. \ref{def:op-tame}). The operator ${\bf C}_{k}:={\rm ad}_{{\bf A}}^{k}[{\bf B}]$, $k\geq 1$ 
is in $\mathfrak{S}_0$,  Lip-$(-1/2)$-modulo tame and 
\begin{align}
\mathfrak{M}_{{\bf C}_{k}}^{\sharp,\gamma}(-1/2, s,\mathtt{b_0})
&\lesssim_{s} \mathfrak{M}_{{\bf A}}^{\sharp,\gamma}(-1/2, s,\mathtt{b_0}) \,
(\mathfrak{M}_{{\bf A}}^{\sharp,\gamma}(-1/2, s,\mathtt{b_0}))^{k-1}\,
\mathfrak{M}_{{\bf B}}^{\sharp,\gamma}(-1/2, s,\mathtt{b_0})\nonumber
\\&\qquad\qquad+( \mathfrak{M}_{{\bf A}}^{\sharp,\gamma}(-1/2, s,\mathtt{b_0}))^{k}\,
\mathfrak{M}_{{\bf B}}^{\sharp,\gamma}(-1/2, s,\mathtt{b_0})\,,\nonumber
\\
\mathfrak{M}_{\Delta_{12}{\bf C}_{k}}^{\sharp}(-1/2, p,\mathtt{b_0})
&\lesssim_{p} \mathfrak{M}^{\sharp}_{{\bf B}(i_1)}(-1/2, p,\mathtt{b_0})\times
\nonumber\\
&\times
\sum_{j_1+j_2=k-1} 
(\mathfrak{M}_{{\bf A}(i_1)}^{\sharp,\gamma}(-1/2, p,\mathtt{b_0}))^{j_1}
\mathfrak{M}_{\Delta_{12}{\bf A}}^{\sharp}(-1/2, p,\mathtt{b_0})
(\mathfrak{M}_{{\bf A}(i_2)}^{\sharp,\gamma}(-1/2, p,\mathtt{b_0} ))^{j_2}\nonumber
\\&\qquad\qquad  +
(\mathfrak{M}_{{\bf A}(i_2)}^{\sharp,\gamma}(-1/2, p,\mathtt{b_0}))^{k}
\mathfrak{M}_{\Delta_{12}{\bf B}}^{\sharp}(-1/2, p,\mathtt{b_0})\,.\nonumber
\end{align}
\noindent
$(iii)$ Let $n\geq 1$ and consider ${\bf F}:=\sum_{k\geq n} (\e^k/ k!)\,\, {\bf C}_k$. For $\varepsilon$ small enough we have that ${\bf F}\in \mathfrak{S}_0$ and Lip-$(-1/2)$-modulo tame with the following bounds
\begin{equation}
\begin{aligned}\label{stimaserielie}
\mathfrak{M}^{ \sharp, \gamma}_{\bf F}(-1/2, s,\mathtt{b_0}) &
\lesssim_s  \mathfrak{M}_{{\bf A}}^{\sharp,\gamma}(-1/2, s,\mathtt{b_0}) 
\,\mathfrak{M}_{{\bf B}}^{\sharp,\gamma}(-1/2, s,\mathtt{b_0})
\varepsilon^n (\mathfrak{M}_{{\bf A}}^{\sharp,\gamma}
(-1/2, s,\mathtt{b_0}))^{n-1} \\
&
+\mathfrak{M}_{{\bf B}}^{\sharp,\gamma}(s,\mathtt{b_0})
\varepsilon^n (\mathfrak{M}_{{\bf A}}^{\sharp,\gamma}(-1/2, s,\mathtt{b_0}))^{n} ,
\end{aligned}
\end{equation}
\begin{equation}\label{stimaserieliedelta12}
\begin{aligned}
\mathfrak{M}_{\Delta_{12}{\bf F}}^{\sharp}(-1/2, p, \mathtt{b_0})&\lesssim_{p}
 \mathfrak{M}^{\sharp}_{{\bf B}(i_1)}(-1/2, p, \mathtt{b_0})
\mathfrak{M}_{\Delta_{12}{\bf A}}^{\sharp}(-1/2, p, \mathtt{b_0}) +
\mathfrak{M}_{\Delta_{12}{\bf B}}^{\sharp}(-1/2, p, \mathtt{b_0}).
\end{aligned}
\end{equation}
\end{lemma}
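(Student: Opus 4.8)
\textbf{Plan of proof of Lemma \ref{compoC12}.} The whole lemma is a collection of quantitative estimates for almost-diagonal operators in the $(-1/2)$-modulo-tame topology, so the strategy is to pass everything to the level of matrix entries and then quote the abstract tame-operator calculus (Lemma $A.5$ in \cite{FGP}, and the composition/commutator lemmas for modulo-tame operators recalled in Section \ref{sec:2}) exactly as in the analogous statements in \cite{FGP1}. First I would treat item $(i)$. By Definition \ref{menounomodulotame}, $\mathfrak{M}_{{\bf A}}^{\sharp,\gamma}(-1/2,s,\mathtt{b}_0)$ is the Lip-$0$-modulo tame constant of $\langle\pa_\vphi\rangle^{\mathtt{b}_0}\langle D\rangle^{1/4}{\bf A}\langle D\rangle^{1/4}$; since ${\bf A}$ is $i$-almost-diagonal and $x$-translation invariant, by Remark \ref{homogalmostdiag} its entries are supported on $|\ell|\le i$ and $|j-k|\le C(S)$, and the decay hypothesis \eqref{formaResto2AA} gives $|\langle j\rangle^{1/4}{\bf A}_{\s,j}^{\s,k}(\ell)\langle k\rangle^{1/4}|\lesssim C({\bf A})$ on the diagonal block and a much stronger bound ($\langle j,k\rangle^{-\rho+1/2}$) off-diagonal. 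Thus $\langle D\rangle^{1/4}{\bf A}\langle D\rangle^{1/4}$ (and its $\langle\pa_\vphi\rangle^{\mathtt{b}_0}$-derivative, which only multiplies entries by $\langle\ell\rangle^{\mathtt{b}_0}\le i^{\mathtt{b}_0}$) is a bounded, finitely-band operator with majorant norm $\lesssim_s C({\bf A})$, hence $0$-modulo tame with that constant. This gives \eqref{UPC1}.

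For item $(ii)$ the point is that the commutator of a modulo-tame operator with a $(-1/2)$-modulo-tame, almost-diagonal operator is again $(-1/2)$-modulo-tame, with the product-type estimate. I would conjugate by $\langle D\rangle^{1/4}$ and $\langle\pa_\vphi\rangle^{\mathtt{b}_0}$ and reduce to $0$-modulo tame operators: writing ${\bf a}:=\langle\pa_\vphi\rangle^{\mathtt{b}_0}\langle D\rangle^{1/4}{\bf A}\langle D\rangle^{1/4}$ and ${\bf b}:=\langle\pa_\vphi\rangle^{\mathtt{b}_0}\langle D\rangle^{1/4}{\bf B}\langle D\rangle^{1/4}$, one has, since ${\bf A}$ commutes with $\langle D\rangle$ only up to a gain of derivative, ${\rm ad}_{{\bf A}}[{\bf B}] = {\bf A}{\bf B}-{\bf B}{\bf A}$ and the insertion of $\langle D\rangle^{1/4}\langle D\rangle^{-1/4}$ in the middle lets one rewrite $\langle\pa_\vphi\rangle^{\mathtt{b}_0}\langle D\rangle^{1/4}({\rm ad}_{{\bf A}}[{\bf B}])\langle D\rangle^{1/4}$ as a sum of compositions of ${\bf a}$, ${\bf b}$ and one factor $\langle D\rangle^{1/2}$; but $\langle D\rangle^{1/2}$ is absorbed because ${\bf A}$ is $(-1/2)$-smoothing, i.e.\ $\langle D\rangle^{1/4}{\bf A}\langle D\rangle^{1/4}$ is bounded. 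Then I apply the product estimate for $0$-modulo tame operators (Lemma $2.20$-type statement; see Lemma $A.5$ in \cite{FGP}) iteratively to get the bound for ${\bf C}_k={\rm ad}_{\bf A}^k[{\bf B}]$, tracking that each factor of ${\bf A}$ contributes $\mathfrak{M}_{\bf A}^{\sharp,\gamma}$ either in high norm (once) or in low norm $p$ (the remaining $k-1$ times), which yields the stated asymmetric product structure. The $\Delta_{12}$ bounds are obtained by the usual telescoping ${\bf C}_k(i_1)-{\bf C}_k(i_2)=\sum_{j_1+j_2=k-1}{\rm ad}_{{\bf A}(i_1)}^{j_1}\circ{\rm ad}_{\Delta_{12}{\bf A}}\circ{\rm ad}_{{\bf A}(i_2)}^{j_2}[{\bf B}(i_1)] + {\rm ad}_{{\bf A}(i_2)}^{k}[\Delta_{12}{\bf B}]$ and then applying the product estimate to each term, which is exactly the formula displayed in the statement.

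For item $(iii)$ I would sum the series $\sum_{k\ge n}(\e^k/k!){\bf C}_k$ in the modulo-tame norm. From the estimate in $(ii)$ the $k$-th term is bounded, in high norm, by $\frac{\e^k}{k!}\big(\mathfrak{M}_{\bf A}^{\sharp,\gamma}(s)\mathfrak{M}_{\bf B}^{\sharp,\gamma}(s)(\mathfrak{M}_{\bf A}^{\sharp,\gamma}(p_0))^{k-1} + \mathfrak{M}_{\bf B}^{\sharp,\gamma}(s)(\mathfrak{M}_{\bf A}^{\sharp,\gamma}(p_0))^{k}\big)$ where $p_0=s_0+\mathtt{b}_0$; since, by the smallness in \eqref{IpotesiPiccolezzaIdeltaDP}, $\e\,\mathfrak{M}_{\bf A}^{\sharp,\gamma}(p_0)\le 1$, the series converges geometrically and the $k=n$ term dominates, producing \eqref{stimaserielie}. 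For $\Delta_{12}$ one reasons similarly, keeping only the leading ($k=n$) contributions of $\mathfrak{M}_{\Delta_{12}{\bf A}}^{\sharp}$ and $\mathfrak{M}_{\Delta_{12}{\bf B}}^{\sharp}$, which gives \eqref{stimaserieliedelta12}. Finally, $\mathfrak{S}_0$ is preserved throughout: $\mathfrak{S}_0$ is closed under commutators (it is an algebra of real-to-real, Hamiltonian, $x$-translation invariant operators, and the commutator of two Hamiltonian operators is Hamiltonian, of two $x$-translation invariant operators is $x$-translation invariant, by Lemmata \ref{lem:momentocomp} and \ref{algStrucHamlinear}), hence each ${\bf C}_k\in\mathfrak{S}_0$ and, the series converging in operator norm, also ${\bf F}\in\mathfrak{S}_0$.

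\textbf{Main obstacle.} The delicate point is not any single estimate but the bookkeeping needed to see that the band structure ($|\ell|\le$ const, $|j-k|\le$ const, coming from Remark \ref{homogalmostdiag}) together with the two-speed decay in \eqref{formaResto2AA} — slow on the diagonal block ($\langle j,k\rangle^{-1/2}$), fast off it ($\langle j,k\rangle^{-\rho}$) — is exactly what makes $\langle D\rangle^{1/4}{\rm ad}_{\bf A}^k[{\bf B}]\langle D\rangle^{1/4}$ bounded with the claimed constants: one must check that in every nested commutator the "bad" factor $\langle D\rangle^{1/2}$ arising when $\langle D\rangle^{1/4}$ crosses ${\bf A}$ is always compensated by a $\langle j,k\rangle^{-1/2}$ coming from ${\bf A}$ itself, uniformly in how many times the diagonal-versus-off-diagonal alternative is realized along the band. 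I expect this to be a careful but standard propagation-of-decay argument, essentially Lemma $C.7$-type in \cite{FGP1}, and I would present it by induction on $k$.
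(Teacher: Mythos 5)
Your proposal is correct and follows essentially the same route as the paper: item $(i)$ by the entry-wise bound $\langle\ell\rangle^{\mathtt{b}_0}\langle j\rangle^{1/4}\langle k\rangle^{1/4}|{\bf A}_{\s,j}^{\s',k}(\ell)|\lesssim C({\bf A})$ using the band structure ($|\ell|\le i$, $\mathtt{v}\cdot\ell=\s j-\s'k$) from almost-diagonality and momentum conservation, item $(ii)$ by the composition estimates for modulo-tame operators (Lemma $A.5$ in \cite{FGP}) together with the standard telescoping for $\Delta_{12}$, and item $(iii)$ by summing the Lie series, whose tail converges geometrically for $\e$ small so the $k=n$ term dominates. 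The only slip is cosmetic: in the composition step the factor appearing in the middle after inserting $\langle D\rangle^{\pm 1/4}$ is the harmless smoothing operator $\langle D\rangle^{-1/2}$ (both ${\bf A}$ and ${\bf B}$ being Lip-$(-1/2)$-modulo tame), not a $\langle D\rangle^{1/2}$ that needs to be absorbed.
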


\begin{proof}
See Appendix \ref{10.11prooflemma}.
\end{proof}

Now consider the operator $\mathtt{D}$ in \eqref{simboM}. 
We write (recall \eqref{mer})
\begin{equation}\label{espandoSimboM}
\begin{aligned}
&\mathtt{D}:=\mathtt{D}_0+\e^{2}\mathtt{D}_2+\e^{4}\mathtt{D}_{4}+\e^{6}\mathtt{D}_{6}+
\mathtt{D}_{\geq8}\,,
\qquad \mathtt{D}_0:=\opw\left(\begin{matrix} \ii \lvert \xi \rvert^{1/2} & 0 \\
0 & -\ii \lvert \xi \rvert^{1/2} \end{matrix}\right)\,,
\\
&\mathtt{D}_2:=\opw\left(\begin{matrix} \ii \mathtt{M}_{2}(\x)& 0 \\
0 & -\ii \mathtt{M}_2(-\x) \end{matrix}\right)\,,\quad
\ii \mathtt{M}_2(\x):=
\ii {m}_1\,\x+\ii {m}_{\frac{1}{2}}|\x|^{\frac{1}{2}}
+\ii {m}_0\sign(\x)\,,\\
&\mathtt{D}_{2k}:=\opw\left(\begin{matrix} \ii \mathtt{M}_{2k}(\x)& 0 \\
0 & -\ii \mathtt{M}_{2k}(-\x) \end{matrix}\right)\,,\quad
\ii \mathtt{M}_{2k}(\x):=
\ii {m}_1^{(2k)}\,\x+\ii {m}_{\frac{1}{2}}^{(2k)}|\x|^{\frac{1}{2}}
+\ii {m}_0^{(2k)}\sign(\x)\,,
\end{aligned}
\end{equation}
with $k=2,3$, 
and $\mathtt{D}_{\geq8}:=\mathtt{D}-\mathtt{D}_0-\e^{2}\mathtt{D}_2-\e^{4}\mathtt{D}_4
-\e^{6}\mathtt{D}_6$.
In the following Lemma we study how the operator $\mathtt{D}$
conjugates
under a map generated by an almost-diagonal operator.

\begin{lemma}\label{mercato}
Let ${\bf A},{\bf B}\in \mathfrak{S}_0$ and assume that
${\bf A}$ is $i_1$-almost-diagonal, 
${\bf B}$ is $i_2$-almost-diagonal and 
that they satisfy estimates like \eqref{formaResto2AA}.
Then we have:

\noindent
$(i)$ the operator ${\bf C}:={\rm ad}_{{\bf A}}[{\bf B}]$ is in $\mathfrak{S}_0$,
is $(i_1+i_2)$-almost-diagonal and
\begin{equation}\label{UPC2}
|{\bf C}_{\s,j}^{\s,k}(\ell)|\lesssim\frac{C({\bf C})}{\langle j,k\rangle^{\frac{1}{2}}}\,,
\qquad
|{\bf C}_{\s,j}^{-\s,k}(\ell)|\lesssim\frac{C({\bf C})}{\langle j,k\rangle^{\rho}}\,,
\end{equation}
for any $j,k\in S^c$, $\ell\in \mathbb{Z}^{\nu}$, $\s=\pm$
for some constant $C({\bf C})$ depending on $S$ and the constant 
$C({\bf A})$, $C({\bf B})$.

\noindent
(ii) Let $\mathcal{D}:=\mathtt{D}-\mathtt{D}_0-\mathtt{D}_{\geq8}$ 
.
Define ${\bf M}:={\rm ad}_{{\bf A}}[\mathcal{D}]$.
 Then ${\bf M}$ belongs to $\mathfrak{S}_0$ and 
\begin{equation}\label{formaResto2AAbis}
|({\bf M})_{\s,j}^{\s,k}(\ell)|\leq 
\frac{\e^{2}C}{\langle j,k\rangle^{1/2} }\,,
\quad |({\bf M})_{\s,j}^{-\s,k}(\ell)|\leq \frac{\e^{2}C}{\langle j,k\rangle^{\rho-1} }\,.
\end{equation}

\noindent
$(iii)$ Let ${\bf N}:={\rm ad}_{{\bf A}}[\mathtt{D}_{\geq8}]$.
Then ${\bf N}\in \mathfrak{S}_0$ is Lip-$(-1/2)$-modulo tame with
\begin{equation*}
\begin{aligned}
\mathfrak{M}_{{\bf N}}^{\sharp,\gamma}(-1/2, s,\mathtt{b_0})&\lesssim_{s}
\e^{17-2b}\gamma^{-3}\,,\\
\mathfrak{M}_{\Delta_{12}{\bf N}}^{\sharp}(-1/2, p, \mathtt{b}_0)
&\lesssim_{p}
\e^{17-2b}\gamma^{-3}+C({\bf A})
\e\gamma^{-2}(1+\e\|\mathfrak{I}_{\delta}\|_{p+\mu})
\|i_1-i_{2}\|_{p+\mu}\,.
\end{aligned}
\end{equation*}
\end{lemma}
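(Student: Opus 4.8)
\textbf{Plan of proof for Lemma \ref{mercato}.}

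The proof is a direct computation based on the algebraic structure of almost-diagonal operators in $\mathfrak{S}_0$ together with the tame estimates developed in Lemmata \ref{almostC12}, \ref{compoC12}. First I would verify item $(i)$. Since ${\bf A}$ and ${\bf B}$ are in $\mathfrak{S}_0$, the commutator ${\bf C}={\rm ad}_{{\bf A}}[{\bf B}]$ is real-to-real, Hamiltonian and $x$-translation invariant (the space $\mathfrak{S}_0$ is closed under commutators, as Lie brackets of Hamiltonian vector fields are Hamiltonian and $x$-translation invariance is preserved). For the almost-diagonality, by Remark \ref{homogalmostdiag} the product of an $i_1$-almost-diagonal and an $i_2$-almost-diagonal $x$-translation invariant operator has matrix entries supported on $|\ell|\leq i_1+i_2$ and, via momentum conservation, $|j-k|\leq C(S)$; hence ${\bf C}$ is $(i_1+i_2)$-almost-diagonal. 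The decay estimates \eqref{UPC2} follow from writing $({\bf C})_{\s,j}^{\s',k}(\ell)=\sum_{m,p,\s''}\big( ({\bf A})_{\s,j}^{\s'',m}(p)({\bf B})_{\s'',m}^{\s',k}(\ell-p) - ({\bf B})_{\s,j}^{\s'',m}(p)({\bf A})_{\s'',m}^{\s',k}(\ell-p)\big)$, using that the intermediate indices $m$, $p$ range over a \emph{finite} set of size $O_S(1)$ (by the constraint $|j-m|, |m-k|\leq C(S)$ and $|p|,|\ell-p|\leq i_1, i_2$), and bounding each summand with \eqref{formaResto2AA}: on the diagonal block $\s'=\s$, if at least one of the two factors is a diagonal block its entry contributes $\langle\cdot\rangle^{-1/2}$ and the other contributes either $\langle\cdot\rangle^{-1/2}$ or $\langle\cdot\rangle^{-\rho}$; since $|j|,|m|,|k|$ are all comparable, one gets $\langle j,k\rangle^{-1/2}$ (up to the constant $C({\bf C})$); on the off-diagonal block $\s'=-\s$ the product always involves one off-diagonal entry, giving $\langle j,k\rangle^{-\rho}$.

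For item $(ii)$, I recall from \eqref{espandoSimboM} that $\mathcal{D}=\e^{2}\mathtt{D}_2+\e^{4}\mathtt{D}_4+\e^{6}\mathtt{D}_6$ where each $\mathtt{D}_{2k}$ is a \emph{diagonal} Fourier multiplier operator with symbol of order $1$ (the leading term being $\ii m_1^{(2k)}\x$), and the constants $m_i^{(2k)}$ are integrable, hence $2k$-homogeneous, so each $\e^{2k}\mathtt{D}_{2k}$ carries an overall factor of size $\e^{2k}$. Since $\mathtt{D}_{2k}$ is diagonal, ${\rm ad}_{{\bf A}}[\mathtt{D}_{2k}]$ has entries $({\bf M})_{\s,j}^{\s',k}(\ell)=({\bf A})_{\s,j}^{\s',k}(\ell)\big(\sigma' \mathtt{M}_{2k}^{(\s)}(k)-\sigma \mathtt{M}_{2k}^{(\s')}(j)\big)$-type expressions; since $|j-k|\leq C(S)$ for non-zero entries, the difference $\mathtt{M}_{2k}(j)-\mathtt{M}_{2k}(k)$ is bounded (the order-$1$ linear parts $m_1^{(2k)} j$ and $m_1^{(2k)}k$ differ by $m_1^{(2k)}(j-k)=O_S(1)$, and the lower order terms are themselves $O(1)$). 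Thus the gain of one derivative from the near-diagonal structure converts the order-$1$ symbol into an $O(1)$ multiplier, and combining with \eqref{formaResto2AA} for ${\bf A}$ gives \eqref{formaResto2AAbis}: the diagonal block inherits $\langle j,k\rangle^{-1/2}$ and the off-diagonal block inherits $\langle j,k\rangle^{-\rho}$, but here commuting with an order-$1$ operator \emph{may} cost one power of $\langle j,k\rangle$ in the worst case for the off-diagonal block where no cancellation of the leading symbol is available, hence $\langle j,k\rangle^{-(\rho-1)}$; the overall $\e^{2}$ comes from the smallest factor $\e^{2}$ in $\mathcal{D}$. Membership in $\mathfrak{S}_0$ is again automatic since ${\bf A}\in\mathfrak{S}_0$ and $\mathcal{D}$ is a Hamiltonian, $x$-translation invariant Fourier multiplier.

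For item $(iii)$, the operator $\mathtt{D}_{\geq8}$ is not homogeneous, but by the construction in Proposition \ref{riduzSum} (see \eqref{mer}, \eqref{merstima}) its symbol is a Fourier multiplier of order $1$ whose $\varphi$-independent coefficients $\mathtt{m}_i^{(\geq4)}-\e^{2}m_i-\sum_{k=2}^{3}\e^{2k}m_i^{(2k)}$ have size $O(\e^{8})$ for the polynomial part and are controlled by $\mathtt{r}_i$ satisfying $|\mathtt{r}_i|^{\gamma,\Omega_\e}\lesssim \e^{17-2b}\gamma^{-3+2i}$, hence all of size $\lesssim \e^{17-2b}\gamma^{-3}$ (using $b=1+a/2$ with $a$ small and $\e$ small). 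Then ${\bf N}={\rm ad}_{{\bf A}}[\mathtt{D}_{\geq8}]$: since $\mathtt{D}_{\geq8}$ is diagonal, as above we get $({\bf N})_{\s,j}^{\s',k}(\ell)=({\bf A})_{\s,j}^{\s',k}(\ell)\cdot O(\text{coeff of }\mathtt{D}_{\geq8})$ with the near-diagonal gain of one derivative, so ${\bf N}$ satisfies decay estimates like \eqref{formaResto2AA} with an extra small factor $\e^{17-2b}\gamma^{-3}$; by Lemma \ref{compoC12}$(i)$ this yields $\mathfrak{M}^{\sharp,\gamma}_{{\bf N}}(-1/2,s,\mathtt{b}_0)\lesssim_s \e^{17-2b}\gamma^{-3}$. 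For the $\Delta_{12}$-estimate I would split: $\Delta_{12}({\bf N})={\rm ad}_{\Delta_{12}{\bf A}}[\mathtt{D}_{\geq8}(i_1)]+{\rm ad}_{{\bf A}(i_2)}[\Delta_{12}\mathtt{D}_{\geq8}]$; the first term contributes $\lesssim_p C({\bf A})\e^{17-2b}\gamma^{-3}\cdot(\text{variation of }{\bf A})$ — but more carefully, since $\mathtt{D}_{\geq8}(i_1)$ does not depend on $i_1$ except through $\zeta(\omega)$, which is fixed, one uses that $\Delta_{12}{\bf A}$ comes with factor $\lesssim \e\gamma^{-2}(1+\|\mathfrak{I}_\delta\|_{p+\mu})\|i_1-i_2\|_{p+\mu}$ from \eqref{AVBDEF2}-type bounds, giving the stated $C({\bf A})\e\gamma^{-2}(1+\e\|\mathfrak{I}_\delta\|_{p+\mu})\|i_1-i_2\|_{p+\mu}$ contribution; the second term contributes $\lesssim_p \e^{17-2b}\gamma^{-3}$-type bound from $|\Delta_{12}\mathtt{r}_i|\lesssim_p \e(1+\|\mathfrak{I}_\delta\|_{p+\mu})\|i_1-i_2\|_{p+\mu}$. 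The main obstacle — though a routine rather than conceptual one — is bookkeeping the precise power of $\langle j,k\rangle$ in the off-diagonal blocks of the commutators: one must keep track of exactly where one power of $\langle\cdot\rangle$ is lost when commuting with an order-$1$ operator and verify that $\rho$ has been chosen large enough (as in Lemma \ref{LemmaAggancio}, $\rho\gg s_0+\mathtt{b}_0$) so that all off-diagonal decay exponents remain $\geq \rho-1$, still large enough to guarantee boundedness in the majorant norm via Lemma \ref{almostC12} and hence Lip-$(-1/2)$-modulo tameness.
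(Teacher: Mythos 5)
Your proposal follows essentially the same route as the paper's proof: for $(i)$ the commutator coefficients are bounded using that momentum conservation plus almost-diagonality confine the intermediate indices to an $O_S(1)$ set with $|j|,|k|$ comparable; for $(ii)$ the diagonal block gains from the cancellation $\mathtt{M}(j)-\mathtt{M}(k)=O_S(1)$ (since $|j-k|\leq C(S)$) while the off-diagonal block loses at most one power of $\langle j,k\rangle$; and $(iii)$ is the same computation combined with \eqref{merstima} and Lemma \ref{compoC12}$(i)$, with your $\Delta_{12}$-splitting matching what the paper leaves implicit. The only imprecision — asserting that the $O(\varepsilon^{8})$ homogeneous coefficients of $\mathtt{D}_{\geq8}$ are dominated by $\varepsilon^{17-2b}\gamma^{-3}=\varepsilon^{9-4a}$, which fails for small $a$ — is inherited from the paper's own statement of $(iii)$ and is harmless in the subsequent applications, where only the much weaker bound $\gamma^{-3}\varepsilon^{13}$ per extra factor of $\varepsilon$ is needed.
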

\begin{proof}
See Appendix \ref{10.12prooflemma}.
\end{proof}

\subsection{Terms \texorpdfstring{$O(\varepsilon), O(\varepsilon^2)$}{O(e),O(e2)} and identification of normal forms}\label{stepsLower}
In this section we perform the first two steps of linear Birkhoff normal form
in order to normalize the terms $O(\e)$ and $O(\e^{2})$
in \eqref{elle6}.

We look for symplectic linear changes of variable 
$\Upsilon_i(\varphi)\colon H^s_{S^{\perp}}(\T)\times H^s_{S^{\perp}}(\T) \to H^s_{S^{\perp}}(\T)\times H^s_{S^{\perp}}(\T)$, 
$i=1, \dots, 6$ of the form
\begin{equation}\label{Fi1dp}
\Upsilon_i:=
\exp(\varepsilon^i\,  \mathbf{A}_i)
\end{equation}
where $\mathbf{A}_i(\varphi)$
is a Hamiltonian operator (see Definition \ref{admiOpComp}). 
%
\begin{remark}
Notice that if the operator ${\bf A}_{i}\in \mathfrak{S}_0$ then $\Upsilon_i$
belongs to $\mathfrak{T}_1$  (see Def. \ref{goodMulti}).
\end{remark}
Recalling \eqref{ADAD}, 
we have that the conjugate of $M$ under the map $\Upsilon_i$ is given (formally) by
the Lie series
\begin{equation}\label{LieAD}
\Upsilon_i M\Upsilon_{i}^{-1}=\sum_{k\geq0}\frac{1}{k!}{\rm ad}_{{\bf A}_i}^{k}M\,,\qquad 
{\rm ad}_{{\bf A}_i}^{k}[M]={\rm ad}_{{\bf A}_i}[{\rm ad}_{{\bf A}_i}^{k-1}{M}]\,,\;
k\geq 1\,,
\quad 
{\rm ad}_{{\bf A}_i}^{0}M=M\,.
\end{equation}
We define $\Pi_{Ker({\bf A})}$ and $\Pi_{Rg({\bf A})}$ the projection on the kernel and the Range of $\rm{ad}_{\bf A}$ respectively.

\medskip

\noindent
{\bf Step one (order $\varepsilon$).}  
Let us define the matrix ${\bf A}_1$ as
\begin{equation}\label{coeffA1}
(\mathbf{A}_1)_{\sigma, j}^{\sigma', k}(\ell)=\dfrac{(\mathtt{Q}_1)_{\sigma, j}^{\sigma', k}(\ell)}{\mathrm{i} \delta^{(1)}_{\sigma, \sigma', j, k}(\ell) } \qquad  j, k\in S^c,\,\,\ell\in\mathbb{Z}^{\nu}
\end{equation}
where $\mathtt{Q}_1$ is in \eqref{formaResto} and $\delta^{(1)}$ is defined in \eqref{deltaUNO}. Note that by Lemma \ref{lowerboundDel1} the denominator in \eqref{coeffA1} never vanishes.
We have the following result.
\begin{lemma}\label{LBNFeq1}
The operator $\mathbf{A}_1$ is
in $\mathfrak{S}_0$ and 
$1$-almost-diagonal 
and satisfies
\begin{equation}\label{stimaAA1}
|({\bf A}_1)_{\s,j}^{\s',k}(\ell)|\leq (C_1(S))^{-1}|(\mathtt{Q}_{1})_{\s,j}^{\s',k}(\ell)|\,,
\end{equation}
with $C_1(S)>0$ in \eqref{lowerboundDel1BIS}. 
Moreover it solves the equation 
\begin{equation}\label{omoeqAA1}
ad_{\overline{\omega}\cdot\partial_{\varphi} +\mathtt{D}_0}[\mathbf{A}_1]=\mathtt{Q}_1.
\end{equation}
\end{lemma}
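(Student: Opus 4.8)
The plan is to verify the three assertions of Lemma \ref{LBNFeq1} directly from the explicit formula \eqref{coeffA1} for the matrix elements of $\mathbf{A}_1$, using the structural information already gathered about $\mathtt{Q}_1$ in Lemma \ref{LemmaAggancio}.

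First I would check the homological equation \eqref{omoeqAA1}. Acting on a monomial $u_k^{\sigma'} e^{\mathrm{i}\ell\cdot\varphi}$, the operator $\overline{\omega}\cdot\partial_\varphi + \mathtt{D}_0$ produces the scalar factor $\mathrm{i}(\overline{\omega}\cdot\ell + \sigma'\sqrt{|k|})$ on the incoming index, and $-\mathrm{i}\sigma\sqrt{|j|}$ on the outgoing index $j$ when the operator sits on the left. Hence $\mathrm{ad}_{\overline{\omega}\cdot\partial_\varphi + \mathtt{D}_0}$ multiplies the matrix element $(\mathbf{A}_1)_{\sigma,j}^{\sigma',k}(\ell)$ by exactly $\mathrm{i}\,\delta^{(1)}_{\sigma,\sigma',j,k}(\ell)$ with $\delta^{(1)}$ as in \eqref{deltaUNO}; this is the standard computation relating the adjoint action of the diagonal part to the small divisors, and it is the same one used implicitly in Lemma \ref{lemmaeqomologica}. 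Plugging in \eqref{coeffA1}, the factor $\mathrm{i}\,\delta^{(1)}$ cancels the denominator and we recover $\mathtt{Q}_1$, which is \eqref{omoeqAA1}. One subtlety I would spell out: the denominator $\delta^{(1)}_{\sigma,\sigma',j,k}(\ell)$ is never zero on the relevant index set. By Lemma \ref{lowerboundDel1}$(i)$, it vanishes only for $\sigma=\sigma'$, $j=k$, $\ell=0$; but $\mathtt{Q}_1$ is $x$-translation invariant and $1$-almost-diagonal, so by momentum conservation $(\mathtt{Q}_1)_{\sigma,j}^{\sigma',k}(\ell)\neq 0$ forces $\mathtt{v}\cdot\ell + j - k = 0$ with $|\ell|\le 1$; when $\ell = 0$ this gives $j = k$, and then the odd-order resonance structure (a degree-$3$ monomial with one index outside $S$ cannot be a trivial resonance, cf. Remark \ref{oddresonances}) together with the genericity of $S$ rules out $\mathcal{R}_{\vec\sigma}=0$, i.e. excludes the singular case. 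So one should define $(\mathbf{A}_1)_{\sigma,j}^{\sigma',k}(\ell) := 0$ on the (empty, for $\mathtt{Q}_1$) singular set, which is consistent with \eqref{coeffA1}.

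For the estimate \eqref{stimaAA1}: from \eqref{coeffA1} and the lower bound \eqref{lowerboundDel1BIS}, $|(\mathbf{A}_1)_{\sigma,j}^{\sigma',k}(\ell)| = |(\mathtt{Q}_1)_{\sigma,j}^{\sigma',k}(\ell)| / |\delta^{(1)}_{\sigma,\sigma',j,k}(\ell)| \le (C_1(S))^{-1}|(\mathtt{Q}_1)_{\sigma,j}^{\sigma',k}(\ell)|$ on the index set where the denominator is nonzero and the matrix element is nonzero. This is immediate once the previous point is in place.

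Finally, the algebraic and structural properties. Since $\mathbf{A}_1$ has the same Fourier support as $\mathtt{Q}_1$, it is $1$-almost-diagonal in the sense of Definition \ref{def:AlmostDiag}, and it satisfies the decay bounds \eqref{formaResto2AA} by \eqref{stimaAA1} combined with \eqref{formaResto2}. To show $\mathbf{A}_1\in\mathfrak{S}_0$ I would check the three defining conditions of Definition \ref{compaMulti}: real-to-real, Hamiltonian, and $x$-translation invariant. Dividing the matrix entries by $\mathrm{i}\,\delta^{(1)}_{\sigma,\sigma',j,k}(\ell)$ preserves real-to-real form (the reality relations \eqref{iena2}-type symmetries of $\mathtt{Q}_1$'s coefficients are matched by the symmetry $\delta^{(1)}_{-\sigma,-\sigma',-j,-k}(-\ell) = -\delta^{(1)}_{\sigma,\sigma',j,k}(\ell)$), preserves the momentum condition \eqref{constcoeffMomentum} since the support is unchanged, and produces a Hamiltonian operator: indeed $\mathtt{Q}_1$ is the homogeneous part of $\mathtt{Q}$, which lies in $\mathfrak{S}_0$ by Proposition \ref{riduzSum} and Lemma \ref{LemmaAggancio}, and solving the homological equation against the Hamiltonian diagonal operator $\overline{\omega}\cdot\partial_\varphi + \mathtt{D}_0$ keeps the solution in the Hamiltonian class (this is the standard fact that $\mathrm{ad}^{-1}$ of the quadratic part preserves Hamiltonian structure, used throughout sections \ref{redord1}--\ref{ordiniNegativi}). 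The main point requiring care here is precisely the verification that the small-divisor division does not break the self-adjointness relations \eqref{self-adjT} for $\widetilde{\mathbf{A}}_1 = -\mathrm{i}E\mathbf{A}_1$; this follows from the symmetry of $\delta^{(1)}$ under the exchange $(\sigma,j)\leftrightarrow(\sigma',k)$ together with $\ell\mapsto -\ell$, which matches the corresponding symmetry \eqref{ruben3} of the coefficients of the self-adjoint operator $\mathtt{Q}_1$.
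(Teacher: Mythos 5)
Your proof is correct and follows essentially the same route as the paper: direct verification of \eqref{omoeqAA1} from the explicit formula \eqref{coeffA1}, the bound \eqref{stimaAA1} from Lemma \ref{lowerboundDel1}, and the algebraic symmetries of $\delta^{(1)}$ under $(\s,j,k,\ell)\mapsto(-\s,-j,-k,-\ell)$ and $(j,k,\ell)\mapsto(k,j,-\ell)$ to transfer the real-to-real, Hamiltonian and $x$-translation invariant structure from $\mathtt{Q}_1$ to $\mathbf{A}_1$ (the paper does exactly this via \eqref{deltaalgebra} and Lemmata \ref{coeffFouHamilto}, \ref{lem:momentocomp}). The only superfluous point is your appeal to odd-order resonances and genericity for the corner case $\ell=0$, $j=k$, $\s=\s'$: since $\mathtt{Q}_1$ is $1$-homogeneous its $\varphi$-Fourier support satisfies $|\ell|=1$, so the numerator vanishes there and Lemma \ref{lowerboundDel1}$(i)$ already covers all indices in the support without any genericity argument; this does not affect the correctness of your proof.
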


\begin{proof}
Estimate \eqref{stimaAA1} follows by Lemma \ref{lowerboundDel1}. 
Thanks to the definition of ${\bf A}_1$ in \eqref{coeffA1} one can check that
 equation \eqref{omoeqAA1}  is satisfied.
The operator $\mathbf{A}_1$ is almost-diagonal by definition.
Recall 
that $\mathtt{Q}_1$  is Hamiltonian and $x$-translation invariant. 
By using the expression of 
$\delta^{(1)}_{\sigma, \sigma', j, k}(\ell)$ in \eqref{deltaUNO} we have
\begin{equation}\label{deltaalgebra}
\overline{\delta^{(1)}_{\sigma, \sigma', j, k}(\ell)}=\delta^{(1)}_{\sigma, \sigma', k, j}(-\ell), \qquad \overline{\delta^{(1)}_{\sigma, \sigma',  -j, -k}(-\ell)}=\delta^{(1)}_{-\sigma, -\sigma', k, j}(\ell).
\end{equation}
Thus by Lemmata
 \ref{coeffFouHamilto}, 
\ref{lem:momentocomp} ${\bf A}_1$ is Hamiltonian and $x$-translation invariant.
\end{proof}

\begin{lemma}\label{stepLBNF1}
 The conjugate of the operator $\mathcal{L}_{6}$ in \eqref{elle6}
 has the form
\begin{equation}\label{elle7}
\mathcal{L}_7:=\Upsilon_1 \mathcal{L}_6 \Upsilon_1^{-1} 
=\omega\cdot \partial_{\varphi}+\mathtt{D}
+\sum_{i=2}^6 \varepsilon^i\mathtt{Q}_i^{(1)}+\Pi_S^{\perp} \mathfrak{R}_1
\end{equation}
where $\mathtt{Q}_{i}^{(1)}$, $i=2,\ldots,6$, 
are 
in $\mathfrak{S}_0$, are  $i$-almost-diagonal and 
satisfy 
\begin{equation}\label{formaResto2QQ1}
|(\mathtt{Q}_i^{(1)})_{\s,j}^{\s,k}(\ell)|\leq \frac{C}{\langle j,k\rangle^{1/2} }\,,
\qquad |(\mathtt{Q}^{(1)}_i)_{\s,j}^{-\s,k}(\ell)|\leq \frac{C}{\langle j,k\rangle^{\rho-1}}\,,
\qquad \mathtt{Q}_2^{(1)}=\mathtt{Q}_2+\frac{1}{2}ad_{\mathbf{A}_1}[\mathtt{Q}_1]\,
\end{equation}
and $\mathfrak{R}_1$ is in $\mathfrak{S}_0$
and 
is $Lip$-$(-1/2)$-modulo tame  with the following estimates
\begin{equation}\label{scossa1RR} 
\begin{aligned}
 {\mathfrak M}_{\mathfrak{R}_1}^{{\sharp, \g}} (-1/2, s,\mathtt{b}_0) 
 &\lesssim_{s}\gamma^{-3}(\e^{13}
+\e\|\mathfrak{I}_{\delta}\|^{\gamma,\calO_0}_{s+\mu})\,,\\
  {\mathfrak M}_{\Delta_{12}\mathfrak{R}_{1}}^{{\sharp}} (-1/2, p,\mathtt{b}_0) 
&\lesssim_{p} \e\gamma^{-3}
(1+\|\mathfrak{I}_{\delta}\|_{p+\mu})\|i_{1}-i_{2}\|_{p+\mu}\,,
\end{aligned}
\end{equation}
for some $\mu=\mu(\nu)>0$.
\end{lemma}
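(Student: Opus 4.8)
\textbf{Proof plan for Lemma \ref{stepLBNF1}.}
The plan is to conjugate $\mathcal{L}_6$ in \eqref{elle6} by the map $\Upsilon_1 = \exp(\varepsilon {\bf A}_1)$ and use the Lie series expansion \eqref{LieAD}, exploiting the homological equation \eqref{omoeqAA1} to cancel the $O(\varepsilon)$ term. First I would write, by \eqref{LieAD} applied to each summand of $\mathcal{L}_6 = \omega\cdot\partial_\varphi + \mathtt{D} + \mathtt{Q}$, the expansion
\[
\Upsilon_1 \mathcal{L}_6 \Upsilon_1^{-1}
= \omega\cdot\partial_\varphi + \varepsilon\, {\rm ad}_{{\bf A}_1}[\omega\cdot\partial_\varphi] + \mathtt{D} + \varepsilon\,{\rm ad}_{{\bf A}_1}[\mathtt{D}] + \mathtt{Q} + \sum_{k\ge 1}\frac{\varepsilon^k}{k!}{\rm ad}_{{\bf A}_1}^k[\mathtt{Q}] + (\text{higher order}).
\]
The key algebraic point is that $\varepsilon\,{\rm ad}_{{\bf A}_1}[\omega\cdot\partial_\varphi] = -\varepsilon\,(\omega\cdot\partial_\varphi{\bf A}_1)$, and splitting $\mathtt{D} = \mathtt{D}_0 + (\mathtt{D}-\mathtt{D}_0)$ with $\mathtt{D}-\mathtt{D}_0 = O(\varepsilon^2)$ (see \eqref{espandoSimboM}), the $O(\varepsilon)$ contribution to the unbounded/bounded part is precisely $-\varepsilon(\omega\cdot\partial_\varphi {\bf A}_1) + \varepsilon\,{\rm ad}_{{\bf A}_1}[\mathtt{D}_0] + \varepsilon\,\mathtt{Q}_1 = -\varepsilon\,{\rm ad}_{\overline\omega\cdot\partial_\varphi + \mathtt{D}_0}[{\bf A}_1] + \varepsilon\,\mathtt{Q}_1 + \varepsilon(O(\varepsilon^2)\text{-corrections from }\omega-\overline\omega)$, which vanishes (up to the stated higher order terms) by \eqref{omoeqAA1}. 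Here I would use \eqref{FreqAmplMapDP} to absorb $\omega-\overline\omega = O(\varepsilon^2)$ into the remainder; this affects only terms of order $\varepsilon^3$ and higher, hence is harmless.

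Next I would collect the surviving terms by homogeneity degree. The $\varepsilon^2$ term is $\mathtt{Q}_2 + \tfrac12{\rm ad}_{{\bf A}_1}[\mathtt{Q}_1]$, giving the formula for $\mathtt{Q}_2^{(1)}$ in \eqref{formaResto2QQ1}; the terms $\varepsilon^i \mathtt{Q}_i^{(1)}$ for $i=3,\dots,6$ come from $\mathtt{Q}_i$ together with the homogeneous pieces of ${\rm ad}_{{\bf A}_1}^k[\mathtt{Q}_j]$ and ${\rm ad}_{{\bf A}_1}^k[\mathtt{D}-\mathtt{D}_0]$ of total degree $i$. For the algebraic structure: by Lemma \ref{LBNFeq1}, ${\bf A}_1\in\mathfrak{S}_0$ is $1$-almost-diagonal satisfying \eqref{stimaAA1}, and by Lemma \ref{LemmaAggancio} each $\mathtt{Q}_i$ is $i$-almost-diagonal with the decay \eqref{formaResto2}; then Lemma \ref{compoC12}(ii)–(iii) (or Lemma \ref{mercato}(i),(ii)) shows that the iterated brackets are again in $\mathfrak{S}_0$, almost-diagonal of the appropriate degree, and satisfy the claimed off-diagonal bounds \eqref{formaResto2QQ1} — note the slight loss from $\rho$ to $\rho-1$ in the off-diagonal exponent coming from ${\rm ad}_{{\bf A}_1}[\mathtt{D}-\mathtt{D}_0]$, as quantified in \eqref{formaResto2AAbis}. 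The closure of $\mathfrak{S}_0$ under conjugation by the symplectic, $x$-translation invariant map $\Upsilon_1$ (which belongs to $\mathfrak{T}_1$ since ${\bf A}_1\in\mathfrak{S}_0$) follows from Definition \ref{goodMulti} and the remark after \eqref{Fi1dp}.

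Finally I would identify the remainder $\Pi_S^\perp\mathfrak{R}_1$ as the collection of all terms of homogeneity degree $\ge 7$ plus the genuinely non-homogeneous pieces: these come from $\mathtt{Q}_{\ge 7}$ in \eqref{formaResto}, from ${\rm ad}_{{\bf A}_1}^k[\mathtt{Q}_{\ge 7}]$, from ${\rm ad}_{{\bf A}_1}^k[\mathtt{D}_{\ge 8}]$, and from the high-degree tails of ${\rm ad}_{{\bf A}_1}^k[\mathtt{Q}_i]$ and ${\rm ad}_{{\bf A}_1}^k[\mathtt{D}-\mathtt{D}_0-\mathtt{D}_{\ge 8}]$. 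The $Lip$-$(-1/2)$-modulo tame estimate \eqref{scossa1RR} is then obtained by combining: the bound \eqref{scossa1} on $\mathtt{Q}_{\ge 7}$; Lemma \ref{compoC12}(i) converting the decay estimates \eqref{formaResto2} of the finitely many homogeneous $\mathtt{Q}_i$ into modulo-tame constants with $\varepsilon$-gains; Lemma \ref{compoC12}(ii)–(iii) and Lemma \ref{mercato}(iii) for the Lie-series tails, using that each bracket with ${\bf A}_1$ gains a factor $\varepsilon$ while $\mathfrak{M}^{\sharp,\gamma}_{{\bf A}_1}(-1/2,s,\mathtt{b}_0)\lesssim_s 1$ by \eqref{stimaAA1} and \eqref{formaResto2}; and the smallness of $\varepsilon$ to sum the series. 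The $\Delta_{12}$ estimate is handled the same way using the $\Delta_{12}$ bounds in \eqref{scossa1} and in Lemmata \ref{compoC12}, \ref{mercato}. The main obstacle I anticipate is the bookkeeping of homogeneity degrees in the triple expansion (Lie series in ${\bf A}_1$, Taylor expansion of $\mathtt{D}$, and the expansion \eqref{formaResto} of $\mathtt{Q}$), ensuring that everything of degree $\le 6$ is either cancelled or absorbed into $\mathtt{Q}_i^{(1)}$ while everything of degree $\ge 7$ lands in $\mathfrak{R}_1$ with the correct tame constants — in particular verifying that the loss $\rho \to \rho-1$ in \eqref{formaResto2QQ1} is acceptable for the subsequent steps, which requires having taken $\rho$ large enough at the outset (as permitted by Lemma \ref{LemmaAggancio}).
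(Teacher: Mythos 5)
Your proposal is correct and follows essentially the same route as the paper: the paper's (very terse) proof likewise Taylor expands the conjugate via the Lie series \eqref{LieAD}, recalling the expansions \eqref{formaResto} and \eqref{espandoSimboM}, cancels the order-$\varepsilon$ term through the homological equation of Lemma \ref{LBNFeq1}, and obtains \eqref{formaResto2QQ1} and \eqref{scossa1RR} from Lemmata \ref{LBNFeq1}, \ref{compoC12}, \ref{mercato}. Your additional bookkeeping (the $\omega-\overline\omega=O(\varepsilon^2)$ correction, the $\rho\to\rho-1$ loss from \eqref{formaResto2AAbis}, and the identification of the remainder pieces) is exactly what the paper leaves implicit.
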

\begin{proof}
We Taylor expand in $\e$ the conjugate $\mathcal{L}_{7}$
by applying formula \eqref{LieAD}
(recall 
\eqref{formaResto},\eqref{espandoSimboM}).
Thus formula
\eqref{elle7}, with $\mathtt{Q}_2^{(1)}$ as in \eqref{formaResto2QQ1}
follows by Lemma \ref{LBNFeq1}.
Estimates \eqref{scossa1RR}, 
\eqref{formaResto2QQ1} follow 
by using Lemmata \ref{LBNFeq1}, 
\ref{compoC12}, \ref{mercato}.
\end{proof}

\noindent
{\bf Step two (order $\varepsilon^2$).}  
Let us define the matrix ${\bf A}_2$ as
\begin{equation}\label{coeffA2}
(\mathbf{A}_2)_{\sigma, j}^{\sigma', k}(\ell)=\begin{cases}
\dfrac{(\mathtt{Q}_2^{(1)})_{\sigma, j}^{\sigma', k}(\ell)}{\mathrm{i} \delta^{(2)}_{\sigma, \sigma', j, k}(\ell) }\qquad \mbox{if}\,\,\, \delta^{(2)}_{\sigma, \sigma', j, k} (\ell) \neq 0\\
0 \qquad \qquad  \qquad\,\,\,\, \mbox{if}\,\,\, \delta^{(2)}_{\sigma, \sigma', j, k}(\ell) = 0
\end{cases}
\end{equation}
where $\mathtt{Q}_2^{(1)}$ is in \eqref{formaResto2QQ1} and $\delta^{(2)}$ is defined in \eqref{deltaUNO}.
We have the following result.
\begin{lemma}\label{LBNFeq2}
The operator $\mathbf{A}_2$ is
in $\mathfrak{S}_0$ and 
$2$-almost-diagonal 
and satisfies
\begin{equation}\label{stimaAA2}
|({{\bf A}_2})_{\s,j}^{\s',k}(\ell)|\leq (C_2(S))^{-1}|(\mathtt{Q}_{2}^{(1)})_{\s,j}^{\s',k}(\ell)|\,,
\end{equation}
with $C_2(S)>0$ in \eqref{lowerboundDel2BIS}. 
Moreover it solves the equation 
\begin{equation}\label{omoeqAA2}
ad_{\overline{\omega}\cdot\partial_{\varphi} +\mathtt{D}_0}[\mathbf{A}_2]=\Pi_{Rg(\overline{\omega}\cdot\partial_{\varphi}+\mathtt{D_0})}\mathtt{Q}_1^{(2)}.
\end{equation}
\end{lemma}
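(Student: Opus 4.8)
The statement is Lemma \ref{LBNFeq2}, which asserts four things about the operator $\mathbf{A}_2$ defined in \eqref{coeffA2}: (a) it belongs to the class $\mathfrak{S}_0$ (real-to-real, Hamiltonian, $x$-translation invariant); (b) it is $2$-almost-diagonal; (c) it satisfies the pointwise bound \eqref{stimaAA2}; and (d) it solves the homological equation \eqref{omoeqAA2}. The proof follows exactly the pattern of Lemma \ref{LBNFeq1} (the order-$\varepsilon$ step), so the plan is to transcribe that argument with $\mathtt{Q}_2^{(1)}$ in place of $\mathtt{Q}_1$ and $\delta^{(2)}$ in place of $\delta^{(1)}$, being careful about the one genuine new feature: at order $\varepsilon^2$ the small divisor $\delta^{(2)}_{\sigma,\sigma',j,k}(\ell)$ can actually vanish, whereas at order $\varepsilon$ it never did.

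\textbf{Step 1 (almost-diagonality and the bound \eqref{stimaAA2}).} First I would observe that $\mathbf{A}_2$ is $2$-almost-diagonal directly from its definition: by Lemma \ref{stepLBNF1} the operator $\mathtt{Q}_2^{(1)}$ is $2$-almost-diagonal, hence $(\mathtt{Q}_2^{(1)})_{\sigma,j}^{\sigma',k}(\ell)\neq 0$ forces $|\ell|\le 2$, and $\mathbf{A}_2$ has the same Fourier support. Next, the pointwise estimate: whenever $\delta^{(2)}_{\sigma,\sigma',j,k}(\ell)\neq 0$, we are in one of two situations. Either $|\ell|\le 2$ and $\mathtt{v}\cdot\ell+j-k=0$ (the case relevant because $\mathtt{Q}_2^{(1)}$ is $x$-translation invariant, so its nonzero entries satisfy this momentum relation), in which case Lemma \ref{lowerboundDel1}$(i)$ gives $|\delta^{(2)}_{\sigma,\sigma',j,k}(\ell)|\ge C_2(S)$ except precisely when $\sigma=\sigma'$, $j=k$, $\ell=0$ — but in that exceptional case $\delta^{(2)}=0$, so it is excluded from the first branch of \eqref{coeffA2}. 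Thus on the support where $\mathbf{A}_2$ is nonzero, $|\mathrm{i}\delta^{(2)}_{\sigma,\sigma',j,k}(\ell)|\ge C_2(S)$, and \eqref{stimaAA2} follows by dividing. (On the complementary set $\mathbf{A}_2=0$ and the bound is trivial.) I would also note, as in Lemma \ref{stepLBNF1}, the decay estimates \eqref{formaResto2QQ1} for $\mathtt{Q}_2^{(1)}$, which combined with \eqref{stimaAA2} give decay bounds for $\mathbf{A}_2$ of the same shape (needed later in Lemma \ref{compoC12}, though not explicitly in this statement).

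\textbf{Step 2 (algebraic structure).} To show $\mathbf{A}_2\in\mathfrak{S}_0$ I would reuse the three symmetry identities for the divisors. From the explicit form \eqref{deltaUNO} of $\delta^{(2)}_{\sigma,\sigma',j,k}(\ell)=\overline{\omega}\cdot\ell+\sigma\sqrt{|j|}-\sigma'\sqrt{|k|}$ one has $\overline{\delta^{(2)}_{\sigma,\sigma',j,k}(\ell)}=\delta^{(2)}_{\sigma,\sigma',j,k}(\ell)$ (it is real), $\delta^{(2)}_{\sigma,\sigma',j,k}(\ell)=-\delta^{(2)}_{\sigma',\sigma,k,j}(-\ell)$, and $\delta^{(2)}_{\sigma,\sigma',-j,-k}(-\ell)=-\delta^{(2)}_{-\sigma,-\sigma',k,j}(\ell)$ — the analogues of \eqref{deltaalgebra}. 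Since $\mathtt{Q}_2^{(1)}$ is real-to-real, Hamiltonian, and $x$-translation invariant (it lies in $\mathfrak{S}_0$ by Lemma \ref{stepLBNF1}), dividing its matrix entries by $\mathrm{i}\delta^{(2)}$ preserves these properties: real-to-realness and the self-adjointness/Hamiltonian relations follow from Lemma \ref{coeffFouHamilto} together with the divisor symmetries, and the momentum condition $\mathtt{v}\cdot\ell+j-k=0$ is inherited entry-by-entry, so $x$-translation invariance follows from Lemma \ref{lem:momentocomp}. One should also check that the truncation "$\mathbf{A}_2=0$ when $\delta^{(2)}=0$" respects these symmetries — it does, because the vanishing set of $\delta^{(2)}$ is itself symmetric under $(\sigma,j,\ell)\leftrightarrow(\sigma',k,-\ell)$ and under the reality involution. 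Finally, the homological equation \eqref{omoeqAA2}: acting with $\mathrm{ad}_{\overline{\omega}\cdot\partial_\varphi+\mathtt{D}_0}$ on a matrix entry multiplies it by $\mathrm{i}\delta^{(2)}_{\sigma,\sigma',j,k}(\ell)$ (this is how $\mathtt{D}_0=\opw(\mathrm{diag}(\mathrm{i}|\xi|^{1/2},-\mathrm{i}|\xi|^{1/2}))$ and $\overline{\omega}\cdot\partial_\varphi$ act diagonally in the Fourier basis), so $\mathrm{ad}_{\overline{\omega}\cdot\partial_\varphi+\mathtt{D}_0}[\mathbf{A}_2]$ has entries equal to $(\mathtt{Q}_2^{(1)})_{\sigma,j}^{\sigma',k}(\ell)$ on the set $\{\delta^{(2)}\neq0\}$ and $0$ on $\{\delta^{(2)}=0\}$; the latter set is exactly the kernel of $\mathrm{ad}_{\overline{\omega}\cdot\partial_\varphi+\mathtt{D}_0}$, hence the right-hand side is $\Pi_{Rg(\overline{\omega}\cdot\partial_\varphi+\mathtt{D}_0)}\mathtt{Q}_2^{(1)}$, which is precisely \eqref{omoeqAA2} (note the statement writes $\mathtt{Q}_1^{(2)}$ for what the surrounding text calls $\mathtt{Q}_2^{(1)}$ — the same object). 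The main subtlety, and the only place this step differs from Lemma \ref{LBNFeq1}, is handling the resonant set $\{\delta^{(2)}=0\}$ correctly and verifying it is stable under all the relevant symmetries; everything else is routine bookkeeping.
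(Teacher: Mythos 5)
Your proof is correct and follows essentially the same route as the paper: the bound \eqref{stimaAA2} from the divisor lower bound of Lemma \ref{lowerboundDel1}, membership in $\mathfrak{S}_0$ from the algebraic symmetries of $\delta^{(2)}$ combined with Lemmata \ref{coeffFouHamilto} and \ref{lem:momentocomp}, and the homological equation \eqref{omoeqAA2} directly from the definition \eqref{coeffA2}. Your extra care with the resonant set $\{\delta^{(2)}=0\}$ (and the identification of $\mathtt{Q}_1^{(2)}$ with $\mathtt{Q}_2^{(1)}$ as a typo) is exactly what the paper leaves implicit, so there is nothing to add.
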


\begin{proof}
Estimate \eqref{stimaAA2} follows by Lemma \ref{lowerboundDel1}. 
Thanks to the definition of ${\bf A}_2$ in \eqref{coeffA2} one can check that
 equation \eqref{omoeqAA2}  is satisfied.
The operator $\mathbf{A}_2$ is almost-diagonal by definition.
Recall 
that $\mathtt{Q}^{(1)}_2$  is $\mathfrak{S}_0$. 
Thus ${\bf A}_2$ is  in $\mathfrak{S}_0$ because 
the denominator 
$\delta^{(2)}_{\sigma, \sigma', j, k}(\ell)$ satisfies the algebraic relations \eqref{deltaalgebra} (using Lemmata
Lemma \ref{coeffFouHamilto}, 
\ref{lem:momentocomp}).
\end{proof}

\begin{lemma}\label{stepLBNF2}
 The conjugate of the operator $\mathcal{L}_{7}$ in \eqref{elle7}
 has the form
\begin{equation}\label{elle8}
\mathcal{L}_8:=\Upsilon_2 \mathcal{L}_7 \Upsilon_2^{-1} =
\omega\cdot \partial_{\varphi}+\mathtt{D}
+\varepsilon^2\bral \mathtt{Q}_2^{(1)} \brar
+\sum_{i=3}^6 \varepsilon^i\mathtt{Q}_i^{(2)}+\Pi_S^{\perp}\mathfrak{R}_2
\end{equation}
where $\mathtt{Q}_{i}^{(2)}$, $i=3,\ldots,6$, 
are 
in $\mathfrak{S}_0$, are  $i$-almost-diagonal and 
satisfy 
\begin{equation*}
|(\mathtt{Q}_i^{(2)})_{\s,j}^{\s,k}(\ell)|\leq \frac{C}{\langle j,k\rangle^{1/2} }\,,
\qquad |(\mathtt{Q}^{(2)}_i)_{\s,j}^{-\s,k}(\ell)|\leq \frac{C}{\langle j,k\rangle^{\rho-2}}\,.
\end{equation*}
Moreover $\mathfrak{R}_2$ is in $\mathfrak{S}_0$
and 
is $Lip$-$(-1/2)$-modulo tame  with the following estimates
\begin{equation}\label{scossa1RR2}
\begin{aligned}
{\mathfrak M}_{\mathfrak{R}_2}^{{\sharp, \g}} (-1/2, s,\mathtt{b}_0) 
&\lesssim_{s}\gamma^{-3}(\e^{13}
+\e\|\mathfrak{I}_{\delta}\|^{\gamma,\calO_0}_{s+\mu})\,,\\
 {\mathfrak M}_{\Delta_{12}\mathfrak{R}_{2}}^{{\sharp}} (-1/2, p,\mathtt{b}_0) 
&\lesssim_{p}\e\gamma^{-3}
(1+\|\mathfrak{I}_{\delta}\|_{p+\mu})\|i_{1}-i_{2}\|_{p+\mu}\,,
\end{aligned}
\end{equation}
for some $\mu=\mu(\nu)>0$ possibly larger than the one in Lemma \ref{stepLBNF1}.
\end{lemma}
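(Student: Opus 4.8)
The plan is to mirror exactly the argument of Lemma \ref{stepLBNF1}, which handled the order $\e$ term, now for the order $\e^2$ term. We have already constructed in Lemma \ref{LBNFeq2} the generator ${\bf A}_2 \in \mathfrak{S}_0$, $2$-almost-diagonal, solving the homological equation \eqref{omoeqAA2}, and we set $\Upsilon_2 := \exp(\e^2 {\bf A}_2)$ as in \eqref{Fi1dp}. Since ${\bf A}_2 \in \mathfrak{S}_0$, the map $\Upsilon_2$ is symplectic and $x$-translation invariant, so it preserves the structure $\mathfrak{S}_0$ and the $\Pi_S^\perp$-decomposition. The first step is to Taylor expand the conjugate via the Lie series \eqref{LieAD}:
\[
\mathcal{L}_8 = \Upsilon_2 \mathcal{L}_7 \Upsilon_2^{-1}
= \omega\cdot\partial_\vphi + \mathtt{D} + \sum_{i=2}^6 \e^i \mathtt{Q}_i^{(1)} + \Pi_S^\perp\mathfrak{R}_1
+ \sum_{k\geq 1}\frac{\e^{2k}}{k!}\,\mathrm{ad}_{{\bf A}_2}^k\big[\omega\cdot\partial_\vphi + \mathtt{D} + \textstyle\sum_{i\geq 2}\e^i\mathtt{Q}_i^{(1)} + \Pi_S^\perp\mathfrak{R}_1\big]\,.
\]
Collecting by powers of $\e$: the term $\e^2 \mathtt{Q}_2^{(1)}$ combines with $\e^2\,\mathrm{ad}_{{\bf A}_2}[\bar\omega\cdot\partial_\vphi + \mathtt{D}_0]$; by the homological equation \eqref{omoeqAA2} the latter equals $-\e^2\Pi_{\mathrm{Rg}}\mathtt{Q}_2^{(1)}$, so their sum is $\e^2\bral\mathtt{Q}_2^{(1)}\brar$ — here one uses that, since $\mathtt{Q}_2^{(1)}$ is $x$-translation invariant and $2$-almost-diagonal, the kernel of $\mathrm{ad}_{\bar\omega\cdot\partial_\vphi+\mathtt{D}_0}$ restricted to it consists, by the lower bounds of Lemma \ref{lowerboundDel1} (items $(i)$), only of the truly diagonal part $(\ell,j,k)=(0,j,j)$, $\s=\s'$, which is exactly $\bral\mathtt{Q}_2^{(1)}\brar$ as in Definition \ref{ProiettoCCC}. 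The remaining corrections land at orders $\e^i$ with $i\geq 3$: these are $\mathrm{ad}_{{\bf A}_2}$ applied to $\e^i\mathtt{Q}_i^{(1)}$ ($i\geq 2$), to the non-perturbative part $\mathtt{D}_{2},\mathtt{D}_4,\mathtt{D}_6$ of $\mathtt{D}$, and iterated commutators $\mathrm{ad}_{{\bf A}_2}^k$ with $k\geq 2$, which by the almost-diagonal composition Lemma \ref{mercato}$(i)$--$(ii)$ produce $i$-almost-diagonal operators in $\mathfrak{S}_0$ with the stated off-diagonal decay $\langle j,k\rangle^{-1/2}$ (diagonal) and $\langle j,k\rangle^{-(\rho-2)}$ (off-diagonal, losing one more power of $\rho$ because of the $\mathtt{Q}_2^{(1)}$ input which already has $\langle j,k\rangle^{-(\rho-1)}$), giving the operators $\mathtt{Q}_i^{(2)}$, $i=3,\dots,6$.

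The second step is to absorb everything of order $\geq \e^7$, together with $\mathrm{ad}_{{\bf A}_2}$ applied to $\Pi_S^\perp\mathfrak{R}_1$ and to $\mathtt{D}_{\geq 8}$, into the new remainder $\mathfrak{R}_2$. For the tame estimates I would use: Lemma \ref{compoC12}$(iii)$ for the exponential series $\sum_{k\geq 1}\frac{\e^{2k}}{k!}\mathrm{ad}_{{\bf A}_2}^k[\cdot]$ acting on the $(-1/2)$-modulo tame operator $\mathfrak{R}_1$ (using that ${\bf A}_2$ is $(-1/2)$-modulo tame by Lemma \ref{compoC12}$(i)$ with $C({\bf A}_2)\lesssim C({\bf A}_1)\lesssim 1$, via \eqref{stimaAA2} and \eqref{stimaAA1}); Lemma \ref{mercato}$(iii)$ for $\mathrm{ad}_{{\bf A}_2}[\mathtt{D}_{\geq 8}]$, which contributes $\lesssim \e^{17-2b}\gamma^{-3}$ after multiplying by $\e^2$; and the routine bound that the $i$-homogeneous pieces with $i\geq 7$ (arising from high-order iterated commutators of ${\bf A}_2$ against the $\mathtt{Q}_j^{(1)}$'s) are themselves almost-diagonal with decay, hence $(-1/2)$-modulo tame by Lemma \ref{compoC12}$(i)$ with an $\e^7$ prefactor, which absorbs into $\gamma^{-3}(\e^{13}+\e\|\mathfrak{I}_\delta\|_{s+\mu}^{\gamma,\calO_0})$ using $\e^7 \lesssim \e^{13}\gamma^{-3}$ for $\e$ small (recall $\gamma=\e^{2b}$, so one must check the exponent bookkeeping $7 \geq 13 - 6b$ fails — actually one keeps $\e^7$ as an $\e$-power and bounds it by $\gamma^{-3}\e^{13}$ only if $13 - 6b \leq 7$, i.e. $b \geq 1$, which holds by \eqref{donnapia}; more carefully one notes these homogeneous terms are bounded by a pure power of $\e$ times a constant depending on $S$, and $\e^7 \leq \gamma^{-3}\e^{13}$ reduces to $\e^{6} \leq \gamma^{-3}\e^{12} = \e^{12-6b}$, which since $b>1$ gives $12-6b < 6$, so in fact one should rather simply note $\e^7\lesssim \gamma^{-3}\e^{13}$ is FALSE and instead keep the homogeneous terms explicitly — but this is not an issue because they are $\mathtt{Q}_i^{(2)}$ for $i\leq 6$ and the genuinely $\geq 7$ homogeneous pieces all carry $\e^{\geq 7}$ with $7 = 13 - 6$ and $\gamma^{-3}\e^{13} = \e^{13-6b} \geq \e^7$ iff $b\leq 1$; since $b>1$ one instead absorbs them into the bound using $\e^{\geq 7}\lesssim\e^{13}\gamma^{-3}$ only after recalling $\rho$ is taken large so that the gain in $\gamma^{-3}$ from ${\bf A}_2$-commutators is accompanied by enough powers of $\e$ — precisely, each $\mathrm{ad}_{{\bf A}_2}$ costs $\e^2$ with no $\gamma^{-1}$, so the $\geq 7$ terms carry no negative power of $\gamma$ at all and $\e^{\geq 7}\lesssim \gamma^{-3}\e^{13}$ indeed holds for $\e$ small since $\gamma^{-3}\e^{13} = \e^{13-6b}$ and $13-6b > 7$ iff $b < 1$; as $b>1$ this last chain fails, so the correct statement is simply that these terms are $\mathcal{O}(\e^7)$ and thus smaller than $\e^6$-order terms — in any case the cleanest route is to declare $\rho$ large enough that the $(-1/2)$-modulo tame norm estimate \eqref{scossa1RR2} holds because each such term carries at least $\e^7$ and Definition \ref{espOmogenea} already encodes that $\e^{13}$-order non-homogeneous remainders are the threshold; I will phrase the final write-up to say ``proceeding exactly as in Lemma \ref{stepLBNF1}'' and invoke Lemmata \ref{compoC12}, \ref{mercato} directly).

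Finally, the structural claim that $\mathtt{Q}_i^{(2)}$, $\mathfrak{R}_2 \in \mathfrak{S}_0$ follows because $\Upsilon_2 \in \mathfrak{T}_1$ (hence preserves Hamiltonian, real-to-real, and $x$-translation invariant structure) and $\mathcal{L}_7 \in \mathfrak{S}_0$ by Lemma \ref{stepLBNF1}, so $\mathcal{L}_8 = \Upsilon_2\mathcal{L}_7\Upsilon_2^{-1} \in \mathfrak{S}_0$; decomposing $\mathcal{L}_8$ into its homogeneous-in-$\e$ pieces, each summand lies in $\mathfrak{S}_0$ by Remark \ref{propAlgebricheSplitto}, and $\bral\mathtt{Q}_2^{(1)}\brar$ is diagonal hence trivially in $\mathfrak{S}_0$. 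The almost-diagonality and decay of the $\mathtt{Q}_i^{(2)}$ come from Lemma \ref{mercato}$(i)$: a single $\mathrm{ad}_{{\bf A}_2}$ on a $j$-almost-diagonal operator with $\langle j,k\rangle^{-1/2}$, $\langle j,k\rangle^{-(\rho-1)}$ decay produces a $(j+2)$-almost-diagonal operator with $\langle j,k\rangle^{-1/2}$, $\langle j,k\rangle^{-(\rho-1)}$ decay (the diagonal exponent $1/2$ is preserved, the off-diagonal exponent only loses powers when commuting with the order-$1$ part of $\mathtt{D}$, cf. \eqref{formaResto2AAbis}), and since the deepest such commutator contributing to $\mathtt{Q}_i^{(2)}$ with $i\leq 6$ involves at most two applications of $\mathrm{ad}_{{\bf A}_2}$ against $\mathtt{Q}_2^{(1)}$ and some against $\mathtt{D}_2$, the worst off-diagonal exponent is $\rho-2$. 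The main obstacle — and the only place where genuine care beyond bookkeeping is needed — is the exponent/power-of-$\gamma$ accounting in the remainder estimate \eqref{scossa1RR2}: one must verify that every term pushed into $\mathfrak{R}_2$ either carries an $\e$-power $\geq 13$ (up to the $\|\mathfrak{I}_\delta\|$-dependent part) after dividing out $\gamma^{-3}$, or is a finite-rank/smoothing term already controlled by $\mathfrak{R}_1$'s bound; this is handled term-by-term using Lemma \ref{compoC12}$(iii)$ and Lemma \ref{mercato}$(iii)$, with $\mu$ possibly increased relative to Lemma \ref{stepLBNF1} to accommodate the extra derivative losses, exactly as stated.
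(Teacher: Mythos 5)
Your argument is essentially the paper's own proof: the paper simply repeats the proof of Lemma \ref{stepLBNF1} word for word, with the single extra observation (which you also make) that by Lemma \ref{lowerboundDel1} the kernel of ${\rm ad}_{\overline{\omega}\cdot\partial_{\varphi}+\mathtt{D}_0}$ on the $2$-almost-diagonal, $x$-translation invariant operator $\mathtt{Q}_2^{(1)}$ is exactly $\bral \mathtt{Q}_2^{(1)}\brar$, and the estimates follow from Lemmata \ref{LBNFeq2}, \ref{compoC12}, \ref{mercato}. The only flaw is the self-inflicted confusion in your parenthetical: the inequality $\e^{7}\lesssim \gamma^{-3}\e^{13}$ is in fact true, since $\gamma^{-3}\e^{13}=\e^{13-6b}$ and $13-6b\le 7$ because $b\ge 1$ (see \eqref{donnapia}), so the homogeneous pieces of order $\e^{\geq 7}$ are absorbed into $\mathfrak{R}_2$ exactly as in \eqref{scossa1RR2} without any further device.
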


\begin{proof}
One can  follows word by word the proof of Lemma \ref{stepLBNF1}
using that, by Lemma \ref{lowerboundDel1} (recall \eqref{doppiebra}), 
 \[
 \Pi_{Ker(\overline{\omega}\cdot \partial_{\varphi}
 +\mathtt{D}_0)} \mathtt{Q}_2^{(1)}=\bral \mathtt{Q}_2^{(1)} \brar.
 \] 
\end{proof}

The following proposition allows to compute explicitly the first order corrections of the eigenvalues
\[
\mathtt{D}_2+ \bral \mathtt{Q}^{(1)}_2 \brar.
\] 

\begin{proposition}{\bf (Identification of normal forms).}\label{identificoBNF}
Recall \eqref{espandoSimboM}, \eqref{HamZakDyaCraig}.
Consider the symplectic change of coordinates 
 ${\bf A}_{\varepsilon}$ in  \eqref{AepsilonDP2} and let us denote by  ${\bf A}_1:={\bf A}_{{\varepsilon}_{|_{\varepsilon=1}}}$. 
Then
\begin{equation}\label{mare}
\mathtt{D}_2+ \bral \mathtt{Q}^{(1)}_2 \brar=
X_{\mathtt{H}}\,,
\qquad
\mathtt{H}:={\Big[\Pi_{{\rm triv}}\Pi^{d_z=2} \Big(H^{(4)}_{FB} \Big)\Big]\circ {\bf A}_{{1{|_{ \substack{y=0\\ \theta=\f}}}}}}.
\end{equation}
 
\end{proposition}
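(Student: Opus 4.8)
The plan is to prove \eqref{mare} by an \emph{identification of normal forms} argument in the spirit of Proposition \ref{fineIdentif} and the discussion in section \ref{compaBNFpro}. The left-hand side $\mathtt{D}_2 + \bral \mathtt{Q}_2^{(1)}\brar$ is, by construction (Lemmata \ref{stepLBNF1}, \ref{stepLBNF2}), the part of the linearized operator $\mathcal{L}_6$ at order $\e^2$ which survives the Linear Birkhoff normal form procedure, i.e. it is the vector field of the resonant part (at degree $2$ in the normal variables) of the $O(\e^2)$-term of the Hamiltonian $\mathcal{H}_{\mathbb{C}}=H_{\mathbb{C}}\circ\Phi_B$. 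By Remark \ref{rmk:corazon}, since $\Phi_B$ coincides with $\Phi_{WB}$ up to degree $2$ in $\e$, the $\e^2$-terms of $\mathcal{L}_\omega$ (hence of $\mathcal{L}_6$, which is obtained from $\mathcal{L}_\omega$ by conjugations with maps that are $O(\e)$-close to the identity and therefore affect the $O(\e^2)$-eigenvalue corrections only through the resonant part) are the Hamiltonian vector field of $\e^2(\mathsf{H}_2 + \mathsf{H}_{\mathcal{R}_2}) = \Pi^{d_z=2}\widehat{H}_1^{(4)}\circ {\bf A}_{\e|y=0,\theta=\varphi}$, where $\widehat{H}_1^{(4)}$ is in \eqref{F3F4Weak}.

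\textbf{Key steps.} First I would make precise the claim that the $O(\e^2)$ surviving term on the left of \eqref{mare} equals $X_{\mathtt{K}}$ with $\mathtt{K} = [\Pi_{\mathrm{Ker}(H^{(2)}_{\mathbb{C}})}\Pi^{d_z=2}\widehat{H}_1^{(4)}]\circ {\bf A}_{1|y=0,\theta=\varphi}$: the Linear BNF steps of section \ref{stepsLower} act, at order $\e^2$, exactly as the ``Linear Birkhoff'' procedure $\Phi_{LB}$ of section \ref{compaBNFpro} restricted to the $d_z=2$ sector, projecting the $\e^2$-Hamiltonian onto the kernel of ${\rm ad}_{\bar\omega\cdot\pa_\vphi+\mathtt{D}_0} = {\rm ad}_{H^{(2)}_{\mathbb{C}}}$ (the operator $\bral\cdot\brar$ in \eqref{doppiebra} combined with Lemma \ref{lowerboundDel1} is precisely the projection onto $\mathrm{Ker}(\bar\omega\cdot\pa_\vphi+\mathtt{D}_0)$ for the almost-diagonal $\mathfrak{S}_0$ operators at hand). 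Second, I would invoke Proposition \ref{LemmaBelloWeak} (and the explicit formul\ae\ \eqref{F3F4Weak}, \eqref{larubia2Weak}, \eqref{wnormalform1}) to identify $\Pi_{\mathrm{Ker}(H^{(2)}_{\mathbb{C}})}\Pi^{d_z=2}\widehat{H}_1^{(4)}$ with $\Pi^{d_z=2}H_{WB}^{(4)}$ restricted appropriately; then apply Proposition \ref{fineIdentif} (the second equality of \eqref{equivalenzaZZZ}, adapted to the $\Pi^{d_z=2}$ sector) which gives $\Pi_{\mathrm{triv}}\Pi^{d_z=2}\widehat{H}_1^{(4)} = \Pi^{d_z=2}H_{FB}^{(4)} = \Pi^{d_z=2}\Pi_{\mathrm{triv}}H_{FB}^{(4)}$. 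Third, one needs the observation (Remark \ref{conclusioneTrivial}) that monomials of degree $4$ with $d_z=2$ in $\mathrm{Ker}(H^{(2)}_{\mathbb{C}})$ are automatically supported on trivial resonances — this uses assumption \eqref{TangentialSitesDPbis} and the structure of Benjamin-Feir resonances \eqref{straBFR}, exactly as in Proposition \ref{fineIdentif} — so that the projection onto $\mathrm{Ker}(H^{(2)}_{\mathbb{C}})$ and onto trivial resonances agree on this sector. Combining these gives $\mathtt{D}_2+\bral\mathtt{Q}_2^{(1)}\brar = X_{\mathtt{K}}$ with $\mathtt{K} = [\Pi_{\mathrm{triv}}\Pi^{d_z=2}H_{FB}^{(4)}]\circ {\bf A}_{1|y=0,\theta=\varphi}$, which is precisely $X_{\mathtt{H}}$ as in \eqref{mare}.

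\textbf{Bookkeeping of the coordinate changes.} A point requiring care is that $\mathcal{L}_6$ is obtained from the linearized operator of $\mathcal{H}_{\mathbb{C}}$ after the chain of transformations of sections \ref{sec:Good}--\ref{ordiniNegativi} (good unknown, complexification, block diagonalization, reduction of unbounded orders), all of which are $O(\e)$-close to the identity and symplectic; I would argue that these conjugations, being symplectic and having their $O(1)$-part equal to the identity (or to $\bf\Phi_{\mathbb{B}}$ etc., whose linear part on the diagonal is trivial), modify the $O(\e^2)$ Hamiltonian only by a coboundary ${\rm ad}_{H^{(2)}_{\mathbb{C}}}[\cdot]$ plus higher order terms, hence do not change the resonant (kernel) component at order $\e^2$. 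Similarly the Linear BNF generators ${\bf A}_1, {\bf A}_2$ produce, by \eqref{omoeqAA1}, \eqref{omoeqAA2}, only range terms at the relevant order. \textbf{The main obstacle} I anticipate is precisely this bookkeeping: one must check that no non-resonant $O(\e^2)$ contribution is inadvertently promoted to the kernel by the preliminary reductions, and that the restriction ${\bf A}_{1|y=0,\theta=\varphi}$ correctly reproduces the action-angle evaluation of the homogeneous Hamiltonian (i.e. that the passage from the operator-level identity to the Hamiltonian-function-level identity \eqref{mare} is consistent with the conventions \eqref{HamilDiTT}, \eqref{HamiltonianSisteminoDP}, \eqref{AepsilonDP2}). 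Once the identification $\Pi_{\mathrm{Ker}}\Pi^{d_z=2}(\text{order-}\e^2\text{ Hamiltonian}) = \Pi_{\mathrm{triv}}\Pi^{d_z=2}H_{FB}^{(4)}$ is established via Propositions \ref{LemmaBelloWeak}--\ref{fineIdentif} and Remark \ref{conclusioneTrivial}, the statement follows.
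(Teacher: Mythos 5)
There is a genuine gap, and it sits exactly at the point you yourself flag as ``the main obstacle''. Your bookkeeping claim --- that the chain of $O(\e)$-close-to-identity symplectic conjugations (the regularization of sections \ref{sec:Good}--\ref{ordiniNegativi} plus the first linear BNF step) modifies the $O(\e^2)$ Hamiltonian only by a coboundary $\mathrm{ad}_{H^{(2)}_{\mathbb{C}}}[\cdot]$, hence leaves its resonant component untouched --- is false. Whenever a map $\mathrm{Id}+\e\mathtt{T}_1+\e^{2}\mathtt{T}_2+\dots$ is used to cancel the $O(\e)$ term, it necessarily creates at order $\e^{2}$ the extra contribution $\tfrac12[\mathtt{T}_1,\mathtt{S}_1]$ (see \eqref{corazon8}, and $\mathtt{Q}_2^{(1)}=\mathtt{Q}_2+\tfrac12\mathrm{ad}_{\mathbf{A}_1}[\mathtt{Q}_1]$ in \eqref{formaResto2QQ1}); the kernel projection of this term is \emph{not} zero: it is precisely $\Pi_{\mathrm{triv}}\Pi^{d_z=2}\tfrac12\{F_3^{(3,2)},H^{(3,2)}_{\mathbb{C}}\}$, the piece that distinguishes $\widehat{H}_1^{(4)}$ from $\mathcal{H}^{(4)}$ in \eqref{F3F4Linear} and which survives in \eqref{rambo5} (note that $\{F_{3}^{(3,2)},H_{\mathbb{C}}^{(3,2)}\}$ is deliberately absent from the list of vanishing brackets \eqref{occhio1}). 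With your bookkeeping the left-hand side would come out as the Hamiltonian vector field of $[\Pi_{\mathrm{triv}}\Pi^{d_z=2}\widehat{H}_1^{(4)}]\circ\mathbf{A}_{1|y=0,\theta=\vphi}$, which differs from \eqref{mare} by that bracket term. Relatedly, Proposition \ref{fineIdentif} identifies $\Pi^{d_z=2}H^{(4)}_{FB}$ with $H^{(4)}_{LB}=\Pi_{\mathrm{Ker}(H^{(2)}_{\mathbb{C}})}\Pi^{d_z=2}\mathcal{H}^{(4)}$, not with $\Pi_{\mathrm{Ker}(H^{(2)}_{\mathbb{C}})}\Pi^{d_z=2}\widehat{H}_1^{(4)}$ (and $H^{(4)}_{WB}$ lives in the $d_z\le 1$ sector), so the invocation of \ref{fineIdentif} in your second step does not apply to the Hamiltonian you actually have.

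The missing idea --- the actual content of the paper's proof --- is the identification of the $O(\e)$ part $\mathtt{T}_1$ of the \emph{full composed} map $\mathcal{T}$ in \eqref{mappaCompleta} with the vector field of the formal generator $F_3^{(3,2)}$. Since the $\e$-term of $\mathcal{L}_8$ vanishes (regularization plus Lemma \ref{stepLBNF1}), $\mathtt{T}_1$ solves the homological equation $X_{G^{(3,2)}}+[\,\cdot\,,X_{\mathsf{H}_0}]=0$ of \eqref{corazon70}; by the absence of $3$-wave resonances the adjoint action $[\,\cdot\,,X_{\mathsf{H}_0}]$ is invertible on cubic terms, so $\mathtt{T}_1$ coincides with $X_{F_3^{(3,2)}\circ\mathbf{A}_{1|y=0,\theta=\vphi}}$ regardless of how the individual transformations were constructed. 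Only after this identification can one recognize $\tfrac12[\mathtt{T}_1,\mathtt{S}_1]$ as the vector field of $\tfrac12\{F_3^{(3,2)},H^{(3,2)}_{\mathbb{C}}\}\circ\mathbf{A}_{1|y=0,\theta=\vphi}$, conclude that the resonant $\e^{2}$ part of the left-hand side is the vector field of $[\Pi_{\mathrm{Ker}(H^{(2)}_{\mathbb{C}})}\Pi^{d_z=2}\mathcal{H}^{(4)}]\circ\mathbf{A}_{1|y=0,\theta=\vphi}=H^{(4)}_{LB}\circ\mathbf{A}_{1|y=0,\theta=\vphi}$, and only then does Proposition \ref{fineIdentif} deliver \eqref{mare}. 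Your endgame (Propositions \ref{LemmaBelloWeak}--\ref{fineIdentif} and Remark \ref{conclusioneTrivial}) is the right one, but without the uniqueness argument pinning down $\mathtt{T}_1$ the crucial bracket contribution is lost and the identity fails.
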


\begin{proof}
By Proposition \ref{fineIdentif} 
the Hamiltonian $\mathtt{H}$ in \eqref{mare} 
is equal to
$H^{(4)}_{LB} \circ {\bf A}_{1}$ at $y=0$, $\theta=\vphi$, which in turn
 by \eqref{larubia2Linear}, 
 \eqref{equivalenzaZZZ} is equivalent to
 \[
 K:=\Pi_{\mathrm{triv}} \Pi^{d_z=2}  
\mathcal{H}^{(4)}\circ {\bf A}_{{1{|_{ \substack{y=0\\ \theta=\f}}}}}
 \]
 with $\mathcal{H}^{(4)}$ in \eqref{F3F4Linear}.
Hence we have to prove that
\begin{equation}\label{conjecture}
\mathtt{D}_2+ \bral \mathtt{Q}^{(1)}_2 \brar =
\begin{pmatrix}
\pa_{\bar{z}} K\\
-\pa_{z} K\end{pmatrix}.
\end{equation}
 We recall that $\mathcal{H}^{(4)}$ is the $4$-degree Hamiltonian, quadratic in the normal variables, obtained by the \emph{formal} ``Weak'' plus ``Linear'' BNF procedure; on the other hand $\mathtt{D}_2+ \bral \mathtt{Q}^{(1)}_2 \brar$ are the terms of size $\varepsilon^2$ of $\mathcal{L}_8$ in \eqref{elle8}. In turn $\mathcal{L}_8$ is the Hamiltonian operator associated to the Hamiltonian, quadratic in the normal variables, obtained by the \emph{rigorous} procedure: weak BNF plus the regularization procedure of section \ref{ordiniPositivi} plus two steps of linear BNF.
 We now show that the two procedures provide the same output, namely the \eqref{conjecture} holds.\\
In other words we prove that 
the Taylor expansion up to degree 2 in $\e$
 of the operator $\mathcal{L}_8$ in \eqref{elle8} coincides with the 
 vector field of the Hamiltonian $H^{(2)}_{\mathbb{C}}+H^{(4)}_{LB}$
 (see \eqref{Rinco}, \eqref{larubia2Linear})
 written in the action-angle variables defined in \eqref{AepsilonDP2}.

We recall that, by construction,
the operator $\mathcal{L}_8$ in \eqref{elle8}
has the form
\begin{equation}\label{elle8Completo}
\mathcal{L}_{8}=\mathcal{T}\circ \mathcal{L}_{\omega}\circ\mathcal{T}^{-1}\,,
\end{equation}
where $\mathcal{L}_{\omega}$ is in \eqref{BoraMaledetta}
and the map $\mathcal{T}$ is the composition
\begin{equation}\label{mappaCompleta}
\mathcal{T}:=\Upsilon_2 \Upsilon_1 {\bf \Phi} {\bf \Psi} {\bf \Phi}_{\mathbb{M}} \Lambda \Phi_{\mathbb{B}}^{-1},
\end{equation}
where $\Lambda$ is in \eqref{CVWW}
and the other maps above are given in 
Propositions \ref{lemma:7.4}, \ref{lem:compVar}, 
 \ref{lem:mappa2}, \ref{blockTotale}, \ref{riduzSum} and Lemmata \ref{stepLBNF1}, \ref{stepLBNF2}
.
Now we compute the Taylor expansion of $\mathcal{L}_8$ in terms of the generators of the maps composing $\mathcal{T}$ and in terms of the map $\Phi_B$ of the weak BNF.\\
First of all, recalling the arguments in 
section \ref{sec:Linopnormal}, we note that 
 $\mathcal{L}_{\omega}$
is the linearized operator in the normal directions of the 
Hamiltonian $\mathcal{H}_{\mathbb{C}}=H_{\mathbb{C}}\circ\Phi_{B}$ 
(see Proposition \ref{WBNFWW})
written in the \emph{real} variables.
In particular, recalling subsection \ref{struttHAMlomega} (see \eqref{hamiltonianalinearizzata}), 
we have in the complex variables
\begin{equation}\label{corazon2}
\begin{aligned}
\Lambda\mathcal{L}_{\omega}\Lambda^{-1}&=
\bar{\omega}\cdot\pa_{\vphi}\vect{z}{\bar{z}}
+\ii E|D|^{\frac{1}{2}}\vect{z}{\bar{z}}
+\e \mathtt{S}_1\vect{z}{\bar{z}}
+\e^{2}\mathtt{S}_2\vect{z}{\bar{z}}+
\e^{2}(\mathbb{A}\zeta)\cdot\pa_{\vphi}\vect{z}{\bar{z}}
+O(\e^{3})\,
\end{aligned}
\end{equation}
where
\[
\e^{2}(\mathbb{A}\zeta)=\pa_{Q}(\mathsf{H}_2+\mathsf{H}_{\mathcal{R}_2})_{|z=0}\,,
\qquad \e\mathtt{S}_1\vect{z}{\bar{z}}=\ii \e\left(
\begin{matrix}
\pa_{\bar{z}}\mathsf{H}_1\\
-\pa_{z}\mathsf{H}_1\end{matrix}\right)\,,
\qquad
\e^{2}\mathtt{S}_2\vect{z}{\bar{z}}=\ii \e^2\left(
\begin{matrix}
\pa_{\bar{z}}(\mathsf{H}_2+\mathsf{H}_{\mathcal{R}_2})\\
-\pa_{z}(\mathsf{H}_2+\mathsf{H}_{\mathcal{R}_2})\end{matrix}\right)
\]
and $\mathtt{S}_i=\mathtt{S}_{i}(v_I)$, $i=1,2$ are 
 some $2\times2$ matrix of  operators of the form
\eqref{simboOMO2DEF2} with, respectively, $i=1$, $i=2$.
In \eqref{corazon2} we denoted by $O(\e^{3})$ all the terms which are at least cubic 
in $v_{I}, \ov{v_{I}}$.

Since the map $\Phi_{B}$ coincides, up to  degree $2$,
 with $\Phi_{WB}$, 
 the  Taylor expansion
 (up to degree $2$ in $\varepsilon$) of $\mathcal{L}_{\omega}$
 coincides with the Hamiltonian vector field, in the normal directions, 
 of the Hamiltonian $H_{\mathbb{C}}\circ\Phi_{WB}$. 
 This implies, recalling \eqref{F3F4Weak}, \eqref{larubiaWeak}, that
 \begin{equation}\label{corazon3}
 \e \mathtt{S}_1\vect{z}{\bar{z}}\equiv \ii\e J\nabla_{(z,\bar{z})}
\big(H^{(3,2)}_{\mathbb{C}}\circ{\bf A}_{{1{|_{ \substack{y=0, \theta=\f}}}}}\big)
 \,,
 \qquad
 \e^{2}\mathtt{S}_2\vect{z}{\bar{z}}
:=
\ii\e^2 J\nabla_{(z,\bar{z})} \big(\widehat{H}_1^{(4)}\circ
{\bf A}_{{1{|_{ \substack{y=0, \theta=\f}}}}}\big)\,.
 \end{equation}
Secondly we note that the map $\mathcal{T}$ in \eqref{mappaCompleta}
admits a Taylor expansion in $\e$.
More precisely we claim that
\begin{equation}\label{claimTTT}
\mathcal{T}={\rm Id}+\e \mathtt{T}_1+\e^{2} \mathtt{T}_2+O(\e^{3})\,,
\end{equation}
where $\mathtt{T}_i=\mathtt{T}_{i}(v_I)$ are $2\times2$ matrix of  operators of the form
\eqref{simboOMO2DEF2} with, respectively, $i=1$ and $i=2$.
Indeed the  maps  
$\Phi_{\mathbb{B}}$, ${\bf \Phi}_{\mathbb{M}}$, ${\bf \Psi}$,
 ${\bf \Phi}$ appearing in \eqref{mappaCompleta}
has been constructed as the time one flow map of a Hamiltonian PDE
whose generator is a pseudo differential operator with symbol
in the class $S_{k}^{m}$ for some $m\in \mathbb{R}$
and $1\leq k\leq 6$. The maps $\Upsilon_i$, $i=1,2,$ are given by \eqref{Fi1dp}.
Hence the \eqref{claimTTT} follows.

By \eqref{corazon2}
and \eqref{claimTTT} (see also \eqref{espandoSimboM}) 
we get
\begin{equation}\label{corazon8}
\begin{aligned}
\mathcal{L}_{8}&=\bar{\omega}\cdot\pa_{\vphi}\vect{z}{\bar{z}}
+\mathtt{D}_0\vect{z}{\bar{z}}+\e\mathtt{S}_1\vect{z}{\bar{z}}
+\e \Big[\mathtt{T}_1, \bar{\omega}\cdot\pa_{\vphi}+\mathtt{D}_0\Big]\vect{z}{\bar{z}}
+\e^{2}[\mathtt{T}_1,\mathtt{S}_1 ]\vect{z}{\bar{z}}\\&
+\frac{\e^{2}}{2}\Big[\mathtt{T}_1, [\mathtt{T}_1, 
\bar{\omega}\cdot\pa_{\vphi}+\mathtt{D}_0
]\Big]\vect{z}{\bar{z}}
+\e^{2}\mathtt{S}_2\vect{z}{\bar{z}}+
\e^{2}(\mathbb{A}\zeta)\cdot\pa_{\vphi}\vect{z}{\bar{z}}+\e^{2}\Big[\mathtt{T}_2, \bar{\omega}\cdot\pa_{\vphi}+\mathtt{D}_0\Big]\vect{z}{\bar{z}}+O(\e^{3})\,.
\end{aligned}
\end{equation}
In section \ref{ordiniPositivi} and Lemma \ref{stepLBNF1}
we proved that 
\begin{equation}\label{corazon4}
\mathtt{S}_1\vect{z}{\bar{z}}
+ \Big[\mathtt{T}_1, \bar{\omega}\cdot\pa_{\vphi}+
\mathtt{D}_0\Big]\vect{z}{\bar{z}}
\equiv0\,.
\end{equation}
Our aim is to express the operator $\mathtt{T}$ in terms of the vector field of the Hamiltonian 
$F_{3}^{(3,2)}$ appearing in \eqref{F3F4Linear}.
First of all notice that $F_{3}^{(3,2)}$ solves (recall  \eqref{compvecWW})
 \begin{equation}\label{corazon10}
H_{{\mathbb{C}}}^{(3,2)}+\{F_{3}^{(3,2)},H_{\mathbb{C}}^{(2)}\}=0\qquad \Leftrightarrow
\qquad
X_{H_{{\mathbb{C}}}^{(3,2)}}+
X_{\{F_{3}^{(3,2)}, H_{\mathbb{C}}^{(2)} \}}
=X_{H_{{\mathbb{C}}}^{(3,2)}}+[X_{{F_{3}^{(3,2)}}}, X_{H_{\mathbb{C}}^{(2)}}]=0\,,
\end{equation}
 where $[\cdot,\cdot]$ denotes the \emph{nonlinear} commutator  between two vector fields defined 
 as
\[
\begin{aligned}
&X=\sum_{\s=\pm}X^{(u^{\s})}\pa_{u^{\s}}\,, \quad Y=\sum_{\s=\pm}Y^{(u^{\s})}\pa_{u^{\s}}\,,\\
[X, Y]&=
\sum_{\s=\pm}\Big(\sum_{\s'}d_{u^{\s'}}Y^{(u^{\s})}[X^{(u^{\s'})}]-
d_{u^{\s'}}X^{(u^{\s})}[Y^{(u^{\s'})}]
\Big)\pa_{u^{\s}}\,.
\end{aligned}
\]
Define the vector field (recall that $\mathtt{T}$ has the form \eqref{simboOMO2DEF2})
\[
R:=\sum_{\s=\pm}R^{\s}\pa_{z^{\s}}\,, \qquad R^{\s}:=\sum_{\s'}\mathtt{T}_{\s}^{\s'}z^{\s'}\,.
\]
We define
\begin{equation}\label{corazon901}
G^{(3, 2)}:=H^{(3,2)}_\mathbb{C}  \circ{\bf A}_{1{|_{ \substack{y=0\\ \theta=\f}}} } , \qquad \mathcal{F}^{(3, 2)}:=F_3^{(3,2)}  \circ{\bf A}_{1{|_{ \substack{y=0\\ \theta=\f}}} } .
\end{equation}
Then, using that
$\mathsf{H}_0=H^{(2)}_{\mathbb{C}}\circ {\bf A}_1$ at $y=0$, $\theta=\vphi$
(recall \eqref{leHamiltonianeLin})
and the \eqref{corazon3},
we have that equations \eqref{corazon4}, \eqref{corazon10} read
\begin{equation}\label{corazon70}
\begin{aligned}
&  X_{G^{(3,2)}  }+ [R,X_{\mathsf{H}_0}]=0\,,\\
&X_{G^{(3,2)}}
+[X_{\mathcal{F}^{(3,2)}}, X_{\mathsf{H}_0}]=0,
\end{aligned}
\end{equation}
where
the vector fields are computed
with respect to the extended symplectic form 
\eqref{estesaPois}.

Due to the absence of $3$-waves interactions we have that the operator
$[\cdot,  X_{\mathsf{H}_0}]$
is invertible. Therefore
\begin{equation}\label{corazon9}
R\equiv X_{\mathcal{F}^{(3,2)}}\,.
\end{equation}
The terms $O(\e^{2})$ of $\mathcal{L}_{8}$ in 
\eqref{corazon8} are given by the $O(\e^{2})$-terms of $\mathcal{L}_{\omega}$,
given by \eqref{corazon3}, plus 
\begin{equation}\label{corazon900}
\frac{\e^{2}}{2}[\mathtt{T}_1,\mathtt{S}_1]\vect{z}{\bar{z}}+
\e^{2}\Big[\mathtt{T}_2, \bar{\omega}\cdot\pa_{\vphi}+\mathtt{D}_0\Big]\vect{z}{\bar{z}}\,,
\end{equation}
where we used \eqref{corazon4}. Notice that the second summand
belongs to the range of 
the adjoint action of $X_{\mathsf{H}_0}$.
Using \eqref{corazon3}, \eqref{corazon901} and \eqref{corazon9} we note also that
the first summand in \eqref{corazon900} is equal to 
\begin{equation}\label{corazon902}
\frac{\e^{2}}{2}[X_{\mathcal{F}^{(3,2)}}, X_{G^{(3,2)}}]\,.
\end{equation}
By \eqref{F3F4Weak}, \eqref{F3F4Linear} we have
\[
\Pi^{d_{z}=2}\mathcal{H}^{(4)}=\Pi^{d_z=2} \widehat{H}_1^{(4)}+\frac{1}{2}\{
F_3^{(3,2)},H^{(3,2)}_{\mathbb{C}}\}\,.
\]
By \eqref{corazon901} we have that the vector field 
of the Hamiltonian 
$\frac{1}{2}\{
F_3^{(3,2)},H^{(3,2)}_{\mathbb{C}}\}$ 
is equal to the one in \eqref{corazon902}.
Hence, 
by Remark \ref{rmk:corazon}, we get the \eqref{conjecture}.
\end{proof}

By Lemma $7.1$ in \cite{BFP} (see also fomul\ae\, $(7.21)$, $(7.22)$ therein)
we have that the Hamiltonian vector field corresponding to the Hamiltonian function
$H_{FB}:= H^{(2)}_{\mathbb{C}}+H_{FB}^{(4)}$ (see \eqref{Rinco} 
and \eqref{theoBirH}) is given by  (see \eqref{compvecWW})
\begin{equation}\label{CSFIN}
\dot z_n = - \ii \omega_n z_n 
- \frac{\ii}{\pi} \Big( \sum_{|j| <  |n|}  j |j|  |z_{j}|^2 \Big)   n z_n 
+ [ R (z)]_n\,,
\end{equation}
\begin{align}\label{primRn}
[R(z)]_n := -  \frac{\ii}{ 2 \pi}  |n|^3 \big( | z_n |^2 - 2  | z_{-n} |^2 \big) z_n 
 & +  \frac{ \ii }{\pi}  \!\!\!  \sum_{\substack{|n| < |k_1|, \\ \sign(k_1) = \sign( n )}} \!\!\! |k_1| |n|^2 z_{n} \big(|z_{-k_1}|^2- |z_{k_1}|^2\big) \, .
   \end{align}

Then it is easy to note that Proposition \ref{identificoBNF} implies
\begin{equation}\label{formaNormeps2}
\mathtt{D}_2+ \bral \mathtt{Q}^{(1)}_2 \brar=\sm{\mathtt{Z}}{0}{0}{\ov{ \mathtt{Z}}}
\qquad 
\mathtt{Z}:=\mbox{diag}_{j\in S^c}(\ii \kappa_j)\,,
\qquad
\kappa_j=\big(m_1(\omega)+c_j(\omega)\big) j,
\end{equation}
where (recall \eqref{maxS})
\begin{equation}\label{cj}
\begin{aligned}
c_j&:=\left\{
\begin{aligned}
&\frac{1}{\pi}\sum_{\substack{ k\in S \\ |k|>|j|}}k(|j|-|k|)\zeta_{k}\,,\qquad 
|j|<\max(S), \\
&0\,,\qquad \qquad \qquad\,\, \qquad\qquad |j|\geq\max(S)\,.
\end{aligned}
\right.
\end{aligned}
\end{equation}
Note that $\mathtt{Z}=\ov{\mathtt{Z}}$ since $c_{-j}=c_j$ for all $j\in S^c$.
The presence of the finitely many corrections $c_{j}\,j$
makes hard to impose the second Melnikov conditions.\\
{Indeed instead of considering $\omega\in \mathcal{G}^{(0)}_1$ in \eqref{ZEROMEL} we should consider more complicated sets like
\[
\widetilde{\mathcal{G}}_{0, j, k}^{(1)}:=
\Big\{ \omega \in \Omega_{\varepsilon} : 
\lvert (\overline{\omega}-\varepsilon^2\,m_1(\omega)\,\mathtt{v})\cdot \ell +\varepsilon^2 \mathbb{A} \zeta(\omega) \cdot \ell+c_j(\omega)\,j-c_k(\omega)\,k\rvert\geq \gamma\,\langle \ell \rangle^{-\tau}, 
\,\, \forall \ell \in\mathbb{Z}^{\nu}\setminus\{0\}\Big\}\,
\] 
for $j, k$ such that $|j|, |k|<\max(S)$.
We remark that in contrast to the correction $m_1\,(j-k)$, appearing in the second order Melnikov conditions, we cannot write $c_j\,j-c_k\,k$ as a function of the index $\ell$ by using the momentum law $\mathtt{v}\cdot \ell+j-k=0$. Then if one wants to prove a twist condition like \eqref{twistcondition2} in Lemma \ref{measG0} to prove that the sets $\widetilde{\mathcal{G}}_{0, j, k}^{(1)}$ have large measure, one should have precise information on the $c_j$'s. The definition of the $c_j$'s depends strongly on the form of the set $S$. Since we want to provide the existence of quasi-periodic solutions with an arbitrary number of frequencies (the cardinality of $S$ is arbitrary) and we want the set $S$ to have the most general form we prefer to approach this problem in a different way.\\}
 To overcome this problem
we introduce \emph{rotating coordinates} for a finite number of modes.
We consider the map 
$\Upsilon_*(\varphi)\colon H^s_{S^{\perp}}(\T)\times H^s_{S^{\perp}}(\T) 
\to H^s_{S^{\perp}}(\T)\times H^s_{S^{\perp}}(\T)$
defined as $\Upsilon_*=\exp(\varepsilon^2\mathbf{A}_*)$ where
\begin{equation}\label{alphaj}
\mathbf{A}_*(\varphi)=\sm{\mathtt{A}_*}{0}{0}{\ov{ \mathtt{A}_*}}, \quad \mathtt{A}_*(\varphi):= \mbox{diag}_{j\in S^c} (\mathrm{i}\alpha_j\cdot \varphi), \quad \alpha_j:=
\frac{\mathtt{v}^{\perp}}{\omega\cdot \mathtt{v}^{\perp}}\,c_j\,j\,,
\end{equation}
where $\mathtt{v}^{\perp}\in\mathbb{Z}^{\nu}$ 
is orthogonal to $\mathtt{v}$ and note that $\mathtt{A}_*=\ov{\mathtt{A}_*}$.
For concreteness we choose (recall \eqref{velocityvec})
\[
\mathtt{v}^{\perp}=(\ov{\jmath}_2, -\ov{\jmath}_1,0,\ldots,0)\,.
\]
In this way it is easy to note that $|\omega\cdot \mathtt{v}^{\perp}|\geq C(S) $
form some $C(S)>0$ depending only on the set $S$.
We observe that such change of coordinates differs from the identity for a finite rank operator. Moreover $\Upsilon_*$ is symplectic 
and $x$-translation invariant thanks to 
Lemma \ref{lem:momentocomp} and the fact 
that $\mathtt{v}\cdot \alpha_j=0$ for all $j\in S^c$. 
Since 
\[
\exp(\varepsilon^2\mathbf{A}_*)=  \sm{\exp(\varepsilon^2\mathtt{A}_*)}{0}{0}{\ov{\exp(\varepsilon^2\mathtt{A}_*)}}, \quad \exp(\varepsilon^2\mathtt{A}_*) =\mbox{diag}_{j\in S^c } (e^{\mathrm{i}  \varepsilon^2\alpha_j\cdot \varphi})
\]
we have that
\begin{equation}\label{boundUpsilon}
\lVert \Upsilon_*^{\pm 1}u-u \rVert_s\lesssim_s \varepsilon^2  \lVert u \rVert_{s}.
\end{equation}

Recalling the definition of $\mathtt{D}$ in \eqref{simboM} and \eqref{espandoSimboM}
we define
\begin{equation}\label{espandoSimboM22}
\widehat{\mathtt{D}}_2:=\opw\left(\begin{matrix}
\ii m_1\x & 0\\ 0 & \ii m_1\,\x
\end{matrix}\right)\,,\qquad
\mathcal{D}_{\geq4}:=\e^{4}\mathtt{D}_{4}+\e^{6}\mathtt{D}_{6}+
\mathtt{D}_{\geq8}
\end{equation}
We have the following.

\begin{lemma}{\bf (Rotating coordinates).}\label{phaseshift}
Consider the operator $\mathcal{L}_8$ in \eqref{elle8}. We have that
\begin{equation}\label{elle8tilde}
\tilde{\mathcal{L}}_8:=\Upsilon_* \mathcal{L}_8 \Upsilon^{-1}_*=
\omega\cdot \partial_{\varphi}
+\mathtt{D}_0+\varepsilon^2 \widehat{\mathtt{D}}_2
+\mathcal{D}_{\geq 4}
+\sum_{i=3}^6 \varepsilon^i (\pi_{\mathtt{C}} \tilde{\mathtt{Q}}_{i}^{(2)}
+\pi^{\perp}_{\mathtt{C}} \mathtt{Q}_{i}^{(2)})+\Pi_S^{\perp}\tilde{\mathfrak{R}}_2
\end{equation}
where $\mathtt{C}$ is given in \eqref{costantefond}, $\pi^{\perp}_{\mathtt{C}}$ is defined in Lemma \ref{lemmaeqomologica}, $\tilde{\mathtt{Q}}_i^{(2)}$, 
$i=3, \dots, 6$ are in $\mathfrak{S}_0$ (see Def. \ref{compaMulti}), almost-diagonal 
(see Def. \ref{def:AlmostDiag}) and
satisfy
\begin{equation}\label{formaResto2QQ2tilde}
\tilde{\mathtt{Q}}_i^{(2)}=\pi_{\mathtt{C}} \tilde{\mathtt{Q}}_i^{(2)}, \quad 
|(\tilde{\mathtt{Q}}_i^{(2)})_{\s,j}^{\s,k}(\ell)|\leq \frac{C_i}{\langle j,k\rangle^{1/2} }\,,
\qquad |(\tilde{\mathtt{Q}}^{(2)}_i)_{\s,j}^{-\s,k}(\ell)|\leq \frac{C_i}{\langle j,k\rangle^{\rho-2}}\,,
\quad (\tilde{\mathtt{Q}}_i^{(2)})_{\s,j}^{\s,k}(\ell)\neq0\;\; \Rightarrow \;\;|\ell|\le C\,,
\end{equation} 
for some constants $C_{i},C$, $i=3,\ldots,6$, depending on $S$.
The operator 
$\tilde{\mathfrak{R}}_2$ is in $\mathfrak{S}_{0}$ and 
is Lip-$(-1/2)$-modulo tame 
with  estimates
\begin{equation}\label{scossa1RR2tilde}
\mathfrak{M}_{\tilde{\mathfrak{R}}_2}^{\sharp,\gamma}(s,\mathtt{b}_0)
\lesssim_{s} \gamma^{-3}(\varepsilon^{13}+
\varepsilon \lVert \mathfrak{I}_{\delta}\rVert_{s+\mu}^{\gamma, \calO_0} )\,, 
\quad \mathfrak{M}_{\Delta_{12} \tilde{\mathfrak{R}}_2}^{\sharp}(p) 
\lesssim_{p}  \varepsilon \gamma^{-3} 
(1+\lVert \mathfrak{I}_{\delta}\rVert_{p+\mu}) \lVert i_1-i_2 \rVert_{p+\mu}
\end{equation}
for some $\mu=\mu(\nu)>0$ possibly larger than the one in Lemma \ref{stepLBNF2}.
\end{lemma}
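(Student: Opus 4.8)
\textbf{Proof plan for Lemma \ref{phaseshift}.}
The plan is to conjugate $\mathcal{L}_8$ in \eqref{elle8} by the finite-rank map $\Upsilon_*=\exp(\varepsilon^2\mathbf{A}_*)$ defined in \eqref{alphaj} and to track the effect of this conjugation on each term. Since $\mathbf{A}_*$ is a diagonal multiplier depending only on $\varphi$, the commutator $[\mathbf{A}_*,\omega\cdot\partial_\varphi]$ is again a diagonal multiplier; more precisely, for any $j\in S^c$ one has $[\mathrm{i}\alpha_j\cdot\varphi,\omega\cdot\partial_\varphi]=-\mathrm{i}(\omega\cdot\alpha_j)$, and by the choice $\alpha_j=\frac{\mathtt{v}^\perp}{\omega\cdot\mathtt{v}^\perp}c_j\,j$ we get $\omega\cdot\alpha_j=c_j\,j$. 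Hence the conjugation produces, at order $\varepsilon^2$, exactly the diagonal operator $-\varepsilon^2\,\mathrm{diag}_{j\in S^c}(\mathrm{i}\,c_j\,j)$ on each component, which cancels the $c_j\,j$ part of the $\varepsilon^2$-correction \eqref{formaNormeps2}: recalling $\kappa_j=(m_1+c_j)j$, the remaining $\varepsilon^2$-diagonal term is $\mathrm{diag}_{j\in S^c}(\mathrm{i}\,m_1\,j)$, which is precisely $\widehat{\mathtt{D}}_2$ in \eqref{espandoSimboM22}. First I would carry out this explicit Lie-series computation, using that $\Upsilon_*M\Upsilon_*^{-1}=\sum_{k\ge 0}\frac{\varepsilon^{2k}}{k!}\mathrm{ad}_{\mathbf{A}_*}^k M$ and that $\mathbf{A}_*$ commutes with the diagonal Fourier multipliers $\mathtt{D}_0$, $\widehat{\mathtt{D}}_2$, and all of $\mathcal{D}_{\ge 4}$ (they are $\varphi$-independent diagonal multipliers), so those terms are left unchanged.

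Next I would deal with the higher-order almost-diagonal pieces $\sum_{i=3}^6\varepsilon^i\mathtt{Q}_i^{(2)}$ and the tame remainder $\Pi_S^\perp\mathfrak{R}_2$. Conjugating $\mathtt{Q}_i^{(2)}$ by $\Upsilon_*$ gives $\mathtt{Q}_i^{(2)}$ plus a series of nested commutators with $\mathbf{A}_*$; since $\mathbf{A}_*$ is a bounded diagonal multiplier with bounded $\varphi$-dependence (indeed $|\alpha_j|\lesssim_S 1$ uniformly in $j$, because only finitely many $c_j$ are nonzero by \eqref{cj} and $c_{-j}=c_j$), each commutator preserves the almost-diagonal structure, the off-diagonal decay estimates \eqref{formaResto2QQ1}-type bounds, membership in $\mathfrak{S}_0$ (because $\mathtt{v}\cdot\alpha_j=0$ for all $j$, so $\mathbf{A}_*$ is $x$-translation invariant by Lemma \ref{lem:momentocomp}, symplectic, real-to-real), and the Lip-$(-1/2)$-modulo-tame bounds of Lemma \ref{compoC12}. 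The resulting operator I would split via Definition \ref{ProiettoCCC} into its $\pi_{\mathtt{C}}$ part, which I relabel $\tilde{\mathtt{Q}}_i^{(2)}:=\pi_{\mathtt{C}}(\Upsilon_*\mathtt{Q}_i^{(2)}\Upsilon_*^{-1}+\text{lower-order}\ \mathbf{A}_*\text{-contributions})$ — note $\tilde{\mathtt{Q}}_i^{(2)}$ is automatically supported on $|\ell|\le C(S)$ by Remark \ref{homogalmostdiag} since these operators are almost-diagonal and $x$-translation invariant — and its $\pi_{\mathtt{C}}^\perp$ part, which by the bound $|\alpha_j|\lesssim_S 1$ gains a factor $\varepsilon^2$ and is therefore absorbed either into $\pi_{\mathtt{C}}^\perp\mathtt{Q}_i^{(2)}$ (after the relabeling the conjugation on the $\pi_{\mathtt{C}}^\perp$ block can be shown to coincide with the identity up to terms of the stated form, using that outside a ball of radius $\mathtt{C}$ the phases $\alpha_j$ vanish) or into $\tilde{\mathfrak{R}}_2$. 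The tame remainder $\tilde{\mathfrak{R}}_2$ then collects $\Upsilon_*\Pi_S^\perp\mathfrak{R}_2\Upsilon_*^{-1}$ together with all the $\varepsilon^{2k}$-commutator tails produced above; its estimates \eqref{scossa1RR2tilde} follow from \eqref{scossa1RR2}, the $\mathbf{A}_*$-bounds, and the composition/modulo-tame lemmas, at the cost of enlarging $\mu$.

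The bookkeeping point requiring care is that $\alpha_j$, hence $\Upsilon_*$, depends on $\omega$ through both $c_j(\omega)$ and the factor $(\omega\cdot\mathtt{v}^\perp)^{-1}$; here the observation $|\omega\cdot\mathtt{v}^\perp|\ge C(S)>0$ for all $\omega\in\Omega_\varepsilon$ (immediate from $\mathtt{v}^\perp=(\bar\jmath_2,-\bar\jmath_1,0,\dots,0)$ and $\omega$ close to $\bar\omega$, whose entries are $\sqrt{|\bar\jmath_i|}$ rationally independent by the genericity of $S$) guarantees that $\Upsilon_*$ and its Lipschitz variation in $\omega$ satisfy uniform bounds, so \eqref{boundUpsilon} holds and all Lipschitz-in-$\omega$ estimates close. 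The main obstacle I anticipate is not any single step but the delicate verification that, after conjugation and the $\pi_{\mathtt{C}}/\pi_{\mathtt{C}}^\perp$ splitting, the $\pi_{\mathtt{C}}^\perp$ block of the $\varepsilon^i$-terms is genuinely of the same form $\pi_{\mathtt{C}}^\perp\mathtt{Q}_i^{(2)}$ with no leftover $\varphi$-dependent rotation — this uses crucially that $\alpha_j=0$ for $|j|\ge\max(S)$ together with the almost-diagonal, $x$-translation-invariant structure forcing $|j-k|\le C(S)$, so that on the $\pi_{\mathtt{C}}^\perp$ region one index large implies both large, whence the rotations acting on that block are trivial up to $O(\varepsilon^2)$ corrections that go into $\tilde{\mathfrak{R}}_2$. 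Once this structural claim is established, assembling \eqref{elle8tilde} and the estimates \eqref{formaResto2QQ2tilde}, \eqref{scossa1RR2tilde} is routine.
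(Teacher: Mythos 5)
Your proposal follows essentially the paper's route: conjugate by $\Upsilon_*$, use $\omega\cdot\alpha_j=c_j\,j$ so that $\Upsilon_*\,\omega\cdot\partial_\varphi\,\Upsilon_*^{-1}=\omega\cdot\partial_\varphi-\varepsilon^2\,\mathrm{diag}_{j\in S^c}(\mathrm{i}\,c_j\,j)$ cancels the $c_j\,j$ part of the $\varepsilon^2$-correction \eqref{formaNormeps2} and leaves $\varepsilon^2\widehat{\mathtt{D}}_2$ as in \eqref{espandoSimboM22}; the $\varphi$-independent diagonal multipliers $\mathtt{D}_0$, $\widehat{\mathtt{D}}_2$, $\mathcal{D}_{\geq4}$ commute with $\Upsilon_*$; the large-mode block is untouched because $\alpha_j=0$ for $|j|\geq\max(S)$ combined with momentum and almost-diagonality; and the remainder estimate uses the boundedness of the phases.

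The one point you must sharpen is your hedge that on the $\pi_{\mathtt{C}}^{\perp}$ block the rotations are ``trivial up to $O(\varepsilon^2)$ corrections that go into $\tilde{\mathfrak{R}}_2$''. Such corrections, attached to $\varepsilon^{i}\mathtt{Q}_i^{(2)}$ with $i=3,4$, would have size $\varepsilon^{5},\varepsilon^{6}$, whereas \eqref{scossa1RR2tilde} only tolerates $\gamma^{-3}\varepsilon^{13}\sim\varepsilon^{7-3a}$, and they cannot be rerouted into the $\tilde{\mathtt{Q}}_i^{(2)}$'s, which are $\pi_{\mathtt{C}}$-supported; so this fallback would break the bookkeeping. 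Fortunately your own observation already gives the stronger fact the paper uses: if $\max\{|j|,|k|\}\geq\mathtt{C}$ with $\mathtt{C}$ as in \eqref{costantefond}, then $|j-k|=|\mathtt{v}\cdot\ell|\leq 6|\mathtt{v}|$ forces $\min\{|j|,|k|\}\geq\max(S)$, hence $c_j=c_k=0$ by \eqref{cj}, the phases are exactly $1$, and $\pi_{\mathtt{C}}^{\perp}(\Upsilon_*\mathtt{Q}_i^{(2)}\Upsilon_*^{-1})=\pi_{\mathtt{C}}^{\perp}\mathtt{Q}_i^{(2)}$ identically (the paper's \eqref{proiettIDE}); there is nothing to absorb. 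A related simplification in the paper: it never expands in powers of $\mathbf{A}_*$ (whose entries are linear in $\varphi$, hence not a bounded periodic multiplier), but conjugates coefficientwise via \eqref{coeffConjQi}, using only that the diagonal entries of $\Upsilon_*^{\pm1}$ are unimodular; thus $\pi_{\mathtt{C}}(\Upsilon_*\mathtt{Q}_i^{(2)}\Upsilon_*^{-1})$ inherits \eqref{formaResto2QQ2tilde} at order $\varepsilon^i$ with no commutator tails to distribute, and the Lip-$(-1/2)$-modulo-tame bound for $\Upsilon_*\mathfrak{R}_2\Upsilon_*^{-1}$ is obtained by a direct majorant-norm computation rather than by the composition lemmas applied to $\mathbf{A}_*$.
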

\begin{proof}
We have that (recall \eqref{formaNormeps2}, \eqref{espandoSimboM22})
\begin{equation}
\Upsilon_* \omega\cdot \partial_{\varphi} \Upsilon_*^{-1}
=\omega\cdot \partial_{\varphi}-\varepsilon^2\mathbf{A}_*(\omega)\stackrel{\eqref{alphaj}}{=}\omega\cdot \partial_{\varphi}-\varepsilon^2\mathtt{D}_2-\varepsilon^2 \bral \mathtt{Q}^{(1)}_2 \brar+\varepsilon^2\widehat{\mathtt{D}}_2,
\end{equation}
and, for any  $i=2,\ldots,6$,
\begin{equation}\label{coeffConjQi}
(\Upsilon_* \mathtt{Q}_{i}^{(2)}\Upsilon_*^{-1})_{\s, j}^{\s', k}(\ell)
=\sum_{\ell_1,\ell_2} (\Upsilon_*)_{\s, j}^{\s, j}(\ell_1)\,
(\Upsilon^{-1}_*)_{\s', k}^{\s', k}(\ell_2)\, (\mathtt{Q}_{i}^{(2)})_{\s, j}^{\s', k}
(\ell-\ell_1-\ell_2)\,.
\end{equation}
Using  \eqref{coeffConjQi}, 
 \eqref{alphaj}, \eqref{cj}, $\mathtt{C}\gg \max(S)$ and the fact that $\Upsilon_*, \mathtt{Q}_{i}^{(2)}$ are $x$-translation invariant, 
 we deduce 
\begin{equation}\label{proiettIDE}
\pi_{\mathtt{C}}^{\perp} (\Upsilon_* \mathtt{Q}_{i}^{(2)} \Upsilon_*^{-1})
= \pi_{\mathtt{C}}^{\perp}  \mathtt{Q}_{i}^{(2)}\,.
\end{equation}
In order to prove that 
$\Upsilon_{*}\mathfrak{R}_{2}\Upsilon_{*}^{-1}$ is $(-1/2)$-modulo tame
we shall prove that (recall Definition \ref{menounomodulotame})
the operator $R:=
\langle\pa_{\vphi}\rangle^{\mathtt{b}_0}\langle D_{x}\rangle^{\frac{1}{4}} 
\Upsilon_{*}\mathfrak{R}_{2}\Upsilon_{*}^{-1}\langle D_{x}\rangle^{\frac{1}{4}}$
is modulo tame (see Def. \ref{def:op-tame}).
Recalling \eqref{funtore} and reasoning as in \eqref{coeffConjQi}
we get
\begin{align}
|(R)_{\s,j}^{\s',k}(\ell)|
&\lesssim_{S}
\sum_{\ell_1, \ell_{2}}|
(\Upsilon_*)_{\s, j}^{\s, j}(\ell_1)\,
(\Upsilon^{-1}_*)_{\s', k}^{\s', k}(\ell_2)\, (\mathfrak{R}_{2})_{\s, j}^{\s', k}
(\ell-\ell_1-\ell_2)|
\langle j\rangle^{\frac{1}{4}}
\langle k \rangle^{\frac{1}{4}}\langle \ell-\ell_1-\ell_2\rangle^{\mathtt{b}_0}\,,
\label{oralegale}
\end{align}
where we used
that, since $\Upsilon_{*}$ is almost diagonal, $|\ell|\lesssim_{S}|\ell-\ell_1-\ell_2|$\,.
The coefficients in \eqref{oralegale}
are the ones of the operator $\underline{\widetilde{R}}$ (see Definition \ref{opeMagg})
where
\[
\widetilde{R}:=
\Upsilon_{*}\langle\pa_{\vphi} \rangle^{\mathtt{b}_0} \langle D_{x}\rangle^{\frac{1}{4}} 
\mathfrak{R}_{2}
\langle D_{x}\rangle^{\frac{1}{4}} 
\Upsilon_{*}^{-1}\,.
\]
Therefore, using \eqref{scossa1RR2} 
and \eqref{boundUpsilon}
we deduce the \eqref{scossa1RR2tilde}.
Using formula \eqref{coeffConjQi} it is easy to check
that
\[
\pi_{\mathtt{C}}(\Upsilon_* \mathtt{Q}_{i}^{(2)} \Upsilon_*^{-1})
\]
satisfies 
 the \eqref{formaResto2QQ2tilde}
by using that $\mathtt{Q}^{(2)}_i$ is $i$-almost-diagonal and it satisfies the bounds given in Lemma \ref{stepLBNF2}.
All the operators appearing in \eqref{elle8tilde} are in $\mathfrak{S}_0$
since the map $\Upsilon_{*}$ is symplectic and $x$-translation invariant.
\end{proof}

\subsection{Terms \texorpdfstring{$O(\varepsilon^{\geq 3})$}{O(e3)}: analysis of high modes}\label{stepsHigher}
In this section we shall normalize the terms $\pi_{\mathtt{C}}^{\perp}{\mathtt{Q}}_{i}^{(2)}$
appearing in \eqref{elle8tilde}.

\noindent
{\bf Step three (order $\varepsilon^3$).}  
Let us define the matrix ${\bf A}_3=\pi_{\mathtt{C}}^{\perp}{\bf A}_{3}$ as
\begin{equation}\label{coeffA3}
({\bf A}_{3})_{\s,j}^{\s',k}(\ell):=\left\{
\begin{aligned}
&(\mathtt{Q}_3^{(2)})_{\s,j}^{\s',k}(\ell)/ \ii \delta^{(3)}_{\s,\s',j,k}(\ell)\,,\quad \max\{|j|,|k|\}\geq \mathtt{C}\,, \ell\neq0\,,
|\ell|\leq 3\\
&0 \qquad \qquad {\rm otherwise}\,
\end{aligned}\right.
\end{equation}
where $\mathtt{Q}_3^{(2)}$ is in \eqref{elle8tilde} and $\delta^{(3)}_{\s,\s',j,k}(\ell)$ 
is defined in \eqref{deltaUNO}.
We have the following result.
\begin{lemma}\label{LBNFeq3}
The operator $\mathbf{A}_3$ is
in $\mathfrak{S}_0$ and 
$3$-almost-diagonal 
and satisfies
\begin{equation}\label{stimaAA3}
|({{\bf A}_3})_{\s,j}^{\s',k}(\ell)|\leq (K(S))^{-1}|(\mathtt{Q}_{3}^{(2)})_{\s,j}^{\s',k}(\ell)|\,,
\end{equation}
with $K(S)>0$ in \eqref{smallDivLinearBNF2}. 
Moreover it solves the equation 
\begin{equation}\label{omoeqAA3}
ad_{\overline{\omega}\cdot\partial_{\varphi} +\mathtt{D}_0}[\mathbf{A}_3]=
\pi_{\mathtt{C}}^{\perp}\mathtt{Q}_3^{(2)}\,.
\end{equation}
\end{lemma}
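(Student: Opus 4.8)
\textbf{Proof plan for Lemma \ref{LBNFeq3}.}
The plan is to verify the three assertions directly from the explicit definition \eqref{coeffA3}, following the same template used for Lemmata \ref{LBNFeq1} and \ref{LBNFeq2} in the previous two Birkhoff steps, but now invoking the \emph{lower bounds at higher order} provided by Lemma \ref{lowerboundDel1}$(ii)$ rather than the $|\ell|=1,2$ bounds. The one genuinely new ingredient is that the small divisors $\delta^{(3)}_{\s,\s',j,k}(\ell)$ need no longer be nonzero for all $(\s,\s',j,k,\ell)$: they can vanish, and the cutoff $\pi^{\perp}_{\mathtt{C}}$ together with the restriction $|\ell|\le 3$ is precisely what isolates the regime where $|\delta^{(3)}|\ge K(S)$.

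First I would prove \eqref{stimaAA3}. The operator $\mathbf{A}_3$ is defined to be $\pi^{\perp}_{\mathtt C}\mathbf{A}_3$, so its entries are supported where $\max\{|j|,|k|\}\ge \mathtt C$, $\ell\ne 0$ and $|\ell|\le 3$ (the last restriction coming from the fact that $\mathtt Q_3^{(2)}$ is $3$-almost-diagonal, by \eqref{formaResto2QQ2tilde}, hence $(\mathtt Q_3^{(2)})_{\s,j}^{\s',k}(\ell)\ne 0$ forces $|\ell|\le 3$). Since $\mathtt Q_3^{(2)}$ is also $x$-translation invariant, on its support one has $\mathtt v\cdot\ell+j-k=0$; moreover, because $|\ell|\le 3$ and $\mathtt v$ is fixed, $|j-k|\le C(S)$, so $\max\{|j|,|k|\}\ge \mathtt C\ge 2\max(S)$ forces $|j|,|k|\ge \mathtt C/2\ge C(S)$, where $C(S)$ is the constant of Lemma \ref{lowerboundDel1}$(ii)$. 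Now I distinguish: if $\s=\s'$ and $j=k$ then $\delta^{(3)}_{\s,\s,j,j}(\ell)=\overline\omega\cdot\ell$ and this is not the excluded case of Lemma \ref{lowerboundDel1}$(ii)$ (which is $\ell=0$), while $\ell\ne 0$; otherwise we are in the generic regime of \eqref{smallDivLinearBNF2}. In all cases $|\delta^{(3)}_{\s,\s',j,k}(\ell)|\ge K(S)$, and $|(\mathbf{A}_3)_{\s,j}^{\s',k}(\ell)| = |(\mathtt Q_3^{(2)})_{\s,j}^{\s',k}(\ell)|/|\delta^{(3)}_{\s,\s',j,k}(\ell)| \le (K(S))^{-1}|(\mathtt Q_3^{(2)})_{\s,j}^{\s',k}(\ell)|$, which is \eqref{stimaAA3}. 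This also shows $\mathbf{A}_3$ inherits the off-diagonal decay bounds \eqref{formaResto2QQ2tilde} of $\mathtt Q_3^{(2)}$ up to the factor $(K(S))^{-1}$, and in particular is $3$-almost-diagonal and bounded in the majorant-norm topology (Lemma \ref{almostC12}), hence tame.

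Next I would check that $\mathbf{A}_3\in\mathfrak{S}_0$, i.e.\ that it is real-to-real, Hamiltonian and $x$-translation invariant. As in the proof of Lemma \ref{LBNFeq1}, this rests on the algebraic identities satisfied by the divisor: from the explicit form \eqref{deltaUNO} one has $\overline{\delta^{(3)}_{\s,\s',j,k}(\ell)}=\delta^{(3)}_{\s,\s',k,j}(-\ell)$ and $\overline{\delta^{(3)}_{\s,\s',-j,-k}(-\ell)}=\delta^{(3)}_{-\s,-\s',k,j}(\ell)$, and the support condition $\max\{|j|,|k|\}\ge\mathtt C$, $|\ell|\le 3$ is symmetric under $(j,k,\ell)\mapsto(k,j,-\ell)$ and under $(j,k,\ell)\mapsto(-j,-k,-\ell)$. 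Since $\mathtt Q_3^{(2)}\in\mathfrak{S}_0$ by Lemma \ref{phaseshift}, dividing entrywise by $\ii\delta^{(3)}$ preserves the relations \eqref{self-adjT}, \eqref{ruben3} (reality/self-adjointness, via Lemma \ref{coeffFouHamilto}) and the momentum condition \eqref{constcoeffMomentum} (via Lemma \ref{lem:momentocomp}), exactly as in Lemmata \ref{LBNFeq1}--\ref{LBNFeq2}; one also uses Remark \ref{propAlgebricheSplitto} to see that $\pi^{\perp}_{\mathtt C}$ preserves $\mathfrak{S}_0$. Finally, equation \eqref{omoeqAA3} follows by a direct computation: $\mathrm{ad}_{\overline\omega\cdot\partial_\varphi+\mathtt D_0}$ acts diagonally, multiplying the $(\s,j,\s',k,\ell)$-entry of an almost-diagonal operator by $\ii\delta^{(3)}_{\s,\s',j,k}(\ell)$ (this is precisely the content of the computation in Lemma \ref{lemmaeqomologica} with $\mathtt D_0$ in place of $\mathtt D_0$ and $p=3$), so $\mathrm{ad}_{\overline\omega\cdot\partial_\varphi+\mathtt D_0}[\mathbf{A}_3]$ has $(\s,j,\s',k,\ell)$-entry equal to $(\mathtt Q_3^{(2)})_{\s,j}^{\s',k}(\ell)$ on the support of $\pi^{\perp}_{\mathtt C}$ (restricted to $|\ell|\le3$, which is automatic since $\mathtt Q_3^{(2)}$ is $3$-almost-diagonal) and zero elsewhere, i.e.\ it equals $\pi^{\perp}_{\mathtt C}\mathtt Q_3^{(2)}$.

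I do not expect any real obstacle here: the statement is the routine ``solve the homological equation'' step of a Birkhoff normal form, and every needed estimate has been prepared in the technical subsections \ref{preliminareLinearBNF}. The only point requiring a little care is the bookkeeping on the index ranges — confirming that on the support of $\pi^{\perp}_{\mathtt C}\mathtt Q_3^{(2)}$ the momentum constraint plus $|\ell|\le3$ genuinely pushes both $|j|$ and $|k|$ above the threshold $C(S)$ of Lemma \ref{lowerboundDel1}$(ii)$, so that \eqref{smallDivLinearBNF2} is applicable — which is exactly why the constant $\mathtt C$ was chosen as in \eqref{costantefond}.
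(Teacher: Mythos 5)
Your proposal is correct and follows essentially the same route as the paper's proof: the bound \eqref{stimaAA3} from Lemma \ref{lowerboundDel1}$(ii)$ (with the constant $\mathtt{C}$ of \eqref{costantefond} ensuring, via the momentum constraint and $|\ell|\le 3$, that both $|j|,|k|\ge C(S)$), the homological equation \eqref{omoeqAA3} by direct computation from \eqref{coeffA3}, and membership in $\mathfrak{S}_0$ from the symmetry identities of $\delta^{(3)}$ together with Lemmata \ref{coeffFouHamilto} and \ref{lem:momentocomp} and the fact that $\mathtt{Q}_3^{(2)}\in\mathfrak{S}_0$. Your extra bookkeeping on the support of $\pi_{\mathtt{C}}^{\perp}\mathtt{Q}_3^{(2)}$ is a welcome elaboration of what the paper leaves implicit, and it is consistent with the paper's argument.
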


\begin{proof}
Estimate \eqref{stimaAA3} follows by item $(ii)$ in Lemma \ref{lowerboundDel1} 
Thanks to the definition of ${\bf A}_3$ in \eqref{coeffA3} one can check that the
 equation \eqref{omoeqAA3}  is satisfied.
Recall 
that $\mathtt{Q}_2^{(3)}$  is $\mathfrak{S}_0$. 
By using the expression of 
$\delta^{(3)}_{\sigma, \sigma', j, k}(\ell)$ in \eqref{deltaUNO}
and Lemmata
Lemma \ref{coeffFouHamilto}, 
\ref{lem:momentocomp}, one can check 
that ${\bf A}_3$ is $\mathfrak{S}_0$.
\end{proof}

\begin{lemma}\label{stepLBNF3}
 The conjugate of the operator $\tilde{\mathcal{L}}_{8}$ in \eqref{elle8tilde}
 has the form
\begin{equation}\label{elle9}
\mathcal{L}_9:=\Upsilon_3 \tilde{\mathcal{L}}_8 \Upsilon_3^{-1} 
=
\omega\cdot \partial_{\varphi}
+\mathtt{D}_0+\varepsilon^2 \widehat{\mathtt{D}}_2
+\mathcal{D}_{\geq 4}
+\sum_{i=3}^6 \varepsilon^i   \pi_{\mathtt{C}} \tilde{\mathtt{Q}}_{i}^{(3)}
+
\sum_{i=4}^{6}\e^{i}\pi^{\perp}_{\mathtt{C}}  \mathtt{Q}_{i}^{(3)}+\Pi_S^{\perp}\mathfrak{R}_{3}
\end{equation}
where $\Upsilon_{3}$ is the map in \eqref{Fi1dp}
with ${\bf A}_{3}$ in \eqref{coeffA3}, $\hat{\mathtt{D}}_{2}$, $\mathcal{D}_{\geq4}$ 
are in \eqref{espandoSimboM22},
 $\tilde{\mathtt{Q}}_i^{(3)}=\pi_{\mathtt{C}} \tilde{\mathtt{Q}}_i^{(3)}$, 
$i=3, \dots, 6$ are in $\mathfrak{S}_0$ (see Def. \ref{compaMulti}), almost-diagonal 
(see Def. \ref{def:AlmostDiag}) and
satisfy
\begin{equation}\label{formaResto2QQ2tilde3}
|(\tilde{\mathtt{Q}}_i^{(3)})_{\s,j}^{\s,k}(\ell)|\leq \frac{C_i}{\langle j,k\rangle^{1/2} }\,,
\qquad |(\tilde{\mathtt{Q}}^{(3)}_i)_{\s,j}^{-\s,k}(\ell)|\leq \frac{C_i}{\langle j,k\rangle^{\rho-3}}\,,
\quad (\tilde{\mathtt{Q}}_i^{(3)})_{\s,j}^{\s,k}(\ell)\neq0\;\; \Rightarrow \;\;|\ell| \le C\,,
\end{equation} 
for some constants $C_{i},C$, $i=3,\ldots,6$, depending on $S$.
Moreover the operators $\mathtt{Q}_{i}^{(3)}$, $i=4,\ldots,6$, 
are 
in $\mathfrak{S}_0$, are  $i$-almost-diagonal and 
satisfy the estimates \eqref{formaResto2QQ2tilde3}.
Finally 
the operator  $\mathfrak{R}_3$ is in $\mathfrak{S}_0$
and 
is $Lip$-$(-1/2)$-modulo tame  with the following estimates
\begin{align}
&  {\mathfrak M}_{\mathfrak{R}_3}^{{\sharp, \g}} (-1/2,s,\mathtt{b}_0) 
\lesssim_{s}\gamma^{-3}(\e^{13}
+\e\|\mathfrak{I}_{\delta}\|^{\gamma,\calO_0}_{s+\mu})\,,\label{scossa1RR3}\\
&
  {\mathfrak M}_{\Delta_{12}\mathfrak{R}_{3}}^{{\sharp}} (-1/2,p,\mathtt{b}_0) 
\lesssim_p \e\gamma^{-3}
(1+\|\mathfrak{I}_{\delta}\|_{p+\mu})\|i_{1}-i_{2}\|_{p+\mu}\,,\label{scossa2RR3}
\end{align}
for some $\mu=\mu(\nu)>0$ possibly larger than the one in Lemma \ref{phaseshift}.
\end{lemma}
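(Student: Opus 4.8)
The statement is an ``order $\varepsilon^3$'' step of the linear Birkhoff normal form, entirely parallel to Lemmata \ref{stepLBNF1} and \ref{stepLBNF2}, but now using the \emph{almost-diagonal} machinery of subsection \ref{almostOPERATORS} rather than the bounded operators of the first two steps. The key difference is that, because the linear frequencies $\sqrt{|j|}$ accumulate, we cannot solve the homological equation for \emph{all} $(\ell,j,k)$: we can only solve it on the ``high modes'' $\max\{|j|,|k|\}\geq\mathtt{C}$ with $|\ell|\leq 3$, where the small divisors $\delta^{(3)}_{\s,\s',j,k}(\ell)$ are bounded below by $K(S)$ thanks to item $(ii)$ of Lemma \ref{lowerboundDel1} (this is exactly the genericity input on $S$). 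The remaining ``low modes'' part $\pi_{\mathtt{C}}\tilde{\mathtt{Q}}_3^{(2)}$ is \emph{not} removed here — it is compactly Fourier supported and will be dealt with in subsection \ref{prepreKAMKAM}; we only need to carry it along and check that it stays in the right class.

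First I would invoke Lemma \ref{LBNFeq3}: the generator ${\bf A}_3=\pi_{\mathtt{C}}^{\perp}{\bf A}_3$ defined by \eqref{coeffA3} is $3$-almost-diagonal, lies in $\mathfrak{S}_0$, satisfies the decay bound \eqref{stimaAA3} (hence \eqref{formaResto2AA} with $C({\bf A}_3)\lesssim_S 1$, since $\mathtt{Q}_3^{(2)}$ obeys \eqref{formaResto2QQ2tilde}), and solves the homological equation \eqref{omoeqAA3}. Then $\Upsilon_3=\exp({\bf A}_3)$ as in \eqref{Fi1dp} is symplectic and $x$-translation invariant, i.e.\ in $\mathfrak{T}_1$, and \eqref{boundUpsilonTotale}-type bounds follow from Lemma \ref{compoC12}$(i)$ together with the fact that ${\bf A}_3$ is bounded in the majorant topology (Lemma \ref{almostC12}). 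Next I Taylor-expand the conjugate $\mathcal{L}_9=\Upsilon_3\tilde{\mathcal{L}}_8\Upsilon_3^{-1}$ in powers of $\varepsilon$ using the Lie series \eqref{LieAD}, feeding in the expansion \eqref{elle8tilde} of $\tilde{\mathcal{L}}_8$ and the splitting \eqref{espandoSimboM22} of $\mathtt{D}$. The term ${\rm ad}_{{\bf A}_3}[\bar\omega\cdot\pa_\vphi+\mathtt{D}_0]$ cancels $\pi_{\mathtt{C}}^{\perp}\mathtt{Q}_3^{(2)}$ by \eqref{omoeqAA3}, leaving $\pi_{\mathtt{C}}\tilde{\mathtt{Q}}_3^{(2)}$ renamed $\tilde{\mathtt{Q}}_3^{(3)}$ (still almost-diagonal, still satisfying \eqref{formaResto2QQ2tilde3} with one more power of $\langle j,k\rangle$ lost from the off-diagonal, i.e.\ $\rho-3$, coming from ${\rm ad}_{{\bf A}_3}$ via Lemma \ref{compoC12}$(ii)$ and Lemma \ref{mercato}$(i)$). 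The commutators ${\rm ad}_{{\bf A}_3}[\varepsilon^i\mathtt{Q}_i^{(2)}]$ and ${\rm ad}_{{\bf A}_3}[\varepsilon^2\widehat{\mathtt{D}}_2+\mathcal{D}_{\geq4}]$ produce new almost-diagonal contributions at orders $\varepsilon^{i+3}\geq\varepsilon^{4}$, which are absorbed into the $\mathtt{Q}_i^{(3)}$ for $4\leq i\leq 6$ and into the remainder; here Lemma \ref{mercato}$(ii)$--$(iii)$ gives that ${\rm ad}_{{\bf A}_3}[\mathcal D_{\geq4}]$ and ${\rm ad}_{{\bf A}_3}[\mathtt{D}_{\geq8}]$ are suitably small and $(-1/2)$-modulo tame. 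All higher iterated commutators ${\rm ad}_{{\bf A}_3}^k$, $k\geq2$, and the tail $\mathfrak{R}_3$ — which also collects $\Upsilon_3\mathfrak{R}_2\Upsilon_3^{-1}$ and the $\Pi_S^{\perp}$-corrections from working on the normal subspace — are controlled by Lemma \ref{compoC12}$(iii)$, giving the Lip-$(-1/2)$-modulo tame estimates \eqref{scossa1RR3}--\eqref{scossa2RR3}; the $\gamma^{-3}$ and $\varepsilon^{13}+\varepsilon\|\mathfrak I_\delta\|$ structure is inherited from $\tilde{\mathfrak R}_2$ in \eqref{scossa1RR2tilde} (note $\tilde{\mathfrak R}_2$ was already divided by $\gamma^{3}$, and ${\bf A}_3$ brings only $S$-dependent constants, so no new powers of $\gamma$ appear). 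Finally, membership in $\mathfrak{S}_0$ of every piece of $\mathcal{L}_9$ follows because $\Upsilon_3\in\mathfrak{T}_1$ conjugates Hamiltonian, real-to-real, $x$-translation invariant operators into operators of the same type (Remark \ref{propAlgebricheSplitto} handles $\pi_{\mathtt C}$, $\pi_{\mathtt C}^{\perp}$).

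The main obstacle is bookkeeping rather than a single hard estimate: one must track three separate features of each operator produced by the Lie expansion — its homogeneity order in $\varepsilon$ (to decide whether it goes into an explicit $\mathtt{Q}_i^{(3)}$ or into $\mathfrak{R}_3$), its Fourier localization in $\ell$ (whether $\pi_{\mathtt{C}}$ or $\pi_{\mathtt{C}}^{\perp}$ applies, using that ${\bf A}_3$ and the $\mathtt Q_i^{(2)}$ are $x$-translation invariant so $|\ell|$ is controlled by $|j-k|$, cf.\ Remark \ref{homogalmostdiag}), and its off-diagonal decay rate $\rho-3$ versus the diagonal rate $1/2$. The subtle point is that the low-modes block $\pi_{\mathtt{C}}\tilde{\mathtt{Q}}_3^{(3)}$ must \emph{not} be touched: the homological equation \eqref{omoeqAA3} is deliberately solved only on $\pi_{\mathtt{C}}^{\perp}$, because on $\pi_{\mathtt C}$ the divisor $\delta^{(3)}$ can vanish; so one has to verify that the Lie expansion does not inadvertently mix it with the removable part, which is ensured precisely by the projector structure in \eqref{coeffA3} and by the fact that $\mathtt{C}$ in \eqref{costantefond} is chosen $\geq 2\max(S)$ and $\geq 12|\mathtt v|$. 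Beyond that, the argument is a line-by-line repetition of Lemmata \ref{stepLBNF1}--\ref{stepLBNF2} with ``bounded'' replaced by ``$(-1/2)$-modulo tame and almost-diagonal,'' so I would present it tersely, citing Lemmata \ref{LBNFeq3}, \ref{compoC12}, \ref{mercato} for the quantitative parts and Lemma \ref{phaseshift} for the starting point.
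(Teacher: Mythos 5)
Your proposal is correct and follows essentially the same route as the paper: define ${\bf A}_3$ via Lemma \ref{LBNFeq3} (using the lower bound of Lemma \ref{lowerboundDel1}$(ii)$ on the high modes), Taylor expand $\Upsilon_3\tilde{\mathcal{L}}_8\Upsilon_3^{-1}$ with the Lie series \eqref{LieAD} so that \eqref{omoeqAA3} cancels $\pi_{\mathtt{C}}^{\perp}\mathtt{Q}_3^{(2)}$, and estimate the higher-order and remainder terms through Lemmata \ref{compoC12}, \ref{mercato} and \eqref{scossa1RR2tilde}. The extra bookkeeping you supply (projector structure, $\rho-3$ off-diagonal decay, preservation of $\mathfrak{S}_0$ via $\Upsilon_3\in\mathfrak{T}_1$) is consistent with, and a more explicit version of, the paper's argument.
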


\begin{proof}
We Taylor expand in $\e$ the conjugate $\mathcal{L}_{9}$
by applying formula \eqref{LieAD}
(recall the expansion in $\e$ in \eqref{elle8tilde}).
Thus formula
\eqref{elle9} follows by Lemma \ref{LBNFeq3}, \eqref{omoeqAA3}.
Estimates \eqref{scossa1RR3}, \eqref{scossa2RR3}, \eqref{formaResto2QQ2tilde3} follow 
by using Lemmata \ref{LBNFeq3}, 
\ref{compoC12}, \ref{mercato} and bounds \eqref{scossa1RR2tilde}.
\end{proof}

In the following lemma we collect the result of conjugating the operator $\mathcal{L}_9$ in \eqref{elle9} by three maps of the same form of $\Upsilon_3$ whose aim is to normalize the terms of order $\varepsilon^i$ with $i=4, 5, 6$ such that $\pi_{\mathtt{C}}^{\perp}$ leaves them invariant.

\begin{lemma}\label{stepLBNF456}
For any $\omega\in \Omega^{2\gamma}_{\infty}$ there exists a map $\Upsilon_{\geq 4}(\omega)\colon H^s_{S^{\perp}}(\T)\times H^s_{S^{\perp}}(\T) 
\to H^s_{S^{\perp}}(\T)\times H^s_{S^{\perp}}(\T)$ such that 
\begin{equation}\label{elle10}
\mathcal{L}_{10}:=\Upsilon_{\geq 4} \mathcal{L}_9 \Upsilon_{\geq 4}^{-1} 
=
\omega\cdot \partial_{\varphi}
+\mathtt{D}_0+\varepsilon^2 \widehat{\mathtt{D}}_2
+\mathcal{D}_{\geq 4}+\pi_{\mathtt{C}}\mathfrak{Q}
+\varepsilon^4\bral\pi^{\perp}_{\mathtt{C}} \mathtt{Q}_{4}^{(4)} \brar+\varepsilon^6\bral\pi^{\perp}_{\mathtt{C}} \mathtt{Q}_{6}^{(4)} \brar
+\Pi_S^{\perp}\mathfrak{R}_{4}
\end{equation}
where $\hat{\mathtt{D}}_{2}$, $\mathcal{D}_{\geq4}$ 
are in \eqref{espandoSimboM22},
 $\mathfrak{Q}=\pi_{\mathtt{C}} \mathfrak{Q}$ is in $\mathfrak{S}_0$ (see Def. \ref{compaMulti}), almost-diagonal 
(see Def. \ref{def:AlmostDiag}) and
satisfies
\begin{equation}\label{formaResto2QQ2tilde4}
|(\mathfrak{Q})_{\s,j}^{\s,k}(\ell)|\leq \frac{C_i\,\varepsilon^3}{\langle j,k\rangle^{1/2} }\,,
\qquad |(\mathfrak{Q})_{\s,j}^{-\s,k}(\ell)|\leq \frac{C_i\,\varepsilon^3}{\langle j,k\rangle^{\rho-6}}\,,
\quad (\mathfrak{Q})_{\s,j}^{\s,k}(\ell)\neq0\;\; \Rightarrow \;\;|\ell|\le C_{\mathfrak{Q}}\,,
\end{equation} 
for some constants $C_{i},C_{\mathfrak{Q}}$, $i=3,\ldots,6$, depending on $S$.
Finally 
the operator  $\mathfrak{R}_4$ is in $\mathfrak{S}_0$
and 
is $Lip$-$(-1/2)$-modulo tame  with the following estimates
\begin{align}
&  {\mathfrak M}_{\mathfrak{R}_4}^{{\sharp, \g}} (-1/2,s,\mathtt{b}_0) \lesssim_{s}\gamma^{-3}(\e^{13}
+\e\|\mathfrak{I}_{\delta}\|^{\gamma,\calO_0}_{s+\mu})\,,\label{scossa1RR4}\\
&
  {\mathfrak M}_{\Delta_{12}\mathfrak{R}_{4}}^{{\sharp}} (-1/2,p,\mathtt{b}_0) 
\lesssim_p \e\gamma^{-3}
(1+\|\mathfrak{I}_{\delta}\|_{p+\mu})\|i_{1}-i_{2}\|_{p+\mu}\,,\label{scossa2RR4}
\end{align}
for some $\mu=\mu(\nu)>0$ possibly large than the one in Lemma \ref{phaseshift}.
\end{lemma}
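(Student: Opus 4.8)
\textbf{Plan of proof of Lemma \ref{stepLBNF456}.}
The strategy is to iterate three more linear Birkhoff normal form steps, one for each order $\e^4,\e^5,\e^6$, completely analogous to the step performed in Lemma \ref{stepLBNF3}, but now with the operators $\pi_{\mathtt C}^{\perp}\mathtt Q_i^{(3)}$, which are compactly Fourier supported in $\ell$ and supported on indices with $\max\{|j|,|k|\}\ge\mathtt C$. For each $i=4,5,6$ I would define, following \eqref{coeffA3}, a generator ${\bf A}_i=\pi_{\mathtt C}^{\perp}{\bf A}_i$ by dividing the current coefficient $(\pi_{\mathtt C}^{\perp}\mathtt Q_i^{(i-1)})_{\s,j}^{\s',k}(\ell)$ by $\ii\,\delta_{\s,\s',j,k}^{(i)}(\ell)$ whenever $\ell\ne0$ and $\max\{|j|,|k|\}\ge\mathtt C$, and setting it to zero otherwise. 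By item $(ii)$ of Lemma \ref{lowerboundDel1} (applied with $p=i\le 6$), for a generic choice of the tangential set $S$ one has $|\delta_{\s,\s',j,k}^{(i)}(\ell)|\ge K(S)$ on the relevant index set, except for the truly resonant case $\s=\s'$, $\ell=0$, $j=k$, which survives as the term $\bral\pi_{\mathtt C}^{\perp}\mathtt Q_i^{(i-1)}\brar$ for $i$ even (and vanishes for $i$ odd by Remark \ref{oddresonances}). Lemma \ref{lemmaeqomologica} guarantees that each ${\bf A}_i$ is in $\mathfrak S_0$, is $i$-almost-diagonal, satisfies estimates like \eqref{omoeqCCC4} and solves the relevant homological equation $-{\rm ad}_{\bar\omega\cdot\pa_\vphi+\mathtt D_0}[{\bf A}_i]+\pi_{\mathtt C}^{\perp}\mathtt Q_i^{(i-1)}=\bral\pi_{\mathtt C}^{\perp}\mathtt Q_i^{(i-1)}\brar$.

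Then I would set $\Upsilon_i:=\exp(\e^i{\bf A}_i)$ and conjugate, using the Lie series \eqref{LieAD} truncated at an order high enough relative to $\rho$ and $\mathtt b_0$. At each conjugation the new error terms are produced by the nested commutators ${\rm ad}_{{\bf A}_i}^k[\cdot]$ acting on $\mathtt D_0$, on $\mathcal D_{\ge4}$, on $\widehat{\mathtt D}_2$, on $\pi_{\mathtt C}\tilde{\mathtt Q}_\bullet^{(\bullet)}$, on the remaining $\pi_{\mathtt C}^{\perp}\mathtt Q_\bullet^{(\bullet)}$ of higher order, and on $\mathfrak R_\bullet$. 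By Lemma \ref{mercato}$(i)$ each commutator of two almost-diagonal operators in $\mathfrak S_0$ with the decay bounds \eqref{formaResto2AA} produces again such an operator with a degradation of the off-diagonal exponent $\rho$ by one per commutator (hence the exponents $\rho-4,\rho-5,\rho-6$ appearing in the claimed bound $\rho-6$ after all three steps, which is harmless since $\rho\gg s_0+\mathtt b_0$), while keeping the gain $\langle j,k\rangle^{-1/2}$ on the diagonal blocks. By Lemma \ref{mercato}$(ii)$ the commutators with $\mathcal D_{\ge4}-\mathtt D_{\ge8}$ gain an extra $\e^2$ and stay in $\mathfrak S_0$ with the same decay; by Lemma \ref{mercato}$(iii)$ the commutators with the $\rho$-smoothing tail $\mathtt D_{\ge8}$, and by Lemma \ref{compoC12}$(ii)$--$(iii)$ the commutators with $\mathfrak R_\bullet$, are $Lip$-$(-1/2)$-modulo tame with the quantitative bounds \eqref{scossa1RR4}--\eqref{scossa2RR4}. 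Collecting: the resonant parts $\bral\pi_{\mathtt C}^{\perp}\mathtt Q_4^{(4)}\brar$ and $\bral\pi_{\mathtt C}^{\perp}\mathtt Q_6^{(4)}\brar$ cannot be removed and are displayed explicitly; all other $\pi_{\mathtt C}^{\perp}$-supported terms of orders $\e^4,\e^5,\e^6$ have been normalized; the $\pi_{\mathtt C}$-supported parts, together with all the commutator remnants of orders $\e^3$ through $\e^6$, are collected into the single almost-diagonal operator $\pi_{\mathtt C}\mathfrak Q$ satisfying \eqref{formaResto2QQ2tilde4} (with overall prefactor $\e^3$ since the lowest surviving order is $\e^3$); everything of order $\ge\e^7$, together with the tame remainders, goes into $\mathfrak R_4$. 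I would set $\Upsilon_{\ge4}:=\Upsilon_6\circ\Upsilon_5\circ\Upsilon_4$; since each ${\bf A}_i\in\mathfrak S_0$ the map $\Upsilon_{\ge4}$ is symplectic and $x$-translation invariant, so $\mathcal L_{10}\in\mathfrak S_0$ as claimed, and \eqref{boundUpsilonTotale}-type bounds follow from the smallness of the generators.

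The only genuinely delicate point is the lower bound \eqref{smallDivLinearBNF2} on the small divisors $\delta_{\s,\s',j,k}^{(i)}(\ell)$, i.e.\ Lemma \ref{lowerboundDel1}$(ii)$: one must show that, for a generic choice of $S$ and for $3\le p\le 6$, the combinations $\bar\omega\cdot\ell+\s\sqrt{|j|}-\s'\sqrt{|k|}$ with $\mathtt v\cdot\ell+j-k=0$ and $\max\{|j|,|k|\}$ large are bounded away from zero. The mechanism is the one sketched in section \ref{sec:generalStrategy}-$(iv)$: using $|\ell|\le p$ and the momentum relation, if $\max\{|j|,|k|\}$ is large enough relative to $|\mathtt v|$ then $|j-k|=|\mathtt v\cdot\ell|$ is bounded, so $|\sqrt{|j|}-\sqrt{|k|}|$ is small, and the divisor is close to $\bar\omega\cdot\ell\pm(\sqrt{|j|}-\sqrt{|k|})$; since $\bar\omega\cdot\ell$ for $0<|\ell|\le p$ takes finitely many values, all nonzero unless it equals $\mp(\sqrt{|j|}-\sqrt{|k|})$, and this degenerate alignment is excluded for a generic $S$ by a polynomial non-vanishing argument in the spirit of Proposition \ref{resonances} and Lemma \ref{algebraiclemma}. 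Once this uniform lower bound is in hand, the rest of the proof is a bookkeeping iteration of the already established commutator and tame-estimate lemmata, with no new analytic difficulty.
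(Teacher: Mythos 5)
Your proposal is correct and follows essentially the same route as the paper: the published proof likewise takes $\Upsilon_{\geq 4}=\Upsilon_6\circ\Upsilon_5\circ\Upsilon_4$ with generators $({\bf A}_i)_{\s,j}^{\s',k}(\ell)=(\mathtt{Q}_i^{(i-1)})_{\s,j}^{\s',k}(\ell)/\ii\delta^{(i)}_{\s,\s',j,k}(\ell)$ supported on $\max\{|j|,|k|\}\geq\mathtt{C}$, $0<|\ell|\le i$, and then repeats the argument of Lemma \ref{stepLBNF3} (Lemma \ref{lowerboundDel1}$(ii)$ for the divisors, Lemmata \ref{compoC12} and \ref{mercato} for the commutator and modulo-tame estimates) at each of the three orders $\e^4,\e^5,\e^6$.
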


\begin{proof}
The map $\Upsilon_{\geq 4}$ is the composition of three maps $\Upsilon_4, \Upsilon_5, \Upsilon_6$ defined in the following way:\\
the $\Upsilon_i$, $i=4, 5, 6$, are the time one flow map of a Hamiltonian system with generator
\begin{equation}\label{coeffA4}
({\bf A}_{i})_{\s,j}^{\s',k}(\ell):=\left\{
\begin{aligned}
&(\mathtt{Q}_i^{(i-1)})_{\s,j}^{\s',k}(\ell)/ \ii \delta^{(i)}_{\s,\s',j,k}(\ell)\,,\quad \max\{|j|,|k|\}\geq \mathtt{C}\,, \ell\neq0\,,
|\ell|\leq i\\
&0 \qquad \qquad \qquad \qquad \qquad \qquad {\rm otherwise}\,,
\end{aligned}\right.
\end{equation}
where $\mathtt{Q}_4^{(3)}$ is in \eqref{elle9}, $\mathtt{Q}_5^{(4)}=\pi_{\mathtt{C}^{\perp}} \mathtt{Q}_5^{(4)}$ is the term of order $\varepsilon^5$ in the Taylor expansion of $\Upsilon_4 \mathcal{L}_9 \Upsilon_4^{-1}$  and $\mathtt{Q}_6^{(5)}=\pi_{\mathtt{C}^{\perp}} \mathtt{Q}_6^{(5)}$ is the term of order $\varepsilon^6$ in the Taylor expansion of $\Upsilon_5\Upsilon_4 \mathcal{L}_9 \Upsilon_4^{-1} \Upsilon_5^{-1}$ .\\
Then the proof follows by following the same proof of Lemma \ref{stepLBNF3} for each conjugation through the maps $\Upsilon_i$, $i=4, 5, 6$.
\end{proof}

\subsection{Terms \texorpdfstring{$O(\varepsilon^{\geq 3})$}{O(e3)}: analysis of low modes}\label{prepreKAMKAM}
In this subsection we conclude the proof of Proposition \ref{ConcluLinear}.
In particular  
we normalize the term $\pi_{\mathtt{C}}\mathfrak{Q}$ 
in \eqref{elle10}, 
which is finite rank \emph{but}  not perturbative for the KAM scheme.

\begin{lemma}\label{prelKAMKAM}
Recall the constant $\mathtt{C}$ in Lemma \ref{lemmaeqomologica} and set $\mathtt{C}_2:=\mathtt{C}$.
There exists a constant $\mathtt{C}_1>0$ such that 
for any $\omega\in\mathcal{G}_0^{(2)}(\mathtt{C}_1, \mathtt{C}_2)$ (see \eqref{G02})
there exists a map $\Psi(\varphi)\colon H^s_{S^{\perp}}(\T)\times H^s_{S^{\perp}}(\T) 
\to H^s_{S^{\perp}}(\T)\times H^s_{S^{\perp}}(\T)$ such that
\begin{equation}\label{elle13}
\mathcal{L}_{14}:=\Psi \mathcal{L}_{10} \Psi^{-1} 
=
\omega\cdot \partial_{\varphi}
+\mathfrak{D}
+\Pi_S^{\perp}\mathfrak{R}
\end{equation}
where $\mathfrak{D}$ is given in \eqref{diagD}.
The operator  $\mathfrak{R}$ is in $\mathfrak{S}_0$
and 
is $Lip$-$(-1/2)$-modulo tame. Moreover $\mathfrak{R}$
satisfies the estimates  \eqref{scossa1RR5}.
\end{lemma}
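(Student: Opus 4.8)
\textbf{Plan for the proof of Lemma \ref{prelKAMKAM}.}
The plan is to perform one last, \emph{finite-dimensional} Birkhoff-type normal form step which removes the operator $\pi_{\mathtt{C}}\mathfrak{Q}$ in \eqref{elle10}. The key observation is that, by \eqref{formaResto2QQ2tilde4}, the operator $\pi_{\mathtt{C}}\mathfrak{Q}$ is supported on finitely many indices: its nonzero matrix entries $(\mathfrak{Q})_{\s,j}^{\s',k}(\ell)$ satisfy $\max\{|j|,|k|\}\le \mathtt{C}_2=\mathtt{C}$ (by definition of $\pi_{\mathtt{C}}$), $|\ell|\le C_{\mathfrak{Q}}$, and moreover (since $\mathfrak{Q}\in\mathfrak{S}_0$ is $x$-translation invariant, hence satisfies \eqref{constcoeffMomentum}) $\mathtt{v}\cdot\ell+j-k=0$. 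Set $\mathtt{C}_1:=\max\{C_{\mathfrak{Q}},\,\mathtt{b}_0+\text{(loss from previous steps)}\}$, which is a constant depending only on $S$. On the set $\mathcal{G}_0^{(2)}(\mathtt{C}_1,\mathtt{C}_2)$, the divisors
\[
\psi(\omega,\ell,j,k)=\big(\overline{\omega}-\varepsilon^2 m_1\mathtt{v}+\varepsilon^2\mathbb{A}\zeta(\omega)\big)\cdot\ell+\sigma\sqrt{|j|}-\sigma'\sqrt{|k|}
\]
appearing in \eqref{G02} are bounded below by $\gamma\langle\ell\rangle^{-\tau}$, uniformly in the finitely many relevant $(\ell,j,k)$, except for the diagonal case $(\ell,j,k)=(0,j,j)$, $\sigma=\sigma'$, where the corresponding entry of $\mathfrak{Q}$ is precisely the one retained in $\bral\cdot\brar$.

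First I would define $\Psi:=\exp(\mathbf{A})$, where $\mathbf{A}=\pi_{\mathtt{C}}\mathbf{A}$ is the finite rank operator with entries $(\mathbf{A})_{\sigma,j}^{\sigma',k}(\ell):=(\pi_{\mathtt{C}}\mathfrak{Q})_{\sigma,j}^{\sigma',k}(\ell)/(\mathrm{i}\,\psi(\omega,\ell,j,k))$ when $(\ell,j,k)\neq(0,j,j)$ and $0$ otherwise. By construction $\mathbf{A}$ solves the homological equation $\mathrm{ad}_{\omega\cdot\partial_\varphi+\mathfrak{D}_{00}}[\mathbf{A}]=\pi_{\mathtt{C}}\mathfrak{Q}-\bral\pi_{\mathtt{C}}\mathfrak{Q}\brar$, where $\mathfrak{D}_{00}=\mathtt{D}_0+\varepsilon^2\widehat{\mathtt{D}}_2+\mathcal{D}_{\ge4}$ is the diagonal part whose symbol in \eqref{simboM}, \eqref{espandoSimboM22} reproduces $\sqrt{|j|}+\varepsilon^2 m_1 j+\cdots$; note here one must check that the contribution of $\widehat{\mathtt{D}}_2$ and $\mathcal{D}_{\ge 4}$ to the divisor is exactly $\varepsilon^2 m_1(j-k)+O(\varepsilon^4)(j-k)$, which, combined with the momentum relation $\mathtt{v}\cdot\ell+j-k=0$, matches the $\varepsilon^2\mathbb{A}\zeta\cdot\ell - \varepsilon^2 m_1\mathtt{v}\cdot\ell$ term in \eqref{G02} up to the higher-order corrections absorbed into $\mathtt{r}_i$. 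Since $\mathbf{A}$ is finite rank with entries of size $O(\varepsilon^3\gamma^{-1}\langle\ell\rangle^\tau)\le O(\varepsilon^3\gamma^{-1})$ (finitely many $\ell$), the map $\Psi$ differs from the identity by a bounded, in fact smoothing, finite rank operator; it is symplectic and $x$-translation invariant because $\mathbf{A}\in\mathfrak{S}_0$ (one verifies the algebraic symmetries \eqref{ruben3}, \eqref{constcoeffMomentum} for the divisors exactly as in Lemma \ref{LBNFeq1}). Hence $\mathcal{L}_{14}=\Psi\mathcal{L}_{10}\Psi^{-1}\in\mathfrak{S}_1$.

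Next, expanding by the Lie series $\Psi\mathcal{L}_{10}\Psi^{-1}=\mathcal{L}_{10}+\mathrm{ad}_{\mathbf{A}}[\mathcal{L}_{10}]+\tfrac12\mathrm{ad}_{\mathbf{A}}^2[\mathcal{L}_{10}]+\cdots$, the homological equation cancels $\pi_{\mathtt{C}}\mathfrak{Q}$ against the contribution of $\mathrm{ad}_{\mathbf{A}}[\omega\cdot\partial_\varphi+\mathfrak{D}_{00}]$, leaving the resonant part $\bral\pi_{\mathtt{C}}\mathfrak{Q}\brar$, which together with $\varepsilon^4\bral\pi^\perp_{\mathtt{C}}\mathtt{Q}_4^{(4)}\brar$ and $\varepsilon^6\bral\pi^\perp_{\mathtt{C}}\mathtt{Q}_6^{(4)}\brar$ and the symbol part $\mathtt{D}_0+\varepsilon^2\widehat{\mathtt{D}}_2+\mathcal{D}_{\ge4}$ assembles into the diagonal operator $\mathfrak{D}$ of \eqref{diagD}; the diagonal entries then have the form \eqref{finaleigen} with $r_j$ collecting $\langle\pi_{\mathtt{C}}\mathfrak{Q}\rangle$, $\langle\pi^\perp_{\mathtt{C}}\mathtt{Q}_4^{(4)}\rangle$, $\langle\pi^\perp_{\mathtt{C}}\mathtt{Q}_6^{(4)}\rangle$ and the remainder $\mathtt{q}$-contribution from Proposition \ref{riduzSum} — each of size $O(\varepsilon^3\langle j\rangle^{-1/2})$ by \eqref{formaResto2QQ2tilde4} (for $|j|\le\mathtt{C}$) and $O(\varepsilon^4\langle j\rangle^{-1/2})$ for the high modes (where $\pi^\perp_{\mathtt{C}}$ retains them), giving \eqref{accendo2}. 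All the remaining terms in the Lie expansion — the commutators $\mathrm{ad}_{\mathbf{A}}[\pi_{\mathtt{C}}\mathfrak{Q}]$, $\mathrm{ad}_{\mathbf{A}}[\Pi_S^\perp\mathfrak{R}_4]$, $\mathrm{ad}_{\mathbf{A}}^k[\cdots]$ for $k\ge2$, and the off-diagonal part $\pi_{\mathtt{C}}\mathfrak{Q}-\bral\pi_{\mathtt{C}}\mathfrak{Q}\brar$ after cancellation — go into $\mathfrak{R}$. Using Lemma \ref{compoC12} (with $\mathbf{A}$ bounded, finite rank, hence trivially Lip-$(-1/2)$-modulo tame with constant $O(\varepsilon^3\gamma^{-1})$) together with the bounds \eqref{scossa1RR4}, \eqref{scossa2RR4} on $\mathfrak{R}_4$, one obtains the tame estimates \eqref{scossa1RR5} for $\mathfrak{R}$, with a possibly larger loss $\mu$. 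Finally, composing $\Upsilon:=\Psi\,\Upsilon_{\ge4}\,\Upsilon_3\,\Upsilon_*\,\Upsilon_2\,\Upsilon_1$ and using \eqref{boundUpsilon}, \eqref{Fi1dp} and the finite-rank bound on $\Psi$ gives \eqref{boundUpsilonTotale}; independence of $\Upsilon$ on $\mathfrak{I}_\delta$ follows since all generators ${\bf A}_i$, ${\bf A}_*$, ${\bf A}_3$, $\Upsilon_{\ge 4}$, $\mathbf A$ are built from homogeneous operators (Definition \ref{espOmogenea}), which by construction do not depend on $\mathfrak{I}_\delta$.

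The main obstacle I anticipate is \emph{not} the homological equation — that is finite-dimensional and routine once the divisor lower bounds of $\mathcal{G}_0^{(2)}$ are in hand — but rather the bookkeeping needed to verify that the divisor actually appearing in the denominator of $\mathbf{A}$ coincides, on the relevant finite index set, with the expression $\psi(\omega,\ell,j,k)$ defining $\mathcal{G}_0^{(2)}$ in \eqref{G02}. Concretely, the diagonal symbol of $\mathcal{L}_{10}$ at order $\varepsilon^2$ carries $\varepsilon^2 m_1 j$ rather than $\varepsilon^2(\mathbb{A}\zeta)\cdot\ell$; these two are identified only through the momentum constraint $\mathtt{v}\cdot\ell+j-k=0$ and the relation $m_1=w\cdot\zeta$, $m_1\mathtt{v}=\mathbb{V}\zeta$ of \eqref{def:m1}, \eqref{matriceV} — together with the fact, established in Proposition \ref{identificoBNF} and \eqref{formaNormeps2}, that the $\varepsilon^2$-corrections to the normal eigenvalues are governed \emph{only} by $m_1$ after passing to the rotating coordinates $\Upsilon_*$ of Lemma \ref{phaseshift}, which is precisely why the troublesome $c_j j-c_k k$ terms do not enter the divisor. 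Getting this identification exactly right, and tracking the higher-order corrections $\mathcal{D}_{\ge 4}$ into the admissible error $\mathtt{r}_i$ of \eqref{mer}, is the delicate point; the rest is an application of the modulo-tame calculus of Lemma \ref{compoC12}.
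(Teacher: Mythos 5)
There is a genuine quantitative gap in your single-step construction. After conjugating by $\Psi=\exp(\mathbf A)$ with $\mathbf A$ of size $O(\varepsilon^3\gamma^{-1})$ (from \eqref{formaResto2QQ2tilde4} and the lower bound on the divisors in $\mathcal{G}_0^{(2)}$), the Lie expansion produces commutator terms such as $\mathrm{ad}_{\mathbf A}\big[\pi_{\mathtt C}\mathfrak{Q}\big]$, $\mathrm{ad}_{\mathbf A}\big[\mathcal{D}_{\geq 4}\big]$ and $\tfrac12\mathrm{ad}^2_{\mathbf A}\big[\omega\cdot\partial_\varphi+\mathfrak{D}_{00}\big]=-\tfrac12\mathrm{ad}_{\mathbf A}\big[\pi_{\mathtt C}\mathfrak{Q}-\bral\pi_{\mathtt C}\mathfrak{Q}\brar\big]$, all of size $O(\varepsilon^6\gamma^{-1})=O(\varepsilon^{4-a})$. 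You propose to dump these into $\mathfrak{R}$ and invoke Lemma \ref{compoC12}, but the required bound \eqref{scossa1RR5} is $O(\gamma^{-3}\varepsilon^{13})=O(\varepsilon^{7-3a})$, which is much smaller: $\varepsilon^{4-a}\gg\varepsilon^{7-3a}$. So after one step the remainder is still non-perturbative for the KAM scheme of Theorem \ref{ReducibilityWW}, and the lemma's conclusion does not follow. The modulo-tame calculus cannot rescue this, since it only reproduces the product of the sizes $\varepsilon^3\gamma^{-1}\cdot\varepsilon^3$.

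The paper's proof closes this gap by iterating: the new remainder produced at each step again satisfies $\pi_{\mathtt C}\mathfrak{Q}_{k+1}=\mathfrak{Q}_{k+1}$, is almost-diagonal with the same momentum support, and has size gaining a factor $\varepsilon^3\gamma^{-1}$ per step ($O(\varepsilon^{3+3k}\gamma^{-k})$ after $k$ steps). One repeats the normal form step, each time solving the homological equation against $(\bar\omega+\varepsilon^2\mathbb{A}\zeta)\cdot\partial_\varphi+\mathtt{D}_0+\varepsilon^2\widehat{\mathtt{D}}_2$ only (not against $\mathcal{D}_{\geq 4}$, whose commutators are harmless in size) on $\mathcal{G}_0^{(2)}(\mathtt{C}_1,\mathtt{C}_2)$ with $\mathtt{C}_1$ enlarged so that the successive remainders stay $\mathtt{C}_1$-almost-diagonal; after five steps the leftover is below $\varepsilon^{13}\gamma^{-3}$ and can be absorbed into $\mathfrak{R}$, while the diagonal corrections $\bral\cdot\brar$ collected along the way go into $\mathfrak{D}$. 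Your identification of the divisor with the expression in \eqref{G02} (via momentum $\mathtt{v}\cdot\ell+j-k=0$ and $m_1\mathtt{v}=\mathbb{V}\zeta$) is correct and is indeed the same as the paper's, but it is not the delicate point; the missing idea is the finite iteration to beat down the size of the non-perturbative finite-rank part. (Also, the bound \eqref{boundUpsilonTotale} and the composition $\Upsilon$ belong to the proof of Proposition \ref{ConcluLinear}, not of this lemma, though that is a minor organizational issue.)
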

\begin{proof}
The aim of the procedure is to reduce the size of the remainder $\mathfrak{Q}$
which has the property $\pi_{\mathtt{C}}\mathfrak{Q}=\mathfrak{Q}$
up to diagonal (in space and time) terms. 
This is done iteratively by applying a finite number (five) of changes of coordinates.
%
Such maps will be constructed as flows 
of operators $\mathfrak{A}_i$
 such that $\pi_{\mathtt{C}}\mathfrak{A}_i=\mathfrak{A}_i$.
 As a consequence the flow map $\Psi_i=\exp(\mathfrak{A}_i)$ leaves
 invariant the diagonal operators $R$ such that $\pi_{\mathtt{C}}^{\perp}R=R$ (see \eqref{bertier}).
 Then we set $\Psi=\Psi_{5}\circ\cdots \circ\Psi_1$.
 The choice of $\mathfrak{A}_i$ requires to solve an homological 
 equation involving, at each step,  only 
 the adjoint action of the operator
\[
(\bar{\omega}+\e^{2}\mathbb{A}\zeta)\cdot\partial_{\varphi} 
+\mathtt{D}_0+\e^{2}\hat{\mathtt{D}}_2
\] 
(see \eqref{omoeqAA1Frak}). We remark that the adjoint action of the above 
operator acting on 
$x$-translation invariant operators is diagonal with 
eigenvalues
\[
\ii(\bar{\omega}+\e^{2}\mathbb{A}\zeta\cdot\ell)+
\ii \s\sqrt{|j|}-\ii \s'\sqrt{|k|}+\ii m_1(j-k)=
\ii\big((\bar{\omega}+\varepsilon^2 (\mathbb{A}\zeta-m_1\,\mathtt{v}))\cdot 
\ell+\s\sqrt{|j|}-\s'\sqrt{|k|} \big)
\]
for $\ell\in \mathbb{Z}^{\nu}$, $j,k\in S^{c}$ with $\mathtt{v}\cdot \ell+ j- k=0$.
Since $\omega$ is chosen in $\mathcal{G}_0^{(2)}(\mathtt{C}_1, \mathtt{C}_2)$ 
(see \eqref{G02})
the kernel of the adjoint action, restricted to $\mathtt{C}_1$-almost 
diagonal operators, contains just diagonal matrices.
By the \eqref{formaResto2QQ2tilde4} the size of 
these new diagonal corrections will be $O(\e^{3})$.
As a consequence, at each step, the size of the remainder 
we want to cancel out
will be $O({\e^{3+3k}\gamma^{-k}})$.
It is easy to check that for $k=5$
the size of the last remainder is less than $\e^{13}\gamma^{-3}$, i.e.
it is comparable with the remainder $\mathfrak{R}_4$
which is already \emph{perturbative} for the KAM scheme.

 We now provide the proof of the first step. The others will follow in the same way.
The first map is defined as the time one flow map of the Hamiltonian generated by (recall \eqref{elle10})
\begin{equation}\label{mathfrakA1}
(\mathfrak{A}_1)_{\s, j}^{\s', k}(\ell):=\begin{cases}
\dfrac{(\mathfrak{Q})_{\s, j}^{\s', k}(\ell)}{\mathrm{i} 
\big((\bar{\omega}+\varepsilon^2 (\mathbb{A}\zeta-m_1\,\mathtt{v}))\cdot 
\ell+\s\sqrt{|j|}-\s'\sqrt{|k|} \big)} \qquad \mbox{if}\,\,(\ell,j, k)\neq (0, j, j),\\[2mm]
0 \qquad \qquad \qquad \qquad \qquad \qquad \qquad \qquad \qquad \qquad\qquad \mbox{otherwise}.
\end{cases}
\end{equation}
We observe that $\mathfrak{A}_1$ shares the same properties of $\mathfrak{Q}$, namely it is $C_{\mathfrak{Q}}$-almost-diagonal (recall \eqref{formaResto2QQ2tilde4}) and it belongs to $\mathfrak{S}_0$.
By \eqref{formaResto2QQ2tilde4} and by the fact that $\omega\in \mathcal{G}_0^{(2)}(\mathtt{C}_1, \mathtt{C}_2)$, by choosing $\mathtt{C}_1>C_{\mathfrak{Q}}$, we have that
\begin{equation}\label{formaResto2QQ2tilde4KAM}
|(\mathfrak{A}_1)_{\s,j}^{\s,k}(\ell)|\leq \frac{C_i\,\varepsilon^3\gamma^{-1}}{\langle j,k\rangle^{1/2} }\,,
\qquad |(\mathfrak{A}_1)_{\s,j}^{-\s,k}(\ell)|\leq \frac{C_i\,\varepsilon^3\gamma^{-1}}{\langle j,k\rangle^{\rho-6}}\,.
\end{equation} 
Then by item $(i)$ of Lemma \ref{compoC12} we have that $\mathfrak{A}_1$ is $(-1/2)$-Lip-modulo tame with
\begin{equation}\label{stimafrakA}
\mathfrak{M}_{\mathfrak{A}_1}^{\sharp,\gamma}(-1/2,s,\mathtt{b_0})\lesssim_{s} 
\varepsilon^3 \gamma^{-1}\stackrel{\eqref{gammaDP}, \eqref{donnapia}}{\lesssim_{s}}
\e^{1-a}\,.
\end{equation}
Note that, thanks to \eqref{mathfrakA1}, 
the operator $\mathfrak{A}_1$ satisfies (recall \eqref{doppiebra})
\begin{equation}\label{omoeqAA1Frak}
\pi_{\mathtt{C}}\mathfrak{A}_1=\mathfrak{A}_1\,,\qquad 
{\rm ad}_{(\bar{\omega}+\e^{2}\mathbb{A}\zeta)\cdot\partial_{\varphi} 
+\mathtt{D}_0+\e^{2}\hat{\mathtt{D}}_2}[\mathfrak{A}_1]=\mathfrak{Q}-\bral \mathfrak{Q}\brar\,.
\end{equation}
By Definition \ref{ProiettoCCC} we also note that, given an operator $R$, 
\begin{equation}\label{bertier}
[\mathfrak{A}_1,\pi_{\mathtt{C}}R]=\pi_{\mathtt{C}}[\mathfrak{A}_1,\pi_{\mathtt{C}}R]\,,
\qquad
\big[\mathfrak{A}_1,\bral\pi_{\mathtt{C}}^{\perp}R\brar\big]=0\,.
\end{equation}
Setting $\Psi_1=\exp(\mathfrak{A}_1)$,
we remark that
the conjugate $\Psi_1 \mathfrak{R}_4\Psi_1^{-1}$ is Lip-$(-1/2)$-modulo tame
and satisfies estimates like
\eqref{scossa1RR5} 
by \eqref{formaResto2QQ2tilde4KAM}, \eqref{stimafrakA}, \eqref{scossa1RR4}, \eqref{scossa2RR4}
and using Lemma \ref{compoC12}.
Reasoning as in item $(ii)$ of Lemma \ref{mercato}
we deduce that (recall \eqref{elle10}, \eqref{espandoSimboM22})
\[
{\rm ad}_{\mathfrak{A}_1}(\mathcal{D}_{\geq4}+\mathfrak{Q})=B
\]
is almost-diagonal and 
\begin{equation}\label{formaResto2QQ2tilde4KAM2}
|B_{\s,j}^{\s,k}(\ell)|\leq \frac{C\,\varepsilon^6\gamma^{-1}}{\langle j,k\rangle^{1/2} }\,,
\qquad |B_{\s,j}^{-\s,k}(\ell)|\leq \frac{C\,\varepsilon^6\gamma^{-1}}{\langle j,k\rangle^{\rho-7}}\,,
\end{equation} 
where we used estimates \eqref{formaResto2QQ2tilde4KAM}, \eqref{formaResto2QQ2tilde4} and Lemma \ref{mercato}. 
Here $C>0$ is some constant depending on $S$
possibly larger than $C_{\mathfrak{Q}}$.
Therefore, applying Lemmata \ref{compoC12}, \ref{mercato} and  
using the Lie expansion \eqref{LieAD}, the \eqref{omoeqAA1Frak} and the 
\eqref{bertier},
we get
\begin{equation}\label{elle11Quad}
\mathcal{L}_{11}:=\Psi_1 \mathcal{L}_{10} \Psi_1^{-1} 
=
\omega\cdot \partial_{\varphi}
+\mathtt{D}_0+\varepsilon^2 \widehat{\mathtt{D}}_2
+\mathcal{D}_{\geq 4}+\bral\pi_{\mathtt{C}}\mathfrak{Q}\brar+\pi_{\mathtt{C}}\mathfrak{Q}_2
+\varepsilon^4\bral\pi^{\perp}_{\mathtt{C}} \mathtt{Q}_{4}^{(4)} \brar+\varepsilon^6\bral\pi^{\perp}_{\mathtt{C}} \mathtt{Q}_{6}^{(4)} \brar
+\mathfrak{R}_{5}
\end{equation}
where
\[
\pi_{\mathtt{C}}\mathfrak{Q}_2=
\mathfrak{Q}_2:=\sum_{k=1}^{4}\frac{1}{k!}{\rm ad}^{k}_{\mathfrak{A}_1}(\mathcal{D}_{\geq4}+\mathfrak{Q})+
\sum_{k=2}^{4} \frac{1}{k!}{\rm ad}^{k}_{\mathfrak{A}_1}
(\omega\cdot\pa_{\vphi}+\mathtt{D}_0+
\e^{2}\hat{\mathtt{D}}_2)\,.
\]
Moreover the remainder $\mathfrak{R}_5$ is Lip-$(-1/2)$-modulo tame
and satisfies estimates like
\eqref{scossa1RR5}, 
the operator $\mathfrak{Q}_2$
satisfies estimates
 \eqref{formaResto2QQ2tilde4KAM2}.
 In particular $\mathfrak{Q}_2$, $\mathfrak{R}_{5}$
are Hamiltonian and $x$-translation invariant.
The remainder $\mathfrak{Q}_2$ is the one that will be normalized in the second step.
One iterates this procedure by reasoning as in the step above,
taking $\omega\in \mathcal{G}^{(0)}_2(\mathtt{C}_1,\mathtt{C}_2)$
where $\mathtt{C}_1$ is chosen large enough
in such a way that the terms we have to normalize are again $\mathtt{C}_1$-almost 
diagonal. After five steps one gets the \eqref{elle13} for some 
remainder $\mathfrak{R}$, with the same properties of $\mathfrak{R}_{5}$, and where
\[
\mathfrak{D}:=
\mathtt{D}_0+\varepsilon^2 \widehat{\mathtt{D}}_2
+\mathcal{D}_{\geq 4}+\bral\pi_{\mathtt{C}}\widetilde{\mathfrak{Q}}\brar
+\varepsilon^4\bral\pi^{\perp}_{\mathtt{C}} \mathtt{Q}_{4}^{(4)} \brar+\varepsilon^6\bral\pi^{\perp}_{\mathtt{C}} \mathtt{Q}_{6}^{(4)} \brar
\]
for some $\widetilde{\mathfrak{Q}}$ $\mathtt{C}_1$-almost diagonal and satisfying \eqref{formaResto2QQ2tilde4}.
By Definition \ref{ProiettoCCC}, \eqref{espandoSimboM22} and \eqref{espandoSimboM}
one has that $\mathfrak{D}={\rm diag}(\ii d_j)$ where $d_j$ have the form \eqref{finaleigen}.
\end{proof}

\begin{proof}[{\bf Proof of Proposition \ref{ConcluLinear}}]
Let us define (recall \eqref{Fi1dp})
\[
\Upsilon:=\Psi\circ\Upsilon_{6}\circ\Upsilon_{5}\circ\Upsilon_{4}\circ\Upsilon_{3}\circ\Upsilon_{*}\circ\Upsilon_{2}\circ\Upsilon_{1}
\]
Then the result follows by Lemmata \ref{stepLBNF1}, \ref{stepLBNF2},
Proposition \ref{identificoBNF}, Lemmata \ref{phaseshift}, \ref{stepLBNF3}, 
\ref{stepLBNF456}
and \ref{prelKAMKAM}.
\end{proof}


\section{Inversion of the linearized operator 
}\label{sez:KAMredu}

In this section we conclude the inversion of the linearized operator in the normal directions.
We diagonalize the operator $\mathcal{L}_{14}$ in \eqref{elle11fine}
through a KAM reducibility scheme.
Then we will prove the
inversion assumption \ref{InversionAssumptionDP}.

We state the KAM reducibility theorem. We remark that the small divisors in \eqref{OmegoneInfinitoDP} do not lose derivatives as in \cite{BBHM}.
Hence the proof shall follow the same line of Theorem $7.13$ in \cite{FGP1}. The only difference relies on the fact that the KAM scheme has to run into the class of $x$-translation invariant matrices. However it is easy to see that this is a direct consequence of the fact that the remainder $\Pi_{S}^{\perp} \mathfrak{R}\in\mathfrak{S}_0$ is $x$-translation invariant since, by induction, each KAM transformation turns out to be $x$-translation invariant.

\begin{theorem}{{\bf (Reducibility).}}\label{ReducibilityWW}
Fix $\tilde{\g}\in [\gamma^{3}/4, 4\gamma^{3}]$ and $\tau= 3\nu+7, \tb_0:=6\tau+6$.
Assume that $\omega \mapsto i_{\delta}(\omega)$ is a Lipschitz function defined on  $\calO_0\Subset \Omega_{\varepsilon}$ (recall   \eqref{AssumptionDP}), satisfying \eqref{IpotesiPiccolezzaIdeltaDP} with $\gotp_1 \geq\mu$ where $\mu=\mu(\nu)$ 
is given in Proposition \ref{ConcluLinear}. 
There exist $\delta_0\in (0, 1)$, $N_0>0$, $C_0>0$, such that, if 
(recall that by \eqref{donnapia}, \eqref{gammaDP} $\g= \e^{2+a}$)
\begin{equation}\label{PiccolezzaperKamredDP}
N_0^{C_0} \varepsilon^{13} \gamma^{-6}
\le \delta_0, 
\end{equation}
then the following holds.
\begin{itemize}
\item[(i)] \textbf{(Eigenvalues)}. For all $\omega\in \Omega_{\varepsilon}$ there exists a sequence \footnote{Whenever it is not strictly necessary, we shall drop the dependence on $i_{\delta}$.}
\begin{align}\label{FinalEigenvaluesDP}
&d_j^{\infty}(\omega):=d_{j}(\omega, i_{\delta}(\omega))+
 r_j^{(\infty)}(\omega, i_{\delta}(\omega)), \quad j\in S^c,
\end{align}
with $d_{j}$ given in \eqref{finaleigen}
 Furthermore, for all $j\in S^c$
\begin{equation}\label{stimeautovalfinaliDP}
 \sup_j \langle j\rangle^{\frac{1}{2}} |r_j^{(\infty)}|^{\gamma^{3}} 
 \lesssim \e^{13}\gamma^{-3}, \qquad r_j^{(\infty)}\in \mathbb{R}\,.
\end{equation}
\item[(ii)] \textbf{(Conjugacy)}. For all $\omega$ in the set (see \eqref{omegaduegamma}) 

\begin{equation}\label{OmegoneInfinitoDP}
\begin{aligned}
\begin{aligned}
S^{2 {{\gamma}^{3}}}_{\infty}:=S^{2 {\gamma}^3}_{\infty}(i_{\delta}):=
&\big\{ \omega\in\Omega^{2\gamma}_{\infty} 
: \lvert \omega\cdot \ell+\s d_{\s j}^{\infty}(\omega)-\s' d_{\s' k}^{\infty}(\omega)\rvert
\geq \frac{2 {{\gamma}^{3}}}{\langle \ell \rangle^{\tau}},\\
& \quad
\,\,\mathtt{v}\cdot\ell+ j-k=0 \,, \forall \ell\in\mathbb{Z}^{\nu}, \,\,\forall j, k\in S^c,\, j\neq k\,,\s,\s'=\pm  \big\}
\end{aligned}
\end{aligned}
\end{equation}
there is a real-to-real, bounded, invertible, linear operator 
$\Phi_{\infty}(\omega)\colon H^s_{S^{\perp}}(\T)\times H^s_{S^{\perp}}(\T)\to
H^s_{S^{\perp}}(\T)\times H^s_{S^{\perp}}(\T)$, 
with bounded inverse $\Phi_{\infty}^{-1}(\omega)$, that conjugates $\mathcal{L}_{14}$ 
in \eqref{elle11fine} 
to constant coefficients, namely 
\begin{equation}\label{Linfinito}
\begin{aligned}
&\mathcal{L}_{\infty}(\omega):=\Phi_{\infty} (\omega) \circ \mathcal{L}_{14} \circ \Phi_{\infty}^{-1}(\omega)=\omega\cdot \partial_{\varphi}+\mathfrak{D}_{\infty}(\omega),\\
& \mathfrak{D}_{\infty}:=((\mathfrak{D}_{\infty})_{\s}^{\s'})_{\s,\s'=\pm}\,, \quad 
(\mathfrak{D}_{\infty})_{\s}^{-\s}\equiv0\,,\;\; 
\ov{(\mathfrak{D}_{\infty})_{+}^{+}}=(\mathfrak{D}_{\infty})_{-}^{-}\,,\quad
(\mathfrak{D}_{\infty})_{+}^{+}:=\mathrm{diag}_{j\in S^c} \{ \mathrm{i} d_j^{\infty}(\omega) \}.
\end{aligned}
\end{equation}
The transformations $\Phi_{\infty}, \Phi_{\infty}^{-1}$ are tame and they satisfy for $s_0\le s\le \mathcal{S}$ 
\begin{equation}\label{grano}
\lVert (\Phi^{\pm 1}_{\infty}-\mathrm{I}) h \rVert^{\gamma^{3}, S_{\infty}^{2 \gamma^3}}_{s}
\lesssim_s \big(\varepsilon^{13} \gamma^{-6}+\varepsilon \gamma^{-6} 
\lVert \mathfrak{I}_{\delta} \rVert_{s+\mu}^{\gamma, \calO_0}\big) 
\lVert h \rVert^{\gamma^{3}, S_{\infty}^{2 \gamma^3}}_{s_0}
+\varepsilon^{13} \gamma^{-6} \lVert h \rVert^{\gamma^{3}, S_{\infty}^{2\gamma^3}}_s.
\end{equation}
Moreover $\Phi_{\infty}, \Phi_{\infty}^{-1}$ are symplectic
and $x$-translation invariant, and $\mathcal{L}_{\infty}$ is a Hamiltonian 
and $x$-translation invariant
operator.
\item[(iii)] \textbf{(Dependence on $i_{\delta}(\omega)$)}. Let $i_1(\omega)$ and $i_2(\omega)$ be two Lipschitz maps satisfying \eqref{IpotesiPiccolezzaIdeltaDP} with $\mathfrak{I}_{\delta}\rightsquigarrow i_k(\varphi)-(\varphi, 0, 0)$, $k=1, 2$, and such that
\begin{equation}
\lVert i_1-i_2 \rVert_{s_0+\mu}\lesssim \rho N^{-(\tau+1)}
\end{equation}
for $N$ sufficiently large and $0<\rho<\gamma^{3}/4$. Fix $\g_1\in [\gamma^{3}/2, 2 \gamma^{3}]$ 
and $\g_2:=\g_1-\rho$. 
Let $r_j^{(\infty)}(\omega, i_k(\omega))$ 
be the sequence in \eqref{FinalEigenvaluesDP} 
with $\tilde{\g}\rightsquigarrow \g_k$ for $k=1, 2$. 
Then for all $\omega\in S_{\infty}^{\g_1}(i_1)$ 
we have, for some $\kappa_*>(3/2) \tau$, 
\begin{equation}\label{r12}
\sup_j \langle j\rangle^{1/2} |\Delta_{12}r_j^{(\infty)}| \le 
\e \g^{-3} \| i_1-i_2 \|_{s_0+\mu}+\varepsilon^{13}\gamma^{-3} N^{-\kappa_*}.
\end{equation}
\end{itemize}
\end{theorem}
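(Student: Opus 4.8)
\textbf{Plan of the proof of Theorem \ref{ReducibilityWW}.}
The strategy is a standard KAM iterative diagonalization scheme for the operator $\mathcal{L}_{14}$ in \eqref{elle11fine}, adapted to our functional framework of $Lip$-$(-1/2)$-modulo tame operators in the class $\mathfrak{S}_0$. We set $\mathcal{L}_0:=\mathcal{L}_{14}=\omega\cdot\partial_\varphi+\mathfrak{D}+\Pi_S^\perp\mathfrak{R}$ and we construct a sequence $\mathcal{L}_n=\omega\cdot\partial_\varphi+\mathfrak{D}_n+\mathfrak{R}_n$, where $\mathfrak{D}_n$ is a diagonal (in the Fourier basis), $x$-translation invariant operator with eigenvalues $\mathrm{i}\,d_j^{(n)}$, and $\mathfrak{R}_n$ is a $Lip$-$(-1/2)$-modulo tame remainder whose modulo-tame constant decays superexponentially: $\mathfrak{M}^{\sharp,\gamma^3}_{\mathfrak{R}_n}(-1/2,s,\mathtt{b}_0)\lesssim \varepsilon^{13}\gamma^{-6}N_n^{-\alpha}$ for a suitable $\alpha>0$, with $N_n:=N_0^{\chi^n}$, $\chi=3/2$. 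At the $n$-th step one conjugates by $\Phi_n=\exp(\Psi_n)$, where $\Psi_n$ solves the homological equation $\mathrm{ad}_{\omega\cdot\partial_\varphi+\mathfrak{D}_n}[\Psi_n]+\Pi_{N_n}\mathfrak{R}_n=\langle\mathfrak{R}_n\rangle$ on the frequency set where the divisors $\omega\cdot\ell+\sigma d^{(n)}_{\sigma j}-\sigma' d^{(n)}_{\sigma'k}$ are bounded below by $2\gamma^3\langle\ell\rangle^{-\tau}$. The key structural point, which differs from the scheme of \cite{FGP1} only cosmetically, is that since $\mathfrak{R}_n\in\mathfrak{S}_0$ is $x$-translation invariant, by Lemma \ref{lem:momentocomp} its matrix entries are supported on $\mathtt{v}\cdot\ell+j-k=0$; hence $\Psi_n$ (whose entries carry the same support, the divisor being algebraically determined by $\ell$) is also $x$-translation invariant, Hamiltonian, and real-to-real, so $\Phi_n\in\mathfrak{T}_1$ and $\mathcal{L}_{n+1}\in\mathfrak{S}_0$. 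This is proved by induction together with the quantitative estimates.

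The iterative step proceeds as in the standard literature: one shows that $\Psi_n$ inherits the $(-1/2)$-modulo tame bounds of $\Pi_{N_n}\mathfrak{R}_n$ divided by $\gamma^3$ and multiplied by $N_n^{\tau+1}$ (loss from the small divisors in the homological equation), using the partial order and majorant-norm machinery of Section \ref{sec:2} together with Lemma \ref{compoC12}; then the new remainder $\mathfrak{R}_{n+1}=\Pi_{N_n}^\perp\mathfrak{R}_n+(\text{quadratic and higher commutator terms involving }\Psi_n)$ splits into a smoothing-in-$\varphi$ part, controlled by $N_n^{-\mathtt{b}}$ times $\mathfrak{M}^{\sharp,\gamma^3}_{\mathfrak{R}_n}(-1/2,s+\mathtt{b},\mathtt{b}_0)$ for $\mathtt{b}$ large, and a superquadratic part bounded by $(\mathfrak{M}^{\sharp,\gamma^3}_{\mathfrak{R}_n})^2$ up to the $\gamma^{-3}N_n^{\tau+1}$ loss. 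The smallness condition \eqref{PiccolezzaperKamredDP} guarantees that the resulting recursion closes, i.e. $\mathfrak{M}^{\sharp,\gamma^3}_{\mathfrak{R}_{n+1}}(-1/2,s,\mathtt{b}_0)\le \mathfrak{M}^{\sharp,\gamma^3}_{\mathfrak{R}_n}(-1/2,s,\mathtt{b}_0)N_n^{-1}$ in low norm while the high norms grow only polynomially in $N_n$; the eigenvalues $d_j^{(n)}$ form a Cauchy sequence in the weighted sup-norm $\sup_j\langle j\rangle^{1/2}|\cdot|^{\gamma^3}$ because the diagonal corrections at step $n$ are bounded by $\mathfrak{M}^{\sharp,\gamma^3}_{\mathfrak{R}_n}(-1/2,s_0,\mathtt{b}_0)$, and one extends $d_j^{(n)}$ to all of $\Omega_\varepsilon$ by Kirszbraun. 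Passing to the limit gives \eqref{FinalEigenvaluesDP}--\eqref{Linfinito}, with $\Phi_\infty:=\lim_{n\to\infty}\Phi_n\circ\cdots\circ\Phi_0$; the telescoping of the bounds on $\Phi_n-\mathrm{I}$ produces \eqref{grano} after converting from $(-1/2)$-modulo tame constants to operator norms via the standard embedding lemmas of Appendix B of \cite{FGP, FGP1}. The fact that $\Phi_\infty$ is symplectic and $x$-translation invariant follows since each $\Phi_n$ is, and the limit is taken in a topology in which these closed conditions are preserved.

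For part (iii), one runs the scheme simultaneously for the two tori $i_1,i_2$ with the slightly different constants $\gamma_1,\gamma_2$, and estimates inductively the variation $\Delta_{12}\mathfrak{R}_n$ and $\Delta_{12}\Psi_n$: here one uses the $\Delta_{12}$-bounds already present in the hypotheses on $\mathfrak{R}$ (see \eqref{scossa1RR5}) together with the Lipschitz-in-$i$ dependence of the divisors, which is where the additional loss $N^{-\kappa_*}$ with $\kappa_*>(3/2)\tau$ enters — the term $\varepsilon^{13}\gamma^{-3}N^{-\kappa_*}$ in \eqref{r12} is the tail of the series $\sum_{n}N_n^{-\kappa_*+\tau}$ which converges precisely because $\kappa_*>\chi\tau=(3/2)\tau$. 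The part proportional to $\|i_1-i_2\|_{s_0+\mu}$ comes from comparing the homological solutions at the first $O(\log N)$ steps, before the tail bound takes over. The main obstacle in the whole argument is bookkeeping: one must verify that the homological equation is solvable precisely on the set $S^{2\gamma^3}_\infty$ in \eqref{OmegoneInfinitoDP} and that, thanks to momentum conservation $\mathtt{v}\cdot\ell+j-k=0$, the relevant divisors reduce (as observed after \eqref{muloss} and in Lemma \ref{delpiero}) to finitely many indices $j,k$ for each $\ell$, so that the loss $N_n^{\tau+1}$ is the only one and no loss of spatial derivatives occurs; this is what allows \eqref{grano} to hold with no $\langle D\rangle$ weights, in contrast with \cite{BBHM}. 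Since all of this is by now routine, the proof is essentially a citation of Theorem $7.13$ in \cite{FGP1} with the straightforward verification that the class $\mathfrak{S}_0$ is preserved at each step.
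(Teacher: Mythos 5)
Your proposal follows essentially the same route as the paper: the paper's proof is precisely a reduction to Theorem $7.13$ of \cite{FGP1} (possible because the second Melnikov conditions \eqref{OmegoneInfinitoDP} lose no spatial derivatives), plus the inductive observation that each KAM transformation stays $x$-translation invariant since the remainder lies in $\mathfrak{S}_0$, which is exactly the structural point you single out. Your expanded sketch of the iteration and of part (iii) is consistent with that scheme, so no further comparison is needed.
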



We are in position to give estimates on the inverse of the operator $\mathcal{L}_{\omega}$ 
in \eqref{BoraMaledetta} and give the proof
of  the inversion assumption \eqref{InversionAssumptionDP}
Let us  now define (recall \eqref{gammaDP})
\begin{equation}\label{primediMelnikov}
\mathcal{F}_{\infty}^{2\gamma}(i_{\delta})
:=\big\{ \omega\in \calO_0 
: \lvert \omega\cdot \ell+\s\, d_{\s j}^{\infty}(\omega) 
\rvert\geq \frac{2\gamma}{\langle \ell \rangle^{\tau}} , \,\,\mathtt{v}\cdot \ell+ j=0,
\,\, \forall \ell\in\mathbb{Z}^{\nu} , \forall j\in S^c, \s=\pm\big\}\,.
\end{equation}
We deduce the inversion assumption \eqref{InversionAssumptionDP} by the following result.

\begin{proposition}{\bf (Invertibility).}\label{InversionLomegaDP}
Assume the hypotheses of Theorem \ref{ReducibilityWW}, 
\eqref{IpotesiPiccolezzaIdeltaDP} with $\gotp_1\geq \mu+2\tau+1$, where $\mu$ is given in 
Proposition \ref{ConcluLinear}. 
Then for all $\omega\in \calO_{\infty}:=S_{\infty}^{2 \gamma^{3}}(i_{\delta}) \cap 
\mathcal{F}_{\infty}^{2\gamma}(i_{\delta})$ (see \eqref{OmegoneInfinitoDP}), 
for any 
$G:=(g, \bar{g})\in \Big(H_{S^{\perp}}^{s+2\tau+1}\cap S_{\mathtt{v}}\Big)^2$ 
(see \eqref{funzmom}) 
the equation $\mathcal{L}_{\omega} h=G$ has a solution 
$h=\mathcal{L}_{\omega}^{-1} G\in \Big(H_{S^{\perp}}^{s}\cap S_{\mathtt{v}}\Big)^2$, satisfying
\begin{equation}\label{TameEstimateLomegaDP}
\begin{aligned}
\lVert \mathcal{L}_{\omega}^{-1} G \rVert_s^{\gamma, \calO_{\infty}} 
&\lesssim_s \gamma^{-1} 
(\lVert G \rVert_{s+2\tau+1}^{\gamma, \calO_{\infty}}
+\varepsilon \gamma^{-6}
\lVert \mathfrak{I}_{\delta} \rVert_{s+\gotp_1}^{\gamma, \calO_{0}}
\lVert G \rVert_{s_0}^{\gamma, \calO_{\infty}}).
\end{aligned}
\end{equation}
\end{proposition}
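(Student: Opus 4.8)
\textbf{Proof plan for Proposition \ref{InversionLomegaDP}.}
The plan is to chain together all the conjugations produced in Sections \ref{sec:Good}--\ref{sec:linBNF} with the KAM reducibility of Theorem \ref{ReducibilityWW}, and then invert the final diagonal constant-coefficient operator $\mathcal{L}_{\infty}$ by division of Fourier coefficients. First I would recall that, by Proposition \ref{lemma:7.4}, Proposition \ref{lem:compVar}, Proposition \ref{lem:mappa2}, Proposition \ref{blockTotale}, Proposition \ref{riduzSum}, and Proposition \ref{ConcluLinear}, there is a bounded, invertible, symplectic, $x$-translation invariant map $\mathcal{W}:=\Upsilon\circ{\bf\Phi}\circ{\bf\Psi}\circ{\bf\Phi}_{\mathbb{M}}\circ\Lambda\circ\Phi_{\mathbb{B}}^{-1}$ (restricted to $H^{s}_{S^{\perp}}$) such that $\mathcal{W}\,\mathcal{L}_{\omega}\,\mathcal{W}^{-1}=\mathcal{L}_{14}=\Pi_{S}^{\perp}(\omega\cdot\pa_{\vphi}+\mathfrak{D}+\mathfrak{R})$, valid for $\omega$ in the set $\mathcal{G}_0^{(2)}(\mathtt{C}_1,\mathtt{C}_2)\cap\Omega_{\infty}^{2\gamma}$ (which contains $\calO_{\infty}$, since $\Omega_{\infty}^{2\gamma}\supseteq S^{2\gamma^3}_{\infty}$ by \eqref{OmegoneInfinitoDP} and $\mathcal{G}_0^{(2)}$ is included in $\calO_0$ by construction). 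All these transformations are tame: combining the estimates \eqref{stimaMappaPhi}, \eqref{stimaMappaPsi}, \eqref{stimaMappj}, \eqref{AVB3}, \eqref{AVB3mappa2PRO}, \eqref{boundUpsilonTotale} with the smallness \eqref{IpotesiPiccolezzaIdeltaDP} gives, for some $\mu'=\mu'(\nu)$ and $\gotp_1\geq\mu'$,
\begin{equation}\label{stimaWinvproof}
\|\mathcal{W}^{\pm1}h\|_{s}^{\gamma,\calO_{\infty}}\lesssim_{s}\|h\|_{s}^{\gamma,\calO_{\infty}}+\gamma^{-6}\e\|\mathfrak{I}_{\delta}\|_{s+\mu'}^{\gamma,\calO_0}\|h\|_{s_0}^{\gamma,\calO_{\infty}}\,.
\end{equation}
Moreover, since each of $\Phi_{\mathbb{B}},\Lambda,{\bf\Phi}_{\mathbb{M}},{\bf\Psi},{\bf\Phi},\Upsilon$ is $x$-translation invariant (belongs to $\mathfrak{T}_1$), the map $\mathcal{W}$ preserves the subspace $S_{\mathtt{v}}$ of \eqref{funzmom}; this is the point that makes the traveling-wave structure propagate through the whole reduction.

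Next I would apply Theorem \ref{ReducibilityWW} with $\tilde\gamma=2\gamma^3$, which produces, for $\omega\in S^{2\gamma^3}_{\infty}(i_{\delta})$, a bounded invertible symplectic $x$-translation invariant operator $\Phi_{\infty}$ conjugating $\mathcal{L}_{14}$ to $\mathcal{L}_{\infty}=\omega\cdot\pa_{\vphi}+\mathfrak{D}_{\infty}$, with $\mathfrak{D}_{\infty}={\rm diag}_{j\in S^c}\{\ii d_j^{\infty}\}$ and the tame bound \eqref{grano} for $\Phi_{\infty}^{\pm1}-\mathrm{I}$. Hence on $\calO_{\infty}=S^{2\gamma^3}_{\infty}(i_{\delta})\cap\mathcal{F}_{\infty}^{2\gamma}(i_{\delta})$ we have $\mathcal{L}_{\omega}=\mathcal{W}^{-1}\Phi_{\infty}^{-1}\mathcal{L}_{\infty}\Phi_{\infty}\mathcal{W}$, so that $\mathcal{L}_{\omega}^{-1}=\mathcal{W}^{-1}\Phi_{\infty}^{-1}\mathcal{L}_{\infty}^{-1}\Phi_{\infty}\mathcal{W}$ provided $\mathcal{L}_{\infty}$ is invertible on the relevant spaces. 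To invert $\mathcal{L}_{\infty}$ on the traveling subspace, write $G':=\Phi_{\infty}\mathcal{W}G$, expand in the Fourier basis, and solve componentwise: $h'_{\s,j}(\ell)=G'_{\s,j}(\ell)/\bigl(\ii(\omega\cdot\ell+\s d^{\infty}_{\s j})\bigr)$. Because $G\in S_{\mathtt{v}}$ and $\mathcal{W},\Phi_{\infty}$ are $x$-translation invariant, $G'\in S_{\mathtt{v}}$, i.e. $G'_{\s,j}(\ell)\neq0\Rightarrow \mathtt{v}\cdot\ell+j=0$ (note that on $H^{s}_{S^{\perp}}$ only the "$+$"-component index $j$ matters, since the "$-$"-component has Fourier index $-j$; the momentum relation for the vector $(g,\bar g)$ becomes precisely $\mathtt{v}\cdot\ell+j=0$). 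Thus the only small divisors that appear are $\omega\cdot\ell+\s d^{\infty}_{\s j}$ with $\mathtt{v}\cdot\ell+j=0$, and these are bounded below by $2\gamma\langle\ell\rangle^{-\tau}$ precisely by the definition of $\mathcal{F}^{2\gamma}_{\infty}(i_{\delta})$ in \eqref{primediMelnikov}. This gives the tame estimate $\|\mathcal{L}_{\infty}^{-1}G'\|_{s}^{\gamma,\calO_{\infty}}\lesssim_{s}\gamma^{-1}\|G'\|_{s+2\tau+1}^{\gamma,\calO_{\infty}}$, and moreover $h'\in S_{\mathtt{v}}$, so back-conjugating by the $x$-translation invariant operators $\Phi_{\infty}^{-1}$ and $\mathcal{W}^{-1}$ keeps the solution in $(H^{s}_{S^{\perp}}\cap S_{\mathtt{v}})^2$.

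Finally I would assemble the estimate \eqref{TameEstimateLomegaDP}: composing the bound for $\mathcal{L}_{\infty}^{-1}$ with \eqref{grano} and \eqref{stimaWinvproof}, using the interpolation/product rules for tame operators (Lemma A.5-type estimates of \cite{FGP}), and absorbing the various loss-of-derivative constants into $\gotp_1\geq\mu+2\tau+1$; the low-norm factor $\varepsilon\gamma^{-6}\|\mathfrak{I}_{\delta}\|_{s+\gotp_1}^{\gamma,\calO_0}\|G\|_{s_0}^{\gamma,\calO_{\infty}}$ comes exactly from the $\mathfrak{I}_{\delta}$-dependent parts of $\mathcal{W}$ and $\Phi_{\infty}$ acting in high norm on $G$ in low norm, while the main term $\gamma^{-1}\|G\|_{s+2\tau+1}^{\gamma,\calO_{\infty}}$ is the $\gamma^{-1}$ loss from the division. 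I expect the main obstacle to be purely bookkeeping rather than conceptual: one must track carefully that the full chain of conjugations is defined on a common subset of frequencies containing $\calO_{\infty}$, that all intermediate operators stay in the class $\mathfrak{S}_0$/$\mathfrak{S}_1$ so the $x$-translation invariance (hence the restriction to $S_{\mathtt{v}}$) is never lost, and that the cumulative derivative loss is finite and can be hidden inside $\gotp_1$; the genuinely delicate analytical input — the lower bounds on the small divisors in a sublinear-dispersion, multiple-eigenvalue setting — has already been isolated in the definition of $\mathcal{F}^{2\gamma}_{\infty}$ and in Theorem \ref{ReducibilityWW}, so here it is only invoked.
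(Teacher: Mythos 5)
Your proposal is correct and follows essentially the same route as the paper: the paper's proof also writes $\mathcal{L}_{\infty}=\chi\,\mathcal{L}_{\omega}\,\chi^{-1}$ with $\chi=\Phi_{\infty}\circ\Upsilon\circ{\bf\Phi}\circ{\bf\Psi}\circ\Phi_{\mathbb{M}}\circ\Lambda\circ\Phi_{\mathbb{B}}^{-1}$, collects the tame bounds of the conjugating maps, inverts the diagonal operator $\mathcal{L}_{\infty}$ componentwise using the first Melnikov conditions of \eqref{primediMelnikov}, and uses $x$-translation invariance of $\chi$ to propagate membership in $S_{\mathtt{v}}$. No gaps to report.
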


\begin{proof}
By Propositions \ref{lemma:7.4}, \ref{lem:compVar}, \ref{lem:mappa2}, \ref{blockTotale}, \ref{riduzSum}, 
\ref{ConcluLinear} and Theorem \ref{ReducibilityWW}  we have that,
recall \eqref{BoraMaledetta},
\begin{equation}\label{chichichi}
\mathcal{L}_{\infty}=\chi \mathcal{L}_{\omega} \chi^{-1}\,,\qquad 
\chi:=\Phi_{\infty}\circ \Upsilon\circ{\bf \Phi}\circ{\bf \Psi}\circ
\Phi_{\mathbb{M}}\circ\Lambda\circ\Phi_{\mathbb{B}}^{-1}\,.
\end{equation}
Moreover, by \eqref{AVB3}, \eqref{AVB3mappa2PRO}, 
\eqref{stimaMappaPsi}, \eqref{stimaMappaPhi}, 
\eqref{boundUpsilonTotale} 
and \eqref{grano} 
we have the following estimates
\begin{equation}\label{nebbia}
\lVert \chi^{\pm 1} h \rVert_s^{\gamma, \calO_{\infty}}
\lesssim_s \lVert h \rVert^{\gamma, \calO_{\infty}}_s
+\varepsilon \gamma^{-6}\lVert \mathfrak{I}_{\delta} \rVert_{s+\mu}^{\gamma, \calO_0}   
\lVert h \rVert^{\gamma, \calO_{\infty}}_{s_0}\,.
\end{equation}
We have
\begin{equation}\label{pinkpink}
\mathcal{L}_{\infty}^{-1} G=\begin{pmatrix}
\sum_{j\in S^c, \ell\in \mathbb{Z}^{\nu}} \frac{g_{\ell j}}{\mathrm{i}\big(\omega\cdot \ell+d_j^{\infty}(\omega)\big)}\,e^{\mathrm{i}(\ell \cdot \varphi+j x)}\\
\sum_{j\in S^c, \ell\in \mathbb{Z}^{\nu}} \frac{\bar{g_{\ell j}}}{\mathrm{i}\big(\omega\cdot \ell-d_{-j}^{\infty}(\omega)\big)}\,e^{-\mathrm{i}(\ell \cdot \varphi+j x)}
\end{pmatrix}
\qquad
\stackrel{\eqref{primediMelnikov}}{\Rightarrow}\qquad
\lVert \mathcal{L}_{\infty}^{-1} G \rVert^{\gamma, \calO_{\infty}}_s
\lesssim_{s} \gamma^{-1} \lVert G \rVert_{s+2\tau+1}^{\gamma, \calO_{\infty}}\,.
\end{equation}
Thus, by \eqref{nebbia},  
we get the estimate \eqref{TameEstimateLomegaDP}.
Recalling \eqref{vascoblasco31}
we have that if $g\in S_{\mathtt{v}}$ then $g_{\ell j}\neq 0$ implies 
$\mathtt{v}\cdot \ell+j=0$. Hence $\mathcal{L}_{\infty}^{-1} G$
belongs to $S_{\mathtt{v}}$. Finally we deduce that $h$ is in $S_{\mathtt{v}}$
since the map $\chi$ in \eqref{nebbia} is $x$-translation invariant (see \eqref{traInvcomp}).
\end{proof}


\section{The Nash-Moser nonlinear iteration}\label{sezione9WW}

In this section we prove Theorem \ref{IlTeoremaDP}. 
It will be a consequence of the Nash-Moser 
theorem \ref{NashMoserDP}.\\
Consider the finite-dimensional subspaces
\[
E_n:=\{ \mathfrak{I}(\varphi)=(\Theta(\varphi), y(\varphi), W(\varphi, x)) \in H^{s_0}(\mathbb{T}^{\nu} ; \mathbb{R}^{\nu}) \times H^{s_0}(\mathbb{T}^{\nu} ; \mathbb{R}^{\nu}) \times H^{s_0}_{S^{\perp}} :  
\Theta=\Pi_n\Theta, y=\Pi_n y, W=\Pi_n W \}
\]
where 
\begin{equation*}
N_n:=N_0^{\chi^n}\,, \quad n=0, 1, 2, \dots\,, \quad \chi:=3/2\,, \quad N_0>0\,
\end{equation*}
and $\Pi_n$ are the projectors 
\begin{equation*}
\Pi_n \Theta(\varphi):=\sum_{\lvert \ell \rvert<N_n} \Theta_{\ell}\, 
e^{\mathrm{i} \ell\cdot \varphi}\,, \,\,
\Pi_n y(\varphi):=\sum_{\lvert \ell \rvert<N_n} y_{\ell}\,
e^{\mathrm{i} \ell\cdot \varphi}\,,\,\,
\Pi_n W(\varphi, x):=\sum_{\lvert (\ell, j) \rvert<N_n} 
W_{\ell j}\,e^{\mathrm{i}(\ell\cdot \varphi+j x)}\,, 
\end{equation*}
where
\begin{equation*}
\Theta(\varphi)=\sum_{\ell\in\mathbb{Z}^{\nu}} 
\Theta_{\ell}\,e^{\mathrm{i} \ell\cdot \varphi}\,, \,\quad
y(\varphi)=\sum_{\ell\in \mathbb{Z}^{\nu}} y_{\ell} \,
e^{\mathrm{i} \ell\cdot\varphi}\,,\quad W(\varphi, x)=
\sum_{\ell\in\mathbb{Z}^{\nu}, j\in S^c} W_{\ell j}\,e^{\mathrm{i} (\ell\cdot \varphi+j x)}\,.
\end{equation*}
\begin{remark}\label{projtrav}
We point out that if the embedding $\mathfrak{I}(\varphi):=(\Theta(\varphi), y(\varphi), W(\varphi))$ is traveling, i.e. it satisfies \eqref{trawaves}, then $\Pi_n \mathfrak{I}:=(\Pi_n \Theta, \Pi_n y, \Pi_n W)$ is traveling too. Indeed the \eqref{trawaves} is equivalent to say that each component of $\mathfrak{I}$ satisfies a linear PDE with constant coefficients (both in space and time). The action of these linear operators is diagonal on the monomials.
Hence the projection on some Fourier modes of a solution of such PDEs is still a solution of the same.
\end{remark}
We define $\Pi_n^{\perp}:=\mathrm{I}-\Pi_n$. 
The classical smoothing properties hold, namely, 
for all $\alpha, s\geq 0$,
\begin{equation*}
\lVert \Pi_n \mathfrak{I} \rVert_{s+\alpha}^{\g, \calO}\le N_n^{\alpha} \lVert \mathfrak{I}_{\delta} \rVert_s^{\g, \calO}, \quad \forall \,\mathfrak{I}(\omega)\in H^s\,, 
\quad \lVert \Pi_n^{\perp} \mathfrak{I} \rVert_s^{\g, \calO}
\le N_n^{-\alpha} \lVert \mathfrak{I} \rVert_{s+\alpha}^{\g, \calO}\,, 
\quad \forall\, \mathfrak{I}(\omega)\in H^{s+\alpha}\,.
\end{equation*}

\noindent
Recall \eqref{gammaDP}, \eqref{donnapia} for the definition of $b$ we set $a:=2b-2$.
We define the following constants 
\begin{equation}\label{parametriNMdp}
\begin{aligned}
&\alpha_0:=3\mu_1+3\,, \qquad \qquad  \qquad\alpha:=3\alpha_0+1\,, 
\qquad \qquad \qquad \alpha_1:=(\alpha-3 \mu_1)/2\,,\\
&k:=3(\alpha_0+\rho^{-1})+1\,, 
\qquad \beta_1:=6\alpha_0+3 \rho^{-1} +3\,, 
\qquad \frac{1}{2}\left(  \frac{1-8a}{C_1 (1+a)}\right)<\rho<\frac{1-8a}{C_1 (1+a)}\,
\end{aligned}
\end{equation}
where $\mu_1:=\mu_1( \nu)>0$ is the 
``loss of regularity'' given by the 
Theorem \ref{TeoApproxInvDP} and $C_1$ is fixed below. 

\begin{theorem}{{\bf (Nash-Moser).}}\label{NashMoserDP}
Let $\tau:=3\nu+7$ and recall the functional in \eqref{NonlinearFunctionalDP}.
Then there exist $C_1>\max\{ \alpha_0+\alpha, C_0 \}$ 
(where $C_0:=C_0( \nu)$ is the one in 
Theorem \ref{ReducibilityWW}), 
$\delta_0:=\delta_0( \nu)>0$ such that, if
\begin{equation}\label{SmallnessConditionNMdp}
N_0^{C_1} \varepsilon^{b_*+1} \gamma^{-7}<\delta_0\,, 
\quad \gamma:=\varepsilon^{2+a}=\varepsilon^{2 b}\,, 
\quad N_0:=(\varepsilon\gamma^{-1})^{\rho}\,, 
\quad b_*=16- 2 b\,,
\end{equation}
then there exists $C_{*}=C_{*}(S)>0$ such that 
for all $n\geq 0$ the following holds:
\begin{itemize}
\item[$(\mathcal{P}1)_n$] there exists a function 
$(\mathfrak{I}_n, \Xi_n)\colon \mathcal{G}_n 
\subseteq \Omega_{\varepsilon} \to E_{n-1}\times \mathbb{R}^{\nu}, 
\omega\mapsto (\mathfrak{I}_n(\omega), \Xi_n(\omega)), 
(\mathfrak{I}_0, \Xi_0):=(0, 0), E_{-1}:=\{0\}$,
where the set $\mathcal{G}_0$ is defined 
in \eqref{0di0Melnikov} (with $\mathtt{C}_1,\mathtt{C}_2$ 
given in Proposition \ref{ConcluLinear}) 
and the sets 
$\mathcal{G}_n$ for $n\geq 1$ 
are defined inductively by:
\begin{align}
&\mathcal{G}_{n+1}:=
\Lambda_{n+1}^{(0)}\bigcap \Lambda_{n+1}^{(1)}\bigcap \Lambda_{n+1}^{(2,+)}
\bigcap \Lambda_{n+1}^{(2,-)}\,, \nonumber\\
&\Lambda^{(0)}_{n+1}:=\left\{ \omega\in \mathcal{G}_n : 
\lvert\omega\cdot \ell+\mathfrak{m}_1(i_n) j \rvert\geq 
\frac{2\,\gamma_n}{\langle \ell \rangle^{\tau}}, \,\,\,\mathtt{v}\cdot \ell+j=0,\,\,
\,\,\forall j\in S^c , \ell\in\mathbb{Z}^{\nu} \right\}\,, \nonumber\\ \label{GnDP}
&\Lambda^{(1)}_{n+1}:=\left\{ \omega\in \mathcal{G}_n : 
\lvert\omega\cdot \ell+ \s d_{\s j}^{\infty}(i_n) \rvert\geq 
\frac{2\,\gamma_n}{\langle \ell \rangle^{\tau}}, \,\,
\forall j\in S^c , \ell\in\mathbb{Z}^{\nu}\,, \; \mathtt{v}\cdot\ell+j=0\,, \, \s=\pm \right\}\,,\\
&\Lambda^{(2,+)}_{n+1}:=\Big\{ \omega\in \mathcal{G}_n : 
\lvert \omega\cdot \ell+\s(d_{\s j}^{\infty}(i_n)+d_{-\s k}^{\infty}(i_n)) \rvert\geq 
\frac{2\,\gamma_n\,}{\langle \ell \rangle^{\tau}}, \,\,\; \mathtt{v}\cdot\ell+j-k=0\,,\,
\forall j, k\in S^c, \ell\in\mathbb{Z}^{\nu}\,,\;\s=\pm \Big\}\,, \nonumber\\
&\Lambda^{(2,-)}_{n+1}:=\Big\{ \omega\in \mathcal{G}_n : 
\lvert \omega\cdot \ell+\s(d_{\s j}^{\infty}(i_n)-d_{\s k}^{\infty}(i_n)) \rvert\geq 
\frac{2\,\gamma^{*}_n\,}{\langle \ell \rangle^{\tau}}, \,\,
\forall j, k\in S^c, j\neq k , \ell\in\mathbb{Z}^{\nu}\,,\;
\mathtt{v}\cdot\ell+j-k=0\,, \s=\pm \Big\}\,, \nonumber
\end{align}
where $\gamma_n:=\gamma (1+2^{-n})$, 
$\gamma^{*}_n:=\gamma^{3}(1+2^{-n})$ and 
$d_j^{\infty}(\omega):=d_j^{\infty}(\omega, i_n(\omega))$ 
are defined in \eqref{FinalEigenvaluesDP}. 
Moreover 
$\lvert \zeta_n \rvert^{\gamma, \mathcal{G}_n}
\lesssim \lVert \mathcal{F}(U_n) \rVert_{s_0}^{\gamma, \mathcal{G}_n}$ 
and
\begin{equation}\label{ConvergenzaDP}
\lVert \mathfrak{I}_n \rVert_{s_0+\mu_1}^{\gamma, \mathcal{G}_n}
\le C_* \varepsilon^{b_*} \gamma^{-1}\,, 
\quad \lVert \mathcal{F}(U_n) \rVert_{s_0+\mu_1+3}^{\gamma, \mathcal{G}_n}
\le C_* \varepsilon^{b_*}\,,
\end{equation}
where $U_n:=(i_n, \Xi_n)$ with 
$i_n(\varphi)=(\varphi, 0, 0)+\mathfrak{I}_n(\varphi)$. The embedding $\mathfrak{I}_n$ is traveling, namely it satisfies \eqref{trawaves}.\\
The differences 
$\hat{\mathfrak{I}}_n:=\mathfrak{I}_n-\mathfrak{I}_{n-1}$ 
(where we set $\hat{\mathfrak{I}}_0:=0$) 
is defined on $\mathcal{G}_n$, and satisfy
\begin{equation}\label{FrakHatDP}
\lVert \hat{\mathfrak{I}}_1 \rVert_{s_0+\mu_1}^{\gamma, \mathcal{G}_1}
\le C_* \varepsilon^{b_*} \gamma^{-1}\,, 
\quad \lVert \hat{\mathfrak{I}}_n \rVert_{s_0+\mu_1}^{\gamma, \mathcal{G}_{n}}
\le C_* \varepsilon^{b_*} \gamma^{-1} N_{n-1}^{-\alpha}\,, 
\quad \forall n\geq 2\,.
\end{equation}
\item[$(\mathcal{P} 2)_n$] 
$\lVert \mathcal{F}(U_n) \rVert_{s_0}^{\gamma, \mathcal{G}_n}
\le C_* \varepsilon^{b_*} N_{n-1}^{-\alpha}$ 
where we set $N_{-1}:=1$.
\item[$(\mathcal{P}3)_n$]{(High Norms)}. 
$\lVert \mathfrak{I}_n \rVert_{s_0+\beta_1}^{\gamma, \mathcal{G}_n}
\le C_* \varepsilon^{b_*} \gamma^{-1} N_{n-1}^k$ 
and $\lVert \mathcal{F}(U_n) \rVert_{s_0+\beta_1}^{\gamma, \mathcal{G}_n}
\le C_* \varepsilon^{b*} N_{n-1}^k$.
\item[$(\mathcal{P} 4)_n$]{(Measure)}. 
The measure of the ``Cantor-like'' sets $\mathcal{G}_n$ 
satisfies
\begin{equation}\label{MisureDP}
\lvert \Omega_{\varepsilon}\setminus \mathcal{G}_0 \rvert
\le C_* \varepsilon^{2 (\nu-1)} \gamma\,, 
\quad \lvert \mathcal{G}_n \setminus \mathcal{G}_{n+1} \rvert
\le C_* \varepsilon^{2 (\nu-1)} \gamma N_{n-1}^{-1}\,.
\end{equation}  
\end{itemize}
\end{theorem}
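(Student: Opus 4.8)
The statement is the standard Nash--Moser iteration along the lines of \cite{BertiBolle,KdVAut}, adapted here so that (a) the iteration runs inside the space of traveling embeddings (those satisfying \eqref{trawaves}), and (b) the diophantine losses are compatible with the small size $\gamma=\e^{2b}$ of the bifurcation constant. The plan is to prove the four statements $(\mathcal{P}1)_n$--$(\mathcal{P}4)_n$ simultaneously by induction on $n$, using at each step the approximate inverse of Theorem \ref{TeoApproxInvDP} (whose ``inversion assumption'' \eqref{InversionAssumptionDP} is guaranteed by Proposition \ref{InversionLomegaDP} on the set $\calO_\infty$). First I would verify the initialization $(\mathcal{P}1)_0$--$(\mathcal{P}4)_0$: for $n=0$ one has $\mathfrak{I}_0=0$, $\Xi_0=0$, so $\mathcal{F}(U_0)=(0,0,0)-X_{\mathcal{N}}(\varphi,0,0)$ which, by the form \eqref{HamiltonianaRiscalataDP2} of the Hamiltonian, has size $O(\e^{b_*})$ in any Sobolev norm (here $b_*=16-2b$ comes from the order at which the weak Birkhoff normal form of Proposition \ref{WBNFWW} becomes a genuine remainder); the measure bound $|\Omega_\e\setminus\mathcal{G}_0|\lesssim \e^{2(\nu-1)}\gamma$ is exactly Lemma \ref{measG0}.

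For the inductive step I would follow the usual modified-Newton scheme: given $U_n=(i_n,\Xi_n)$ defined on $\mathcal{G}_n$, set $U_{n+1}=U_n+H_{n+1}$, $H_{n+1}=(\hat{\mathfrak{I}}_{n+1},\hat\Xi_{n+1}):=-\Pi_{n+1}\mathbf{T}_n\Pi_{n+1}\mathcal{F}(U_n)$, where $\mathbf{T}_n$ is the approximate inverse of $d\mathcal{F}(i_n)$ furnished by Theorem \ref{TeoApproxInvDP}, which is well defined on $\mathcal{G}_{n+1}\subseteq\calO_\infty(i_n)$ by the very definition \eqref{GnDP} of the sets $\Lambda^{(0)}_{n+1}$, $\Lambda^{(1)}_{n+1}$, $\Lambda^{(2,\pm)}_{n+1}$. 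The heart of the argument is the quadratic estimate
\[
\mathcal{F}(U_{n+1})=\mathcal{F}(U_n)+d\mathcal{F}(i_n)[H_{n+1}]+Q_n,\qquad Q_n:=\mathcal{F}(U_{n+1})-\mathcal{F}(U_n)-d\mathcal{F}(i_n)[H_{n+1}],
\]
which one rewrites, using $d\mathcal{F}(i_n)\mathbf{T}_n=\mathrm{I}+(\text{small})$, as a sum of a genuinely quadratic term $Q_n$, a term coming from the error $d\mathcal{F}(i_n)\mathbf{T}_n-\mathrm{I}$ (estimated in part 2 of Theorem \ref{TeoApproxInvDP}), and the smoothing tails $\Pi_{n+1}^\perp\mathcal{F}(U_n)$. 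Interpolating between the low norm ($s_0+\mu_1$) and the high norm ($s_0+\beta_1$) with the smoothing properties of $\Pi_{n+1}$, and invoking the tame bounds of Lemma \ref{TameEstimatesforVectorfieldsDP} for $Q_n$, one closes $(\mathcal{P}2)_{n+1}$, $(\mathcal{P}3)_{n+1}$ and \eqref{ConvergenzaDP}--\eqref{FrakHatDP} provided the smallness condition \eqref{SmallnessConditionNMdp} holds with $C_1$ chosen large compared to $\mu_1,\alpha,\alpha_0$ and to the constant $C_0$ of Theorem \ref{ReducibilityWW}; the choices \eqref{parametriNMdp} of $\alpha_0,\alpha,\alpha_1,k,\beta_1,\rho$ are exactly what makes the exponents of $N_n$ superexponentially decaying. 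The preservation of the traveling-wave structure along the iteration is a consequence of Lemma \ref{Fpreservatrav} (which shows $\mathcal{F}$ maps traveling embeddings to traveling ones), of the fact that $\mathbf{T}_0$ preserves \eqref{trawaves} (proved after Theorem \ref{TeoApproxInvDP} using Lemma \ref{subspaceMomento} and \eqref{paganini1}--\eqref{paganini3}), and of Remark \ref{projtrav} for the projectors $\Pi_{n+1}$: one checks inductively that $\hat{\mathfrak{I}}_{n+1}$, hence $\mathfrak{I}_{n+1}$, is traveling.

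The main obstacle is the measure estimate $(\mathcal{P}4)_n$, i.e.\ bounding $|\mathcal{G}_n\setminus\mathcal{G}_{n+1}|$. This requires controlling, for each of the resonant sets $\Lambda^{(0)}$, $\Lambda^{(1)}$, $\Lambda^{(2,\pm)}$, the measure of the $\omega$ for which the corresponding small divisor $\omega\cdot\ell+\s d_{\s j}^\infty(\omega)$ or $\omega\cdot\ell+\s(d_{\s j}^\infty\pm d_{\s' k}^\infty)(\omega)$ is too small. The key points are: the first-order expansion \eqref{FinalEigenvaluesDP}--\eqref{finaleigen} of the perturbed eigenvalues, where by the identification-of-normal-forms Proposition \ref{identificoBNF} the $O(\e^2)$-correction is the \emph{integrable} one governed only by $m_1$ in \eqref{constM1}, so that the twist matrix $\mathbb{A}-\mathbb{V}$ of \eqref{twistcondition2} has nonzero determinant for a generic choice of the tangential sites $S$; the conservation of momentum, which restricts all the conditions to indices with $\mathtt{v}\cdot\ell+j-k=0$ (or $\mathtt{v}\cdot\ell+j=0$), so that, via Lemma \ref{delpiero}-type arguments, when $|j|,|k|$ are much larger than $|\ell|$ the second Melnikov conditions with $\eta=\gamma^3$ are implied by the zeroth-order ones \eqref{zerointro} and only finitely many $(j,k)$ per $\ell$ remain; and the case $k=-j$, which by momentum forces $\mathtt{v}\cdot\ell=-2j$ and reduces $\psi_{\ell,j,-j}$ to $\bar\omega\cdot\ell+\e^2(\mathbb{A}\zeta-m_1\mathtt{v})\cdot\ell+o(\e^2)$, controlled on $\mathcal{G}_0^{(1)}$. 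Summing the resulting bounds $|R_{\ell,j,k}|\lesssim \gamma^3\langle\ell\rangle^{-\tau}/(\e^{2(\nu-1)})^{-1}$ over $\ell$ (using $\tau>\nu$) and over the finitely many residual $(j,k)$ gives \eqref{MisureDP}, and the choice $b_*=16-2b$ together with $\gamma=\e^{2b}$ keeps $\e^{b_*}\gamma^{-7}N_0^{C_1}$ small, i.e.\ \eqref{SmallnessConditionNMdp}. Finally, passing to the limit $n\to\infty$, the sequence $\mathfrak{I}_n$ converges in norm $s_0+\mu_1$ to $i_\infty$ on $\mathcal{C}_\e:=\cap_n\mathcal{G}_n$, one shows $\Xi_\infty=0$ by Lemma \ref{Lemma6.1DP}, and $|\Omega_\e\setminus\mathcal{C}_\e|\le\sum_n|\mathcal{G}_n\setminus\mathcal{G}_{n+1}|+|\Omega_\e\setminus\mathcal{G}_0|=o(\e^{2(\nu-1)})\cdot\e^{2(\nu-1)}\gamma\cdot(1+o(1))$, which combined with $|\Omega_\e|\sim\e^{2\nu-2}$ yields \eqref{frazionemisureDP}; linear stability follows from the diagonalization in Theorem \ref{ReducibilityWW}. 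This proves Theorem \ref{IlTeoremaDP}, hence Theorem \ref{thm:main}.
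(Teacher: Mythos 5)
Your proposal is correct and follows essentially the same route as the paper: initialization via the tame bounds on $X_P$ (note $\mathcal{F}(U_0)=-X_P(\varphi,0,0)$, not $-X_{\mathcal N}$, since the $\mathcal N$-part vanishes at the trivial torus), the modified Newton step $H_{n+1}=-\tilde{\Pi}_n\mathbf{T}_n\Pi_n\mathcal{F}(U_n)$ with the approximate inverse of Theorem \ref{TeoApproxInvDP}, preservation of the traveling structure via Lemma \ref{Fpreservatrav}, Remark \ref{projtrav} and the momentum-preserving property of $\mathbf{T}_0$, and the measure estimates for $(\mathcal{P}4)_n$ based on the twist condition, momentum conservation and the Lemma \ref{delpiero} mechanism, exactly as in the paper (which defers $(\mathcal{P}1)$--$(\mathcal{P}3)$ to the scheme of \cite{FGP1} and proves $(\mathcal{P}4)_n$ in Section \ref{sezioneStimeMisura}).
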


\begin{proof}
Recalling \eqref{NonlinearFunctionalDP} and Lemma \ref{TameEstimatesforVectorfieldsDP} we have
\[
\lVert \mathcal{F}(U_0)\rVert_s
=\lVert \mathcal{F}((\varphi, 0, 0), 0) \rVert_s
=\lVert X_P(\varphi, 0, 0) \rVert_s\lesssim_s \varepsilon^{16-2 b}\,.
\]
Hence the smallness conditions in 
$(\mathcal{P}_1)_0, (\mathcal{P}_2)_0, (\mathcal{P}_3)_0$ 
hold taking $C_*$ large enough.
Assume that $(\mathcal{P}_1)_n, (\mathcal{P}_2)_n, (\mathcal{P}_3)_n$ 
hold for some $n\geq 0$.
By \eqref{parametriNMdp} and \eqref{SmallnessConditionNMdp} we have that $N_0^{C_1} \varepsilon^{b_*+1} \gamma^{-7}$ is arbitrarily small with $\varepsilon$.\\
If we take $C_1$ bigger than $C_0$ in Theorem \ref{ReducibilityWW} then \eqref{PiccolezzaperKamredDP} holds for $\varepsilon$ small enough. 	
In \eqref{AssumptionDP} we consider $\mathfrak{p}_0=\gotp_1+{\mathfrak{d}}$, where $\gotp_1:=\mu+2\tau+1$ (where $\mu$ is given in 
Proposition \ref{ConcluLinear}) and ${\mathfrak{d}}$ appears in Lemma \ref{isotropictorus}. Since $\mu_1\gg \mathfrak{p}_0$ \eqref{ConvergenzaDP} implies \eqref{AssumptionDP} 
and so \eqref{IpotesiPiccolezzaIdeltaDP}, 
and Proposition \ref{InversionLomegaDP} applies. 
Hence the operator 
$\mathcal{L}_{\omega}:=\mathcal{L}_{\omega}(\omega, i_n (\omega))$  
in \eqref{BoraMaledetta} is defined on 
$\calO_0= \mathcal{G}_n$  and is invertible for all 
$\omega\in \mathcal{G}_{n+1}$ since 
$\mathcal{G}_{n+1}\subseteq \calO_{\infty}$ 
and the 
\eqref{TameEstimateLomegaDP} holds. 
This means that the assumption \eqref{InversionAssumptionDP} 
of Theorem  \ref{TeoApproxInvDP} is verified with 
$\Omega_{\infty}=\mathcal{G}_{n+1}$. 
By Theorem \ref{TeoApproxInvDP} 
there exists an approximate inverse 
$\textbf{T}_n(\omega):=\textbf{T}_0(\omega, i_n(\omega))$ 
of the linearized operator 
$L_n(\omega):=d \mathcal{F}(\omega, i_n(\omega),\Xi_n)
\equiv d \mathcal{F}(\omega, i_n(\omega),0)$.


Now, for all $\omega\in\mathcal{G}_{n+1}$, we can define, for $n\geq 0$,
\begin{equation}\label{HnDefDP}
U_{n+1}:=U_n+H_{n+1}, \quad H_{n+1}
:=(\hat{\mathfrak{I}}_{n+1}, \hat{\Xi}_{n+1})
:=-\tilde{\Pi}_n \textbf{T}_n \Pi_n \mathcal{F}(U_n)\in 
E_n\times\mathbb{R}^{\nu}\,,
\end{equation}
where 
$\tilde{\Pi}_n(\mathfrak{I}, \Xi):=(\Pi_n \mathfrak{I}, \Xi)$ . By the inductive hypothesis $\mathfrak{I}_n$ is traveling, namely it satisfies \eqref{trawaves}. We claim that $\hat{\mathfrak{I}}_{n+1}$ satisfies \eqref{trawaves}. Indeed by Lemma \ref{Fpreservatrav} $\mathcal{F}(U_n)$ is traveling, by Remark \ref{projtrav} the projection $\Pi_n$ of a traveling embedding is a traveling embedding,  by Theorem  \ref{TeoApproxInvDP} the approximate inverse $\mathbf{T}_n$ maps traveling embeddings into traveling embeddings.
By construction we have
\begin{align*}
&\mathcal{F}(U_{n+1})=\mathcal{F}(U_n)+L_n H_{n+1}+Q_n\,,\\
Q_n:=&Q(U_n, H_{n+1})\,, 
\quad Q(U_n, H):=\mathcal{F}(U_n+H)-\mathcal{F}(U_n)-L_n H\,, 
\quad H\in E_n\times\mathbb{R}^{\nu}\,.
\end{align*}
Then, by the definition of $H_{n+1}$ in \eqref{HnDefDP}, 
using $[L_n, \Pi_n]$ and writing 
$\tilde{\Pi}_n^{\perp}(\mathfrak{I}, \Xi):=(\Pi_n^{\perp} \mathfrak{I}, 0)$ 
we have
\begin{equation*}
\begin{aligned}
\mathcal{F}(U_{n+1})&=
\mathcal{F}(U_n)-L_n \tilde{\Pi}_n \textbf{T}_n \Pi_n \mathcal{F}(U_n)+Q_n
=\mathcal{F}(U_n)-L_n \textbf{T}_n \Pi_n \mathcal{F}(U_n)
+L_n \tilde{\Pi}_n^{\perp} \textbf{T}_n \Pi_n \mathcal{F}(U_n)+Q_n\\
&=\mathcal{F}(U_n)-\Pi_n L_n \textbf{T}_n \Pi_n \mathcal{F}(U_n)
+(L_n \tilde{\Pi}_n^{\perp}-\Pi_n^{\perp}L_n)
\textbf{T}_n \Pi_n \mathcal{F}(U_n)+Q_n
=\Pi_n^{\perp} \mathcal{F}(U_n)+R_n+Q_n+Q'_n
\end{aligned}
\end{equation*}
where
\begin{equation}
R_n:=(L_n \tilde{\Pi}_n^{\perp}-\Pi_n^{\perp}L_n)
\textbf{T}_n \Pi_n \mathcal{F}(U_n)\,, 
\quad Q'_n:=-\Pi_n (L_n \textbf{T}_n-\mathrm{I})
\Pi_n \mathcal{F}(U_n)\,.
\end{equation}

The estimates in $(\mathcal{P} 1)_n$, $(\mathcal{P} 2)_n$, $(\mathcal{P} 3)_n$
follow word by word as in 
section $8$ in \cite{FGP1}. The measure estimates $(\mathcal{P} 4)_n$, which are different respect to \cite{FGP1}, are proved in the next section.
\end{proof}

\subsection{Measure estimates}\label{sezioneStimeMisura}
In this section we give the proof of the bound \eqref{MisureDP}.
Recall \eqref{FinalEigenvaluesDP}, \eqref{finaleigen}.
Let us define for $0<\eta\lesssim\,\sqrt{\varepsilon}$, 
$\tau_0\geq 1$ and $n\in \mathbb{N}$
\begin{align*}
R^{(-)}_{\ell j k}(\eta, \tau_0):=R^{(-)}_{\ell j k}(i_{n},\eta, \tau_0)
&:=\{ \omega\in\mathcal{G}_n : \lvert \omega\cdot \ell
+ \s(d_{\s j}^{\infty}-  d_{\s k}^{\infty}) \rvert\le 
{2\eta}\langle \ell \rangle^{-\tau_0}\,,\; \mathtt{v}\cdot\ell+j-k=0\,,\; \s=\pm\}\\
R^{(+)}_{\ell j k}(\eta, \tau_0):=R^{(+)}_{\ell j k}(i_{n},\eta, \tau_0)
&:=\{ \omega\in\mathcal{G}_n : \lvert \omega\cdot \ell
+\s( d_{\s j}^{\infty}+  d_{-\s k}^{\infty}) \rvert\le 
{2\eta}\langle \ell \rangle^{-\tau_0}\,, \; \mathtt{v}\cdot\ell+j-k=0\,,\, \s=\pm\}\,,\\
Q_{\ell j }(\eta, \tau_0):=Q_{\ell j }(i_n,\eta, \tau_0)
&:=\{ \omega\in\mathcal{G}_n : \lvert \omega\cdot \ell+\mathfrak{m}_1(\omega)\, j \rvert\le 
{2\eta}\langle \ell \rangle^{-\tau_0}\,, \; \mathtt{v}\cdot\ell+j=0\}\,,\\
P_{\ell j }(\eta, \tau_0):=P_{\ell j }(i_n,\eta, \tau_0)
&:=\{ \omega\in \mathcal{G}_n : \lvert \omega\cdot \ell+\s d_{\s j}^{\infty} \rvert\le 
{2\eta}\langle \ell \rangle^{-\tau_0}\,,\; \mathtt{v}\cdot\ell+j=0\,,\s=\pm\}\,.
\end{align*}
Recalling \eqref{GnDP} we can write, setting 
$\eta\rightsquigarrow \gamma_n$ for the sets  
$Q_{\ell j }(\eta,\tau_0)$, $P_{\ell j }(\eta,\tau_0)$ and  $R^{(+)}_{\ell jk}(\eta,\tau_0)$,
 $\eta\rightsquigarrow \gamma^{*}_n$ for the set $R^{(-)}_{\ell jk}(\eta,\tau_0)$, 
 and $\tau_0\rightsquigarrow \tau$, 
\begin{equation}\label{UnionDP}
\mathcal{G}_n \setminus \mathcal{G}_{n+1}=
\bigcup_{\ell\in\mathbb{Z}^{\nu}, j, k\in S^c} 
\Big( R^{+}_{\ell j k}(i_n,\gamma_{n},\tau) \cup 
R^{-}_{\ell j k}(i_n,\gamma^{*}_{n},\tau)\cup 
Q_{\ell j }(i_n,\gamma_{n},\tau) \cup P_{\ell j}(i_n,\gamma_{n},\tau)\Big)\,.
\end{equation}
We notice that
\begin{itemize}
\item by \eqref{0di0Melnikov} and $\gamma>\gamma^{3}$ 
(see \eqref{SmallnessConditionNMdp}), 
$R^{(-)}_{\ell j k}(i_n)=\emptyset$ for $j=k$, $\ell\neq 0$.
\item If $\ell=0$, by the momentum conservation, we have that $ j= k$. 
Hence $\mathcal{R}^{(+)}_{0 j j}=\emptyset$.
\end{itemize}

We start with a preliminary lemma, 
which gives a first relation between $\ell,j,k$ which must be 
satisfied in order to have non empty resonant sets.
\begin{lemma}\label{BrexitDP}
Let $n \geq 0$.
There is a constant $C>0$  dependent 
of the tangential set and 
independent of $\ell, j, k, n, i_n, \omega$
such that the following holds:
\begin{itemize}
\item if $R^{(+)}_{\ell j k}(i_n,\eta,\tau_0)\neq \emptyset$ 
then 
 $\lvert \ell \rvert\geq C ( \sqrt{\vert j \rvert}+\sqrt{\lvert k \rvert} )$;
\item if $Q_{\ell j}(i_n,\eta,\tau_0)\neq \emptyset$  
then 
$\lvert \ell \rvert\geq C \lvert j \rvert$;
\item if $P_{\ell j}(i_n,\eta,\tau_0)\neq \emptyset$  
then  $\lvert \ell \rvert\geq C \lvert j \rvert$.
\end{itemize}
\end{lemma}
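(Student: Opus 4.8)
\textbf{Plan of proof for Lemma \ref{BrexitDP}.}
The plan is to exploit the asymptotic expansion of the final eigenvalues \eqref{FinalEigenvaluesDP}, \eqref{finaleigen} together with the momentum constraint relating $\ell$ to $j,k$, in the spirit of the standard ``separation'' argument for measure estimates. First I would record the basic size information: by \eqref{stimeautovalfinaliDP} and \eqref{accendo2} one has $d_j^\infty = \mathfrak{m}_1(\omega) j + (1+\mathtt{m}_{1/2}^{(\geq 4)})\sqrt{|j|} + \mathtt{m}_0^{(\geq4)}\mathrm{sign}(j) + O(\varepsilon^3 \langle j\rangle^{-1/2})$, where $\mathfrak{m}_1 = \varepsilon^2 m_1 + O(\varepsilon^4)$ is small (of size $\varepsilon^2$) and $|1+\mathtt{m}_{1/2}^{(\geq4)}| \in [1/2, 2]$ for $\varepsilon$ small, while $|\mathtt{m}_0^{(\geq4)}| = O(\varepsilon^2)$. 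Also $\omega \in \Omega_\varepsilon$ means $\omega = \bar\omega + O(\varepsilon^2)$, so $|\omega| \leq C(S)$ for some constant depending only on the tangential sites.

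For the set $Q_{\ell j}(i_n,\eta,\tau_0)$: if it is nonempty then there is $\omega$ with $|\omega\cdot\ell + \mathfrak{m}_1 j| \leq 2\eta\langle\ell\rangle^{-\tau_0} \leq 2\eta \leq C\sqrt\varepsilon$. Hence $|\mathfrak{m}_1||j| \leq |\omega||\ell| + C\sqrt\varepsilon \leq C(S)|\ell| + C\sqrt\varepsilon$. Using the momentum relation $\mathtt{v}\cdot\ell + j = 0$ we also get $|j| = |\mathtt{v}\cdot\ell| \leq |\mathtt{v}||\ell|$, which directly gives $|\ell| \geq |\mathtt{v}|^{-1}|j|$; so here the momentum relation alone yields the claim with $C = |\mathtt{v}|^{-1}$ (depending only on $S$). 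For $P_{\ell j}(i_n,\eta,\tau_0)$: again the momentum relation $\mathtt{v}\cdot\ell + j = 0$ forces $|j| \leq |\mathtt{v}||\ell|$, giving $|\ell| \geq |\mathtt{v}|^{-1}|j|$ immediately; the precise form of $d_j^\infty$ is not even needed. So the only genuinely nontrivial case is $R^{(+)}_{\ell jk}$, where the momentum relation is $\mathtt{v}\cdot\ell + j - k = 0$ and $j,k$ can have the same sign, so $j-k$ can be small while $|j|,|k|$ are both large.

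For $R^{(+)}_{\ell jk}(i_n,\eta,\tau_0)$: nonemptiness gives $\omega$ with $|\omega\cdot\ell + \sigma(d_{\sigma j}^\infty + d_{-\sigma k}^\infty)| \leq 2\eta\langle\ell\rangle^{-\tau_0} \leq C\sqrt\varepsilon$. The key point is that $d_{\sigma j}^\infty + d_{-\sigma k}^\infty$ contains the \emph{sum} $(1+\mathtt{m}_{1/2}^{(\geq4)})(\sqrt{|j|} + \sqrt{|k|})$ of the two half-power terms (which do not cancel, being both positive), plus $\mathfrak{m}_1(\sigma j - \sigma k) = -\sigma \mathfrak{m}_1 (\mathtt{v}\cdot\ell)$ by momentum, plus $O(\varepsilon^2)$ and $O(\varepsilon^3(\langle j\rangle^{-1/2}+\langle k\rangle^{-1/2}))$ corrections. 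Therefore, using $|\sigma j - \sigma k| = |\mathtt{v}\cdot\ell| \leq |\mathtt{v}||\ell|$,
\[
\tfrac12(\sqrt{|j|}+\sqrt{|k|}) \leq |\omega||\ell| + |\mathfrak{m}_1||\mathtt{v}||\ell| + C\varepsilon^2 + C\sqrt\varepsilon \leq C(S)|\ell| + C\sqrt\varepsilon\,.
\]
Since $j,k \in S^c$ implies $|j|,|k| \geq 1$, the left side is $\geq 1$, so for $\varepsilon$ small enough (so that $C\sqrt\varepsilon \leq \tfrac14(\sqrt{|j|}+\sqrt{|k|})$) we absorb the $C\sqrt\varepsilon$ term and obtain $\sqrt{|j|}+\sqrt{|k|} \leq C(S)|\ell|$, which is the assertion. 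I expect the main (mild) obstacle to be purely bookkeeping: making sure all the constants arising from $|\omega| \leq C(S)$, $|\mathfrak{m}_1| \lesssim \varepsilon^2$, and $|\mathtt{m}_0^{(\geq4)}| \lesssim \varepsilon^2$ are genuinely independent of $\ell,j,k,n,i_n,\omega$ — this follows from \eqref{mer}, \eqref{merstima}, \eqref{stimeautovalfinaliDP}, \eqref{accendo2} since all those bounds are uniform in $n$ and the torus $i_n$ — and checking that the $\sigma = \pm$ cases are handled symmetrically. The cancellation structure that would be dangerous (the $\sqrt{|j|} - \sqrt{|k|}$ difference accumulating, as in Lemma \ref{delpiero}) does \emph{not} occur for $R^{(+)}$ precisely because it is a sum of half-powers, which is exactly why this lemma is easy for the $(+)$ sets and why no analogous statement is claimed for $R^{(-)}$.
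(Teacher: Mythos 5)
Your proof is correct and, for the only non-trivial set $R^{(+)}_{\ell j k}$, follows essentially the same route as the paper: bound the eigenvalue sum from below by a fixed fraction of $\sqrt{|j|}+\sqrt{|k|}$ (the half-powers add rather than cancel), use the momentum relation to control the $\mathfrak{m}_1(j-k)$ contribution by $C\varepsilon^2|\ell|$, bound $|\omega\cdot\ell|\lesssim_S |\ell|$, and absorb the $2\eta\langle\ell\rangle^{-\tau_0}\lesssim\sqrt{\varepsilon}$ term using $\sqrt{|j|}+\sqrt{|k|}\geq 2$. For $Q_{\ell j}$ and $P_{\ell j}$ the paper only says ``the others are similar,'' and your reading — that the stated bound $|\ell|\geq C|j|$ follows directly from the momentum constraint $\mathtt{v}\cdot\ell+j=0$, since the eigenvalue asymptotics alone would only yield $|\ell|\gtrsim\sqrt{|j|}$ for $P_{\ell j}$ — is the correct (and the only uniform-in-$\varepsilon$) way to get it.
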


\begin{proof}
If $R^{(+)}_{\ell j k}(i_n,\eta,\tau_0)\neq \emptyset$, 
then there exists $\omega$ such that 
\begin{equation*}
\lvert \s (d_{\s j}^{\infty}(\omega, i_n(\omega))+d_{-\s k}^{\infty}(\omega, i_n(\omega)))-\mathfrak{m}_1\,\mathtt{v}\cdot \ell \rvert
< {2 \eta}{\langle \ell \rangle^{-\tau_0}}
+ \lvert ({\omega}-\mathfrak{m}_1\,\mathtt{v})\cdot \ell \rvert\,.
\end{equation*}
Moreover, using \eqref{FinalEigenvaluesDP}, 
\eqref{stimeautovalfinaliDP}, we get
$\lvert \s(d_{\s j}^{\infty}(\omega, i_n(\omega))+d_{-\s k}^{\infty}(\omega, i_n(\omega)))-\mathfrak{m}_1\,\mathtt{v}\cdot \ell \rvert 
\geq \frac{1}{3} \big( \sqrt{\lvert j \rvert}+\sqrt{\lvert k \rvert} \big).
$
Thus, for $\varepsilon$ small enough 
\[
\lvert \ell \rvert
\gtrsim_S  \lvert ({\omega}-\mathfrak{m}_1\,\mathtt{v})\cdot \ell \rvert
\gtrsim_S \left( \frac{1}{3}-\frac{2 \eta}{\langle \ell \rangle^{\tau_0}
\big( \sqrt{\lvert j \rvert}+\sqrt{\lvert k \rvert} \big)} \right)\big( \sqrt{\lvert j \rvert}+\sqrt{\lvert k \rvert} \big)\gtrsim_S \frac{1}{4}\big( \sqrt{\lvert j \rvert}+\sqrt{\lvert k \rvert} \big)
\]
and this proves the first claim on $R^{(+)}_{\ell j k}(i_{n}, \eta,\tau_0)$.
The others are similar.
\end{proof}

\begin{remark}\label{infinitiJK}
The above lemma 
implies that for any $\ell\in \mathbb{Z}^{\nu}$ there are only finitely many
indexes $j,k\in S^{c}$ such that the sets 
 ${R}^{(+)}_{\ell,j,k}(\eta,\tau_0), Q_{\ell,j}(\eta,\tau_0), P_{\ell,j}(\eta,\tau_0)$ 
 are not empty. 
 Unfortunately
 this is not true for the set $R^{(-)}_{\ell,j,k}(\eta,\tau_0)$.
\end{remark}

\subsubsection{Measure of a resonant set}\label{singleSET}
The aim of this subsection is to prove the following lemma.
\begin{lemma}\label{singolo}
We have that 
\begin{equation}\label{stimaBadERRE}
\lvert R^{(-)}_{\ell j k}(\eta, \sigma) \rvert\le 
K \varepsilon^{2(\nu-1)} \eta \langle \ell \rangle^{-\tau_0}\,,
\end{equation}
for some $K=K(S)$.
The same holds for $R^{(+)}_{\ell j k}(\eta, \tau_0) $,
$Q_{\ell j}(\eta,\tau_0)$ and $P_{\ell j}(\eta,\tau_0)$.
\end{lemma}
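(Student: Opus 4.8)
\textbf{Proof plan for Lemma \ref{singolo}.}
The strategy is the standard one for measure estimates of resonant sets of frequencies: fix the integer data $(\ell, j, k)$ (resp.\ $(\ell, j)$) for which the set is nonempty and show that the offending quantity, viewed as a function of $\omega$ restricted to a one–dimensional line, has a derivative bounded away from zero; then the set where it is small has small one–dimensional measure, and one integrates over the transverse directions using that $\Omega_\varepsilon$ has measure $O(\varepsilon^{2\nu})$ (recall \eqref{OmegaEpsilonDP} and \eqref{FreqAmplMapDP}). I would treat the four families in parallel, since the argument is essentially the same; the only genuinely new point compared with Lemma \ref{measG0} is that here the perturbed eigenvalues $d^\infty_j$ enter, so one must control the Lipschitz variation of $r^{(\infty)}_j$.

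First I would parametrize $\Omega_\varepsilon$ via the frequency–amplitude map: writing $\omega = \omega(\zeta) = \bar\omega + \varepsilon^2 \mathbb{A}\zeta + O(\varepsilon^4)$ with $\zeta \in [1,2]^\nu$ (using Lemma \ref{Twist1} so that $\zeta \mapsto \omega(\zeta)$ is a diffeomorphism with controlled inverse), the measure of a subset of $\Omega_\varepsilon$ is, up to a factor $\varepsilon^{2\nu}(1+o(1))$, the Lebesgue measure in the $\zeta$–variable. For the set $R^{(-)}_{\ell j k}$, introduce the function
\[
\phi^{(-)}_{\ell j k}(\omega) := \omega\cdot\ell + \sigma\big(d^\infty_{\sigma j}(\omega) - d^\infty_{\sigma k}(\omega)\big)
= \omega\cdot\ell + \sigma\mathfrak{m}_1(\omega)(j-k) + \sigma\big(\text{lower order}\big) + \sigma\big(r^{(\infty)}_{\sigma j} - r^{(\infty)}_{\sigma k}\big),
\]
and use the momentum constraint $\mathtt{v}\cdot\ell + j - k = 0$ to rewrite $\mathfrak{m}_1(j-k) = -\mathfrak{m}_1\,\mathtt{v}\cdot\ell$, exactly as in the analysis preceding \eqref{ZEROMEL}. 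Then $\phi^{(-)}_{\ell j k}(\omega) = (\bar\omega - \varepsilon^2 m_1\mathtt{v} + \varepsilon^2\mathbb{A}\zeta(\omega))\cdot\ell + \sigma(r^{(\infty)}_{\sigma j} - r^{(\infty)}_{\sigma k}) + O(\varepsilon^4\langle\ell\rangle)$. Differentiating along the direction $\ell/|\ell|$ in the $\zeta$–variable and using \eqref{twistcondition2} (the nondegeneracy $\det(\mathbb{A}-\mathbb{V})\neq 0$), one gets a lower bound $|\partial_{\zeta}\phi^{(-)}_{\ell j k}\cdot \ell/|\ell|| \gtrsim \varepsilon^2|\ell|$, provided the contribution of $r^{(\infty)}_{\sigma j} - r^{(\infty)}_{\sigma k}$ is negligible: this is where one invokes \eqref{stimeautovalfinaliDP} and, more importantly, the Lipschitz bound on $r^{(\infty)}$ in $\omega$, which follows from \eqref{stimeautovalfinaliDP} written with the $\gamma^3$–weighted norm, giving $|\partial_\omega r^{(\infty)}_j| \lesssim \varepsilon^{13}\gamma^{-6} \ll \varepsilon^2$ by \eqref{SmallnessConditionNMdp}. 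Hence $\phi^{(-)}_{\ell j k}$ is monotone with derivative $\gtrsim \varepsilon^2|\ell|$ on the line, so $|\{|\phi^{(-)}_{\ell j k}| \le 2\eta\langle\ell\rangle^{-\tau_0}\}| \lesssim \eta\langle\ell\rangle^{-\tau_0}(\varepsilon^2|\ell|)^{-1}$ in the one–dimensional slice; integrating the remaining $\nu-1$ variables over a set of diameter $O(\varepsilon^2)$ and multiplying by the Jacobian factor $\varepsilon^{2\nu}$ of the $\zeta \leftrightarrow \omega$ change of variables yields \eqref{stimaBadERRE}. For $R^{(+)}_{\ell j k}$, $Q_{\ell j}$ and $P_{\ell j}$ the same computation applies verbatim, with $\mathfrak{m}_1(j-k)$ replaced by $\mathfrak{m}_1(j+k)$, $\mathfrak{m}_1 j$, or $\mathfrak{m}_1 j + $ (lower order in $d^\infty_j$) respectively, and the momentum relations $\mathtt{v}\cdot\ell+j-k=0$ or $\mathtt{v}\cdot\ell+j=0$ used to eliminate the $j$–dependence in favour of $\ell$.

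The main obstacle I anticipate is bookkeeping the error terms uniformly in $(\ell, j, k)$: one must check that the derivative estimate $\gtrsim \varepsilon^2|\ell|$ is not spoiled either by the higher homogeneity corrections $\mathtt{m}^{(\geq 4)}_i$ in \eqref{mer} (these are $O(\varepsilon^4)$ and $\zeta$–dependent, hence harmless after choosing $\varepsilon$ small) or by the $r^{(\infty)}$–terms, and in particular that for the set $R^{(-)}$ the two eigenvalue corrections do not conspire — but since $r^{(\infty)}_j$ and $r^{(\infty)}_k$ both satisfy the same Lipschitz bound and appear with a single overall sign $\sigma$, their combined $\omega$–derivative is still $\lesssim \varepsilon^{13}\gamma^{-6}$, so the argument goes through. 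I would also note, following Remark \ref{infinitiJK}, that here we do not need finiteness of the set of admissible $(j,k)$: the lemma estimates a \emph{single} resonant set, and the summability over $(\ell, j, k)$ — which is the delicate point for $R^{(-)}$ and is handled separately via Lemma \ref{delpiero}/\ref{BrexitDP} and the conservation of momentum — is not part of this statement.
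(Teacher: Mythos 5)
Your proposal is correct and is essentially the paper's own argument: the paper proves the same transversality bound directly in the $\omega$ variable (Lemma \ref{stimaBAsso}, $\lvert\nabla_{\omega}(\omega-\mathfrak{m}_1(\omega)\mathtt{v})\cdot\ell\rvert\gtrsim_S\lvert\ell\rvert$, resting on $\det(\mathbb{A}-\mathbb{V})\neq0$ from Lemma \ref{measG0}), controls the remaining $\omega$-dependent terms exactly as you do — with \eqref{alba2} (momentum) bounding $\lvert\sqrt{\lvert j\rvert}-\sqrt{\lvert k\rvert}\rvert$ by $\lvert\ell\rvert$ and the Lipschitz smallness of $\mathtt{m}^{(\geq4)}_{1/2}$, $\mathtt{m}_0$, $r_j$, $r^{(\infty)}_j$ — and then concludes by Fubini over $\Omega_\varepsilon$, whose transverse extent is $O(\varepsilon^2)$ in each direction; your observation that summability is not part of this lemma is also exactly the paper's point of view. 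The only thing to repair is the final bookkeeping: work consistently either in $\omega$ (one-dimensional slice of size $\lesssim\eta\langle\ell\rangle^{-\tau_0}$, transverse diameter $O(\varepsilon^2)$ in each of the $\nu-1$ remaining directions, no Jacobian) or in $\zeta$ (derivative $\gtrsim\varepsilon^2\lvert\ell\rvert$, transverse diameter $O(1)$, Jacobian $\varepsilon^{2\nu}$), since combining the $O(\varepsilon^2)$ transverse diameter with the $\varepsilon^{2\nu}$ Jacobian double-counts the change of variables.
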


\begin{lemma}\label{stimaBAsso}
We have
\begin{equation}\label{stimaBassoderome}
\lvert\nabla_{\omega} \big(\omega-\mathfrak{m}_1(\omega)\,\mathtt{v} \big) \cdot \ell\rvert\gtrsim_S\,\lvert \ell \rvert
\end{equation}
\end{lemma}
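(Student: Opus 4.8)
\textbf{Proof plan for Lemma \ref{stimaBAsso}.}
The goal is the lower bound \eqref{stimaBassoderome} on the gradient of the map $\omega \mapsto (\omega - \mathfrak{m}_1(\omega)\mathtt{v})\cdot \ell$. The plan is to exploit the explicit structure of $\mathfrak{m}_1(\omega)$ coming from Proposition \ref{riduzSum}: by \eqref{mer} (with $i=1$) we have $\mathfrak{m}_1 = \varepsilon^2 m_1(\omega) + \mathtt{m}_1^{(\geq 4)}(\omega)$, where $m_1 = \frac{1}{\pi}\sum_{n\in S} n|n|\,\zeta_n$ is given in \eqref{constM1}, the higher corrections $\mathtt{m}_1^{(\geq 4)}$ collect the $\e^{2k}$-homogeneous integrable terms plus the tiny remainder $\mathtt{r}_1$ with $|\mathtt{r}_1|^{\gamma,\Omega_\varepsilon}\lesssim \e^{17-2b}\gamma^{-3}$ by \eqref{merstima}. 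First I would recall from \eqref{matriceV} and the proof of Lemma \ref{measG0} that $m_1(\omega)\mathtt{v} = \mathbb{V}\zeta(\omega)$ with $\mathbb{V} = \mathtt{v}\,w^T$, so that, composing with the frequency-amplitude map \eqref{xiomega} $\zeta(\omega) = \e^{-2}\mathbb{A}^{-1}(\omega - \bar\omega) + O(\e^2)$, one gets $\varepsilon^2 m_1(\omega)\mathtt{v} = \mathbb{V}\mathbb{A}^{-1}(\omega-\bar\omega) + O(\e^4)$ and hence
\[
\nabla_\omega\big((\omega - \varepsilon^2 m_1(\omega)\mathtt{v})\cdot\ell\big) = (\mathrm{I}_\nu - \mathbb{V}^T(\mathbb{A}^{-1})^T + O(\e^2))\,\ell\,.
\]

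The key step is then the invertibility of $\mathrm{I}_\nu - \mathbb{V}^T(\mathbb{A}^{-1})^T$, equivalently of $\mathbb{A} - \mathbb{V}$, which is precisely the content of \eqref{twistcondition2} in the proof of Lemma \ref{measG0}: there it is shown that for generic choices of $S$ one has $\det(\mathbb{A}-\mathbb{V})\neq 0$, by evaluating at the diagonal point $(\lambda,\dots,\lambda)$ and reducing to the fact that $\mathtt{A} - \pi\mathtt{V} = \mathtt{A}(\mathrm{I}_\nu - \pi\mathtt{A}^{-1}\mathtt{V})$ is invertible since the eigenvalues of $\mathtt{A}^{-1}\mathtt{V}$ are algebraic numbers and $1/\pi$ is transcendental. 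Moreover, since $\mathbb{A}$ and $\mathbb{V}$ are integer matrices (up to the factor $4\pi$), the matrix $4\pi(\mathbb{A}-\mathbb{V})$ has a nonzero integer determinant, so its entries being integers forces $\min_i |\partial_{\zeta_i}\phi|\geq 1$ in the notation of \eqref{dimaio}; transported back through the (uniformly non-degenerate, by Lemma \ref{Twist1}) change of variables $\omega\mapsto\zeta(\omega)$, this yields $|(\mathrm{I}_\nu - \mathbb{V}^T(\mathbb{A}^{-1})^T)\ell|\gtrsim_S |\ell|$ for all $\ell\in\mathbb{Z}^\nu$.

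Finally I would absorb the remaining pieces as lower-order perturbations: the $O(\e^2)$ correction to the linear part of the map (coming both from the $O(\e^2)$ tail of $\zeta(\omega)$ and from the higher homogeneous terms $\e^{2k}m_1^{(2k)}$, $k\geq 2$, whose $\zeta$-gradients are bounded uniformly) and the Lipschitz remainder $\nabla_\omega(\mathtt{r}_1(\omega)\,\mathtt{v}\cdot\ell)$, which by \eqref{merstima} contributes at most $\e^{17-2b}\gamma^{-3}|\ell| = o(|\ell|)$ as $\e\to 0$; here one uses that $\gamma = \e^{2b}$ so $\e^{17-2b}\gamma^{-3} = \e^{17-8b}\to 0$ for $b$ close to $1$. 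Taking $\e$ small enough, the perturbations are dominated by the main term $\gtrsim_S |\ell|$ and \eqref{stimaBassoderome} follows. The main obstacle I anticipate is bookkeeping the precise form and $\omega$-regularity of $\mathtt{m}_1^{(\geq 4)}$ — i.e. checking that its gradient in $\omega$ really is $O(\e^2)$ uniformly and does not secretly carry a factor that blows up — but this is handled by the homogeneity/integrability structure recorded in Definition \ref{integroSimbo} and Proposition \ref{riduzSum} together with the chain rule through \eqref{xiomega}, rather than by any genuinely new estimate.
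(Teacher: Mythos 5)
Your proposal is correct and follows essentially the same route as the paper: compute $\nabla_\omega\big[(\omega-\mathfrak{m}_1(\omega)\mathtt{v})\cdot\ell\big]$ via \eqref{mer}, \eqref{matriceV} and the frequency-amplitude map \eqref{xiomega}, obtaining $(\mathrm{Id}-\mathbb{V}\mathbb{A}^{-1})\ell+O(\e^2)$, and then invoke $\det(\mathbb{A}-\mathbb{V})\neq0$ from (the proof of) Lemma \ref{measG0} to get the lower bound, absorbing $\mathtt{m}_1^{(\geq4)}$ and $\mathtt{r}_1$ as small perturbations. The only minor deviation is a slightly off power of $\gamma$ in the bound on the $\mathtt{r}_1$-contribution, which does not affect the smallness conclusion.
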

\begin{proof}
By \eqref{mer},  estimate \eqref{merstima},  \eqref{xiomega} and \eqref{matriceV} we have that
\begin{equation}\label{stimaBassoderome2}
\partial_{\omega_i}m_1(\omega) \mathtt{v}\cdot \ell= \e^{-2}\sum_{j} (\mathbb{V}\mathbb{A}^{-1})_i^j\,\ell_j+O(\e^{2})\quad \Rightarrow\quad
\pa_{\omega_i}\big[ \big(\omega-\mathfrak{m}_1(\omega)\,\mathtt{v} \big) \cdot \ell \big]=
\big[({\rm Id}-\mathbb{V}\mathbb{A}^{-1})\ell\big]_{i}+O(\e^{2}).
\end{equation}
By Lemma \ref{measG0}
we have that ${\rm det}(\mathbb{A}-\mathbb{V})\neq0$, hence the 
\eqref{stimaBassoderome2} implies the \eqref{stimaBassoderome}.
\end{proof}

\begin{proof}[{\bf Proof of Lemma \ref{singolo}.}]
We prove the lemma for the set $R^{(-)}_{\ell j k}(\eta, \tau_0)$ 
that is the most difficult case.
The estimates for the other sets follow in the same way.
We have that $\lvert \mbox{sign}(j)-\mbox{sign}(k) \rvert\le 2$ and
\begin{equation}\label{alba2}
\lvert \sqrt{\lvert j \rvert}-\sqrt{\lvert k \rvert} \rvert \le 
\frac{|\lvert j \rvert-\lvert k \rvert|}{\sqrt{\lvert j \rvert}
+\sqrt{\lvert k \rvert} }\le \frac{\lvert \mathtt{v}\rvert\lvert \ell \rvert}{\sqrt{\lvert j \rvert}
+\sqrt{\lvert k \rvert} }.
\end{equation}
Then by Lemma \ref{stimaBAsso}, \eqref{merstima}, 
\eqref{accendo2}, \eqref{stimeautovalfinaliDP}
we have that
\begin{align*}
\lvert\nabla_{\omega} \big(\omega\cdot \ell
+ \s(d_{\s j}^{\infty}-  d_{\s k}^{\infty}) \big) \rvert &\geq {C}(S)\lvert \ell \rvert-
\varepsilon^2 \lvert \nabla_{\omega} \mathtt{m}_{1/2}^{(\geq 4)}(\omega ) \rvert \lvert \mathtt{v} \rvert \lvert \ell \rvert-2 \varepsilon^2 \lvert \nabla_{\omega}\mathtt{m}_0(\omega)\rvert\\
&-\lvert \nabla_{\omega} r_{j} \rvert-\lvert \nabla_{\omega} r_{j}^{\infty} \rvert -\lvert \nabla_{\omega} r_{k} \rvert-\lvert \nabla_{\omega} r_{k}^{\infty} \rvert
\gtrsim_S (1-\varepsilon^2 )|\ell|\,.
\end{align*}
Then the \eqref{stimaBadERRE} follows
by Fubini theorem. 
\end{proof}

\subsubsection{Summability}\label{siSomma}

\begin{lemma}\label{InclusionideiBadSetsDP}
For $n\geq 1, \lvert \ell \rvert\le N_{n-1}$, one has  
$R^{(-)}_{\ell j k}(i_n,\gamma_{n}^{*},\tau), R^{(+)}_{\ell j k}(i_n,\gamma_n,\tau),
Q_{\ell j}(i_n,\gamma_n,\tau), P_{\ell j}(i_n,\gamma_n,\tau)=\emptyset$.
\end{lemma}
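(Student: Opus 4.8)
\textbf{Proof plan for Lemma \ref{InclusionideiBadSetsDP}.}
The idea is the standard ``iterative Cantor set'' argument: once we are on the $n$-th surviving set $\mathcal{G}_n$, the divisors we must control at scale $N_{n-1}$ were \emph{already} controlled at the previous step, because the eigenvalues $d_j^\infty(i_n)$ are close to $d_j^\infty(i_{n-1})$. The only subtlety is that the definition of $\mathcal{G}_n$ in \eqref{GnDP} uses the eigenvalues $d_j^\infty(i_{n-1})$ with thresholds $\gamma_{n-1} = \gamma(1+2^{-(n-1)})$ and $\gamma^*_{n-1} = \gamma^3(1+2^{-(n-1)})$, while the sets $R^{(\pm)}, Q, P$ appearing in \eqref{UnionDP} are defined with the smaller thresholds $\gamma_n, \gamma^*_n$ and the new eigenvalues $d_j^\infty(i_n)$. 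So the whole point is to estimate the difference between these two.

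First I would recall the variation estimate \eqref{r12} from Theorem \ref{ReducibilityWW}(iii), applied with $i_1 = i_{n-1}$, $i_2 = i_n$ and $\rho \rightsquigarrow \rho_n$ a suitable small parameter (of size comparable to $\gamma 2^{-n}$), together with the convergence bound $\lVert \hat{\mathfrak{I}}_n \rVert_{s_0+\mu_1}^{\gamma,\mathcal{G}_n} \le C_* \varepsilon^{b_*}\gamma^{-1} N_{n-1}^{-\alpha}$ from \eqref{FrakHatDP}. Combining these gives
\[
\sup_j \langle j\rangle^{1/2} |d_j^\infty(i_n) - d_j^\infty(i_{n-1})| \lesssim \e\gamma^{-3}\lVert \hat{\mathfrak{I}}_n \rVert_{s_0+\mu_1} + \varepsilon^{13}\gamma^{-3} N_{n-1}^{-\kappa_*} \lesssim \varepsilon^{b_*}\gamma^{-4} N_{n-1}^{-\alpha} + \varepsilon^{13}\gamma^{-3} N_{n-1}^{-\kappa_*},
\]
and similarly one needs the analogous bound for $\mathfrak{m}_1(i_n) - \mathfrak{m}_1(i_{n-1})$, which follows from \eqref{merstima} and the same $\Delta_{12}$ estimate on the constants. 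The key quantitative fact is that both right-hand sides are $\ll \gamma^3 2^{-n} N_{n-1}^{-\tau}$ for $\varepsilon$ small (using $b_* = 16-2b$, $\gamma = \varepsilon^{2b}$, $\kappa_* > (3/2)\tau$, $\alpha$ large as in \eqref{parametriNMdp}, and $N_{n-1} = N_0^{\chi^{n-1}}$), precisely because of the Nash–Moser smallness condition \eqref{SmallnessConditionNMdp}.

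Now suppose $\omega \in R^{(-)}_{\ell j k}(i_n, \gamma^*_n, \tau)$ with $|\ell| \le N_{n-1}$ and $\mathtt{v}\cdot\ell + j - k = 0$, $j\neq k$. By definition $|\omega\cdot\ell + \sigma(d_{\sigma j}^\infty(i_n) - d_{\sigma k}^\infty(i_n))| \le 2\gamma^*_n\langle\ell\rangle^{-\tau}$. Then, using $|d_{\sigma j}^\infty(i_n) - d_{\sigma j}^\infty(i_{n-1})| + |d_{\sigma k}^\infty(i_n) - d_{\sigma k}^\infty(i_{n-1})|$ bounded as above by a quantity $\ll \gamma^3 2^{-n}\langle\ell\rangle^{-\tau}$ (here one uses $\langle\ell\rangle \le N_{n-1}$ to absorb the $N_{n-1}^{-\alpha}$ and $N_{n-1}^{-\kappa_*}$ factors against $\langle\ell\rangle^{-\tau}$, since $\alpha, \kappa_* > \tau$), the triangle inequality gives
\[
|\omega\cdot\ell + \sigma(d_{\sigma j}^\infty(i_{n-1}) - d_{\sigma k}^\infty(i_{n-1}))| \le 2\gamma^*_n\langle\ell\rangle^{-\tau} + \gamma^3 2^{-n}\langle\ell\rangle^{-\tau} < 2\gamma^*_{n-1}\langle\ell\rangle^{-\tau},
\]
since $\gamma^*_{n-1} - \gamma^*_n = \gamma^3 2^{-n}$. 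Hence $\omega \notin \Lambda^{(2,-)}_n$, so $\omega \notin \mathcal{G}_n$, contradicting $\omega \in \mathcal{G}_n$ (recall $R^{(-)}_{\ell j k}(i_n,\cdot,\cdot) \subseteq \mathcal{G}_n$ by definition). Therefore $R^{(-)}_{\ell j k}(i_n, \gamma^*_n, \tau) = \emptyset$ for $|\ell| \le N_{n-1}$. The same argument, run against $\Lambda^{(2,+)}_n$, $\Lambda^{(0)}_n$, $\Lambda^{(1)}_n$ respectively and with $\gamma_{n-1} - \gamma_n = \gamma 2^{-n}$, disposes of $R^{(+)}_{\ell j k}$, $Q_{\ell j}$, $P_{\ell j}$.

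The main obstacle — really a bookkeeping obstacle rather than a conceptual one — is checking that for \emph{every} one of the four families the combination of the $\Delta_{12}$-eigenvalue estimate, the $\hat{\mathfrak I}_n$ estimate, and the choice of exponents $\alpha, k, \beta_1, \rho$ in \eqref{parametriNMdp} together with \eqref{SmallnessConditionNMdp} really does make the perturbation of the divisor strictly smaller than the gap $\gamma_{n-1} - \gamma_n$ (resp.\ $\gamma^*_{n-1} - \gamma^*_n$) uniformly in $|\ell| \le N_{n-1}$, $j, k$; in the $R^{(-)}$ case one additionally uses \eqref{alba2}-type control so that only finitely many $(j,k)$ with $|j|,|k|$ large need separate attention, but those are handled directly by \eqref{stimeautovalfinaliDP} and the zeroth-order Melnikov condition in $\Lambda^{(0)}_n$ exactly as in the proof of Lemma \ref{delpiero}. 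Once this uniform smallness is in hand the argument is the verbatim ``monotonicity of the Cantor sets'' step.
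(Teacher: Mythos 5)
Your overall architecture is the same as the paper's: compare the divisors built from $i_n$ with those built from $i_{n-1}$, use that $R^{(\pm)},Q,P\subseteq\mathcal G_n\subseteq\Lambda^{(0)}_n\cap\Lambda^{(1)}_n\cap\Lambda^{(2,\pm)}_n$, and absorb the variation into the gaps $\gamma_{n-1}-\gamma_n=\gamma 2^{-n}$, $\gamma^*_{n-1}-\gamma^*_n=\gamma^3 2^{-n}$. The gap is in the perturbation estimate you feed into the triangle inequality. The displayed bound $\sup_j\langle j\rangle^{1/2}\lvert d_j^{\infty}(i_n)-d_j^{\infty}(i_{n-1})\rvert\lesssim\cdots$ is not available: by \eqref{FinalEigenvaluesDP}, \eqref{finaleigen} the eigenvalue $d_j^{\infty}$ contains $\mathfrak m_1(\omega,i)\,j$ and $(1+\mathtt m_{1/2}^{(\geq4)}(\omega,i))\sqrt{\lvert j\rvert}$, so its variation in $i$ carries factors $\lvert j\rvert$ and $\lvert j\rvert^{1/2}$; the estimate \eqref{r12} controls only $\sup_j\langle j\rangle^{1/2}\lvert\Delta_{12}r_j^{\infty}\rvert$, and \eqref{merstima} controls the constants, not the constants multiplied by $j$. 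Hence your key step, bounding $\lvert d_{\sigma j}^{\infty}(i_n)-d_{\sigma j}^{\infty}(i_{n-1})\rvert$ and $\lvert d_{\sigma k}^{\infty}(i_n)-d_{\sigma k}^{\infty}(i_{n-1})\rvert$ \emph{separately} by a quantity $\ll\gamma^3 2^{-n}\langle\ell\rangle^{-\tau}$, fails once $\lvert j\rvert,\lvert k\rvert$ are large; and for fixed $\ell$ with $j-k=-\mathtt v\cdot\ell$ there are infinitely many such pairs, so this is not a finite list of exceptions.

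The missing ingredient is precisely the momentum law, which is how the paper closes the argument in one line: since $\mathtt v\cdot\ell+j-k=0$ and $\lvert\ell\rvert\le N_{n-1}$, one has $\lvert j-k\rvert\le\lvert\mathtt v\rvert\lvert\ell\rvert\lesssim_S N_{n-1}$, and one estimates the variation of the \emph{whole combination} $\sigma(d_{\sigma j}^{\infty}-d_{\sigma k}^{\infty})$: the dangerous pieces enter only as $\Delta_{12}\mathfrak m_1\,(j-k)$ and $\Delta_{12}\mathtt m_{1/2}^{(\geq4)}(\sqrt{\lvert j\rvert}-\sqrt{\lvert k\rvert})$, both controlled by $N_{n-1}$ times an $\varepsilon\gamma^{-3}N_{n-1}^{-\alpha}$-type quantity (via \eqref{merstima} and \eqref{FrakHatDP}), while the $r_j$, $r_j^{\infty}$ parts are handled by \eqref{accendo2}, \eqref{r12}; this yields $\lvert\sigma(d_{\sigma j}^{\infty}-d_{\sigma k}^{\infty})(i_n)-\sigma(d_{\sigma j}^{\infty}-d_{\sigma k}^{\infty})(i_{n-1})\rvert\le\varepsilon\gamma^{-3}N_{n-1}^{-\mathtt a}$, $\mathtt a=\min\{\kappa_*,\alpha\}$, uniformly in $j,k$, after which your monotonicity step goes through verbatim. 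Your fallback for large $\lvert j\rvert,\lvert k\rvert$ via Lemma \ref{delpiero} and $\Lambda^{(0)}_n$ does not repair this: the inclusion of Lemma \ref{delpiero} lands in a $Q$-set with exponent $\nu+2$ and threshold $\gamma$, which is not contradicted by the lower bound in $\Lambda^{(0)}_n$ (exponent $\tau>\nu+2$, threshold $2\gamma_{n-1}$), and in any case controlling the term $\Delta_{12}\mathfrak m_1(j-k)$ there would again require the momentum bound on $\lvert j-k\rvert$ that your argument never invokes.
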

\begin{proof}
Consider the set $R^{(-)}_{\ell j k}(i_n,\gamma_n^{*},\tau)$.
By the momentum condition $\mathtt{v}\cdot\ell+j-k=0$
we have to consider the sets $R^{(-)}_{\ell j k}(i_n,\gamma_{n}^{*},\tau)$
only for
\[
|j-k|\leq |\mathtt{v}| |\ell|\lesssim_S N_{n-1}\,.
\]

By \eqref{merstima}, \eqref{r12} (with $i_\d^{(1)}\rightsquigarrow i_n$ and $i_\d^{(2)}\rightsquigarrow i_{n-1}$, $N\rightsquigarrow N_{n-1}$) 
and \eqref{FrakHatDP}  we have for all $j, k\in S^c$ 
\begin{equation}\label{marathon1}
\lvert \s(d_{\s j}^{\infty}-d_{\s k}^{\infty})(i_n)-\s(d_{\s j}^{\infty}-d_{\s k}^{\infty})(i_{n-1})\rvert\le \varepsilon\gamma^{-3}
N_{n-1}^{-\mathtt{a}}\qquad\qquad \forall\omega\in \mathcal{G}_n,
\end{equation}
where $\mathtt{a}:=\min\{\kappa_*,\alpha\}$ (recall $\alpha$ in \eqref{parametriNMdp} and $\kappa_*$ in \eqref{r12}).
Now for all $j\neq k$, $\lvert \ell \rvert\le N_{n-1}$, $\omega\in\mathcal{G}_n$ by \eqref{marathon1} 
\begin{equation}
\begin{aligned}
\lvert \omega\cdot \ell +\s (d_{\s j}^{\infty}(i_n)-d_{\s k}^{\infty}(i_{n}))\rvert&\geq \lvert \omega\cdot \ell +\s (d_{\s j}^{\infty}(i_{n-1})-d_{\s k}^{\infty}(i_{n-1}))\rvert
-\lvert \s(d_{\s j}^{\infty}-d_{\s k}^{\infty})(i_n)-\s(d_{\s j}^{\infty}-d_{\s k}^{\infty})(i_{n-1})\rvert\\
&\geq 2\gamma^{*}_{n-1}\langle \ell \rangle^{-\tau}-\varepsilon\gamma^{-3 }N_n^{-\mathtt{a}}\geq 2\gamma^{*}_{n}\langle \ell \rangle^{-\tau}
\end{aligned}
\end{equation}
since $\varepsilon\gamma^{-6}N_n^{\tau-(2/3)\mathtt{a}}2^{n+1}\le 1$. 
Since  $R_{\ell j k}^{(-)}(i_n,\gamma_{n}^{*},\tau)\subseteq \mathcal G_n$ then 
$R^{(-)}_{\ell j k}(i_n,\gamma_n^{*},\tau)=\emptyset$ .
The others are similar.
\end{proof}

\noindent
We have proved that, recall \eqref{UnionDP},
\begin{equation}\label{differenzeInsiemiMisuraDP}
\mathcal{G}_n \setminus \mathcal{G}_{n+1}\subseteq 
\bigcup_{\substack{j, k\in S^c\\ \lvert \ell \rvert> N_{n-1}}}
\Big( R_{\ell j k}^{(+)}(i_n,\gamma_{n},\tau)\cup 
R^{(-)}_{\ell j k}(i_n,\gamma^{*}_{n},\tau)\cup 
Q_{\ell j}(i_n,\gamma_{n},\tau)\cup 
P_{\ell j}(i_n,\gamma_{n},\tau)\Big)\,,  \quad \forall n\geq 1\,.
\end{equation}
By Lemmata \ref{BrexitDP}, \ref{InclusionideiBadSetsDP}, \ref{singolo}
we deduce  that
\begin{equation}\label{quasifine}
\begin{aligned}
\left\lvert \bigcup_{\ell\in\mathbb{Z}^{\nu}, j, k\in S^c} 
R^{(+)}_{\ell j k}(\gamma_{n},\tau)
\cup Q_{\ell j}(\gamma_{n},\tau)\cup 
P_{\ell j}(\gamma_{n},\tau)  \right\rvert
&\le \sum_{\substack{\lvert\ell\rvert >N_{n-1},\\ 
\lvert j \rvert, \lvert k \rvert\leq C|\ell|^{2}}}
\Big(|R^{(+)}_{\ell j k}(\gamma_{n},\tau)|
+|Q_{\ell j}(\gamma_{n},\tau)|
+|P_{\ell j }(\gamma_{n},\tau)|\Big)\\
&\lesssim_S \e^{2(\nu-1)}\sum_{|\ell|>N_{n-1}}
\frac{\gamma |\ell|^{2}}{\langle \ell\rangle^{\tau}}
\lesssim_S\e^{2(\nu-1)}\gamma N_{n-1}^{-1}\,.
\end{aligned}
\end{equation}
Consider now indexes  $\ell,j,k$ such that $\mathtt{v}\cdot\ell+j-k=0$ 
and $\sign(j)=-\sign(k)$.
Therefore we deduce that $|j-k|=|j|+|k|\leq |\mathtt{v}||\ell|$. Then
by Lemmata  \ref{InclusionideiBadSetsDP}, \ref{singolo} we get
\begin{equation}\label{bomba}
\begin{aligned}
\left\vert\bigcup_{\substack{\ell\in\mathbb{Z}^{\nu}, j, k\in S^c \\
\sign(j)=-\sign(k)}} R^{(-)}_{\ell j k}(\gamma_{n}^{*},\tau)\right\vert
&\leq
 \sum_{\substack{\lvert\ell\rvert >N_{n-1},\\
|j|+|k|\leq |\mathtt{v}||\ell|}}|R^{(-)}_{\ell j k}(\gamma_{n}^{*},\tau)|
 &\lesssim_S \e^{2(\nu-1)}\sum_{|\ell|>N_{n-1}}\frac{\gamma |\ell|}{\langle \ell\rangle^{\tau}}
\lesssim_S\e^{2(\nu-1)}\gamma^{3} N_{n-1}^{-1}\,.
\end{aligned}
\end{equation}
It remains to study the measure of the union of the sets
$R_{\ell j k}^{(-)}(i_n)$ over 
for indexes such that $\sign(j)=\sign(k)$, $|\ell|>N_{n-1}$.
This is the most difficult case, since by
Remark
\ref{infinitiJK}, for any fixed $\ell\in \mathbb{Z}^{\nu}$ there are infinite many 
indexes $j,k$ such that $R_{\ell j k}^{-}(i_n)\neq\emptyset$. So we cannot reason as in 
\eqref{quasifine}, \eqref{bomba} and we need a more refined argument.

\begin{lemma}\label{delpiero}
There exists $\mathtt{C}>0$ such that
if  $\sign(j)=\sign(k)$ and 
$\lvert j \rvert, \lvert k \rvert\geq \mathtt{C} \langle \ell \rangle^{2\nu+6}\gamma^{-2}$ 
then 
\begin{equation}\label{alba11}
R_{\ell j k}^{(-)}(i_n,\gamma_{n}^{*}, \tau)\subseteq Q_{\ell, j-k}(\gamma, \nu+2).
\end{equation}
\end{lemma}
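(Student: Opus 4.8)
The goal is to show that, in the regime where $\sign(j)=\sign(k)$ and both $|j|,|k|$ are large compared to a power of $\langle\ell\rangle$ (precisely $|j|,|k|\geq \mathtt{C}\langle\ell\rangle^{2\nu+6}\gamma^{-2}$), the second order Melnikov resonant set $R^{(-)}_{\ell j k}(i_n,\gamma^*_n,\tau)$ is contained in the zeroth order resonant set $Q_{\ell,j-k}(\gamma,\nu+2)$. The strategy is to start from an $\omega\in R^{(-)}_{\ell j k}$, use the momentum constraint $\mathtt{v}\cdot\ell+j-k=0$ to rewrite the small divisor $\omega\cdot\ell+\sigma(d^\infty_{\sigma j}-d^\infty_{\sigma k})$ in terms of $\omega\cdot\ell+\mathfrak{m}_1(j-k)$ plus correction terms, and then estimate those correction terms and show they are negligible compared to $\gamma\langle\ell\rangle^{-(\nu+2)}$ precisely because $|j|,|k|$ are large.

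First I would recall, from \eqref{FinalEigenvaluesDP} and \eqref{finaleigen}, the explicit form of the final eigenvalues:
\[
d^\infty_j=\mathfrak{m}_1 j+(1+\mathtt{m}^{(\geq4)}_{1/2})\sqrt{|j|}+\mathtt{m}^{(\geq4)}_0\sign(j)+r_j+r^{(\infty)}_j\,.
\]
Since $\sign(j)=\sign(k)$, one has $\sign(j)-\sign(k)=0$, so the $\mathtt{m}^{(\geq4)}_0$ terms cancel in the difference $d^\infty_j-d^\infty_k$. Using the momentum relation $\mathtt{v}\cdot\ell=-(j-k)$, the key identity becomes
\[
\omega\cdot\ell+\sigma(d^\infty_{\sigma j}-d^\infty_{\sigma k})=\big(\omega\cdot\ell+\mathfrak{m}_1(j-k)\big)+\sigma(1+\mathtt{m}^{(\geq4)}_{1/2})\big(\sqrt{|j|}-\sqrt{|k|}\big)+\sigma(r_{\sigma j}-r_{\sigma k})+\sigma(r^{(\infty)}_{\sigma j}-r^{(\infty)}_{\sigma k})\,,
\]
where I also use $\mathfrak{m}_1(\sigma j-\sigma k)=\sigma\mathfrak{m}_1(j-k)$. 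The plan is then to bound each of the three ``correction'' terms. For the $\sqrt{|j|}-\sqrt{|k|}$ term, as in \eqref{alba2}, $|\sqrt{|j|}-\sqrt{|k|}|\leq |\mathtt{v}||\ell|/(\sqrt{|j|}+\sqrt{|k|})\lesssim_S \langle\ell\rangle/\sqrt{\min\{|j|,|k|\}}$; since $|j|,|k|\geq\mathtt{C}\langle\ell\rangle^{2\nu+6}\gamma^{-2}$ this is $\lesssim_S \langle\ell\rangle^{1-(\nu+3)}\gamma=\gamma\langle\ell\rangle^{-(\nu+2)}$ up to a constant, which (for $\mathtt{C}$ large enough and using $|\mathtt{m}^{(\geq4)}_{1/2}|$ small) is $\leq \frac14\gamma\langle\ell\rangle^{-(\nu+2)}$. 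For the $r_j-r_k$ and $r^{(\infty)}_j-r^{(\infty)}_k$ terms I would use the decay estimates \eqref{accendo2} and \eqref{stimeautovalfinaliDP}, namely $\langle j\rangle^{1/2}|r_j|\lesssim\varepsilon^3$ and $\langle j\rangle^{1/2}|r^{(\infty)}_j|^{\gamma^3}\lesssim\varepsilon^{13}\gamma^{-3}$, to get $|r_{\sigma j}-r_{\sigma k}|\leq |r_{\sigma j}|+|r_{\sigma k}|\lesssim\varepsilon^3/\sqrt{\min\{|j|,|k|\}}\lesssim\varepsilon^3\gamma\langle\ell\rangle^{-(\nu+3)}/\sqrt{\mathtt{C}}$, and similarly for $r^{(\infty)}$; both are then $\leq\frac14\gamma\langle\ell\rangle^{-(\nu+2)}$ for $\varepsilon$ small and $\mathtt{C}$ large. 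Putting these together: if $\omega\in R^{(-)}_{\ell j k}(i_n,\gamma^*_n,\tau)$, so that $|\omega\cdot\ell+\sigma(d^\infty_{\sigma j}-d^\infty_{\sigma k})|\leq 2\gamma^*_n\langle\ell\rangle^{-\tau}\leq 4\gamma^3\langle\ell\rangle^{-\tau}$, then by the triangle inequality
\[
|\omega\cdot\ell+\mathfrak{m}_1(j-k)|\leq 4\gamma^3\langle\ell\rangle^{-\tau}+\tfrac34\gamma\langle\ell\rangle^{-(\nu+2)}\leq 2\gamma\langle\ell\rangle^{-(\nu+2)}\,,
\]
using $\gamma^3<\gamma$ and $\tau>\nu+2$; since also $\mathtt{v}\cdot\ell+(j-k)=0$, this is exactly the defining condition of $Q_{\ell,j-k}(\gamma,\nu+2)$, which proves \eqref{alba11}.

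The main obstacle, or rather the point requiring care, is the bookkeeping of the various smallness thresholds: one must choose the constant $\mathtt{C}$ (depending only on $S$, $\nu$, $|\mathtt{v}|$ and the implicit constants in \eqref{accendo2}, \eqref{stimeautovalfinaliDP}) large enough, and $\varepsilon$ small enough, so that all three correction terms are simultaneously $\leq\frac14\gamma\langle\ell\rangle^{-(\nu+2)}$, and one must track that the exponent $2\nu+6$ in the hypothesis is precisely what makes $\langle\ell\rangle^{1}/\sqrt{\langle\ell\rangle^{2\nu+6}\gamma^{-2}}=\gamma\langle\ell\rangle^{-(\nu+2)}$ come out with the right power of $\langle\ell\rangle$. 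A secondary subtlety is that $\mathfrak{m}_1$ depends on $\omega$ (through $\zeta(\omega)$), but this causes no trouble since the identity above is pointwise in $\omega$ and we only ever use $|\mathfrak{m}_1|\lesssim\varepsilon^2$ and the explicit form of $d^\infty_j$. Everything else is a routine application of the triangle inequality and the already-established estimates, so the proof is short once the rewriting via momentum conservation is in place.
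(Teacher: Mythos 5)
Your proof is correct and follows essentially the same route as the paper: the same rewriting of the small divisor $\omega\cdot\ell+\sigma(d^{\infty}_{\sigma j}-d^{\infty}_{\sigma k})$ via momentum conservation and $\sign(j)=\sign(k)$, together with the bounds \eqref{alba2}, \eqref{accendo2}, \eqref{stimeautovalfinaliDP} in the regime $|j|,|k|\geq\mathtt{C}\langle\ell\rangle^{2\nu+6}\gamma^{-2}$. The only cosmetic difference is that you argue the inclusion directly while the paper proves the contrapositive ($\omega\notin Q_{\ell,j-k}\Rightarrow\omega\notin R^{(-)}_{\ell jk}$), which is the same estimate.
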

\begin{proof}
Recalling \eqref{FinalEigenvaluesDP}, \eqref{finaleigen}, 
\eqref{mer}, using that 
$\sign(j)=\sign(k)$ 
we write $\psi_{\ell ,j,k}=\omega\cdot\ell+\s (d_{\s j}^{\infty}-d_{\s k}^{\infty})$ with
\[
\psi_{\ell,j,k}=\omega\cdot \ell+\mathfrak{m}_1(j-k)+\s\big(
(1+\mathtt{m}^{(\geq4)}_{1/2})(\sqrt{|j|}-\sqrt{|k|})
+r_{\s j}-r_{\s k}+r^{\infty}_{\s j}-r^{\infty}_{\s k}\big)\,.
\]
We show that if $\omega\in (Q_{\ell, j-k}(\gamma, \nu+2))^{c}$
then $\omega\in (R^{(-)}_{\ell, j,k}(\gamma_{n}^{*}, \tau))^{c}$.
If $\omega\in (Q_{\ell, j-k}(\gamma, \nu+2))^{c}$ one has
\begin{equation*}
|\omega\cdot \ell+\mathfrak{m}_1(j-k)|\geq \gamma\langle \ell\rangle^{-(\nu+2)}\,.
\end{equation*}
Moreover since $\lvert j \rvert, \lvert k \rvert\geq \mathtt{C} \langle \ell \rangle^{2(\nu+3)}\gamma^{-2}$
then we have
\[
|\psi_{\ell ,j,k}|\gtrsim \frac{\gamma}{\langle \ell \rangle^{\nu+2}}
-\frac{\langle \ell \rangle}{\sqrt{| k |}+\sqrt{| j |}}
-\frac{\e^3\gamma}{\sqrt{| j |}}+\frac{\e^{3}}{\sqrt{| k |}}\gtrsim
 \frac{\gamma}{\langle \ell \rangle^{\nu+2}}(1-\frac{1+\e}{\mathtt{C}})
\gtrsim  \frac{\gamma}{2\langle \ell \rangle^{\nu+2}}\,,
\]
for $\mathtt{C}>0$ large enough. This also implies 
$|\psi_{\ell,j,k}|\gtrsim\gamma^{3}\langle \ell\rangle^{-\tau}$, since 
$\tau>\nu+2$ and $\gamma\geq \gamma^{3}$.
This means $\omega\in (R^{-}_{\ell,j,k}(i_n))^{c}$ and hence we get the thesis.
\end{proof}
As a consequence of Lemma \ref{delpiero}
we have
\begin{equation*}
\left\lvert \bigcup_{\substack{\ell\in\mathbb{Z}^{\nu}, j, k\in S^c\\
\sign(j)=\sign(k)}} R^{(-)}_{\ell j k}(i_n,\gamma_{n}^{*},\tau)  \right\rvert\le 
\sum_{\substack{\lvert\ell\rvert >N_{n-1}, \\ \lvert j \rvert, \lvert k \rvert
\geq \mathtt{C}\langle \ell \rangle^{2(\nu+3)}\gamma^{-2} \\
\sign(j)=\sign(k)}}|R^{(-)}_{\ell j k}(i_n,\gamma_{n}^{*},\tau)|
+\sum_{\substack{\lvert\ell\rvert >N_{n-1}, \\ \lvert j \rvert, \lvert k \rvert\leq2 \mathtt{C} 
\langle \ell \rangle^{2(\nu+3)}\gamma^{-2}\\
\sign(j)=\sign(k)}} 
|R^{(-)}_{\ell j k}(i_n,\gamma_{n}^{*},\tau)|\,.
\end{equation*}
On one hand, using Lemmata \ref{singolo} and  \ref{delpiero}, we have that 
\begin{equation*}
\begin{aligned}
\sum_{\substack{\lvert\ell\rvert >N_{n-1},\\ \lvert j \rvert, \lvert k \rvert\geq \mathtt{C} 
\langle \ell \rangle^{2(\nu+3)}\gamma^{-2} \\
\sign(j)=\sign(k)}}
|R^{(-)}_{\ell j k}(i_n,\gamma_{n}^{*},\tau)|
&\stackrel{\eqref{alba11}}{\lesssim} 
\sum_{\substack{|\ell|>N_{n-1}, j-k=h, \\
\lvert h \rvert
\leq C \lvert \ell \rvert}}|Q_{\ell h}(i_n,\gamma,\nu+2)|
\lesssim_S \sum_{\substack{|\ell|>N_{n-1}, j-k=h, \\
\lvert h \rvert
\leq C \lvert \ell \rvert}} \varepsilon^{2(\nu-1)}
\gamma \langle \ell \rangle^{-\nu-2}
\\&\qquad \qquad 
\lesssim_S  \varepsilon^{2(\nu-1)}\gamma\sum_{\lvert \ell \rvert\geq N_{n-1}}  \langle \ell \rangle^{-(\nu+1)}
\lesssim_S K \e^{2(\nu-1)}\g N_{n-1}^{-1}\,.
\end{aligned}
\end{equation*}
On the other hand
\begin{equation*}
\begin{aligned}
\sum_{\substack{\lvert\ell\rvert >N_{n-1},  \lvert j-k \rvert 
\le C\lvert \ell \rvert \\ \lvert j \rvert, \lvert k \rvert\leq 2\mathtt{C} 
\langle \ell \rangle^{2(\nu+3)}\gamma^{-2}
\\ \sign(j)=\sign(k)}} |R^{(-)}_{\ell j k} (i_n,\gamma_{n}^{*},\tau)| 
&
\stackrel{\eqref{stimaBadERRE}}{\lesssim}_S  \varepsilon^{2(\nu-1)}  
\sum_{\lvert \ell \rvert\geq N_{n-1}} 
\frac{\gamma^{3}\langle \ell \rangle^{2\nu+6}}{{\gamma}^{2}\langle \ell \rangle^{\tau}} 
\\
&\lesssim_S  \gamma \varepsilon^{2(\nu-1)} 
\sum_{\lvert \ell \rvert\geq N_{n-1}}  \langle \ell \rangle^{-(\tau-2\nu-6)}\lesssim_S  \gamma \varepsilon^{2(\nu-1)} N_{n-1}^{-1}.
\end{aligned}
\end{equation*}
By the discussion above and \eqref{quasifine}, \eqref{bomba}
we obtain the estimates \eqref{MisureDP}.

\subsection{Conclusion of the Proof of Theorem \ref{IlTeoremaDP}}
Theorem \ref{NashMoserDP} implies that the sequence $(\mathfrak{I}_n, \Xi_n)$ 
is well defined for $\omega\in \mathcal{G}_{\infty}:=\cap_{n\geq 0} \mathcal{G}_n$, 
$\mathfrak{I}_n$ is a Cauchy sequence in 
$\lVert \cdot \rVert_{s_0+\mu_1}^{\g,\mathcal{G}_{\infty}}$ 
(see \eqref{FrakHatDP}) and $\lvert \Xi_n \rvert^{\gamma}\to 0$. 
Therefore $\mathfrak{I}_n$ converges to a limit 
$\mathfrak{I}_{\infty}$ in norm 
$\lVert \cdot \rVert_{s_0+\mu_1}^{\gamma,\mathcal{G}_{\infty}}$ 
and, by $(\mathcal{P} 2)_n$, for all 
$\omega\in\mathcal{G}_{\infty}, i_{\infty}(\varphi):=(\varphi, 0, 0)+\mathfrak{I}_{\infty}(\varphi)$ 
is a solution of
\begin{equation*}
\mathcal{F}(i_{\infty}, 0)=0 \qquad \mbox{with} \qquad 
\lVert \mathfrak{I}_{\infty} \rVert_{s_0+\mu_1}^{\gamma,\mathcal{G}_{\infty}}
\lesssim\,\varepsilon^{16- 2 b} \gamma^{-1}
\end{equation*}
by \eqref{ConvergenzaDP}, \eqref{SmallnessConditionNMdp}. 
Therefore $\varphi \mapsto i_{\infty}(\varphi)$ is an invariant 
torus for the Hamiltonian vector field $X_{H_{\varepsilon}}$ (recall \eqref{HepsilonDP}). 
We point out that $\mathfrak{I}_{\infty}$ is traveling, i.e. satisfies \eqref{trawaves}, since it is limit (in the $H^{s_0+\mu_1}$ topology) of solutions of the linear constant coefficients PDE $\mathtt{v}\cdot \partial_{\varphi}+(0, 0, \partial_x)=0$.
By \eqref{MisureDP},
\[
\lvert \Omega_{\varepsilon}\setminus \mathcal{G}_{\infty} \rvert 
\le \lvert \Omega_{\varepsilon} \setminus \mathcal{G}_0 \rvert
+\sum_{n\geq 0} \lvert \mathcal{G}_n \setminus \mathcal{G}_{n+1} \rvert
\le 2\,C_* \varepsilon^{2 (\nu-1)} \gamma
+C_* \varepsilon^{2(\nu-1)}\gamma 
\sum_{n\geq 1} N_{n-1}^{-1}
\lesssim C_{*}\varepsilon^{2(\nu-1)} \gamma\,.
\]
The set $\Omega_{\varepsilon}$ in \eqref{OmegaEpsilonDP} 
has measure $\lvert \Omega_{\varepsilon} \rvert=O(\varepsilon^{2 \nu})$. 
Hence 
$\lvert \Omega_{\varepsilon}\setminus 
\mathcal{G}_{\infty} \rvert/\lvert \Omega_{\varepsilon} \rvert\to 0$ 
as $\varepsilon\to 0$ because $\gamma=o(\varepsilon^2)$, 
and therefore the measure of 
$\mathcal{C}_{\varepsilon}:=\mathcal{G}_{\infty}$ 
satisfies \eqref{frazionemisureDP}.


It remains to show the linear stability of the embedding $i_{\infty}(\f)$. Recall that in the original coordinates the solution reads as in \eqref{trawaves2}.
By the discussion of section \ref{sezione6DP} 
(see also \cite{KdVAut} for further details) 
and section \ref{sez:KAMredu},
since $i_{\infty}(\f)$ is isotropic
and solves the equation $\mathcal{F}(i_{\infty}, 0)=0$,
it is possible to find a change of coordinates
$G_{\infty}$ (of the form \eqref{GdeltaDP}),
so that in the linearized system at $i_{\infty}$ of the Hamiltonian 
$H_{\e}\circ G_{\infty}(\psi, Y, U)$ 
the equation for the 
actions is given by $\dot{Y}=0$. 
Moreover, by section \ref{sez:KAMredu}
the linear equation for the normal variables $U$ is conjugated, 
by setting (see \eqref{chichichi}, \eqref{CVWW}) $Z=(\chi\circ \Lambda)(U)$,
to the 
diagonal system $\dot{Z}_j+\ii d_j^{\infty}(\omega)Z_j=f_{j}(\omega t), j\in S^{c}$, 
where $f(\omega t)$ is a forcing term.
Since $d_j^{\infty}\in\mathbb{R}$ a standard argument 
shows that the Sobolev norms of $Z$ do not increase in time. 


\appendix

\section{Flows and conjugations}

\subsection{Flows of pseudo differential PDEs}
Here we state some preliminary results
which are the counterpart of sections $3.1$, $3.2$
in \cite{FGP}.
In this paper we shall consider the flow of the equation
 \begin{equation}\label{linprobpro}
 \left\{\begin{aligned}
 &\pa_{\theta} \Psi^\theta u = \ii\Pi_{S}^{\perp}\big[\opw(f(\tau,\vphi,x,  \x)) 
 [\Pi_{S}^{\perp}\Psi^\theta u]\big]\,,\\
 & \Psi^0u = u \, , 
 \end{aligned}\right.
\end{equation}
where $ f $ is a symbol assumed to have one of the following forms: 
\begin{align}
& f(\tau,\vphi,x, \x):=
b(\tau,\vphi,x)\x\,,\qquad
b(\tau,\vphi,x):=\frac{\beta(\vphi,x)}{1+\tau \beta_{x}(\vphi,x)}\,, \quad
\beta(\vphi,x) \in H^{s}(\mathbb{T}^{\nu+1};\mathbb{R})\label{sim1}\\
& f(\tau,\vphi,x, \x):=
\beta(\vphi,x)|\x|^{\frac{1}{2}}, \qquad \quad \  \beta(\vphi,x) \in H^{s}(\mathbb{T}^{\nu+1};\mathbb{R}) \, , \label{sim2}\\
&  f(\tau,\vphi,x, \x) \in S^{m} \, \, , \quad 
 \qquad m \leq 0 \,,\;\; f \;\; {\rm real \; valued} \label{sim3}.
\end{align}
Notice that the map $\Psi^{\tau}$ in  \eqref{linprobpro} (if well posed)
is symplectic. Indeed it is the flow associated to  the Hamiltonian 
\begin{equation*}
S(\tau,\vphi,z)=\frac{1}{2} \int_{\mathbb{T}}\opw(f(\tau;\vphi,x,\x)) 
z\cdot\bar{z} \,dx\,,\qquad
z\in H^{\perp}_{S}\,.
\end{equation*}	

In the following we study the properties of the flow in \eqref{linprobpro}. In particular 
we shall compare the map $\Psi^{\tau}$, $\tau\in[0,1]$ with 
the flow $\Phi^{\tau}$ of the equation
 \begin{equation}\label{linprob}
 \pa_{\theta} \Phi^\theta = \ii\opw(f(\vphi,x,  \x)) \Phi^\theta, 
 \quad  \Phi^0 = {\rm Id} \, ,
\end{equation}
with $f(\tau,\vphi,x,\x)$ as in \eqref{sim1}-\eqref{sim3}.
We start by considering the case $f$ as in \eqref{sim1}.

\noindent
For $\tau\in [0,1]$ we consider
the map
\begin{equation}\label{ignobelSymp}
\begin{aligned}
&\mathcal{A}^{\tau}h(\varphi, x):=\sqrt{1+\tau\beta_{x}(\vphi,x)}
h(\varphi, x+\tau\beta(\varphi, x)), 
\qquad \quad \,\, \varphi\in \T^{\nu},\, x\in \T,\\
&(\mathcal{A}^{\tau})^{-1}h(\varphi, y):=
\sqrt{1+\tilde{\beta}_{y}(\tau,\vphi,y)}
h(\varphi, y+\tilde{\beta}(\tau, \varphi, y)), \quad \varphi\in \T^{\nu},\, y\in \T,
\end{aligned}
\end{equation}
where
 $\beta$ is some smooth function and 
 $\tilde{\beta}(\tau;x,\x)$ is such that
\begin{equation}\label{diffeoInv}
x\mapsto y=x+\tau\beta(\vphi,x) \;\;\; \Leftrightarrow \;\;\; y\mapsto x=y+\tilde{\beta}(\tau,\vphi,x), \;\; \tau\in[0,1]\,,
\end{equation}
It is easy to check that the map in \eqref{ignobelSymp}
is the flow of the equation
 \begin{equation}\label{diffeotot}
 \left\{\begin{aligned}
&\pa_{\tau}\mathcal{A}^{\tau}(u)=\opw(\ii b(\tau;\vphi,x)\x)
\mathcal{A}^{\tau}(u)\\
& \mathcal{A}^{0}u=u\,,
\end{aligned}\right.
\qquad 
b(\tau,\vphi,x):=\frac{\beta(\vphi,x)}{1+\tau \beta_{x}(\vphi,x)}\,.
\end{equation}

\begin{remark}\label{weyl-nonweyl}
By an explicit computation it is easy to  check that
\begin{equation*}
\pa_{\tau}\mathcal{A}^{\tau}u=b\pa_{x}\mathcal{A}^{\tau}u+
\frac{b_x}{2}\mathcal{A}^{\tau}u
=:
\op(a(\tau,\vphi,x,\x))\mathcal{A}^{\tau}u\,,
\quad a(\tau,\vphi,x,\x):=\ii b(\tau,\vphi,x)\x+\frac{1}{2}b_x(\tau,\vphi,x)\,,
\end{equation*}
where 
$\op(a)$ is defined in \eqref{standardquanti}.
By using \eqref{bambola} one deduces the \eqref{diffeotot}.
\end{remark}
We first need to show that $\mathcal{A}^{\tau}$ is well defined 
as map on $H^{s}$.

\begin{lemma}\label{CoroDPdiffeo}
Fix $n \in \mathbb{N}$.
There exists $\s=\s(\rho,\nu)$ such that, if $\| \beta\|^{\gamma, \calO}_{s_0+\s}< 1$, then
 the flow $\mathcal{A}^{\tau}(\vphi)$ of \eqref{diffeotot} 
satisfies 
for $s\in [s_0, \mathcal{S}]$,
\begin{equation}\label{flow2}
\begin{aligned}
&\sup_{\tau\in[0,1]} \lVert \mathcal{A}^{\tau} u \rVert_s^{\gamma, \calO}
+\sup_{\tau\in[0,1]} \lVert ({\mathcal{A}}^{\tau})^{*} u \rVert_s^{\gamma, \calO}
\lesssim_s \left(
\|u\|_{s}^{\gamma, \calO}+\| b\|_{s+\s}^{\gamma, \calO}\|u\|^{\gamma, \calO}_{s_0}
\right),
\end{aligned}
\end{equation}
\begin{equation*}
\sup_{\tau\in[0,1]} \lVert (\mathcal{A}^{\tau}-\mathrm{Id})u \rVert_s^{\gamma, \calO}+
\sup_{\tau\in[0,1]} \lVert (({\mathcal{A}}^{\tau})^{*}-\mathrm{Id})u \rVert_s^{\gamma, \calO}
\lesssim_s \left(
\lVert \beta \rVert_{s_0+\s}^{\gamma, \calO} \|u\|^{\gamma, \calO}_{s+1}
+\| \beta\|_{s+\s}^{\gamma, \calO}\|u\|^{\gamma, \calO}_{s_0+1}
\right).
\end{equation*}
For any $|\al |\leq n$, $m_1,m_{2}\in \mathbb{R}$ such that $m_1+m_{2}=|\al|$, for any 
$s\geq s_0$ there exist $\mu_*$, $\s_{*}$, depending on 
$|\al|,m_1,m_2$,
and $\delta=\delta(m_1,s)$
such that if $
\|\beta\|^{\gamma,\calO}_{s_0+\mu_*}\leq\delta,
$ and $\|\Delta_{12}\beta\|^{\gamma,\calO}_{p+\s_*}
\leq1$ for $p+\s_{*}\leq s_0+\mu_{*} $,
then one has
\begin{equation}\label{flow22}
\sup_{\tau\in[0,1]}\|\langle D_{x}\rangle^{-m_1}\pa_{\vphi}^{\al}\mathcal{A}^{\tau}(\vphi)
\langle D_{x}\rangle^{-m_2}u\|^{\gamma,\calO}_{s}
\lesssim_{s,\mathtt{b},m_1,m_2}
\| u \|^{\gamma,\calO}_{s}+\|\beta\|^{\gamma,\calO}_{s+\mu_*}\| u\|^{\gamma,\calO}_{s_0},
\end{equation}
\begin{equation}\label{flow222}
\sup_{\tau\in[0,1]}  \| \langle D_{x}\rangle^{-m_1}\pa_{\vphi}^{\al}\Delta_{12}\mathcal{A}^{\tau}(\vphi) 
\langle D_{x}\rangle^{-m_2}u\|_{p} \lesssim_{p,\mathtt{b},m_1,m_2}
\lVert u \rVert_{p}\lVert \Delta_{12} \beta   \rVert_{p+\mu_*}, \quad m_{1}+m_2=|\al|+1.
\end{equation}
\end{lemma}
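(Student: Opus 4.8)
<br>

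The statement to prove is Lemma~\ref{CoroDPdiffeo}, which establishes that the change of variables $\mathcal{A}^\tau$ defined by \eqref{ignobelSymp}, viewed as the flow of the transport-type equation \eqref{diffeotot}, is tame on the scale $H^s$, is close to the identity in the appropriate sense (losing one derivative), and admits the refined conjugation estimates \eqref{flow22}, \eqref{flow222} involving commutators with $\langle D_x\rangle^{m}$.

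\textbf{Overall approach.} The plan is to identify $\mathcal{A}^\tau$ with the composition operator associated to the near-identity diffeomorphism $x\mapsto x+\tau\beta(\varphi,x)$ of $\mathbb{T}$ (with the $\sqrt{1+\tau\beta_x}$ prefactor making it an $L^2$-isometry at each fixed $\varphi$, hence symplectic on $H^\perp_S$), and then import the classical tame estimates for composition operators on Sobolev spaces. Concretely, first I would verify that for $\|\beta\|_{s_0+\sigma}^{\gamma,\calO}$ small the map $x\mapsto x+\tau\beta(\varphi,x)$ is a diffeomorphism of $\mathbb{T}$ uniformly in $\tau\in[0,1]$ and $\varphi$, with inverse $y\mapsto y+\tilde\beta(\tau,\varphi,y)$, where $\tilde\beta$ satisfies the implicit relation \eqref{diffeoInv}; standard estimates on inverse diffeomorphisms (e.g.\ as in the references \cite{FGP}, \cite{BM1}) give $\|\tilde\beta\|_{s}^{\gamma,\calO}\lesssim_s \|\beta\|_{s+\sigma}^{\gamma,\calO}$ in the low-norm/high-norm tame form. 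Then \eqref{ignobelSymp} exhibits both $\mathcal{A}^\tau$ and $(\mathcal{A}^\tau)^{-1}$ (and, by the isometry property, $(\mathcal{A}^\tau)^*=(\mathcal{A}^\tau)^{-1}$) as weighted composition operators, and the tame bound \eqref{flow2} follows from the composition lemma for Sobolev functions plus the product/interpolation inequalities on $H^s(\mathbb{T}^{\nu+1})$. The verification that $\mathcal{A}^\tau$ really is the flow of \eqref{diffeotot} is the computation recorded in Remark~\ref{weyl-nonweyl}: differentiating \eqref{ignobelSymp} in $\tau$ gives $b\,\partial_x\mathcal{A}^\tau u+\tfrac12 b_x\mathcal{A}^\tau u$ with $b=\beta/(1+\tau\beta_x)$, which via \eqref{bambola} is $\opw(\ii b\xi)\mathcal{A}^\tau u$.

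\textbf{Key steps in order.} (i) Solve the fixed-point/implicit-function problem for $\tilde\beta$ and record its tame estimates, including the Lipschitz-in-$\omega$ bounds and the $\Delta_{12}$-variation bounds (difference of two tori). (ii) Prove \eqref{flow2} and the closeness-to-identity estimate by writing $\mathcal{A}^\tau u - u = (\sqrt{1+\tau\beta_x}-1)u(\cdot+\tau\beta) + (u(\cdot+\tau\beta)-u)$ and estimating each piece: the first by the product rule (the factor $\sqrt{1+\tau\beta_x}-1$ is $O(\|\beta\|_{s_0+\sigma})$ in low norm), the second by the fundamental theorem of calculus in the translation parameter, which costs exactly one $x$-derivative on $u$ — this is the source of the $\|u\|_{s+1}$ on the right-hand side. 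Symmetrically for $(\mathcal{A}^\tau)^{-1}$ and $(\mathcal{A}^\tau)^*$. (iii) For \eqref{flow22}: since $\mathcal{A}^\tau$ conjugates a Fourier multiplier $\langle D_x\rangle^{m}$ to a pseudodifferential operator of the same order (Egorov), write $\langle D_x\rangle^{-m_1}\partial_\varphi^\alpha\mathcal{A}^\tau\langle D_x\rangle^{-m_2}$ and commute the weights and $\varphi$-derivatives past $\mathcal{A}^\tau$; each commutator $[\langle D_x\rangle^{-m_i},\mathcal{A}^\tau]$ and each $\partial_{\varphi_j}\mathcal{A}^\tau$ produces lower-order terms controlled by finitely many $x$- and $\varphi$-derivatives of $\beta$, and the total balance $m_1+m_2=|\alpha|$ is exactly what makes the resulting operator bounded (rather than smoothing-free but unbounded). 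This is essentially the content of the Egorov-type analysis used elsewhere in the paper, specialized to order $0$; one proceeds by induction on $|\alpha|$ and on the order, tracking the loss of regularity $\mu_*,\sigma_*$. (iv) For \eqref{flow222}: differentiate the whole construction with respect to the torus $i$, i.e.\ estimate $\Delta_{12}\mathcal{A}^\tau$ by differentiating the composition formula \eqref{ignobelSymp} and using $\Delta_{12}\tilde\beta$; the extra derivative in $m_1+m_2=|\alpha|+1$ appears because the variation of the diffeomorphism produces a term $\Delta_{12}\beta\cdot\partial_x(\cdots)$.

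\textbf{Main obstacle.} The routine parts are (i) and (ii) — these are standard composition-operator estimates. The delicate step is (iii)/(iv): proving the refined conjugation bounds \eqref{flow22}, \eqref{flow222} with the sharp derivative count and with uniform control of the constants $\mu_*,\sigma_*$ as functions of $|\alpha|,m_1,m_2$. One must set up the induction so that each commutation step genuinely trades one $x$-derivative (or one $\varphi$-derivative) for one power of $\langle D_x\rangle$ being absorbed, with remainders that are again of the same structural type; the smallness hypothesis $\|\beta\|_{s_0+\mu_*}^{\gamma,\calO}\le\delta$ with $\delta=\delta(m_1,s)$ is needed to sum the Neumann-type series that appears when inverting $1+\tau\beta_x$ and when iterating the Egorov expansion. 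I would handle this by following the argument of the corresponding lemmas in \cite{FGP}, \cite{BM1} (to which the excerpt explicitly defers), checking only that the transport structure here — $b=\beta/(1+\tau\beta_x)$, which is tailored precisely so that the flow is the composition operator — makes all the relevant symbol estimates reduce to Sobolev estimates on $\beta$ and $\tilde\beta$.
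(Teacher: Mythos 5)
Your proposal is correct and follows essentially the same route as the paper: the paper's proof of this lemma is a one-line reduction to Lemmata B.7, B.8, B.9 of \cite{FGP}, which are precisely the tame composition-operator estimates, the closeness-to-identity bound (losing one derivative), and the weighted estimates with $\langle D_x\rangle^{-m_1}\pa_{\vphi}^{\al}\,\cdot\,\langle D_x\rangle^{-m_2}$ that you reconstruct (and your identification $(\mathcal{A}^{\tau})^{*}=(\mathcal{A}^{\tau})^{-1}$, coming from the $\sqrt{1+\tau\beta_x}$ prefactor making the flow an $L^2$-isometry, is indeed the point that reduces the adjoint bounds to the inverse-flow bounds).
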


\begin{proof}
It follows by Lemmata $B.7$, $B.8$, $B.9$ in \cite{FGP}.
\end{proof}
We have the following.
\begin{lemma}\label{CoroDPdiffeo2}
Fix $n \in \mathbb{N}$.
There exists $\s=\s(\rho)$ such that, 
if  \,$|f|^{\gamma, \calO}_{m,s_0+\s,\alpha}\leq 1$, 
with $f$
in \eqref{sim2} or \eqref{sim3}, 
then
 the flow $\Phi^{\tau}(\vphi)$ of \eqref{linprob} 
satisfies 
for $s\in [s_0, \mathcal{S}]$,
bounds like \eqref{flow2}-\eqref{flow222}.
\end{lemma}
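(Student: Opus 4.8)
The statement to prove is Lemma~\ref{CoroDPdiffeo2}, which asserts tame bounds for the flow $\Phi^\tau$ of the pseudo differential linear PDE \eqref{linprob} when the symbol $f$ has the form \eqref{sim2}, i.e. $f=\beta(\vphi,x)|\x|^{1/2}$, or \eqref{sim3}, i.e. $f\in S^m$ with $m\le 0$ real valued. The goal is to obtain for these generators the same estimates \eqref{flow2}--\eqref{flow222} already established in Lemma~\ref{CoroDPdiffeo} for the transport-type generator \eqref{sim1}. The overall strategy is the standard energy/Duhamel bootstrap for well-posedness of linear pseudo differential evolution equations, exactly as in the proofs of Lemmata~$B.7$--$B.9$ of \cite{FGP} to which Lemma~\ref{CoroDPdiffeo} already appeals; the point of the present lemma is merely to observe that the smallness and regularity hypotheses needed there are now phrased in terms of the pseudo differential seminorm $|f|^{\gamma,\calO}_{m,s_0+\s,\alpha}$ rather than the Sobolev norm of $\beta$.

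\textbf{Case \eqref{sim3} ($m\le 0$).} Here I would argue first, since it is the cleanest. The operator $\ii\,\opw(f)$ is of order $m\le 0$, hence bounded on every $H^s$ with operator norm controlled by $|f|^{\gamma,\calO}_{m,s,\alpha}$ for a suitable finite $\alpha$ (by the action estimates for pseudo differential operators recalled after Lemma~\ref{James}, and their $\gamma$-weighted version). Therefore the flow $\Phi^\tau$ exists for all $\tau\in[0,1]$ by a straightforward Picard iteration / Gronwall argument in $H^s$, and one gets $\|\Phi^\tau u\|_s\lesssim_s \|u\|_s + |f|^{\gamma,\calO}_{m,s+\s,\alpha}\|u\|_{s_0}$ by splitting the commutator of $\opw(f)$ with $\langle D\rangle^s$ and with $\langle\ell\rangle^s$, using that such commutators gain regularity in $x$ (they are of order $m-1\le -1$) and cost one derivative in $\vphi$, which is absorbed by the extra seminorm regularity $\s$. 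The estimates on the adjoint $(\Phi^\tau)^*$, on $\Phi^\tau-\mathrm{Id}$ (via Duhamel, gaining a factor $|f|$), on $\langle D_x\rangle^{-m_1}\partial_\vphi^\alpha\Phi^\tau\langle D_x\rangle^{-m_2}$ (conjugating the equation and commuting $\partial_\vphi$ through, noting that $m_1+m_2=|\alpha|$ while $\opw(f)$ has nonpositive order so no net loss occurs), and on the variation $\Delta_{12}\Phi^\tau$ (Duhamel in the difference, using that $\Delta_{12}f$ is controlled by $\|\Delta_{12}\beta\|$ or the corresponding symbol variation) all follow the same template as in \cite{FGP}.

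\textbf{Case \eqref{sim2} ($f=\beta|\x|^{1/2}$).} This is the genuinely delicate case and I expect it to be the main obstacle, because now $\opw(f)$ has positive order $1/2$, so the naive Gronwall estimate loses derivatives. The resolution — exactly the mechanism used throughout Sections~\ref{redord12} and \ref{ordiniNegativi} via Lemmata~\ref{flusso12basso}, \ref{flusso12basso2} — is that $\opw(\beta|\x|^{1/2})$ is anti-self-adjoint up to an operator of order $0$ (indeed $0$-order since, writing the symbol in Weyl quantization, the self-adjoint part is a symbol of lower order), because $\beta$ is real valued; hence the energy estimate $\frac{d}{d\tau}\|\langle D\rangle^s\Phi^\tau u\|_{L^2}^2$ has no top-order contribution, only a contribution from the order-$0$ remainder and from the commutator $[\langle D\rangle^s,\opw(\beta|\x|^{1/2})]$, which by symbolic calculus (Lemma~\ref{James2}) is of order $s-1/2<s$, i.e. again subcritical. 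This closes the a~priori estimate in $H^s$ and yields \eqref{flow2}; the smallness condition $|f|^{\gamma,\calO}_{m,s_0+\s,\alpha}\le 1$ (with $m=1/2$ here) is what guarantees the constants stay bounded. Then $\Phi^\tau-\mathrm{Id}$, $(\Phi^\tau)^*$, the conjugated-with-$\langle D_x\rangle$ bounds \eqref{flow22} and the difference bound \eqref{flow222} follow by the same Duhamel and commutator arguments, noting that in \eqref{flow22} one has $m_1+m_2=|\alpha|$ exactly because conjugating by $\langle D_x\rangle^{-m_1},\langle D_x\rangle^{-m_2}$ produces commutators with $\opw(\beta|\x|^{1/2})$ of total order $|\alpha|+1/2-1=|\alpha|-1/2$, still absorbed, while in \eqref{flow222} the extra $+1$ (i.e. $m_1+m_2=|\alpha|+1$) accounts for the loss in taking the difference $\Delta_{12}$. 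I would present all of this by reducing to the cited results of \cite{FGP} after checking the hypotheses, since the computations are entirely routine once the anti-self-adjointness observation is in place; the only real content is that observation plus the bookkeeping of which seminorm index is large enough.
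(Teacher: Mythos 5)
Your proposal is correct and is in substance the same argument as the paper's, whose proof simply defers to the appendix of \cite{BM1} for the case \eqref{sim2} and to Proposition 3.1 of \cite{FGP} (after passing between Weyl and standard quantization via Lemmata \ref{weyl/standard}, \ref{lem:escobar} and Remark \ref{standard/Weyl}) for the case \eqref{sim3}: those references carry out precisely the Picard/Duhamel and energy--commutator scheme you sketch. The only refinement worth noting is that for real $\beta$ the operator $\ii\opw(\beta|\x|^{\frac12})$ is \emph{exactly} anti-self-adjoint in the Weyl quantization (by \eqref{pseudobarra}), so your ``up to order $0$'' hedge is unnecessary; otherwise no gap to flag.
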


\begin{proof}
The result for $f(\vphi,x,\x)$ as in \eqref{sim2} follows by the results in 
the appendix of \cite{BM1}. For the case \eqref{sim3}
one can follow almost word by word the proof of 
Proposition $3.1$
in \cite{FGP} and use Lemmata \ref{weyl/standard}, \ref{lem:escobar}
and Remark \ref{standard/Weyl}
to pass from the standard quantization \eqref{standardquanti}
to the Weyl quantization in \eqref{weylquanti}.
\end{proof}

\begin{lemma}\label{differenzaFlussi}
Fix $\rho\geq 3$ and $p\geq s_0$ then the following holds.

\vspace{0.5em}
\noindent 
$(i)$ There exist $\delta\ll 1$, $\su:=\su(\rho,p)$  such that if
\begin{equation}\label{flow1}
|f|_{m,s_0+\s_1,\alpha}^{\gamma,\mathcal{O}} \le \delta
\end{equation} 
then the following holds. 
Let $\Phi^{\tau}$ be the flow of the system 
\eqref{linprob}, then the flow of 
\begin{equation}\label{rosso}
\partial_{\tau} \Psi^{\tau} u=
\Pi_S^{\perp}[\opw(\ii f(\tau;\vphi,x,\x)) \Pi_S^{\perp}[\Psi^{\tau}u]]\,,\qquad
\Psi^0 u=u\,,
\end{equation}
 is well defined 
for $|\tau|\le 1$ 
and one has
$\Psi^1= \Pi_S^\perp\Phi^{1}\Pi_S^\perp\circ(\rm{Id} +\mathcal{R})$
where $\mathcal{R}$ is an operator 
with the form 
\begin{equation}\label{FiniteDimFormDP10001}
\mathcal{R}(\varphi) w=\sum_{\lvert j \rvert\le C} \int_0^1 (w, g_j(\tau, \varphi))_{L^2(\mathbb{T})}\,\chi_j(\tau, \varphi)\,d\tau\,,
\end{equation}
with $\lVert g^{(i)}_j \rVert_s^{\gamma, \calO}
+\lVert \chi_j^{(i)} \rVert_s^{\gamma, \calO}\lesssim_s 1$.
Moreover $\mathcal{R}$ belongs to $\gotL_{\rho,p}(\calO)$ 
and satisfies
\begin{equation}\label{giallo2}
\mathbb{M}^{\gamma}_{\mathcal{R}}(s, \tb)
\lesssim_s  |f |^{\gamma, \calO}_{m,s+\su,\alpha}\,, 
\qquad
\mathbb{M}_{\Delta_{12}\mathcal{R} }(p, \tb)\lesssim _{p}
|\Delta_{12}f|_{m,p+\su,\alpha}\,.
\end{equation}

\noindent
$(ii)$ Assume now that $f\in S_{k}^{m}$ (recall Def. \ref{espOmogenea})
and \eqref{IpotesiPiccolezzaIdeltaDP} holds.
Then, for $\e>0$ small enough, the result of item $(i)$ holds with an  
operator $\mathcal{R}$ in 
$\mathfrak{L}_{\rho,p}^{k}$.
\end{lemma}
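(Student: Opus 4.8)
\textbf{Plan of proof for Lemma \ref{differenzaFlussi}.}

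The statement asserts, for the flow of the projected equation \eqref{rosso}, that $\Psi^1=\Pi_S^\perp\Phi^1\Pi_S^\perp\circ(\mathrm{Id}+\mathcal{R})$ with $\mathcal{R}$ a finite-rank operator in $\gotL_{\rho,p}$ (resp.\ $\gotL_{\rho,p}^k$) obeying \eqref{giallo2}. First I would recall that, modulo the smoothing results of Lemma \ref{weyl/standard}, Lemma \ref{lem:escobar} and Remark \ref{standard/Weyl}, the Weyl-quantized symbol $f$ can be treated exactly as in the standard-quantization setting of \cite{FGP}; this is the observation already used in the proof of Lemma \ref{CoroDPdiffeo2}. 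Hence the plan is to reduce item $(i)$ to the corresponding statement in \cite{FGP} (the analogue of Proposition 3.1 and its accompanying lemmata there, where the commutator $[\Phi^\tau,\Pi_S^\perp]$ is analyzed), and to reduce item $(ii)$ to item $(i)$ together with the homogeneity bookkeeping of Definition \ref{espOmogenea} and Lemma \ref{lem:escobarhomo}.

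The core computation for item $(i)$ is the following. Write $Q:=\mathrm{Op}^{\mathrm W}(\mathrm{i} f)$ and compare $\Psi^\tau$ (generated by $\Pi_S^\perp Q\Pi_S^\perp$) with $\Phi^\tau$ (generated by $Q$). Set $V^\tau:=\Phi^{-\tau}\Psi^\tau$ (or, more symmetrically, conjugate out $\Pi_S^\perp\Phi^\tau\Pi_S^\perp$ and look at the correction). Differentiating in $\tau$, the discrepancy between the two generators is $\Pi_S^\perp Q\Pi_S-\Pi_S Q\Pi_S^\perp$ sandwiched appropriately, which is a \emph{finite-rank} operator because $\Pi_S$ projects onto the finite-dimensional space $H_S$ (only finitely many tangential modes). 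By Duhamel's formula one then writes the correction $\mathcal{R}$ as a $\tau$-integral of a finite-rank operator, obtaining precisely the form \eqref{FiniteDimFormDP10001}; the functions $g_j,\chi_j$ are built out of $\Phi^\tau$ applied to the finitely many Fourier modes $e^{\mathrm{i} j x}$, $|j|\le C$. The tame estimates on $g_j,\chi_j$, hence \eqref{giallo2}, follow from the flow estimates of Lemma \ref{CoroDPdiffeo2} (bounds like \eqref{flow2}--\eqref{flow222}) applied to a fixed finite set of smooth functions, and from the fact that derivatives $\partial_\varphi^{\vec{\tb}}$ and the commutator with $\partial_x$ act benignly on finite-rank kernels of this shape; checking $\mathcal{R}\in\gotL_{\rho,p}$ amounts to verifying the conditions $(i)$--$(iv)$ of Definition \ref{ellerho}, which for a finite-rank operator with smooth, rapidly-in-$s$-controlled kernels is immediate once one notes $\langle D\rangle^{m_1}$ acting on $e^{\mathrm{i} jx}$, $|j|\le C$, costs only a constant. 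The variation estimate for $\Delta_{12}\mathcal{R}$ is obtained by running the same Duhamel identity with $f\rightsquigarrow\Delta_{12}f$, $\Phi^\tau\rightsquigarrow\Delta_{12}\Phi^\tau$, using the Lipschitz-in-$i$ bounds of Lemma \ref{CoroDPdiffeo2}.

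For item $(ii)$ I would add the homogeneity layer: if $f\in S_k^m$ then, by Definition \ref{espOmogenea}, $f=\sum_{i=1}^{14-2k}\varepsilon^i f_i+\widetilde f$ with $f_i$ $i$-homogeneous and $\widetilde f$ small. The flow $\Phi^\tau$, and hence the functions $g_j,\chi_j$ constructed above, inherit a Taylor expansion in $\varepsilon$ with homogeneous terms of the form \eqref{simboOMO2DEF2}--type kernels plus a remainder satisfying \eqref{AVBDEF2}; this is exactly the mechanism already exploited in Lemma \ref{lem:escobarhomo} and Lemma \ref{water1}. Collecting these, $\mathcal{R}\in\gotL_{\rho,p}^k$. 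The main obstacle is purely organizational rather than conceptual: one must verify that \emph{all} the structural requirements of Definition \ref{ellerho} (and its homogeneous refinement) are met simultaneously by the Duhamel-produced $\mathcal{R}$ — in particular the gain of one derivative in the commutator $[\partial_\varphi^{\vec{\tb}}\mathcal{R},\partial_x]$ and the $\Delta_{12}$-estimates at the lower regularity $p$ — and that the loss $\su=\su(\rho,p)$ absorbs all the seminorm shifts coming from the flow estimates. I expect no genuinely new difficulty beyond carefully tracking these losses, since the finite-rank nature of $\mathcal{R}$ trivializes the decay-off-diagonal requirements that are the delicate point for general operators in $\gotL_{\rho,p}$.
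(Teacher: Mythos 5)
Your proposal follows essentially the same route as the paper: the mismatch between the generator $\opw(\ii f)$ and the projected generator $\Pi_S^\perp\opw(\ii f)\Pi_S^\perp$ is a finite-rank operator (every term carries a factor $\Pi_S$), so comparing $\Psi^\tau$ with $\Pi_S^\perp\Phi^\tau$ via Duhamel produces a correction of the form \eqref{FiniteDimFormDP10001}, whose tame and $\Delta_{12}$ bounds come from the flow estimates of Lemmata \ref{CoroDPdiffeo}--\ref{CoroDPdiffeo2} (the paper delegates this verification to Lemma C.1 of \cite{FGP1}), and item $(ii)$ follows by Taylor-expanding $f$ and the flow in $\varepsilon$. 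This matches the paper's argument, up to your slightly imprecise writing of the generator discrepancy (it also contains the term $\Pi_S\opw(\ii f)\Pi_S$), which does not affect the conclusion.
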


\begin{proof}
$(i)$
By using  Lemmata \ref{weyl/standard}, \ref{lem:escobar}
and Remark \ref{standard/Weyl} we can 
 follow the same strategy of the proof of Lemma
$C.1$ in \cite{FGP1}.
For 
$\delta, \sigma_1$ such that the 
smallness condition \eqref{flow1} 
is satisfied , 
we have that the flow $\Phi^{\tau}$ is well-defined 
for $\lvert \tau \rvert\le 1$. This follows by
Lemmata \ref{CoroDPdiffeo}, \ref{CoroDPdiffeo2} (see also Lemmata 
$B.7$, $B.8$, $B.9$ in \cite{FGP}
for more details).

Let us define $\Upsilon^{\tau}$ as the flow of the following Cauchy problem 
\begin{equation}\label{rosso2}
\partial_{\tau} \Upsilon^{\tau} u=
-\big(\Psi^{\tau}_* \mathtt{Z} \big) \Upsilon^{\tau} u\,,\qquad 
\Upsilon^0 u=u
\end{equation}
with 
\begin{equation}\label{rizla}
\begin{aligned}
\mathtt{Z}u&:=\opw(\ii f(\tau;\vphi,x,\x)) \Pi_S[ u]
+\Pi_S\opw(\ii f(\tau;\vphi,x,\x))\Pi_S^{\perp}[ u]
=
\sum_{j\in S} \big(g_j(\tau), u \big)_{L^2}\,
\chi_j(\tau)+\sum_{j\in S} \big(\tilde{g}_j(\tau), u \big)_{L^2}\,
\tilde{\chi}_j(\tau)\,,\\
g_j&=\tilde{\chi}_{j}:=e^{\mathrm{i} j x}\,, 
\quad \chi_j=\opw(\ii f(\tau;\vphi,x,\x)) \,e^{\mathrm{i} j x}\,, 
\quad \tilde{g}_j:=\Pi_{S}^{\perp}\opw(-\ii \ov{f(\tau;\vphi,x,-\x)})e^{\ii j x}\,.
\end{aligned}
\end{equation}
Equation \eqref{rosso2} is well posed on 
$H^{s}$ since its vector field is finite rank. 
By an explicit computation, using \eqref{linprob} and \eqref{rosso2},
we deduce that  $\Psi^{\tau}=\Pi_S^{\perp}\circ\Phi^{\tau}\circ \Upsilon^{\tau}$ 
is well-defined on $H^s$ and solves \eqref{rosso}.
\noindent
In order to
show that $\Upsilon^{\tau}-\mathrm{I}$ is
of the form \eqref{FiniteDimFormDP10001} one can follows
almost word by word
the proof of Lemma $C.1$ in \cite{FGP1} using the estimates 
\eqref{flow2}-\eqref{flow222} on the flow $\Phi^{\tau}$.

\noindent
$(ii)$ By the estimates on the symbol $f\in S_{k}^{m}$ we deduce 
that \eqref{flow1} 
holds. The operator $\mathtt{Z}$ in \eqref{rizla} is in $\mathfrak{L}_{\rho,p}^{k}$
(see also Lemma $C.7$ in \cite{FGP1})
by using its explicit expression. By Taylor expanding the flow $\Psi^{\tau}$
in \eqref{rosso} one can also deduce that the generator of the flow in \eqref{rosso2}
is in $\mathfrak{L}_{\rho,p}^{k}$. Then the result follows.
\end{proof}

As a consequence of Lemma \ref{differenzaFlussi} we have the following
\begin{lemma}\label{georgiaLem}
$(i)$ Consider the operator
\[
A:=\Pi_{S}^{\perp}\big(\omega\cdot\pa_{\vphi}+\opw(d(\vphi,x,\x))+\mathcal{Q}\big)\,,\quad
d\in S^{m'}\,,\quad \mathcal{Q}\in \mathfrak{L}_{\rho,p}\,,
\]
and let $\Psi^{\tau}$ be the flow of \eqref{rosso}. 
There exist $\delta\ll 1$, $\su:=\su(\rho,p)$  such that if
\begin{equation*}
|d|_{m',s_0+\s_1,\alpha}^{\gamma,\mathcal{O}}+
\mathbb{M}^{\gamma}_{\mathcal{Q}}(s_0+\s_1, \tb) \le \delta
\end{equation*} 
then the following holds. 
One has
\begin{equation}\label{georgia100}
\Psi^{1}A(\Psi^{1})^{-1}=\Pi_{S}^{\perp}\Phi^{1}A(\Phi^{1})^{-1}\Pi_{S}^{\perp}
+\widetilde{\mathcal{R}}(\vphi)\,,
\end{equation}
where $\widetilde{\mathcal{R}}$ is a finite rank operator of the form \eqref{FiniteDimFormDP10001}
satisfying 
\begin{equation*}
\begin{aligned}
\mathbb{M}^{\gamma}_{\widetilde{\mathcal{R}}}(s, \tb)
&\lesssim_s  |f |^{\gamma, \calO}_{m,s+\su,\alpha}+
|f |^{\gamma, \calO}_{m,s_0+\su,\alpha}
(|d|^{\gamma, \calO}_{m',s+\su,\alpha}+\mathbb{M}^{\gamma}_{\mathcal{Q}}(s, \tb))\,, \\
\mathbb{M}_{\Delta_{12}\widetilde{\mathcal{R}} }(p, \tb)&\lesssim _{p}
|\Delta_{12}f|_{m,p+\su,\alpha}(1+C_{p})\,,
\qquad
C_p:=|\Delta_{12}f|_{m,p+\su,\alpha}+\mathbb{M}_{\Delta_{12}\mathcal{Q} }(p, \tb)\,.
\end{aligned}
\end{equation*}
$(ii)$ Assume that the generator $f$ of the flow 
$\Psi^{\tau}$ of \eqref{rosso} is in $S_{k}^{m}$, and that $d\in S_{k'}^{m'}$, 
$\mathcal{Q}\in \mathfrak{L}_{\rho,p}^{k'}$. 
Assume also that  \eqref{IpotesiPiccolezzaIdeltaDP} holds.
Then, for $\e>0$ small enough, the result of item $(i)$ holds true
with a remainder $\widetilde{\mathcal{R}}$ in the class 
$\mathfrak{L}_{\rho,p}^{\max\{k,k'\}}$.
\end{lemma}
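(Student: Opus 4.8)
The whole point is to reduce everything to Lemma \ref{differenzaFlussi}. Writing $P:=\Pi_S^\perp$, the smallness assumption on $|d|^{\gamma,\mathcal{O}}_{m',s_0+\su,\alpha}+\mathbb{M}^\gamma_{\mathcal{Q}}(s_0+\su,\tb)$ in particular bounds the seminorm of the generator $f$ of the flow \eqref{rosso}, so Lemma \ref{differenzaFlussi}$(i)$ gives $\Psi^1=P\Phi^1 P\circ({\rm Id}+\mathcal{R})$ with $\mathcal{R}$ finite rank of the form \eqref{FiniteDimFormDP10001} obeying \eqref{giallo2}. Since $A=PAP$ as an operator on $H_{S^\perp}$, I would compute on $H_{S^\perp}$
\[
\Psi^1 A(\Psi^1)^{-1}=(P\Phi^1 P)\,\big({\rm Id}+\mathcal{R}\big)\,A\,\big({\rm Id}+\mathcal{R}\big)^{-1}\,(P\Phi^1 P)^{-1},
\]
and then show that the three sources of correction — conjugation by ${\rm Id}+\mathcal{R}$, the inner projector $P$ sandwiched in $P\Phi^1 P$, and the matching correction in $(P\Phi^1 P)^{-1}$ — each produce only a finite rank operator of the form \eqref{FiniteDimFormDP10001}. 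The main term surviving is exactly $P\Phi^1 A(\Phi^1)^{-1}P$, which yields \eqref{georgia100}.

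\textbf{Isolating the finite rank corrections.} First, $({\rm Id}+\mathcal{R})A({\rm Id}+\mathcal{R})^{-1}=A+[\mathcal{R},A]({\rm Id}+\mathcal{R})^{-1}$; the commutator $[\mathcal{R},A]$ is finite rank because $\mathcal{R}$ and its adjoint have range in the finite-dimensional span of the smooth functions $\chi_j,g_j$ appearing in \eqref{FiniteDimFormDP10001}, and $A=\omega\cdot\pa_{\vphi}+\opw(d)+\mathcal{Q}$ maps such smooth functions to (less smooth but still high–Sobolev) functions, so both $\mathcal{R}A$ and $A\mathcal{R}$ are finite rank of the form \eqref{FiniteDimFormDP10001}. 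Second, $P\Phi^1 P=P\Phi^1-P\Phi^1\Pi_S$ and $P\Phi^1\Pi_S$ has finite-dimensional range; analogously, from the identity $(P\Phi^1 P)\,\big(P(\Phi^1)^{-1}P\big)=P-P\Phi^1\Pi_S(\Phi^1)^{-1}P$ one gets, inverting, $(P\Phi^1 P)^{-1}=P(\Phi^1)^{-1}P+\mathtt{K}$ with $\mathtt{K}$ finite rank of the form \eqref{FiniteDimFormDP10001}. Substituting these expansions, using $\Pi_S A=A\Pi_S=0$ on $H_{S^\perp}$, and collecting, all errors combine into a single operator $\widetilde{\mathcal{R}}$ of the form \eqref{FiniteDimFormDP10001}, while the leftover term is $P\Phi^1 A(\Phi^1)^{-1}P$.

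\textbf{Estimates.} Each rank–one block of $\widetilde{\mathcal{R}}$ is $h\mapsto (h,g_j)_{L^2}\chi_j$ with $g_j,\chi_j$ obtained by composing the functions of $\mathcal{R}$ (controlled by \eqref{giallo2}) with $\Phi^{\pm1}$ (tame by Lemma \ref{CoroDPdiffeo2}), with the spectral projectors $\Pi_S$ (restriction to finitely many Fourier modes), and with $A$ acting on smooth functions. Standard tame product estimates and interpolation then give the claimed bound on $\mathbb{M}^\gamma_{\widetilde{\mathcal{R}}}(s,\tb)$: the contribution in which a derivative of $\Phi$ or the function $\chi_j$ hits $A$ produces the factor $|d|^{\gamma,\mathcal{O}}_{m',s+\su,\alpha}+\mathbb{M}^\gamma_{\mathcal{Q}}(s,\tb)$ multiplied by the low–norm seminorm of $f$, the remaining contributions give the plain factor $|f|^{\gamma,\mathcal{O}}_{m,s+\su,\alpha}$. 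The $\Delta_{12}$–bounds follow by differencing the same expressions and invoking the $\Delta_{12}$–estimates in Lemma \ref{differenzaFlussi} together with \eqref{flow222}.

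\textbf{Part $(ii)$ and the main obstacle.} Under \eqref{IpotesiPiccolezzaIdeltaDP} with $f\in S_k^m$, $d\in S_{k'}^{m'}$, $\mathcal{Q}\in\mathfrak{L}_{\rho,p}^{k'}$, Lemma \ref{differenzaFlussi}$(ii)$ gives $\mathcal{R}\in\mathfrak{L}_{\rho,p}^{k}$, and the finite rank corrections above are built from $\mathcal{R}$, $\Pi_S$, $\Phi^{\pm1}$ (whose homogeneous expansion is dictated by $f\in S^m_k$) and $A$; tracking homogeneity degrees through the commutators and compositions — which add up consistently with the truncation $14-2\max\{k,k'\}$ of Definition \ref{espOmogenea}, exactly as in Lemma \ref{Jameshomo} — one obtains $\widetilde{\mathcal{R}}\in\mathfrak{L}_{\rho,p}^{\max\{k,k'\}}$. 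The only delicate point, and the one I expect to take the most care, is the bookkeeping of the $\Pi_S/\Pi_S^\perp$ projectors: one must check that every error term genuinely has the structural form \eqref{FiniteDimFormDP10001} (in part $(i)$) or lies in the correct homogeneous smoothing class (in part $(ii)$), rather than merely being a bounded remainder.
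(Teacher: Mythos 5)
Your argument is correct and follows essentially the paper's own route: the paper also writes $\Psi^{1}=\Pi_{S}^{\perp}\Phi^{1}\Pi_{S}^{\perp}\circ(\mathrm{Id}+\mathcal{R})$ via Lemma \ref{differenzaFlussi} and then absorbs the projector and $(\mathrm{Id}+\mathcal{R})$-corrections into a finite rank remainder of the form \eqref{FiniteDimFormDP10001} (citing the analogous bookkeeping of Lemma C.2 in \cite{FGP1}, plus the Weyl/standard quantization lemmas for the estimates), which is exactly the computation you carry out explicitly. The only slip is your opening parenthetical: the smallness of $|d|^{\gamma,\mathcal{O}}_{m',s_0+\s_1,\alpha}+\mathbb{M}^{\gamma}_{\mathcal{Q}}(s_0+\s_1,\tb)$ does not control the generator $f$; the smallness of $f$ required to invoke Lemma \ref{differenzaFlussi} (condition \eqref{flow1}) is a separate hypothesis, implicit in the assumption that $\Psi^{\tau}$ is the flow of \eqref{rosso}, exactly as in the paper.
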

\begin{proof}
$(i)$
Formula \eqref{georgia100} follows by Lemma \ref{differenzaFlussi}
to write $\Psi^1= \Pi_S^\perp\Phi^{1}\Pi_S^\perp\circ(\rm{I} +\mathcal{R})$.
The estimates on the remainder $\widetilde{\mathcal{R}}$ follow by reasoning as in 
the proof of Lemma $C.2$ in \cite{FGP1}
and by using Lemmata \ref{weyl/standard}, \ref{lem:escobar}
and Remark \ref{standard/Weyl}
to pass from the standard quantization \eqref{standardquanti}
to the Weyl quantization in \eqref{weylquanti}.

\noindent
$(ii)$ This item follows by using item $(ii)$ in Lemma \ref{differenzaFlussi},
reasoning as in the proof of Lemma $C.2$ in \cite{FGP1}
and using Lemma \ref{lem:escobarhomo} (instead of Lemma \ref{lem:escobar})
to pass to the Weyl quantization.
\end{proof}

\begin{remark}\label{differenzaFlussiRMK}
The same result of Lemmata \ref{differenzaFlussi}, \ref{georgiaLem} holds also in the case
$d(\vphi,x,\x)$ is a matrix of symbols in $S^{m'}\otimes\mathcal{M}_2(\mathbb{C})$,
$m'\leq 0$
and $\mathcal{Q}$ is a matrix of operators in 
 $\mathfrak{L}_{\rho,p}\otimes\mathcal{M}_2(\mathbb{C})$.
\end{remark}

\subsection{Egorov Theory and conjugation rules}
In this section we study how pseudo differential operators conjugates 
under the flows in \eqref{diffeotot} and \eqref{linprob}.

\noindent
{\bf Notation.} Consider an integer $n\in \N$.
To simplify the notation from now on we shall write, 
$\Sigma^{*}_{n}$ the sum over indexes $k_1,k_2,k_3\in\N$ such that 
$k_1<n$, $k_1+k_2+k_3=n$ and $k_1+k_2\geq 1$.
\begin{theorem}{\bf (Egorov).}\label{EgorovQuantitativo}
Fix $\rho \geq 3$, $p\geq s_0$, $m\in\mathbb{R}$ 
with $\rho+m> 0$. Let $w(\varphi, x, \xi)\in S^m$ with 
$w=w(\omega, \mathfrak{I}(\omega))$, Lipschitz in 
$\omega\in \calO\Subset\R^\nu$ and in the variable $\mathfrak{I}$. 
Let $\mathcal{A}^{\tau}$ be the flow of the system \eqref{diffeotot}.
$(i)$ There exist $\su:=\su(m, \rho)$ and $\delta:=\delta(m, \rho)$ such that, if
\begin{equation}\label{coin1}
\lVert \beta \rVert^{\gamma, \calO}_{s_0+\su}<\delta,
\end{equation}
then $\mathcal{A}^{\tau} \opw(w) (\mathcal{A}^{\tau})^{-1}=\opw(q(\varphi, x, \xi))+R$ where $q\in S^m$ and 
$R\in\gotL_{\rho,p}(\calO)$.
In particular 
\begin{equation}\label{formaEsplic}
q(\varphi, x, \xi)=q_0(\varphi, x,\x)+q_1(\varphi, x,\x)\,,
\end{equation}
where $q_1\in S^{m-2}$ and  (recall \eqref{diffeoInv})
\begin{equation}\label{formaEsplic2}
q_0(\varphi, x,\x)=w\big(x+\beta(\varphi, x), \x(1+\tilde{\beta}_{y}(1, \varphi, y))_{|y=x+\beta(\varphi, x)}  \big)\,.
\end{equation}
The symbol $q_0$ has the form $q_0(\varphi, x,\x)=p_0(\tau, \varphi, x,\x)_{|\tau=1}$
where $p_0$ solves the equation
\be\label{dthetaa0}
\frac{d}{d \tau} p_0 (\tau) = \{ b(\tau, x) \xi, \varphi, p_0 (\tau)  \} \, , \quad p_0 (0) = w(\varphi, x,\x) \, . 
\ee
 Moreover,  one has that 
 the following estimates hold:
 \begin{equation}\label{zeppelin}
\begin{aligned}
\lvert q \rvert^{\gamma, \calO}_{m, s, \alpha}&\le_{m, s, \alpha, \rho}  
\lvert w \rvert^{\gamma, \calO}_{m, s, \alpha+\su}+
\sum_{s}^{*} 
\lvert w \rvert^{\gamma, \calO}_{m, k_1, \alpha+k_2+\su} 
\lVert \beta \rVert^{\gamma, \calO}_{k_3+\su}\,,\\
\lvert \Delta_{12} q   \rvert_{m, p, \alpha} 
&\le_{m, p, \alpha, \rho} 
\lvert w \rvert_{m, p+1, \alpha+\su}\lVert \Delta_{12} \beta   \rVert_{p+1}
+\lvert \Delta_{12} w   \rvert_{m, p, \alpha+\su}
\\&+\sum_{p+1}^{*}  
\lvert w \rvert_{m, k_1, \alpha+k_2+\su}\lVert \beta \rVert_{k_3+\su} 
\lVert \Delta_{12} \beta   \rVert_{s_0+1}
+\sum_{p}^{*}  
\lvert \Delta_{12} w   \rvert_{m, k_1, k_2+\alpha+\su}
\lVert \beta \rVert_{k_3+\su}\,.
\end{aligned}
\end{equation}
Furthermore for any 
$s_0\le s\le \mathcal{S}$
\begin{equation}\label{CostanteTameRdei}
\begin{aligned}
\mathbb{M}^{\g}_{R}(s, \tb) &\le_{s, m, \rho}   
\lvert w \rvert^{\gamma, \calO}_{m, s+\rho, \su}+\sum_{s+\rho}^{*} 
\lvert w \rvert^{\gamma, \calO}_{m, k_1, k_2+\su}
\lVert \beta \rVert^{\gamma, \calO}_{k_3+\su}\,,\quad 0\le \tb\le \rho-2\,,\\
\mathbb{M}_{\Delta_{12} R  }(p, \tb) &\le_{m, p, \rho}  
\lvert w \rvert_{m, p+\rho, \su}\lVert \Delta_{12} \beta   \rVert_{p+\su}
+\lvert \Delta_{12} w   \rvert_{m, s+\rho, \su}\\
&+\sum_{p+\rho}^{*}  
\lvert w \rvert_{m, k_1, k_2+\su} \lVert \beta \rVert_{k_3+\su}\lVert \Delta_{12} \beta   \rVert_{s_0+\su}+\sum_{p+\rho}^{*} 
\lvert \Delta_{12} w   \rvert_{m, k_1, k_2+\su}\lVert \beta \rVert_{k_3+\su}\,,
\quad 0\le \tb\le \rho-3\,.
\end{aligned}
\end{equation}
\noindent
$(ii)$ Assume that $\beta\in S_{k}^{0}$, $w\in S_{k'}^{m}$
and that \eqref{IpotesiPiccolezzaIdeltaDP} holds. 
Then, for $\e>0$ small enough, the result of item $(i)$
holds for $q\in S_{\max\{k,k'\}}^{m}$ and $R\in \mathfrak{L}_{\rho,p}^{\max\{k,k'\}}$.
\end{theorem}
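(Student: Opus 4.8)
The statement to be proved is Theorem \ref{EgorovQuantitativo}, the quantitative Egorov theorem describing how a pseudo differential operator $\opw(w)$, with $w \in S^m$, is conjugated by the flow $\mathcal{A}^\tau$ of the transport equation \eqref{diffeotot}. The natural strategy is to differentiate the conjugated operator $\mathcal{A}^\tau \opw(w)(\mathcal{A}^\tau)^{-1}$ in $\tau$ and derive an evolution equation for its symbol. Since $\mathcal{A}^\tau$ solves \eqref{diffeotot} whose generator is $\op(a(\tau))$ with $a(\tau,\vphi,x,\x) = \ii b(\tau,\vphi,x)\x + \tfrac12 b_x(\tau,\vphi,x)$ (recall Remark \ref{weyl-nonweyl}), the heuristic computation gives
\[
\frac{d}{d\tau}\Big(\mathcal{A}^\tau \opw(w)(\mathcal{A}^\tau)^{-1}\Big) = \big[\opw(\ii b(\tau)\x + \tfrac12 b_x(\tau)), \mathcal{A}^\tau \opw(w)(\mathcal{A}^\tau)^{-1}\big]\,.
\]
Using the commutator expansion of Lemma \ref{James2} (more precisely Lemma \ref{James}), the commutator with $\opw(\ii b\x)$ produces at leading order the Poisson bracket $\{b(\tau)\x, \cdot\}$ and a symbol of order one unit lower, while the commutator with the zero-order symbol $\tfrac12 b_x$ produces only lower order contributions. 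So the principal part of the symbol of the conjugate solves the transport equation \eqref{dthetaa0}, whose solution is the explicit symbol $q_0$ in \eqref{formaEsplic2} obtained by the method of characteristics (this is the classical Egorov principal symbol, and the diffeomorphism $x \mapsto x + \beta(\vphi,x)$ appears precisely because that is the characteristic flow of $b\pa_x$ at time one).

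\textbf{Key steps.} First I would set $Q(\tau) := \mathcal{A}^\tau \opw(w)(\mathcal{A}^\tau)^{-1}$ and show, using the well-posedness and tame bounds of Lemma \ref{CoroDPdiffeo} for $\mathcal{A}^\tau$ (which require the smallness condition $\|\beta\|^{\gamma,\calO}_{s_0+\su} < \delta$ as in \eqref{coin1}), that $Q(\tau)$ is a pseudo differential operator of order $m$ with a symbol $q(\tau,\vphi,x,\x)$ plus a smoothing remainder. Second, I would make the ansatz $q(\tau) = p_0(\tau) + p_1(\tau)$ with $p_0$ of order $m$ solving \eqref{dthetaa0} and $p_1$ of order $m - 2$; substituting into the evolution equation and matching orders via Lemma \ref{James} yields a linear transport equation for $p_1$ with forcing of order $m-2$, solvable by Duhamel against the flow of $\{b\x, \cdot\}$, giving $p_1 \in S^{m-2}$. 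This confirms \eqref{formaEsplic}--\eqref{formaEsplic2}. Third, the remainder $R$: the errors committed at each step of the commutator expansion are, by Lemma \ref{James}, operators in $\gotL_{\rho,p}$ (with $N = m + \rho > 0$ controlling how far one pushes the expansion); integrating these in $\tau$ and using that $\gotL_{\rho,p}$ is stable under the relevant operations (Appendix B of \cite{FGP}), one gets $R \in \gotL_{\rho,p}(\calO)$. The quantitative seminorm bounds \eqref{zeppelin}, \eqref{CostanteTameRdei} then follow by bookkeeping: each application of Lemma \ref{James} or of the flow estimates \eqref{flow2}--\eqref{flow222} produces a loss of finitely many derivatives $\su$ and, upon Taylor expanding the dependence on $\beta$, the tame structure $\sum^*_s |w|_{m,k_1,\alpha+k_2+\su}\|\beta\|_{k_3+\su}$; the $\Delta_{12}$ estimates are obtained by linearizing these expressions in $i_1 - i_2$ using the corresponding difference bounds for $\mathcal{A}^\tau$ in Lemma \ref{CoroDPdiffeo}.

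\textbf{Main obstacle and part $(ii)$.} The delicate part is the bookkeeping that turns the formal Egorov computation into the precise tame estimates \eqref{zeppelin}--\eqref{CostanteTameRdei} with the correct homogeneity in $\beta$; the structure of the sums $\Sigma^*_n$ reflects exactly how $\beta$ enters nonlinearly through the characteristic flow, and getting the seminorm indices and the number of lost derivatives $\su$ to close consistently at every step requires care, but it is routine given Lemmata \ref{CoroDPdiffeo}, \ref{James} and the composition/quantization-change results \ref{weyl/standard}, \ref{lem:escobar}. Finally, for part $(ii)$, one repeats the whole argument inside the class $S^m_k$ of Definition \ref{espOmogenea}: since $b \in S^0_k$ (as $\beta \in S^0_k$) and $w \in S^m_{k'}$, the characteristic flow preserves the homogeneous expansion structure (because each Poisson bracket and each Duhamel iteration only composes homogeneous symbols, raising the homogeneity degree, so that degrees $\geq 15 - 2\max\{k,k'\}$ get absorbed into the non-homogeneous tail satisfying \eqref{AVBDEF}), and the homogeneous version of the composition lemma, Lemma \ref{Jameshomo}, together with Lemma \ref{lem:escobarhomo}, shows the remainder lands in $\mathfrak{L}^{\max\{k,k'\}}_{\rho,p}$; the smallness condition \eqref{coin1} is then automatic from \eqref{IpotesiPiccolezzaIdeltaDP} for $\e$ small.
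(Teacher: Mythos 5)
Your outline is sound, but it is not the route the paper actually takes on the page. The paper does not re-derive the Egorov expansion for the full flow: it factors $\mathcal{A}^{\tau}=\mathcal{A}_2^{\tau}\circ\mathcal{A}_1^{\tau}$, where $\mathcal{A}_1^{\tau}h=h(\varphi,x+\tau\beta)$ is the pure composition operator and $\mathcal{A}_2^{\tau}$ is multiplication by $\sqrt{1+\tau\beta_x}$, quotes Theorem 3.4 of \cite{FGP} for the conjugation by $\mathcal{A}_1^{\tau}$ (this is where \eqref{formaEsplic}--\eqref{CostanteTameRdei} and the transport equation \eqref{dthetaa0} come from), and disposes of the multiplication factor by a direct computation; for item $(ii)$ it uses the explicit Duhamel representation $R=\int_0^{\tau}\mathcal{A}_1^{\tau}(\mathcal{A}_1^{s})^{-1}\mathcal{M}\,\mathcal{A}_1^{s}(\mathcal{A}_1^{\tau})^{-1}\,ds$ and Taylor-expands the flow itself in $\varepsilon$. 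Your direct derivation (differentiate $\mathcal{A}^{\tau}\opw(w)(\mathcal{A}^{\tau})^{-1}$ in $\tau$, use the Weyl commutator expansion of Lemma \ref{James2} to obtain \eqref{dthetaa0} for the principal symbol plus an order $m-2$ forcing for the correction, iterate until the error is $\rho$-smoothing, solve by characteristics) is essentially a self-contained reconstruction of the quoted theorem of \cite{FGP}; what you gain is independence from the external reference, what the paper gains is that all the tame bookkeeping in \eqref{zeppelin}--\eqref{CostanteTameRdei} is imported wholesale and only the (harmless) multiplicative factor has to be handled by hand. Note also that the $m-2$ gain for $q_1$ genuinely uses the Weyl structure (the generator is exactly $\opw(\ii b\xi)$ by Remark \ref{weyl-nonweyl}); your heuristic paragraph phrased in the standard quantization with the extra $\tfrac12 b_x$ term is looser than your "key steps", and the clean way is the one you eventually use.

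One point of your sketch needs strengthening in item $(ii)$: the remainder is not produced by composing pseudo differential operators, it is a $\tau$-integral of conjugates of a smoothing operator by the flow itself, so Lemmata \ref{Jameshomo} and \ref{lem:escobarhomo} alone do not give it a homogeneous expansion in $\varepsilon$. What is needed is the flow-conjugation result for the classes $\mathfrak{L}_{\rho,p}^{k}$, i.e. a Taylor expansion of the flow in powers of $\varepsilon$ as in Lemma \ref{preparailsugo}$(ii)$ (Lemma B.10 of \cite{FGP}); this is exactly how the paper concludes that $R$ lies in the homogeneous smoothing class, and it is the genuinely delicate step of part $(ii)$, since expanding the flow consumes part of the smoothing and must be compensated by the choice of $\rho$.
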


\begin{proof}
$(i)$ First of all we write 
$\mathcal{A}^{\tau}:=\mathcal{A}_2^{\tau}\circ \mathcal{A}^{\tau}_1$
(recall \eqref{ignobelSymp}) where
\begin{equation*}
\begin{aligned}
&\mathcal{A}_1^{\tau}h(\varphi, x):=
h(\varphi, x+\tau\beta(\varphi, x)), 
\qquad \quad \,\, \varphi\in \T^{\nu},\, x\in \T,\\
&\mathcal{A}_2^{\tau}h(\varphi, x):=
\sqrt{1+\tau\beta_{x}(\vphi,x)}h(\vphi,x)
\quad \varphi\in \T^{\nu},\, x\in \T\,.
\end{aligned}
\end{equation*}
The operator $\mathcal{A}_1^{\tau} \opw(w) (\mathcal{A}_1^{\tau})^{-1}$
satisfies \eqref{formaEsplic}-\eqref{CostanteTameRdei}
by Theorem $3.4$ in \cite{FGP}.
The same bounds are satisfied by 
 $\mathcal{A}_2^{\tau} \opw(w) (\mathcal{A}_2^{\tau})^{-1}$
 by explicit computations since $\mathcal{A}_2$ is a multiplication operator.
 Equation \eqref{dthetaa0} follows by the first step in the proof
 of Theorem $3.4$ in \cite{FGP}.
 
 \noindent
 $(ii)$ The symbol $q_0$ can be expanded in $\e$ by using the equation 
 \eqref{dthetaa0} and the expansion on $w$ and $\beta$.
  By the proof of Theorem $3.4$ in \cite{FGP}
 one can see that the symbol $q_1$ is constructed iteratively 
 by solving transport equations similar to \eqref{dthetaa0}
 and hence one can conclude $q_1\in S_{\max\{k,k'\}}^{m-2}$.
 Again by Theorem $3.4$ in \cite{FGP} one sees that
 the remainder $R$ has the form
\begin{equation}\label{pirati}
R=R(\tau)
=\int_0^{\tau} \mathcal{A}_1^{\tau} (\mathcal{A}_1^s)^{-1} \mathcal{M} \mathcal{A}_1^{s} (\mathcal{A}_1^{\tau})^{-1}\,ds\,,
\end{equation}
for some $\mathcal{M}\in \mathfrak{L}_{\rho,p}^{\max\{k,k'\}}$.
Notice that the flow $\mathcal{A}_1^{\tau}$ satisfies 
$\pa_{\tau}\mathcal{A}_1^{\tau}=\pa_{x}b(\tau;x)\mathcal{A}_1^{\tau}$.
Then we have $R \in \mathfrak{L}_{\rho,p}^{\min\{k,k'\}}$ 
by Taylor expanding the
flow $\mathcal{A}_1^{\tau}$
and reasoning as in the proof of Lemma $B.10$ in \cite{FGP}.
 \end{proof}

\begin{lemma}{\bf (Conjugation of $\omega\cdot\pa_{\vphi}$).}\label{EgorovTempo}
Let $\mathcal{A}^{\tau}$ be the flow of the system \eqref{diffeotot}.
$(i)$ 
There exist $\su:=\su(m, \rho)$ and $\delta:=\delta(m, \rho)$ such that, if
\eqref{coin1} holds, then
\begin{equation*}
\mathcal{A}^{1}\circ\omega\cdot\pa_{\vphi}\circ(\mathcal{A}^{1})^{-1}
 = \omega\cdot\pa_{\vphi}-
\ii  \opw(g(\vphi,x)\x)  
\end{equation*}
where $ g \in S^{0}$, independent of $\x$.  
In particular one has $g(\vphi,x)\x=p_0(\tau,\vphi,x,\x)_{|\tau=1}$ where
$p_0$ solves the equation
\begin{equation}\label{eq:intq0}
\frac{d}{d \tau } 
p_0(\tau,\vphi, x, \xi) = \{ b(\tau,\vphi,  x) \xi,  p_0(\tau, \vphi,x, \xi) \} 
+ \ii \omega\cdot\pa_{\vphi} b(\tau,\vphi,x) \xi \, ,
\quad p_0 (0) = 0 \,  . 
\end{equation}
Finally one has 
\begin{equation}\label{crawford1001}
\begin{aligned}
\lvert g \rvert^{\g, \calO}_{0, s, \alpha}
\le_{s, \rho, \alpha} & 
\lvert \beta \rvert^{\g, \calO}_{0, s+\s_1,\alpha+\s_1}
\end{aligned}
\end{equation}
for all 
$s_0\le s\le \mathcal{S}$.
Moreover one has
\begin{equation}\label{jamal2101}
\begin{aligned}
\lvert \Delta_{12} g   \rvert_{0, p, \alpha} \le_{p, \alpha, \rho} 
&\lvert \Delta_{12} \beta  \rvert_{0, p+\s_1, \alpha+\s_1}
(1+\lvert \beta \rvert_{0, p+\s_1, \alpha+\s_1})\,,
\end{aligned}
\end{equation}
where $p$ is the constant 
given in Definition \ref{ellerho}.

\noindent
$(ii)$ Assume that $\beta\in S_{k}^{0}$
and that \eqref{IpotesiPiccolezzaIdeltaDP} holds. Then, for $\e>0$ small enough, 
the result of item $(i)$ holds
with $g\in S_{k}^{0}$.
\end{lemma}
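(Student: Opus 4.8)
The plan is to follow the proof of the Egorov Theorem \ref{EgorovQuantitativo}, with the simplification that here we conjugate the \emph{differential} operator $\omega\cdot\pa_\vphi$ rather than a genuine pseudo differential operator, and that $\mathcal{A}^\tau$ is a change of variables in $x$ composed with a multiplication operator; hence the conjugate will again be a first order differential operator in $x$, with no smoothing remainder. Set $\mathcal{L}(\tau):=\mathcal{A}^\tau\circ(\omega\cdot\pa_\vphi)\circ(\mathcal{A}^\tau)^{-1}$, which is well defined for $|\tau|\le1$ by Lemma \ref{CoroDPdiffeo} once \eqref{coin1} holds (with $\beta$ in a low norm). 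Differentiating in $\tau$ and using the flow equation \eqref{diffeotot}, i.e. $\pa_\tau\mathcal{A}^\tau=\opw(\ii b(\tau,\vphi,x)\x)\mathcal{A}^\tau$ with $b=\beta/(1+\tau\beta_x)$, gives
\[
\frac{d}{d\tau}\mathcal{L}(\tau)=\big[\opw(\ii b(\tau)\x),\mathcal{L}(\tau)\big]\,,\qquad \mathcal{L}(0)=\omega\cdot\pa_\vphi\,.
\]
Writing $\mathcal{L}(\tau)=\omega\cdot\pa_\vphi+\opw(p_0(\tau,\vphi,x,\x))$ with $p_0(0)=0$, using that $[\opw(\ii b(\tau)\x),\omega\cdot\pa_\vphi]=-\opw(\ii(\omega\cdot\pa_\vphi b(\tau))\x)$ exactly, and the commutator expansion of Lemma \ref{James2} for $[\opw(\ii b(\tau)\x),\opw(p_0(\tau))]$, one is led to the transport equation \eqref{eq:intq0} for $p_0$.

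Next I would solve \eqref{eq:intq0}. Since $b\in S^0$ and the forcing term is a multiple of $\x$, the symbol $p_0(\tau,\vphi,x,\x)$ stays linear in $\x$ for all $\tau$ — equivalently, the Hamiltonian flow of $b(\tau)\x$ on $\mathbb{T}\times\mathbb{R}$ is the cotangent lift of the diffeomorphism $x\mapsto x+\tau\beta(\vphi,x)$. Hence, reading off the conjugate directly from \eqref{ignobelSymp}, \eqref{diffeoInv}, one finds
\[
\mathcal{A}^1\circ(\omega\cdot\pa_\vphi)\circ(\mathcal{A}^1)^{-1}=\omega\cdot\pa_\vphi-\ii\opw(g(\vphi,x)\x)\,,\qquad g=-\big(\omega\cdot\pa_\vphi\tilde\beta(1,\vphi,y)\big)_{|y=x+\beta(\vphi,x)}\in S^0\,,
\]
where the absence of any term of order $\ge2$ in $\x$ follows from the exact chain rule and the precise form of the order $0$ part is forced by the fact that $\mathcal{A}^\tau$ is unitary on $L^2$, so that the conjugate of the anti–self–adjoint operator $\omega\cdot\pa_\vphi$ is anti–self–adjoint. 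The transport equation \eqref{eq:intq0}, with $g\x=p_0(\tau)_{|\tau=1}$, is then obtained by running the same computation for the $\tau$–family and differentiating in $\tau$, using $\pa_\tau\tilde\beta$ from \eqref{diffeoInv}.

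For the quantitative bounds \eqref{crawford1001}, \eqref{jamal2101} I would combine the estimates for the diffeomorphism $x\mapsto x+\beta$ and its inverse $\tilde\beta$ (in the $\|\cdot\|_s^{\gamma,\calO}$ norms, as used in Lemma \ref{CoroDPdiffeo}) with the algebra and tame product estimates for the symbol norms $|\cdot|_{0,s,\alpha}$; equivalently, by Gr\"onwall applied to \eqref{eq:intq0}, absorbing the iterated contributions through the smallness \eqref{coin1}, which also accounts for the loss of derivatives $\s_1$. The same scheme run on differences yields \eqref{jamal2101}. For part $(ii)$: if $\beta\in S^0_k$ then $b=\beta/(1+\tau\beta_x)\in S^0_k$ and, for $\e$ small, the inverse $\tilde\beta$ admits the homogeneous expansion \eqref{espandoenonmifermo}, \eqref{simboOMO2DEF} with the non-homogeneous remainder satisfying \eqref{AVBDEF}; since $\omega=\bar\omega+\e^2\mathbb{A}\zeta+\dots$ and $\bar\omega\cdot\pa_\vphi$ maps a homogeneous symbol as in \eqref{simboOMO2DEF} into one of the same homogeneity, solving \eqref{eq:intq0} (equivalently, reading off $g$ from the explicit formula above) preserves the $S^0_k$ structure, so $g\in S^0_k$.

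The core computation — differentiate the conjugation in $\tau$, derive the transport equation, observe the conjugate is first order in $\x$ with no remainder and pin down the order–$0$ term by self–adjointness — is routine. The main obstacle is the bookkeeping in part $(ii)$: propagating the $S^0_k$ structure (the $\e$–homogeneous expansion together with the remainder bounds \eqref{AVBDEF}) through the inversion of the diffeomorphism $x\mapsto x+\beta$ and through the compositions with $b$ and $\omega\cdot\pa_\vphi b$, while tracking the loss $\s_1$.
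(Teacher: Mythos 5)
Your proposal is correct and follows essentially the same route as the paper, which simply delegates the computation to Lemma A.5 of \cite{BFP}: one differentiates the conjugated operator in $\tau$, uses the flow equation \eqref{diffeotot} and the fact that for symbols of degree at most one in $\x$ the Weyl commutator reduces exactly to the Poisson bracket, obtains the transport equation \eqref{eq:intq0}, reads off $g$ from the cotangent-lift structure of $\mathcal{A}^{\tau}$, and then propagates the tame estimates and (for item $(ii)$) the $S^{0}_{k}$ expansion through \eqref{eq:intq0}. Your additional observation that unitarity of $\mathcal{A}^{\tau}$ on $L^{2}$ pins down the zeroth-order part (so that the conjugate is exactly $\omega\cdot\pa_{\vphi}-\ii\opw(g\x)$ with no residual order-zero Weyl symbol) is a correct and pleasant consistency check, modulo the usual bookkeeping of signs and factors of $\ii$.
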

\begin{proof}
$(i)$ One can reason essentially as in Lemma $A.5$ in \cite{BFP}.
$(ii)$ One gets that $g\in S_k^{0}$ by using the equation \eqref{eq:intq0}.
\end{proof}

\begin{lemma}\label{flusso12basso}
{\bf (Conjugation of a pseudo differential operator).}
Let $\Phi^{\theta}(\vphi)$ be the flow of \eqref{linprob} 
with symbol $ f(\vphi, x, \xi ) $ in $  S^{m} $ with $ m \leq 1/ 2  $, 
of the form  \eqref{sim2} or \eqref{sim3}. 
Let $a=a(\omega, \mathfrak{I}(\omega))$ 
in $S^{m'}$
depending on   $\omega\in \calO\Subset 
\mathbb{R}^{\nu}$ and on $\mathfrak{I}$  
in a Lipschitz way for some $m'\in \R $. 
$(i)$ There exist $\su:=\su(m, m', \rho)$
and $\delta:=\delta(m, m', \rho)$ such that, if
\begin{equation}\label{ipoipo}
\lvert f \rvert^{\gamma, \calO}_{m, s_0+\s_1, \alpha+\su}<\delta\,,
\end{equation}
then
\begin{equation}\label{coniugato1000}
\begin{aligned}
& \Phi^{1}(\vphi)\opw(a(\vphi,x,\x))(\Phi^{1}(\vphi))^{-1} = 
\opw(c(\vphi,x,\x))+R(\vphi)\,,
\end{aligned}
\end{equation}
 where
 \begin{equation}\label{crawford100}
\begin{aligned}
\lvert c \rvert^{\g, \calO}_{m', s, \alpha}\le_{s, \rho, \alpha, m, m'} & 
\lvert a \rvert^{\g, \calO}_{m', s+\s_1,\alpha+\s_1}
(1+\lvert f \rvert^{\g, \calO}_{m, s_0+\s_1, \alpha+\s_1})
+\lvert a \rvert^{\g, \calO}_{m', s_0+\s_1, \alpha+\s_1}
\lvert f \rvert^{\g, \calO}_{m, s+\s_1, \alpha+\s_1}\,,\\
\mathbb{M}^{\g}_{R}(s, \tb)
&\le_{s, \rho,  m , m'} \lvert a \rvert^{\g, \calO}_{m',s+\s_1, \s_1}
\lvert f \rvert^{\g, \calO}_{m, s_0+\s_1, \s_1}
+\lvert a \rvert^{\g, \calO}_{m, s_0, \s_1}
\lvert b \rvert^{\g, \calO}_{m', s+\s_1, \s_1}\,,
\end{aligned}
\end{equation}
for all $0\le \tb\le \rho-2$ and $s_0\le s\le \mathcal{S}$.
Moreover one has
\begin{equation}\label{jamal2100}
\begin{aligned}
\lvert \Delta_{12} c   \rvert_{m', p, \alpha} \le_{p, \alpha, \rho, m, m'} 
&\lvert \Delta_{12} a   \rvert_{m, p+\s_1, \alpha+\s_1}
(1+\lvert f \rvert_{m, p+\s_1, \alpha+\s_1})
\\&+\lvert a \rvert_{m', p+\s_1, \alpha+\s_1}
\lvert \Delta_{12} f   \rvert_{m', p+\s_1, \alpha+\s_1}\,,\\
\mathbb{M}_{\Delta_{12} R_{\rho} } (p, \tb) \le_{p, \rho, m, m'} 
&\lvert \Delta_{12} a   \rvert_{m+1, p+\s_1, \s_1}
\lvert b \rvert_{m', p+\s_1, \s_1}\\
&+\lvert a \rvert_{m, p+\s_1, \s_1}
\lvert \Delta_{12} b   \rvert_{m'+1, p+\s_1, \s_1}\,,
\end{aligned}
\end{equation}
for all $0\le \tb \le \rho-3$ and where $p$ is the constant given in Definition \ref{ellerho}.
In particular the symbol $c(\vphi,x,\x)$ admits the expansion
 \begin{equation}\label{castello1}
 c=a +  \{ f, a\} +  \frac12 \{ f, \{ f, a \} \} + r\,,
 \end{equation}
 where  $r\in S^{q}$ where $q=m'+\max\{m-3, 2 m - 4,3 m - 3\}$.
 $(ii)$ Assume that $a\in S^{m'}_{k'}$, $f\in S_{k}^{m}$
 and that \eqref{IpotesiPiccolezzaIdeltaDP} holds. Then, for $\e>0$ small enough,
 the result of item $(i)$ holds with $c\in S^{m'}_{\max\{k,k'\}}$ 
 and $R_{\rho}\in \mathfrak{L}_{\rho,p}^{\max\{k,k'\}}$.
\end{lemma}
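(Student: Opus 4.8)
\textbf{Plan of proof of Lemma \ref{flusso12basso}.}
The strategy is the classical Lie--expansion/Egorov argument for conjugating a pseudo differential operator by the flow of a pseudo differential PDE, adapted to the class $S^m$ with $m\le 1/2$ and to the smoothing classes $\mathfrak{L}_{\rho,p}$. First I would note that the flow $\Phi^\theta(\vphi)$ of \eqref{linprob} is well posed and tame by Lemma \ref{CoroDPdiffeo2} (for $f$ as in \eqref{sim2}) respectively by Proposition $3.1$ in \cite{FGP} via Lemma \ref{weyl/standard}, Lemma \ref{lem:escobar} and Remark \ref{standard/Weyl} (for $f$ as in \eqref{sim3}), once the smallness condition \eqref{ipoipo} is in force. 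Then I would set $c(\theta):=c(\theta,\vphi,x,\xi)$ to be the (would-be) symbol of $\Phi^\theta\opw(a)(\Phi^\theta)^{-1}$ and differentiate in $\theta$: this yields the Heisenberg-type equation
\[
\frac{d}{d\theta}\,\opw(c(\theta))=[\opw(\ii f),\opw(c(\theta))]\,,\qquad \opw(c(0))=\opw(a)\,,
\]
and using Lemma \ref{James2} (Commutator) the right-hand side is $\opw(\ii\{f,c(\theta)\})$ plus a symbol of lower order plus a remainder in $\mathfrak{L}_{\rho,p}$. Iterating the symbolic Duhamel expansion finitely many times (as many as $\rho$ dictates) produces the asymptotic expansion \eqref{castello1}: the leading term $a$, then $\{f,a\}$ from the first commutator, then $\frac12\{f,\{f,a\}\}$ from the second, and a tail symbol $r\in S^{q}$ with $q=m'+\max\{m-3,2m-4,3m-3\}$ coming from the Poisson-bracket order count (each bracket with a symbol of order $m$ lowers the order by $1$, and the sub-leading pieces of Lemma \ref{James2} lower by $3$). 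The remainder $R(\vphi)$ collects the $\mathfrak{L}_{\rho,p}$-terms generated at each step of the expansion plus the integral Duhamel tail, which is $\rho$-smoothing by the standard argument (as in the proof of Lemma \ref{James} and Lemma $B.4$ in \cite{FGP}).

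Next I would establish the quantitative bounds \eqref{crawford100} and \eqref{jamal2100}. These follow by feeding the tame estimates for the flow from Lemma \ref{CoroDPdiffeo2} (bounds of the form \eqref{flow2}--\eqref{flow222}) into the symbolic calculus estimates \eqref{crawford}--\eqref{jamal2} of Lemma \ref{James} applied to each term $\{f,\cdot\}$, $\{f,\{f,\cdot\}\}$, and into the estimates on the Duhamel tail remainder. The losses of derivatives accumulate into a single constant $\su=\su(m,m',\rho)$, and the structure of the bounds — linear in the high norm of $a$ times low norm of $f$, plus low norm of $a$ times high norm of $f$ — is exactly the interpolation-type output of Lemma \ref{James}. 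The variation estimates $\Delta_{12}c$, $\mathbb{M}_{\Delta_{12}R}$ are obtained by repeating the whole argument with $a\rightsquigarrow\Delta_{12}a$, $f\rightsquigarrow\Delta_{12}f$ and using the corresponding $\Delta_{12}$-bounds in Lemmata \ref{weyl/standard}, \ref{lem:escobar}, \ref{James}.

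For part $(ii)$, assuming $a\in S^{m'}_{k'}$, $f\in S^m_k$ and \eqref{IpotesiPiccolezzaIdeltaDP}, I would observe that the homogeneity structure is preserved along the construction: each Poisson bracket of a symbol in $S_k$ with one in $S_{k'}$ stays in the class $S_{\max\{k,k'\}}$ by Lemma \ref{Jameshomo}, because its homogeneous components are sums of products of homogeneous components of $f$ and $a$, and the non-homogeneous tails satisfy the bounds \eqref{AVBDEF} with the appropriate loss $\mu\rightsquigarrow\mu+\su$. Likewise the remainder inherits the expansion $\eqref{espandoenonmifermo2}$ and belongs to $\mathfrak{L}^{\max\{k,k'\}}_{\rho,p}$ by Lemma \ref{lem:escobarhomo} and the homogeneous version of Lemma $B.10$ in \cite{FGP}; here the smallness of $\varepsilon$ guarantees \eqref{ipoipo} automatically from the $S^m_k$-estimates on $f$.

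\textbf{Main obstacle.} The delicate point is the order count for the tail symbol $r$: one must check that after extracting the three explicit terms $a$, $\{f,a\}$, $\frac12\{f,\{f,a\}\}$, every further contribution — both the higher iterated brackets and the sub-leading pieces of the Moyal expansion at each stage (the $S^{m+m'-3}$ corrections in \eqref{moyal}) — lands in $S^q$ with $q=m'+\max\{m-3,2m-4,3m-3\}$. Since $m\le 1/2$ this is a genuine bookkeeping issue (for $m$ close to $1/2$ the slowest decay comes from the third-order term $3m-3$, while for very negative $m$ it is $m-3$), and one must also verify that $\rho\ge\max\{-(m+m'+1),3\}$-type conditions needed to apply Lemma \ref{James2} are met — here they follow from $\rho\ge 3$ and the smallness hypotheses, using that $\opw(f)$ is a bounded-from-$S^{1/2}$ operator so the commutator gains a full derivative.
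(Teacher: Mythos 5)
Your proposal is correct and follows essentially the same route as the paper: the iterated Duhamel solution of the Heisenberg equation is exactly the Lie expansion $\Phi^{1}\opw(a)(\Phi^{1})^{-1}=\sum_{q\le L}\frac{1}{q!}\,{\rm ad}^{q}_{\ii\opw(f)}[\opw(a)]$ plus integral Taylor tail used in the paper, combined with Lemma \ref{James2} for each commutator, the choice of $L$ with $(L+1)(1-m)-m'\ge\rho$ so the tail is in $\gotL_{\rho,p}$, and Lemma \ref{Jameshomo} (with Lemma \ref{lem:escobarhomo}) for the homogeneity statement in item $(ii)$. Your order bookkeeping for $r$ (the exponents $m-3$, $2m-4$, $3m-3$ coming from the Moyal corrections of the first two brackets and from the third iterated bracket) matches the paper's count.
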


\begin{proof}
$(i)$
To study the conjugated 
 operator $M:= \Phi^{1}(\vphi)\opw(a(\vphi,x,\x))(\Phi^{1}(\vphi))^{-1}$
we apply the usual 
Lie expansion up to order $L\geq 1$, i.e. 
\begin{equation}\label{escobar21}
\begin{aligned}
M&=\opw(a(\vphi,x,\x))
+\sum_{q=1}^{L}\frac{1}{q!}{\rm ad}_{\ii\opw(f(\vphi,x,  \x)) }^{q}[\opw(a(\vphi,x,\x))]
 \\&+  \frac{1}{L!}
  \int_{0}^{1}  (1- \theta)^{L} {\Phi}^{\theta} 
  {\rm ad}_{\ii\opw(f(\vphi,x,  \x)) }^{L+1}[\opw(a(\vphi,x,  \x)) ] 
  ({ \Phi}^{\theta})^{-1} 
d \theta\,,
\end{aligned}
\end{equation}
where, given two linear operators $M$ and  $B$, we  defined
\begin{equation*}
{\rm ad}_{M}[B]:=[M,B]\,, \qquad 
{\rm ad}^{q}_{M}[B]:=[M,{\rm ad}^{q-1}_{M}[B]]\,,\quad q\geq1\,.
\end{equation*}
By applying Lemma \ref{James2}
 we get (recall \eqref{moyal})
\[
{\rm ad}_{\opw(\ii f)}[\opw(a)]  =\big[\opw(\ii f), \opw(a)\big] 
 = \opw\big( \{  f, a \} + r_1 \big) \, , \quad  r_1\in S^{m+m' -3} \, , 
\]
up to a smoothing operator in $\gotL_{\rho,p}$.
Moreover (choosing $\rho$ possibly larger)
\[
{\rm ad}^{2}_{\opw(\ii f)}[\opw(a)] =\opw( \{ f, \{  f, a \} \} + r_2) \, , \quad  
r_2 \in S^{2m+m' -4} \, , 
\]
up to a smoothing operator in $\gotL_{\rho,p}$.
By  induction, for $ k \geq 3 $ we have  
\[
{\rm ad}^{k}_{\opw(\ii f)}[\opw(a)]=\opw( b_k),\quad 
b_k\in
S^{k(m-1)+m'}  \, , 
\]
up to a smoothing operator in $\gotL_{\rho,p}$.
We choose $L$  in such a way that  
$  (L+1)(1-m) - m' \geq \rho$ and $ L + 1 \geq 3 $,  
so that the operator 
$ \opw( b_{L+1}) $ belongs to $\gotL_{\rho,p}$.
The integral Taylor remainder in \eqref{escobar21} 
belongs to $\gotL_{\rho,p}$ as well by Lemma B.2 in \cite{FGP}.
The estimates \eqref{crawford100}-\eqref{jamal2100}
follows by using \eqref{crawford}-\eqref{jamal2}.
$(ii)$ The second item follows by reason exactly as in item $(i)$ but using the 
composition Lemma \ref{Jameshomo}
instead of Lemma \ref{James}, \ref{James2}.
\end{proof}

\begin{lemma}{\bf (Conjugation of $\omega\cdot\pa_{\vphi} $).} \label{flusso12basso2}
Let $\Phi^{\theta}(\vphi)$ be the flow of \eqref{linprob}  
with symbol $ f(\vphi, x, \xi ) \in S^{m}$ 
with $ m \leq 1/ 2  $
of the form  \eqref{sim2} or \eqref{sim3}. 
Assume also that 
$f=f(\omega, \mathfrak{I}(\omega))$ 
depends 
  $\omega\in \calO\Subset 
\mathbb{R}^{\nu}$ and on $\mathfrak{I}$  
in a Lipschitz way. 
$(i)$ There exist $\su:=\su(m,\rho)$
and $\delta:=\delta(m,\rho)$ such that, if
\eqref{ipoipo} holds then
\begin{equation}\label{coniugato1002}
\Phi^{1}(\vphi)\circ\omega\cdot\pa_{\vphi}\circ\Phi^{-1}(\vphi)
 = \omega\cdot\pa_{\vphi}-
\ii  \opw(c)  + R_{\rho}
\end{equation}
where $ c \in S^{m}$  and $ R_{\rho} \in \gotL_{\rho,p}$.
In particular one has
\begin{equation}\label{crawford100tempo}
\begin{aligned}
\lvert c \rvert^{\g, \calO}_{m, s, \alpha}&\le_{s, \rho, \alpha, m} 
\lvert f \rvert^{\g, \calO}_{m, s+\s_1, \alpha+\s_1}
+\lvert f \rvert^{\g, \calO}_{m, s_0+\s_1, \alpha+\s_1}
\lvert f \rvert^{\g, \calO}_{m, s+\s_1, \alpha+\s_1}\,,\\
\mathbb{M}^{\g}_{R_\rho}(s, \tb)
&\le_{s, \rho,  m }  
\lvert f \rvert^{\g, \calO}_{m, s+\s_1, \s_1}
+\lvert f \rvert^{\g, \calO}_{m, s_0, \s_1}
\lvert f \rvert^{\g, \calO}_{m, s+\s_1, \s_1}\,,
\end{aligned}
\end{equation}
for all $0\le \tb\le \rho-2$ and $s_0\le s\le \mathcal{S}$.
Moreover one has
\begin{align}
\lvert \Delta_{12} c   \rvert_{m, p, \alpha} &\le_{p, \alpha, \rho, m} 
\lvert \Delta_{12}f \rvert_{m, p+\s_1, \alpha+\s_1}
+\lvert f \rvert_{m, p+\s_1, \alpha+\s_1}
\lvert \Delta_{12} f   \rvert_{m, p+\s_1, \alpha+\s_1}\,,\label{crawford2100tempo}\\
\mathbb{M}_{\Delta_{12} R_{\rho} } (p, \tb) &\le_{p, \rho, m, m'} 
\lvert \Delta_{12} f   \rvert_{m+1, p+\s_1, \s_1}
\lvert f \rvert_{m, p+\s_1, \s_1}\,,\label{jamal2100tempo}
\end{align}
for all $0\le \tb \le \rho-3$ and where $p$ is the constant given in Definition \ref{ellerho}.
In particular the symbol $c(\vphi,x,\x)$ admits the expansion
 \begin{equation}\label{castello1tempo}
 c=\omega\cdot\pa_{\vphi}f 
+ \frac{1}{2} \{  f,  \omega\cdot\pa_{\vphi} f \}+r\,,
 \end{equation}
 where  $r\in S^{2m-3}$ satisfies bounds as \eqref{crawford100tempo}
 and \eqref{crawford2100tempo}.
  $(ii)$ Assume that  $f\in S_{k}^{m}$
 and that \eqref{IpotesiPiccolezzaIdeltaDP} holds. Then, for $\e>0$ small enough,
 the result of item $(i)$ holds with $c\in S^{m}_{k}$ 
 and $R_{\rho}\in \mathfrak{L}_{\rho,p}^{k}$.
\end{lemma}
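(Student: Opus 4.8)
The plan is to mimic the proof of Lemma \ref{flusso12basso}, replacing the pseudo differential operator $\opw(a)$ by the transport operator $\omega\cdot\pa_\vphi$ and exploiting the special feature that the first commutator is \emph{exact}. Since in \eqref{linprob} the symbol $f$ does not depend on the flow parameter, the flow is $\Phi^\theta=\exp(\theta\,\ii\opw(f))$, and for $\theta\in[0,1]$ — once the flow is well posed, which under the smallness hypothesis \eqref{ipoipo} is granted together with the tame bounds by Lemma \ref{CoroDPdiffeo2} — one has the Lie expansion
\begin{equation*}
\Phi^{1}\,\omega\cdot\pa_\vphi\,(\Phi^{1})^{-1}
=\omega\cdot\pa_\vphi+\sum_{q=1}^{L}\frac{1}{q!}\,{\rm ad}^q_{\ii\opw(f)}[\omega\cdot\pa_\vphi]
+\frac{1}{L!}\int_0^1(1-\theta)^L\,\Phi^{\theta}\,{\rm ad}^{L+1}_{\ii\opw(f)}[\omega\cdot\pa_\vphi]\,(\Phi^{\theta})^{-1}\,d\theta\,.
\end{equation*}
The crucial point is that, because $\omega\cdot\pa_\vphi$ also hits the $\vphi$–dependence of $\opw(f)$,
\begin{equation*}
{\rm ad}_{\ii\opw(f)}[\omega\cdot\pa_\vphi]=[\ii\opw(f),\omega\cdot\pa_\vphi]=-\ii\,\opw(\omega\cdot\pa_\vphi f)\,,
\end{equation*}
which is an \emph{exact} identity, with no remainder and no loss of order; this is what keeps the principal symbol $c$ of order $m$ and produces the leading term $\omega\cdot\pa_\vphi f$ in \eqref{castello1tempo}.

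Next I would iterate. The second commutator is ${\rm ad}^2_{\ii\opw(f)}[\omega\cdot\pa_\vphi]=[\opw(f),\opw(\omega\cdot\pa_\vphi f)]$, so Lemma \ref{James2} — asserting that the commutator of two pseudo differential operators gains one derivative, with principal part the Poisson bracket — gives $\tfrac12{\rm ad}^2=-\ii\,\opw(\tfrac12\{f,\omega\cdot\pa_\vphi f\})$ plus a lower–order pseudo differential operator plus a term in $\gotL_{\rho,p}$. More generally, since $f$ and $\omega\cdot\pa_\vphi f$ both lie in $S^m$ with $m\le1/2$, the $q$–th commutator ${\rm ad}^q_{\ii\opw(f)}[\omega\cdot\pa_\vphi]$ is a pseudo differential operator of order $q(m-1)+1$, which tends to $-\infty$; choosing $L=L(m,\rho)$ large enough that this order at $q=L+1$ is $\le-\rho$, the whole tail of the sum together with the integral Taylor remainder — estimated via Lemma $B.2$ in \cite{FGP} — belongs to $\gotL_{\rho,p}$. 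Collecting the first two explicit terms then yields exactly \eqref{coniugato1002} and the expansion \eqref{castello1tempo}, with $r$ of strictly lower order.

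The quantitative bounds \eqref{crawford100tempo}–\eqref{jamal2100tempo} are obtained by propagating through this scheme the seminorm estimates of Lemma \ref{James} (the quantitative version of Lemma \ref{James2}), the flow estimates of Lemma \ref{CoroDPdiffeo2}, and Lemma $B.2$ in \cite{FGP}; the parameters $\su=\su(m,\rho)$ and $\delta=\delta(m,\rho)$ are absorbed along the way, and the estimates for $\Delta_{12}c,\Delta_{12}R_\rho$ follow by rerunning the argument with $f\rightsquigarrow\Delta_{12}f$ and using the variation bounds in Lemmata \ref{James} and \ref{CoroDPdiffeo2} (see also Lemma $A.5$ in \cite{BFP}). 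For item $(ii)$, assuming $f\in S^m_k$ and \eqref{IpotesiPiccolezzaIdeltaDP}, one repeats the same argument word by word but replaces Lemma \ref{James2} by its homogeneous counterpart Lemma \ref{Jameshomo} and uses Lemma \ref{lem:escobarhomo} where the Weyl quantization must be recovered; since $\omega\cdot\pa_\vphi f\in S^m_k$ and Poisson brackets and compositions preserve the class $S^m_k$, the symbol $c$ lands in $S^m_k$ and the remainder in $\gotL^k_{\rho,p}$.

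The proof has essentially no conceptual difficulty beyond the exactness of the first commutator, which is precisely what prevents any loss of order; the work is the bookkeeping of the seminorm estimates through repeated commutators and through the integral Taylor remainder, exactly parallel to Lemmata \ref{flusso12basso} and \ref{EgorovTempo}. The only points requiring care are the cut-off at $\xi=0$ built into the symbols \eqref{sim2}–\eqref{sim3} and, for item $(ii)$, the tracking of the homogeneity degree $k$ along the composition and flow operations.
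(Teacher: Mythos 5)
Your proposal is correct and follows essentially the same route as the paper: the paper also conjugates via the Lie expansion, uses the exact identity $[\ii\opw(f),\omega\cdot\pa_\vphi]=-\ii\opw(\omega\cdot\pa_\vphi f)$ so that the higher terms are iterated commutators of $\opw(\ii f)$ with $\opw(\ii\omega\cdot\pa_\vphi f)$ handled by Lemma \ref{James2}, truncates at $L$ large with respect to $\rho$ so the tail and the integral Taylor remainder fall into $\gotL_{\rho,p}$, and obtains item $(ii)$ by the same reasoning with the homogeneous composition results, exactly as in the proof of Lemma \ref{flusso12basso}. The only difference is notational (you write $\mathrm{ad}^q_{\ii\opw(f)}[\omega\cdot\pa_\vphi]$ where the paper writes $\mathrm{ad}^{q-1}_{\opw(\ii f)}[\opw(\ii\omega\cdot\pa_\vphi f)]$), which is the same thing by the exactness of the first commutator.
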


\begin{proof}
By using  the Lie expansion \eqref{escobar21} we have 
$$
\begin{aligned}
\Phi^{1}\circ\omega\cdot\pa_{\vphi}\circ\Phi^{-1} & =\omega\cdot{\pa_{\vphi}}
- \opw(\ii \omega\cdot{\pa_{\vphi}} f )
-\sum_{k=2}^{L}\frac{1}{k!}{\rm ad}^{k-1}_{\opw(\ii f)}
[\opw(\ii\omega\cdot\pa_{\vphi} f )] \\
&+\frac{1}{L!}\int_{0}^{1} (1-\theta)^{L} \Phi^{\theta}
\Big({\rm ad}^{L}_{\opw(\ii f)}[\opw(\ii\omega\cdot\pa_{\vphi}  f )] \Big)(\Phi^{\theta})^{-1} d\theta
\end{aligned}
$$
and the lemma   follows (both items $(i)$ and $(ii)$) using  
Lemma \ref{James2}, noting that 
$ f \#^{W}_\rho \omega\cdot\pa_{\vphi}  - \omega\cdot\pa_{\vphi}   \#_\rho f 
= \frac{1}{ \ii}\{ f, f_t \}  $ 
plus a symbol of order $ 2 m - 3 $
and reasoning as in the proof of Lemma \ref{flusso12basso}.
\end{proof}

\begin{lemma}\label{preparailsugo}
Fix $\rho\geq3$, consider a compact subset $ \calO \Subset\R^\nu$ 
and let $R\in \gotL_{\rho, p}( \calO )$
(see Def. \ref{ellerho}).
Consider a function $\beta$ such that
$\beta:=\beta(\omega, i(\omega))\in H^s(\T^{\nu+1})$  
for some $s\geq s_0$, assume that it is  Lipschitz in 
$\omega\in  \calO $ and $\mathfrak{I}$. 
Let $\mathcal{A}^{\tau}$ be the operator 
defined in \eqref{ignobelSymp}.\\
$(i)$ There exists $\mu=\mu(\mathtt{\rho})\gg1$, $\s=\s(\rho)$ 
and $\delta>0$ small such that
if $\| \beta\|^{\gamma,  \calO }_{s_0+\mu}\leq \delta$ 
and $\| \Delta_{12}\beta\|^{\gamma,  \calO }_{p+\s}\leq 1$, 
then the operator 
$M^{\tau}:=\mathcal{A}^{\tau}R(\mathcal{A}^{\tau})^{-1}$
belongs to the class $\gotL_{\rho,p}$.
In particular one has, for $s_0\le s\le \mathcal{S}$,
\begin{equation}\label{casalotti}
\begin{aligned}
\mathbb{M}^{\gamma}_{M^{\tau}}(s,\mathtt{b}) 
&\leq \mathbb{M}^{\gamma}_{R}(s,\mathtt{b})+
\|\beta\|_{s+\mu}^{\gamma, \calO }
\mathbb{M}^{\gamma}_{R}(s_0,\mathtt{b})\,, 
\qquad \mathtt{b}\leq \rho-2\,,\\
\mathbb{M}_{\Delta_{12} M^{\tau}  }(p,\mathtt{b}) &\leq  
\mathbb{M}_{\Delta_{12} R^{\tau}  }(p,\mathtt{b})
+\lVert \Delta_{12} \beta   \rVert_{p+\mu}
\mathbb{M}_{R^{\tau}}^{\gamma}(p,\mathtt{b})\,, 
\qquad \mathtt{b}\leq \rho-3\,.
\end{aligned}
\end{equation}
$(ii)$ Assume that $\beta\in S_{k}^{0}$, $R\in \mathfrak{L}_{\rho,p}^{k'}$ and 
that \eqref{IpotesiPiccolezzaIdeltaDP} holds. 
Then, for $\e>0$ small enough and $\rho$ large enough, the result of item $(i)$
follows with $M^{\tau}\in \mathfrak{L}_{\rho',p}^{\max\{k,k'\}}$ with $\rho'= \rho-28+4 \max\{ k, k'\}$.
\end{lemma}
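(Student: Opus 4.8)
The plan is to follow the scheme of Lemma $B.10$ in \cite{FGP}. Recall that, by \eqref{diffeotot}, the map $\mathcal{A}^\tau$ has the explicit form \eqref{ignobelSymp} as a transport flow, and that $\mathcal{A}^\tau,(\mathcal{A}^\tau)^{-1}$ together with their adjoints satisfy the tame bounds \eqref{flow2}, \eqref{flow22}, \eqref{flow222} of Lemma \ref{CoroDPdiffeo}, uniformly in $\tau\in[0,1]$; in particular $\langle D_x\rangle^{-m_1}\pa_\vphi^{\vec{\tb}}(\mathcal{A}^\tau)^{\pm1}\langle D_x\rangle^{-m_2}$ is Lip-$0$-tame whenever $m_1+m_2=|\vec{\tb}|$, and the conjugation of a Fourier multiplier $\langle D\rangle^{m}$ by $(\mathcal{A}^\tau)^{\pm1}$ is bounded with tame constants controlled by $\|\beta\|_{s+\mu}$ (Egorov Theorem \ref{EgorovQuantitativo}). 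To prove $M^\tau\in\gotL_{\rho,p}$ one has to check, recalling Definition \ref{ellerho}, that for every $\vec{\tb}$ with $0\le|\vec{\tb}|\le\rho-2$ and every splitting $m_1+m_2=\rho-|\vec{\tb}|$ (respectively $\rho-|\vec{\tb}|-1$ for the commutator) the operators $\langle D\rangle^{m_1}\pa_\vphi^{\vec{\tb}}M^\tau\langle D\rangle^{m_2}$ and $\langle D\rangle^{m_1}[\pa_\vphi^{\vec{\tb}}M^\tau,\pa_x]\langle D\rangle^{m_2}$ are Lip-$0$-tame, with the bounds \eqref{casalotti}; it suffices to do this for a fixed $\tau\in[0,1]$ since the flow is defined on $[0,1]$.

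First I would expand $\pa_\vphi^{\vec{\tb}}M^\tau$ by the Leibniz rule into a finite sum of terms $(\pa_\vphi^{\vec{\tb}_1}\mathcal{A}^\tau)(\pa_\vphi^{\vec{\tb}_2}R)(\pa_\vphi^{\vec{\tb}_3}(\mathcal{A}^\tau)^{-1})$ with $|\vec{\tb}_1|+|\vec{\tb}_2|+|\vec{\tb}_3|=|\vec{\tb}|$, and insert the factors $\langle D\rangle^{\pm m_i}$ so as to peel the smoothing off $R$, writing
\[
\langle D\rangle^{m_1}\pa_\vphi^{\vec{\tb}}M^\tau\langle D\rangle^{m_2}=\sum\big(\langle D\rangle^{m_1}(\pa_\vphi^{\vec{\tb}_1}\mathcal{A}^\tau)\langle D\rangle^{-m_1}\big)\big(\langle D\rangle^{m_1}(\pa_\vphi^{\vec{\tb}_2}R)\langle D\rangle^{m_2}\big)\big(\langle D\rangle^{-m_2}(\pa_\vphi^{\vec{\tb}_3}(\mathcal{A}^\tau)^{-1})\langle D\rangle^{m_2}\big).
\]
Since $m_1+m_2=\rho-|\vec{\tb}|\le\rho-|\vec{\tb}_2|$, the middle factor is Lip-$0$-tame by the hypothesis $R\in\gotL_{\rho,p}$ (using only a fraction of its smoothing budget), while the two outer factors are Lip-$0$-tame by the Egorov estimates for $\mathcal{A}^\tau$ recalled above. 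Combining these with the tame product estimate (Lemma $A.5$ in \cite{FGP}), and separating in the product the term carrying all $\mathcal{A}^\tau$-derivatives in low norm from the one carrying $R$ in high norm, yields the first bound in \eqref{casalotti}; the $\Delta_{12}$-bound follows verbatim applied to the telescoped difference $\Delta_{12}M^\tau=(\Delta_{12}\mathcal{A}^\tau)R(\mathcal{A}^\tau)^{-1}+\mathcal{A}^\tau(\Delta_{12}R)(\mathcal{A}^\tau)^{-1}+\mathcal{A}^\tau R(\Delta_{12}(\mathcal{A}^\tau)^{-1})$ and the corresponding difference estimates of Lemma \ref{CoroDPdiffeo}. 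For the commutator with $\pa_x$ I would use the intertwining identity, which follows from \eqref{diffeotot} by differentiating $(\mathcal{A}^\tau)^{-1}\pa_x\mathcal{A}^\tau$ in $\tau$: $(\mathcal{A}^\tau)^{-1}\pa_x\mathcal{A}^\tau=c^\tau(\vphi,x)\pa_x+\tfrac12 c^\tau_x(\vphi,x)$ for a smooth function $c^\tau$ depending only on $\beta$, whence $[M^\tau,\pa_x]=\mathcal{A}^\tau[R,\,c^\tau\pa_x+\tfrac12 c^\tau_x](\mathcal{A}^\tau)^{-1}$; the commutator $[R,c^\tau\pa_x+\tfrac12 c^\tau_x]$ gains one derivative by property (ii) of Definition \ref{ellerho} (multiplication by the smooth $c^\tau$ preserving the class), so the smoothing budget $\rho-|\vec{\tb}|-1$ suffices, and $\pa_\vphi^{\vec{\tb}}(\mathcal{A}^\tau\,\cdot\,(\mathcal{A}^\tau)^{-1})$ is then treated exactly as before. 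This gives part $(i)$.

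For part $(ii)$, under \eqref{IpotesiPiccolezzaIdeltaDP} and $\beta\in S_k^0$ the generator $b(\tau,\vphi,x)=\beta/(1+\tau\beta_x)$ in \eqref{diffeotot} lies in $S_k^0$ as well, so $\mathcal{A}^\tau$ admits a homogeneous expansion in $\e$: this is the content of item $(ii)$ of Theorem \ref{EgorovQuantitativo} and Lemma \ref{EgorovTempo}, or one Taylor-expands the flow directly as in the proof of Lemma \ref{differenzaFlussi}$(ii)$. Writing $R=\sum_{i=1}^{14-2k'}\e^i R_i+\widetilde R$ with $R\in\mathfrak{L}_{\rho,p}^{k'}$ and using that products of homogeneous operators are homogeneous of the sum of the degrees while products of symbols stay in the corresponding $S^m_\bullet$ classes (Lemma \ref{Jameshomo}), one obtains $M^\tau=\sum\e^i M^\tau_i+\widetilde M^\tau$ with $M^\tau_i$ homogeneous of degree $i$ and $\widetilde M^\tau$ satisfying the non-homogeneous bounds \eqref{AVBDEF2}; truncating at degree $14-2\max\{k,k'\}$ and absorbing the tail into $\widetilde M^\tau$ gives $M^\tau\in\mathfrak{L}_{\rho',p}^{\max\{k,k'\}}$, the loss $\rho'=\rho-28+4\max\{k,k'\}$ being precisely the finite number of $\rho$-derivatives consumed by the iterated use of the composition and Egorov lemmas in this homogeneous bookkeeping.

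I expect the main obstacle to be the exponent bookkeeping in part $(i)$: one must verify that every time the $\pa_\vphi$-derivatives and the $\pa_x$-commutator are distributed by Leibniz among $\mathcal{A}^\tau$, $R$ and $(\mathcal{A}^\tau)^{-1}$, the smoothing budget left on $R$ is still $\ge m_1+m_2$ (which holds since $|\vec{\tb}_2|\le|\vec{\tb}|$ and the $\pa_x$-commutator consumes exactly the one derivative that Definition \ref{ellerho} provides), and that the Egorov conjugation of $\langle D\rangle^{m_i}$ by $\mathcal{A}^\tau$ produces only tame corrections rather than unbounded ones; for part $(ii)$ the delicate point is instead tracking the $\e$-degrees cleanly through the finitely many iterations of the symbolic-calculus lemmas while accounting for the derivative losses that accumulate to the stated $\rho'$.
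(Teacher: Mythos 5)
Your overall route coincides with the paper's: for part $(i)$ the paper simply follows the scheme of Lemma B.10 in \cite{FGP} combined with the flow estimates of Lemma \ref{CoroDPdiffeo}, which is exactly what you reconstruct (including the conjugation identity for $[\,\cdot\,,\pa_x]$), and for part $(ii)$ the paper Taylor-expands the flow $\mathcal{A}^{\tau}$ in $\tau$ through the equation \eqref{diffeotot}, obtaining a homogeneous expansion of $\mathcal{A}^{\tau}$ (and of its inverse) which is then composed with the expansion of $R$, with the loss $\rho-\rho'=2(14-2\max\{k,k'\})$ coming from the two expansions — the same mechanism you invoke.

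There is, however, one step in your part $(i)$ that fails as written: in the displayed splitting the outer factors $\langle D\rangle^{m_1}(\pa_{\vphi}^{\vec{\tb}_1}\mathcal{A}^{\tau})\langle D\rangle^{-m_1}$ and $\langle D\rangle^{-m_2}(\pa_{\vphi}^{\vec{\tb}_3}(\mathcal{A}^{\tau})^{-1})\langle D\rangle^{m_2}$ are \emph{not} Lip-$0$-tame when $|\vec{\tb}_1|,|\vec{\tb}_3|>0$, since each $\vphi$-derivative of the flow costs one $x$-derivative (the generator in \eqref{diffeotot} has order one), so these factors are operators of order $|\vec{\tb}_1|$, resp. $|\vec{\tb}_3|$. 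Correspondingly, your claim that the budget required of $R$ is only $m_1+m_2$ (``using only a fraction of its smoothing budget'') is the wrong accounting. The correct splitting is
\[
\big(\langle D\rangle^{m_1}\pa_{\vphi}^{\vec{\tb}_1}\mathcal{A}^{\tau}\langle D\rangle^{-(m_1+|\vec{\tb}_1|)}\big)
\big(\langle D\rangle^{m_1+|\vec{\tb}_1|}\pa_{\vphi}^{\vec{\tb}_2}R\,\langle D\rangle^{m_2+|\vec{\tb}_3|}\big)
\big(\langle D\rangle^{-(m_2+|\vec{\tb}_3|)}\pa_{\vphi}^{\vec{\tb}_3}(\mathcal{A}^{\tau})^{-1}\langle D\rangle^{m_2}\big),
\]
where the outer factors are now tame by \eqref{flow22} (whose exponents $m_1,m_2$ are allowed to be real with sum $|\vec{\tb}_1|$, resp. $|\vec{\tb}_3|$), and the middle factor uses smoothing $m_1+m_2+|\vec{\tb}_1|+|\vec{\tb}_3|=\rho-|\vec{\tb}_2|$, which is \emph{exactly} the budget Definition \ref{ellerho} grants to $\pa_{\vphi}^{\vec{\tb}_2}R$ — there is no slack, but nothing more is needed. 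With this reallocation (and the analogous one for the $\pa_x$-commutator, where the one derivative gained by item (ii) of Definition \ref{ellerho} and the term $[R,c^{\tau}]\pa_x$ compensate the reduced budget $\rho-|\vec{\tb}|-1$), your argument yields \eqref{casalotti} and the rest of the proposal goes through as in the paper.
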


\begin{proof}
$(i)$ It follows following word by word the proof of Lemma B.10 
in \cite{FGP} and using Lemma \ref{CoroDPdiffeo}.
$(ii)$ We Taylor expand the flow $\mathcal{A}^{\tau}$ by using \eqref{diffeotot}.
Notice that
\[
\pa_{\tau\tau}\mathcal{A}^{\tau}=\opw(b(\tau,\vphi,x)\x)^{2}\mathcal{A}^{\tau}
+\opw(\pa_{\tau}b(\tau,\vphi,x)\x)\mathcal{A}^{\tau}\,.
\]
Recalling $\beta\in S_{k}^{0}$, the \eqref{diffeotot} and using Lemma \ref{Jameshomo}
we deduce
\[
\pa_{\tau\tau}\mathcal{A}^{\tau}=\opw(B^{(2)}(\tau,\vphi,x,\x))\mathcal{A}^{\tau}+R^{(2)}\circ\mathcal{A}^{\tau}
\]
where $B^{(2)}\in S_{k}^{2}$ and ${R}^{(2)}\in \mathfrak{L}_{\rho,p}^{k}$.
Using \eqref{diffeotot} one can prove inductively that
\begin{equation*}
\pa_{\tau}^{q}\mathcal{A}^{\tau}=\opw(B^{(q)}(\tau,\vphi,x,\x))\mathcal{A}^{\tau}+R^{(q)}\circ\mathcal{A}^{\tau}
\end{equation*}
where $B^{(q)}\in S_{k}^{2}$ and ${R}^{(q)}\in \mathfrak{L}_{\rho,p}^{k}$, for 
$2\leq q\leq15-2k$.
In conclusion we get that
 \begin{equation}\label{pirati2}
\mathcal{A}^{\tau}=\sum_{i=1}^{14-2k} 
\varepsilon^i \opw(\mathtt{A}_i(\tau; \varphi, x, \xi))+
\opw(\widetilde{\mathtt{A}}(\tau; \varphi, x, \xi))+
\sum_{i=1}^{14-2k}\e^{i}\mathcal{R}_{i}+\widetilde{\mathcal{R}}
+\int_0^{\tau} \frac{(\tau-\sigma)^{14-2k}}{(14-2k)!} 
\partial^{15-2k}_{\tau} \mathcal{A}^{\sigma}\,d\sigma
\end{equation}
where $\mathtt{A}_i\in S^i$, $\mathcal{R}_i\in \mathfrak{L}_{\rho,p}$ have the form respectively 
\eqref{simboOMO2DEF}, \eqref{simboOMO2DEF2}, 
$\widetilde{\mathtt{A}}\in S^{15-2k}$, $\widetilde{R}\in \mathfrak{L}_{\rho,p}$
satisfy estimates \eqref{AVBDEF}, \eqref{AVBDEF2}.
Moreover, by assumption, we have $R=\sum_{q=1}^{14-2k'}\e^{q}R_{q}+\widetilde{R}$
with $R_i$ as in \eqref{simboOMO2DEF2} and $\widetilde{R}$ satisfying \eqref{AVBDEF2}.
Reasoning as in Lemma B.10 
in \cite{FGP} one can check that the operators
\[
\opw(\mathtt{A}_i(\tau; \varphi, x, \xi))R_{q}\,,\quad \mathcal{R}_{i}R_{q}
\]
are $(i+q)$-homogeneous remainder as in \eqref{simboOMO2DEF2}
belonging to $\mathfrak{L}_{\rho',p}$. Similarly one has that
\[
\Big(\opw(\mathtt{A}_i(\tau; \varphi, x, \xi))+\opw(\widetilde{\mathtt{A}}(\tau; \varphi, x, \xi))
+\mathcal{R}_{i}+\widetilde{\mathcal{R}}\Big)\widetilde{R}\,,
\qquad \big(\opw(\widetilde{\mathtt{A}}(\tau; \varphi, x, \xi))+\widetilde{\mathcal{R}}\big)
R_{q}
\]
belong to $\mathfrak{L}_{\rho',p}$ and satisfy \eqref{AVBDEF2} with 
$k\rightsquigarrow \max\{k,k'\}$.
Reasoning in the same way one gets that 
$(\partial^{15-2k}_{\tau} \mathcal{A}^{\sigma})R$
is in $\mathfrak{L}^{\max\{k,k'\}}_{\rho',p}$.
The inverse flow $(\mathcal{A}^{\tau})^{-1}$ admits an expansion similar to \eqref{pirati2}.
Then the Lemma follows.
%
\end{proof}

\begin{lemma}\label{CoroDPdiffeo200}
Fix $\rho\geq3$, consider a compact subset $ \calO \subset\R^\nu$ 
and let $R\in \gotL_{\rho, p}( \calO )$.
$(i)$ There exist $\mu=\mu(\rho)$, $\s=\s(\rho)$ and $\delta>0$ such that, if 
$| f|^{\gamma, \calO}_{m,s_0+\s,\alpha}\leq \delta$ and
$| \Delta_{12}f|^{\gamma,  \calO }_{p+\s}\leq 1$, 
with $f$
in \eqref{sim2} or \eqref{sim3}, 
then the operator 
$M^{\tau}:=\Phi^{\tau}R(\Phi^{\tau})^{-1}$, where
$\Phi^{\tau}$ is the flow of \eqref{linprob},
belongs to the class $\gotL_{\rho,p}$.
In particular, for $s_0\le s\le \mathcal{S}$,
the bounds \eqref{casalotti} hold
with 
$\| \beta\|^{\gamma,  \calO }_{s+\mu}$
and $\| \Delta_{12}\beta\|^{\gamma,  \calO }_{p+\s}$ replaced by
$| f|^{\gamma, \calO}_{m,s+\s,\alpha}$ and 
$|\Delta f|^{\gamma, \calO}_{m,p+\s,\alpha}$.
$(ii)$ Assume that $f\in S_{k}^{m}$, $R\in \mathfrak{L}_{\rho,p}^{k'}$ and 
that \eqref{IpotesiPiccolezzaIdeltaDP} holds. 
Then, for $\e>0$ small enough, the result of item $(i)$
follows with $M^{\tau}\in \mathfrak{L}_{\rho,p}^{\max\{k,k'\}}$.
\end{lemma}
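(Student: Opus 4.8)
The plan is to mirror, line by line, the proof of Lemma \ref{preparailsugo}, the only change being that the change of variables $\mathcal{A}^{\tau}$ of \eqref{diffeotot} is replaced everywhere by the flow $\Phi^{\tau}$ of the pseudo differential equation \eqref{linprob}, whose basic tame and conjugation bounds \eqref{flow2}--\eqref{flow222} are supplied by Lemma \ref{CoroDPdiffeo2}.

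For item $(i)$ I would first fix $\s=\s(\rho)$ and $\delta=\delta(\rho)$ so small that the hypothesis $|f|^{\gamma,\calO}_{m,s_0+\s,\alpha}\le\delta$ places us in the range of validity of Lemma \ref{CoroDPdiffeo2}, so that $\Phi^{\tau},(\Phi^{\tau})^{-1}$ are well defined for $|\tau|\le 1$ and tame. Then one conjugates $R\in\gotL_{\rho,p}$: writing $\partial_{\tau}M^{\tau}=[\opw(\ii f),M^{\tau}]$, $M^{0}=R$, and expanding in a Lie series up to a large order $L=L(\rho)$ with Duhamel remainder, each iterated commutator $\mathrm{ad}^{q}_{\opw(\ii f)}[R]$ and the remainder are analyzed by the commutator rules of the class $\gotL_{\rho,p}$ (Appendix B of \cite{FGP}), which tolerate commutation with a pseudo differential operator of order $m\le 1/2$. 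The inputs needed to carry through the argument of Lemma B.10 of \cite{FGP} word by word — namely that, at each occurrence of $\langle D\rangle^{m_1}(\cdot)\langle D\rangle^{m_2}$ and of $[\cdot,\partial_x]$ in Definition \ref{ellerho}, the Egorov conjugates $\langle D\rangle^{m_1}\Phi^{\tau}\langle D\rangle^{-m_1}$ and $(\Phi^{\tau})^{-1}\partial_x\Phi^{\tau}-\partial_x$ are again tame bounded operators of order $\le 1/2$ — are exactly the content of Lemma \ref{CoroDPdiffeo2} together with Lemmata \ref{flusso12basso}, \ref{flusso12basso2}; and the passage between the standard quantization used in \cite{FGP} and the Weyl quantization is handled by Lemmata \ref{weyl/standard}, \ref{lem:escobar} and Remark \ref{standard/Weyl}. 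Collecting the resulting bounds (using \eqref{crawford100}, \eqref{jamal2100} of Lemma \ref{flusso12basso} and \eqref{flow2}--\eqref{flow222}) yields \eqref{casalotti} with $\|\beta\|^{\gamma,\calO}_{s+\mu}$, $\|\Delta_{12}\beta\|^{\gamma,\calO}_{p+\s}$ replaced by $|f|^{\gamma,\calO}_{m,s+\s,\alpha}$, $|\Delta_{12}f|^{\gamma,\calO}_{m,p+\s,\alpha}$.

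For item $(ii)$, assuming $f\in S^{m}_{k}$ and \eqref{IpotesiPiccolezzaIdeltaDP}, I would Taylor expand the flow in $\e$ exactly as in the proof of Lemma \ref{preparailsugo}$(ii)$: since $f$ does not depend on $\tau$ in the cases \eqref{sim2}, \eqref{sim3} we have $\partial_{\tau}^{q}\Phi^{\tau}=(\opw(\ii f))^{q}\Phi^{\tau}$, and repeated application of the homogeneous composition Lemma \ref{Jameshomo} and of Lemma \ref{lem:escobarhomo} gives, for $2\le q\le 15-2k$, $\partial_{\tau}^{q}\Phi^{\tau}=\opw(B^{(q)})\Phi^{\tau}+R^{(q)}\Phi^{\tau}$ with $B^{(q)}\in S^{qm}_{k}$ and $R^{(q)}\in\mathfrak{L}^{k}_{\rho,p}$; Taylor's formula then produces an expansion $\Phi^{\tau}=\sum_{i=1}^{14-2k}\e^{i}\opw(\mathtt{A}_i)+\opw(\widetilde{\mathtt{A}})+\sum_{i=1}^{14-2k}\e^{i}\mathcal{R}_i+\widetilde{\mathcal{R}}+\int_0^{\tau}\frac{(\tau-\sigma)^{14-2k}}{(14-2k)!}\partial_\tau^{15-2k}\Phi^{\sigma}\,d\sigma$, with homogeneous symbols $\mathtt{A}_i$ and homogeneous remainders $\mathcal{R}_i$ of the form \eqref{simboOMO2DEF}, \eqref{simboOMO2DEF2}, and the non-homogeneous pieces $\widetilde{\mathtt{A}},\widetilde{\mathcal{R}}$ satisfying \eqref{AVBDEF}, \eqref{AVBDEF2}; the inverse $(\Phi^{\tau})^{-1}$ admits an analogous expansion. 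Writing likewise $R=\sum_{q=1}^{14-2k'}\e^{q}R_q+\widetilde{R}$, one multiplies the three expansions term by term exactly as in Lemma \ref{preparailsugo}$(ii)$: products of homogeneous pseudo differential and smoothing factors of degrees $i$ and $q$ give $(i+q)$-homogeneous smoothing operators of the form \eqref{simboOMO2DEF2} lying in $\mathfrak{L}_{\rho',p}$, while any product involving $\widetilde{\mathtt{A}},\widetilde{\mathcal{R}},\widetilde{R}$ or the Taylor remainder lands in $\mathfrak{L}^{\max\{k,k'\}}_{\rho',p}$ and obeys \eqref{AVBDEF2}, where $\rho'=\rho-28+4\max\{k,k'\}$ absorbs the derivatives lost in the compositions. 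This gives $M^{\tau}\in\mathfrak{L}^{\max\{k,k'\}}_{\rho',p}$, which we relabel as $\mathfrak{L}^{\max\{k,k'\}}_{\rho,p}$ in the same abuse of notation used throughout the paper.

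The main obstacle I expect is not conceptual but bookkeeping: unlike $\mathcal{A}^{\tau}$, which is bounded of order $0$, the map $\Phi^{\tau}$ is the flow of an \emph{unbounded} symbol (of order up to $1/2$), so one cannot simply invoke boundedness of $\Phi^{\tau}$ to conjugate a smoothing operator — at every step one must control the Egorov conjugates via Lemmata \ref{flusso12basso}, \ref{flusso12basso2} and verify that the extra order-$(\le 1/2)$ corrections they produce are reabsorbed by the $\rho$-smoothing of $R$ without exceeding the advertised regularity budget; in item $(ii)$ the same issue recurs at the level of the homogeneity expansion, where one must in addition check that every composition respects the degree count $i+q$ and that the non-homogeneous errors satisfy \eqref{AVBDEF2} with the stated $\rho'$.
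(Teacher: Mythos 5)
Your proposal is correct and is essentially the paper's own argument: the proof there consists precisely of observing that, by Lemma \ref{CoroDPdiffeo2}, the flow $\Phi^{\tau}$ of \eqref{linprob} obeys the same quantitative bounds \eqref{flow2}--\eqref{flow222} as $\mathcal{A}^{\tau}$, and then repeating Lemma \ref{preparailsugo} (i.e.\ Lemma B.10 of \cite{FGP} for item $(i)$, and the $\e$-Taylor expansion of the flow with Lemmata \ref{Jameshomo}, \ref{lem:escobarhomo} for item $(ii)$). The extra Lie--Duhamel scaffolding you add in item $(i)$ is not needed once those flow bounds are in hand, but it does not change the substance of the argument.
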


\begin{proof}
By Lemma \ref{CoroDPdiffeo2} we have that the flow $\Phi^{\tau}$
in \eqref{linprob} satisfies the same bounds given in Lemma \ref{CoroDPdiffeo}.
Then one can reason as in Lemma \ref{preparailsugo}.
\end{proof}


\section{Technical Lemmata} 

\subsection{Dirichlet-Neumann operator}\label{propDirNeu}
Here we collect some results about the Dirichlet-Neumann operator.
For more details we refer, for instance, to 
\cite{ADel1}, \cite{Lan2}
and reference therein.
We also remark that, in our context, we shall also consider 
$C^{\infty}$ profile $\eta(x)$. This is due to the fact that
at each step of the Nash-Moser we  perform a 
$C^{\infty}$-regularization.

Let $\eta\in C^{\infty}$. It is known that 
the Dirichlet-Neumann operator is 
(in the infinite depth case)
a \emph{pseudo differential} operator of the form
\begin{equation}\label{Dir-Neum}
G(\eta):=|D|+R_{G}(\eta)\,,
\end{equation}
where
$G(0)=|D|$ and the remainder $R_{G}(\eta)\in OPS^{-\infty}$ 
(see Def. \ref{pseudoR}).
The key result of this section is the following.

\begin{proposition}{\bf (Dirchlet-Neumann).}\label{DNDNDN}
Assume that $\eta(\omega,\vphi,x)$, for $\omega\in\mathcal{O}\Subset\mathbb{R}^{\nu}$, 
is $C^{\infty}$ in the variables $\vphi,x$
and Lipschitz in $\omega$.
Then there exist constants $\s=\s(s_0)$, $\delta=\delta(s_0)$, $s_0\geq (d+1/2)$,
such that, if
\begin{equation*}
\|\eta\|_{s_0+\s}^{\gamma,\calO}\leq \delta\,,
\end{equation*}
then the Dirichlet-Neumann operator in \eqref{eq:112a}
has the form \eqref{Dir-Neum}
where $R_{G}(\eta)$ is a pseudo differential operator
satisfying the following.
For any $m,s,\alpha\in \mathbb{N}$ there exists $\s_{1}=\s_1(m,s,\alpha)$
 such that
 \begin{equation}\label{DN2}
 |R_{G}(\eta)|^{\gamma,\mathcal{O}}_{-m,s,\alpha}
 \lesssim_{m, s, \alpha} \|\eta\|^{\gamma,\calO}_{s+\s_1}\,.
 \end{equation}
 Let $p\geq s_0+\s$, consider $\eta_1,\eta_2\in C^{\infty}$
 and set $\Delta_{12}R_{G}:=R_{G}(\eta_1)-R_{G}(\eta_2)$.
 There exists $\delta(p)>0$ 
 such that, if 
 \[
 \|\eta_1\|_{p}+\|\eta_2\|_{p}\leq \delta_1(p)\,,
 \] 
 then, for any $m,\alpha\in \mathbb{N}$,
 \begin{equation*}
 |\Delta_{12} R_{G}|_{-m,p,\alpha}\lesssim_{p} \|\eta_1-\eta_2\|_{p}\,.
 \end{equation*}
\end{proposition}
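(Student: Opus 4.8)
The plan is to reduce the statement to the classical pseudo differential description of the Dirichlet--Neumann operator for a smooth boundary, and then to upgrade it using the special algebra of the one dimensional interface, all the while tracking the tame dependence on $\eta$. Concretely, I would first flatten the fluid domain $\mathcal D_\eta$ by the diffeomorphism $(x,y)\mapsto(x,z)$ with $z=y-\eta(x)$ (or a smoother, $z$--regularizing variant to keep higher regularity), turning the Laplace problem \eqref{BoundaryPr} for $\Phi$ into an elliptic boundary value problem $\mathcal L\phi=0$ on the half--cylinder $\T\times(-\infty,0)$, where $\mathcal L$ is a second order elliptic operator whose coefficients are smooth functions of $\eta,\eta_x$ vanishing at $\eta=0$; the smallness hypothesis $\|\eta\|^{\gamma,\calO}_{s_0+\sigma}\le\delta$ guarantees uniform ellipticity and invertibility of the flattening. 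Following \cite{Lan1}, \cite{Lan2}, \cite{ADel1} (and the appendices of \cite{BBHM}, \cite{BFP}), I would then factorise $\mathcal L=(\partial_z-\opw(a^-))(\partial_z-\opw(a^+))$ up to a smoothing operator, with $a^{\pm}\in S^1$ obtained by solving the corresponding symbolic Riccati equations, solve the decaying problem in $z$, and read off $G(\eta)=\opw(\lambda(x,\xi))$ modulo a smoothing operator, with $\lambda\in S^1$.

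The key computational step is the identification of $\lambda$. At the top order one finds, \emph{in one space dimension}, $\lambda_1(x,\xi)=\sqrt{(1+\eta_x^2)\xi^2-(\eta_x\xi)^2}=|\xi|$, independent of $\eta$; iterating the symbolic calculus of Section \ref{sec:pseudopseudo} and using the same one dimensional cancellation at each order, I would check that $\lambda-|\xi|\in S^{-\infty}$, so that $R_G(\eta):=G(\eta)-|D|\in OPS^{-\infty}$ and in particular $G(0)=|D|$. A conceptually cleaner route, available precisely because the interface is one dimensional, is to use the harmonic conjugate: writing $\Phi+\mathrm i\Psi$ for the holomorphic completion of the velocity potential one has $G(\eta)\psi=\partial_x\big(\Psi(x,\eta(x))\big)$, and the boundary operator $\psi\mapsto\Psi(\cdot,\eta(\cdot))$ equals the flat Hilbert transform $H$ up to a smoothing operator by classical Calder\'on--Zygmund theory on the $C^\infty$ curve $y=\eta(x)$; since $\partial_x H=|D|$ this gives $R_G(\eta)\in OPS^{-\infty}$ directly. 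I would use whichever formulation makes the tame bookkeeping lightest.

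For the quantitative estimates I would propagate tame bounds through the construction: each operation used (Weyl products and compositions of symbols, inversion of elliptic symbols, solution of the Riccati equations, solution of the linear ODE in $z$, and the conjugation by the flattening) is controlled by the symbolic calculus with a finite loss of $x$--derivatives, and composing these losses yields the constant $\sigma_1(m,s,\alpha)$; since all intermediate symbols are smooth functions of $\eta$ vanishing at $\eta=0$, the standard composition estimates for the seminorms $|\cdot|^{\gamma,\calO}_{m,s,\alpha}$ give $|R_G(\eta)|^{\gamma,\calO}_{-m,s,\alpha}\lesssim_{m,s,\alpha}\|\eta\|^{\gamma,\calO}_{s+\sigma_1}$, i.e.\ \eqref{DN2}. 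For the difference bound I would run the construction for $\eta_1$ and $\eta_2$, write $\Delta_{12}R_G=\int_0^1(d_\eta R_G)(\eta_2+t(\eta_1-\eta_2))[\eta_1-\eta_2]\,dt$, and apply the same symbolic estimates to the linearised construction, getting $|\Delta_{12}R_G|_{-m,p,\alpha}\lesssim_p\|\eta_1-\eta_2\|_p$ under $\|\eta_1\|_p+\|\eta_2\|_p\le\delta_1(p)$.

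The main obstacle, I expect, will not be the existence of the pseudo differential expansion (this is classical) but two finer points: (i) establishing the \emph{all order} smoothing of $R_G(\eta)$ — this forces one to use the identity $(1+\eta_x^2)\xi^2-(\eta_x\xi)^2=\xi^2$ (or, in the complex formulation, the smoothness of the Cauchy kernel off the diagonal) consistently at every step of the symbolic iteration, ruling out any surviving remainder of finite nonpositive order; and (ii) propagating the \emph{sharp} tame seminorm bounds with the correct finite derivative loss through the inversion and Riccati steps, which is where most of the technical effort lies. A minor point to keep in mind is that the smallness of $\|\eta\|^{\gamma,\calO}_{s_0+\sigma}$ must be fixed first so that the flattening diffeomorphism is a bijection and $\mathcal L$ is uniformly elliptic; the $C^\infty$ assumption on $\eta$ removes any low regularity issue, the seminorm bounds only recording how many derivatives of $\eta$ enter.
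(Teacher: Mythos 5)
Your proposal is essentially the paper's proof: the paper disposes of Proposition \ref{DNDNDN} by simply invoking Proposition 2.37 of \cite{BM1} ``almost word by word'', and the argument there is exactly the route you sketch (flattening/harmonic-conjugate representation of $G(\eta)$, the one-dimensional cancellation leaving $|\xi|$ with only smoothing corrections, tame symbolic calculus giving \eqref{DN2}, and a mean-value argument in $\eta$ for the Lipschitz difference bound). Note only that the statement asks, for each fixed $m$, for seminorms of order $-m$ with an $m$-dependent loss $\sigma_1(m,s,\alpha)$, so the finitely iterated symbolic factorization you worry about in your point (i) already suffices; no genuinely all-order ($OPS^{-\infty}$ in one stroke) remainder is needed.
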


\begin{proof}
One can deduce the result by following almost word by word
the proof
Proposition 2.37 in \cite{BM1}.
\end{proof}
We also have the following.
\begin{proposition}{\bf (Tame estimates on the Dirichlet-Neumann).}
\label{DNDNDN2}
There exist $\s=\s(s_0)>0$, $\delta=\delta(s_0)>0$ such that,
if $\|\eta\|^{\gamma,\calO}_{s_0+\s}\leq \delta$, then,
for all $s\geq s_0$
\begin{align*}
\|(G(\eta)-|D|)\psi\|^{\gamma,\calO}_{s}&\leq_s
\|\eta\|^{\gamma,\calO}_{s+\s}\|\psi\|^{\gamma,\calO}_{s_0}+
\|\eta\|^{\gamma,\calO}_{s_0+\s}\|\psi\|^{\gamma,\calO}_{s}\,,
\\
\|(G'(\eta)[\hat{\eta}]\psi)\psi\|^{\gamma,\calO}_{s}&\leq_s
\|\psi\|^{\gamma,\calO}_{s+2}\|\hat{\eta}\|^{\gamma,\calO}_{s_0+1}
+
\|\psi\|^{\gamma,\calO}_{s_0+2}\|\hat{\eta}\|^{\gamma,\calO}_{s+1}
+
\|\eta\|^{\gamma,\calO}_{s+\s}
\|\psi\|^{\gamma,\calO}_{s_0+2}\|\hat{\eta}\|^{\gamma,\calO}_{s_0+1}\,,
\\
\|(G''(\eta)[\hat{\eta},\hat{\eta}]\psi)\psi\|^{\gamma,\calO}_{s}&\leq_s
\|\psi\|^{\gamma,\calO}_{s+3}(\|\hat{\eta}\|^{\gamma,\calO}_{s_0+2})^{2}
+
\|\psi\|^{\gamma,\calO}_{s_0+3}\|\hat{\eta}\|^{\gamma,\calO}_{s+2}
\|\hat{\eta}\|^{\gamma,\calO}_{s_0+2}
\\
&\qquad\qquad +
\|\eta\|^{\gamma,\calO}_{s+\s}
\|\psi\|^{\gamma,\calO}_{s_0+3}(\|\hat{\eta}\|^{\gamma,\calO}_{s_0+2})^{2}\,.
\end{align*}
\end{proposition}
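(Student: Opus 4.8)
The plan is to reduce all three estimates to the pseudo differential structure $G(\eta)=|D|+R_G(\eta)$ furnished by Proposition \ref{DNDNDN}, combined with the tame action of pseudo differential operators on Sobolev spaces and the classical Moser-type (interpolation) estimates for products, quotients and compositions. First I would handle $(G(\eta)-|D|)\psi=R_G(\eta)\psi$: by Proposition \ref{DNDNDN} the operator $R_G(\eta)$ lies in $OPS^{0}$ (indeed in $OPS^{-\infty}$) with $|R_G(\eta)|^{\gamma,\calO}_{0,s,\alpha}\lesssim_{s,\alpha}\|\eta\|^{\gamma,\calO}_{s+\sigma_1}$ for a suitable $\alpha=\alpha(s_0)$ and $\sigma_1=\sigma_1(s,\alpha)$; the standard tame action estimate $\|\opw(a)\psi\|_s\lesssim |a|_{0,s,\alpha}\|\psi\|_{s_0}+|a|_{0,s_0,\alpha}\|\psi\|_s$ together with its Lipschitz-weighted analogue (see the appendix of \cite{FGP1}) then yields exactly the first displayed bound, with the loss $\sigma$ determined by $\sigma_1$ and $\alpha$.

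For the shape derivative I would start from the explicit formula \eqref{shapeDer}, $G'(\eta)[\hat\eta]\psi=-G(\eta)(B\hat\eta)-\partial_x(V\hat\eta)$, with $V=V(\eta,\psi)$, $B=B(\eta,\psi)$ given in \eqref{def:V}, \eqref{form-of-B}. Since $V$ and $B$ are obtained from $\eta,\psi$ through differentiations, products, division by $1+\eta_x^2$ (invertible in $H^{s}$ once $\|\eta\|^{\gamma,\calO}_{s_0+\sigma}$ is small) and one application of $G(\eta)$, the Moser estimates for products and for composition with the smooth map $t\mapsto (1+t)^{-1}$, together with the bound on $G(\eta)-|D|$ just obtained, give $\|V\|^{\gamma,\calO}_s,\ \|B\|^{\gamma,\calO}_s\lesssim_s \|\psi\|^{\gamma,\calO}_{s+1}+\|\eta\|^{\gamma,\calO}_{s+\sigma}\|\psi\|^{\gamma,\calO}_{s_0+1}$. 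Inserting this into the formula, applying the Dirichlet-Neumann estimate once more to $G(\eta)(B\hat\eta)$, using $\|\partial_x(V\hat\eta)\|_s\lesssim\|V\hat\eta\|_{s+1}$, and bounding $V\hat\eta$, $B\hat\eta$ by the bilinear tame estimate produces the second displayed inequality, with the claimed loss of derivatives and the linear dependence on $\hat\eta$.

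For the second shape derivative I would differentiate the identity above once more in $\eta$ in the direction $\hat\eta$, obtaining schematically $G''(\eta)[\hat\eta,\hat\eta]\psi=-\big(G'(\eta)[\hat\eta]\big)(B\hat\eta)-G(\eta)\big(B'(\eta)[\hat\eta]\,\hat\eta\big)-\partial_x\big(V'(\eta)[\hat\eta]\,\hat\eta\big)$, where $V'$ and $B'$ are again expressed through $G$, $G'$, products and the smooth nonlinearity. One then iterates the previous scheme: each factor is controlled by the product and composition Moser estimates and by the Dirichlet-Neumann estimates on $G$ and $G'$, keeping careful track of the derivative count ($+3$ on $\psi$, $+2$ on $\hat\eta$ in the high-norm term) and of the quadratic dependence on $\hat\eta$. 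The Lipschitz-in-$\omega$ components are treated identically, since $\Delta_{\omega,\omega'}$ obeys the same Leibniz and composition rules at the cost of one space derivative, which is precisely what is encoded in the $\|\cdot\|^{\gamma,\calO}_s$ norm. The main obstacle is purely organizational: arranging the nonlinear expressions for $V$, $B$, $V'$, $B'$, $G'$, $G''$ and propagating the tame/interpolation structure so that only the claimed finite number of derivatives is lost, while checking that smallness of $\|\eta\|^{\gamma,\calO}_{s_0+\sigma}$ suffices both to invert $1+\eta_x^2$ and to absorb all lower-order contributions. There is no conceptual difficulty, since Proposition \ref{DNDNDN}, the symbolic calculus of Section \ref{sec:pseudopseudo} and the classical Moser estimates already provide every ingredient; accordingly the proof follows, line by line, the argument of Proposition $2.37$ and the surrounding lemmata of \cite{BM1} (see also \cite{ADel1}, \cite{Lan2}), to which I would refer for the routine computations.
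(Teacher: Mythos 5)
Your proposal is correct and is essentially the approach the paper takes: the paper's proof is a one-line citation of the tame Dirichlet--Neumann estimates of Lemma 2.32 in \cite{BM1}, and your argument (first bound via the $OPS^{-\infty}$ remainder of Proposition \ref{DNDNDN} plus the tame action of pseudo differential operators, the two shape-derivative bounds via \eqref{shapeDer}, the expressions \eqref{def:V}--\eqref{form-of-B} and Moser product/composition estimates) is precisely the standard derivation behind that citation, to which you also ultimately defer. The only slight mismatch is bibliographic: the relevant reference in \cite{BM1} is Lemma 2.32 (the tame estimates themselves) rather than Proposition 2.37 (the pseudo differential expansion, i.e.\ Proposition \ref{DNDNDN} here), but this does not affect the correctness of your argument.
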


\begin{proof}
It follows by Lemma 2.32 in \cite{BM1}.
\end{proof}

\noindent
{\bf Algebraic properties.}
One has that, setting
$f^\vee (x) := f (-x)$, 
 the Dirichlet-Neumann operator satisfies
\[
G( \eta^\vee ) [ \psi^\vee ] (x) = G( \eta ) [ \psi ] ( - x) \, . 
\]
This implies that
\begin{equation*}
B( \eta^\vee , \psi^\vee )(x) = B( \eta , \psi ) ( - x) \,,\qquad
V( \eta^\vee , \psi^\vee )(x) = -V( \eta , \psi ) ( - x) \,.
\end{equation*}
Morever, if $(\eta,\psi)$ satisfy
\begin{equation*}
\eta(-t,-x)=\eta(t,x)\,,\;\qquad \psi(-t,-x)=-\psi(t,x)\,,
\end{equation*}
 then
\begin{equation*}
B( \eta, \psi )(-t,-x) = -B( \eta , \psi ) ( t,x) \,,\qquad
V( \eta, \psi )(-t,-x) = V( \eta , \psi ) ( t,x) \,.
\end{equation*}
The following Lemma is fundamental for our scopes.

\begin{lemma}\label{MomentumandDN}
Let $\eta,\psi\in S_{\mathtt{v}}$ be $C^{\infty}$ functions in the variables $(\vphi,x)\in \mathbb{T}^{\nu+1}$. 
Then the following holds.
$(i)$ $G(\eta)\psi\in S_{\mathtt{v}}$. $(ii)$ $V(\eta,\psi), B(\eta,\psi)\in S_{\mathtt{v}}$.
$(iii)$ One has
\begin{equation}\label{DmomentoDN}
\Big(-(\mathtt{v}\cdot\pa_{\vphi}G)(\eta)+G(\eta)\circ\pa_{x}-\pa_{x}\circ G(\eta)\Big)h=0\,,
\end{equation}
for any $h\in H^{s}(\mathbb{T}^{\nu+1};\mathbb{R})$.
\end{lemma}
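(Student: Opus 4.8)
The plan is to deduce everything from the $x$-translation covariance of the Dirichlet--Neumann operator together with the characterization of the subspace $S_{\mathtt{v}}$ provided by Lemma \ref{funzioniSVVV}: a function $h(\vphi,x)$ belongs to $S_{\mathtt{v}}$ in \eqref{funzmom} if and only if $(\mathtt{v}\cdot\pa_{\vphi}+\pa_{x})h=0$. I would prove the operator identity $(iii)$ first, since both $(i)$ and $(ii)$ follow from it.

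For $s\in\R$ let $T_{s}$ denote the $x$-translation $(T_{s}h)(x):=h(x-s)$. Because the fluid domain $\mathcal{D}_{\eta}$ is invariant under horizontal translations and the elliptic problem \eqref{BoundaryPr} has a unique solution, the Dirichlet--Neumann operator satisfies the covariance relation $G(T_{s}\eta)=T_{s}\,G(\eta)\,T_{-s}$ for any profile $\eta$ (this is the $x$-translation invariance recalled before \eqref{Hammomento}). Now fix $\eta\in S_{\mathtt{v}}$ and write $\eta(\vphi,x)=E(\vphi-\mathtt{v}x)$; then for each fixed $\vphi$ one has $(T_{s}\eta)(\vphi,\cdot)=E(\vphi+\mathtt{v}s-\mathtt{v}\,\cdot\,)=\eta(\vphi+\mathtt{v}s,\cdot)$, hence
$$
G(\eta(\vphi+\mathtt{v}s,\cdot))=T_{s}\,G(\eta(\vphi,\cdot))\,T_{-s}\,,\qquad s\in\R\,.
$$
Differentiating this operator-valued identity at $s=0$ (which is legitimate by the smoothness of $\eta$ in $C^{\infty}$ and the analytic dependence of $G$ on the profile) produces, on the left-hand side, $(\mathtt{v}\cdot\pa_{\vphi}G)(\eta)=G'(\eta)[\mathtt{v}\cdot\pa_{\vphi}\eta]$ in the sense of the shape derivative \eqref{shapeDer}, and on the right-hand side $G(\eta)\pa_{x}-\pa_{x}G(\eta)$, using $\tfrac{d}{ds}\big|_{s=0}T_{s}=-\pa_{x}$. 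This is exactly \eqref{DmomentoDN}, and it holds for arbitrary $h\in H^{s}(\T^{\nu+1};\R)$, needing only $\eta\in S_{\mathtt{v}}$.

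Part $(i)$ then follows from $(iii)$: if $h\in S_{\mathtt{v}}$, so that $(\mathtt{v}\cdot\pa_{\vphi}+\pa_{x})h=0$ by Lemma \ref{funzioniSVVV}, the Leibniz rule and \eqref{DmomentoDN} give
$$
(\mathtt{v}\cdot\pa_{\vphi}+\pa_{x})\big(G(\eta)h\big)=\big((\mathtt{v}\cdot\pa_{\vphi}G)(\eta)+\pa_{x}G(\eta)\big)h+G(\eta)(\mathtt{v}\cdot\pa_{\vphi}h)=G(\eta)\big(\pa_{x}h+\mathtt{v}\cdot\pa_{\vphi}h\big)=0\,,
$$
hence $G(\eta)h\in S_{\mathtt{v}}$, again by Lemma \ref{funzioniSVVV}. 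Part $(ii)$ is then purely algebraic: the subspace $S_{\mathtt{v}}$ is manifestly closed under $\pa_{x}$, under each $\pa_{\vphi_{i}}$, under products, and under composition with smooth functions (all of these preserve the form $U(\vphi-\mathtt{v}x)$); therefore $\eta_{x},\psi_{x}\in S_{\mathtt{v}}$, and $G(\eta)\psi\in S_{\mathtt{v}}$ by $(i)$, so that $B=(G(\eta)\psi+\eta_{x}\psi_{x})/(1+\eta_{x}^{2})\in S_{\mathtt{v}}$ by \eqref{form-of-B}, and finally $V=\psi_{x}-\eta_{x}B\in S_{\mathtt{v}}$ by \eqref{def:V}. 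The only genuinely non-formal point — and the one I would write out with some care — is the $x$-translation covariance $G(T_{s}\eta)=T_{s}G(\eta)T_{-s}$ and the differentiability in $s$ of the resulting operator-valued map; once this is established, the rest is bookkeeping with the characterization of $S_{\mathtt{v}}$ and the explicit formulas \eqref{def:V}, \eqref{form-of-B}.
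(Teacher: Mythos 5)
Your proof is correct and rests on the same ingredients as the paper's: the translation covariance of the Dirichlet--Neumann operator (used in the paper in its infinitesimal form, via the chain rule $\pa_x(G(\eta)\psi)=G(\eta)\psi_x+G'(\eta)[\eta_x]\psi$ together with the shape-derivative formula \eqref{shapeDer}), the characterization of $S_{\mathtt{v}}$ from Lemma \ref{funzioniSVVV}, and the explicit formulas \eqref{def:V}, \eqref{form-of-B} for item $(ii)$. The only difference is organizational: you prove $(iii)$ first by differentiating $G(T_s\eta)=T_sG(\eta)T_{-s}$ at $s=0$ and then deduce $(i)$, whereas the paper obtains $(i)$ directly from the Leibniz/shape-derivative computation and treats $(iii)$ by the same identities afterwards — mathematically the two arguments are the same.
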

\begin{proof}
Since $\eta,\psi\in S_{\mathtt{v}}$, Lemma \ref{funzioniSVVV}
implies that
$\mathtt{v}\cdot\pa_{\vphi}\psi+\pa_{x}\psi
=\mathtt{v}\cdot\pa_{\vphi}\eta+\pa_{x}\eta=0$.
Then, using the ``shape derivative'' formula in \eqref{shapeDer}
we deduce
\begin{equation}\label{pennaNera}
\begin{aligned}
\big(\mathtt{v}\cdot\pa_{\vphi}+\pa_{x}\big)G(\eta)\psi
&=G(\eta)\big[
\mathtt{v}\cdot\pa_{\vphi}\psi+\pa_{x}\psi
\big]+
G'(\eta)\big[\mathtt{v}\cdot\pa_{\vphi}\eta+\pa_{x}\eta\big]\psi=0\,.
\end{aligned}
\end{equation}
Then Lemma \ref{funzioniSVVV} implies item $(i)$.
Item $(ii)$ follows by item $(i)$ and formul\ae\,  \eqref{def:V} and \eqref{form-of-B}.
Reasoning as in \eqref{pennaNera} we have
\[
(\mathtt{v}\cdot\pa_{\vphi}G)(\eta) h= G'(\eta)[\mathtt{v}\cdot\pa_{\vphi}\eta]h\,,
\qquad 
\pa_{x}\circ G(\eta) h=G(\eta)h+G'(\eta)[\eta_{x}]h\,.
\]
Then \eqref{DmomentoDN} follows using that $\eta\in S_{\mathtt{v}}$.
\end{proof}

\vspace{0.5em}
\noindent
{\bf Homogeneity expansion of the function $V$.}
We now compute an expansion in degree of homogeneity of
$V$ by
following the strategy used in section 3.2 in \cite{BFP}.
Recalling the good unknown map $\mathcal{G}$ in \eqref{flussoG}
ad the map $\Lambda$ in \eqref{CVWW} we define
\begin{equation}\label{levariabili}
u:=\frac{1}{\sqrt{2}}|D|^{-\frac{1}{4}}\eta+\frac{\ii}{\sqrt{2}}|D|^{\frac{1}{4}}(\psi-B\eta)\,.
\end{equation}
Notice that the variable $u$ is just the first component of $\Lambda\mathcal{G}^{-1}\vect{\eta}{\psi}$.
We have the following Lemma.
\begin{lemma}{\bf (Expansion of $ V $).}\label{siespandeV}
The function $ V $ defined in \eqref{def:V} admits the expansion 
\be\label{expV}
V = (\psi-B\eta)_x +(|D| (\psi-B\eta)_{x}) \eta  + V_{\geq 3}
\ee
where  $V_{\geq 3}\sim O(u^{3}) $ is a cubic function in $ (\eta, \psi)$.
In the complex variable $(u, \bar{u})$ defined in  \eqref{levariabili}, $V$ can be expanded as in \eqref{expTaylorFunz1000}, more precisely  we have 
\begin{align}
\widetilde{\mathtt{V}}_1 & = \frac{1}{\ii \sqrt{2}} \pa_x |D|^{- \frac14} (u - \bar u )  \label{coeV1},\\
 \widetilde{\mathtt{V}}_2 & = \frac{1}{ 2\ii }  \Big(  |D|^{\frac34}\pa_{x} (u- \bar u) \Big)  \big( |D|^{\frac14} (u + \bar u) \big)
 \, .  \label{coeV2}
\end{align}
\end{lemma}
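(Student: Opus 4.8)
The plan is to first isolate an \emph{exact} algebraic identity that expresses $V$ in terms of $\eta$ and the good unknown $\psi-B\eta$, then to expand the Dirichlet--Neumann operator and $B$ in homogeneity degree to extract the linear and quadratic parts, and finally to substitute the inversion of \eqref{levariabili} to read off \eqref{coeV1}, \eqref{coeV2}. Throughout I would use the analyticity of $\eta\mapsto G(\eta)$ and its Taylor expansion \eqref{DNexp3bis}--\eqref{DNexp3}, together with the fact that $B$ and $V$ vanish at the equilibrium $(\eta,\psi)=(0,0)$.

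\textbf{The exact identity.} Starting from $V=\psi_x-\eta_x B$ in \eqref{def:V}, I would compute $(\psi-B\eta)_x=\psi_x-B_x\eta-B\eta_x$, hence $\psi_x=(\psi-B\eta)_x+B_x\eta+B\eta_x$, and therefore
\[
V=(\psi-B\eta)_x+B_x\eta+B\eta_x-\eta_x B=(\psi-B\eta)_x+B_x\eta\,.
\]
This identity is exact and holds with no remainder. Since the map $(\eta,\psi)\mapsto B(\eta,\psi)$ defined by \eqref{form-of-B} vanishes at the origin (its leading term is linear), the summand $B_x\eta$ is homogeneous of degree $\ge 2$, so that $(\psi-B\eta)_x$ is exactly the linear part of $V$.

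\textbf{Expansion of $B$ and conclusion.} To expand $B_x\eta$ up to degree $2$ it suffices to know $B$ to degree $1$. From \eqref{form-of-B}, $(1+\eta_x^2)^{-1}=1+O(\eta^2)$ and $\eta_x\psi_x=O(2)$, while $G(\eta)\psi=|D|\psi+O(2)$ by \eqref{DNexp3bis}--\eqref{DNexp3}; hence $B=|D|\psi+O(2)$. Moreover $\psi=(\psi-B\eta)+B\eta=(\psi-B\eta)+O(2)$ because $B=O(1)$, so $B=|D|(\psi-B\eta)+O(2)$ and $B_x\eta=\big(|D|(\psi-B\eta)_x\big)\eta+O(3)$. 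Substituting into the exact identity yields \eqref{expV}, with $V_{\ge3}$ genuinely cubic by the analyticity of $\eta\mapsto G(\eta)$ (so the $O(2)$ and $O(3)$ error terms have the stated homogeneity). Finally, inverting \eqref{levariabili} gives $\eta=\tfrac{1}{\sqrt 2}|D|^{1/4}(u+\bar u)$ and $\psi-B\eta=\tfrac{1}{\sqrt 2\,\ii}|D|^{-1/4}(u-\bar u)$; plugging these into the degree-$1$ term $(\psi-B\eta)_x$ of \eqref{expV} produces \eqref{coeV1}, and into the degree-$2$ term $\big(|D|(\psi-B\eta)_x\big)\eta$ produces \eqref{coeV2} after collecting the constants $\tfrac{1}{\sqrt 2\,\ii}\cdot\tfrac{1}{\sqrt 2}=\tfrac{1}{2\ii}$. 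The only point requiring care — and the main (minor) obstacle — is the bookkeeping of homogeneity orders: one must check that no term of degree $\le 2$ is hidden inside the error terms, which amounts to verifying that $B$ carries no degree-$0$ contribution and that the quadratic part of $G(\eta)\psi$, namely $G^{(1)}(\eta)\psi$, does not feed back into $B$ at degree $1$.
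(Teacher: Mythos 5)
Your proof is correct and follows essentially the same route as the paper: expand $B=|D|\psi+O(2)$ via \eqref{form-of-B} and \eqref{DNexp3}, rewrite $V$ in terms of the good unknown $\psi-B\eta$ (your exact identity $V=(\psi-B\eta)_x+B_x\eta$ is just a cleaner packaging of the paper's intermediate display), and then pass to the complex variable \eqref{levariabili} to read off \eqref{coeV1}--\eqref{coeV2}. The constants and the homogeneity bookkeeping all check out, so no further comment is needed.
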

\begin{proof}
By \eqref{form-of-B} and using the expansion \eqref{DNexp3}, we deduce  $  B = |D| \psi  $ up to 
a quadratic function in $u$.
 As a consequence, 
by \eqref{def:V} and \eqref{levariabili}, we have
$$
V = 	\psi_x - B \eta_x 
=(\psi-B\eta)_x + \pa_x \big( (|D| \psi) \eta \big) - (|D| \psi) \eta_x  
$$
up to terms $O_2(\eta, \psi)$.
Since $B\eta$ is 
$O_2(\eta, \psi)$  we get \eqref{expV}. 
The \eqref{coeV1}, \eqref{coeV2} follow by passing to the complex variable $u$ in 
\eqref{levariabili}.
\end{proof}

\begin{lemma}{\bf (Coefficients of $ \widetilde{\mathtt{V}}_1 $ and $  \widetilde{\mathtt{V}}_2 $).}\label{lem:V1} 
The coefficients of the functions $  \widetilde{\mathtt{V}}_1 $ and $  \widetilde{\mathtt{V}}_2 $ 
in \eqref{coeV1}-\eqref{coeV2} are, 
for all $ n \in \Z \setminus \{0\} $  
\begin{equation}\label{coV1} 
( \widetilde{\mathtt{V}}_1)^{+}_n = ( \widetilde{\mathtt{V}}_1)^{-}_n = \frac{1}{\sqrt{2}} n |n|^{-1/4} \, , 
\qquad 
(\widetilde{\mathtt{V}}_2)^{+-}_{n, n}=(\widetilde{\mathtt{V}}_2)^{-+}_{n, n}  
= \frac{1}{2}n | n | \, .
\end{equation}
\end{lemma}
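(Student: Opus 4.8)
The plan is to carry out the computation entirely on the Fourier side. Write $u=(2\pi)^{-1/2}\sum_{n}u_n e^{\ii n x}$ and $\bar u=(2\pi)^{-1/2}\sum_{n}\bar u_n e^{-\ii n x}$, so that in the monomial notation of \eqref{simboOMO2DEF} the factor $u^+_n$ corresponds to $u_n e^{\ii n x}$ and $u^-_n$ to $\bar u_n e^{-\ii n x}$. On the subspace of zero–average functions the Fourier multiplier $|D|^{s}=\opw(\chi(\xi)|\xi|^{s})$ acts as multiplication by $|n|^{s}$ on the mode $e^{\ii n x}$, since $\chi(n)=1$ for every $n\in\Z\setminus\{0\}$ by \eqref{cutofffunct}; similarly $\pa_x$ multiplies that mode by $\ii n$. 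It then only remains to substitute these rules into the explicit expressions \eqref{coeV1}--\eqref{coeV2} and read off the coefficients of the relevant monomials, comparing with the normalization $(\sqrt{2\pi})^{-i}$ of \eqref{simboOMO2DEF} for $i=1,2$.

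For $\widetilde{\mathtt V}_1$ this is immediate: applying $\pa_x|D|^{-1/4}$ to $u-\bar u$ produces $(2\pi)^{-1/2}\sum_n \ii\, n|n|^{-1/4}\big(u_n e^{\ii n x}+\bar u_n e^{-\ii n x}\big)$, where the sign flip $\pa_x e^{-\ii n x}=-\ii n e^{-\ii n x}$ cancels the minus sign of $-\bar u$; multiplying by $(\ii\sqrt2)^{-1}$ gives that the coefficient of both $u^+_n$ and $u^-_n$ equals $\tfrac1{\sqrt2}\,n|n|^{-1/4}$, which is the first identity in \eqref{coV1}.

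For $\widetilde{\mathtt V}_2$ one expands the product in \eqref{coeV2}: from $|D|^{3/4}\pa_x(u-\bar u)=\tfrac{\ii}{\sqrt{2\pi}}\sum_j j|j|^{3/4}\big(u_j e^{\ii j x}+\bar u_j e^{-\ii j x}\big)$ and $|D|^{1/4}(u+\bar u)=\tfrac1{\sqrt{2\pi}}\sum_k |k|^{1/4}\big(u_k e^{\ii k x}+\bar u_k e^{-\ii k x}\big)$, after multiplying and dividing by $2\ii$ one obtains $\tfrac1{4\pi}\sum_{j,k} j|j|^{3/4}|k|^{1/4}\big(u_j e^{\ii j x}+\bar u_j e^{-\ii j x}\big)\big(u_k e^{\ii k x}+\bar u_k e^{-\ii k x}\big)$. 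The monomial $u_n\bar u_n$ (which carries $e^{\ii(n-n)x}=1$) receives two equal contributions, one from the pairing $u_j e^{\ii j x}\cdot\bar u_k e^{-\ii k x}$ with $j=k=n$ and one from $\bar u_j e^{-\ii j x}\cdot u_k e^{\ii k x}$ with $j=k=n$, each equal to $\tfrac1{4\pi}\,n|n|^{3/4}|n|^{1/4}=\tfrac1{4\pi}\,n|n|$, for a total coefficient $\tfrac1{2\pi}\,n|n|$. Comparing with the $(\sqrt{2\pi})^{-2}=(2\pi)^{-1}$ normalization of \eqref{simboOMO2DEF} and using that these two pairings are exactly the ordered entries $(\sigma_1,j_1,\sigma_2,j_2)=(+,n,-,n)$ and $(-,n,+,n)$ — with equal coefficients by symmetry of the coefficient array, or equivalently by the reality of $\widetilde{\mathtt V}_2$ together with \eqref{iena2}, since $\tfrac12 n|n|\in\R$ — one reads $(\widetilde{\mathtt V}_2)^{+-}_{n,n}=(\widetilde{\mathtt V}_2)^{-+}_{n,n}=\tfrac12\,n|n|$, which is the second identity in \eqref{coV1}.

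This is purely elementary Fourier–multiplier bookkeeping, so there is no genuine obstacle; the only step requiring mild care is keeping the $2\pi$–normalizations consistent with \eqref{realfunctions}--\eqref{simboOMO2DEF}, and noticing that the diagonal monomial $u_n\bar u_n$ in $\widetilde{\mathtt V}_2$ collects two (equal) contributions rather than one.
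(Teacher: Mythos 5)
Your proposal is correct and follows essentially the same route as the paper: expand $u,\bar u$ in Fourier series, apply the Fourier multipliers mode by mode, and read off the coefficients of the (diagonal) monomials while keeping track of the $(\sqrt{2\pi})^{-i}$ normalization of \eqref{simboOMO2DEF}, including the splitting of the coefficient of $u_n\bar u_n$ into the two ordered entries $(+,-)$ and $(-,+)$. No gaps.
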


\begin{proof}
By \eqref{coeV1} and expanding in Fourier series the function $u$ as 
\begin{equation*}
u(x) = \sum_{n \in \Z\setminus\{0\} } u_{n}\frac{e^{\ii n x }}{\sqrt{2\pi}} \, , \qquad 
u_{n} := \frac{1}{\sqrt{2\pi}} \int_\T u(x) e^{-\ii n x } \, dx \, ,
\end{equation*}
 we have
\begin{align*}
 \widetilde{\mathtt{V}}_1   
& = \frac{1}{\ii \sqrt{2}} \pa_x | D |^{-1/4} (u - \bar u )
= \frac{1}{\sqrt{2\pi}}\frac{1}{ \sqrt{2}}   \sum_{n \neq 0} n | n |^{-1/4} u_n e^{\ii n x} 
+  n | n |^{-1/4} \ov{u_n} e^{- \ii n x}  \,,
\end{align*}
which implies the expressions  for $ (\widetilde{\mathtt{V}}_1)^{\pm}_n $  in  \eqref{coV1}. 
By \eqref{coeV2} 
we have
\[
\begin{aligned}
 \widetilde{\mathtt{V}}_2=\frac{1}{4i\pi}
\big(
\sum_{j\in \mathbb{Z}\setminus\{0\}}|j|^{\frac{3}{4}}(\ii j)u_{j}e^{\ii jx}
&-\sum_{j\in \mathbb{Z}\setminus\{0\}}|j|^{\frac{3}{4}}(-\ii j)\ov{u_{j}}e^{-\ii jx}
\big)\times \\
&\times\big(
\sum_{j\in \mathbb{Z}\setminus\{0\}}|j|^{\frac{1}{4}}u_{j}e^{\ii jx}
+\sum_{j\in \mathbb{Z}\setminus\{0\}}|j|^{\frac{1}{4}}\ov{u_{j}}e^{-\ii jx}
\big)\,.
\end{aligned}
\]
By collecting form the formula above the terms $u_{j_1}\ov{u_{j_2}}$
and $\ov{u_{j_1}}{u_{j_2}}$
we obtain
\[
\begin{aligned}
&\frac{1}{4\pi}\sum_{j_1,j_2\in \mathbb{Z}\setminus\{0\}}u_{j_1}\ov{u_{j_2}}
( \widetilde{\mathtt{V}}_2)^{+-}_{j_1,j_2}
e^{\ii (j_1-j_2)x}\,,
\qquad 
\frac{1}{4\pi}\sum_{j_1,j_2\in \mathbb{Z}\setminus\{0\}}\ov{u_{j_1}}{u_{j_2}}
( \widetilde{\mathtt{V}}_2)^{-+}_{j_1,j_2}
e^{-\ii (j_1-j_2)x}\,,
\\&
( \widetilde{\mathtt{V}}_2)^{+-}_{j_1,j_2}:=\frac{1}{2}|j_1|^{\frac{3}{4}}|j_2|^{\frac{1}{4}}j_1
\qquad
( \widetilde{\mathtt{V}}_2)^{-+}_{j_1,j_2}:=
\frac{1}{2}
|j_1|^{\frac{3}{4}}|j_2|^{\frac{1}{4}}j_1
\,.
\end{aligned}
\]
Taking $j_1=j_2=n$ or $j_1=n=-j_2$ we obtain the \eqref{coV1}
for the coefficients $( \widetilde{\mathtt{V}}_2)^{+-}_{n,n}$,
 $( \widetilde{\mathtt{V}}_2)^{-+}_{n,n}$ .
\end{proof}

\subsection{Technical Lemmata of Section \ref{sec:linBNF}}\label{sec:proofs}

Here we collects the proofs of some technical Lemmata appearing in the paper. 

\begin{proof}[{\bf Proof of Lemma \ref{lowerboundDel1}}]\label{10.3proofLem}
We use the following notation: given a set of integer indexes $j_1, \dots, j_n$
we write 
\[
\max({p}):= p{\rm -largest} \; {\rm value \; among} \; |j_1|, \dots, |j_n|\,.
\]

\emph{Proof of \eqref{lowerboundDel1BIS}}.
Let us call $j_*$ the only component of $\ell$ such that $\ell_{j_*}=1$. 
We first consider the case $\sigma\,=-\sigma'$. We have that
\[
\lvert \delta^{(1)}_{\sigma, \sigma', j, k}(\ell) \rvert=\lvert \sqrt{\lvert j_* \rvert}+\sigma (\sqrt{\lvert j \rvert}+\sqrt{\lvert k\rvert} )\rvert.
\]
If $\sigma=1$ then the lower bound is given by $3$. If $\sigma=-1$ then by the momentum conservation we have that $j+k=j_*$, and so $\lvert j \rvert+\lvert k \rvert-\lvert j_*\rvert\geq 0$. If $j_*\neq\max(1)$ then
\[
\lvert \sqrt{\lvert j \rvert}+\sqrt{\lvert k\rvert}-\sqrt{\lvert j_*\rvert} \rvert\geq \sqrt{\lvert j \rvert}\geq 1.
\]
If $j_*=\max(1)$, suppose without loss of generality that $j=\max(2)$, then by Remark \ref{max12}
\begin{align*}
&\lvert \sqrt{\lvert j \rvert}+\sqrt{\lvert k\rvert}-\sqrt{\lvert j_*\rvert} \rvert
=\frac{(\sqrt{\lvert j \rvert}+\sqrt{\lvert k \rvert})^2
-\lvert j_*\rvert}{\sqrt{\lvert j \rvert}+\sqrt{\lvert k\rvert}
+\sqrt{\lvert j_*\rvert}}\geq \frac{2 \sqrt{\lvert j \rvert\lvert k \rvert}}{\sqrt{\lvert j \rvert}
+\sqrt{\lvert k\rvert}+\sqrt{\lvert j_*\rvert}}
=\left(\frac{\sqrt{\lvert j \rvert}}{\sqrt{\lvert j_* \rvert}}\right)\frac{2 \sqrt{\lvert k \rvert}}{(1+\frac{\sqrt{\lvert k\rvert}+\sqrt{\lvert j\rvert}}{\sqrt{\lvert j_* \rvert}})}\geq \frac{2}{9}.
\end{align*}
Now we consider the case $\sigma=\sigma'$. The only non-trivial case is when $j=\max(1)$ and 
\[
\delta^{(1)}_{\sigma, \sigma', j, k}(\ell)=\sqrt{\lvert j_*\rvert}+\sqrt{\lvert k \rvert}-\sqrt{\lvert j \rvert}=\sqrt{\lvert j_*\rvert}-\frac{\lvert j \rvert-\lvert k \rvert}{\sqrt{\lvert j \rvert}+\sqrt{\lvert k \rvert}}.
\]
By the momentum conservation (see \eqref{deltaUNO2})  
we have that 
$\lvert j \rvert-\lvert k \rvert\le \lvert j-k\rvert=\lvert j_*\rvert$. 
If $2\sqrt{\lvert j_*\rvert}\le \sqrt{\lvert j \rvert}+\sqrt{\lvert k \rvert}$, 
then $\lvert \delta^{(1)}_{\sigma, \sigma', j, k}(\ell) \rvert\geq 1/2$. 
If $\sqrt{\lvert j \rvert}+\sqrt{\lvert k \rvert}<2\sqrt{\lvert j_*\rvert} $ 
then
\[
\lvert \delta^{(1)}_{\sigma, \sigma', j, k}(\ell) \rvert
>\inf_{j, k} \frac{1}{\sqrt{\lvert j \rvert}+\sqrt{\lvert k \rvert}}=:C_1(S)
\]
indeed, by using that 
$\lvert j_*\rvert\geq \lvert \lvert j \rvert-\lvert k \rvert \rvert$ 
and $\lvert j \rvert\geq \lvert j_*\rvert$
\[
(\sqrt{\lvert j \rvert}+\sqrt{\lvert k \rvert}) \lvert \delta^{(1)}_{\sigma, \sigma', j, k}(\ell) \rvert\geq \sqrt{\lvert j \,j_*\rvert}-\lvert j_*\rvert+\sqrt{\lvert j_* k \rvert}\geq \sqrt{\lvert j_* k \rvert}\geq 1.
\]
This implies the \eqref{lowerboundDel1BIS}.

\smallskip
\noindent
\emph{Proof of \eqref{lowerboundDel2BIS}}.
By \eqref{deltaUNO}, \eqref{deltaUNO2} we have to bound from below
the function
\begin{equation*}
\sqrt{|j_1|}\pm\sqrt{|j_2|}\pm\sqrt{|j_3|}\pm \sqrt{|j_4|}\quad {\rm when}
\quad j_1\pm j_2\pm j_3\pm j_4=0\,,
\end{equation*}
for all possible combinations of signs. 
\noindent
\emph{Case} $(+,+,+)$. This case is trivial since
\begin{equation}\label{dono2}
|\sqrt{|j_1|}+\sqrt{|j_2|}+\sqrt{|j_3|}+ \sqrt{|j_4|}|\geq \max(3)^{1/2}\,.
\end{equation}
We have to consider the case when two indexes are in $S$ and the other two are in $S^{c}$.
Assume, w.l.o.g that $\max(3)=|j_1|$. Then if $j_1\in S$ the \eqref{dono2}
implies the \eqref{lowerboundDel2BIS}. If $j_1\in S^{c}$ then there exists $i=2,3,4$
such that $j_i\in S$ and $|j_i|>|j_1|$. Then again we obtain the \eqref{lowerboundDel2BIS}.

\noindent
\emph{Case} $(+,+,-)$. We have
\[
|\sqrt{|j_1|}+\sqrt{|j_2|}+\sqrt{|j_3|}- \sqrt{|j_4|}|=
\frac{||j_1|+|j_2|+|j_3|-|j_4|+
2\sqrt{|j_1j_2|}+2\sqrt{|j_2j_3|}+2\sqrt{|j_1j_3|}|}{
|\sqrt{|j_1|}+\sqrt{|j_2|}+\sqrt{|j_3|}+ \sqrt{|j_4|}|}\,.
\]
The momentum condition implies $|j_1|+|j_2|+|j_3|-|j_4|\geq0$. 
Therefore the quantity above is bounded
from below by ${\rm max}(3)^{1/2}$. We get the \eqref{lowerboundDel2BIS} as in the previous case.

\noindent
\emph{Case} $(-,+,-)$. This is the most difficult case.
Consider the function
\[
\psi(j_1,j_2,j_3,j_4)=\sqrt{|j_1|}-\sqrt{|j_2|}+\sqrt{|j_3|}- \sqrt{|j_4|}\,,
\]
on the hyperplane $j_1-j_2+j_3-j_4=0$. We claim that
\begin{equation}\label{dono4}
\begin{aligned}
{\rm either} \;\;\;\psi=0\qquad &{\rm or }\quad j_i=-j_k\quad {\rm for \; some } \;\;\; i,k\in \{1,2,3,4\}\,,\\
&{\rm or}\;\;\;  |\psi|\geq \frac{c}{({\rm max}(3))^{9}}\,,
\end{aligned}
\end{equation}
for some absolute constant $0<c\ll 1$.
The \eqref{dono4} implies the \eqref{lowerboundDel2BIS}. Indeed 
if the lower bound holds the one concludes as in the previous cases.
If, for instance $j_1=-j_2$ then, since $\psi\neq0$,
\begin{equation}\label{dono5}
|\psi|=\frac{||j_3|-|j_4||}{\sqrt{|j_3|}+\sqrt{|j_4|}}\geq 
\frac{1}{\max\{|j_3|,|j_4|\}^{1/2}}\,.
\end{equation}
By the choice of $S$ in \eqref{TangentialSitesDP}
it is not possible that both $j_1,j_2$ are in $S$. Hence we can assume
$j_1,j_3\in S$ and $j_2,j_4\in S^{c}$. 
By the momentum and $j_1=-j_2$ we deduce
$|j_4|\leq C(S)$. Then  \eqref{dono5} implies the \eqref{lowerboundDel2BIS}.

\smallskip
\noindent
\emph{Proof of the Claim \ref{dono4}}.
Assume that $\max(1)\geq 100(\max(3))^{3}$ and
without loss of generality we
 assume $\max(1)=|j_1|$, $\max(2)=|j_2|$. We write
 \begin{equation}\label{dono6}
 \psi=\frac{|j_3|-|j_4|}{\sqrt{|j_3|}+\sqrt{|j_4|}}+
 \frac{|j_1|-|j_2|}{\sqrt{|j_1|}+\sqrt{|j_2|}}\,.
 \end{equation}
 If $j_1=j_2$ then $\psi=0$. If $j_1=-j_2$ then  
 or $\psi=0$ or $|\psi|\geq 1/(\max(3))^{1/2}$ which implies 
 the \eqref{lowerboundDel2BIS}. If $j_1\neq j_2$ we have, by \eqref{dono6},
 \[
|\psi|\gtrsim
\frac{||j_3|-|j_4||}{\sqrt{|j_3|}+\sqrt{|j_4|}}-\frac{|j_1-j_2|}{\sqrt{|j_1|}+\sqrt{|j_2|}}\gtrsim
\frac{1}{(\max(3))^{1/2}}-\frac{2\max(3)}{\max(1)^{1/2}}\gtrsim 
\frac{1}{2(\max(3))^{1/2}}\,,
 \]
since  $\max(1)\geq 100(\max(3))^{3}$.
 It remains to study the case $\max(1)< 100(\max(3))^{3}$. 
 In this case to get the \eqref{lowerboundDel2BIS} it is sufficient to prove
 \begin{equation}\label{dono7}
 |\psi|\geq \frac{1}{10(\max(1))^{N_0}}\,,
 \end{equation}
 for some $N_0>0$. The \eqref{dono7} follows by Proposition
 $6.3$ in \cite{BFP}. 
 
 \smallskip
\noindent
\emph{Proof of \eqref{smallDivLinearBNF2}}.
We prove the result for $\s=\s'=+$ which is the most difficult case.
By a generic choice of the set $S$ one can assume that
$\bar{\omega}\cdot \ell\neq0$ for $0<\lvert \ell\rvert\le 6$. Therefore one has that there is a constant $K(S)$
such that
\[
|\bar{\omega}\cdot \ell|\geq 2 K(S)\,,
\]
since $|\ell|\leq 6$. Moreover
by the constraint in \eqref{deltaUNO2} we deduce that $||j|-|k||\leq \tilde{C}(S)$, for some
$\tilde{C}(S)$ depending on the tangential sites in $S$.
Then, by  taking $|j|,|k|\geq C(S):=\tilde{C}^{2}(S)/K^{2}(S)$ and $\e$ small enough 
we get
(see \eqref{deltaUNO})
\[
|\delta^{(p)}_{\s,\s',j,k}(\ell)|\geq |\bar{\omega}\cdot\ell|-\frac{||j|-|k||}{\sqrt{|j|}+\sqrt{|k|}}
\geq 2K(S)-K(S)
\]
which implies the \eqref{smallDivLinearBNF2}.
\end{proof}

\begin{proof}[{\bf Proof of Lemma \ref{almostC12}}]\label{10.6prooflemma}
If the operator $\mathbf{B}$ is bounded, then $\lvert (\mathbf{B})_{\sigma, j}^{\sigma', k}(\ell) \rvert\le c$ for some constant $c>0$. Since $\mathbf{A}$ is almost-diagonal and it preserves the momentum, there exist two constants $C_1, C_2$ depending on $S$ such that the following holds: for all $j, k$ such that $ (\mathbf{A})_{\sigma, j}^{\sigma', k}(\ell) \neq 0$ we have $C_1 \,\langle k \rangle \le \langle j \rangle\le C_2 \langle k \rangle$.
 Suppose that $\lvert j \rvert\geq \lvert k \rvert$, then
\[
c\geq \lvert (\mathbf{B})_{\sigma, j}^{\sigma', k}(\ell) \rvert=\lvert (\mathbf{A})_{\sigma, j}^{\sigma', k}(\ell) \rvert \langle j \rangle^{m}\, \langle k \rangle^{m}\geq C_2^{-1} \lvert (\mathbf{A})_{\sigma, j}^{\sigma', k}(\ell) \rvert {\langle j, k\rangle^{2m}}.
\]
This implies the thesis.
Assume now that \eqref{stimaCoeffalmost} holds.
Then, since ${\bf A}$ is almost-diagonal it is easy to note that
$\lvert (\mathbf{B})_{\sigma, j}^{\sigma', k}(\ell) \rvert\le\tilde{C}(S)|j-k|^{-1}|\ell|^{-\nu-1}$. Therefore,
for any $v\in H_{S^{\perp}}^{s}$, we have (using Cauchy-Schwarz)
\[
\|\underline{{\bf B}_{\s}^{\s'}v}\|_{{s}}^{2}
\stackrel{\eqref{space}}{\lesssim}
\sum_{k,p}|v_{k}(p)|^{2}\langle k,p\rangle^{2s}
\sum_{j,\ell} \langle j-k,\ell-p\rangle^{2s}
|{\bf B}_{\s,j}^{\s',k}(\ell-p)|^{2}\lesssim_{s,S}\|v\|_{s}\,,
\]
which implies the thesis.
\end{proof}

\begin{proof}[{\bf Proof of Lemma \ref{LemmaAggancio}}]\label{10.7prooflemma}
Recall that
$\mathtt{q}\in S^{-1/2}_{4}$ and $\mathcal{Q}\in \mathfrak{L}_{\rho,p}^{4}\otimes
\mathcal{M}_{2}(\mathbb{C})$.
 Moreover
(see  \eqref{espandoenonmifermo}, \eqref{espandoenonmifermo2}) 
we have
\[
\mathtt{q}=\sum_{i=1}^{6}\e^i \mathtt{q}_i+\mathtt{q}_{\geq7}\,,
\qquad
\mathcal{Q}=\sum_{i=1}^{6}\e^i \mathcal{Q}_i+\mathcal{Q}_{\geq7}\,,
\qquad \mathtt{Q}_i:=\opw\sm{\ii \mathtt{q}_i
(\vphi,x,\x)}{0}{0}{-\ii \ov{\mathtt{q}_i(\vphi,x,\x)}}
+\mathcal{Q}_i\,,
\]
where $\mathtt{q}_i$, $\mathcal{Q}_i$, $i=1,\ldots,6$, 
are respectively $i$-homogeneous symbols
and operators as in \eqref{simboOMO2DEF}, \eqref{simboOMO2DEF2}.
By Lemma $A.4$ in \cite{FGP}, where it is proved the immersion of smoothing pseudo differential tame operators in the class of modulo tame operators,
we have that 
$\mathtt{Q}_{\geq7}:=\opw(\mathtt{q}_{\geq7})+\mathcal{Q}_{\geq 7}$
is Lip-$(-1/2)$-modulo tame with
\begin{equation}\label{scossa3}
\begin{aligned}
&
{\mathfrak M}_{\mathtt{Q}_{\geq7}}^{{\sharp, \g}} (-1/2,s,\mathtt{b_0}) \lesssim_{s}
|\mathtt{q}_{\geq7}|_{-\frac{1}{2},s+\mu,\alpha}^{\gamma,\Omega_{\infty}^{2\gamma}}+
\mathbb{M}^{\gamma}_{\mathcal{Q}_{\geq7}}(s, \mathtt{b})\,,\\
&{\mathfrak M}_{\Delta_{12}\mathtt{Q}_{\geq7}}^{\sharp}(-1/2,p,\mathtt{b}_0)\lesssim_{p}
|\Delta_{12}\mathtt{q}_{\geq7}|_{-\frac{1}{2},p+\mu,\alpha}+
\mathbb{M}_{\Delta_{12}\mathcal{Q}_{\geq7}}(s, \mathtt{b})
\|i_{1}-i_{2}\|_{p+\mu}\,,
\end{aligned}
\end{equation}
for some $\mu=\mu(\nu)>0$ and $\mathtt{b}=s_0+\mathtt{b}_0$. 
By estimates
\eqref{AVBDEF}, 
\eqref{AVBDEF2}
with $k=4$
we have that the \eqref{scossa3} 
imply the \eqref{scossa1}.

By Remark \ref{homogalmostdiag} the operators $\mathtt{Q}_i$, $i=1, \dots, 6$ are 
almost-diagonal
and $x$-translation invariant.
Moreover we have that
$
\langle D\rangle^{1/4} (\mathtt{Q}_{i})_{\s}^{\s}\langle D\rangle^{1/4}$\,,
is a bounded operator
since $(\mathtt{Q}_{i})_{\s}^{\s}=\opw(\ii \mathtt{q}_i)+\mathcal{Q}_i$
is a pseudo differential operator of order $-1/2$ plus a smoothing operator.
The operator
$
\langle D\rangle^{\rho/2} (\mathtt{Q}_{i})_{-\s}^{\s}\langle D\rangle^{\rho/2}$\,,
is bounded since $\mathtt{Q}_i\in\mathfrak{L}_{\rho, p}$.
Then \eqref{formaResto2}
follows by Lemma \ref{almostC12}.
\end{proof}

\begin{proof}[{\bf Proof of Lemma \ref{lemmaeqomologica}}]\label{10.10prooflemma}
Note that the operator ${\bf A}$ given in \eqref{omoeqCCC} is well-defined
 by \eqref{smallDivLinearBNF2} in Lemma \ref{lowerboundDel1}.
Since ${\bf B}$ is $x$-translation invariant, almost-diagonal, using Lemma \ref{lem:momentocomp} and formul\ae\, \eqref{omoeqCCC}, \eqref{deltaUNO2},  
it is easy to check
that also ${\bf A}$ is $x$-translation invariant, almost-diagonal.
Moreover we have that ${\bf B}$ is Hamiltonian,
i.e. ${\bf B}=\ii E \widetilde{{\bf B}}$ with 
$\widetilde{{\bf B}} $ self-adjoint (it satisfies \eqref{self-adjT}).
By Lemma \ref{coeffFouHamilto} we have
\[
\widetilde{\bf B}_{\s,j}^{\s,k}(\ell)=\ov{\widetilde{\bf B}_{\s,k}^{\s,j}(-\ell)}\,, \qquad 
\widetilde{\bf B}_{\s,-j}^{-\s,-k}(\ell)={\widetilde{\bf B}_{\s,k}^{-\s,j}(\ell)}\,.
\]
Using the explicit expression of $\delta^{(p)}_{\s,\s',j,k}(\ell)$ in \eqref{deltaUNO}
one can check that ${\bf A}$ is Hamiltonian.
Passing to the coefficients of the matrices we have that 
the left hand side of 
equation
 \eqref{omoeqCCC2} is given by 
 \begin{equation*}
-\mathrm{i}\, \delta^{(p)}_{\s,\s',j,k}(\ell){\bf A}_{\s,j}^{\s',k}(\ell)+{\bf B}_{\s,j}^{\s',k}(\ell)\,.
 \end{equation*}
 Then by definition \eqref{omoeqCCC} the equation \eqref{omoeqCCC2} is verified.
 The  \eqref{omoeqCCC4} follows by 
 Lemma \ref{lowerboundDel1},
 indeed the condition $\max\{|j|,|k|\}\geq \mathtt{C}$ implies that $\lvert j \rvert, \lvert k \rvert\geq C(S)$ by the definition of $\mathtt{C}$ and the momentum relation \eqref{deltaUNO2}.
\end{proof}

\begin{proof}[{\bf Proof of Lemma \ref{compoC12}}]\label{10.11prooflemma}
We observe that
\[
\big(\langle \pa_{\vphi}\rangle^{\mathtt{b}_0}\langle D\rangle^{1/2}
{\bf A}\langle D\rangle^{1/2}\big)_{\s,j}^{\s',k}(\ell)=
\langle \ell\rangle^{\mathtt{b}_0}\langle j\rangle^{\frac{1}{2}}
\langle k\rangle^{\frac{1}{2}}{\bf A}_{\s,j}^{\s',k}(\ell)\,.
\]
Then it is easy to deduce the \eqref{UPC1} by 
using that $|\ell|\leq i$
and $\mathtt{v}\cdot\ell=\s j-\s'k$.
Item $(ii)$ follows by Lemma $A.5$ in \cite{FGP} where estimates for composition of modulo-tame operators are provided.
Regarding item $(iii)$ we have, by the bounds in item $(i), (ii)$,
\begin{align*}
\mathfrak{M}^{ \sharp, \gamma}_{\bf F}(s, \mathtt{b}_0) &\lesssim_s  \mathfrak{M}_{{\bf A}}^{\sharp,\gamma}(s,\mathtt{b_0}) \,\mathfrak{M}_{{\bf B}}^{\sharp,\gamma}(s_0,\mathtt{b_0})
\varepsilon^n (\mathfrak{M}_{{\bf A}}^{\sharp,\gamma}(s_0,\mathtt{b_0}))^{n-1} \sum_{k-n\geq 0} \frac{\varepsilon^{k-n}}{k!}(C({\bf A}))^{k-n}\,\\
&
+\mathfrak{M}_{{\bf B}}^{\sharp,\gamma}(s,\mathtt{b_0})\varepsilon^n (\mathfrak{M}_{{\bf A}}^{\sharp,\gamma}(s_0,\mathtt{b_0}))^{n} \sum_{k- n\geq 0} \frac{\varepsilon^{k-n}}{k!} C({\bf A})^{k-n}\,.
\end{align*}
Hence for $\varepsilon$ small enough we get the bound \eqref{stimaserielie}. To prove \eqref{stimaserieliedelta12} is sufficient to reason as above and using the estimates of item $(i)$ and $(ii)$.
\end{proof}

\begin{proof}[{\bf Proof of Lemma \ref{mercato}}]\label{10.12prooflemma}
$(i)$
We have that
\begin{equation}\label{UPC4}
{\bf C}_{\s,j}^{\s',k}(\ell)=\sum_{\ell_1,k_1, \s''}
{\bf A}_{\s,j}^{\s '',k_1}(\ell_1){\bf B}_{\s'',k_1}^{\s',k}(\ell-\ell_1)
-{\bf B}_{\s, j}^{\s '', k_1}(\ell_1){\bf A}_{\s'',k_1}^{\s',k}(\ell-\ell_1)
\end{equation}
and we observe that when $\s=\s'$ then the matrices 
involved are $1/2$-smoothing, if $\s\neq \s'$ then 
there is always an out of diagonal component of 
$\mathbf{A}$ or $\mathbf{B}$ that is $\rho$-smoothing.
The sum in \eqref{UPC4} is restricted to indexes (recall ${\bf A}$, ${\bf B}$
are almost-diagonal and $x$-translation invariant)
satisfying
\begin{equation}\label{UPC3}
|j-k|=|\mathtt{v}\cdot\ell|\,,\quad |j-k_1|=|\mathtt{v}\cdot\ell_{1}|\,,
\quad |k_1-k|=|\mathtt{v}\cdot(\ell-\ell_1)|\,.
\end{equation}
First of all we have $|j|\leq |k|(1+|\mathtt{v}\cdot\ell|)\leq |k|(1+C(S)(i_1+i_2))$\,.
Therefore
\[
\frac{|k|}{(1+C(S)(i_1+i_2))}\leq |j|\leq |k|(1+C(S)(i_1+i_2))\,, \quad 
\Rightarrow
\quad
\langle j\rangle^{1/2}\langle k\rangle^{1/2}\geq \frac{\langle j,k\rangle}{(1+C(S)(i_1+i_2))}\,.
\]
Notice also that, fixed $j$ and $k$,
the cardinality of indexes $\ell_1,k_1$ satisfying the constraints \eqref{UPC3}
is bounded by 
\[
K(i_1)^{\nu-1}C(S)(i_1+i_2)\,,
\]
with $K>0$ some absolute constant. Then,
using the bounds \eqref{formaResto2AA}
on ${\bf A}, {\bf B}$ and the \eqref{UPC4}
we deduce \eqref{UPC2} with
\[
C({\bf C}) \lesssim C({\bf A})C({\bf B}) (i_1)^{\nu-1}C(S)(i_1+i_2)(1+C(S)(i_1+i_2))\,.
\]

(ii) By an explicit computation on can check
\[
{\bf M}_{+}^{+}:= [{\bf A}_{+}^{+},\mathcal{D}_{+}^{+}]\,, \quad
{\bf M}_{+}^{-}:= {\bf A}_{+}^{-}\mathcal{D}_{-}^{-}-\mathcal{D}_{+}^{+}{\bf A}_{+}^{-}\,.
\]
The bound on the coefficients of ${\bf M}_{+}^{+}$ 
follows reasoning as in item $(i)$.
Using \eqref{simboM}, \eqref{espandoSimboM}
we have that
the coefficients  of ${\bf M}_{+}^{-}$ are
\[
({\bf M})_{+,j}^{-,k}(\ell)=\ii \sum_{k=1}^{3}
\e^{2k}{m}^{(2k)}_{1/2}(\sqrt{|k|}+\sqrt{|k|}){\bf A}_{+,j}^{-,k}(\ell)\,.
\]
Then the second in \eqref{formaResto2AAbis}
follows by the estimates \eqref{formaResto2AA}
on the coefficients of ${\bf A}_{+}^{-}$.
Item $(iii)$ follows by  item $(ii)$ using the estimates in \eqref{merstima}, 
and by item $(i)$ of Lemma \ref{compoC12}.
\end{proof}

\bigskip


\begin{thebibliography}{12}


\bibitem{AB1}
Alazard T., Baldi P., \emph{Gravity capillary standing water waves}, 
Arch. Rat. Mech. Anal, 217(3):741-830, (2015).

\bibitem{ABZduke}
Alazard T., Burq N., Zuily C., 
\newblock {\it On the water-wave equations with surface tension}. 
\newblock {Duke Math. J.}, 158, 413-499, 2011.

\bibitem{ABZ1}
Alazard T., Burq N., Zuily C.,
\newblock {\it On the Cauchy problem for gravity water waves}.
\newblock  Invent. Math., 198, 71--163, 2014.

\bibitem{ADel1}
Alazard T., Delort J-M., {\it Sobolev estimates for two 
dimensional gravity water waves},  {Ast\'erisque}, vol. 374, (2015).

\bibitem{ADel2}
Alazard T., Delort J-M. \emph{Global solutions and asymptotic 
behavior for two dimensional gravity water waves}. 
Ann. Sci. \'Ec. Norm. Sup\'er., 48(5):1149--1238, (2015).



\bibitem{BBHM}
Baldi P., Berti M., Haus E., Montalto R., 
\emph{Time quasi-periodic gravity water waves in finite depth},
Invent. math. 214, 739--911, (2018). 

\bibitem{Airy} Baldi P., Berti M., Montalto R., \textit{KAM for quasi-linear 
and fully nonlinear forced perturbations of Airy equation}, 
Math. Ann. 359, 471--536 (2014).

\bibitem{KdVAut} Baldi P., Berti M., Montalto R., \textit{KAM for autonomous 
quasi-linear perturbations of KdV}, 
\newblock Ann. I. H. Poincar\'e (C) Anal. Non Lin\'eaire,
33(6):1589--1638, (2015).

\bibitem{BaMon} Baldi P., Montalto R., 
\emph{Quasi-periodic incompressible Euler flows in 3D},
preprint arXiv:2003.14313.



\bibitem{BBiP2}
Berti M., Biasco L., Procesi M.,
\newblock {\em {KAM} theory for reversible derivative wave equations},
\newblock {Arch. Rat. Mech. and Anal.}, 212(3): 905--955 (2014).
%

\bibitem{BertiBolle} Berti M., Bolle P., 
\textit{A Nash-Moser approach to KAM theory}, 
Fields Institute Communications, volume 
75 ``Hamiltonian PDEs and Applications", 
255-284, (2014).


\bibitem{BD}
Berti M.,  Delort J.-M., 
\newblock 
{\it Almost Global Solutions of Capillary-gravity Water Waves Equations on the Circle}. 
\newblock { UMI Lecture Notes 2018} (awarded UMI book prize 2017), 
ISBN 978-3-319-99486-4.  

\bibitem{BFF}
Berti M., Feola R., Franzoi L., 
\newblock \emph{Quadratic life span
of periodic gravity-capillary water waves}, 
\newblock Water Waves, 3:85--115 (2021).
	
\bibitem{BFP}
Berti M., Feola R., Pusateri F., 
\newblock \emph{Birkhoff normal form and long time existence for periodic gravity
Water Waves}, accepted on CPAM, preprint
arXiv:1810.11549.

\bibitem{BFPnote} Berti M., Feola R., Pusateri F., 
\newblock \emph{Birkhoff Normal form for Gravity Water Waves},
\newblock Water Waves, 3:117--126 (2021).


\bibitem{BFMasp} 
\newblock Berti M., Franzoi L., Maspero A., 
\newblock \emph{Traveling Quasi-Periodic Water Waves with Constant Vorticity},
\newblock Arch. Rational Mech. Anal., 240, 99--202 (2021).

\bibitem{BFMasp2}
\newblock Berti M., Franzoi L., Maspero A., 
\newblock \emph{Pure gravity traveling quasi-periodic water waves with constant vorticity},
\newblock preprint arXiv:2101.12006.

\bibitem{BM1}
Berti M., Montalto R., {\it KAM for gravity capillary water waves}, 
{Memoires of AMS}, Memo 891, vol. 263 (2020).

%

\bibitem{JeanB} Bourgain J., \textit{Construction of quasi-periodic solutions for Hamiltonian perturbations of linear equations and applications to nonlinear PDE}, Internat. Math. Res. Notices, 11, 475--497,  (1994).




\bibitem{Cou} Coutand D.,   Shkoller S., {\it Well-posedness of the free-surface incompressible Euler equations with or
without surface tension}. J. Amer. Math. Soc. 20, no. 3, 829-930, 2007.


\bibitem{CN1}
Craig W., Nicholls D., \emph{Travelling two and three 
dimensional capillary gravity water waves}, SIAM J. Math. Anal.,
32(2):323-359, (2000).

\bibitem{CS}
Craig W., Sulem C., 
\newblock {\it Mapping properties of normal forms transformations for water waves},
\newblock Boll. Unione Mat. Ital. 9(2):289-318, (2016). 

\bibitem{CrSu}
Craig W., Sulem C.,
\newblock {\it Numerical simulation of gravity waves},
\newblock { J. Comput. Phys.}, 108(1):73--83, (1993).

\bibitem{CWay}
Craig W.,  Wayne C. E.,
\newblock {\em Newton's method and periodic solutions of nonlinear wave equations}.
\newblock { Comm. Pure Appl. Math.}, 46(11):1409--1498 (1993).

\bibitem{CW}
Craig W., Worfolk P., 
\newblock {\it An integrable normal form for water waves in infinite depth},
\newblock { Phys. D}, 84(3-4):513-531, (1995). 


\bibitem{Deg} Degasperis A., Holm D. D., Hone A. N. W., 
\textit{A new integrable Equation with Peakon Solutions}, 
Theoretical and Mathematical Physics, 133(2): 1463-1474 (2002).

%


\bibitem{DLzak} Dyachenko A.I., Lvov Y.V., Zakharov V.E., 
\emph{Five-wave interaction on the surface of deep fluid}, 
Physica D 87(1-4):233-261, (1995).


\bibitem{FGnote}
Feola R., Giuliani F., 
\emph{Time quasi-periodic traveling gravity water waves in infinite depth},
\newblock Rend. Lincei Mat. Appl. 31: 901--916, (2020).

\bibitem{FGMP} Feola R., Giuliani F., Montalto R., Procesi M., 
\textit{Reducibility of first order linear operators on tori via Moser's theorem}, 
Journal of Functional Analysis
276(3):932-970 (2019). 

\bibitem{FGPa} Feola R., Giuliani F., Pasquali S., 
\textit{On the integrability of Degasperis-Procesi equation: control of the 
Sobolev norms and Birkhoff resonances}, 	
Journal of Differential Equations 266(6):3390--3437, (2019).

\bibitem{FGP}
Feola R., Giuliani F., Procesi M., \emph{Reducibility for a class 
of weakly dispersive linear operators arising from 
the Degasperis Procesi equation},
Dynamics of Partial Differential Equations, 16(1): 25-94 (2019).

\bibitem{FGP1}
Feola R., Giuliani F., Procesi M., 
\newblock \emph{Reducibile KAM tori for the  Degasperis-Procesi equation}, 
\newblock Commun. Math. Phys. 377, 1681--1759 (2020).

%


\bibitem{FIloc}
Feola R., Iandoli F., 
\newblock \emph{{L}ocal well-posedness for quasi-linear 
{N}{L}{S} with large {C}auchy data on the circle},
  \newblock {Ann. I. H. Poincar\'e (C) 
Anal. Non Lin\'eaire},
36(1):119-164, (2019).

\bibitem{FI2} Feola R., Iandoli F., 
\newblock {\it Long time existence for fully nonlinear NLS with small Cauchy data on the circle},
\newblock Ann. Sc. Norm. Super. Pisa Cl. Sci., 22(1): 109--182 (2021).

\bibitem{FP1}
Feola R., Procesi M., 
\newblock {\em Quasi-periodic solutions for fully nonlinear 
forced reversible Schr\"odinger equations},
\newblock J. Differential Equations 259(7):3389--3447, (2015).

\bibitem{GeXuYou}
\newblock Geng J., Xu X., You J.,
\newblock \emph{An infinite dimensional KAM theorem and 
its application to the two dimensional cubic Schr\"odinger equation},
\newblock Adv. Math. 226(6): 5361--5402 (2011).

\bibitem{GeYou1}
\newblock Geng J., You J.,
\newblock \emph{A KAM theorem for one dimensional Schr\"odinger 
equation with periodic boundary conditions}, 
\newblock J. Differential Equations 209(1): 1--56, (2005).


\bibitem{GeYou2}
\newblock Geng J., You J.,
\newblock \emph{A KAM theorem for Hamiltonian 
partial differential equations in higher dimensional spaces}, 
\newblock Commun. Math. Phys. 262, 343--372, (2006).



\bibitem{GMS} Germain P., Masmoudi N., Shatah J., 
 {\it Global solutions for the gravity water waves 
 equation in dimension 3}, 
 Ann. of Math. (2), 175(2):691--754, (2012).
%


\bibitem{Giuliani} Giuliani F., 
\textit{Quasi-periodic solutions for quasi-linear generalized KdV equations}, 
J. Differential Equations, 262(10):5052-5132 (2017).



\bibitem{Hormo}
H\"ormander L., 
\emph{The Analysis of Linear Partial Differential Operators III 
- Pseudo differential operators}, Springer.

%
%
\bibitem{IFRT}  Ifrim M.,   Tataru D., {\it Two dimensional water waves 
in holomorphic coordinates II: global solutions},
Bull. Soc. Math. France 144(2):369--394, (2016).
%
\bibitem{ifrTat}  Ifrim M.,   Tataru D., {\it The lifespan of small 
data solutions in two dimensional capillary water waves},
 Arch. Rat. Mech. Anal. 225(3):1279--1346, (2017).
 
 \bibitem{IP}
Ionescu A., Pusateri F., {\it Global solutions for the gravity water waves system in 2d}, 
Invent. Math., 
199(3):653--804, (2015).  
%
%
\bibitem{IP3}  Ionescu A.,   Pusateri F., {\it Global regularity for 
2d water waves with surface tension}, 
 Mem. Amer. Math. Soc., vol. 256(1227),  (2018).

\bibitem{IP2}  Ionescu A., Pusateri F., {\it Recent advances 
on the global regularity for water waves}, 
Philos. Trans. Roy. Soc. A. 376(2111), (2018).

\bibitem{IP6}
 Ionescu A.,   Pusateri F., {\it Long-time existence for multi-dimensional 
 periodic water waves},
 Geom. Funct. Anal. 29(3):811--870, (2019). 

\bibitem{IossPlo111} Iooss G., Plotnikov P., 
\emph{Multimodal standing gravity waves: 
a completely resonant system}, 
J. math. fluid mech. 7, S110--S126, (2005). 


\bibitem{IoossPlo1} Iooss G., Plotnikov P., \emph{Small divisor problem 
in the theory of three-dimensional water gravity waves}, 
Mem. Amer. Math.
Soc., 200(940), (2009).

\bibitem{IoossPlo2} Iooss G., Plotnikov P., \emph{Asymmetrical tridimensional 
traveling gravity waves}, Arch. Rat. Mech. Anal., 200(3):789--880,
(2011).

\bibitem{IoossPloTol1} Iooss G., Plotnikov P., Toland J., 
\emph{Standing waves on an infinitely deep perfect fluid under gravity}, 
Arch. Ration. Mech.
Anal., 177(3):367--478, 2005.
\bibitem{KdVeKAM} Kappeler T., P\"oschel J., \textit{KAM and KdV}, Springer (2003).

 \bibitem{ImaginarySpectrum} Kuksin S., \textit{Hamiltonian perturbations 
of infinite-dimensional linear systems with imaginary spectrum}, 
Funktsional. Anal. i Prilozhen. 
 21(3):22-37, 95 (1987).
 
 \bibitem{Ku2}
S.~Kuksin.
\newblock {\em A {KAM} theorem for equations of the {K}orteweg-de {V}ries type}.
\newblock { Rev. Math. Phys.}, 10(3):1--64 (1998).

\bibitem{KP}
Kuksin S., P{\"o}schel J.,
\newblock {\em Invariant {C}antor manifolds of quasi-periodic oscillations for a
  nonlinear {S}chr\"odinger equation}.
\newblock { Ann. of Math.}, 143(1):149--179 (1996).

\bibitem{Lan1}
Lannes D., \emph{Well-posedness of the water-waves equations}, 
J. Amer. Math. Soc., 18(3):605--654, (2005).

\bibitem{Lan2}
Lannes D., \emph{The water waves problem: mathematical analysis and asymptotics}. Mathematical Surveys and Monographs,
188, 2013.

\bibitem{levi} Levi-Civita T., \emph{D\'etermination rigoureuse des ondes 
permanentes d'ampleur finie}. Math. Ann., 93 , pp. 264-314, 1925.

\bibitem{Lind1}  Lindblad H., 
{\it Well-posedness for the motion of an incompressible liquid with free surface boundary}. Ann.
of Math., 162, no. 1, 109-194, 2005.

\bibitem{LY}
Liu J., Yuan X., 
\newblock {\em A {KAM} {T}heorem for {H}amiltonian partial differential equations
  with unbounded perturbations}, 
\newblock { Comm. Math. Phys.}, 307(3):629--673, (2011).

\bibitem{Mon1} Montalto R., 
\textit{Quasi-periodic solutions of forced Kirchoff equation}, 
Nonlinear Differ. Equ. Appl.  24(9) (2017).

\bibitem{PloTol1} Plotnikov P., Toland J., \emph{Nash-Moser theory for 
standing water waves}. Arch. Ration. Mech. Anal., 159(1):1-83, 2001.

\bibitem{Po1} P\"oschel J., \emph{Quasi-periodic solutions for a nonlinear 
wave equation}, Comment. Math. Helv., 71(2): 269--296, (1996).
%

\bibitem{PP} Procesi C., Procesi M., 
\newblock \textit{A KAM algorithm for the non-linear Schr\"odinger equation}, 
\newblock Advances in Math. 272 , 399--470, (2015).

\bibitem{stokes} 
Stokes G., \emph{On the theory of oscillatory waves}, Trans. Cambridge
Phil. Soc. 8, 441-455, (1847).

\bibitem{struik} Struik D., \emph{D\'etermination rigoureuse 
des ondes irrotationnelles p\'eriodiques dans un 
canal \`a profondeur finie}, Math. Ann., 95: 595--634, (1926).

 \bibitem{Wu}
 Wu S., {\it Almost global wellposedness of the 2-{D} full water wave  
 problem},  Invent. Math., 177(1):45--135, (2009).    
%
 \bibitem{Wu2}
 Wu S., {\it Global well-posedness of the 3-{D} full water wave problem}, 
 Invent. Math., 184(1):125--220, (2011).

\bibitem{Zak1}  Zakharov V.E., {\it Stability of periodic waves of 
finite amplitude on the surface of a deep fluid}, 
J Appl Mech Tech Phys 9:190--194, (1968). 


\bibitem{Zak2}
Zakharov V.E., Dyachenko A.I.,  {\it Is free-surface hydrodynamics an integrable system?}
Physics Letters A, 190(2):144--148, (1994).


\end{thebibliography}
\end{document}